\documentclass[10pt,reqno]{amsart}
\usepackage[a4paper,margin=1in]{geometry}
\usepackage{amsmath,amssymb,amsfonts,amsthm,mathtools}
\usepackage{mathrsfs}
\usepackage{enumitem}
\usepackage{microtype}
\usepackage{xcolor}
\usepackage{hyperref}
\usepackage{aliascnt}

\hypersetup{
  colorlinks=true,
  linkcolor=blue!55!black,
  citecolor=blue!55!black,
  urlcolor=blue!55!black,
  pdftitle={Operational State Selection in Stochastic Analysis: The Least State for a Calibrated Quadratic Ito Program},
  pdfauthor={Guangqian Zhao},
  pdfsubject={Operational state selection for nondominated Hilbert-valued semimartingales, calibrated quadratic Ito states, common realization, compact capacity cores, Euler-state compatibility, and backward response jets},
  pdfkeywords={operational state selection, response calibration, quadratic Ito state, nondominated stochastic analysis, Schatten ideals, defect-energy transfer, spatial trace tightness, common Euler fields, backward response jet, rough paths}
}

\theoremstyle{plain}
\newtheorem{theorem}{Theorem}[section]
\newaliascnt{proposition}{theorem}
\newtheorem{proposition}[proposition]{Proposition}
\aliascntresetthe{proposition}
\newaliascnt{lemma}{theorem}
\newtheorem{lemma}[lemma]{Lemma}
\aliascntresetthe{lemma}
\newaliascnt{corollary}{theorem}
\newtheorem{corollary}[corollary]{Corollary}
\aliascntresetthe{corollary}

\theoremstyle{definition}
\newaliascnt{definition}{theorem}
\newtheorem{definition}[definition]{Definition}
\aliascntresetthe{definition}
\newaliascnt{remark}{theorem}
\newtheorem{remark}[remark]{Remark}
\aliascntresetthe{remark}
\newaliascnt{example}{theorem}
\newtheorem{example}[example]{Example}
\aliascntresetthe{example}

\usepackage[nameinlink,capitalize]{cleveref}
\crefname{theorem}{Theorem}{Theorems}
\crefname{proposition}{Proposition}{Propositions}
\crefname{lemma}{Lemma}{Lemmas}
\crefname{corollary}{Corollary}{Corollaries}
\crefname{definition}{Definition}{Definitions}
\crefname{remark}{Remark}{Remarks}
\crefname{example}{Example}{Examples}
\Crefname{theorem}{Theorem}{Theorems}
\Crefname{proposition}{Proposition}{Propositions}
\Crefname{lemma}{Lemma}{Lemmas}
\Crefname{corollary}{Corollary}{Corollaries}
\Crefname{definition}{Definition}{Definitions}
\Crefname{remark}{Remark}{Remarks}
\Crefname{example}{Example}{Examples}

\newcommand{\R}{\mathbb R}
\newcommand{\N}{\mathbb N}
\newcommand{\cF}{\mathcal F}
\newcommand{\cP}{\mathcal P}
\newcommand{\Smax}{\mathfrak S_{\Lambda,B}^{0,T}(H)}
\newcommand{\Mmax}{\mathfrak M_{\Gamma}^{0,T}(H)}
\newcommand{\Tr}{\operatorname{Tr}}
\newcommand{\eps}{\varepsilon}
\newcommand{\one}{\mathbf 1}
\newcommand{\dd}{\,d}
\newcommand{\Anti}{\operatorname{Anti}}
\newcommand{\Sym}{\operatorname{Sym}}
\newcommand{\norm}[1]{\left\lVert #1\right\rVert}
\newcommand{\abs}[1]{\left\lvert #1\right\rvert}
\newcommand{\qv}[1]{\langle\!\langle #1\rangle\!\rangle}
\newcommand{\preceqsc}{\preceq_{\cP}^{\mathrm{sc}}}
\newcommand{\equivsc}{\equiv_{\cP}^{\mathrm{sc}}}
\newcommand{\Gito}{\mathbb G^{2,\mathrm I}}

\title[Operational state selection]
{Operational State Selection in Stochastic Analysis:\\
The Least State for a Calibrated Quadratic It\^o Program}
\author{Guangqian Zhao}
\address{School of Mathematical Sciences, University of Science and Technology of China, Hefei, Anhui 230026, China}
\email{zhaoguangqian@mail.ustc.edu.cn}
\date{}

\subjclass[2020]{60A05, 60G44, 60L20, 60H05, 60H10}
\keywords{operational state selection; response calibration; least quadratic It\^o state; nondominated stochastic analysis; Schatten ideals; defect-energy transfer; spatial trace tightness; common Euler fields; backward response jet; rough paths}

\begin{document}
\raggedbottom

\begin{abstract}
We study the least stable state determined by a causal response program and
its quantitative calibration.  For nondominated families of continuous
Hilbert-valued semimartingale laws with uniform drift and trace-clock bounds,
the calibrated quadratic It\^o program selects a state $\mathcal A_2$ with
canonical coordinates $(X,A,Q)$, where $A$ is Hilbert--Schmidt skew area and
$Q$ is trace-class covariance.  Brownian experiments identify the
Hilbert--Schmidt/operator coefficient gauges and hence the dual
$\mathcal S_2/\mathcal S_1$ state geometry.  A lossless encoder--decoder pair
shows that every joint realization factors through $\mathcal A_2$.

A single sequence of total Borel causal approximants constructs the common
state, while the laws verify its classical semantics.  A dimension-free
defect--energy estimate gives independence from the finite-variation
regularization, and spatial trace tightness yields restart-stable compact
capacity cores in infinite dimensions.  Each fixed higher signature level is
a continuous readout of $\mathcal A_2$.  Rough flows inherit corewise
continuity, while Euler schemes construct a jointly Borel raw-causal It\^o
field outside one parameter-independent polar set.  On compact
covariance-envelope subclasses, covariance responses select a backward first
jet which, under the stated completion and continuation hypotheses, has
modelwise Galtchouk--Kunita--Watanabe semantics and agrees with the solver's
martingale coordinate.
\end{abstract}

\maketitle

Throughout, $0<T<\infty$.  All deterministic bounds, in particular
$\Lambda$ and $B$, are finite and nonnegative unless stated otherwise.

\section{Introduction}
\label{sec:introduction}

\subsection*{The operational state-selection problem}

Fix a causal information system and a specified program of
operations---restart, integration, quadratic response, nonlinear evolution,
conditioning, or infinitesimal testing.  The basic question is
\[
 \boxed{\text{What is the least stable state on which the declared program is
 well posed?}}
\]
Almost-sure equality enters after a state has been specified: laws identify
versions of realized objects, while response requirements determine which
coordinates the state must retain.  The same principle applies to
infinitesimal responses: a derivative is retained as a jet when the value
topology does not recover it continuously.

The construction first descends the declared responses through the causal
information quotient, calibrates their joint profile, and completes the
resulting uniform space.  A coordinate may be removed precisely when it is
continuously recoverable by a stable-causal readout.  Probability enters in
distinct roles: canonical experiments calibrate the response uniformity;
after selection, laws supply modelwise semantics and the null ideal identifying
versions.  An added response leaves the state unchanged exactly
when it descends through the existing response quotient and is uniformly
continuous for the calibrated uniformity.

\subsection*{Central results and applications}

\emph{Operational mechanism.}
\Cref{thm:descent-before-completion} constructs the response congruence and
calibrated Hausdorff completion after causal descent; its corollaries separate
response-null, uncontrolled, recoverable, and retained coordinates.

\emph{I. Calibrated quadratic reconstruction and least-state factorization.}
The analytic reconstruction theorem \Cref{thm:quadratic-response-reconstruction}
combines
\[
 \text{restart algebra}\quad+\quad\text{calibrated }\mathcal S_2/\mathcal S_1
 \text{ geometry}\quad+\quad\text{lossless factorization}
\]
and identifies
\begin{equation}\label{eq:intro-forced-state}
 \boxed{\operatorname{ch}(\mathcal A_2)=(X,A,Q)}.
\end{equation}
If $(U,V)$ is the restarted step-two It\^o signature, then
\begin{equation}\label{eq:intro-lossless-state-readout}
 A=\operatorname{Anti}V,
 \qquad
 Q=U^{\otimes2}-V-V^*.
\end{equation}
Brownian area and covariance experiments select the Hilbert--Schmidt and
operator coefficient norms, hence the dual state geometry
$\mathcal S_2(H)_{\rm sk}\times\mathcal S_1(H)_{\rm sa}$.  The least-state
factorization theorem
\Cref{thm:quadratic-response-representability} shows directly that a causal
source state $S$ realizes the complete quadratic program if and only if
$\mathcal A_2\preceqsc S$.  Its global decoder gives the factor map explicitly
and proves uniqueness up to stable-causal equivalence.  Part~I assumes the
single common realization of
\Cref{def:common-accessibility}.  Part~II constructs this law-independent
realization on $\Smax$, so
\Cref{thm:nondominated-quadratic-reconstruction} gives the same representation
directly on the nondominated Hilbert semimartingale model class.

\emph{II. Common realization and compact perfection.}
The defect-energy theorem \Cref{thm:defect-energy-transfer} makes primitive
reconstruction Lipschitz in the upper regularization-energy pseudometric, hence
scheme-independent in uniform $\mathcal S_2$ when the defect vanishes.
Rough-H\"older and trace-energy convergence require separate estimates and
spatial profiles.  In infinite dimension, the trace-clock obstruction
\Cref{thm:trace-clock-compactness-obstruction} shows that bracket trace control
alone does not give uniform compactness.  The spatial trace-tightness theorem
\Cref{thm:spatial-trace-tight-cores} gives compact restart-stable capacity cores
from a uniform spatial trace profile, strictly weaker than a fixed trace-class
envelope.  Its covariance path space is compact, restart congruent, and
trace-variation isometric to its $L^1$ density class.  Under the same profile,
\Cref{thm:Euler-state-interface} constructs an independent upper-$L^r$
Euler-generated It\^o field and, with additional rough regularity, identifies
it with the bracket-corrected flow off one polar set simultaneously in model,
start time, terminal time, and initial state.

\emph{Backward first-jet application.}
On the compact-envelope branch, \Cref{thm:backward-first-jet-selection} uses an
admissible raw testing core to generate a fiber-valued jet from
cross-covariance responses.  The modelwise covariation identities recover its
GKW energy classes, while clause~\textup{(A2)} places their active section in
one common upper-energy space.
Under the continuation and common-channel interface of
\Cref{def:backward-compatible-interface}, that jet is identified with the
solver's martingale-integral class.  Stopped martingale responses calibrate the global
$\mathbb H_{\mathcal P}^2$ geometry.  For the complete closed linear response
relation, the jet compresses to the inherited value-process topology exactly
when its vertical space vanishes and its value domain is closed; a fixed
nonlinear solver range is governed instead by continuity of its restricted
value-projection inverse.

For a fixed interface and driver, terminal-to-jet continuity governs boundary
enhancement, whereas continuity of the value projection governs evolutionary
compression.

\subsection*{Propagation consequences}

For $1/3<\eta<1/2$, every fixed higher geometric or It\^o signature level is a
continuous readout of $\mathcal A_2$.  Deterministic rough--Young maps give
finite-dimensional Stratonovich and It\^o flows, perfected simultaneously on
compact capacity cores.  Deterministic continuous coefficients with the stated
global state-Lipschitz and linear-growth bounds admit an independent Euler
completion; under the additional autonomous rough-vector-field hypotheses it
agrees with the propagated rough cocycle on one quasi-sure domain.  The
joint exterior--covariance presentation is lossless; its scalar even--odd
energies recover the unitary-invariant spectral data of
\Cref{thm:even-odd-spectral-decoding}, but are not lossless.

\subsection*{Relation to existing frameworks}

Rough-path theory starts from an enhanced driver and proves continuity of
signatures and solution maps.  Wong--Zakai, BDG, and canonical-lift results
connect continuous semimartingales and enhanced martingales to rough paths
\cite{Lyons98,CoutinLejay05,FrizVictoir08,ChevyrevFriz19,FrizHairer20,
FrizZorinKranich23,GrongNilssenSchmeding22}; suitable lifts also yield rough
cocycles and random flows \cite{Kunita90,BailleulRiedelScheutzow17}.  Here both
the enhancement and its operator-ideal topology are outputs of the calibrated
response program.

The least-state viewpoint meets minimal realization
\cite{HoKalman66,Kailath80} and statistical sufficiency
\cite{Blackwell53,LeCam64}, but here sufficiency is relative to a declared
causal program, state comparisons are given by stable-causal readouts, and
calibration fixes the topology.

Pathwise integration, causal functional calculus, and quasi-sure aggregation
construct reference-law-independent objects or aggregate modelwise ones
\cite{Foellmer81,Karandikar95,Nutz12,SonerTouziZhang11,Cohen12,
Oberhauser16,PerkowskiPromel16,BartlKupperNeufeld19,ChiuCont22}.  Here the
response congruence and completed state precede implementation; law-independent
approximants then construct one raw-causal Borel realization whose semantics
and capacity properties are verified by the laws.

Approximation of stochastic integrals and equations by finite-variation paths
has a long history, from Wong--Zakai limits to calculus via regularization and
rough-path corrections
\cite{WongZakai65,RussoVallois07,BerardBergeryVallois11,
FrizOberhauser09,GomesOhashiRussoTeixeira21}.  The defect-energy transfer theorem
gives a scheme-independent criterion: every causal finite-variation
regularization whose first-level defect vanishes in upper energy produces the
same realized primitive in the uniform $\mathcal S_2$ topology.

Capacity rough paths and $G$-Brownian dynamics provide quasi-sure lifts, rough
equations, and Wong--Zakai limits under nonlinear expectation
\cite{DenisHuPeng11,BoedihardjoGengQian16,GengQianYang14,Peng07,
PengZhang17,PengZhang22}.  Our compactness theorem applies to general
nondominated driftless families and uses uniform spatial trace tightness beyond
a trace-clock bound; nondominated capacity methods in finance provide an
earlier benchmark \cite{DenisMartini06}.

Universal measurable SDE representations and nonanticipative derivatives are
further comparison points \cite{Kallenberg96,PrzybylowiczEtAl24,ContFournie13}.
Our forward theorem identifies the propagated cocycle with classical strong
solutions, while the backward theorem constructs a covariance-energy jet from
martingale representation and cross-covariation.

Earlier driver-level and scalar-flow constructions established common capacity
cores and simultaneous It\^o flows in narrower settings
\cite{ZhaoCausalFlows25,ZhaoDriverCores25}.  This framework extends that line
by selecting the Hilbert-valued area--covariance state operationally, proving
its regularization-independent common realization, and identifying spatial
trace tightness as a sufficient compactness condition for perfection.  Operator-ideal
endpoint regularity for infinite-dimensional L\'evy area is treated separately
in \cite{ZhaoLevyArea26}.

\subsection*{Organization and hypotheses}

Part~I develops operational completion, calibrated quadratic reconstruction,
and least-state factorization, comparing signature and norming triangular
presentations.  Part~II constructs the common realization and
spatial-profile compact perfection, with a fixed envelope as a closed
restriction.  Part~III treats signatures, flows, the Euler interface, backward
jets, and exterior-spectral representation.  The appendices collect response
completion, tangent geometry, upper-capacity and common-integral spaces, and
Euler-field completion.

Part~II allows uniformly bounded drift density and absolutely continuous
trace-class bracket density with bounded trace; general spatial-profile compact
perfection is stated for the driftless subclass.  A trace-class envelope is a
sufficient special case used by tangent theory.  Backward selection assumes an
admissible raw testing core and~\textup{(A2)}, while solver identification also
requires~\textup{(B4)}.

\part{Operational state selection and quadratic rigidity}

This part fixes information before probability: responses descend first, their
joint profile determines the Hausdorff completion, and a lossless realization
makes that completion the least stable state.

\section{Operational state selection: descent, calibration, and completion}
\label{sec:response-demand-descent}

\subsection{Causal information and declared operations}

\begin{definition}[Raw causal carrier and causal information presentation]
A \emph{raw causal carrier} is a set $\mathsf U_0$ with truncations
$r_t:\mathsf U_0\to\mathsf U_0$, $0\le t\le T$, satisfying
$r_T=I$ and $r_sr_t=r_{s\wedge t}$.  A \emph{causal information presentation}
$\mathbb I=(\mathscr I_t)_{0\le t\le T}$ is an increasing family of scalar
observables such that
\begin{equation}\label{eq:information-restriction-stability}
 f=f\circ r_t\quad(f\in\mathscr I_t),
 \qquad
 f\circ r_s\in\mathscr I_s
 \quad(0\le s\le t,\ f\in\mathscr I_t).
\end{equation}
The second condition says that a time-$t$ observation evaluated on an
$s$-stopped history is available at time $s$.

The presentation induces
\[
 u\sim_{\mathbb I,t}v
 \quad\Longleftrightarrow\quad
 f(u)=f(v)\ \text{for every }f\in\mathscr I_t,
\]
and the global relation $u\sim_{\mathbb I}v$ when this holds for every $t$.
Write $q_{\mathbb I}:\mathsf U_0\to\mathsf U_{\mathbb I}$ for the global
quotient.  Restriction stability makes
\[
 \bar r_tq_{\mathbb I}(u):=q_{\mathbb I}(r_tu)
\]
well defined, so the information quotient remains a causal carrier.  No
topology, measure, or null ideal is implied by this notation.
\end{definition}

\begin{example}[Filtered path spaces]
On canonical path space one may use the observation map $r_t$, or equivalently
$\mathscr I_t=B_b(\Omega_H,\sigma(r_t))$; then
$x\sim_{\mathbb I,t}y$ exactly when $r_tx=r_ty$.  On a general stochastic
basis $(\Omega,\mathcal F,\mathbb F,P)$, the filtration is an abstract
information structure.  It has the preceding pathwise presentation whenever
the carrier is equipped with measurable truncations generating the intended
$\mathcal F_t$ (or a declared subfiltration).  The law $P$ supplies weights
and null sets; equality $P$-almost surely is equality of realized versions, not
the definition of $\sim_{\mathbb I,t}$.
\end{example}

\begin{definition}[Declared response program]
A \emph{declared response program} on $(\mathsf U_0,\mathbb I)$ consists of
primitive responses
\[
 R_\alpha^{\rm raw}:\mathsf U_0\to\mathsf Y_\alpha,
 \qquad \alpha\in A,
\]
into complete Hausdorff uniform spaces and the algebraic, causal, restart,
dynamical, conditional, or infinitesimal constructors promised to be realized
stably.  Exactly the program-required coordinates are retained.
\end{definition}

\begin{definition}[Response calibration]
\label{def:response-calibration}
A \emph{response calibration} specifies the target uniformities, equivalently
the gauges comparing response errors before completion.  It may be deterministic
or induced by canonical stochastic experiments.  One qualitative response
family may admit inequivalent calibrations and completions; the Brownian
calibration below selects the principal Schatten geometry.
\end{definition}

\begin{definition}[Response status]
Fix $(\mathbb I,\mathfrak R)$.  Inputs are \emph{informationally identified}
when their $q_{\mathbb I}$-images agree.  A descended distinction is
\emph{response-visible} when some declared response separates it.  A candidate
coordinate is \emph{calibration-controlled} when it is uniformly continuous
for the response-generated uniformity; otherwise retaining it changes the
calibrated completion.  Controlled visible data are \emph{retained} when they
survive completion and \emph{propagated} when stable-causally recoverable from
a smaller completed state.  A coordinate is \emph{tangent-generated} by scaled
response germs or infinitesimal cross-responses; it enters a jet only when
demanded and not continuously recoverable from the value topology.
\end{definition}

\subsection{Descent, calibrated completion, and compression}

\begin{definition}[Information descent compatibility]
Let $R^{\rm raw}:\mathsf U_0\to\mathsf Y$ be a primitive response.  If
$\mathsf Y$ has truncations $\rho_t^{\mathsf Y}$, then $R^{\rm raw}$ is
\emph{$\mathbb I$-compatible} if
\[
 u\sim_{\mathbb I,t}v
 \quad\Longrightarrow\quad
 \rho_t^{\mathsf Y}R^{\rm raw}(u)
 =\rho_t^{\mathsf Y}R^{\rm raw}(v)
\]
for every $t$.  For a terminal response the condition is imposed at $T$.  A
constructor is compatible when its output is independent of all information
representatives at the relevant times, and the program is compatible when every
primitive response and declared constructor is compatible.  For a compatible
primitive response, write $R^{\mathbb I}$ for its unique factor through the
global information quotient $q_{\mathbb I}$.
\end{definition}

\begin{definition}[Causally calibrated response program]
\label{def:causally-calibrated-response-program}
A calibrated response program is \emph{causally calibrated} if every primitive
response target $\mathsf Y_\alpha$ carries uniformly continuous truncations
$\rho_t^\alpha$ satisfying
\[
 \rho_T^\alpha=I,
 \qquad
 \rho_s^\alpha\rho_t^\alpha=\rho_{s\wedge t}^\alpha,
\]
and the descended primitive responses are equivariant:
\begin{equation}\label{eq:causal-calibration-equivariance}
 R_\alpha^{\mathbb I}(\bar r_tu)
 =\rho_t^\alpha R_\alpha^{\mathbb I}(u).
\end{equation}
The product response target is equipped with the coordinatewise truncation.
A declared constructor is causally calibrated when it is uniformly continuous
for the chosen response uniformities and commutes with the corresponding
truncations.  In the metric response spaces used below the truncations are
nonexpansive.
\end{definition}

\begin{definition}[Causal spaces and stable-causal maps]
\label{def:causal-spaces}
A \emph{causal topological space} is a Hausdorff topological space
$\mathsf S$ with continuous truncations $\rho_t^{\mathsf S}$ satisfying
\[
 \rho_T^{\mathsf S}=I,
 \qquad
 \rho_s^{\mathsf S}\rho_t^{\mathsf S}
 =\rho_{s\wedge t}^{\mathsf S}.
\]
A \emph{causal uniform space} is a Hausdorff uniform space equipped with
uniformly continuous truncations satisfying the same identities.  A map
$R:\mathsf S\to\mathsf T$ between causal topological spaces is
\emph{stable-causal} if it is continuous and
\begin{equation}\label{eq:stable-causal-readout}
 \rho_t^{\mathsf T}R
 =\rho_t^{\mathsf T}R\rho_t^{\mathsf S},
 \qquad 0\le t\le T.
\end{equation}
Between causal uniform spaces, a stable-causal map is additionally required to
be uniformly continuous.  A stable-causal map used to extract one state from
another is called a \emph{stable-causal readout}.
\end{definition}

\begin{lemma}[Causal structure passes to calibrated completion]
\label{lem:causal-calibrated-completion}
For a causally calibrated response presentation, the response congruence is
invariant under stopping.  The induced truncations on the response quotient
are uniformly continuous and extend uniquely to the response completion,
where they satisfy the same semigroup law.  Every uniformly continuous
coordinate or constructor from the dense operational core into a complete
causal uniform target, and commuting there with stopping, extends uniquely as
a stable-causal map on the completion.
\end{lemma}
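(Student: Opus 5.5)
The plan is to work with the concrete description of the response congruence and completion from \Cref{def:causally-calibrated-response-program}. The descended primitive responses define the initial map
\[
 R^{\mathbb I}=(R_\alpha^{\mathbb I})_\alpha:\mathsf U_{\mathbb I}\to\prod_\alpha\mathsf Y_\alpha .
\]
Two points are congruent when $R^{\mathbb I}(u)=R^{\mathbb I}(v)$. The calibrated uniformity on the quotient is the initial uniformity pulled back along the injective factor $\hat R$, so the quotient is uniformly isomorphic to its image in the product. The completion can then be taken as the closure of that image, since each $\mathsf Y_\alpha$ is complete and Hausdorff and so is the product.

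\emph{Stopping invariance and continuity of truncations.} Suppose $R^{\mathbb I}(u)=R^{\mathbb I}(v)$. The equivariance identity \eqref{eq:causal-calibration-equivariance} gives
\[
 R^{\mathbb I}(\bar r_tu)=\rho_tR^{\mathbb I}(u)=\rho_tR^{\mathbb I}(v)=R^{\mathbb I}(\bar r_tv),
\]
so the congruence is invariant under stopping and $\bar r_t$ descends to a map $\hat r_t$ on the quotient. Under the identification with the image, $\hat r_t$ is just the restriction of the coordinatewise truncation $\rho_t=\prod_\alpha\rho_t^\alpha$. A product of uniformly continuous maps is uniformly continuous for the product uniformity, so $\hat r_t$ is uniformly continuous.

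\emph{Extension to the completion.} The map $\rho_t$ is continuous on the product and leaves the image invariant, so it leaves the closure invariant; its restriction there is the unique uniformly continuous extension of $\hat r_t$. The semigroup law $\rho_s\rho_t=\rho_{s\wedge t}$ and the identity $\rho_T=I$ hold coordinatewise on the whole product, hence on the completion. If one prefers an abstract completion, the same conclusion follows from uniqueness of extensions. Both sides of each identity are uniformly continuous maps into a Hausdorff space that agree on a dense subset, so they agree everywhere. The completion is therefore a causal uniform space.

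\emph{Extension of coordinates and constructors.} Let $F$ be uniformly continuous from the dense core into a complete causal uniform target $\mathsf T$, with $\rho_t^{\mathsf T}F=F\hat r_t$. By the extension theorem for uniformly continuous maps into complete Hausdorff spaces, $F$ has a unique uniformly continuous extension $\bar F$. The commutation relation passes to $\bar F$ by the same density argument, since both $\rho_t^{\mathsf T}\bar F$ and $\bar F\rho_t$ are uniformly continuous and agree on the core. The stable-causal identity \eqref{eq:stable-causal-readout} then follows from the semigroup law:
\[
 \rho_t^{\mathsf T}\bar F\rho_t=\bar F\rho_t\rho_t=\bar F\rho_t=\rho_t^{\mathsf T}\bar F .
\]
For a multi-argument constructor the argument is the same, with the domain replaced by the product of completions; the product of completions is the completion of the product.

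The only delicate point is bookkeeping. One must check that "commuting with stopping" in the statement means the equivariance $\rho^{\mathsf T}_tF=F\hat r_t$, which is what makes the stable-causal identity follow. One must also check that the completion is Hausdorff, which holds because the congruence quotient has been taken before completing. Beyond these, the argument is standard uniform-space density.
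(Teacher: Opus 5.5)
Your proposal is correct and takes the same route as the paper. Equivariance makes the response congruence stopping-invariant; the identification of the quotient uniformity with the initial uniformity along the profile embedding makes the induced truncations uniformly continuous; and density into a Hausdorff target carries the semigroup law and the equivariance of extended constructors to the completion. Two of your steps are ones the paper leaves implicit: realizing the extended truncation as the restriction of the coordinatewise $\rho_t$ to the closed image, and deriving the stable-causal identity from equivariance together with $\rho_t\rho_t=\rho_t$.
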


\begin{proof}
Let $\iota^0$ be the joint profile embedding of the response quotient.  If
$u\sim_{\mathfrak R}v$, then
\[
 \mathbf R^{\mathbb I}(\bar r_tu)
 =\rho_t\mathbf R^{\mathbb I}(u)
 =\rho_t\mathbf R^{\mathbb I}(v)
 =\mathbf R^{\mathbb I}(\bar r_tv),
\]
so stopping is well defined on the quotient.  There
\[
 \iota^0(\bar r_t[u])=\rho_t\iota^0([u]).
\]
Since the quotient uniformity is the initial uniformity induced by $\iota^0$
and $\rho_t$ is uniformly continuous, $\bar r_t$ is uniformly continuous and
extends uniquely to the completion.  Density and continuity preserve the
semigroup identities.  The same completion argument extends every uniformly
continuous equivariant constructor, and density preserves its stopping
identity, proving stable causality.
\end{proof}

\begin{definition}[Causal random states, realization, and leastness]
\label{def:stable-causal}
Fix a Borel carrier $(\Omega,(r_t)_{0\le t\le T})$ and a nonempty law family
$\cP$.  A \emph{causal Polish state space} is a Polish causal topological
space in the sense of \Cref{def:causal-spaces}.
A Borel state $S:\Omega\to\mathsf S$ is $\cP$-causal if there is a Borel set
$G_S$ with $P(G_S)=1$ for every $P\in\cP$ such that, for all $t$ and all
$\omega,\omega'\in G_S$,
\[
 r_t\omega=r_t\omega'
 \quad\Longrightarrow\quad
 \rho_t^{\mathsf S}S(\omega)=\rho_t^{\mathsf S}S(\omega').
\]
For $\cP$-causal states $S,T$, write $T\preceqsc S$ if
$T=R(S)$ $P$-almost surely for every $P\in\cP$ for some stable-causal
readout $R$, and write $T\equivsc S$ for mutual factorization.  Two factor
maps are identified when they agree on the realized source state under every
$P\in\cP$.

Given a causal Polish response target $\mathsf R_d$ and modelwise responses
$Y^{P,d}$, the state $S$ \emph{stably realizes} $d$ if a stable-causal readout
$\Psi_d:\mathsf S\to\mathsf R_d$ satisfies
$\Psi_d(S)=Y^{P,d}$ $P$-almost surely for every $P\in\cP$.  For a declared
joint response $Y^{P,\mathfrak D}$ in a causal Polish bundle
$\mathsf R_{\mathfrak D}$, a \emph{joint realization} is a single
stable-causal readout $\Psi:\mathsf S\to\mathsf R_{\mathfrak D}$ satisfying
$\Psi(S)=Y^{P,\mathfrak D}$ $P$-almost surely for every $P\in\cP$.  Relative
to a declared response family, a state is \emph{dynamically sufficient} if it
stably realizes every response; it is \emph{jointly sufficient} if it admits
a joint realization of the declared bundle.  A sufficient state $Z$ is
\emph{least} if
$Z\preceqsc S$ for every sufficient state $S$ in the stated sense.
\end{definition}

\begin{theorem}[Operational state-selection theorem]
\label{thm:descent-before-completion}
Let $\mathfrak R$ be a declared response program on
$(\mathsf U_0,\mathbb I)$.
\begin{enumerate}[label=\textup{(\roman*)},leftmargin=2.5em]
\item Every compatible primitive response factors uniquely through the
information quotient.  All algebraic, causal, restart, and uniformly continuous
postcomposition identities descend with it.
\item Let $\mathbf R^{\mathbb I}$ be the descended joint profile and put
\[
 u\sim_{\mathfrak R}v
 \quad\Longleftrightarrow\quad
 \mathbf R^{\mathbb I}(u)=\mathbf R^{\mathbb I}(v).
\]
The quotient by this response congruence carries the Hausdorff initial
uniformity generated by the responses.  The closure of its joint response image
in the product target is the completed response state, and every uniformly
continuous descended constructor extends uniquely to it.  If the program is
causally calibrated in the sense of
\Cref{def:causally-calibrated-response-program}, then the quotient and its
completion inherit the induced truncations, and every equivariant uniformly
continuous constructor extends as a stable-causal map.
\item If a required response fails information descent, no pointwise
realization on the full raw carrier that factors through $q_{\mathbb I}$ can
realize the program.  A probabilistic implementation relative to $\cP$ may
instead impose descent on the realized support; that is a support-relative
program rather than a realization of the full raw one.
\item The calibrated completion is unique in the following precise sense.  Every
complete faithful realization of the descended response presentation whose
uniformity is the initial uniformity generated by the extended responses is
uniquely uniformly isomorphic to the response completion; equivalently, no
second complete geometry can realize the same calibrated profile faithfully.
This is the deterministic uniqueness theorem
\Cref{thm:canonical-response-completion}.
\item Suppose in addition that a probabilistic implementation carries a
$\cP$-causal state $Z$ with stable-causal joint encoder
$\Phi:\mathsf Z\to\mathsf R$ whose value $\Phi(Z)$ is the declared joint
response under every $P\in\cP$, and a global stable-causal decoder
$D_{\mathfrak R}:\mathsf R\to\mathsf Z$ satisfying
$D_{\mathfrak R}\Phi=I_{\mathsf Z}$.  Then, for every $\cP$-causal state $S$,
joint realization of the completed package is equivalent to $Z\preceqsc S$,
and a joint readout $\Psi$ has the factor
\[
 R_S=D_{\mathfrak R}\Psi.
\]
If the decoder factors through one distinguished response,
$D_{\mathfrak R}=D_\star\Pi_\star$, and that response is
$\Pi_\star\Phi(Z)$, then a separate realization $\Psi_\star$ of it already
yields the factor $D_\star\Psi_\star$.
Thus $Z$ is the unique least sufficient realized state up to stable-causal
equivalence; the factor is unique on the realized source state.
\end{enumerate}
\end{theorem}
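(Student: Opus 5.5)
The proof goes clause by clause; each clause reduces to a universal property that is already available.

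For (i), I would define $R^{\mathbb I}(q_{\mathbb I}u):=R^{\rm raw}(u)$. Compatibility at $T$, with $\rho_T^{\mathsf Y}=I$, makes this well defined, and uniqueness follows from surjectivity of $q_{\mathbb I}$. Identities of the form $F(R_1^{\rm raw},\dots)=G(R_2^{\rm raw},\dots)$ hold on $\mathsf U_0$, so they descend pointwise, again because $q_{\mathbb I}$ is onto. For restart identities I use $\bar r_tq_{\mathbb I}=q_{\mathbb I}r_t$, which holds by definition of $\bar r_t$.

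For (ii), the joint profile $\mathbf R^{\mathbb I}$ is injective on the quotient by $\sim_{\mathfrak R}$. The initial uniformity is therefore the one pulled back from the product target through an injective map, so it is Hausdorff, and the quotient is uniformly isomorphic onto its image. The product of complete Hausdorff uniform spaces is complete, so the closure of the image is complete and contains the image densely. It is thus the Hausdorff completion. A uniformly continuous descended constructor transports to a uniformly continuous map on the dense image, and the standard extension theorem for uniformly continuous maps into complete targets extends it uniquely. The causal statement is exactly \Cref{lem:causal-calibrated-completion}, which I invoke directly.

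For (iii), I argue by contraposition. Suppose a pointwise realization $\Psi\circ q_{\mathbb I}$ equals the required response on all of $\mathsf U_0$. Then whenever $u\sim_{\mathbb I,t}v$, restriction stability gives $q_{\mathbb I}(r_tu)=q_{\mathbb I}(r_tv)$. This should force equality of the truncated responses, provided the realization respects truncation as a realization of a causal program must. That equality is descent, which contradicts the assumed failure. The support-relative remark then requires no proof. For (iv), I simply cite \Cref{thm:canonical-response-completion}. A direct sketch would run as follows: the identity on the common dense image extends in both directions by uniform continuity, and the two composites are identities on a dense set.

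The substantive step is (v), and I expect its "only if" direction to be the main obstacle. For "if", suppose $Z=R(S)$ almost surely with $R$ stable-causal. Then $\Psi:=\Phi R$ is stable-causal, since stable-causal maps compose: $\rho_t\Phi R=\rho_t\Phi\rho_tR=\rho_t\Phi\rho_tR\rho_t$. Moreover $\Psi(S)=\Phi(Z)$, which is the joint response $P$-a.s. For "only if", take a joint readout $\Psi$ and set $R_S:=D_{\mathfrak R}\Psi$. This map is continuous and stable-causal. The delicate point is checking $\rho_tD\Psi=\rho_tD\Psi\rho_t$, which needs stable causality of $D$ together with that of $\Psi$, chained as in the composite above. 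Next, $R_S(S)=D\Psi(S)=D\Phi(Z)=Z$ almost surely under every $P$, because $\Psi(S)=\Phi(Z)$ a.s. and $D\Phi=I$. The distinguished-response variant follows the same way: $D_\star\Psi_\star(S)=D_\star\Pi_\star\Phi(Z)=Z$ a.s. For uniqueness of the factor, any two factors $R,R'$ satisfy $R(S)=Z=R'(S)$ a.s., so they are identified by convention. Finally, leastness: every sufficient $S$ satisfies $Z\preceqsc S$, and if $Z'$ is another least state then $Z\preceqsc Z'\preceqsc Z$, so $Z\equivsc Z'$.
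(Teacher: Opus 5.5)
Your proposal is correct. For clauses (i), (ii), (iv) and (v) it follows the paper's argument: the quotient universal property, the injective profile embedding completed by closure (with \Cref{lem:causal-calibrated-completion} for the causal part), a citation of \Cref{thm:canonical-response-completion}, and the two assignments $\Psi\mapsto D_{\mathfrak R}\Psi$ and $R\mapsto\Phi R$. The paper additionally notes that these assignments are mutually inverse on the realized source, since $\Phi D_{\mathfrak R}\Psi(S)=\Phi(Z)=\Psi(S)$ quasi surely. Your identification-by-convention argument gives the uniqueness clause just as well. In your composition chain, add the closing step $\rho_t\Phi\rho_tR\rho_t=\rho_t\Phi R\rho_t$, which uses stable causality of $\Phi$ a second time.

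Clause (iii) is where your route genuinely differs, and yours is the more careful one. The paper argues that anything factoring through $q_{\mathbb I}$ is constant on $q_{\mathbb I}$-fibres, whereas a non-descending response is not. That reasoning only covers failure of the time-$T$ condition. Because $\mathscr I_t\subseteq\mathscr I_T$, one has $\sim_{\mathbb I,T}=\sim_{\mathbb I}$. However, a failure $u\sim_{\mathbb I,t}v$ with $\rho_tR(u)\ne\rho_tR(v)$ at some $t<T$ need not place two points in one global fibre.

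Your argument handles every $t$. Restriction stability together with monotonicity of $(\mathscr I_t)$ does give $r_tu\sim_{\mathbb I}r_tv$:
\begin{itemize}
\item for $f\in\mathscr I_s$ with $s\le t$, use $f=f\circ r_s$ and $\mathscr I_s\subseteq\mathscr I_t$;
\item for $s>t$, use $f\circ r_t\in\mathscr I_t$.
\end{itemize}
Then $\rho_t\Psi=\rho_t\Psi\bar r_t$ yields descent. The price is the hypothesis that the realizing readout on the quotient is stable-causal, which you only hedge with ``should''. State it outright, because it cannot be dropped. On $C([0,T];\mathbb R)$ with stopping information, $q_{\mathbb I}$ is the identity. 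The anticipating response $R(u)\equiv u_T$ (a constant path) fails descent at every $t<T$, yet it trivially factors through $q_{\mathbb I}$. So your version of (iii) is the one that actually proves the clause, with causality of the realization made explicit. That causality is implicit in the paper's notion of realization through stable-causal readouts.
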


\begin{proof}
Part~\textup{(i)} is the quotient universal property.  For~\textup{(ii)}, the
response quotient Hausdorffizes the initial pseudouniformity and its injective
profile embedding completes by closure.  For~\textup{(iii)}, every pointwise
state and readout factoring through $q_{\mathbb I}$ is constant on
$q_{\mathbb I}$-fibers, whereas a response that fails information descent is
not; hence no such full-carrier realization exists.  Part~\textup{(iv)} is the uniqueness
clause of \Cref{thm:canonical-response-completion}: completeness closes the
embedded image and density identifies it with the completed range.  For
part~\textup{(v)}, a joint readout $\Psi:\mathsf S\to\mathsf R$ yields the
explicit factor
$D_{\mathfrak R}\Psi:S\to Z$; conversely a factor $R:S\to Z$ yields the
joint readout $\Phi R$.  On realized source states the two assignments are
inverse, because $D_{\mathfrak R}\Phi=I$ and
$\Phi D_{\mathfrak R}\Psi(S)=\Phi(Z)=\Psi(S)$ quasi surely.  Hence the factor
is unique on the realized source.  Finally,
$D_{\mathfrak R}=D_\star\Pi_\star$ turns a separate distinguished-response
realization into the factor $D_\star\Psi_\star$.
\end{proof}

\begin{corollary}[Extension of a calibrated program]
\label{cor:calibrated-program-extension}
There are two distinct pre-completion obstructions.  If a required response
fails information descent, the information presentation or the declared
program must be refined.  If the response descends but is not uniformly
continuous for the chosen calibration, its inclusion requires a refined
calibration and a new completion.  A uniformly continuous derived coordinate,
on the other hand, extends to the existing completion.  A retained coordinate
can be compressed precisely when it is recovered continuously by a
stable-causal readout from the reduced completed state.
\end{corollary}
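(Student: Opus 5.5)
The corollary splits into four claims, and I would read each one off \Cref{thm:descent-before-completion} and \Cref{lem:causal-calibrated-completion}, checking them in turn.

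\emph{First obstruction.} Suppose a required response fails information descent. Then there are $u\sim_{\mathbb I,t}v$ with $\rho_tR^{\rm raw}(u)\ne\rho_tR^{\rm raw}(v)$. Part~(iii) of \Cref{thm:descent-before-completion} already shows that no realization factoring through $q_{\mathbb I}$ exists. The only remedies are therefore to enlarge $\mathscr I_t$, so that the relation $\sim_{\mathbb I,t}$ separates $u$ and $v$, or to change the declared response. Either way the information presentation or the program is refined.

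\emph{Second obstruction.} Suppose the response $R$ descends but is not uniformly continuous for the response-generated initial uniformity. Adjoining $R$ to the profile strictly refines that uniformity. By part~(iv) and \Cref{thm:canonical-response-completion}, the completion is determined up to unique uniform isomorphism by the calibrated profile. The old completion cannot carry $R$ as a uniformly continuous coordinate, since its restriction to the dense core would then be uniformly continuous, contrary to assumption. So $R$ forces a refined calibration and a new completion.

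\emph{Uniformly continuous derived coordinates.} This is the extension clause of part~(ii). A uniformly continuous map from the dense response quotient into a complete target extends uniquely to the completion. If it is equivariant, \Cref{lem:causal-calibrated-completion} makes the extension stable-causal.

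\emph{Compression criterion.} Suppose the reduced completed state $Z'$ has a stable-causal readout $R$ recovering the retained coordinate $c$. Then the full state and the state obtained from $Z'$ by adjoining $R(Z')$ are stable-causally equivalent, so $c$ is redundant. Conversely, suppose compression is possible, meaning the reduced state remains sufficient. By part~(v), with the decoder of the full package, sufficiency yields a stable-causal factor from $Z'$ onto the full state. Composing this factor with the coordinate projection gives the required readout of $c$. Hence compression holds exactly when such a readout exists.

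I expect the main obstacle to be the converse of the compression criterion. Here one must confirm that ``compressible'' is meant in the sense of least-state leastness, so that the lossless decoder hypothesis of part~(v) applies. If that hypothesis is unavailable, I would take the equivalence as definitional: compression means the reduced state still realizes $c$ stably, which is precisely the existence of the readout.
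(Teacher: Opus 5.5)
Your proposal is correct and is the argument the paper intends: the corollary is stated without a separate proof, as a direct consequence of \Cref{thm:descent-before-completion}\textup{(ii)--(iii)}, the fact that extension from a dense uniform subspace into a complete Hausdorff target is equivalent to uniform continuity, and the definition of a propagated coordinate. The same four mechanisms are spelled out in \Cref{cor:state-selection-trichotomy} and \Cref{thm:program-refinement-enhancement}.

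For the converse of the compression clause, you do not need the decoder of part~\textup{(v)}, which in any case only identifies the factor quasi surely on realized states. If a lossless readout of a package containing the coordinate factors through the compression, projecting that readout onto the coordinate already gives the recovery map. This is exactly how the proof of \Cref{cor:state-selection-trichotomy}\textup{(iv)} argues, and it coincides with your definitional fallback.
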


\begin{corollary}[State-selection alternatives under a fixed calibration]
\label{cor:state-selection-trichotomy}
Assume the response program is causally calibrated.  Let $C^\circ$ be a
candidate coordinate on the descended operational core, with values in a
complete Hausdorff causal uniform space and commuting there with the declared
truncations, and let
$\mathsf Z_{\mathfrak R}$ be the completion generated by the declared
calibrated response profile.  Then the following alternatives are exhaustive.
\begin{enumerate}[label=\textup{(\roman*)},leftmargin=2.5em]
\item If $C^\circ$ is not constant on response-equivalence classes, the
declared program identifies core inputs carrying different $C^\circ$-values.
The coordinate does not descend and is invisible to the declared program.
\item Suppose $C^\circ$ descends to the response quotient but is not uniformly
continuous for the response-generated uniformity.  Then the present
calibration does not control that coordinate.  Retaining it requires adjoining
its uniform structure and recompleting with the refined calibration.  In a
metrizable presentation,
failure is witnessed by pairs $u_n,v_n$ with response distance tending to zero
but with the $C^\circ$-distance bounded away from zero.
\item Suppose $C^\circ$ is uniformly continuous.  It then extends uniquely to
a uniformly continuous stable-causal coordinate
$C:\mathsf Z_{\mathfrak R}\to\mathsf C$.  Relative to a proposed
stable-causal compression $\pi:\mathsf Z_{\mathfrak R}\to\mathsf Y$, the
coordinate is propagated exactly when there is a stable-causal
$\widetilde C:\mathsf Y\to\mathsf C$ such that
$C=\widetilde C\circ\pi$.
\item If the extension in \textup{(iii)} exists but no such factorization
through $\pi$ exists, then $\pi$ is not lossless for any response package
containing $C$.  In particular, a coordinate of the least state may be removed
through $\pi$ only when the whole least state admits stable-causal
recovery from $\pi(\mathsf Z_{\mathfrak R})$.
\end{enumerate}
The same alternatives apply to a tangent or derivative coordinate after its
scale and response calibration have been declared.
\end{corollary}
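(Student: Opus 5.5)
The plan is to reduce every alternative to the universal properties already recorded in \Cref{thm:descent-before-completion}\textup{(ii)} and \Cref{lem:causal-calibrated-completion}. I then sort the candidate $C^\circ$ by two yes/no questions: is it constant on response classes, and, if so, is its descended version uniformly continuous for the initial uniformity generated by the calibrated profile. These two questions give a complete case split, so exhaustiveness is automatic once each branch is identified with the corresponding clause. Clauses (iii) and (iv) subdivide the last branch according to whether the extension factors through $\pi$.

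For \textup{(i)}, non-constancy means there are core inputs $u\sim_{\mathfrak R}v$ with $C^\circ(u)\ne C^\circ(v)$. Every readout defined on the response quotient, or on its completion, is constant on these classes. Hence no such readout can equal $C^\circ$, which is the invisibility statement (the same quotient argument as in part \textup{(iii)} of the theorem).

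For \textup{(ii)}, the descended map $\bar C$ is defined on the Hausdorff quotient but fails uniform continuity for the initial uniformity. By definition this means some entourage of the target has no pulled-back response entourage inside it. In the metrizable case, negating the $\varepsilon$--$\delta$ form produces $u_n,v_n$ with response distance tending to zero and $C^\circ$-distance at least $\varepsilon_0$. Such Cauchy-equivalent pairs have the same limit in $\mathsf Z_{\mathfrak R}$ but carry separated values. Thus no continuous extension to the completion exists, and adjoining $C$ forces the join of the two uniformities and a new completion, as in \Cref{cor:calibrated-program-extension}.

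For \textup{(iii)}, uniform continuity plus completeness of $\mathsf C$ gives the unique uniformly continuous extension via \Cref{thm:descent-before-completion}\textup{(ii)}. Commutation with truncations on the dense core then passes to the closure by \Cref{lem:causal-calibrated-completion}, giving the stable-causal identity \eqref{eq:stable-causal-readout}. The propagation criterion is a definition, so only its consistency needs checking. If $C=\widetilde C\circ\pi$ with $\widetilde C$ stable-causal, then $C$ is recovered from $\pi$. Conversely, recovery by a stable-causal readout is exactly such a factorization.

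For \textup{(iv)}, I argue by contradiction. Suppose $\pi$ were lossless for a package containing $C$, meaning a stable-causal readout from $\mathsf Y$ realizes the package. Composing with the coordinate projection onto $C$ would produce a factorization $\widetilde C$, contradicting the hypothesis. Applying this to the identity coordinate of the least state shows that removal is possible only if the whole state is recovered from $\pi(\mathsf Z_{\mathfrak R})$. For the tangent remark, once scale and calibration are declared, a derivative coordinate is just another candidate $C^\circ$, so the same argument applies verbatim.

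The main point needing care is \textup{(ii)}: showing that failure of uniform continuity genuinely obstructs any extension to the existing completion, not merely this particular formula. The witness-pair argument, which uses Hausdorffness of $\mathsf C$, settles it in the metrizable case. In the general uniform case I would use nets in place of sequences.
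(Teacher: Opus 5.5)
Your case split and your treatment of (i), (iii), (iv) and the tangent remark follow the paper's proof essentially verbatim. That means constancy on response classes for (i), and extension by completion with equivariance passed to the closure by \Cref{lem:causal-calibrated-completion} for (iii). For (iv) it is projection of a putative lossless readout onto the $C$-coordinate, applied to a full decoder for the least-state clause.

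The gap is in (ii), at exactly the step you single out as the crux. Pairs $u_n,v_n$ with response distance tending to zero need not be Cauchy. So they need not converge to a common point of $\mathsf Z_{\mathfrak R}$, and failure of uniform continuity does not obstruct a merely continuous extension. For example, take the carrier $\mathbb R$ with trivial truncations and the single response $R(x)=x$, so that $\mathsf Z_{\mathfrak R}=\mathbb R$, and let $C^\circ(x)=x^2$. The pairs $u_n=n$, $v_n=n+1/n$ witness non-uniform continuity, yet $C^\circ$ is already a continuous function on the complete state. More generally, a descended coordinate that sends Cauchy sequences to Cauchy sequences extends continuously even when it is not uniformly continuous. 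Your claim that no continuous extension to the completion exists is therefore false, and passing to nets will not repair it.

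What (ii) asserts, and what the paper proves, concerns the calibrated uniform structure. The coordinate has no \emph{uniformly continuous} extension to $\mathsf Z_{\mathfrak R}$. Adjoining it strictly refines the response-generated initial uniformity, so the canonical map between the old and refined completions is not a uniform isomorphism (compare \Cref{thm:program-refinement-enhancement}\textup{(iii)--(iv)}). This needs no witness-pair argument. If $C$ were a uniformly continuous extension, its restriction to the dense image of the core would make $C^\circ$ uniformly continuous for the response uniformity. The pairs $u_n,v_n$ are just the metric negation of uniform continuity, which is all the statement claims of them. Drop the ``same limit'' step and state the obstruction for uniformly continuous extensions only.
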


\begin{proof}
Descent is constancy on response classes, proving~\textup{(i)}.  On the
quotient, uniform completion extends a map into a complete Hausdorff target
exactly when it is uniformly continuous on the dense core; its metric failure
criterion gives the pairs in~\textup{(ii)}.  Equivariance is preserved by
\Cref{lem:causal-calibrated-completion}, and propagation in~\textup{(iii)} is
exactly factorization through $\pi$.  If~\textup{(iv)} failed, projecting a
putative lossless readout onto $C$ would supply the excluded factorization; the
same argument applied to a full decoder proves the least-state assertion.
\end{proof}

\begin{theorem}[Program refinement and the enhancement criterion]
\label{thm:program-refinement-enhancement}
Let $\mathfrak R$ be a causally calibrated response program on a descended
operational core $X$, and let $\mathfrak R^+$ be obtained by adjoining a family
of causally equivariant responses
\[
 S_\beta^\circ:X\longrightarrow\mathsf T_\beta,
 \qquad \beta\in B,
\]
into complete Hausdorff causal uniform spaces.  Write
$\mathsf Z_{\mathfrak R}$ and $\mathsf Z_{\mathfrak R^+}$ for the corresponding
response completions.
\begin{enumerate}[label=\textup{(\roman*)},leftmargin=2.5em]
\item If some $S_\beta^\circ$ is not constant on
$\mathfrak R$-response classes, then the enlarged program refines the response
quotient before any topology is completed.
\item Suppose every $S_\beta^\circ$ descends to the
$\mathfrak R$-response quotient.  Then the enlarged and original response
congruences coincide, and the identity on the common dense quotient is
uniformly continuous from the refined response uniformity to the original one.
It therefore extends uniquely to a stable-causal map
\begin{equation}\label{eq:program-refinement-canonical-map}
 \Pi_{\mathfrak R^+,\mathfrak R}:
 \mathsf Z_{\mathfrak R^+}\longrightarrow\mathsf Z_{\mathfrak R}
\end{equation}
with dense range.
\item Under the hypothesis of~\textup{(ii)}, the following are equivalent:
\begin{enumerate}[label=\textup{(\alph*)},leftmargin=2.5em]
\item every added response $S_\beta^\circ$ is uniformly continuous for the
$\mathfrak R$-generated uniformity;
\item the original and enlarged response-generated uniformities coincide;
\item the map in \eqref{eq:program-refinement-canonical-map} is the unique
uniform isomorphism of the two completions, and every $S_\beta^\circ$ extends
uniquely as a stable-causal readout of $\mathsf Z_{\mathfrak R}$.
\end{enumerate}
\item If one descended added response is not uniformly continuous for the old
calibration, it has no uniformly continuous extension to
$\mathsf Z_{\mathfrak R}$ as a response of the declared calibrated program.
Retaining it requires the refined completion $\mathsf Z_{\mathfrak R^+}$.
In a metrizable presentation the failure is witnessed by pairs $x_n,y_n$ whose
old response distance tends to zero while the added-response distance stays
bounded away from zero.
\end{enumerate}
\end{theorem}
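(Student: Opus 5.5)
The plan is to work entirely on the descended operational core $X$, with the completions described as in \Cref{thm:descent-before-completion}(ii). On the response quotient the original profile $\mathbf R$ and the enlarged profile $\mathbf R^+=(\mathbf R,(S_\beta^\circ)_\beta)$ generate initial uniformities, $\mathcal U_{\mathfrak R}$ and $\mathcal U_{\mathfrak R^+}$. Since $\mathbf R^+$ contains $\mathbf R$ as a coordinate projection, $\mathcal U_{\mathfrak R^+}$ is finer than $\mathcal U_{\mathfrak R}$ on any common carrier. Every clause will be reduced to this comparison of two initial uniformities, together with the extension statement of \Cref{lem:causal-calibrated-completion}.

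For~(i), suppose $S_\beta^\circ(u)\ne S_\beta^\circ(v)$ while $\mathbf R(u)=\mathbf R(v)$. Then $\sim_{\mathfrak R^+}$ is strictly finer than $\sim_{\mathfrak R}$. This is already visible at the level of quotients, before any uniformity enters, exactly as in \Cref{cor:state-selection-trichotomy}(i).

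For~(ii), descent means $\mathbf R(u)=\mathbf R(v)$ implies $\mathbf R^+(u)=\mathbf R^+(v)$, so the two congruences coincide and both quotients are the same set $X/\!\sim$. The identity map $(X/\!\sim,\mathcal U_{\mathfrak R^+})\to(X/\!\sim,\mathcal U_{\mathfrak R})$ is uniformly continuous because the finer initial uniformity dominates the coarser one. The map also commutes with the induced truncations, since both are $\bar r_t$ on the quotient. \Cref{lem:causal-calibrated-completion} then extends it uniquely to a stable-causal map $\Pi_{\mathfrak R^+,\mathfrak R}$. Its range contains the dense image of the core, so it has dense range.

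For~(iii), the implications run as follows.
\begin{itemize}
\item (a)$\Rightarrow$(b): if each $S_\beta^\circ$ is $\mathcal U_{\mathfrak R}$-uniformly continuous, then $\mathcal U_{\mathfrak R}$ makes every coordinate of $\mathbf R^+$ uniformly continuous. By the defining minimality of the initial uniformity, $\mathcal U_{\mathfrak R^+}\subseteq\mathcal U_{\mathfrak R}$, and hence the two uniformities are equal.
\item (b)$\Rightarrow$(c): the identity on the core is now a uniform isomorphism. Its extension and the extension of its inverse compose to extensions of the identity, so by uniqueness of extension on dense sets they are mutually inverse. This makes $\Pi$ the unique uniform isomorphism restricting to the identity on the core. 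Each $S_\beta^\circ$ is a coordinate of the $\mathfrak R^+$-profile, so it is uniformly continuous and causally equivariant. It therefore extends to $\mathsf Z_{\mathfrak R^+}\cong\mathsf Z_{\mathfrak R}$ by \Cref{lem:causal-calibrated-completion}.
\item (c)$\Rightarrow$(a): restricting the extended readout to the dense core returns $S_\beta^\circ$. A restriction of a uniformly continuous map is uniformly continuous, which gives (a).
\end{itemize}
The one point needing care is what "unique" means in (c). I would state it as uniqueness among uniform isomorphisms that restrict to the identity on the common dense quotient, since that is exactly what density-plus-continuity delivers.

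For~(iv), a uniformly continuous extension to $\mathsf Z_{\mathfrak R}$ would restrict to a uniformly continuous map on the core, contradicting the hypothesis. Hence, by (iii), the completions differ, and retaining the response forces $\mathsf Z_{\mathfrak R^+}$. In the metrizable case, the uniformity $\mathcal U_{\mathfrak R}$ is generated by countably many pseudometrics, and the negation of uniform continuity produces an $\varepsilon>0$ and pairs $x_n,y_n$ with old distance tending to zero but $d_{\mathsf T_\beta}(S_\beta^\circ x_n,S_\beta^\circ y_n)\ge\varepsilon$. This is the same criterion as in \Cref{cor:state-selection-trichotomy}(ii).

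I expect no real obstacle. The only subtle step is the minimality argument in (a)$\Rightarrow$(b) when $B$ is infinite. There I would use that the initial uniformity of a family is generated by finite intersections of pulled-back entourages, so it suffices to check one $\beta$ at a time.
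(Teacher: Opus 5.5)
Your proposal is correct and follows essentially the same route as the paper: both compare the two initial uniformities on the common response quotient and extend the identity by \Cref{lem:causal-calibrated-completion}, then use minimality of the initial uniformity for (a)$\Rightarrow$(b), uniqueness of completion for (c), and the equivalence between extendability to the completion and uniform continuity, together with the metric negation, for (iv). Your explicit (c)$\Rightarrow$(a) step by restriction to the core fills in what the paper calls ``immediate,'' as does your reading of ``unique'' in (c) as uniqueness among uniform isomorphisms fixing the dense quotient.
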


\begin{proof}
Part~\textup{(i)} is the response-congruence definition.  Under~\textup{(ii)},
the enlarged initial uniformity is finer, so the identity to the old quotient
extends by completion to \eqref{eq:program-refinement-canonical-map}; its range
is dense because it contains the common core.  Corewise intertwining of
truncations and \Cref{lem:causal-calibrated-completion} make it stable-causal.

If all additions are old-uniformly continuous, minimality of the initial
uniformity makes the two uniformities equal; the converse is immediate.
Uniqueness of completion and causal equivariance give the isomorphism and
extended readouts in~\textup{(iii)}.  Finally, extension from a dense uniform
subspace to a complete Hausdorff target is equivalent to uniform continuity;
the metric failure criterion proves~\textup{(iv)} and its witness.
\end{proof}

\begin{remark}[Examples of program refinement]
Classical It\^o integration and Brownian $L^2$ BSDEs use
probability-calibrated energy completions on which the relevant stochastic
responses already extend.  A L\'evy-area response, in contrast, is not
continuous for the uniform-path topology and leads to the refined rough-path
completion.  Program refinement is therefore governed by continuity of the
added response in the existing calibration.
\end{remark}

\begin{corollary}[Approximation invariance after descent]
\label{cor:descent-proof-economy}
Once the program has descended, representative independence is settled.
Cauchy estimates, compactness, completion, and universal-limit arguments may be
carried out on the response classes, and continuous descended constructors take
the same value on any two approximation schemes converging to the same class.
\end{corollary}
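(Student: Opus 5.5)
The plan is to reduce both assertions of \Cref{cor:descent-proof-economy} to factorization through the response congruence, followed by the uniform-completion extension of \Cref{thm:descent-before-completion}\textup{(ii)}.

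\emph{Representative independence.} By \Cref{thm:descent-before-completion}\textup{(i)}, every compatible primitive response and every declared constructor factors uniquely through $q_{\mathbb I}$. By the definition of $\sim_{\mathfrak R}$, the descended joint profile $\mathbf R^{\mathbb I}$ is then constant on response classes. Any quantity built from the program therefore depends only on the class $[u]$, not on the raw representative $u$. This settles representative independence once and for all, so no later argument has to re-verify it.

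\emph{Transfer of limit arguments.} The quotient carries the initial uniformity generated by the responses, and by \Cref{thm:descent-before-completion}\textup{(ii)} it embeds densely into its completion, namely the closure of the joint response image. A Cauchy estimate on raw approximants is, by definition of the initial uniformity, a Cauchy estimate on the response profiles. Such an estimate therefore produces a Cauchy net in the quotient, which converges to a unique point of the completion. The same holds for compactness, since total boundedness is also a statement about the initial uniformity. Universal-limit arguments go through in the same way, because limits in the completion are determined by the images of the profiles. None of these steps refers back to the raw carrier.

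\emph{Scheme independence.} Let $(u_n)$ and $(v_n)$ be two approximation schemes whose classes converge to the same point $z$ of the completion. Let $C$ be a uniformly continuous descended constructor. By \Cref{thm:descent-before-completion}\textup{(ii)}, or by \Cref{lem:causal-calibrated-completion} in the causally calibrated case, $C$ has a unique continuous extension $\bar C$ to the completion. Continuity of $\bar C$ and the Hausdorff property of the target then give
\[
\lim_n C([u_n]) = \bar C(z) = \lim_n C([v_n]).
\]
If the target carries truncations, the extension is stable-causal by \Cref{lem:causal-calibrated-completion}, so truncated values agree as well.

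The only delicate point is what ``converging to the same class'' means. It has to be read as convergence in the completed response state, not merely equality of limiting raw objects. The statement will be interpreted in this sense, and this is consistent with \Cref{cor:state-selection-trichotomy}\textup{(ii)}: a constructor that is not uniformly continuous may separate schemes with the same response limit, so it is excluded by the hypothesis that $C$ is uniformly continuous.
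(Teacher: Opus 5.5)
Your proposal is correct and follows the reasoning the paper leaves implicit: the corollary is stated without proof as an immediate consequence of \Cref{thm:descent-before-completion}\textup{(i)}--\textup{(ii)}, namely factorization through the response congruence, unique extension of uniformly continuous descended constructors to the completed response state, and uniqueness of limits in a Hausdorff target. You are also right to read ``continuous descended constructors'' as constructors that are uniformly continuous on the quotient, and hence extend continuously to the completion, since continuity on the dense quotient alone would not force two schemes with the same completed limit to give the same value.
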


\begin{remark}[Separation of reductions]
\label{rem:separation-of-reductions}
The following operations are distinct.
\begin{enumerate}[label=\textup{(\roman*)},leftmargin=2.5em]
\item Information identification compares histories through $\mathbb I$.
\item Response congruence identifies information classes not separated by the
program and determines the complete state geometry.
\item Probabilistic version identification compares realized objects modulo a
null ideal; it creates neither of the preceding quotients.
\item Stable compression removes retained data only through continuous
stable-causal factorization.
\item Jet extension adds a scaled infinitesimal response coordinate and is not an
enlargement of the base filtration.
\end{enumerate}
\end{remark}

\begin{proposition}[Lossless-chart factorization]
\label{prop:lossless-chart-factorization}
Let $(\mathsf Z,Z)$ be a $\cP$-causal state.  Let
\[
 \mathcal O:\mathsf Z\to\mathsf R,
 \qquad
 \mathcal D:\mathsf R\to\mathsf Z
\]
be stable-causal maps between causal Polish spaces such that
\begin{equation}\label{eq:lossless-chart-left-inverse}
 \mathcal D\mathcal O=I_{\mathsf Z}.
\end{equation}
Then $\mathcal O$ is a homeomorphism onto the closed retract
\begin{equation}\label{eq:lossless-chart-closed-retract}
 \mathcal O(\mathsf Z)=\operatorname{Fix}(\mathcal O\mathcal D)
 \subseteq\mathsf R.
\end{equation}
This retract is generated by the stable-causal idempotent
$\mathcal O\mathcal D$.
For a $\cP$-causal source state $(\mathsf S,S)$, a joint realization of the
chart is a stable-causal map $\Psi:\mathsf S\to\mathsf R$ satisfying
$\Psi(S)=\mathcal O(Z)$ $P$-almost surely for every $P\in\cP$.  Then:
\begin{enumerate}[label=\textup{(\roman*)},leftmargin=2.5em]
\item Every joint realization yields the stable-causal factor
$\mathcal D\Psi:S\to Z$, and every stable-causal factor $R:S\to Z$ yields
the joint realization $\mathcal O R$.  Thus
\begin{equation}\label{eq:lossless-chart-factor-bijection}
 \Psi\longmapsto\mathcal D\Psi,
 \qquad
 R\longmapsto\mathcal O R
\end{equation}
are inverse after maps agreeing on the realized source state under every law
are identified.  In particular, the source realizes the lossless chart if and
only if $Z\preceqsc S$, and the factor is unique on the realized source.

\item Every stable-causal readout
$F:\mathsf Z\to\mathsf T$ factors through the chart on its image:
\begin{equation}\label{eq:derived-through-lossless-chart}
 F=(F\mathcal D)\mathcal O.
\end{equation}
The factor $F\mathcal D$ is unique on $\mathcal O(\mathsf Z)$.  If in addition
$\mathcal O\mathcal D=I_{\mathsf R}$, it is the unique factor on all of
$\mathsf R$.

\item Suppose a stable-causal map
$\pi:\mathsf Z\to\mathsf Y$ preserves the full chart in the sense that
$\mathcal O=\Psi\pi$ for a stable-causal
$\Psi:\mathsf Y\to\mathsf R$.  Then
\begin{equation}\label{eq:lossless-compression-left-inverse}
 L:=\mathcal D\Psi
 \quad\text{satisfies}\quad
 L\pi=I_{\mathsf Z}.
\end{equation}
Consequently $\pi$ is a homeomorphism onto a closed retract of $\mathsf Y$
generated by the stable-causal idempotent $P:=\pi L$, and
\[
 \pi(\mathsf Z)=\operatorname{Fix}(P).
\]
If $\pi$ is surjective, then $\pi$ is a stable-causal homeomorphism with
inverse $L$.
\end{enumerate}
\end{proposition}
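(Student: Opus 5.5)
The argument is elementary retract topology followed by bookkeeping with stable-causal maps; it uses only continuity, the identity \eqref{eq:lossless-chart-left-inverse}, and the fact that compositions of stable-causal maps are stable-causal. I would first record that last fact. If $F:\mathsf S\to\mathsf T$ and $G:\mathsf T\to\mathsf U$ are stable-causal, then
\[
\rho_t^{\mathsf U}GF=\rho_t^{\mathsf U}G\rho_t^{\mathsf T}F=\rho_t^{\mathsf U}G\rho_t^{\mathsf T}F\rho_t^{\mathsf S}=\rho_t^{\mathsf U}GF\rho_t^{\mathsf S},
\]
where the first and last steps use \eqref{eq:stable-causal-readout} for $G$, and the middle step applies it to $F$ inside. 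It follows that $\mathcal O\mathcal D$, $\mathcal D\Psi$, $\mathcal O R$, $F\mathcal D$ and $\pi L$ are all stable-causal.

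\textbf{The chart and its image.} Put $E:=\mathcal O\mathcal D$. Then $E^2=\mathcal O(\mathcal D\mathcal O)\mathcal D=E$, so $E$ is a continuous idempotent. Its fixed set $\operatorname{Fix}(E)$ is closed, because $\mathsf R$ is Hausdorff and $\operatorname{Fix}(E)$ is the equalizer of $E$ and $I$.

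For the identity \eqref{eq:lossless-chart-closed-retract}, take $z\in\mathsf Z$. Then $E\mathcal O z=\mathcal O\mathcal D\mathcal O z=\mathcal O z$, so $\mathcal O(\mathsf Z)\subseteq\operatorname{Fix}(E)$. Conversely, if $Er=r$ then $r=\mathcal O(\mathcal D r)$, which gives the reverse inclusion.

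The map $\mathcal O$ is injective with continuous left inverse $\mathcal D$. Hence $\mathcal D|_{\mathcal O(\mathsf Z)}$ is a continuous inverse of $\mathcal O$ onto its image, and $\mathcal O$ is a homeomorphism onto that image.

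\textbf{Part (i).} Let $\Psi$ be a joint realization, so $\Psi(S)=\mathcal O(Z)$ $P$-a.s. for every $P\in\cP$. Applying $\mathcal D$ gives $\mathcal D\Psi(S)=\mathcal D\mathcal O(Z)=Z$ $P$-a.s.; since $\mathcal D\Psi$ is stable-causal, this yields $Z\preceqsc S$. Conversely, a factor $R$ with $R(S)=Z$ a.s. gives $\mathcal O R(S)=\mathcal O(Z)$ a.s., so $\mathcal O R$ is a joint realization.

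The two assignments are inverse modulo the stated identification. On one side, $\mathcal D\mathcal O R(S)=R(S)$ holds identically. On the other, $\mathcal O\mathcal D\Psi(S)=\mathcal O(Z)=\Psi(S)$ holds a.s. Uniqueness of the factor follows from the same computation: any two factors $R,R'$ satisfy $R(S)=Z=R'(S)$ a.s.

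\textbf{Part (ii).} The factorization \eqref{eq:derived-through-lossless-chart} is just $F\mathcal D\mathcal O=F$.

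For uniqueness on the image, suppose $G\mathcal O=F$. For $r=\mathcal O z$ in the image, $G(r)=F(z)=F\mathcal D\mathcal O z=F\mathcal D(r)$, so $G$ and $F\mathcal D$ agree on $\mathcal O(\mathsf Z)$. If moreover $\mathcal O\mathcal D=I_{\mathsf R}$, then $\mathcal O$ is surjective, and the image is all of $\mathsf R$.

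\textbf{Part (iii).} Define $L:=\mathcal D\Psi$. Then $L\pi=\mathcal D\Psi\pi=\mathcal D\mathcal O=I_{\mathsf Z}$, which is \eqref{eq:lossless-compression-left-inverse}. Now rerun the first step with the pair $(\pi,L)$ in place of $(\mathcal O,\mathcal D)$. This shows that $P=\pi L$ is a stable-causal idempotent, that $\pi(\mathsf Z)=\operatorname{Fix}(P)$ is closed, and that $\pi$ is a homeomorphism onto it. If $\pi$ is surjective, then $\operatorname{Fix}(P)=\mathsf Y$, so $P=I$ and $L$ is a two-sided inverse of $\pi$. Since $L$ is stable-causal, $\pi$ is a stable-causal homeomorphism.

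There is no real obstacle. The only points needing care are checking stable causality of composites, which is the chain of identities above, and keeping track of which equalities are pointwise and which hold only $P$-a.s. The a.s. qualifications arise only in (i). The Polish hypotheses are needed only to keep the retracts and factors inside the category of causal Polish states; closed subsets of Polish spaces are Polish.
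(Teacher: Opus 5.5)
Your proposal is correct and follows essentially the same argument as the paper. The left inverse gives injectivity and a continuous inverse on the image, and $\operatorname{Fix}(\mathcal O\mathcal D)$ is closed because $\mathsf R$ is Hausdorff. The factor bijection is $\Psi\mapsto\mathcal D\Psi$, $R\mapsto\mathcal O R$, and part (iii) repeats the retract argument for the pair $(\pi,L)$. Your only addition is the explicit check that composites of stable-causal maps are stable-causal, which the paper uses without comment.
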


\begin{proof}
The left inverse makes $\mathcal O$ injective with continuous inverse
$\mathcal D|_{\mathcal O(\mathsf Z)}$.  The stable-causal idempotent
$\mathcal O\mathcal D$ has fixed set $\mathcal O(\mathsf Z)$, closed because
$\mathsf R$ is Hausdorff, proving
\eqref{eq:lossless-chart-closed-retract}.

The two maps in \eqref{eq:lossless-chart-factor-bijection} are well defined
because $\Psi(S)=\mathcal O(Z)$ implies $\mathcal D\Psi(S)=Z$, while $R(S)=Z$
implies $\mathcal OR(S)=\mathcal O(Z)$.  Their composites agree with the
original maps on the realized source because $\mathcal D\mathcal O=I$ and
$\mathcal O\mathcal D\Psi(S)=\mathcal O(Z)=\Psi(S)$ under every law.

Equation \eqref{eq:derived-through-lossless-chart} is
$F\mathcal D\mathcal O=F$.  If $G\mathcal O=F$, then for
$r=\mathcal O(z)$ one has
$G(r)=F(z)=F\mathcal D(r)$, proving uniqueness on the chart image and hence
on all of $\mathsf R$ when the chart is onto.

Finally, $L\pi=\mathcal D\Psi\pi=I$, so $L$ inverts $\pi$ on its image.
The stable-causal idempotent $P=\pi L$ has fixed set
$\pi(\mathsf Z)$, which is closed in Hausdorff $\mathsf Y$; surjectivity makes
this image all of $\mathsf Y$.
\end{proof}

\begin{corollary}[Equivalence of lossless response charts]
\label{cor:canonical-equivalence-lossless-charts}
Let, for $i=1,2$,
\[
 \mathcal O_i:\mathsf Z\to\mathsf R_i,
 \qquad
 \mathcal D_i:\mathsf R_i\to\mathsf Z
\]
be stable-causal maps with
$\mathcal D_i\mathcal O_i=I_{\mathsf Z}$.  Then
\begin{equation}\label{eq:canonical-lossless-chart-transports}
 T_{21}:=\mathcal O_2\mathcal D_1:\mathsf R_1\to\mathsf R_2,
 \qquad
 T_{12}:=\mathcal O_1\mathcal D_2:\mathsf R_2\to\mathsf R_1
\end{equation}
restrict to mutually inverse homeomorphisms
\[
 \mathcal O_1(\mathsf Z)\cong\mathcal O_2(\mathsf Z).
\]
These homeomorphisms are the restrictions of the ambient stable-causal maps
$T_{21}$ and $T_{12}$.
They are the unique transports on the chart images that intertwine the two
encoders:
\[
 T_{21}\mathcal O_1=\mathcal O_2,
 \qquad
 T_{12}\mathcal O_2=\mathcal O_1.
\]
If both charts are onto, $T_{21}$ and $T_{12}$ are inverse stable-causal
homeomorphisms of the full response targets.  Thus two lossless presentations
of one completed state differ only by the explicit encoder--decoder change of
coordinates in \eqref{eq:canonical-lossless-chart-transports}.
\end{corollary}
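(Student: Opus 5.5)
The argument is a direct computation with the two left-inverse identities $\mathcal D_1\mathcal O_1=I_{\mathsf Z}$ and $\mathcal D_2\mathcal O_2=I_{\mathsf Z}$, supplemented by \Cref{prop:lossless-chart-factorization} for the topological facts about chart images.

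\emph{Stable causality and intertwining.} First I check that $T_{21}=\mathcal O_2\mathcal D_1$ and $T_{12}=\mathcal O_1\mathcal D_2$ are stable-causal. Continuity is clear, and \eqref{eq:stable-causal-readout} is preserved under composition:
\[
\rho_t^{\mathsf R_2}\mathcal O_2\mathcal D_1
=\rho_t^{\mathsf R_2}\mathcal O_2\rho_t^{\mathsf Z}\mathcal D_1
=\rho_t^{\mathsf R_2}\mathcal O_2\rho_t^{\mathsf Z}\mathcal D_1\rho_t^{\mathsf R_1}
=\rho_t^{\mathsf R_2}\mathcal O_2\mathcal D_1\rho_t^{\mathsf R_1}.
\]
Here the first and third equalities use stable causality of $\mathcal O_2$, and the middle one uses stable causality of $\mathcal D_1$. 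The intertwining identities are immediate: $T_{21}\mathcal O_1=\mathcal O_2\mathcal D_1\mathcal O_1=\mathcal O_2$, and symmetrically $T_{12}\mathcal O_2=\mathcal O_1$.

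\emph{Restriction to the images.} The intertwining shows that $T_{21}$ maps $\mathcal O_1(\mathsf Z)$ onto $\mathcal O_2(\mathsf Z)$, and likewise $T_{12}$ maps $\mathcal O_2(\mathsf Z)$ onto $\mathcal O_1(\mathsf Z)$. For $r=\mathcal O_1(z)$ we get $T_{12}T_{21}r=T_{12}\mathcal O_2 z=\mathcal O_1 z=r$, and similarly in the other order, so the restrictions are mutually inverse bijections. Both restrictions are continuous, so they are homeomorphisms. By \Cref{prop:lossless-chart-factorization} these images are closed retracts, but closedness is not needed for the homeomorphism claim.

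\emph{Uniqueness.} Suppose $T$ is any map on $\mathcal O_1(\mathsf Z)$ with $T\mathcal O_1=\mathcal O_2$. Then $T(\mathcal O_1 z)=\mathcal O_2 z=T_{21}(\mathcal O_1 z)$ for every $z$, so $T=T_{21}$ on $\mathcal O_1(\mathsf Z)$. This is the uniqueness clause of \Cref{prop:lossless-chart-factorization}(ii) applied to $F=\mathcal O_2$, and the symmetric argument handles $T_{12}$.

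\emph{Surjective charts.} If both charts are onto, then for every $r\in\mathsf R_1$ we have $r=\mathcal O_1 z$ with $z=\mathcal D_1 r$, so $\mathcal O_1\mathcal D_1=I_{\mathsf R_1}$, and likewise $\mathcal O_2\mathcal D_2=I_{\mathsf R_2}$. Hence
\[
T_{12}T_{21}=\mathcal O_1\mathcal D_2\mathcal O_2\mathcal D_1=\mathcal O_1\mathcal D_1=I_{\mathsf R_1},
\]
and symmetrically $T_{21}T_{12}=I_{\mathsf R_2}$. So $T_{21}$ and $T_{12}$ are inverse stable-causal homeomorphisms of the full response targets.

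There is no real obstacle. The only point that needs care is stable causality of the composites, which the displayed chain of equalities settles.
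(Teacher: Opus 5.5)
Your proof is correct and follows the paper's argument: the composite identity $T_{12}T_{21}\mathcal O_1=\mathcal O_1$ (and its symmetric counterpart), intertwining from $\mathcal D_i\mathcal O_i=I_{\mathsf Z}$, uniqueness on the chart images via the factorization clause, and surjectivity to pass to the full targets. Your explicit check that the composites satisfy \eqref{eq:stable-causal-readout}, and your derivation of $\mathcal O_i\mathcal D_i=I_{\mathsf R_i}$ in the onto case, fill in steps the paper only asserts.
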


\begin{proof}
On $\mathcal O_1(\mathsf Z)$,
\[
 T_{12}T_{21}\mathcal O_1
 =\mathcal O_1\mathcal D_2\mathcal O_2\mathcal D_1\mathcal O_1
 =\mathcal O_1,
\]
and the symmetric identity holds on $\mathcal O_2(\mathsf Z)$.  The
intertwining formulas follow from $\mathcal D_i\mathcal O_i=I$.  Uniqueness on
the chart images is the uniqueness clause of
\Cref{prop:lossless-chart-factorization}\textup{(ii)}.  Surjectivity identifies
each chart image with its full target.
\end{proof}
\medskip

\subsection{Probability as calibration, realization, and version semantics}

\begin{remark}[The roles of probability]
\label{rem:roles-of-probability}
A law family does not define causal information.  Canonical experiments may
calibrate responses as in \Cref{sec:quadratic-response-specification}; after
selection, a raw construction realizes the state.  Laws then supply polar
sets, version equality, stochastic semantics, and conditional expectations,
while
simultaneous flow or cocycle laws additionally require parameter-stable
perfection.
\end{remark}

\subsection{The degree-two operational test program}

The principal program combines full stopped-path information with restarted
integration, area/covariance probes, and restart composition.  Resolvent defect
is transient finite-scale memory; the later covariance-energy quotient is
tangent-response generated, and quasi-sure equality enters only after
realization.

\section{The least quadratic It\^o state}
\label{sec:intrinsic-state}

For the principal degree-two It\^o program, descent and completion retain
$(X,A,Q)$.  Brownian response magnitudes select its anisotropic geometry as
Hilbert--Schmidt/operator norm duality; their direct-sum laws follow, and
converse rigidity recovers the same split Schatten completion from the
normalized orthogonally natural laws.

Under common accessibility, the intrinsic response syntax and its coherent
completion jointly yield reconstruction and least-state factorization.

\begin{theorem}[Calibrated quadratic reconstruction theorem]
\label{thm:quadratic-response-reconstruction}
Let $H$ be a separable real Hilbert space and $1/3<\eta<1/2$.  Let
$\mathfrak Q_2(H)$ be the principal quadratic response specification defined in
\Cref{sec:quadratic-response-specification,def:coherent-principal-response-target}.
Let $\tau_0=(\eta;\mathcal S_2,\mathcal S_1)$; the temporal exponent $\eta$ is
fixed, while \Cref{thm:Brownian-calibration-rigidity} selects the central ideal
pair.  Assume that the common accessibility hypothesis of
\Cref{def:common-accessibility} holds for $\cP$.
Then the completed deterministic response target is canonically represented by
the stable-causal isometric homeomorphism
\[
 \mathcal O_{\mathfrak Q_2}:
 \mathsf Z_{\eta;\mathcal S_2,1}(H)
 \xrightarrow{\ \cong\ }
 \mathsf R_{\mathfrak Q_2}^{\rm coh},
 \qquad
 \mathcal O_{\mathfrak Q_2}^{-1}=\mathcal D_{\mathfrak Q_2}.
\]
Write $\mathcal A_2:=Z_{\tau_0}$ for the accessible $\cP$-causal realization
in \Cref{def:common-accessibility}.  Its canonical coordinates are
\begin{equation}\label{eq:quadratic-state-canonical-chart}
 \operatorname{ch}_{\tau_0}(\mathcal A_2)=(X,A,Q),
 \qquad
 (A,Q)=\bigl(A^P,\qv{M^P}\bigr)
 \quad P\text{-a.s. for every }P\in\cP.
\end{equation}
The response reconstruction has the following properties.
\begin{enumerate}[label=\textup{(\roman*)},leftmargin=2.5em]
\item \emph{Exact response.}  Under the preceding identification, the
restarted step-two It\^o signature is injective on the completed response
state and has a stable-causal decoder.  In every accessible probabilistic
implementation its common realization agrees modelwise with the classical
signature; equal realized signature fields therefore have quasi-surely equal
$(X,A,Q)$ chart coordinates.
\item \emph{Algebraic rigidity.}  The finite-rank restart responses reconstruct
\[
 (x,a,q)\star(y,b,r)
 =(x+y,a+b+\Anti(x\otimes y),q+r),
\]
the unique central-coordinate degree-two product with additive covariance and
multiplicative It\^o tensor readout.
\item \emph{Analytic rigidity.}  Brownian area energy and rank-one covariance
defect admit the exact calibrations
$\mathfrak c_A(K)=\|K\|_2$ and
$\mathfrak c_Q(S)=\|S\|_{\rm op}$.  Their Pythagorean and max direct-sum laws
are therefore consequences, while the converse orthogonal rigidity theorem
shows that these normalized orthogonally natural laws, together with the
principal normalizations, uniquely recover the Hilbert--Schmidt and operator
coefficient gauges and the dual state completion
\[
 H\times\mathcal S_2(H)_{\rm sk}\times\mathcal S_1(H)_{\rm sa}.
\]
At the covariance endpoint, the finite-rank coefficient completion is
$\mathcal K(H)_{\rm sa}$; the principal program separately uses the full
normal test class $\mathcal L(H)_{\rm sa}$, whose unit ball induces the same
trace-class state gauge.
The induced path topology is $\tau_{\eta;\mathcal S_2,1}$.  The finite-rank
algebra and qualitative point-separation alone do not determine this topology.
\item \emph{Lossless factorization.}  For every
$\cP$-causal state $(\mathsf S,S)$ and every joint realization
$\Psi_S:\mathsf S\to\mathsf R_{\mathfrak Q_2}^{\rm coh}$,
\[
 R_S:=\mathcal D_{\mathfrak Q_2}\Psi_S:
 \mathsf S\longrightarrow\mathsf Z_{\eta;\mathcal S_2,1}(H),
 \qquad R_S(S)=\mathcal A_2\quad\cP\text{-quasi surely}.
\]
Conversely, every stable-causal map
$R:\mathsf S\to\mathsf Z_{\eta;\mathcal S_2,1}(H)$ with
$R(S)=\mathcal A_2$ quasi surely gives the realization
$\mathcal O_{\mathfrak Q_2}R$.  The resulting least-state equivalence is
stated in \Cref{thm:quadratic-response-representability}.
\item \emph{Compression obstructions.}  No noninjective stable-causal quotient
of the full operational state space through which the complete norming map
factors is lossless.  The complete scalar triangular family separates states,
whereas every fixed finite family misses a nonzero finite-rank central
direction in infinite dimension.
\item \emph{Propagation.}  Every finite product and stable-causal
postcomposition of the principal responses is a stable-causal readout of
$\mathsf Z_{\eta;\mathcal S_2,1}(H)$.  Any lossless derived response represents
the same realized state as $\mathcal A_2$ up to stable-causal equivalence.
\end{enumerate}
\end{theorem}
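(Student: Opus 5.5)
The plan is to assemble the theorem from the general operational machinery of \Cref{sec:response-demand-descent} and from the specific analytic inputs that the paper places in \Cref{sec:quadratic-response-specification}: \Cref{thm:Brownian-calibration-rigidity}, the coherent target of \Cref{def:coherent-principal-response-target}, and the common accessibility of \Cref{def:common-accessibility}. We first construct the deterministic encoder $\mathcal O_{\mathfrak Q_2}$ on the dense core of finite-variation paths, where $(x,a,q)$ are computed classically. It sends a core element to its principal response profile: the restarted step-two It\^o signature $(U,V)$ together with the area/covariance probes and the restart increments. The decoder is given explicitly by \eqref{eq:intro-lossless-state-readout}, namely $A=\Anti V$, $Q=U^{\otimes2}-V-V^*$, and $X=U$ on restarted increments. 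Since these are linear or polynomial in $(U,V)$, and $(U,V)$ is in turn polynomial in $(X,A,Q)$ through $V=\tfrac12(U^{\otimes2}-Q)+A$, both maps are uniformly continuous on bounded sets for $\tau_{\eta;\mathcal S_2,1}$. They commute with truncation by restart compatibility, so \Cref{lem:causal-calibrated-completion} extends them to stable-causal maps on the completion. The identity $\mathcal D_{\mathfrak Q_2}\mathcal O_{\mathfrak Q_2}=I$ holds on the core and persists by density. Isometry holds because the coherent target is normed by the calibrated gauges of $(X,A,Q)$ themselves. Surjectivity onto $\mathsf R^{\rm coh}_{\mathfrak Q_2}$ holds because the coherent target is by definition the closure of the core image, which is the uniqueness in \Cref{thm:descent-before-completion}(iv).

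Clauses (i), (ii) and (vi) are then largely algebraic. For (i), injectivity and the decoder are exactly the left-inverse identity. The modelwise agreement follows from common accessibility: $Z_{\tau_0}$ has chart coordinates agreeing $P$-a.s.\ with $(X,A^P,\qv{M^P})$, and the classical It\^o signature equals the same polynomial in these. Hence equal signature fields give equal charts by applying $\mathcal D_{\mathfrak Q_2}$. For (ii), we compute the restarted signature of a concatenation using Chen's identity $V_{0,t}=V_{0,s}+V_{s,t}+U_{0,s}\otimes U_{s,t}$. Applying $\Anti$ and the covariance formula gives the stated product, with the cross terms cancelling in $Q$. For uniqueness, we take a general central degree-two product with additive $q$. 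Multiplicativity of the It\^o tensor readout forces the bilinear correction to equal the antisymmetric part of $x\otimes y$: the symmetric part is already absorbed by $U^{\otimes2}$, and centrality excludes any other term. For (vi), finite products and postcompositions of stable-causal readouts are stable-causal, and \Cref{cor:canonical-equivalence-lossless-charts} transports any other lossless chart to this one.

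Clause (iv) is a direct instance of \Cref{prop:lossless-chart-factorization}(i) applied with $\mathcal O=\mathcal O_{\mathfrak Q_2}$ and $\mathcal D=\mathcal D_{\mathfrak Q_2}$. For (v), let $\pi$ be a noninjective quotient through which the full norming map factors. By \Cref{prop:lossless-chart-factorization}(iii), $\pi$ would acquire a left inverse, which contradicts noninjectivity. Separation by the complete scalar triangular family follows from duality: $\mathcal S_2$ is self-dual, and $\mathcal S_1$ is the predual of $\mathcal L(H)$, so finite-rank tests separate points. For a fixed finite family we work in infinite dimension. Choose an orthonormal pair $e_i,e_j$ orthogonal to the finitely many finite-rank test operators involved, up to a compact perturbation argument, and take the area direction $e_i\wedge e_j$. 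This is a nonzero central direction that every test in the family annihilates.

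The main obstacle is (iii), the analytic calibration. The area half has a clean route. For Brownian motion $W$ on $H$ and a skew coefficient $K$ of finite rank, the It\^o isometry gives $\mathbb E|\langle K,A_T\rangle|^2=cT^2\|K\|_2^2$, so the area energy is exactly the Hilbert--Schmidt norm. The covariance half is harder. For $S$ self-adjoint, we take the supremum over rank-one directional covariance experiments of the defect $\langle S,\qv{W}\rangle$ over unit-trace perturbations, which yields $\|S\|_{\rm op}$. This fixes the $\mathcal S_1$ state gauge by trace duality, and because the unit balls of $\mathcal K(H)_{\rm sa}$ and $\mathcal L(H)_{\rm sa}$ norm $\mathcal S_1$ identically, the full normal test class gives the same gauge. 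For the converse rigidity we would invoke \Cref{thm:Brownian-calibration-rigidity}, taking the orthogonally natural normalized Pythagorean and max laws as its input. Finally, to show that the finite-rank algebra alone cannot determine the topology, we exhibit $\mathcal S_p$ gauges with $p\neq2$, all of which yield inequivalent completions of the same core with the same point separation.
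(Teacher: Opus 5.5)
Your assembly follows the paper's route: coherent chart, \Cref{prop:lossless-chart-factorization}, Brownian calibration and norming. However, three steps do not go through as written.

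The most substantive gap is the converse in (iii). You propose to take it from \Cref{thm:Brownian-calibration-rigidity}, but that theorem is only the forward computation of the Brownian gauges. It never starts from an arbitrary architecture satisfying the $\ell^2$/max block laws. So the claim that these laws and the normalizations \emph{uniquely} recover the Hilbert--Schmidt and operator gauges is left unproved. What is needed is the separate deterministic argument of \Cref{thm:orthogonal-response-rigidity}:
\begin{itemize}
\item[] Decompose a skew $K$ orthogonally, with $K|_{E_0}=0$ and $K|_{E_j}=a_jJ_0$ on oriented planes. Orthogonal naturality, homogeneity, the square-sum law and $\kappa_A^{\mathbb R^2}(J_0)=\sqrt2$ then give $\kappa_A^E(K)^2=2\sum_ja_j^2=\|K\|_{\mathcal S_2}^2$.
\item[] For symmetric $S$, an eigenbasis, the max law and $\kappa_Q^{\mathbb R}(1)=1$ give $\kappa_Q^E(S)=\|S\|_{\rm op}$.
\end{itemize}
Two smaller points on (iii). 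The covariance half of the forward calibration is not the hard part: for $B^e=\beta e$ the scalar It\^o product formula makes the defect deterministically $e\otimes e$, so $\mathfrak c_Q^E(S)=\sup_{\|e\|=1}|\langle Se,e\rangle|$ at once. Also, the gauges live on finite-dimensional $E$ and pass to $H$ by orthogonal naturality and exhaustion; there is no standard Brownian motion on an infinite-dimensional $H$.

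Second, you conflate the two lossless charts. The target $\mathsf R^{\rm coh}_{\mathfrak Q_2}$ contains no It\^o second level. The encoder $\mathcal O_{\mathfrak Q_2}$ reads $(x,\jmath_AA,\jmath_Qq)$, its isometry is the duality \eqref{eq:principal-forced-norms}, and $\mathcal D_{\mathfrak Q_2}$ inverts $\jmath_A,\jmath_Q$ on their closed ranges after Chen and additivity pass to limits. The decoder you wrote, $A=\Anti V$, $Q=U^{\otimes2}-V-V^*$, is the signature decoder that belongs to (i). It is continuous only for the split metric of \Cref{def:signature-response}, where the symmetric defect is measured in $\mathcal S_1$. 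With $V$ in its Hilbert--Schmidt topology it is discontinuous at zero by \Cref{cor:split-topology-necessary}. So your claim of ``uniform continuity on bounded sets'' fails unless you impose that metric explicitly.

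Third, in (v) your orthonormal pair chosen orthogonal to the test operators ``up to a compact perturbation'' only makes the responses small, not zero. This happens as soon as the second-order coefficients are functionals $T\mapsto\Tr(K_i^*T)$ with $K_i\in\mathcal S_2(H)$ of infinite rank. Instead, take a nonzero kernel vector $A_0\in\operatorname{Fin}(H)_{\rm sk}$ of the combined finite-dimensional map $\mathbf B$. Alternatively, pick a unit $e$ and a unit $f\perp e,K_ie,K_i^*e$, and set $A_0=\Anti(e\otimes f)$. Then realize $A_0$ as an actual state via the pure-area needle of \Cref{lem:operational-needles}; its membership in the geometric closure needs the small-loop approximation. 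Along that ramp $X=0$ and $V$ is proportional to $A_0$, so every $C_jX+B_jV$ vanishes identically.
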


\begin{theorem}[Least-state factorization for the calibrated quadratic program]
\label{thm:quadratic-response-representability}
Under the hypotheses of \Cref{thm:quadratic-response-reconstruction}, every
$\cP$-causal source state $(\mathsf S,S)$ satisfies
\begin{equation}\label{eq:quadratic-response-universal-property}
 S\text{ jointly realizes }\mathfrak Q_2(H)
 \quad\Longleftrightarrow\quad
 \mathcal A_2\preceqsc S.
\end{equation}
If $\Psi_S$ is a joint realization, the factor is the stable-causal map
$\mathcal D_{\mathfrak Q_2}\Psi_S:
\mathsf S\to\mathsf Z_{\eta;\mathcal S_2,1}(H)$.  Conversely, a stable-causal map
$R:\mathsf S\to\mathsf Z_{\eta;\mathcal S_2,1}(H)$ satisfying
$R(S)=\mathcal A_2$ quasi surely produces the realization
$\mathcal O_{\mathfrak Q_2}R$.  These assignments are inverse after maps
agreeing on the realized source under every law are identified.  Hence
$\mathcal A_2$ is the unique least jointly sufficient state up to stable-causal
equivalence.
\end{theorem}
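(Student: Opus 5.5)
The plan is to deduce the statement from the abstract lossless-chart machinery, namely \Cref{prop:lossless-chart-factorization} together with \Cref{thm:descent-before-completion}\textup{(v)}. The chart is supplied by \Cref{thm:quadratic-response-reconstruction}. Take $\mathsf Z=\mathsf Z_{\eta;\mathcal S_2,1}(H)$, $\mathsf R=\mathsf R_{\mathfrak Q_2}^{\rm coh}$, $\mathcal O=\mathcal O_{\mathfrak Q_2}$ and $\mathcal D=\mathcal D_{\mathfrak Q_2}$. By the reconstruction theorem, $\mathcal O$ is a stable-causal isometric homeomorphism with inverse $\mathcal D$, so both $\mathcal D\mathcal O=I_{\mathsf Z}$ and $\mathcal O\mathcal D=I_{\mathsf R}$ hold. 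These spaces are causal Polish, since they are complete separable metric spaces with nonexpansive truncations. The state $\mathcal A_2=Z_{\tau_0}$ is $\cP$-causal by \Cref{def:common-accessibility}.

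The first substantive step is to check that "$S$ jointly realizes $\mathfrak Q_2(H)$" in the sense of \Cref{def:stable-causal} means exactly a stable-causal $\Psi_S:\mathsf S\to\mathsf R$ with $\Psi_S(S)=\mathcal O(\mathcal A_2)$ $P$-a.s. for every $P\in\cP$. This requires that the declared joint response $Y^{P,\mathfrak Q_2}$ equals $\mathcal O_{\mathfrak Q_2}(\mathcal A_2)$ modelwise. That identity comes from \Cref{thm:quadratic-response-reconstruction}\textup{(i)}: the common realization agrees modelwise with the classical signature. Together with \eqref{eq:quadratic-state-canonical-chart}, $(A,Q)=(A^P,\qv{M^P})$, it gives agreement of every coherent principal response $P$-a.s. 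I expect this to be the main obstacle. The realized responses are defined as classical stochastic objects under each $P$, so one must verify that every coordinate of the coherent target (restart products, area and covariance probes) is obtained from $(X,A,Q)$ by the same deterministic readout. Only then does a single global null set, rather than a $P$-dependent one, suffice. I would handle this by working on the intersection of the accessibility set $G_{\mathcal A_2}$ with the modelwise sets from (i). Because only "for every $P$" equalities are needed, no aggregation issue arises.

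With this in hand, part \textup{(i)} of \Cref{prop:lossless-chart-factorization} gives everything at once. A joint realization $\Psi_S$ yields the stable-causal factor $\mathcal D_{\mathfrak Q_2}\Psi_S$, which satisfies $\mathcal D_{\mathfrak Q_2}\Psi_S(S)=\mathcal D\mathcal O(\mathcal A_2)=\mathcal A_2$ quasi surely, so $\mathcal A_2\preceqsc S$. Conversely, a stable-causal $R$ with $R(S)=\mathcal A_2$ gives $\mathcal O_{\mathfrak Q_2}R$, and this map is stable-causal as a composition of stable-causal maps. Mutual inverseness after identification on the realized source follows from $\mathcal D\mathcal O=I$ and $\mathcal O\mathcal D\Psi_S(S)=\Psi_S(S)$.

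For leastness, $\mathcal A_2$ itself is jointly sufficient via $\Psi=\mathcal O_{\mathfrak Q_2}$. Every jointly sufficient $S$ satisfies $\mathcal A_2\preceqsc S$ by the equivalence just proved. Suppose $Z'$ is another least jointly sufficient state. Then $\mathcal A_2\preceqsc Z'$ by sufficiency of $Z'$, and $Z'\preceqsc\mathcal A_2$ by leastness of $Z'$. Hence $Z'\equivsc\mathcal A_2$, which gives uniqueness up to stable-causal equivalence, exactly as in \Cref{thm:descent-before-completion}\textup{(v)}.
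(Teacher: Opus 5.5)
Your proposal is correct and takes essentially the paper's route. The paper's proof invokes part~\textup{(iv)} of \Cref{thm:quadratic-response-reconstruction}, which is \Cref{prop:lossless-chart-factorization}\textup{(i)} applied to the coherent chart $(\mathcal O_{\mathfrak Q_2},\mathcal D_{\mathfrak Q_2})$ of \Cref{prop:coherent-principal-response-target}; the modelwise identification of the declared response with $\mathcal O_{\mathfrak Q_2}(\mathcal A_2)$ is supplied by accessibility together with \Cref{prop:semimartingale-verifies-principal-response}, and leastness and uniqueness then follow by mutual factorization exactly as you argue.

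The step you flag as the main obstacle is handled by that proposition. Only $P$-dependent null sets are required there, so no common exceptional set has to be constructed.
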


\subsection{The algebraic skeleton and It\^o-defect rigidity}
\label{sec:minimality}

We use standard Schatten-ideal notation, ideal inequalities, and trace dualities
as in \cite{Simon05}.  The topology classification is deterministic.  Accessibility identifies the
intrinsic chart with modelwise It\^o area and bracket and enters the
least-state theorem.  A jointly topologized triangular response recovers the
prescribed operator-ideal topology exactly when its probe families are
norming; failure of norming produces finite-rank invisible directions.

Fix a separable real Hilbert space $H$ and write
\[\Omega_H:=C_0([0,T];H).\]
Fix $\eta\in(1/3,1/2)$.  Let $\mathfrak I(H)$ be a separable Banach ideal of
compact operators such that finite-rank operators are dense, adjoint is
isometric,
\begin{equation}
\label{eq:operational-rank-one}
 \|u\otimes v\|_{\mathfrak I}=\|u\|_H\|v\|_H,
\end{equation}
and the inclusion $\mathcal S_1(H)\hookrightarrow\mathfrak I(H)$ is
continuous.  Put
\[
 E_A(H):=\mathfrak I(H)_{\rm sk},
 \qquad E_Q(H):=\mathcal S_1(H)_{\rm sa},
\]
and define the canonical alternating cocycle
\[
 \omega_H(x,y):=\Anti(x\otimes y)\in E_A(H).
\]

\begin{definition}[Intrinsic It\^o degree-two group]
\label{def:intrinsic-Ito-group}
The \emph{two-channel It\^o degree-two group} is
\begin{equation}\label{eq:intrinsic-Ito-group}
 \Gito_{\mathfrak I,1}(H)
 :=H\times E_A(H)\times E_Q(H)
\end{equation}
with multiplication
\begin{equation}\label{eq:intrinsic-Ito-product}
 (x,a,q)\star(y,b,r)
 :=\bigl(x+y,a+b+\omega_H(x,y),q+r\bigr).
\end{equation}
\end{definition}

Let $\operatorname{Fin}(H)$ denote the finite-rank operators and set
\[
 E_A^{\rm fin}(H):=\operatorname{Fin}(H)_{\rm sk},
 \qquad
 E_Q^{\rm fin}(H):=\operatorname{Fin}(H)_{\rm sa}.
\]
The same product on
\begin{equation}\label{eq:finite-rank-central-skeleton}
 \Gito_{\rm fin}(H)
 :=H\times E_A^{\rm fin}(H)\times E_Q^{\rm fin}(H)
\end{equation}
is independent of the operational type.  Finite-rank density in
$\mathfrak I(H)$ and $\mathcal S_1(H)$ identifies
$\Gito_{\mathfrak I,1}(H)$ with the completion of this common central
skeleton in the $\mathfrak I$- and trace norms.  Thus the algebraic cocycle is
fixed before the observable topology is chosen.

\begin{theorem}[It\^o-defect characterization and algebraic rigidity]
\label{prop:intrinsic-central-extension}
The product \eqref{eq:intrinsic-Ito-product} makes
$\Gito_{\mathfrak I,1}(H)$ a two-step Banach Lie group with inverse
$(x,a,q)^{-1}=(-x,-a,-q)$ and central extension
\[
 0\longrightarrow E_A(H)\oplus E_Q(H)
 \longrightarrow\Gito_{\mathfrak I,1}(H)
 \longrightarrow H\longrightarrow0.
\]
Let $\mathcal T^2_{\mathfrak I}(H)$ be the step-two tensor group with product
$(x,U)\cdot(y,V)=(x+y,U+V+x\otimes y)$ and define
\begin{equation}\label{eq:Ito-defect-subgroup}
 \mathfrak G^{I}_{\mathfrak I,1}(H)
 :=\bigl\{(x,U,q)\in\mathcal T^2_{\mathfrak I}(H)\times E_Q(H):
 U+U^*=x^{\otimes2}-q\bigr\},
\end{equation}
with the product induced from $\mathcal T^2_{\mathfrak I}(H)\times E_Q(H)$.
Then $\mathfrak G^{I}_{\mathfrak I,1}(H)$ is a closed subgroup and
\begin{equation}\label{eq:Ito-defect-isomorphism}
 \Phi_H^I(x,a,q)
 :=\left(x,\frac12x^{\otimes2}+a-\frac12q,q\right)
\end{equation}
is a homeomorphic group isomorphism
$\Gito_{\mathfrak I,1}(H)\to\mathfrak G^{I}_{\mathfrak I,1}(H)$ with
inverse
\[
 (x,U,q)\longmapsto(x,\Anti U,q).
\]
In these coordinates the geometric and covariance readouts are
\begin{align}
 \Theta_H^S(x,a,q)
 &:=\left(x,\frac12x^{\otimes2}+a\right),
 \label{eq:intrinsic-geometric-readout}\\
 \Theta_H^I(x,a,q)
 &:=\left(x,\frac12x^{\otimes2}+a-\frac12q\right),
 \label{eq:intrinsic-Ito-readout}\\
 \Theta_H^Q(x,a,q)&:=q,
 \label{eq:intrinsic-covariance-readout}
\end{align}
and all three maps are continuous homomorphic readouts;
$(\Theta_H^S,\Theta_H^Q)$ is injective.

Moreover, this product is rigid within central-coordinate degree-two
extensions.  Suppose a continuous product on the same underlying space has
the form
\[
 (x,a,q)\star'(y,b,r)
 =\bigl(x+y,a+b+\beta_A(x,y),q+r+\beta_Q(x,y)\bigr),
\]
where $\beta_A$ and $\beta_Q$ are continuous, and assume that the covariance
projection is a homomorphism and that the map in
\eqref{eq:intrinsic-Ito-readout} is a homomorphism into
$\mathcal T^2_{\mathfrak I}(H)$.  Then necessarily
\[
 \beta_Q(x,y)=0,\qquad
 \beta_A(x,y)=\Anti(x\otimes y),
\]
so $\star'=\star$.
\end{theorem}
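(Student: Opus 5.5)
The plan is to verify the structural claims by direct algebra on the product \eqref{eq:intrinsic-Ito-product}, and then to obtain rigidity by comparing two group laws. Throughout I use the following facts: $\omega_H$ is bilinear and continuous, because $\|\Anti(x\otimes y)\|_{\mathfrak I}\le\|x\|\|y\|$ by \eqref{eq:operational-rank-one}; it is alternating, with $\omega_H(x,x)=0$; and the inclusion $\mathcal S_1\hookrightarrow\mathfrak I$ is continuous.

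\emph{Group structure.} Associativity reduces to bilinearity of $\omega_H$:
\[
\omega_H(x,y)+\omega_H(x+y,z)=\omega_H(x,y+z)+\omega_H(y,z).
\]
The identity is $(0,0,0)$. The inverse is $(-x,-a,-q)$ because $\omega_H(x,-x)=0$. Centrality of $0\times E_A\times E_Q$ and exactness of the sequence are immediate. The Banach Lie group structure follows because the product is a continuous polynomial map on a Banach space.

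\emph{Subgroup and isomorphism.} $\mathfrak G^I$ is closed because its defining equation is continuous; here $x^{\otimes2}-q$ lies in $\mathfrak I$ through the $\mathcal S_1$ inclusion. It is a subgroup since
\[
(U+V+x\otimes y)+(U+V+x\otimes y)^*=(x+y)^{\otimes2}-(q+r)
\]
follows from the defining relations for $U$ and $V$. For $\Phi_H^I$ I check the following.
\begin{itemize}
\item Its image lies in $\mathfrak G^I$: the symmetric part of $\tfrac12x^{\otimes2}+a-\tfrac12q$ doubles to $x^{\otimes2}-q$, since $a$ is skew.
\item It is a homomorphism. Expanding $\tfrac12(x+y)^{\otimes2}$ gives
\[
\tfrac12x^{\otimes2}+\tfrac12y^{\otimes2}+\operatorname{Sym}(x\otimes y),
\qquad
\operatorname{Sym}(x\otimes y)+\Anti(x\otimes y)=x\otimes y.
\]
\item It has the stated inverse: $\Anti U=a$, and on $\mathfrak G^I$ the symmetric part of $U$ is determined by $(x,q)$.
\item Both directions are continuous.
\end{itemize}
The readouts $\Theta^S,\Theta^I,\Theta^Q$ are then homomorphisms. $\Theta^I$ and $\Theta^Q$ are coordinate projections composed with $\Phi^I_H$. $\Theta^S$ is checked by the same $\operatorname{Sym}+\Anti$ identity. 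The pair $(\Theta^S,\Theta^Q)$ is injective because $x$ is read directly, $a=\Anti$ of the second component, and $q=\Theta^Q$.

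\emph{Rigidity.} This is the only step that needs an argument rather than a computation. Requiring the covariance projection to be a homomorphism for $\star'$ forces $q+r+\beta_Q(x,y)=q+r$, so $\beta_Q=0$. Requiring \eqref{eq:intrinsic-Ito-readout} to be a homomorphism, and comparing second tensor components, gives
\[
\tfrac12(x+y)^{\otimes2}+a+b+\beta_A(x,y)-\tfrac12(q+r)
=\tfrac12x^{\otimes2}+a-\tfrac12q+\tfrac12y^{\otimes2}+b-\tfrac12r+x\otimes y .
\]
This yields $\beta_A(x,y)=x\otimes y-\operatorname{Sym}(x\otimes y)=\Anti(x\otimes y)$.

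The main subtlety is bookkeeping rather than analysis. One must confirm that all terms live in $\mathfrak I$, including $q\in\mathcal S_1$, and that $\beta_A$ is a priori only $E_A$-valued. The identity above determines $\beta_A$ pointwise, so continuity is not even needed.
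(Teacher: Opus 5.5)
Your proposal is correct and follows the paper's proof essentially step for step. You use the bilinear cocycle identity for associativity, the $\operatorname{Sym}+\Anti$ splitting of $x\otimes y$ for multiplicativity of $\Phi_H^I$ and $\Theta_H^S$, the decomposition $U=\tfrac12(x^{\otimes2}-q)+\Anti U$ on the defect set for the inverse, and comparison of second tensor components for rigidity, exactly as the paper does.

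The differences are cosmetic. You read off the Banach Lie structure directly from the polynomial product instead of transporting it through $\Phi_H^I$. You also skip the paper's explicit check that the defect set is stable under the ambient inverse $(-x,-U+x^{\otimes2},-q)$; this is harmless, because stability follows from the defect set being the image of the group homomorphism $\Phi_H^I$, which you do establish.
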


\begin{proof}
The alternating cocycle identity gives associativity; centrality and the
inverse formula are immediate.  The defect set is closed as the zero set of
$(x,U,q)\mapsto U+U^*-x^{\otimes2}+q$.  It is a subgroup: if
$U+U^*=x^{\otimes2}-q$ and $V+V^*=y^{\otimes2}-r$, then
\[
 (U+V+x\otimes y)+(U+V+x\otimes y)^*
 =(x+y)^{\otimes2}-(q+r).
\]
The ambient inverse is
$(-x,-U+x^{\otimes2},-q)$, and its defect is
\[
 (-U+x^{\otimes2})+(-U+x^{\otimes2})^*
 =x^{\otimes2}+q=(-x)^{\otimes2}-(-q),
\]
so inverses remain in it.  The image of \eqref{eq:Ito-defect-isomorphism}
satisfies the defect identity; conversely that identity gives
$\Sym U=\frac12(x^{\otimes2}-q)$, hence
$U=\frac12x^{\otimes2}+\Anti U-\frac12q$, proving bijectivity and the stated
inverse.  Transport by $\Phi_H^I$ gives the Banach Lie structure, and
multiplicativity follows from
\[
 \frac12(x+y)^{\otimes2}+a+b+\Anti(x\otimes y)-\frac12(q+r)
 =\left(\frac12x^{\otimes2}+a-\frac12q\right)
  +\left(\frac12y^{\otimes2}+b-\frac12r\right)+x\otimes y.
\]
The same identity without the covariance correction proves multiplicativity
of $\Theta_H^S$, while $\Theta_H^Q$ is additive; joint injectivity recovers
$a$ and $q$.

For rigidity, covariance additivity first forces $\beta_Q=0$.  The homomorphism
property of \eqref{eq:intrinsic-Ito-readout} then yields
\[
 \frac12(x+y)^{\otimes2}+a+b+\beta_A(x,y)-\frac12(q+r)
 =\frac12x^{\otimes2}+a-\frac12q
  +\frac12y^{\otimes2}+b-\frac12r+x\otimes y,
\]
and subtraction gives
$\beta_A(x,y)=\frac12(x\otimes y-y\otimes x)=\Anti(x\otimes y)$.
\end{proof}

\begin{remark}[Algebraic rigidity class]
The classification is taken within central-coordinate degree-two products
compatible with the additive covariance channel and the It\^o tensor readout.
Within this class, fixing the two degree-two observables determines the
algebraic cocycle uniquely.
\end{remark}

\subsection{Stochastic calibration and response-forced coefficient geometry}
\label{sec:quadratic-response-specification}

With the algebraic product fixed, Brownian experiments and It\^o isometry now
compute the area and covariance coefficient gauges.  Their Pythagorean/max
block laws and converse rigidity characterize the calibrated norms
(\Cref{cor:calibration-direct-sum-equivalence}); dual completion fixes the state
topology before general semimartingale laws realize the syntax.

\begin{definition}[Brownian response gauges]
\label{def:Brownian-response-calibration}
For $E=\{0\}$ set
$\mathfrak c_A^E(0)=\mathfrak c_Q^E(0)=0$.  Let now $E$ be nonzero and
finite dimensional, and let $B^E$ be standard $E$-valued Brownian motion on
$[0,1]$.  For a unit vector $e\in E$ put $B_t^e=\beta_te$, where
$\beta$ is a standard real Brownian motion.  For
$K\in\operatorname{Skew}(E)$ and $S\in\operatorname{Sym}(E)$ define
\begin{align}
 \mathfrak c_A^E(K)
 &:=\left(2\,\mathbb E\left|\frac12\int_0^1
   \operatorname{Tr}\!\left[K^*\bigl(B_t^E\otimes\dd B_t^E
   -\dd B_t^E\otimes B_t^E\bigr)\right]\right|^2\right)^{1/2},
 \label{eq:Brownian-area-calibration}\\
 \mathfrak c_Q^E(S)
 &:=\sup_{\|e\|=1}
   \left\|
   \operatorname{Tr}\!\left[S(B_1^e\otimes B_1^e)\right]
   -\int_0^1\operatorname{Tr}\!\left[
   S\bigl(B_t^e\otimes\dd B_t^e+\dd B_t^e\otimes B_t^e\bigr)\right]
   \right\|_{L^2(\Omega)}.
 \label{eq:Brownian-defect-calibration}
\end{align}
These are respectively centered area energy and maximal covariance-defect
$L^2$ amplitude over unit rank-one experiments; no operator-ideal norm is
presupposed.
\end{definition}

\begin{theorem}[Exact Brownian calibration]
\label{thm:Brownian-calibration-rigidity}
For every finite-dimensional real Hilbert space $E$,
\begin{equation}\label{eq:Brownian-calibration-identification}
 \boxed{
 \mathfrak c_A^E(K)=\|K\|_{\mathcal S_2},\qquad
 \mathfrak c_Q^E(S)=\|S\|_{\rm op}.}
\end{equation}
Thus the dual state gauges are exactly the Hilbert--Schmidt and trace norms,
and the compatible infinite-dimensional response completion is
\[
 H\times\mathcal S_2(H)_{\rm sk}\times\mathcal S_1(H)_{\rm sa}.
\]
The calibration is orthogonally natural.  For increasing finite-rank
orthogonal projections $P_n\uparrow I$, every
$K\in\mathcal S_2(H)_{\rm sk}$, and every
$S\in\mathcal L(H)_{\rm sa}$,
\begin{equation}\label{eq:Brownian-calibration-exhaustion}
 \sup_n\mathfrak c_A^{P_nH}(P_nKP_n)=\|K\|_2,
 \qquad
 \sup_n\mathfrak c_Q^{P_nH}(P_nSP_n)=\|S\|_{\rm op}.
\end{equation}
\end{theorem}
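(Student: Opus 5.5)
We compute both gauges directly from their definitions, using It\^o calculus on finite-dimensional Brownian motion, and then pass to $H$ by finite-rank exhaustion. The case $E=\{0\}$ holds by definition, so fix a nonzero finite-dimensional $E$ with an orthonormal basis.

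\emph{Area gauge.} Identify $x\otimes y$ with the operator $z\mapsto\langle y,z\rangle x$, so that $\Tr[K^*(x\otimes y)]=\langle x,Ky\rangle$. Since $K^*=-K$, the two terms in \eqref{eq:Brownian-area-calibration} coincide up to sign:
\[
\Tr[K^*(B_t\otimes \dd B_t-\dd B_t\otimes B_t)]=2\langle B_t,K\,\dd B_t\rangle .
\]
Hence the centered area is the martingale $\int_0^1\langle K^*B_t,\dd B_t\rangle$. The opposite tensor convention only changes the sign, which the modulus removes. By the It\^o isometry,
\[
\mathbb E\Big|\int_0^1\langle K^*B_t,\dd B_t\rangle\Big|^2=\int_0^1\mathbb E|K^*B_t|^2\,\dd t=\int_0^1 t\,\Tr(KK^*)\,\dd t=\tfrac12\|K\|_{\mathcal S_2}^2 .
\]
The factor $2$ in the definition then gives $\mathfrak c_A^E(K)=\|K\|_{\mathcal S_2}$.

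\emph{Covariance gauge.} For a unit vector $e$ and $B^e=\beta e$, every tensor is a multiple of $e\otimes e$, and $\Tr[S(e\otimes e)]=\langle Se,e\rangle$. The expression inside the norm is therefore
\[
\langle Se,e\rangle\Bigl(\beta_1^2-2\int_0^1\beta_t\,\dd\beta_t\Bigr)=\langle Se,e\rangle\bigl(\beta_1^2-(\beta_1^2-1)\bigr)=\langle Se,e\rangle,
\]
by It\^o's formula for $\beta_1^2$. Its $L^2$-norm is $|\langle Se,e\rangle|$. Because $S$ is self-adjoint, the numerical radius equals the operator norm, so $\sup_{\|e\|=1}|\langle Se,e\rangle|=\|S\|_{\rm op}$.

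\emph{Duality and completion.} $\mathcal S_2$ is self-dual under the trace pairing. The operator norm on finite-rank self-adjoint coefficients completes to $\mathcal K(H)_{\rm sa}$, whose trace-pairing dual is $\mathcal S_1(H)_{\rm sa}$. The unit ball of $\mathcal L(H)_{\rm sa}$ norms $\mathcal S_1$ identically, so both test classes give the same state gauge. This yields the stated completion $H\times\mathcal S_2(H)_{\rm sk}\times\mathcal S_1(H)_{\rm sa}$.

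\emph{Orthogonal naturality.} For an orthogonal map $U:E\to E'$, the process $UB^E$ is a standard $E'$-valued Brownian motion, and $\|\cdot\|_2$ and $\|\cdot\|_{\rm op}$ are unitarily invariant; hence both gauges are natural.

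\emph{Exhaustion \eqref{eq:Brownian-calibration-exhaustion}.} Choose an orthonormal basis adapted to the increasing projections $P_n$. Then $\|P_nKP_n\|_2^2$ is a partial sum of $\sum_{i,j}|\langle Ke_i,e_j\rangle|^2$, so it increases to $\|K\|_2^2$. For the covariance gauge, $\|P_nSP_n\|_{\rm op}\le\|S\|_{\rm op}$ for every $n$. Moreover $\sup_n\|P_nSP_n\|_{\rm op}\ge\sup\{|\langle Se,e\rangle|:e\in\bigcup_nP_nH,\ \|e\|=1\}$. Since $\bigcup_nP_nH$ is dense and $e\mapsto\langle Se,e\rangle$ is continuous on the unit sphere, this supremum equals the numerical radius $\|S\|_{\rm op}$.

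The only delicate point is bookkeeping: the tensor/trace convention and the normalizing factor $2$ in \eqref{eq:Brownian-area-calibration}. Everything else is a one-line It\^o computation.
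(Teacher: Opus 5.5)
Your proposal is correct and follows essentially the same route as the paper: the It\^o isometry with $\mathbb E(B_t\otimes B_t)=tI_E$ for the area gauge, the scalar It\^o product formula making the rank-one covariance defect the deterministic constant $\langle Se,e\rangle$, finite-dimensional duality for the state gauges, and density of $\bigcup_nP_nH$ for the exhaustion formulas. The only cosmetic difference is that you obtain the $\mathcal S_1$ completion through $\mathcal K(H)_{\rm sa}^*\cong\mathcal S_1(H)_{\rm sa}$, whereas the paper computes the dual gauges directly on finite-rank states supported in a finite-dimensional $E$ and then completes; the two agree because finite-rank operators are dense in $\mathcal S_1(H)$.
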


\begin{proof}
The zero-dimensional case holds by convention.  Assume $E\ne\{0\}$.
For skew $K$, the rank-one trace pairing gives
\[
 \frac12\operatorname{Tr}\!\left[
 K^*(B_t\otimes\dd B_t-\dd B_t\otimes B_t)\right]
 =-\langle KB_t,\dd B_t\rangle.
\]
It\^o isometry and the covariance identity
$\mathbb E(B_t\otimes B_t)=tI_E$ therefore yield
\begin{align*}
 (\mathfrak c_A^E(K))^2
 &=2\int_0^1\mathbb E\|KB_t\|^2\,\dd t
 =2\int_0^1 t\,\operatorname{Tr}(K^*K)\,\dd t
 =\|K\|_{\mathcal S_2}^2.
\end{align*}
For the rank-one process $B_t^e=\beta_te$, the scalar It\^o product formula
holds almost surely in the form
\[
 B_1^e\otimes B_1^e
 -\int_0^1\bigl(B_t^e\otimes\dd B_t^e
                 +\dd B_t^e\otimes B_t^e\bigr)
 =e\otimes e.
\]
Consequently the random variable inside the $L^2$ norm is the deterministic
constant $\operatorname{Tr}(S(e\otimes e))$, and hence
\[
 \mathfrak c_Q^E(S)
 =\sup_{\|e\|=1}|\operatorname{Tr}(S(e\otimes e))|
 =\sup_{\|e\|=1}|\langle Se,e\rangle|
 =\|S\|_{\rm op}.
\]
For finite-rank
$a\in\operatorname{Fin}(H)_{\rm sk}$ and
$q\in\operatorname{Fin}(H)_{\rm sa}$, choose a finite-dimensional subspace
$E$ supporting both operators and put
\[
 \|a\|_{A,*}
 :=\sup_{\mathfrak c_A^E(K)\le1}|\operatorname{Tr}(K^*a)|,
 \qquad
 \|q\|_{Q,*}
 :=\sup_{\mathfrak c_Q^E(S)\le1}|\operatorname{Tr}(Sq)|.
\]
The preceding identities and finite-dimensional duality give
\[
 \|a\|_{A,*}=\|a\|_2,
 \qquad
 \|q\|_{Q,*}=\|q\|_1.
\]
These values do not depend on the supporting space $E$, by orthogonal
naturality.  Completing the two finite-rank state axes therefore gives
$\mathcal S_2(H)_{\rm sk}\times\mathcal S_1(H)_{\rm sa}$; adjoining the
unchanged first-level Hilbert coordinate gives the displayed state completion.
Finally, $P_nKP_n\to K$ in Hilbert--Schmidt norm, while the union of the
$P_nH$ is dense and
$\|S\|_{\rm op}=\sup_{\|e\|=1}|\langle Se,e\rangle|$ for self-adjoint $S$.
These facts prove the two exhaustion formulas.
\end{proof}

\begin{definition}[Orthogonally natural quadratic coefficient architecture]
\label{def:orthogonal-quadratic-architecture}
For each finite-dimensional real Hilbert space $E$, let
$\kappa_A^E$ be a norm on $\operatorname{Skew}(E)$ and let $\kappa_Q^E$ be a
norm on $\operatorname{Sym}(E)$.  The family
$\boldsymbol\kappa=(\kappa_A^E,\kappa_Q^E)_E$ is called an
\emph{orthogonally natural quadratic coefficient architecture} if:
\begin{enumerate}[label=\textup{(A\arabic*)},leftmargin=2.8em]
\item for every orthogonal isomorphism $U:E\to F$,
\[
 \kappa_A^F(UKU^*)=\kappa_A^E(K),\qquad
 \kappa_Q^F(USU^*)=\kappa_Q^E(S);
\]
\item for orthogonal sums,
\begin{align*}
 \kappa_A^{E\oplus F}(K\oplus L)^2
 &=\kappa_A^E(K)^2+\kappa_A^F(L)^2,\\
 \kappa_Q^{E\oplus F}(S\oplus T)
 &=\max\{\kappa_Q^E(S),\kappa_Q^F(T)\};
\end{align*}
\item if $(e_1,e_2)$ is an orthonormal basis of $\mathbb R^2$ and
$J_0=e_1\otimes e_2-e_2\otimes e_1$, then
$\kappa_A^{\mathbb R^2}(J_0)=\sqrt2$, while
$\kappa_Q^{\mathbb R}(1)=1$.
\end{enumerate}
By \Cref{thm:Brownian-calibration-rigidity} the Brownian gauges obey these laws;
the next proposition is their deterministic converse.
\end{definition}

\begin{proposition}[Rigidity under the principal orthogonal response laws]
\label{thm:orthogonal-response-rigidity}
Every orthogonally natural quadratic coefficient architecture is unique.  For
every finite-dimensional real Hilbert space $E$,
\begin{equation}\label{eq:orthogonal-rigidity-coefficients}
 \kappa_A^E(K)=\|K\|_{\mathcal S_2},\qquad
 \kappa_Q^E(S)=\|S\|_{\rm op}.
\end{equation}
Consequently its dual response gauges on finite-rank state coordinates are
\begin{equation}\label{eq:orthogonal-rigidity-state-gauges}
 \sup_{\kappa_A(K)\le1}|\operatorname{Tr}(K^*a)|=\|a\|_2,
 \qquad
 \sup_{\kappa_Q(S)\le1}|\operatorname{Tr}(Sq)|=\|q\|_1.
\end{equation}
For a separable Hilbert space $H$, compatible finite-dimensional exhaustion
therefore produces the principal response completion
\begin{equation}\label{eq:orthogonal-principal-completion}
 H\times\mathcal S_2(H)_{\rm sk}\times\mathcal S_1(H)_{\rm sa}.
\end{equation}
\end{proposition}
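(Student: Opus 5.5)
Both identities in \eqref{eq:orthogonal-rigidity-coefficients} will come from spectral normal forms: (A1) reduces each operator to a canonical block form, (A2) splits the blocks, and (A3) pins down the value on one elementary block.

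\emph{Covariance channel.}  Take $S\in\operatorname{Sym}(E)$ and diagonalize it in an orthonormal basis, $S=U\,\mathrm{diag}(\lambda_1,\dots,\lambda_n)U^*$.  By (A1) and repeated use of the max law in (A2),
\[
 \kappa_Q^E(S)=\max_i\kappa_Q^{\mathbb R}(\lambda_i).
\]
On the one-dimensional space $\mathbb R$, $\kappa_Q^{\mathbb R}$ is a norm, so homogeneity and the normalization $\kappa_Q^{\mathbb R}(1)=1$ give $\kappa_Q^{\mathbb R}(\lambda)=|\lambda|$.  Hence $\kappa_Q^E(S)=\max_i|\lambda_i|=\|S\|_{\rm op}$.

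\emph{Area channel.}  Take $K\in\operatorname{Skew}(E)$ and use the real normal form of skew operators.  There is an orthogonal decomposition $E=\bigoplus_j F_j\oplus N$, with $\dim F_j=2$ and $N=\ker K$, such that $K|_{F_j}=\mu_jJ_j$ for some $\mu_j\ge0$.  Here $J_j$ is orthogonally conjugate to $J_0$; if necessary we swap the basis vectors, which is also an orthogonal map.  By (A1), (A2) and homogeneity,
\[
 \kappa_A^E(K)^2=\sum_j\mu_j^2\,\kappa_A^{\mathbb R^2}(J_0)^2+\kappa_A^N(0)^2=2\sum_j\mu_j^2,
\]
since $\kappa_A^{\mathbb R^2}(J_0)=\sqrt2$ by (A3) and $\kappa_A^N(0)=0$ because $\kappa_A^N$ is a norm.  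On the other hand $\|J_0\|_{\mathcal S_2}^2=2$, so $\|K\|_{\mathcal S_2}^2=\sum_j\mu_j^2\|J_0\|_{\mathcal S_2}^2=2\sum_j\mu_j^2$.  The two expressions agree, so $\kappa_A^E(K)=\|K\|_{\mathcal S_2}$.  Uniqueness of the architecture is immediate, because both values are now forced.

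\emph{Dual gauges and completion.}  With the coefficients identified, \eqref{eq:orthogonal-rigidity-state-gauges} is finite-dimensional duality, exactly as in the proof of \Cref{thm:Brownian-calibration-rigidity}.  The norm $\mathcal S_2$ is self-dual under the trace pairing, restricted here to the skew subspace; the supremum is attained at $K=a/\|a\|_2$.  The dual of the operator norm on $\operatorname{Sym}(E)$ is the trace norm, attained at $S=\operatorname{sgn}(q)$ via the spectral decomposition.  These dual values do not depend on the finite-dimensional space supporting $a$ and $q$: enlarging the support by an orthogonal summand leaves the supremum unchanged by (A2), and (A1) handles the choice of space.  Completing $\operatorname{Fin}(H)_{\rm sk}$ and $\operatorname{Fin}(H)_{\rm sa}$ in these norms, and adjoining the Hilbert coordinate $H$, yields \eqref{eq:orthogonal-principal-completion}.

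The main obstacle is small.  One must check that (A2) applies to decompositions with several summands and with a zero block, and that the orientation of each $2\times2$ block is absorbed by (A1).  Both follow by iterating the binary law and from the fact that a reflection is orthogonal.
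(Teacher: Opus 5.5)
Your proposal is correct and follows the paper's proof essentially step for step. Both arguments use the real skew and symmetric spectral normal forms, transport by (A1), the $\ell^2$ and max block laws with homogeneity, and the normalizations (A3), and then finish with finite-dimensional Hilbert--Schmidt and operator/trace duality and finite-rank completion. One small remark: independence of the dual gauges from the supporting space needs no appeal to (A2), which only sees block-diagonal test operators, because the computed values $\|a\|_2$ and $\|q\|_1$ are intrinsic to the operators themselves.
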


\begin{proof}
Let $K\in\operatorname{Skew}(E)$.  The real skew spectral theorem gives an
orthogonal decomposition
\[
 E=E_0\oplus E_1\oplus\cdots\oplus E_m,
 \qquad
 K|_{E_0}=0,\qquad K|_{E_j}=a_jJ_0
\]
after identifying each two-plane $E_j$ with $\mathbb R^2$.  Orthogonal
naturality, homogeneity, and the square-sum law give
\[
 \kappa_A^E(K)^2
 =\sum_{j=1}^m |a_j|^2\kappa_A^{\mathbb R^2}(J_0)^2
 =2\sum_{j=1}^m a_j^2
 =\|K\|_{\mathcal S_2}^2.
\]
If $S\in\operatorname{Sym}(E)$, choose an orthonormal eigenbasis with
eigenvalues $\lambda_1,\ldots,\lambda_n$.  The max direct-sum law and the
one-dimensional normalization yield
\[
 \kappa_Q^E(S)=\max_j|\lambda_j|=\|S\|_{\rm op}.
\]
Finite-dimensional Hilbert--Schmidt self-duality and operator/trace duality
give \eqref{eq:orthogonal-rigidity-state-gauges}.  Increasing
finite-dimensional exhaustion of a separable $H$ yields
\eqref{eq:orthogonal-principal-completion}.
\end{proof}

\begin{corollary}[Brownian calibration dictionary]
\label{prop:Brownian-response-direct-sum}
\label{cor:calibration-direct-sum-equivalence}
For finite-dimensional real Hilbert spaces $E,F$, the Brownian gauges obey
\begin{align}
 \mathfrak c_A^{E\oplus F}(K\oplus L)^2
 &=\mathfrak c_A^E(K)^2+\mathfrak c_A^F(L)^2,
 \label{eq:Brownian-area-block-law}\\
 \mathfrak c_Q^{E\oplus F}(S\oplus T)
 &=\max\{\mathfrak c_Q^E(S),\mathfrak c_Q^F(T)\},
 \label{eq:Brownian-covariance-block-law}
\end{align}
and, for the standard skew block
$J_0=e_1\otimes e_2-e_2\otimes e_1$ on $\mathbb R^2$,
\begin{equation}\label{eq:Brownian-principal-normalizations}
 \mathfrak c_A^{\mathbb R^2}(J_0)=\sqrt2,
 \qquad
 \mathfrak c_Q^{\mathbb R}(1)=1.
\end{equation}
Consequently the following descriptions of the normalized orthogonally
natural quadratic coefficient geometry are equivalent:
\begin{enumerate}[label=\textup{(\roman*)},leftmargin=2.5em]
\item the coefficients are measured by the Brownian response gauges;
\item area blocks obey the $\ell^2$ law, covariance blocks obey the maximum
law, and the preceding principal normalizations hold;
\item the coefficient norms are $\mathcal S_2$ on the skew channel and the
operator norm on the symmetric channel.
\end{enumerate}
Their dual state completion is
$\mathcal S_2(H)_{\rm sk}\times\mathcal S_1(H)_{\rm sa}$.
\end{corollary}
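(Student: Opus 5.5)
The plan is to derive the corollary directly from \Cref{thm:Brownian-calibration-rigidity} and \Cref{thm:orthogonal-response-rigidity}, with no new stochastic computation.

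\emph{Step 1: block laws and normalizations.} By \Cref{thm:Brownian-calibration-rigidity}, $\mathfrak c_A^E(K)=\|K\|_{\mathcal S_2}$ and $\mathfrak c_Q^E(S)=\|S\|_{\rm op}$ on every finite-dimensional $E$. So it suffices to check the corresponding identities for these two norms.
\begin{itemize}
\item For $K\oplus L$ on $E\oplus F$, one has $(K\oplus L)^*(K\oplus L)=K^*K\oplus L^*L$. Taking traces gives the square-sum law \eqref{eq:Brownian-area-block-law}.
\item For $S\oplus T$, the spectrum is the union of the two spectra, and the operator norm of a self-adjoint operator is its maximal absolute eigenvalue. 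This gives \eqref{eq:Brownian-covariance-block-law}.
\item $J_0$ has singular values $1,1$, so $\|J_0\|_2=\sqrt2$. Trivially $\|1\|_{\rm op}=1$ on $\mathbb R$. This gives \eqref{eq:Brownian-principal-normalizations}.
\end{itemize}

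\emph{Step 2: the equivalences.}
\begin{itemize}
\item (i)$\Rightarrow$(iii) is \Cref{thm:Brownian-calibration-rigidity} itself.
\item (iii)$\Rightarrow$(ii) is Step 1.
\item (ii)$\Rightarrow$(iii): a family satisfying (ii) is orthogonally natural in the sense of \Cref{def:orthogonal-quadratic-architecture}. Condition (A1) holds because both laws in (ii) are stated for the family of norms implicit in "orthogonally natural". The block laws are (A2) and the normalizations are (A3). Then \Cref{thm:orthogonal-response-rigidity} forces the $\mathcal S_2$ and operator norms.
\item (iii)$\Rightarrow$(i): combine with \Cref{thm:Brownian-calibration-rigidity} once more.
\end{itemize}

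\emph{Step 3: the dual state completion.} This is \eqref{eq:orthogonal-rigidity-state-gauges} together with completion of the finite-rank axes, as already stated in \Cref{thm:orthogonal-response-rigidity} and \Cref{thm:Brownian-calibration-rigidity}.

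\emph{Expected obstacle.} The only delicate point is in (ii)$\Rightarrow$(iii). The corollary speaks of "orthogonally natural" geometry, so invariance (A1) must be read as part of the standing hypothesis rather than derived from (ii). Without (A1), the spectral reduction in \Cref{thm:orthogonal-response-rigidity} cannot transport arbitrary two-planes to the standard $\mathbb R^2$. I would therefore state explicitly that all three descriptions are taken within the class of orthogonally natural norm families.
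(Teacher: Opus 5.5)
Your proposal is correct and follows essentially the same route as the paper: identify the Brownian gauges with the $\mathcal S_2$ and operator norms via \Cref{thm:Brownian-calibration-rigidity}, then check the elementary block formulas and normalizations, invoke \Cref{thm:orthogonal-response-rigidity} for the converse, and obtain the $\mathcal S_2/\mathcal S_1$ completion by duality. Treating the invariance condition (A1) as part of the standing orthogonally natural class, rather than deriving it from (ii), matches how the corollary is framed.
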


\begin{proof}
\Cref{thm:Brownian-calibration-rigidity} and the elementary norm block formulas
give the laws and normalizations; \Cref{thm:orthogonal-response-rigidity} gives
the converse, and duality gives the stated completion.
\end{proof}

\begin{corollary}[Schatten classification from orthogonal direct-sum laws]
\label{cor:direct-sum-schatten-scale}
Let $1\le r\le\infty$.  Replace the area square-sum law by the $\ell^r$ block
law and normalize a standard skew two-plane by $2^{1/r}$ for $r<\infty$, or by
$1$ for $r=\infty$.  Then the finite-dimensional coefficient norm is
$\|\cdot\|_{\mathcal S_r}$.  The analogous symmetric law, normalized in one
dimension, also gives $\|\cdot\|_{\mathcal S_r}$.  On finite-rank state
coordinates the dual gauge is the Schatten $r'$ gauge; completing finite ranks
gives $\mathcal S_{r'}$ when $r'<\infty$ and $\mathcal K(H)$ at the endpoint
$r'=\infty$.

For the principal pair $(r_A,r_Q)=(2,\infty)$, the state geometry is therefore
$\mathcal S_2/\mathcal S_1$.  At the coefficient endpoint $r_Q=\infty$, the
finite-rank norm completion is $\mathcal K(H)$, while the full normal test space
may be taken to be $\mathcal L(H)=\mathcal S_1(H)^*$; both unit balls induce the
same trace norm on $\mathcal S_1(H)$.
\end{corollary}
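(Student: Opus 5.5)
The plan is to repeat the argument of \Cref{thm:orthogonal-response-rigidity}, with the square-sum law replaced by the $\ell^r$ law, and then pass to duals by finite-dimensional Schatten duality.

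\emph{Skew channel.} Let $K\in\operatorname{Skew}(E)$. The real skew spectral theorem gives an orthogonal splitting $E=E_0\oplus E_1\oplus\cdots\oplus E_m$ with $K|_{E_0}=0$ and $K|_{E_j}=a_jJ_0$. By orthogonal naturality, homogeneity and the $\ell^r$ block law,
\[
 \kappa_A^E(K)^r=\sum_j|a_j|^r\,\kappa_A^{\mathbb R^2}(J_0)^r
 =2\sum_j|a_j|^r
\]
for $r<\infty$. The singular values of $K$ are the $|a_j|$, each taken twice, so the right side equals $\|K\|_{\mathcal S_r}^r$. For $r=\infty$ the block law is a maximum, and the normalization $1$ gives $\max_j|a_j|=\|K\|_{\rm op}$. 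One point needs checking: the normalization of $J_0$ is consistent, because $\|J_0\|_{\mathcal S_r}=2^{1/r}$.

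\emph{Symmetric channel.} Diagonalize $S\in\operatorname{Sym}(E)$ in an orthonormal eigenbasis. The block law over one-dimensional summands, together with $\kappa_Q^{\mathbb R}(1)=1$, gives $\kappa_Q^E(S)=\|(\lambda_j)\|_{\ell^r}=\|S\|_{\mathcal S_r}$.

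\emph{Dual gauge.} Finite-dimensional trace duality $\mathcal S_r^*=\mathcal S_{r'}$ identifies the dual gauges. The only care needed is that the supremum runs over skew (respectively symmetric) test operators only. For a skew $a$, the element $\operatorname{sgn}$-polar of $a$ that norms it in $\mathcal S_{r'}$ can itself be taken skew: use the spectral decomposition of $a$ and, on each block, the test $|a_j|^{r'-1}\operatorname{sgn}(a_j)J_0$. The symmetric case is the same with an eigenbasis. Hence the dual gauge is exactly $\|\cdot\|_{\mathcal S_{r'}}$ on finite ranks. This value is independent of the supporting subspace by orthogonal naturality, as in the proof of \Cref{thm:Brownian-calibration-rigidity}. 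Completing finite ranks then gives $\mathcal S_{r'}(H)$ for $r'<\infty$. For $r'=\infty$ the closure of the finite-rank operators in operator norm is $\mathcal K(H)$.

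\emph{Principal pair and endpoint.} Take $(r_A,r_Q)=(2,\infty)$. Then $r_A'=2$ and $r_Q'=1$, which is $\mathcal S_2/\mathcal S_1$. For the endpoint statement, note that $\mathcal K(H)^*=\mathcal S_1(H)$ and $\mathcal S_1(H)^*=\mathcal L(H)$. The trace norm is then recovered as a supremum over either unit ball: it is already attained over finite-rank test operators (by duality on finite-rank approximants, using density of finite rank in $\mathcal S_1$), and these lie in both balls. Restricting to self-adjoint tests for self-adjoint $q$ is harmless, since $\operatorname{Tr}(Sq)$ is real and the norming element $\operatorname{sgn}(q)$ is self-adjoint.

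The main obstacle is this channel-restricted duality, namely that the suprema over skew or symmetric tests still reach the full Schatten dual norm. The block-wise explicit norming elements above settle it.
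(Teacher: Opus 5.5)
Your proposal is correct and follows the paper's own argument. Both decompose into oriented skew two-planes and one-dimensional eigenblocks, combine orthogonal invariance, homogeneity and the $\ell^r$ block law to obtain the Schatten $r$-norm, and then use finite-dimensional Schatten duality together with the standard finite-rank completions. Your explicit skew and symmetric norming tests, and your finite-rank approximation argument for the trace norm at the endpoint, spell out what the paper compresses into ``finite-dimensional Schatten duality.'' One small repair: for $r=1$, $r'=\infty$, the formula $|a_j|^{r'-1}$ does not make sense, so take instead a test concentrated on a single block of maximal $|a_j|$.
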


\begin{proof}
Use the real skew spectral decomposition into oriented two-planes and the
ordinary spectral decomposition of self-adjoint operators into one-dimensional
blocks.  Orthogonal invariance, homogeneity, and the prescribed block law give
the $\ell^r$ norm of the singular values.  Finite-dimensional Schatten duality
gives the state gauges, and the endpoint completions are the standard ones.
\end{proof}

\begin{proposition}[Qualitative observability alone does not choose a topology]
\label{prop:qualitative-observability-no-topology}
The finite-rank central restart algebra and point-separation by all scalar linear
probes do not determine a Banach completion.  For every pair
$1\le r_A,r_Q\le\infty$, the same finite-dimensional algebraic skeleton supports
orthogonally natural coefficient geometries with Schatten exponents
$(r_A,r_Q)$.  Their dual state gauges have exponents $(r_A',r_Q')$, with compact
operator completion at an $\infty$ endpoint, and are generally inequivalent.
Nevertheless the complete dual probe families separate the same finite-rank
points and reconstruct the same central product.  Hence the principal
$\mathcal S_2/\mathcal S_1$ split is selected by the quantitative Brownian
response laws, not by qualitative separation alone.
\end{proposition}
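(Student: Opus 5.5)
The proof is a construction of explicit competing geometries on one fixed algebraic skeleton. First I fix the finite-rank central skeleton $\Gito_{\rm fin}(H)$ with the product \eqref{eq:intrinsic-Ito-product}. Its algebra involves no norm: the cocycle $\omega_H(x,y)=\Anti(x\otimes y)$ is finite rank and skew, and the covariance channel is additive. By \Cref{prop:intrinsic-central-extension}, within the central-coordinate class the product is determined by the two degree-two readouts alone, with no topology used. So the restart algebra is the same object whatever geometry is later chosen.

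Next, for each pair $1\le r_A,r_Q\le\infty$ I define coefficient norms $\kappa_A^E=\|\cdot\|_{\mathcal S_{r_A}}$ on $\operatorname{Skew}(E)$ and $\kappa_Q^E=\|\cdot\|_{\mathcal S_{r_Q}}$ on $\operatorname{Sym}(E)$. They are orthogonally natural because Schatten norms depend only on singular values, which are invariant under $K\mapsto UKU^*$. They obey the $\ell^{r}$ block laws, and the normalizations follow those of \Cref{cor:direct-sum-schatten-scale}. That corollary then identifies the dual state gauges on finite-rank coordinates as the $\mathcal S_{r_A'}$ and $\mathcal S_{r_Q'}$ norms. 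Completing the finite ranks gives $\mathcal S_{r'}(H)$ when $r'<\infty$ and $\mathcal K(H)$ at $r'=\infty$. The principal pair $(2,\infty)$ is one instance of this family.

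\textbf{Inequivalence.} Whenever the exponents differ, say $r_A'\neq s_A'$, I compare the diagonal skew operators $a_n=\sum_{j=1}^n J_0^{(j)}$ built from $n$ orthogonal standard two-planes. Their $\mathcal S_p$ norms are $(2n)^{1/p}$, so the ratio of the two norms is unbounded in $n$ and no equivalence constant exists. The same argument with $q_n=\sum_{j\le n}e_j\otimes e_j$ handles the covariance channel. This step shows that the topologies are genuinely distinct in infinite dimension, including at the $\infty$ endpoints via the compact-operator norm.

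\textbf{Point separation.} For any of these geometries, the full dual probe family $\{\operatorname{Tr}(K^*\cdot)\}$ and $\{\operatorname{Tr}(S\,\cdot)\}$, with $K,S$ finite rank, separates finite-rank points. If $a\neq0$, the probe $K=a$ gives $\operatorname{Tr}(a^*a)>0$, and likewise for $q$. These probes are the same set of linear functionals in every case; only their normalization differs. The product reconstructed from them is the one fixed in the first step. Hence qualitative separation together with the algebra is common to all $(r_A,r_Q)$, while the completions differ. By \Cref{thm:Brownian-calibration-rigidity} and \Cref{thm:orthogonal-response-rigidity}, only the quantitative Brownian block laws single out $(2,\infty)$.

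The main obstacle is only the bookkeeping at the $\infty$ endpoints. There one must make sure that the finite-rank completion is $\mathcal K(H)$ rather than $\mathcal L(H)$. One must also check that the inequivalence witnesses still work there, since the operator norm of $a_n$ stays bounded while the other norms grow, so they do.
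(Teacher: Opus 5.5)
Your proposal is correct and follows essentially the paper's own route. It applies Schatten duality on each axis via \Cref{cor:direct-sum-schatten-scale}, separates finite-rank points by the (rescaled) dual balls, takes the norm-free restart product from \Cref{prop:intrinsic-central-extension}, and uses finite-rank diagonal sequences with different Schatten asymptotics as inequivalence witnesses; you simply make the witnesses $a_n,q_n$ and the $\mathcal K(H)$ endpoint bookkeeping explicit.
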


\begin{proof}
Apply \Cref{cor:direct-sum-schatten-scale} on each axis.  Linear dual balls
separate every finite-rank point independently of the exponent, while the
restart product was fixed in \Cref{prop:intrinsic-central-extension}.
Inequivalence is witnessed by finite-rank diagonal sequences with different
Schatten asymptotics.
\end{proof}

On a separable Hilbert space $H$, use the Hilbert--Schmidt area coefficient
ball and declare the full normal covariance coefficient ball
\[
 \mathbb K_A:=\{K\in\mathcal S_2(H)_{\rm sk}:\|K\|_2\le1\},
 \qquad
 \mathbb K_Q:=\{S\in\mathcal L(H)_{\rm sa}:\|S\|_{\rm op}\le1\},
\]
respectively.  The displayed norms are selected by the Brownian response
gauges.  The full normal class $\mathcal L(H)_{\rm sa}$ is part of the
principal program: the finite-rank operator-norm completion is
$\mathcal K(H)_{\rm sa}$, while both classes induce the trace norm on
$\mathcal S_1(H)_{\rm sa}$.  Equip the balls with the weak Hilbert--Schmidt and weak-star
$\sigma(\mathcal L(H),\mathcal S_1(H))$ topologies.  Both are compact
metrizable.  Let $\Delta_T=\{(s,t):0\le s\le t\le T\}$ and, for a separable
Banach space $E$, let $\mathscr H_{\Delta_T}^{0,\alpha}(E)$ be the
little-H\"older closure of continuous two-index fields vanishing on the
diagonal.  The ambient profile carrier
\begin{equation}\label{eq:principal-joint-response-space}
 \widetilde{\mathsf R}_{\mathfrak Q_2}^{\eta}
 :=C_0^{0,\eta}([0,T];H)
 \times\mathscr H_{\Delta_T}^{0,2\eta}\!\bigl(C(\mathbb K_A)\bigr)
 \times\mathscr H_{\Delta_T}^{0,2\eta}\!\bigl(C(\mathbb K_Q)\bigr)
\end{equation}
serves as the ambient space for response profiles.  The distinguished response
target is the coherent closed range defined in
\Cref{def:coherent-principal-response-target}.

\begin{definition}[Intrinsic principal quadratic response syntax]
\label{def:principal-intrinsic-quadratic-response}
A \emph{principal quadratic response package} on a stopping-stable response
domain is a total causal joint profile
\[
 \mathbf R^{(2)}=(X,\mathcal R_A,\mathcal R_Q)
 \in\widetilde{\mathsf R}_{\mathfrak Q_2}^{\eta}
\]
such that, for every $(s,t)$, the maps in the coefficient variable are the
restrictions to $\mathbb K_A$ and $\mathbb K_Q$ of weakly, respectively
weak-star, continuous linear functionals on
$\mathcal S_2(H)_{\rm sk}$ and $\mathcal L(H)_{\rm sa}$.  Its restarted
responses satisfy, for $s\le u\le t$,
\begin{align}
 \mathcal R_{A;s,t}(K)
 &=\mathcal R_{A;s,u}(K)+\mathcal R_{A;u,t}(K)
   +\operatorname{Tr}\!\left[K^*\Anti
      (X_{s,u}\otimes X_{u,t})\right],
 \qquad K\in\mathbb K_A,
 \label{eq:principal-raw-area-response}\\
 \mathcal R_{Q;s,t}(S)
 &=\mathcal R_{Q;s,u}(S)+\mathcal R_{Q;u,t}(S),
 \qquad S\in\mathbb K_Q.
 \label{eq:principal-raw-cov-response}
\end{align}
The \emph{intrinsic principal quadratic response syntax}, denoted
$\mathfrak Q_2^{\circ}(H)$, is generated under finite products and
stable-causal postcomposition by the restriction of this package to the
smooth finite-rank coherent source class in
\eqref{eq:coherent-core-response-profile}.  Its canonical stable response
completion $\mathfrak Q_2(H)$ is defined in
\Cref{def:coherent-principal-response-target}.
\end{definition}

The two composition identities determine the central response laws.  The
quantitative stochastic calibration determines the coefficient balls, and the
completion theorem reconstructs the spaces on which their scalar profiles act
continuously.

\begin{proposition}[Response characterization of the split topology]
\label{prop:principal-quadratic-forcing}
Let $a\in\operatorname{Fin}(H)_{\rm sk}$ and
$q\in\operatorname{Fin}(H)_{\rm sa}$.  The gauges induced by the two
coefficient classes satisfy
\begin{equation}\label{eq:principal-forced-norms}
 \sup_{K\in\mathbb K_A}|\operatorname{Tr}(K^*a)|=\|a\|_2,
 \qquad
 \sup_{S\in\mathbb K_Q}|\operatorname{Tr}(Sq)|=\|q\|_1.
\end{equation}
Consequently the Hausdorff completion of the common finite-rank central
skeleton in the topology determined by the principal response syntax is
canonically
\begin{equation}\label{eq:principal-response-completion}
 H\times\mathcal S_2(H)_{\rm sk}\times\mathcal S_1(H)_{\rm sa}.
\end{equation}
The stochastic identification of the two central coordinates is given by
\Cref{prop:semimartingale-verifies-principal-response}.
\end{proposition}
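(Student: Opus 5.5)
The plan is to prove the two dual-gauge identities in \eqref{eq:principal-forced-norms} by finite-dimensional reduction, and then get the completion \eqref{eq:principal-response-completion} from density of finite ranks.

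\emph{Reduction to a supporting subspace.} Fix finite-rank $a$ and $q$, and choose a finite-dimensional subspace $E\subseteq H$ containing the ranges and co-ranges of both, with orthogonal projection $P_E$. For $K\in\mathbb K_A$ we have $\operatorname{Tr}(K^*a)=\operatorname{Tr}((P_EKP_E)^*a)$, because $a=P_EaP_E$. Moreover $\|P_EKP_E\|_2\le\|K\|_2$, and $P_EKP_E$ is again skew. The same compression works for $S\in\mathbb K_Q$: $\operatorname{Tr}(Sq)=\operatorname{Tr}(P_ESP_Eq)$ and $\|P_ESP_E\|_{\rm op}\le\|S\|_{\rm op}$. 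So both suprema equal the corresponding suprema over the coefficient balls of $\operatorname{Skew}(E)$ and $\operatorname{Sym}(E)$. These are exactly the quantities $\|a\|_{A,*}$ and $\|q\|_{Q,*}$ in the proof of \Cref{thm:Brownian-calibration-rigidity}, where the Brownian gauges were identified with $\|\cdot\|_2$ and $\|\cdot\|_{\rm op}$ by that theorem, or equivalently by \Cref{thm:orthogonal-response-rigidity}.

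\emph{The finite-dimensional dualities.}
\begin{itemize}
\item For the area channel, Hilbert--Schmidt self-duality on $\operatorname{Skew}(E)$ does the work. The supremum is attained at $K=a/\|a\|_2$, which is skew, and Cauchy--Schwarz gives the upper bound. The only point to note is that the skew subspace is orthogonally complemented, so restricting to skew $K$ loses nothing.
\item For the covariance channel, diagonalize $q=\sum_j\lambda_je_j\otimes e_j$. Take $S=\sum_j\operatorname{sgn}(\lambda_j)e_j\otimes e_j$, which has operator norm at most $1$; this gives the lower bound $\sum_j|\lambda_j|=\|q\|_1$. The upper bound is $|\operatorname{Tr}(Sq)|\le\|S\|_{\rm op}\|q\|_1$.
\end{itemize}
These values do not depend on the choice of $E$, which settles \eqref{eq:principal-forced-norms}.

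\emph{Completion.} The principal syntax tests the central coordinates only through the scalar profiles $K\mapsto\operatorname{Tr}(K^*a)$ and $S\mapsto\operatorname{Tr}(Sq)$ in $C(\mathbb K_A)$ and $C(\mathbb K_Q)$ with the sup norm. On the finite-rank central skeleton $\operatorname{Fin}(H)_{\rm sk}\times\operatorname{Fin}(H)_{\rm sa}$, the induced uniformity is therefore the one given by the norms $\|\cdot\|_2$ and $\|\cdot\|_1$, while the first level carries the $H$-norm. The Hausdorff completion is thus the closure of finite ranks in these norms. Finite ranks are dense in $\mathcal S_2(H)$ and $\mathcal S_1(H)$, and the skew and self-adjoint parts are closed, so the completion is $\mathcal S_2(H)_{\rm sk}\times\mathcal S_1(H)_{\rm sa}$.

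To make the identification canonical, the profile embedding should extend to an isometry from these Schatten spaces into $C(\mathbb K_A)\times C(\mathbb K_Q)$. Continuity of each extended profile in the declared topologies is routine:
\begin{itemize}
\item For $a\in\mathcal S_2$, the map $K\mapsto\operatorname{Tr}(K^*a)$ is weakly continuous.
\item For $q\in\mathcal S_1$, the map $S\mapsto\operatorname{Tr}(Sq)$ is weak-star continuous, since $\mathcal S_1$ is the predual of $\mathcal L(H)$.
\end{itemize}
The uniqueness clause of \Cref{thm:descent-before-completion}(iv) then identifies the completion with \eqref{eq:principal-response-completion}.

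\emph{Main obstacle.} The step needing care is the covariance endpoint. The test ball is the full normal class $\mathcal L(H)_{\rm sa}$ rather than its finite-rank completion, so one must check that it still induces exactly $\|\cdot\|_1$ and not a larger gauge. The compression argument above handles this, since every $S$ reduces to $P_ESP_E$ on finite-rank $q$. The remaining input is the operator/trace duality already recorded in \Cref{cor:direct-sum-schatten-scale}.
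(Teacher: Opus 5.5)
Your proposal is correct and follows essentially the paper's argument: Hilbert--Schmidt self-duality for the area gauge, operator/trace duality with self-adjoint tests for the covariance gauge, and finite-rank density for the completion. The compression to a finite-dimensional $E$ does no harm but is not needed, because your explicit maximizers $a/\|a\|_2$ and $\sum_j\operatorname{sgn}(\lambda_j)\,e_j\otimes e_j$ already lie in $\mathbb K_A$ and $\mathbb K_Q$, and the upper bounds (Cauchy--Schwarz and $|\operatorname{Tr}(Sq)|\le\|S\|_{\rm op}\|q\|_1$) hold directly on $H$.
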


\begin{proof}
Hilbert-space duality on $\mathcal S_2(H)_{\rm sk}$ gives the first identity in
\eqref{eq:principal-forced-norms}.  Trace-class duality gives
$\|q\|_1=\sup_{\|S\|_{\rm op}\le1}|\operatorname{Tr}(Sq)|$; self-adjoint tests
suffice for self-adjoint $q$.  Finite-rank density gives
\eqref{eq:principal-response-completion}.
\end{proof}

\begin{proposition}[Semimartingale realization of the principal syntax]
\label{prop:semimartingale-verifies-principal-response}
Let $P$ be a continuous $H$-valued semimartingale law for which the restarted
second level, area, and bracket have the declared regularity.  Put
$U_r^{P,s}=X_r-X_s$ and define
\begin{align}
 \mathcal R_{A;s,t}^{P}(K)
 &:=\frac12\int_s^t
   \operatorname{Tr}\!\left[
   K^*\bigl(U_r^{P,s}\otimes\dd X_r-\dd X_r\otimes U_r^{P,s}\bigr)
   \right],\\
 \mathcal R_{Q;s,t}^{P}(S)
 &:=\operatorname{Tr}\!\left[S(U_t^{P,s}\otimes U_t^{P,s})\right]
   -\int_s^t\operatorname{Tr}\!\left[
   S\bigl(U_r^{P,s}\otimes\dd X_r+\dd X_r\otimes U_r^{P,s}\bigr)
   \right].
 \label{eq:principal-semimartingale-observers}
\end{align}
Then the resulting joint profile is a probabilistic implementation of
\Cref{def:principal-intrinsic-quadratic-response}: it satisfies
\eqref{eq:principal-raw-area-response}--\eqref{eq:principal-raw-cov-response},
and
\begin{equation}\label{eq:principal-modelwise-identification}
 \mathcal R_{A;s,t}^{P}(K)=\operatorname{Tr}(K^*A^P_{s,t}),
 \qquad
 \mathcal R_{Q;s,t}^{P}(S)=\operatorname{Tr}(S Q^P_{s,t}).
\end{equation}
If the principal operational type is accessible, these profiles admit one
jointly continuous version in the ambient carrier
$\widetilde{\mathsf R}_{\mathfrak Q_2}^{\eta}$.
\end{proposition}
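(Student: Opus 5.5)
The proof is a sequence of It\^o-calculus identities for a fixed law $P$, followed by a regularity remark. First I would fix $s\le u\le t$ and use $U_r^{P,s}=U_r^{P,u}+X_{s,u}$ for $r\ge u$, where $X_{s,u}=X_u-X_s$ is $\mathcal F_u$-measurable.

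\emph{Area identity.} I split $\int_s^t=\int_s^u+\int_u^t$ and substitute the decomposition on $[u,t]$. The constant part contributes
\[
\tfrac12\operatorname{Tr}\bigl[K^*(X_{s,u}\otimes X_{u,t}-X_{u,t}\otimes X_{s,u})\bigr]
=\operatorname{Tr}\bigl[K^*\Anti(X_{s,u}\otimes X_{u,t})\bigr].
\]
The justification is that integrating a constant $\mathcal F_u$-measurable vector against $dX$ on $[u,t]$ gives $X_{s,u}\otimes X_{u,t}$. This is legitimate because the stochastic integral is linear in $\mathcal F_u$-measurable bounded-rank multipliers; I would localize if needed. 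This gives \eqref{eq:principal-raw-area-response}.

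\emph{Covariance identity.} I expand $U_t^{P,s}\otimes U_t^{P,s}$ with the same decomposition. The cross terms $X_{s,u}\otimes U_t^{P,u}+U_t^{P,u}\otimes X_{s,u}$ cancel exactly against the constant parts of the integral on $[u,t]$. What remains is the sum of the $[s,u]$ and $[u,t]$ defects, which is \eqref{eq:principal-raw-cov-response}.

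\emph{Identification \eqref{eq:principal-modelwise-identification}.} I would take $A^P_{s,t}:=\Anti\int_s^t U_r^{P,s}\otimes dX_r$, interpreted in $\mathcal S_2$ as the declared regularity provides. The factor $\tfrac12(\cdot-\cdot^*)$ is exactly $\Anti$, so the area formula is a pairing identity. The pairing $\operatorname{Tr}[K^*\int\cdots]=\int\operatorname{Tr}[K^*\cdots]$ holds for finite-rank $K$ by linearity of the integral. It then extends to $K\in\mathbb K_A$ by continuity of both sides in the $\mathcal S_2$ pairing, using the declared integrability.

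For the covariance, the Hilbert-valued It\^o product rule gives
\[
U_t\otimes U_t=\int_s^t(U\otimes dX+dX\otimes U)+\qv{M^P}_{s,t},
\]
since the finite-variation drift has no bracket. The trace pairing with $S\in\mathcal L(H)_{\rm sa}$ is then $\operatorname{Tr}(SQ^P_{s,t})$ with $Q^P=\qv{M^P}$ in $\mathcal S_1$. I would check this first for finite-rank $S$ and pass to the full $\mathcal L(H)_{\rm sa}$ by weak-star density, using $\mathcal S_1$--$\mathcal L$ duality.

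\emph{Coefficient-variable properties.} Linearity and weak/weak-star continuity in $K,S$ follow at once, because the profiles are pairings with fixed $\mathcal S_2$ and $\mathcal S_1$ elements. The identities hold almost surely for each $(s,u,t)$ and each $K,S$. To upgrade them to simultaneous identities I would use countable dense sets together with continuity in all variables.

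\emph{Jointly continuous version.} This is the main obstacle. The plan is to obtain the version from the accessibility hypothesis (\Cref{def:common-accessibility}), which supplies a common realization in $\widetilde{\mathsf R}_{\mathfrak Q_2}^{\eta}$. That realization agrees modelwise with $(A^P,\qv{M^P})$ via the pairing identities above, and sup over the compact metrizable balls converts $\mathcal S_2$/$\mathcal S_1$ H\"older regularity into $C(\mathbb K)$-valued H\"older regularity. Absent accessibility one would need Kolmogorov-type estimates, so the statement only claims it under accessibility.
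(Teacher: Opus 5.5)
Your proposal is correct and is essentially the paper's argument, written out in more detail: Chen splitting for the area law, additivity for the covariance law, the It\^o product formula for \eqref{eq:principal-modelwise-identification}, and the accessible state transported through the isometric dual embeddings $\jmath_A,\jmath_Q$ to get the jointly continuous version in the carrier. One step needs tightening: when you pass from finite-rank to general $S\in\mathcal L(H)_{\rm sa}$, duality controls only $\operatorname{Tr}(SQ^P_{s,t})$ and not the scalar integral $2\int_s^t\langle SU_r^{P,s},\dd X_r\rangle$, so either use strongly convergent compressions $\Pi_nS\Pi_n\to S$ (with $\Pi_n\uparrow I$ finite-rank projections) together with stochastic dominated convergence, or, as the paper does, apply the scalar It\^o formula to $\langle SU^{P,s},U^{P,s}\rangle$ directly for bounded $S$.
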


\begin{proof}
The antisymmetric second-level Chen identity gives
\eqref{eq:principal-raw-area-response}.  The bracket is additive, giving
\eqref{eq:principal-raw-cov-response}.  The scalar It\^o product formula gives
\eqref{eq:principal-modelwise-identification}.  Accessibility supplies the
joint little-H\"older versions required by the ambient profile carrier.
\end{proof}

\section{Lossless representations, norming obstructions, and minimality}
\label{sec:response-topology-minimality}

The response-generated analytic topology determines the lossless probe
restrictions and excludes proper lossless stable-causal quotients.

\subsection{Calibrated response completion}
\label{sec:probe-forced-completion}

The cocycle precedes topology: Brownian gauges select the principal completion,
while a general stable probe family may induce another central norm.  We first
construct that completion and then classify when it is principal.

Let
\[
 \mathfrak F_A(H):=\operatorname{Fin}(H)_{\rm sk},
 \qquad
 \mathfrak F_Q(H):=\operatorname{Fin}(H)_{\rm sa}.
\]
A pair $L=(L_A,L_Q)$ of linear probe families on these spaces is called
\emph{admissible} if
\begin{equation}\label{eq:probe-forced-norms}
 \|a\|_{L_A}:=\sup_{\lambda\in L_A}|\lambda(a)|,
 \qquad
 \|q\|_{L_Q}:=\sup_{\mu\in L_Q}|\mu(q)|
\end{equation}
are finite separating norms and there are constants $C_A,C_Q<\infty$ such
that, for all $x,y\in H$,
\begin{equation}\label{eq:probe-rank-one-bounds}
 \|\Anti(x\otimes y)\|_{L_A}\le C_A\|x\|\|y\|,
 \qquad
 \|\Sym(x\otimes y)\|_{L_Q}\le C_Q\|x\|\|y\|.
\end{equation}
Write $E_A[L]$ and $E_Q[L]$ for the Banach completions of
$\mathfrak F_A(H)$ and $\mathfrak F_Q(H)$ in these norms.

\begin{proposition}[Response completion of the central skeleton]
\label{thm:response-forced-completion}
Let $L=(L_A,L_Q)$ be admissible.
\begin{enumerate}[label=\textup{(\roman*)},leftmargin=2.4em]
\item The finite-rank product
\[
 (x,a,q)\star(y,b,r)
 =\bigl(x+y,a+b+\Anti(x\otimes y),q+r\bigr)
\]
extends uniquely to a continuous two-step topological group law on
\begin{equation}\label{eq:probe-completed-group}
 \Gito_L(H):=H\times E_A[L]\times E_Q[L].
\end{equation}
This group is the completion of the common finite-rank central skeleton in the
response norm $\|x\|+\|a\|_{L_A}+\|q\|_{L_Q}$.

\item The evaluation maps
\[
 a\longmapsto(\lambda(a))_{\lambda\in L_A},
 \qquad
 q\longmapsto(\mu(q))_{\mu\in L_Q}
\]
extend isometrically from $E_A[L]$ and $E_Q[L]$ onto the closures of their
finite-rank images in the corresponding supremum-norm function spaces.  Thus
the completed central topology is exactly the initial topology generated by
the bundled probes.

\item Suppose $\mathfrak I(H)$ is a separable rank-one-normalized Banach
operator ideal in which finite-rank operators are dense, with
$\mathcal S_1(H)\hookrightarrow\mathfrak I(H)$, and, on
finite-rank skew and self-adjoint operators, the probe norms satisfy
\begin{equation}\label{eq:probe-identification-bounds}
 c_A^{-1}\|a\|_{\mathfrak I}
 \le\|a\|_{L_A}\le C_A'\|a\|_{\mathfrak I},
 \qquad
 c_Q^{-1}\|q\|_1
 \le\|q\|_{L_Q}\le C_Q'\|q\|_1.
\end{equation}
Then the identity on the finite-rank skeleton extends to a canonical
bi-Lipschitz group isomorphism
\[
 \Gito_L(H)\cong
 H\times\mathfrak I(H)_{\rm sk}\times\mathcal S_1(H)_{\rm sa}.
\]
For the complete dual unit balls the identification is isometric.
\end{enumerate}
\end{proposition}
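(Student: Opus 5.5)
The proof is elementary: the probe norms are seminorms on finite-rank spaces, so everything reduces to continuity of a bilinear cocycle together with standard Banach completion and isometric embedding arguments. I would treat the three parts in order.

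\emph{Part (i).} Admissibility makes $\|\cdot\|_{L_A}$ and $\|\cdot\|_{L_Q}$ norms on $\mathfrak F_A(H)$ and $\mathfrak F_Q(H)$. The cocycle $\omega_H(x,y)=\Anti(x\otimes y)$ is bilinear from $H\times H$ into $\mathfrak F_A(H)$. By the first bound in \eqref{eq:probe-rank-one-bounds}, it is bounded: $\|\omega_H(x,y)\|_{L_A}\le C_A\|x\|\|y\|$. So $\omega_H$ extends as a continuous bilinear map $H\times H\to E_A[L]$; in fact it already takes values in the finite-rank skeleton. The product on $H\times E_A[L]\times E_Q[L]$ is then given by the same formula. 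It is jointly continuous because addition is continuous and bounded bilinear maps are continuous. Associativity, the identity, and the inverse $(x,a,q)^{-1}=(-x,-a,-q)$ follow exactly as in \Cref{prop:intrinsic-central-extension}. The cocycle identity $\omega(x,y)+\omega(x+y,z)=\omega(y,z)+\omega(x,y+z)$ holds by bilinearity, and $\omega(x,-x)=0$. The finite-rank skeleton is dense for the norm $\|x\|+\|a\|_{L_A}+\|q\|_{L_Q}$, so a continuous extension is unique. The product space is complete, so $\Gito_L(H)$ is exactly the completion. Two-step nilpotence holds because every commutator lies in the central factor $E_A[L]\oplus E_Q[L]$. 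The second bound in \eqref{eq:probe-rank-one-bounds} is not needed for the product itself. It only guarantees that the covariance readout $\Sym(x\otimes y)$ lands continuously in $E_Q[L]$.

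\emph{Part (ii).} Map $a\mapsto(\lambda(a))_{\lambda\in L_A}$ into $\ell^\infty(L_A)$. This map is linear, and by definition of $\|\cdot\|_{L_A}$ it is isometric on $\mathfrak F_A(H)$. An isometry from a dense subspace extends uniquely to an isometry of the completion $E_A[L]$. Its image is complete, hence closed, and it contains the finite-rank image as a dense subset, so it equals the closure of that image. The same argument applies to $L_Q$. The topology on $E_A[L]$ is therefore the topology pulled back from the supremum norm. Because the embedding is isometric, this pulled-back topology coincides with the initial topology generated by the bundled probes (uniform in the probe index).

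\emph{Part (iii).} The bounds \eqref{eq:probe-identification-bounds} say that the identity on finite ranks is bi-Lipschitz between $\|\cdot\|_{L_A}$ and $\|\cdot\|_{\mathfrak I}$, and between $\|\cdot\|_{L_Q}$ and $\|\cdot\|_1$. Finite ranks are dense in $\mathfrak I(H)_{\rm sk}$ and $\mathcal S_1(H)_{\rm sa}$. Here one checks that skew and self-adjoint parts of finite-rank approximants stay finite rank and converge, using that the adjoint is isometric in the ideal. Hence the identity extends to a linear bi-Lipschitz isomorphism $E_A[L]\cong\mathfrak I(H)_{\rm sk}$ and $E_Q[L]\cong\mathcal S_1(H)_{\rm sa}$. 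It intertwines the two group laws, because they agree on the dense skeleton and both are continuous.

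For the complete dual unit balls, the probe norm equals the ideal norm. This follows from the duality norming on self-adjoint or skew elements, for example \Cref{prop:principal-quadratic-forcing} in the principal case, so the constants can be taken equal to $1$ and the isomorphism is isometric. The only point requiring care is this density of the finite-rank skew and self-adjoint parts inside the ideal. The rest is routine completion of uniformly continuous maps, as in \Cref{lem:causal-calibrated-completion}.
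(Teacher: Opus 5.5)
Your proposal is correct and follows essentially the same route as the paper: the rank-one bound makes the (already finite-rank-valued) cocycle continuous, so density extends the product uniquely to the completion; the probe evaluations are isometries whose completed images are the closures of the finite-rank images; and the bi-Lipschitz identity extends, preserves the cocycle, and becomes isometric for complete dual balls by Hahn--Banach. Your explicit remark that density of finite-rank skew and self-adjoint parts in $\mathfrak I(H)_{\rm sk}$ and $\mathcal S_1(H)_{\rm sa}$ relies on the standing isometric-adjoint hypothesis is a useful clarification. The paper leaves it implicit in the phrase ``the stated finite-rank density.''
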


\begin{proof}
The rank-one bounds make the central cocycle continuous, so density uniquely
extends the product to \eqref{eq:probe-completed-group}; associativity and the
inverse pass from the finite-rank skeleton.  Rational finite-rank tensors over
a countable dense subset of $H$ are dense, proving separability.

For~\textup{(ii)}, \eqref{eq:probe-forced-norms} makes each evaluation map an
isometry; its unique completion has range equal to the closure of the original
image.

Under \eqref{eq:probe-identification-bounds} and the stated finite-rank
density, the finite-rank identity maps extend to bi-Lipschitz bijections
from $E_A[L]$ and $E_Q[L]$ onto $\mathfrak I(H)_{\rm sk}$ and
$\mathcal S_1(H)_{\rm sa}$, respectively.  They preserve the cocycle and
hence the group law.  Complete dual balls give isometries by Hahn--Banach.
\end{proof}

\begin{remark}[Response class and central completion]
\label{rem:what-is-forced}
Stable scalar observations determine the central completion; model-class
accessibility is separate (\Cref{sec:canonicalization}).  Thus responses fix
operator-ideal topology, while capacity enters at realization.
\end{remark}

By \Cref{thm:response-forced-completion}, a Banach operational type is an
intrinsic identification of the probe-induced central completion.  Fix one to
formulate accessibility and rough-path closure in its declared ideal.

Let $G\Omega^2_{\eta,\mathfrak I}(H)$ be the little-H\"older closure of smooth
finite-dimensional step-two signatures for the $\eta$/$2\eta$ H\"older metric
on $H\times\mathfrak I$ (the usual superscript $0$ is suppressed).  Completeness
of the ambient space and rational piecewise-linear paths in a countable dense
finite-dimensional union make it Polish; rank-one normalization makes
step-two multiplication continuous.

\begin{definition}[Coordinate-free operational second-order state]
\label{def:operational-state}
The $(\eta;\mathfrak I,1)$ state space
$\mathsf Z_{\eta;\mathfrak I,1}(H)$ consists of continuous paths starting at the group identity
\[
 g:[0,T]\longrightarrow\Gito_{\mathfrak I,1}(H)
\]
such that the geometric readout $\Theta_H^S(g)$ belongs to
$G\Omega^2_{\eta,\mathfrak I}(H)$ and the covariance path
$q_t:=\Theta_H^Q(g_t)$ belongs to
$C_0^{0,2\eta}([0,T];\mathcal S_{1,\rm sa})$.  For
$g_{s,t}:=g_s^{-1}\star g_t$, write its canonical cocycle chart as
\[
 g_{s,t}=(x_{s,t},A_{s,t},q_{s,t}).
\]
The state metric is
\begin{equation}
\label{eq:operational-state-metric}
 d_{\eta;\mathfrak I,1}(g,\widetilde g)
 =|x-\widetilde x|_{\eta;H}
 +|A-\widetilde A|_{2\eta;\mathfrak I}
 +|q-\widetilde q|_{2\eta;\mathcal S_1}.
\end{equation}
Stopping is group-path stopping.  The resulting space is causal Polish.
\end{definition}

\begin{proposition}[Intrinsic chart and representation equivalence]
\label{prop:intrinsic-chart-equivalence}
The cocycle chart
\[
 \operatorname{ch}_\tau(g):=(x,A,q)
\]
is a causal isometric homeomorphism from
$\mathsf Z_{\eta;\mathfrak I,1}(H)$ onto the triple presentation in which
$(1,x,\frac12x^{\otimes2}+A)\in G\Omega^2_{\eta,\mathfrak I}(H)$,
$q\in C_0^{0,2\eta}(\mathcal S_{1,\rm sa})$, and
\begin{equation}
\label{eq:operational-area-Chen}
 A_{s,t}=A_{s,u}+A_{u,t}
 +\Anti(x_{s,u}\otimes x_{u,t}).
\end{equation}
In this chart,
\[
 \mathbb X^S_{s,t}=\frac12x_{s,t}^{\otimes2}+A_{s,t},
 \qquad
 \mathbb X^I_{s,t}=\mathbb X^S_{s,t}-\frac12q_{s,t},
 \qquad
 \widehat j_t=\mathbb X^I_{0,t}.
\]
Thus $(x,A,q)$ is a canonical global chart of the intrinsic group-valued
state.
\end{proposition}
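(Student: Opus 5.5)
The plan is to reduce everything to the group isomorphism of \Cref{prop:intrinsic-central-extension} together with the identification of the metric. First I will check that $\operatorname{ch}_\tau$ is well defined and lands in the triple presentation. For $g\in\mathsf Z_{\eta;\mathfrak I,1}(H)$, set $g_{s,t}=g_s^{-1}\star g_t=(x_{s,t},A_{s,t},q_{s,t})$. The inverse formula $(x,a,q)^{-1}=(-x,-a,-q)$ and the product \eqref{eq:intrinsic-Ito-product} give the cocycle identity $g_{s,t}=g_{s,u}\star g_{u,t}$. Reading off coordinates yields three facts: $x$ is additive in its increments, $q_{s,t}=q_t-q_s$, and the area obeys the Chen relation \eqref{eq:operational-area-Chen}. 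Applying the homomorphism $\Theta_H^S$ of \eqref{eq:intrinsic-geometric-readout} to $g_{s,t}$ gives $(x_{s,t},\frac12x_{s,t}^{\otimes2}+A_{s,t})$, which is the increment of the geometric readout. Its membership in $G\Omega^2_{\eta,\mathfrak I}(H)$ is therefore exactly the first condition in the triple presentation, and the covariance condition transfers verbatim.

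Next I will construct the inverse. Given a triple $(x,A,q)$ satisfying the three conditions, define $g_t:=(x_{0,t},A_{0,t},q_{0,t})$. Using \eqref{eq:operational-area-Chen} with $s=0$ and the product formula, I will verify that $g_s^{-1}\star g_t=(x_{s,t},A_{s,t},q_{s,t})$. The area coordinate computes to $A_{0,t}-A_{0,s}-\Anti(x_{0,s}\otimes x_{0,t})$; rewriting $x_{0,t}=x_{0,s}+x_{s,t}$ and using $\Anti(x\otimes x)=0$ reduces this to the Chen identity. Continuity of $g$ and $g_0=e$ follow from the H\"older regularity. The two constructions are mutually inverse, since each recovers $g$ from its increments at base point $0$.

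Isometry is immediate: the metric \eqref{eq:operational-state-metric} is defined through the chart coordinates, so it coincides with the sum of H\"older norms on the triple side. Causality also needs checking. Group-path stopping $g\mapsto g_{\cdot\wedge t}$ gives increments $g_{s\wedge t,u\wedge t}$, which is exactly coordinatewise two-index stopping of $(x,A,q)$. Both maps therefore commute with truncations, and being isometries they are stable-causal.

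Finally, the displayed readouts follow from \eqref{eq:intrinsic-geometric-readout} and \eqref{eq:intrinsic-Ito-readout} applied to $g_{s,t}$: we get $\mathbb X^I=\mathbb X^S-\frac12 q$, and $\widehat j_t$ is its value at $s=0$. The main point needing care is the inverse construction: one must confirm that the Chen relation alone, without separately assumed additivity of $A$, produces a genuine group path whose increments reproduce all two-index values. This is the short bilinear computation above. I would also briefly verify that the little-H\"older closures on the two sides correspond under the chart, which holds because the chart is an isometry on smooth finite-rank elements and hence on their closures.
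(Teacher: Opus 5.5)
Your proposal is correct and is the paper's argument written out in full. The paper's proof rests on three points: multiplicativity of $g_{s,t}=g_s^{-1}\star g_t$ is equivalent to additivity of $x,q$ together with the area Chen relation; the analytic conditions and the metric \eqref{eq:operational-state-metric} coincide by definition with those of the triple presentation; and the readouts come from \Cref{prop:intrinsic-central-extension}. Your two-directional computation, including the inverse step using $\Anti(x\otimes x)=0$, checks the first point correctly.

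Your closing remark about matching little-H\"older closures is unnecessary. Membership $\Theta_H^S(g)\in G\Omega^2_{\eta,\mathfrak I}(H)$ is literally the same condition as $(1,x,\frac12x^{\otimes2}+A)\in G\Omega^2_{\eta,\mathfrak I}(H)$ on the triple side, so no closure argument is needed.
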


\begin{proof}
Multiplicativity of $g_{s,t}$ is equivalent to additivity of $q$ and
\eqref{eq:operational-area-Chen}.  The analytic requirements and
the metric are exactly those of the triple presentation.  The readout
identities follow from \Cref{prop:intrinsic-central-extension}.
\end{proof}

\begin{definition}[Common accessibility]
\label{def:common-accessibility}
Call $\tau=(\eta;\mathfrak I,\mathcal S_1)$ an
\emph{accessible operational type} for $\cP$ if there is a single Borel
$\cP$-causal group-valued state
\[
Z_\tau:\Omega_H\longrightarrow
 \mathsf Z_{\eta;\mathfrak I,1}(H)
\]
whose canonical chart satisfies
\begin{equation}\label{eq:common-accessibility-chart}
 \operatorname{ch}_\tau(Z_\tau)
 =\bigl(X,A^P,\qv{M^P}\bigr)
 \qquad P\text{-a.s. for every }P\in\cP,
\end{equation}
where $M^P$ is the continuous local-martingale part of the canonical process
and $A^P$ is its It\^o area.  The same map serves all models.  Part~I is
conditional on this common-realization hypothesis; Part~II proves it for
$\Smax$ by a law-independent raw-causal selector whose lawwise semantics are
verified afterwards.
\end{definition}

The ambient operational space allows the two central channels to range over
the full skew $\mathfrak I$-space and self-adjoint trace class.  Positivity and
monotonicity are properties of accessible stochastic states.  This linear
ambient chart allows the norming classification to test every area and
covariance direction.

\subsection{A lossless operator-valued response}
\label{sec:operational-stochastic}

For an accessible type, first treat the operator-valued truncated It\^o
signature, then scalarize it: the probes observe one dynamics rather than
define the state.

Fix an accessible type $\tau$, $P\in\cP$, and a restart time $s$.  Consider
\begin{equation}
\label{eq:primitive-triangular-system}
 \dd U_r^{P,s}=\dd X_r,
 \qquad
 \dd V_r^{P,s}=U_r^{P,s}\otimes\dd X_r,
 \qquad U_s^{P,s}=V_s^{P,s}=0.
\end{equation}
Equivalently, $(1,U,V)$ is the restarted step-two It\^o signature.  The second
equation is understood weakly: for every
$\lambda\in\mathfrak I(H)^*$, the scalar process $\lambda(V)$ solves the
corresponding real It\^o equation.  Accessibility supplies the jointly
continuous $\mathfrak I$-valued version.

\begin{proposition}[Lossless decoding of the step-two It\^o signature]
\label{prop:signature-lossless-identities}
Under an accessible operational type, the modelwise solution of
\eqref{eq:primitive-triangular-system} is
\[
 U_{s,t}^{P,s}=X_{s,t},
 \qquad
 V_{s,t}^{P,s}=\mathbb X^{I,P}_{s,t}
 =A^P_{s,t}+\frac12
   \bigl(X_{s,t}^{\otimes2}-\qv{M^P}_{s,t}\bigr).
\]
It obeys the lossless identities
\begin{equation}
\label{eq:primitive-lossless-identities}
 A^P_{s,t}=\Anti V_{s,t}^{P,s},
 \qquad
 \qv{M^P}_{s,t}
 =(U_{s,t}^{P,s})^{\otimes2}
  -V_{s,t}^{P,s}-(V_{s,t}^{P,s})^*.
\end{equation}
Hence the standard step-two It\^o signature determines both central channels
of the canonical state.
\end{proposition}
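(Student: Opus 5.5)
The plan is to identify the modelwise solution of \eqref{eq:primitive-triangular-system} by integrating its two equations and then comparing with the It\^o-defect group of \Cref{prop:intrinsic-central-extension}. The first equation integrates directly to $U_r^{P,s}=X_r-X_s$, so $U_{s,t}^{P,s}=X_{s,t}$. Substituting this into the second equation gives
\[
 V_{s,t}^{P,s}=\int_s^t (X_r-X_s)\otimes\dd X_r,
\]
understood weakly, that is, after testing with every $\lambda\in\mathfrak I(H)^*$. This is by definition the restarted It\^o second level $\mathbb X^{I,P}_{s,t}$. Accessibility supplies the jointly continuous $\mathfrak I$-valued version. Uniqueness of the solution is immediate, because the system is explicit and triangular: for each $\lambda$, $\lambda(V)$ is a stochastic integral of a known integrand.

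Next, decompose $V$ into its symmetric and antisymmetric parts. For the antisymmetric part, the definition of the It\^o area gives $\Anti V_{s,t}^{P,s}=A^P_{s,t}$; this agrees with the first response in \Cref{prop:semimartingale-verifies-principal-response} together with \eqref{eq:principal-modelwise-identification}. For the symmetric part, apply the It\^o product formula to $U\otimes U$, tested against finite-rank symmetric $S$ exactly as in \eqref{eq:principal-semimartingale-observers}:
\[
 X_{s,t}^{\otimes2}=V_{s,t}+V_{s,t}^*+\qv{M^P}_{s,t}.
\]
Only the local-martingale part contributes to the bracket, since the finite-variation drift adds no covariation. This identity is precisely the defect relation $U+U^*=x^{\otimes2}-q$ in \eqref{eq:Ito-defect-subgroup}. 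It yields $\Sym V=\tfrac12(X^{\otimes2}-\qv{M^P})$, and hence
\[
 V_{s,t}^{P,s}=A^P_{s,t}+\tfrac12\bigl(X_{s,t}^{\otimes2}-\qv{M^P}_{s,t}\bigr),
\]
which is $\Phi_H^I$ applied to the chart $(X,A^P,\qv{M^P})$. The two identities in \eqref{eq:primitive-lossless-identities} then follow. The area identity is the inverse map $(x,U,q)\mapsto(x,\Anti U,q)$ of \Cref{prop:intrinsic-central-extension}, and the bracket identity is a rearrangement of the defect relation.

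The main obstacle is infinite-dimensional. The scalar identities hold $P$-a.s. for each fixed test operator and each fixed pair $(s,t)$, but the conclusion must hold as an operator identity in $\mathfrak I$ and $\mathcal S_1$, simultaneously in $(s,t)$. To handle this I would first restrict to a countable family of finite-rank tests (rank-one $u\otimes v$ with $u,v$ in a countable dense subset of $H$) and rational times, obtaining one null set. Finite-rank tests separate points of $\mathfrak I(H)$ and of $\mathcal S_1(H)$, because $\mathcal S_1\hookrightarrow\mathfrak I$ and rank-one functionals are continuous by \eqref{eq:operational-rank-one}. The joint continuity of the accessible versions in $(s,t)$ then extends the identities to all times. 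Finally, the canonical chart \eqref{eq:common-accessibility-chart} of $Z_\tau$ identifies $(A^P,\qv{M^P})$ with the state's central coordinates, so the signature determines both central channels.
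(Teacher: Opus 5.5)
Your proposal is correct and takes essentially the same route as the paper. The paper integrates the triangular system, obtains the area by antisymmetrization and the bracket from the operator It\^o product formula, and lets accessibility supply the jointly continuous $\mathfrak I$-valued version. Your countable rank-one test family with rational times makes explicit the simultaneity that the paper's short proof leaves implicit. One small correction: continuity of the rank-one tests on $\mathfrak I(H)$ follows from the ideal inequality combined with the rank-one normalization, not from \eqref{eq:operational-rank-one} alone.
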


\begin{proof}
The first equation gives $U=X_{s,\cdot}$ and the second gives
$V_{s,t}^{P,s}=\int_s^tX_{s,r}\otimes\dd X_r$ weakly.  Antisymmetrization gives
area, the operator It\^o product formula gives the covariance identity in
\eqref{eq:primitive-lossless-identities}, and accessibility supplies the
jointly continuous $\mathfrak I$-valued version.
\end{proof}

\begin{definition}[Restarted It\^o signature response topology]
\label{def:signature-response}
For $z\in\mathsf Z_{\eta;\mathfrak I,1}(H)$ with cocycle chart $(x,A,q)$,
define its restarted step-two It\^o signature response by
\begin{equation}\label{eq:primitive-observation-map}
 \mathcal J_\tau z=(x,V),\qquad
 V_{s,t}:=A_{s,t}+\frac12\bigl(x_{s,t}^{\otimes2}-q_{s,t}\bigr).
\end{equation}
Let $\mathsf R_\tau^{\rm sig}:=\mathcal J_\tau
(\mathsf Z_{\eta;\mathfrak I,1}(H))$.  On this image set
\begin{align}
 \mathfrak q(x,V)_{s,t}
 &:=x_{s,t}^{\otimes2}-V_{s,t}-V_{s,t}^*,
 \label{eq:primitive-defect}\\
 d_\tau^{\rm sig}\bigl((x,V),(\widetilde x,\widetilde V)\bigr)
 &:=|x-\widetilde x|_{\eta;H}
   +|\Anti(V-\widetilde V)|_{2\eta;\mathfrak I}
   +|\mathfrak q(x,V)-\mathfrak q(\widetilde x,\widetilde V)|_{2\eta;\mathcal S_1}.
 \label{eq:signature-response-metric}
\end{align}
Stopping is inherited from the restarted two-index response.  We call this
the \emph{split signature topology}: it records precisely the skew second
level and the symmetric It\^o defect, rather than forcing one norm on the
whole second level.
\end{definition}

\begin{proposition}[Signature representation of the quadratic state]
\label{prop:primitive-lossless-dynamics}
The map $\mathcal J_\tau$ in \eqref{eq:primitive-observation-map} is a
stable-causal isometric homeomorphism from
$\mathsf Z_{\eta;\mathfrak I,1}(H)$ onto
$\mathsf R_\tau^{\rm sig}$.  Its inverse is the stable-causal decoder
\begin{equation}\label{eq:primitive-decoder}
 \mathcal D_\tau^{\rm sig}(x,V)
 :=\operatorname{ch}_\tau^{-1}
 \bigl(x,\Anti V,\mathfrak q(x,V)\bigr).
\end{equation}
If $\tau$ is accessible, then under every $P\in\cP$ the modelwise response
$\mathcal J_\tau Z_\tau$ is exactly the jointly restarted signature system
\eqref{eq:primitive-triangular-system}.  Thus one standard operator-valued
It\^o dynamics is already lossless for the canonical state.
\end{proposition}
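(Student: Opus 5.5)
The proof reduces everything to the cocycle chart of \Cref{prop:intrinsic-chart-equivalence} and the algebraic identities of \Cref{prop:intrinsic-central-extension}. The plan is to show that $\mathcal J_\tau$ is the composite of $\operatorname{ch}_\tau$ with the algebraic map
\[
 (x,A,q)\longmapsto\bigl(x,\,A+\tfrac12(x^{\otimes2}-q)\bigr),
\]
which is exactly $\Phi_H^I$ of \eqref{eq:Ito-defect-isomorphism} applied to each increment. Its left inverse on the image is $(x,V)\mapsto(x,\Anti V,\mathfrak q(x,V))$. Since $A$ is skew and $q$, $x^{\otimes2}$ are self-adjoint, one has $\Anti V=A$ and $V+V^*=x^{\otimes2}-q$, hence $\mathfrak q(x,V)=q$. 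Therefore $\mathcal D_\tau^{\rm sig}\mathcal J_\tau=I$ and, on the image, $\mathcal J_\tau\mathcal D_\tau^{\rm sig}=I$. This gives bijectivity onto $\mathsf R_\tau^{\rm sig}$ with the stated inverse \eqref{eq:primitive-decoder}.

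Next I will check the isometry. By definition \eqref{eq:signature-response-metric}, substituting $\Anti V=A$ and $\mathfrak q(x,V)=q$ turns $d_\tau^{\rm sig}(\mathcal J_\tau z,\mathcal J_\tau\widetilde z)$ term by term into $d_{\eta;\mathfrak I,1}(z,\widetilde z)$ from \eqref{eq:operational-state-metric}. So $\mathcal J_\tau$ is an isometry onto its image, and the inverse is automatically a continuous isometry; no analytic estimate is needed.

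For stable causality, stopping on $\mathsf Z$ is group-path stopping. In the chart, $(x,A,q)$ of $\rho_tz$ are the increments frozen after $t$: $x_{s,u}^{\rho_t z}=x_{s\wedge t,u\wedge t}$, and likewise for $A$ and $q$. This follows because $g_{s,t}=g_s^{-1}\star g_t$ with both endpoints stopped. Stopping on $\mathsf R_\tau^{\rm sig}$ is inherited from the restarted two-index response, which is the same freezing of $(x,V)$. Since $V$ is a pointwise algebraic function of $(x_{s,u},A_{s,u},q_{s,u})$, the two operations commute, $\rho_t\mathcal J_\tau=\mathcal J_\tau\rho_t$. This implies \eqref{eq:stable-causal-readout} in both directions, for $\mathcal J_\tau$ and for $\mathcal D_\tau^{\rm sig}$, because $\rho_t$ is idempotent. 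Uniform continuity is immediate from the isometry.

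Finally, for the probabilistic clause, I will invoke \Cref{def:common-accessibility} and \Cref{prop:signature-lossless-identities}. Under each $P$, $\operatorname{ch}_\tau(Z_\tau)=(X,A^P,\qv{M^P})$ almost surely. Hence $\mathcal J_\tau Z_\tau=(X,\,A^P+\frac12(X^{\otimes2}-\qv{M^P}))$, and this coincides with $(U^{P,s},V^{P,s})$ for all $s\le t$ simultaneously. The main (mild) obstacle is this simultaneity in $(s,t)$. I would handle it by noting that accessibility provides the jointly continuous $\mathfrak I$-valued version, and that \Cref{prop:signature-lossless-identities} holds for each rational pair $(s,t)$ off a null set. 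A countable union of null sets together with joint continuity of both sides in $(s,t)$ then extends the identity to all pairs off one $P$-null set.
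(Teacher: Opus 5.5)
Your proof is correct and follows the paper's route: self-adjointness of $x_{s,t}^{\otimes2}$ and $q_{s,t}$ gives $\Anti V=A$ and $\mathfrak q(x,V)=q$. That yields the two-sided inverse and, by substitution into the two metrics, the isometry; commutation with stopping and the accessibility identification then give the remaining clauses. One cosmetic point: the incrementwise map $(x,A,q)\mapsto(x,A+\frac12(x^{\otimes2}-q))$ is the It\^o readout $\Theta_H^I$, i.e.\ $\Phi_H^I$ with its covariance coordinate dropped, and your simultaneity argument in $(s,t)$ spells out what the paper leaves to the jointly continuous version supplied by accessibility.
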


\begin{proof}
The antisymmetric part of \eqref{eq:primitive-observation-map} is $A$, because
$x_{s,t}^{\otimes2}$ and $q_{s,t}$ are self-adjoint.  Moreover
\[
 x_{s,t}^{\otimes2}-V_{s,t}-V_{s,t}^*=q_{s,t}.
\]
Thus \eqref{eq:primitive-decoder} is the two-sided inverse, and isometry follows
from
\eqref{eq:operational-state-metric} and
\eqref{eq:signature-response-metric}.  Both formulas commute with stopping and
are stable-causal.  Under accessibility,
\Cref{prop:signature-lossless-identities} identifies $V$ modelwise with the
restarted It\^o second level.
\end{proof}

\begin{definition}[Coherent principal quadratic response target]
\label{def:coherent-principal-response-target}
Fix the coefficient norms reconstructed in
\Cref{thm:orthogonal-response-rigidity} and use the principal full normal
covariance test class, so that
\[
 \mathbb K_A=\{K\in\mathcal S_2(H)_{\rm sk}:\|K\|_2\le1\},
 \qquad
 \mathbb K_Q=\{S\in\mathcal L(H)_{\rm sa}:\|S\|_{\rm op}\le1\},
\]
and let
\[
 \jmath_A(a)(K):=\operatorname{Tr}(K^*a),
 \qquad
 \jmath_Q(q)(S):=\operatorname{Tr}(Sq)
\]
be the corresponding isometric dual embeddings.

Define the \emph{smooth finite-rank compatible response core}
$\mathsf R_{\mathfrak Q_2}^{\circ}$ as follows.  Let $E\subset H$ be
finite dimensional, let $x:[0,T]\to E$ be piecewise $C^1$ with $x_0=0$, and
put
\begin{equation}\label{eq:coherent-core-area}
 A^x_{s,t}:=\Anti\int_s^t x_{s,r}\otimes \dd x_r.
\end{equation}
Let $q:[0,T]\to\operatorname{Fin}(H)_{\rm sa}$ be piecewise $C^1$ with
$q_0=0$.  Associate to $(x,q)$ the response profile
\begin{equation}\label{eq:coherent-core-response-profile}
 \mathcal O_{\mathfrak Q_2}^{\circ}(x,q)
 :=\left(
 x,
 (\jmath_A A^x_{s,t})_{s,t},
 (\jmath_Q(q_t-q_s))_{s,t}
 \right)
 \in\widetilde{\mathsf R}_{\mathfrak Q_2}^{\eta}.
\end{equation}
Their union over finite-dimensional $E$ is
$\mathsf R_{\mathfrak Q_2}^{\circ}$, whose response-metric completion is the
\emph{coherent principal quadratic response target}
\begin{equation}\label{eq:coherent-principal-response-target}
 \mathsf R_{\mathfrak Q_2}^{\rm coh}
 :=\overline{\mathsf R_{\mathfrak Q_2}^{\circ}}^{\,
       \widetilde{\mathsf R}_{\mathfrak Q_2}^{\eta}}.
\end{equation}
Stopping is the restriction of ambient response stopping.  The
\emph{principal quadratic response specification} $\mathfrak Q_2(H)$ is the
intrinsic syntax $\mathfrak Q_2^{\circ}(H)$ of
\Cref{def:principal-intrinsic-quadratic-response} equipped with this canonical
stable response completion.  In particular, neither
$\mathsf Z_{\eta;\mathcal S_2,1}(H)$ nor an It\^o second-level state is used in
\eqref{eq:coherent-principal-response-target}.
\end{definition}

\begin{proposition}[Coherent response completion and global decoding]
\label{prop:coherent-principal-response-target}
The coherent response space
$\mathsf R_{\mathfrak Q_2}^{\rm coh}$ is causal Polish, closed in the ambient
profile carrier $\widetilde{\mathsf R}_{\mathfrak Q_2}^{\eta}$, and invariant
under deterministic stopping.  The core map
\eqref{eq:coherent-core-response-profile} extends uniquely by completion to a
stable-causal isometric homeomorphism
\begin{equation}\label{eq:coherent-principal-encoder}
 \mathcal O_{\mathfrak Q_2}:
 \mathsf Z_{\eta;\mathcal S_2,1}(H)
 \longrightarrow\mathsf R_{\mathfrak Q_2}^{\rm coh},
 \qquad
 \mathcal O_{\mathfrak Q_2}(x,A,q)
 =\bigl(x,(\jmath_AA_{s,t})_{s,t},(\jmath_Qq_{s,t})_{s,t}\bigr).
\end{equation}
Its inverse is the globally defined stable-causal decoder
\begin{equation}\label{eq:coherent-principal-decoder}
 \mathcal D_{\mathfrak Q_2}(x,\alpha,\chi)
 =\operatorname{ch}_{\tau_0}^{-1}(x,A,q),
 \qquad
 A_{s,t}=\jmath_A^{-1}\alpha_{s,t},\qquad
 q_t=\jmath_Q^{-1}\chi_{0,t},
\end{equation}
where $q_{s,t}=q_t-q_s$ and
$\tau_0=(\eta;\mathcal S_2,\mathcal S_1)$.  Thus
\begin{equation}\label{eq:coherent-principal-two-sided-inverse}
 \mathcal D_{\mathfrak Q_2}\mathcal O_{\mathfrak Q_2}=I,
 \qquad
 \mathcal O_{\mathfrak Q_2}\mathcal D_{\mathfrak Q_2}=I
 \quad\text{on }\mathsf R_{\mathfrak Q_2}^{\rm coh}.
\end{equation}
Thus $\mathcal D_{\mathfrak Q_2}$ is defined on the full declared response
target $\mathsf R_{\mathfrak Q_2}^{\rm coh}$.
\end{proposition}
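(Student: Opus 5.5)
The plan is to build $\mathcal O_{\mathfrak Q_2}$ directly from the isometric dual embeddings, check that it is an isometry onto a closed set, and then identify that set with $\mathsf R_{\mathfrak Q_2}^{\rm coh}$ by a density argument.

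\emph{Step 1: the encoder is isometric.} Define $\mathcal O_{\mathfrak Q_2}$ on all of $\mathsf Z_{\eta;\mathcal S_2,1}(H)$ by the formula \eqref{eq:coherent-principal-encoder}. By \Cref{prop:principal-quadratic-forcing}, together with the density of finite ranks, $\jmath_A:\mathcal S_2(H)_{\rm sk}\to C(\mathbb K_A)$ and $\jmath_Q:\mathcal S_1(H)_{\rm sa}\to C(\mathbb K_Q)$ are linear isometries.
\begin{itemize}
\item[] They take values in continuous functions because the balls carry the weak and weak-star topologies and $\operatorname{Tr}(K^*a)$, $\operatorname{Tr}(Sq)$ are continuous there.
\item[] Linear isometries preserve $2\eta$-H\"older seminorms of two-index fields and the little-H\"older property.
\end{itemize}
Hence, comparing with the metric \eqref{eq:operational-state-metric}, $\mathcal O_{\mathfrak Q_2}$ is an isometry into $\widetilde{\mathsf R}_{\mathfrak Q_2}^{\eta}$. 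It commutes with stopping, since stopping acts through the chart coordinates. It is also stable-causal: the stopped output depends only on the stopped state.

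\emph{Step 2: the image is the coherent target.} On the smooth finite-rank core, $\mathcal O_{\mathfrak Q_2}$ agrees with $\mathcal O^\circ_{\mathfrak Q_2}$. Indeed, a pair $(x,q)$ with $x$ piecewise $C^1$ in a finite-dimensional $E$ is the state with chart $(x,A^x,q)$, by \eqref{eq:coherent-core-area} and the Chen identity \eqref{eq:operational-area-Chen}. The one real issue is density: these core states must be dense in $\mathsf Z_{\eta;\mathcal S_2,1}(H)$.
\begin{itemize}
\item[] For the geometric component, use the definition of $G\Omega^2_{\eta,\mathcal S_2}(H)$ as the little-H\"older closure of smooth finite-dimensional signatures.
\item[] For the covariance path $q\in C_0^{0,2\eta}(\mathcal S_{1,\rm sa})$, approximate by finite-rank piecewise $C^1$ paths. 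Little-H\"older paths are the closure of smooth ones. Composing with $P_nqP_n$, where $P_n\uparrow I$, converges in the $2\eta$-H\"older $\mathcal S_1$ norm by a standard equicontinuity and compactness argument for little-H\"older fields, because $P_n\cdot P_n\to I$ strongly on $\mathcal S_1$ and is contractive.
\end{itemize}
The two components are independent in the chart, so the approximations combine. The image of an isometry from a complete space is closed, so it coincides with the closure of the image of the core. That closure is $\mathsf R^{\rm coh}_{\mathfrak Q_2}$.

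This density step is the main obstacle. The first-level/area approximation is built into the definition of $G\Omega^2$. The only care needed is uniformity of the $P_n$-compression in the little-H\"older norm, which I would handle via compactness of the set of increments. The uniqueness of the extension then follows from density and continuity.

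\emph{Step 3: structural properties of the target.} $\mathsf R^{\rm coh}_{\mathfrak Q_2}$ is closed in the ambient carrier by definition. It is Polish as a closed subset of a product of separable Banach spaces; equivalently, it is the isometric image of a Polish space. It is stopping-invariant because $\mathcal O_{\mathfrak Q_2}$ intertwines stopping and the state space is stopping-invariant.

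\emph{Step 4: the decoder.} On the image, the formula \eqref{eq:coherent-principal-decoder} inverts $\mathcal O_{\mathfrak Q_2}$. The inverses $\jmath^{-1}$ exist on the ranges, and $q_{s,t}=q_t-q_s$ is recovered from $\chi_{0,t}$ by additivity, which is \eqref{eq:principal-raw-cov-response} passed to the closure. As the inverse of an isometric homeomorphism, $\mathcal D_{\mathfrak Q_2}$ is an isometry. It is stable-causal because it commutes with stopping. The two identities in \eqref{eq:coherent-principal-two-sided-inverse} then follow.
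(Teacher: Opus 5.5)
Your proposal is correct and follows essentially the paper's own argument. The dual norm identities make the encoder isometric, and density of the smooth finite-rank core in $\mathsf Z_{\eta;\mathcal S_2,1}(H)$ identifies its image with the closure $\mathsf R_{\mathfrak Q_2}^{\rm coh}$. Commutation with stopping then gives stable causality of $\mathcal O_{\mathfrak Q_2}$ and $\mathcal D_{\mathfrak Q_2}$.

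The differences are only in presentation:
\begin{itemize}
\item You define $\mathcal O_{\mathfrak Q_2}$ globally and then identify its image, rather than extending it from the core by completion.
\item You justify the finite-rank density of covariance paths, via the compressions $P_nqP_n$ and relative compactness of the normalized increments; the paper simply asserts this density.
\item You obtain Polishness from the isometric image of a Polish space, rather than from compact metrizability of the coefficient balls.
\end{itemize}
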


\begin{proof}
The coefficient balls are weak/weak-star compact metrizable, hence their
$C(K)$ spaces and the little-H\"older factors are separable Banach spaces.
Thus the ambient carrier is Polish and
\eqref{eq:coherent-principal-response-target} is closed Polish.

On the response core, the dual identities
\eqref{eq:principal-forced-norms} give exactly
\[
 \|\jmath_A(A-\widetilde A)\|_{C(\mathbb K_A)}=\|A-\widetilde A\|_2,
 \qquad
 \|\jmath_Q(q-\widetilde q)\|_{C(\mathbb K_Q)}=\|q-\widetilde q\|_1.
\]
Hence the core metric is the pullback of $d_{\eta;\mathcal S_2,1}$.  Smooth
finite-dimensional signatures are dense in the geometric component and
finite-rank piecewise-linear paths in
$C_0^{0,2\eta}([0,T];\mathcal S_{1,\rm sa})$.  Completing therefore identifies
the core isometrically with $\mathsf Z_{\eta;\mathcal S_2,1}(H)$ and extends its
observation map to \eqref{eq:coherent-principal-encoder} with range
$\mathsf R_{\mathfrak Q_2}^{\rm coh}$.

Closedness of the $\jmath_A,\jmath_Q$ ranges and passage of Chen/additivity to
limits make \eqref{eq:coherent-principal-decoder} global and inverse to the
completed isometry.  Stopping preserves the smooth core and hence its closure;
both maps commute with it, proving stable causality and
\eqref{eq:coherent-principal-two-sided-inverse}.
\end{proof}

\begin{remark}[Compatible response core]
The ambient carrier also contains profiles violating Chen compatibility,
covariance additivity, or geometric rough-path closure.  The distinguished
target is the response-metric completion of the compatible testing core.
\end{remark}

\begin{proposition}[Universal factorization of finite-dimensional triangular tests]
\label{prop:finite-dimensional-triangular-factorization}
Let $m\in\mathbb N$, let $C\in\mathcal L(H,\mathbb R^m)$, and let
$B\in\mathcal L(\mathfrak I(H),\mathbb R^m)$.  For an accessible type, the
restarted finite-dimensional triangular system
\begin{equation}\label{eq:finite-dimensional-triangular-test}
 \dd Y_r^{P,s}=C\,\dd X_r+B\bigl(U_r^{P,s}\otimes\dd X_r\bigr),
 \qquad Y_s^{P,s}=0,
\end{equation}
where $U^{P,s}$ solves the first equation in
\eqref{eq:primitive-triangular-system}, has the coordinatewise It\^o solution
\begin{equation}\label{eq:finite-dimensional-triangular-factorization}
 Y_{s,t}^{P,s}=C X_{s,t}+B V_{s,t}^{P,s}.
\end{equation}
Consequently its complete restarted response is a stable-causal
readout of $\mathcal J_\tau Z_\tau$.  Thus every coefficient-defined
finite-dimensional triangular It\^o test whose second-order coefficient
extends continuously to $\mathfrak I(H)$ factors through the single
operator-valued step-two signature response.
\end{proposition}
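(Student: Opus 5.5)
The plan has two parts.

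First, I identify the solution of \eqref{eq:finite-dimensional-triangular-test} coordinatewise.

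Second, I read off stable causality from the explicit formula \eqref{eq:finite-dimensional-triangular-factorization}.

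\emph{Identification.} Fix $P\in\cP$ and a restart time $s$. By the first equation in \eqref{eq:primitive-triangular-system}, $U_r^{P,s}=X_{s,r}$. For each coordinate $i\le m$:
\begin{itemize}
\item the functional $\lambda_i:=e_i^*B$ lies in $\mathfrak I(H)^*$;
\item the functional $e_i^*C$ lies in $H^*$.
\end{itemize}
The $i$-th component of \eqref{eq:finite-dimensional-triangular-test} is therefore the real It\^o equation
\[
 \dd Y^i_r=(e_i^*C)\dd X_r+\lambda_i\bigl(U_r^{P,s}\otimes\dd X_r\bigr).
\]
Its right-hand side contains no $Y$, so its unique solution is obtained by integration. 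The first term integrates to $(e_i^*C)X_{s,t}$. For the second term, the weak formulation of the second equation in \eqref{eq:primitive-triangular-system} says precisely that $\lambda_i(V^{P,s}_{s,\cdot})$ solves $\dd\lambda_i(V_r)=\lambda_i(U_r\otimes\dd X_r)$ with zero initial value. Uniqueness of the scalar It\^o integral up to indistinguishability then gives
\[
 Y^i_{s,t}=(e_i^*C)X_{s,t}+\lambda_i(V^{P,s}_{s,t}).
\]
Assembling the coordinates gives \eqref{eq:finite-dimensional-triangular-factorization} $P$-a.s. Accessibility supplies the jointly continuous version of $V$, so the identity holds simultaneously in $(s,t)$ by continuity.

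\emph{Readout.} Define $F_{C,B}(x,V)_{s,t}:=Cx_{s,t}+BV_{s,t}$ on $\mathsf R_\tau^{\rm sig}$. Equivalently, through the decoder \eqref{eq:primitive-decoder}, this is
\[
 F_{C,B}\circ\mathcal J_\tau(z)_{s,t}
 =Cx_{s,t}+B\Bigl(A_{s,t}+\tfrac12\bigl(x_{s,t}^{\otimes2}-q_{s,t}\bigr)\Bigr).
\]
The three maps involved are continuous:
\begin{itemize}
\item $x\mapsto Cx$ is linear and bounded;
\item $A\mapsto BA$ is bounded in $\mathfrak I$;
\item $q\mapsto Bq$ is bounded, because $\mathcal S_1\hookrightarrow\mathfrak I$ continuously.
\end{itemize}
The map $x\mapsto x_{s,t}^{\otimes2}$ is continuous from $\eta$-H\"older into $2\eta$-H\"older $\mathfrak I$-valued two-index fields by rank-one normalization \eqref{eq:operational-rank-one}, with the product estimate
\[
 \bigl|x^{\otimes2}-\widetilde x^{\otimes2}\bigr|_{2\eta}
 \le\bigl(|x|_\eta+|\widetilde x|_\eta\bigr)\,|x-\widetilde x|_\eta .
\]
Hence $F_{C,B}\circ\mathcal J_\tau$ is continuous into the $2\eta$-H\"older space of $\mathbb R^m$-valued two-index fields, uniformly on bounded sets. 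It is stable-causal because restarted increments on $[s,t]$ with $t\le u$ depend only on the state stopped at $u$, so \eqref{eq:stable-causal-readout} holds. Composing with $\mathcal J_\tau Z_\tau$ and invoking \Cref{prop:primitive-lossless-dynamics} shows that the complete restarted response is a readout of the single signature response.

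\emph{Main obstacle.} The delicate point is interpreting $B(U\otimes\dd X)$ when $B$ is defined only on $\mathfrak I(H)$ and $X$ is infinite-dimensional. I would handle it exactly as the weak formulation does: define the integral as $\lambda_i$ applied to the $\mathfrak I$-valued It\^o integral $V$. To check consistency with coordinatewise It\^o integration, I would approximate $\lambda_i$ by finite-rank functionals $\operatorname{Tr}(K^*\cdot)$, for which the integral is a finite sum of scalar It\^o integrals, and pass to the limit using continuity of $\lambda_i$ together with the accessible version of $V$.
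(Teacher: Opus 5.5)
Your proposal is correct and follows the paper's own route. You integrate each real coordinate using the weak formulation of the $V$-equation. You then show the factor map $(x,V)\mapsto Cx+BV$ is continuous in the split topology, via $V=\Anti V+\tfrac12\bigl(x^{\otimes2}-\mathfrak q(x,V)\bigr)$, the embedding $\mathcal S_1(H)\hookrightarrow\mathfrak I(H)$, the rank-one estimate for $x\mapsto x^{\otimes2}$, and commutation with stopping.

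One slip: because of the first-level term $Cx_{s,t}$, the response is continuous into $\eta$-H\"older two-index fields, not $2\eta$-H\"older ones; only the $BV$ component is $2\eta$-H\"older. Also, the finite-rank approximation in your last paragraph is unnecessary. By rank-one normalization, $h\mapsto\lambda_i(U_r\otimes h)$ is already a bounded predictable $H^*$-valued integrand, so the scalar integral is defined directly.
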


\begin{proof}
Coordinatewise real It\^o integration gives
\eqref{eq:finite-dimensional-triangular-factorization}, with factor map
$(x,V)\mapsto Cx+BV$.  It is split-topology continuous because
\[
 V=\Anti V+\frac12\bigl(x^{\otimes2}-\mathfrak q(x,V)\bigr),
\]
$\mathcal S_1(H)\hookrightarrow\mathfrak I(H)$ continuously, and
$x\mapsto x^{\otimes2}$ is continuous from the first-level H\"older topology
to the corresponding second-level $\mathfrak I$ topology on bounded sets.
It commutes with stopping and is therefore stable-causal.
\end{proof}

\begin{remark}[Operator-valued one-test representation]
\label{rem:operator-valued-one-test}
All finite-dimensional tests factor through the same operator-valued
signature.  The norming theorem below shows that finitely many scalar readouts
cannot recover the full infinite-dimensional central geometry.  The lossless
one-test object is therefore operator valued, with finite-dimensional systems
appearing as its compressions.
\end{remark}

\begin{corollary}[Necessity of the split topology in infinite dimension]
\label{cor:split-topology-necessary}
Assume $H$ is infinite dimensional and take the principal operational type
$\tau=(\eta;\mathcal S_2,\mathcal S_1)$.  Endow the restarted signature
response only with the unsplit metric
\[
 d_{\rm unsplit}((x,V),(\widetilde x,\widetilde V))
 :=|x-\widetilde x|_{\eta;H}
   +|V-\widetilde V|_{2\eta;\mathcal S_2}.
\]
Then the decoder to $\mathsf Z_{\eta;\mathcal S_2,1}(H)$ is not continuous
at the zero response.  More precisely, there are pure-covariance states
$z_n$ with
\[
 d_{\eta;\mathcal S_2,1}(z_n,0)=1,\qquad
 d_{\rm unsplit}(\mathcal J_\tau z_n,0)=\frac1{2\sqrt n}\longrightarrow0.
\]
Thus the trace-class covariance topology cannot be replaced by the ambient
Hilbert--Schmidt topology of the It\^o second level.
\end{corollary}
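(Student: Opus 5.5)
The plan is to take the $z_n$ to be pure-covariance states ($x\equiv0$, $A\equiv0$) whose covariance path is linear in time and spread evenly over $n$ orthonormal directions. Fix an orthonormal family $e_1,\dots,e_n$ in $H$, which exists because $H$ is infinite dimensional. Put
\[
 P_n:=\sum_{j=1}^n e_j\otimes e_j,\qquad q^{(n)}_t:=c_n\,\phi(t)\,P_n/n,
\]
with $\phi$ a fixed smooth scalar path, $\phi(0)=0$, and $c_n>0$ a normalization constant to be fixed below. Then $q^{(n)}_{s,t}=c_n(\phi(t)-\phi(s))P_n/n$, and we set $z_n:=\operatorname{ch}_{\tau_0}^{-1}(0,0,q^{(n)})$. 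This lies in $\mathsf Z_{\eta;\mathcal S_2,1}(H)$ by \Cref{prop:intrinsic-chart-equivalence}: with $x=0$, $A=0$, Chen's relation \eqref{eq:operational-area-Chen} holds trivially, and a smooth finite-rank $q$ lies in the little-H\"older space $C_0^{0,2\eta}$.

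Next I compute both distances. By \eqref{eq:operational-state-metric},
\[
 d_{\eta;\mathcal S_2,1}(z_n,0)=|q^{(n)}|_{2\eta;\mathcal S_1}
 =c_n\,\|P_n/n\|_1\,|\phi|_{2\eta}=c_n|\phi|_{2\eta},
\]
since $\|P_n\|_1=n$. On the response side, \eqref{eq:primitive-observation-map} gives
\[
 \mathcal J_\tau z_n=(0,V^{(n)}),\qquad V^{(n)}_{s,t}=-\tfrac12 q^{(n)}_{s,t}.
\]
Hence
\[
 d_{\rm unsplit}(\mathcal J_\tau z_n,0)=\tfrac12 c_n\,\|P_n/n\|_2\,|\phi|_{2\eta}=\tfrac12 c_n\,n^{-1/2}|\phi|_{2\eta},
\]
since $\|P_n\|_2=\sqrt n$. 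Choosing $c_n=|\phi|_{2\eta}^{-1}$ gives exactly $d_{\eta;\mathcal S_2,1}(z_n,0)=1$ and $d_{\rm unsplit}=1/(2\sqrt n)$. For concreteness we can take $\phi(t)=t$, whose $2\eta$-H\"older seminorm on $[0,T]$ is finite and nonzero, namely $T^{1-2\eta}$.

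For the discontinuity, the decoder $\mathcal D_\tau^{\rm sig}$ of \Cref{prop:primitive-lossless-dynamics} sends $\mathcal J_\tau z_n\mapsto z_n$ and $0\mapsto0$. So $\mathcal J_\tau z_n\to0$ in $d_{\rm unsplit}$ while $d(z_n,0)=1$, and the decoder is not continuous at the zero response. This also shows that no constant $C$ with $\|q\|_1\le C\|q\|_2$ can exist on the response. It is the elementary Schatten gap $\|P_n\|_1/\|P_n\|_2=\sqrt n$, which is unbounded only in infinite dimension.

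The main point needing care is the normalization of the H\"older seminorm convention. Whatever the convention, the two seminorms factor as the scalar $|\phi|_{2\eta}$ times the operator norm, because $q^{(n)}$ is a scalar path times a fixed operator. The ratio is therefore exactly $\frac12 n^{-1/2}$, and one normalization constant fixes both displayed values.
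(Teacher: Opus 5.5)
Your proposal is correct and is essentially the paper's proof. The paper uses the same pure-covariance ramp $q^{(n)}_t=T^{2\eta-1}t\,n^{-1}P_n$, which is your choice $\phi(t)=t$, $c_n=T^{2\eta-1}$, together with $V^{(n)}=-\tfrac12 q^{(n)}$ and the Schatten gap $\|n^{-1}P_n\|_1=1$, $\|n^{-1}P_n\|_2=n^{-1/2}$.
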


\begin{proof}
Choose orthonormal vectors $e_1,e_2,\ldots$ and let $P_n$ be the orthogonal
projection onto $\operatorname{span}\{e_1,\ldots,e_n\}$.  Put
$C_n=n^{-1}P_n$, so $\|C_n\|_1=1$ and $\|C_n\|_2=n^{-1/2}$.  Set
\[
 q_t^{(n)}:=T^{2\eta-1}t\,C_n,\qquad 0\le t\le T,
\]
and let $z_n$ be the state with $x=0$, $A=0$, and covariance path
$q^{(n)}$.  Since $2\eta<1$,
$|q^{(n)}|_{2\eta;\mathcal S_1}=1$, while the geometric readout is the
trivial rough path, so $z_n$ belongs to the state space and has unit state
distance from zero.  Its It\^o second level is
$V_{s,t}^{(n)}=-\frac12q_{s,t}^{(n)}$, hence
\[
 |V^{(n)}|_{2\eta;\mathcal S_2}
 =\frac12\|C_n\|_2=\frac1{2\sqrt n}.
\]
This proves the claim.
\end{proof}

\subsection{General dual probes and modelwise scalarization}

The principal coherent target already uses the complete dual balls selected by
the Brownian response gauges and classified by orthogonal rigidity.  We record
the same scalarization for a general declared operational type
$\tau=(\eta;\mathfrak I,\mathcal S_1)$.  Write
$E_A:=\mathfrak I(H)_{\rm sk}$ and $E_Q:=\mathcal S_1(H)_{\rm sa}$.  For
$E=E_A,E_Q$, let $K_E$ be the closed unit ball of $E^*$ with its weak-star
topology.  Since $E$ is separable, $K_E$ is compact metrizable and
\begin{equation}\label{eq:dual-ball-embedding}
 \jmath_E:E\longrightarrow C(K_E),
 \qquad (\jmath_Ev)(\lambda)=\lambda(v),
\end{equation}
is a linear isometry with closed range.

Under an accessible implementation these coordinates are restarted triangular
It\^o responses.  For
$P\in\cP$, put $U_r^{P,s}=X_r-X_s$.  The first-level member is
\begin{equation}\label{eq:first-level-probe}
 \dd U_r^{P,s}=\dd X_r,\qquad U_s^{P,s}=0,
\end{equation}
and for $\lambda\in K_{E_A}$ and $\mu\in K_{E_Q}$ set
\begin{align}
 R^{A,P}_{s,t}(\lambda)
 &:=\frac12\int_s^t
   \lambda\bigl(U_r^{P,s}\otimes\dd X_r
              -\dd X_r\otimes U_r^{P,s}\bigr),
 \label{eq:area-triangular-probe}\\
 R^{Q,P}_{s,t}(\mu)
 &:=\mu(U_t^{P,s}\otimes U_t^{P,s})
  -\int_s^t
   \mu\bigl(U_r^{P,s}\otimes\dd X_r
          +\dd X_r\otimes U_r^{P,s}\bigr).
 \label{eq:cov-triangular-probe}
\end{align}
If $(x,A,q)$ is the canonical chart of the accessible state, the It\^o product
formula gives simultaneously
\begin{equation}\label{eq:joint-response-identification}
 R^{A,P}_{s,t}=\jmath_{E_A}(A_{s,t}),
 \qquad
 R^{Q,P}_{s,t}=\jmath_{E_Q}(q_{s,t}).
\end{equation}
Thus the probes verify the declared operator-ideal state; for the principal
type, their complete bundle is the response-core completion of
\Cref{def:coherent-principal-response-target}.

\subsection{Norming classification and invisible directions}

The full dual balls are maximally norming; compact subfamilies may also be
norming.  Let $L_A\subset K_{E_A}$ and $L_Q\subset K_{E_Q}$ be nonempty
weak-star compact probe families and set
\begin{equation}\label{eq:probe-seminorms}
 \|a\|_{L_A}:=\sup_{\lambda\in L_A}|\lambda(a)|,
 \qquad
 \|q\|_{L_Q}:=\sup_{\mu\in L_Q}|\mu(q)|.
\end{equation}
For $z\in\mathsf Z_{\eta;\mathfrak I,1}(H)$ with chart $(x,A,q)$ define the
restricted bundled observation
\begin{equation}\label{eq:restricted-joint-observation-map}
 \mathcal O_{\tau,L}(z)
 :=\bigl(x,(\lambda(A_{s,t}))_{\lambda\in L_A,s,t},
           (\mu(q_{s,t}))_{\mu\in L_Q,s,t}\bigr),
\end{equation}
and equip its image with
\begin{align}\label{eq:restricted-response-metric}
 d_L(\mathcal O_{\tau,L}z,\mathcal O_{\tau,L}\widetilde z)
 :=&\ |x-\widetilde x|_{\eta;H}
   +\sup_{\lambda\in L_A}|\lambda(A-\widetilde A)|_{2\eta}\\
   &+\sup_{\mu\in L_Q}|\mu(q-\widetilde q)|_{2\eta}.
\end{align}
The response metric induces on the state set the pseudometric
\begin{equation}
\label{eq:operational-pullback-metric}
 d_{{\rm op},L}(z,\widetilde z)
 :=d_L\bigl(\mathcal O_{\tau,L}z,
             \mathcal O_{\tau,L}\widetilde z\bigr).
\end{equation}
Let $\tau_{{\rm op},L}$ be the initial (coarsest response-continuous) topology.
Since the observation commutes with deterministic stopping, all state
truncations are continuous in $\tau_{{\rm op},L}$.  Write
$\tau_{\eta;\mathfrak I,1}$ for the declared topology induced by
\eqref{eq:operational-state-metric}.

The theorem below characterizes equality with
$\tau_{\eta;\mathfrak I,1}$; central finite-rank needles quantify its failure.

\begin{lemma}[Central needles and exact norm detection]
\label{lem:operational-needles}
Fix $I=[s,s+r]\subset[0,T]$, $r>0$, and let
\[
 \chi_I(u)=
 \begin{cases}
 0,&u\le s,\\
 (u-s)/r,&s<u<s+r,\\
 1,&u\ge s+r.
 \end{cases}
\]
For finite-rank $B^*=-B\in E_A$ and $C^*=C\in E_Q$, define
\[
 \iota_I(B,C):=(0,A^{r;B},q^{r;C}),\qquad
 A_u^{r;B}=r^{2\eta}\chi_I(u)B,\qquad
 q_u^{r;C}=r^{2\eta}\chi_I(u)C.
\]
Then $\iota_I(B,C)\in\mathsf Z_{\eta;\mathfrak I,1}(H)$ and
\begin{align}
 d_{\eta;\mathfrak I,1}(\iota_I(B,C),0)
 &=\|B\|_{\mathfrak I}+\|C\|_{\mathcal S_1},
 \label{eq:needle-state-metric}\\
 d_{{\rm op},L}(\iota_I(B,C),0)
 &=\|B\|_{L_A}+\|C\|_{L_Q}.
 \label{eq:needle-operational-metric}
\end{align}
Thus finite-rank central directions embed the two ideal norms and the two
probe seminorms into the state and response geometries with no loss of
constants.
\end{lemma}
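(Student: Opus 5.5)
The plan is to verify membership directly and then compute both Hölder seminorms explicitly, using that the needle path has trivial first level.

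\textbf{Membership and the cocycle chart.} Since $x=0$, the area Chen identity \eqref{eq:operational-area-Chen} reduces to additivity of $A$. The two-index quantities are therefore increments,
\[
 A_{s',t'}=r^{2\eta}\bigl(\chi_I(t')-\chi_I(s')\bigr)B,
 \qquad
 q_{s',t'}=r^{2\eta}\bigl(\chi_I(t')-\chi_I(s')\bigr)C .
\]
The geometric readout is $(1,0,A)$. Because $B$ is finite rank, it is supported in some finite-dimensional $E$, and we approximate $\chi_I$ by Lipschitz paths. The pure-area path $(0,A)$ is a little-Hölder limit of smooth finite-dimensional signatures in $E$: concatenate small loops whose Lévy area is a piecewise smooth increment of $\chi_I B$. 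This is the standard fact that pure-area paths with Lipschitz area lie in the geometric closure when $2\eta<1$. Likewise $q$ is a Lipschitz finite-rank path, hence lies in $C_0^{0,2\eta}$. Group-path continuity and vanishing at $0$ are immediate. So $\iota_I(B,C)\in\mathsf Z_{\eta;\mathfrak I,1}(H)$.

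\textbf{Reduction to a scalar seminorm.} The key scalar computation is
\[
 \sup_{s'<t'}\frac{\bigl|\chi_I(t')-\chi_I(s')\bigr|}{|t'-s'|^{2\eta}}=r^{-2\eta}.
\]
The function $\chi_I$ is nondecreasing with values in $[0,1]$ and slope $1/r$, so it suffices to take $s'\le t'$ inside $[s,s+r]$, giving the quotient $h/(r\,h^{2\eta})$ with $h\le r$. Since $1-2\eta>0$, this quotient is increasing in $h$ and is maximized at $h=r$, where it equals $r^{-2\eta}$. When the interval straddles $I$, the numerator can only shrink relative to the length, so such intervals do no better.

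\textbf{The two identities.} Multiplying by $r^{2\eta}$ gives
\[
 |A|_{2\eta;\mathfrak I}=\|B\|_{\mathfrak I},
 \qquad
 |q|_{2\eta;\mathcal S_1}=\|C\|_1 .
\]
Since the first-level term vanishes, \eqref{eq:operational-state-metric} yields \eqref{eq:needle-state-metric}. For \eqref{eq:needle-operational-metric}, each probe evaluation is linear, so $\lambda(A_{s',t'})=r^{2\eta}\bigl(\chi_I(t')-\chi_I(s')\bigr)\lambda(B)$. Hence $|\lambda(A)|_{2\eta}=|\lambda(B)|$, and taking the supremum over $L_A$ gives $\|B\|_{L_A}$. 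The same argument applied to $q$ gives $\|C\|_{L_Q}$.

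\textbf{Main obstacle.} The only nonroutine step is the geometric membership of the pure-area readout, that is, showing the pure-area path lies in the little-Hölder closure $G\Omega^2_{\eta,\mathfrak I}$. I would handle it by the finite-dimensional loop approximation in $E$, using $2\eta<1$ to obtain little-Hölder (not merely Hölder) convergence.
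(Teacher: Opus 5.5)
Your proposal is correct and follows essentially the same route as the paper: the ramp's normalized $2\eta$-H\"older seminorm equals its coefficient norm (your scalar computation makes this explicit), linear probes scalarize it, and geometric membership comes from concatenating small closed loops with zero first increment. The paper makes your sketched loop step quantitative. It splits $B$ into orthogonal skew two-planes by the real skew spectral theorem, places in each of $N$ slots planar loops of signed area $r^{2\eta}b_\ell/N$, and obtains the rates $|x^N|_\eta\le C_BN^{\eta-1/2}$ and $|A^N-A^{r;B}|_{2\eta}\le C_BN^{2\eta-1}$, which is exactly where $\eta<1/2$ enters.
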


\begin{proof}
Each ramp has normalized $2\eta$-H\"older seminorm equal to its coefficient
norm, proving \eqref{eq:needle-state-metric} and, after scalarization,
\eqref{eq:needle-operational-metric}.  It remains to verify geometric
membership of the pure-area ramp.  On a finite-dimensional subspace containing
$\operatorname{ran}B$, the real skew spectral theorem gives
\[
 B=\sum_{\ell=1}^M b_\ell
 (u_\ell\otimes v_\ell-v_\ell\otimes u_\ell)
\]
with orthogonal oriented two-planes.  Subdivide $I$ into $N$ slots and in each
concatenate $M$ smooth closed planar loops, the $\ell$-th having signed area
$r^{2\eta}b_\ell/N$ in the plane
$\operatorname{span}\{u_\ell,v_\ell\}$.  Since each loop has zero first
increment, Chen cross terms vanish; the second level per slot is
$r^{2\eta}B/N$, so the accumulated center agrees with the ramp at mesh points.

The diameter of each loop is $O_B(r^\eta N^{-1/2})$.  Dividing by the slot
length to the power $\eta$ gives the first-level estimate
\[
 |x^N|_{\eta}\le C_B N^{\eta-1/2}\longrightarrow0.
\]
Within a slot the interpolation error is $O_B(r^{2\eta}/N)$; it vanishes at
mesh points, so splitting arbitrary intervals at neighboring mesh points gives
\[
 |A^N-A^{r;B}|_{2\eta}
 \le C_BN^{2\eta-1}\longrightarrow0.
\]
Since $\eta<1/2$, these smooth finite-dimensional signatures converge to
$(0,A^{r;B})$ in $G\Omega^2_{\eta,\mathfrak I}(H)$; the covariance ramp is an
independent $C_0^{0,2\eta}(\mathcal S_{1,\rm sa})$ coordinate.
\end{proof}

The next theorem characterizes when restricted responses recover the declared
path topology; the central needles give the obstruction.

\begin{theorem}[Norming classification of the operational topology]
\label{thm:operational-topology}
Let $\tau=(\eta;\mathfrak I,\mathcal S_1)$ and let
$L=(L_A,L_Q)$ be as above.  Then
\begin{equation}
\label{eq:operational-topology-inclusion}
 \tau_{{\rm op},L}\subseteq\tau_{\eta;\mathfrak I,1}.
\end{equation}
The following are equivalent.
\begin{enumerate}[label=\textup{(\roman*)},leftmargin=2.4em]
\item There are finite constants $c_A,c_Q$ such that
\begin{equation}
\label{eq:norming-lower-bounds}
 \|a\|_{\mathfrak I}\le c_A\|a\|_{L_A}
 \quad(a\in E_A),
 \qquad
 \|q\|_{\mathcal S_1}\le c_Q\|q\|_{L_Q}
 \quad(q\in E_Q).
\end{equation}
\item $d_{{\rm op},L}$ is a metric and is bi-Lipschitz equivalent to
$d_{\eta;\mathfrak I,1}$.
\item $\tau_{{\rm op},L}=\tau_{\eta;\mathfrak I,1}$.
\item The restricted observation $\mathcal O_{\tau,L}$ is a causal
bi-Lipschitz embedding into the observed response topology.  Its image is
closed and Polish, and its inverse on that image is a stable-causal decoder.
\item There exists a continuous decoder on the observed image satisfying
$\mathcal D_{\tau,L}\mathcal O_{\tau,L}=I$ on the whole operational state
space.
\end{enumerate}
If either lower bound in \eqref{eq:norming-lower-bounds} fails, the inclusion
\eqref{eq:operational-topology-inclusion} is strict: there are finite-rank
operational states $z_n$ such that
\begin{equation}
\label{eq:operational-invisible-needles}
 d_{\eta;\mathfrak I,1}(z_n,0)=1,
 \qquad
 d_{{\rm op},L}(z_n,0)\longrightarrow0.
\end{equation}
For the complete dual balls $L_A=K_{E_A}$ and $L_Q=K_{E_Q}$,
\begin{equation}
\label{eq:complete-operational-topology}
 d_{{\rm op},L}=d_{\eta;\mathfrak I,1}
 \quad\text{and}\quad
 \boxed{\tau_{{\rm op},L}=\tau_{\eta;\mathfrak I,1}}.
\end{equation}
Thus the declared area--covariance topology is exactly the topology generated
by the complete triangular It\^o response bundle.
\end{theorem}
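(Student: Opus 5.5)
We prove the inclusion \eqref{eq:operational-topology-inclusion} first, then the cycle (i)$\Rightarrow$(ii)$\Rightarrow$(iv)$\Rightarrow$(v)$\Rightarrow$(iii)$\Rightarrow$(i), with the last implication in contrapositive form via the needles of \Cref{lem:operational-needles}.

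\emph{Inclusion.} Since $L_A\subset K_{E_A}$ and $L_Q\subset K_{E_Q}$, each $\lambda\in L_A$ satisfies $|\lambda(a)|\le\|a\|_{\mathfrak I}$. Applying this to every two-index increment gives $\sup_{\lambda}|\lambda(A-\widetilde A)|_{2\eta}\le|A-\widetilde A|_{2\eta;\mathfrak I}$, and similarly for $q$. Hence $d_{{\rm op},L}\le d_{\eta;\mathfrak I,1}$, and every $\tau_{{\rm op},L}$-open set is $\tau_{\eta;\mathfrak I,1}$-open.

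\emph{(i)$\Rightarrow$(ii).} Apply \eqref{eq:norming-lower-bounds} pointwise to $A_{s,t}-\widetilde A_{s,t}$, divide by $|t-s|^{2\eta}$, and take suprema over $(s,t)$. Commuting the two suprema gives $|A-\widetilde A|_{2\eta;\mathfrak I}\le c_A\sup_\lambda|\lambda(A-\widetilde A)|_{2\eta}$, and likewise for $q$. This yields $d_{\eta;\mathfrak I,1}\le\max(1,c_A,c_Q)\,d_{{\rm op},L}$. In particular $d_{{\rm op},L}$ separates points, so it is a metric.

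\emph{(ii)$\Rightarrow$(iv).} Bi-Lipschitz equivalence makes $\mathcal O_{\tau,L}$ a bi-Lipschitz bijection onto its image. Its inverse is therefore uniformly continuous. It commutes with stopping, because $\mathcal O_{\tau,L}$ does, so it is a stable-causal decoder. Since $\mathsf Z_{\eta;\mathfrak I,1}(H)$ is complete and Polish, the image is complete in $d_L$, hence closed in the observed response space, and it is Polish.

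\emph{(iv)$\Rightarrow$(v)} is immediate.

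\emph{(v)$\Rightarrow$(iii).} $I=\mathcal D_{\tau,L}\circ\mathcal O_{\tau,L}$ is continuous from $\tau_{{\rm op},L}$ to $\tau_{\eta;\mathfrak I,1}$, because $\mathcal O_{\tau,L}$ is $\tau_{{\rm op},L}$-continuous by definition of the initial topology. This gives the reverse inclusion.

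\emph{(iii)$\Rightarrow$(i), the main step.} Suppose the area lower bound fails; the covariance case is identical. Then there are $a_n\in E_A$ with $\|a_n\|_{\mathfrak I}=1$ and $\|a_n\|_{L_A}\to0$. We need finite-rank witnesses, so we approximate: finite-rank skew operators are dense in $E_A$, and $\|\cdot\|_{L_A}\le\|\cdot\|_{\mathfrak I}$. Hence we may replace $a_n$ by finite-rank skew $B_n$ with $\|B_n\|_{\mathfrak I}=1$ and $\|B_n\|_{L_A}\to0$, renormalizing after the approximation.

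Fix any interval $I$ and set $z_n:=\iota_I(B_n,0)$. By \eqref{eq:needle-state-metric} and \eqref{eq:needle-operational-metric} we get $d_{\eta;\mathfrak I,1}(z_n,0)=1$ and $d_{{\rm op},L}(z_n,0)=\|B_n\|_{L_A}\to0$, which is \eqref{eq:operational-invisible-needles}. The sequence $z_n\to0$ in $\tau_{{\rm op},L}$ but not in $\tau_{\eta;\mathfrak I,1}$. We must check that sequential convergence suffices to separate the topologies: $\tau_{{\rm op},L}$ is pseudometrizable by $d_{{\rm op},L}$, because $L$ is compact and the metric controls all the generating responses. Thus $\tau_{{\rm op},L}\ne\tau_{\eta;\mathfrak I,1}$, which establishes both strictness and the failure of (iii).

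\emph{Complete dual balls.} For $L_A=K_{E_A}$ and $L_Q=K_{E_Q}$, Hahn--Banach gives $\|a\|_{L_A}=\|a\|_{\mathfrak I}$ and $\|q\|_{L_Q}=\|q\|_1$. Then (i) holds with $c_A=c_Q=1$, the two-sided estimate above becomes equality, $d_{{\rm op},L}=d_{\eta;\mathfrak I,1}$, and \eqref{eq:complete-operational-topology} follows.

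The principal obstacle is the interchange of suprema together with the finite-rank approximation in the needle step. The remaining implications are formal.
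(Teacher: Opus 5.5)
Your proposal is correct and follows the paper's proof essentially step for step: unit-ball bounds for the inclusion, the constant $\max\{1,c_A,c_Q\}$ for (i)$\Rightarrow$(ii)--(v), normalized finite-rank needles from \Cref{lem:operational-needles} for the converse and for \eqref{eq:operational-invisible-needles}, and Hahn--Banach for the complete dual balls. The differences are cosmetic. You close the cycle via (v)$\Rightarrow$(iii) by composing continuous maps, whereas the paper uses the needles to rule out any decoder continuous at zero. The density of finite-rank skew (resp.\ self-adjoint) elements that you invoke rests on the contractive projection $a\mapsto\frac12(a\mp a^*)$, which the paper states explicitly.
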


\begin{proof}
The unit-ball bounds $\|a\|_{L_A}\le\|a\|_{\mathfrak I}$ and
$\|q\|_{L_Q}\le\|q\|_1$, applied incrementwise, give
\eqref{eq:operational-topology-inclusion}.  Under
\eqref{eq:norming-lower-bounds},
\[
 d_{\eta;\mathfrak I,1}(z,\widetilde z)
 \le C_L d_{{\rm op},L}(z,\widetilde z),
 \qquad C_L:=\max\{1,c_A,c_Q\},
\]
and the reverse inequality has constant one.  Hence \textup{(i)} implies
\textup{(ii)}--\textup{(iii)}.  The resulting causal bi-Lipschitz embedding
has complete, hence closed, image, and its inverse is continuous and commutes
with stopping.  Thus
\textup{(i)}$\Rightarrow$\textup{(iv)}$\Rightarrow$\textup{(v)}.

Conversely, suppose the area lower bound fails.  Choose norm-one
$a_n'\in E_A$ with $\|a_n'\|_{L_A}\to0$.  Approximation by finite-rank
operators, followed by the contractive skew projection and normalization,
gives finite-rank $a_n\in E_A$ with
$\|a_n\|_{\mathfrak I}=1$ and $\|a_n\|_{L_A}\to0$.  With
$z_n=\iota_{[0,T]}(a_n,0)$, \Cref{lem:operational-needles} gives
\[
 d_{\eta;\mathfrak I,1}(z_n,0)=1,
 \qquad d_{{\rm op},L}(z_n,0)\to0.
\]
If the covariance bound fails, trace-norm finite-rank approximation followed
by contractive self-adjoint projection and normalization gives the same result
with $z_n=\iota_{[0,T]}(0,q_n)$.  Thus failure of \textup{(i)} makes the
inclusion strict and precludes every decoder continuous at zero, hence
\textup{(iii)} and \textup{(v)}.  The preceding implications prove the five-way
equivalence and \eqref{eq:operational-invisible-needles}.  For the complete
dual balls, Hahn--Banach gives both norm identities with constant one and
\eqref{eq:complete-operational-topology}.
\end{proof}

\begin{remark}[Ambient and stochastic ranges]
The necessity statement concerns the full operational state space and requires
no support or reachability hypothesis.  Invertibility on a smaller stochastic
range is a separate property.  The least-state theorem uses the ambient
norming implication.
\end{remark}

\begin{corollary}[No finite-dimensional triangular family is lossless in infinite dimension]
\label{cor:no-finite-scalar-lossless-test}
Assume that at least one central axis $E_A$ or $E_Q$ is infinite dimensional.
Then no finite family of coefficient-defined finite-dimensional tests of the
form \eqref{eq:finite-dimensional-triangular-test} is lossless on the full
operational state space.  More precisely, for any finite collection
$(C_j,B_j)_{j=1}^N$ there is a nonzero finite-rank pure central state whose
complete restarted response is zero for every one of these tests.  Likewise,
no finite product family of scalar area/covariance probes is norming.
Consequently no fixed finite-dimensional scalarization admits a continuous
decoder to the full operational state whenever a central axis is infinite
dimensional.
\end{corollary}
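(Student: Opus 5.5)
The plan is a linear-algebra kernel argument combined with the needle lemma \Cref{lem:operational-needles} and the norming classification \Cref{thm:operational-topology}. Fix a finite collection $(C_j,B_j)_{j=1}^N$ with $C_j\in\mathcal L(H,\mathbb R^{m_j})$ and $B_j\in\mathcal L(\mathfrak I(H),\mathbb R^{m_j})$. By \Cref{prop:finite-dimensional-triangular-factorization}, on an accessible state the restarted response of the $j$-th test is $C_jx_{s,t}+B_jV_{s,t}$. Here
\[
 V_{s,t}=A_{s,t}+\tfrac12\bigl(x_{s,t}^{\otimes2}-q_{s,t}\bigr).
\]
The same formula serves as the deterministic definition of the response on the full operational state space via $\mathcal J_\tau$.

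Consider a pure central state $z=(0,A,q)$. Then $x=0$, and the $j$-th response reduces to $B_j(A_{s,t}-\tfrac12 q_{s,t})$. I will produce a nonzero finite-rank pair $(b,c)\in E_A\times E_Q$ with $B_j(b-\tfrac12c)=0$ for all $j$. The linear map
\[
 (b,c)\longmapsto\bigl(B_j(b-\tfrac12c)\bigr)_{j=1}^N
\]
sends $\operatorname{Fin}(H)_{\rm sk}\times\operatorname{Fin}(H)_{\rm sa}$ into $\mathbb R^{\sum m_j}$. Its kernel therefore has finite codimension.

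Suppose, say, that $E_A$ is infinite dimensional. Then $\operatorname{Fin}(H)_{\rm sk}$ is infinite dimensional, since $H$ must be infinite dimensional. Restricting to $c=0$, the map $b\mapsto(B_jb)_j$ has a nonzero kernel on any subspace of dimension exceeding $\sum_j m_j$. Such a subspace exists, for example the span of finitely many $\Anti(e_k\otimes e_l)$. The case of infinite-dimensional $E_Q$ is the same with $b=0$. Taking $z=\iota_{[0,T]}(b,c)$ from \Cref{lem:operational-needles}, the increments are $A_{s,t}=\lambda_{s,t}b$ and $q_{s,t}=\lambda_{s,t}c$ for a scalar ramp $\lambda$. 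Hence every response vanishes identically in $(s,t)$, while $z\neq0$ by \eqref{eq:needle-state-metric}. Geometric membership of the area ramp is part of the lemma.

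For scalar probes, take a finite product family $L_A=\{\lambda_1,\dots,\lambda_k\}$ and $L_Q=\{\mu_1,\dots,\mu_l\}$. On an infinite-dimensional axis, say $E_A$, the finite-rank kernel $\bigcap_i\ker\lambda_i$ contains a nonzero finite-rank skew $a$. Normalize it to $\|a\|_{\mathfrak I}=1$. Then $\|a\|_{L_A}=0$, so the lower bound \eqref{eq:norming-lower-bounds} fails and the family is not norming. \Cref{thm:operational-topology} then yields states $z_n$ with $d_{\eta;\mathfrak I,1}(z_n,0)=1$ and $d_{{\rm op},L}(z_n,0)\to0$; in fact the single needle has $d_{{\rm op},L}=0$. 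This excludes any decoder continuous at zero.

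For the final sentence, any fixed finite-dimensional scalarization is a finite family of either kind. The zero-response needle $z\neq0$ and the state $0$ have the same image, so no decoder, continuous or otherwise, satisfies $\mathcal D\mathcal O=I$.

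The only delicate point is bookkeeping rather than depth. The tests' response must be interpreted on the full ambient state space, not only on stochastic ranges, through the factor map $(x,V)\mapsto Cx+BV$. We also need the kernel vector to be finite rank and to lie in the correct symmetry class. Choosing it inside a finite-dimensional span of rank-one skew (or symmetric) operators handles both.
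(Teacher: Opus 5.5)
Your proposal is correct and follows essentially the paper's own argument: a finite-codimension kernel of the combined second-order coefficient map on finite-rank skew (or self-adjoint) operators is realized as an invisible central needle via \Cref{lem:operational-needles}, and the probe case is reduced to failure of \eqref{eq:norming-lower-bounds} and then \Cref{thm:operational-topology}. Your extra observation is a harmless sharpening of the paper's ``continuous decoder'' conclusion: the zero-response needle makes the finite scalarization non-injective, so no decoder exists at all.
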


\begin{proof}
Combine the second-order coefficient maps into
$\mathbf B:\mathfrak I(H)\to\mathbb R^M$.  If $E_A$ is infinite dimensional,
its finite-rank skew subspace contains nonzero $A_0\in\ker\mathbf B$; the
pure-area ramp of \Cref{lem:operational-needles} has zero first level and
It\^o second level proportional to $A_0$ on every restart, so all
$C_jX+B_jV$ vanish.  If $E_Q$ is infinite dimensional, restrict $\mathbf B$
to finite-rank self-adjoint trace-class operators and choose nonzero
$Q_0$ in its kernel.  Its pure-covariance ramp has $V=-Q/2$ and is likewise
invisible.

For a finite product family of scalar area/covariance probes, the same
finite-dimensional-kernel argument shows that one of the two norming lower
bounds in \eqref{eq:norming-lower-bounds} fails.  The decoder obstruction then
follows from \Cref{thm:operational-topology}.
\end{proof}

For the principal Hilbert--Schmidt area axis, a concrete invisible sequence is
obtained from an orthonormal family by setting
\[
 B_n=2^{-1/2}
 (e_{2n}\otimes e_{2n+1}-e_{2n+1}\otimes e_{2n})
 \in\mathcal S_2(H)_{\rm sk}.
\]
Then $\|B_n\|_2=1$ while every fixed Hilbert--Schmidt probe tends to zero on
$B_n$, whereas the full dual ball recovers the norm exactly.

\begin{proposition}[Point separation by scalar triangular responses]
\label{prop:scalar-triangular-point-separation}
Let $z=(x,A,q)$ and $\widetilde z=(\widetilde x,\widetilde A,\widetilde q)$ be
distinct elements of
$\mathsf Z_{\eta;\mathcal S_2,1}(H)$.  Then there is a scalar restarted
coefficient-defined triangular It\^o system of the form
\eqref{eq:finite-dimensional-triangular-test} whose complete response separates
$z$ and $\widetilde z$.  More precisely:
\begin{enumerate}[label=\textup{(\roman*)},leftmargin=2.5em]
\item a first-level difference is detected by a rank-one map
$C\in\mathcal L(H,\mathbb R)$ and $B=0$;
\item if the first levels agree but the area fields differ, a finite-rank skew
$K$ and
$B_K(T)=\operatorname{Tr}(K^*\operatorname{Anti}T)$ detect the difference;
\item if the first level and area agree but the covariance fields differ, a
finite-rank self-adjoint $S$ and
$B_S(T)=\operatorname{Tr}(S\operatorname{Sym}T)$ detect the difference.
\end{enumerate}
Thus the scalar triangular response class is point separating, although
\Cref{cor:no-finite-scalar-lossless-test} shows that no fixed finite subfamily
is globally lossless in infinite dimension.
\end{proposition}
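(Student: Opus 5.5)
The plan is to use \Cref{prop:finite-dimensional-triangular-factorization} to reduce every test to a deterministic readout. For a system \eqref{eq:finite-dimensional-triangular-test} with coefficients $(C,B)$, the complete restarted response evaluated on a state $z$ with signature response $\mathcal J_\tau z=(x,V)$ is the two-index field $(s,t)\mapsto Cx_{s,t}+BV_{s,t}$. Here
\[
 V_{s,t}=A_{s,t}+\tfrac12\bigl(x_{s,t}^{\otimes2}-q_{s,t}\bigr)
\]
as in \eqref{eq:primitive-observation-map}. Separating $z$ from $\widetilde z$ therefore means finding $(C,B)$ and some pair $(s,t)$ at which these scalar fields differ. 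We then split according to the first of the three chart coordinates in which $z$ and $\widetilde z$ differ. This is exhaustive, because $\operatorname{ch}_{\tau_0}$ is injective by \Cref{prop:intrinsic-chart-equivalence}.

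\emph{Case (i).} If $x\ne\widetilde x$, pick $t$ with $x_{0,t}\ne\widetilde x_{0,t}$. Take $C=\langle h,\cdot\rangle$ with $h=x_{0,t}-\widetilde x_{0,t}$, and take $B=0$. The response at $(0,t)$ is then $\langle h,x_{0,t}\rangle$ versus $\langle h,\widetilde x_{0,t}\rangle$, and these differ by $\|h\|^2>0$.

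\emph{Case (ii).} Suppose $x=\widetilde x$ but $A\ne\widetilde A$. Pick $(s,t)$ with $D:=A_{s,t}-\widetilde A_{s,t}\ne0$ in $\mathcal S_2(H)_{\rm sk}$. Finite-rank skew operators are dense in $\mathcal S_2(H)_{\rm sk}$, so there is a finite-rank skew $K$ with $\operatorname{Tr}(K^*D)\ne0$. For instance, take $K=P_nDP_n$ for a large finite-rank projection $P_n$; then $\operatorname{Tr}(K^*D)\to\|D\|_2^2$. Set $B_K(T)=\operatorname{Tr}(K^*\operatorname{Anti}T)$. This functional is bounded on $\mathcal S_2(H)$, since $|B_K(T)|\le\|K\|_2\|T\|_2$. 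Since $\operatorname{Anti}V=A$ (the terms $x^{\otimes2}$ and $q$ are self-adjoint), the two responses differ at $(s,t)$ by $\operatorname{Tr}(K^*D)\ne0$.

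\emph{Case (iii).} Suppose $x=\widetilde x$ and $A=\widetilde A$ but $q\ne\widetilde q$. Pick $(s,t)$ with $D:=q_{s,t}-\widetilde q_{s,t}\ne0$ in $\mathcal S_1(H)_{\rm sa}$. By spectral decomposition, $D$ has some eigenvector $e$ with nonzero eigenvalue, so $S=e\otimes e$ gives $\operatorname{Tr}(SD)\ne0$. Set $B_S(T)=\operatorname{Tr}(S\operatorname{Sym}T)$, which is bounded on $\mathcal S_2(H)$. We have $\operatorname{Sym}V_{s,t}=\frac12(x_{s,t}^{\otimes2}-q_{s,t})$, and the $x$-parts cancel, so the responses differ by $-\frac12\operatorname{Tr}(SD)\ne0$.

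The main point requiring care is well-definedness rather than the separation itself: each coefficient $B$ must lie in $\mathcal L(\mathfrak I(H),\mathbb R)$ with $\mathfrak I=\mathcal S_2$, so that \Cref{prop:finite-dimensional-triangular-factorization} applies. The finite-rank choices of $K$ and $S$ ensure this. Finally, the closing sentence of the statement is simply a citation of \Cref{cor:no-finite-scalar-lossless-test}.
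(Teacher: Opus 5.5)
Your proposal is correct and follows essentially the paper's proof. Both arguments reduce the response to the readout $(x,V)\mapsto Cx+BV$ and split on the first differing chart coordinate. Both then use a Riesz functional at the first level and finite-rank duality on $\operatorname{Anti}V=A$ and on $\operatorname{Sym}V=\tfrac12(x^{\otimes2}-q)$, and both check that $B_K,B_S$ are continuous on $\mathcal S_2(H)$. Your explicit choices $K=P_nDP_n$ and $S=e\otimes e$ simply make concrete the paper's appeals to finite-rank Hilbert--Schmidt and trace duality.
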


\begin{proof}
Choose a restart interval on which the first nonzero coordinate difference
appears.  In case~\textup{(i)}, the Riesz representation theorem gives $C$ with
$C(x_{s,t}-\widetilde x_{s,t})\ne0$.  If the first levels agree, the skew part
of the signature difference is $A_{s,t}-\widetilde A_{s,t}$; finite-rank
Hilbert--Schmidt duality gives $K$ in case~\textup{(ii)}.  If the first level and
area agree, then
\[
 \operatorname{Sym}(V_{s,t}-\widetilde V_{s,t})
 =-\frac12(q_{s,t}-\widetilde q_{s,t}).
\]
A nonzero trace-class self-adjoint operator is separated by a finite-rank
self-adjoint trace functional, giving~\textup{(iii)}.  Each $B_K$ and $B_S$ is
continuous on $\mathcal S_2(H)$, so the corresponding response is an admissible
scalar triangular response.
\end{proof}

\subsection{Least-state representation and compression rigidity}

For a norming pair $L=(L_A,L_Q)$ satisfying
\eqref{eq:norming-lower-bounds}, let $\mathfrak T_{\tau,L}$ denote the joint
marked triangular response specification
\eqref{eq:first-level-probe}--\eqref{eq:cov-triangular-probe}, restricted to
$L_A,L_Q$, with its single closed Polish bundled response space
\[
 \mathsf R_{\tau,L}:=\mathcal O_{\tau,L}
 \bigl(\mathsf Z_{\eta;\mathfrak I,1}(H)\bigr).
\]
Joint realization means one stable-causal readout of the whole bundle.

For the accessible type \(\tau=(\eta;\mathfrak I,\mathcal S_1)\), let
\(Y_L^{P,\tau}\) be the modelwise norming triangular response and let
\(Y_{\rm sig}^{P,\tau}\) be the modelwise restarted step-two It\^o signature.
A source state realizes either response when one stable-causal readout produces
the corresponding response under every \(P\in\cP\).  Accessibility gives
\[
 \mathcal O_{\tau,L}(Z_\tau)=Y_L^{P,\tau},
 \qquad
 \mathcal J_\tau(Z_\tau)=Y_{\rm sig}^{P,\tau}
 \quad P\text{-a.s. for every }P\in\cP.
\]

\begin{theorem}[Least-state theorem for norming quadratic responses]
\label{thm:least-state}
Let \(\tau=(\eta;\mathfrak I,\mathcal S_1)\) be accessible, let \(Z_\tau\) be
its canonical group-valued state, and let \(L=(L_A,L_Q)\) satisfy
\eqref{eq:norming-lower-bounds}.  For every \(\cP\)-causal state
\((\mathsf S,S)\),
\begin{equation}\label{eq:signature-response-representability}
 \begin{aligned}
 S\text{ realizes the restarted step-two signature}
 &\quad\Longleftrightarrow\quad Z_\tau\preceqsc S,\\
 S\text{ jointly realizes }\mathfrak T_{\tau,L}
 &\quad\Longleftrightarrow\quad Z_\tau\preceqsc S.
 \end{aligned}
\end{equation}
If \(\Psi_{\rm sig}\) is a signature realization, then
\(\mathcal D_\tau^{\rm sig}\Psi_{\rm sig}\) is the corresponding factor to
\(Z_\tau\).  If \(\Psi_L\) is a triangular realization, the factor is
\(\mathcal D_{\tau,L}\Psi_L\).  Conversely, a factor \(R:S\to Z_\tau\)
produces the two realizations \(\mathcal J_\tau R\) and
\(\mathcal O_{\tau,L}R\).  Thus the signature and the norming triangular
bundle carry the same realized second-order information, and \(Z_\tau\) is the
unique least sufficient state up to stable-causal equivalence.
\end{theorem}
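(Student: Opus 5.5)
The plan is to reduce both equivalences to \Cref{prop:lossless-chart-factorization}, applied once to each of two lossless charts of the state space $\mathsf Z:=\mathsf Z_{\eta;\mathfrak I,1}(H)$. The first chart is the signature encoder $\mathcal O=\mathcal J_\tau:\mathsf Z\to\mathsf R_\tau^{\rm sig}$ with decoder $\mathcal D=\mathcal D_\tau^{\rm sig}$. The second is the restricted triangular observation $\mathcal O=\mathcal O_{\tau,L}:\mathsf Z\to\mathsf R_{\tau,L}$ with decoder $\mathcal D_{\tau,L}$.

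First I verify the hypotheses of that proposition for both charts.

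\textbf{Signature chart.} \Cref{prop:primitive-lossless-dynamics} shows that $\mathcal J_\tau$ is a stable-causal isometric homeomorphism onto $\mathsf R_\tau^{\rm sig}$, with stable-causal inverse $\mathcal D_\tau^{\rm sig}$. Hence $\mathcal D\mathcal O=I_{\mathsf Z}$, and in fact $\mathcal O\mathcal D=I$ on the target. The target is Polish because it is an isometric image of the Polish space $\mathsf Z$; it carries the inherited stopping, so it is causal Polish.

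\textbf{Triangular chart.} Since $L$ satisfies \eqref{eq:norming-lower-bounds}, \Cref{thm:operational-topology}\textup{(iv)} applies. It gives that $\mathcal O_{\tau,L}$ is a causal bi-Lipschitz embedding with closed Polish image $\mathsf R_{\tau,L}$, and that its inverse $\mathcal D_{\tau,L}$ on that image is a stable-causal decoder. So again $\mathcal D\mathcal O=I_{\mathsf Z}$, and the chart is onto its declared target.

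Stable causality of each encoder needs one check: the stopping identity \eqref{eq:stable-causal-readout}. I get it from the fact that both observations are built incrementwise from the cocycle chart $(x,A,q)$, which commutes with group-path stopping.

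Next I identify the realized chart value. Accessibility, in the form displayed just before the theorem, gives $\mathcal J_\tau(Z_\tau)=Y^{P,\tau}_{\rm sig}$ and $\mathcal O_{\tau,L}(Z_\tau)=Y^{P,\tau}_L$, $P$-a.s. for every $P\in\cP$. The underlying computations are \Cref{prop:signature-lossless-identities} and \eqref{eq:joint-response-identification}. Consequently a source $S$ realizes the signature (respectively jointly realizes $\mathfrak T_{\tau,L}$) exactly when there is a stable-causal $\Psi$ with $\Psi(S)=\mathcal O(Z_\tau)$ quasi surely. This is precisely a joint realization of the chart in the sense of \Cref{prop:lossless-chart-factorization}.

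Part~\textup{(i)} of that proposition then delivers the remaining claims.
\begin{itemize}
\item Each equivalence in \eqref{eq:signature-response-representability} holds.
\item The explicit factors are $\mathcal D^{\rm sig}_\tau\Psi_{\rm sig}$ and $\mathcal D_{\tau,L}\Psi_L$.
\item Conversely, a factor $R$ yields the realizations $\mathcal J_\tau R$ and $\mathcal O_{\tau,L}R$.
\item These assignments are inverse on the realized source, and the factor is unique there.
\end{itemize}

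For the statement that both responses carry the same information, I apply \Cref{cor:canonical-equivalence-lossless-charts}. The transports $\mathcal O_{\tau,L}\mathcal D^{\rm sig}_\tau$ and $\mathcal J_\tau\mathcal D_{\tau,L}$ convert one realization into the other.

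For leastness, $Z_\tau$ is itself sufficient: take $S=Z_\tau$ and $R=I$. Every sufficient $S$ satisfies $Z_\tau\preceqsc S$ by the equivalences. If $Z'$ is another least sufficient state, then $Z'\preceqsc Z_\tau$ and $Z_\tau\preceqsc Z'$, so $Z'\equivsc Z_\tau$. This gives uniqueness up to stable-causal equivalence.

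The only step I expect to need real care is the realized-value identification. It requires that the accessible state $Z_\tau$ and the modelwise responses agree on a single polar-exceptional set uniformly in all restart times $(s,t)$ and all probes $\lambda\in L_A$, $\mu\in L_Q$, not merely for each fixed $(s,t,\lambda,\mu)$. I would handle this using the jointly continuous versions supplied by accessibility. First obtain equality on a countable dense set of restart times and probes, which is possible because $L_A$ and $L_Q$ are weak-star compact metrizable, hence separable. Then extend by joint continuity in the Hölder and $C(L)$ topologies.
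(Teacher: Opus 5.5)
Your proposal is correct and follows the paper's proof: both apply \Cref{prop:lossless-chart-factorization}\textup{(i)} to the signature chart $(\mathcal J_\tau,\mathcal D_\tau^{\rm sig})$ of \Cref{prop:primitive-lossless-dynamics} and to the norming triangular chart $(\mathcal O_{\tau,L},\mathcal D_{\tau,L})$ of \Cref{thm:operational-topology}, with accessibility supplying the realized chart values. Your extra care over the realized-value identification does no harm, but only one $P$-null set for each $P$ is needed there (uniform in the restart times and probes), not a single polar set shared by all models.
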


\begin{proof}
The signature chart
\((\mathcal J_\tau,\mathcal D_\tau^{\rm sig})\) from
\Cref{prop:primitive-lossless-dynamics} and the norming triangular chart
\((\mathcal O_{\tau,L},\mathcal D_{\tau,L})\) from
\Cref{thm:operational-topology} are stable-causal and have left inverses.
The two equivalences and the displayed factor maps follow from
\Cref{prop:lossless-chart-factorization}\textup{(i)}.  Norming is essential:
for a nonnorming family, \Cref{thm:operational-topology} supplies central
directions whose responses vanish while their state norm remains one, so no
continuous decoder exists.
\end{proof}

\begin{corollary}[Equivalence of the principal lossless response presentations]
\label{cor:state-independent-quadratic-universality}
Let \(\tau_0=(\eta;\mathcal S_2,\mathcal S_1)\) be accessible and let
\(L=(L_A,L_Q)\) be norming.  For every \(\cP\)-causal state \(S\),
\begin{equation}\label{eq:principal-quadratic-representation}
 \begin{aligned}
 S\text{ realizes the coherent quadratic response}
 &\Longleftrightarrow Z_{\tau_0}\preceqsc S,\\
 S\text{ realizes the restarted step-two signature}
 &\Longleftrightarrow Z_{\tau_0}\preceqsc S,\\
 S\text{ jointly realizes the norming triangular bundle}
 &\Longleftrightarrow Z_{\tau_0}\preceqsc S.
 \end{aligned}
\end{equation}
For any two of their encoder--decoder pairs
\((\mathcal O_i,\mathcal D_i)\) and \((\mathcal O_j,\mathcal D_j)\), the closed
chart images are related by the stable-causal transports
\(\mathcal O_j\mathcal D_i\) and \(\mathcal O_i\mathcal D_j\).  Hence they
encode the same realized second-order state without choosing a preferred
presentation.
\end{corollary}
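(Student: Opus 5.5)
The plan is to reduce all three equivalences to the abstract lossless-chart factorization of \Cref{prop:lossless-chart-factorization}. For that, each presentation must supply a stable-causal encoder on $\mathsf Z_{\eta;\mathcal S_2,1}(H)$ with a stable-causal left inverse, and the encoder evaluated at $Z_{\tau_0}$ must equal the modelwise declared response quasi surely. The three charts are:
\begin{itemize}
\item the coherent chart $(\mathcal O_{\mathfrak Q_2},\mathcal D_{\mathfrak Q_2})$ from \Cref{prop:coherent-principal-response-target};
\item the signature chart $(\mathcal J_{\tau_0},\mathcal D_{\tau_0}^{\rm sig})$ from \Cref{prop:primitive-lossless-dynamics};
\item the norming triangular chart $(\mathcal O_{\tau_0,L},\mathcal D_{\tau_0,L})$ from \Cref{thm:operational-topology}\textup{(iv)}, which applies because $L$ satisfies \eqref{eq:norming-lower-bounds}.
\end{itemize}
In each case $\mathcal D\mathcal O=I$ holds on the whole state space.

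Next I check the realization identity for each chart. Under accessibility, $\operatorname{ch}_{\tau_0}(Z_{\tau_0})=(X,A^P,\qv{M^P})$ holds $P$-a.s. Hence:
\begin{itemize}
\item $\mathcal J_{\tau_0}(Z_{\tau_0})$ is the restarted It\^o signature, by \Cref{prop:signature-lossless-identities};
\item $\mathcal O_{\tau_0,L}(Z_{\tau_0})$ is the modelwise triangular bundle, by \eqref{eq:joint-response-identification} restricted to $L$;
\item $\mathcal O_{\mathfrak Q_2}(Z_{\tau_0})$ is the modelwise coherent profile, by \Cref{prop:semimartingale-verifies-principal-response} and \eqref{eq:principal-modelwise-identification}.
\end{itemize}
The last identification needs care. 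The coherent target uses the dual embeddings $\jmath_A,\jmath_Q$ on the balls $\mathbb K_A,\mathbb K_Q$, and the scalar responses $\mathcal R^P_{A}(K)$, $\mathcal R^P_Q(S)$ must agree with $\jmath_A A^P$ and $\jmath_Q\qv{M^P}$ simultaneously in all $(s,t,K,S)$ off one null set. I plan to get this from joint continuity of the accessible version together with separability: first fix countable dense subsets of the compact metrizable balls and rational times, then extend to all parameters by continuity. This uniformity-in-parameters step is the only point I expect to require attention, and it is already recorded in the last sentence of \Cref{prop:semimartingale-verifies-principal-response}.

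With these identities in hand, \Cref{prop:lossless-chart-factorization}\textup{(i)} gives each of the three equivalences in \eqref{eq:principal-quadratic-representation}. It also gives the explicit factors $\mathcal D_i\Psi_i$, uniquely determined on the realized source. The transport statement is then \Cref{cor:canonical-equivalence-lossless-charts} applied to any pair $(\mathcal O_i,\mathcal D_i)$, $(\mathcal O_j,\mathcal D_j)$. The maps $\mathcal O_j\mathcal D_i$ and $\mathcal O_i\mathcal D_j$ are stable-causal as composites of stable-causal maps, and they restrict to mutually inverse homeomorphisms between the closed chart images. These images are closed retracts by \eqref{eq:lossless-chart-closed-retract}, and they intertwine the encoders. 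Since all three conditions are equivalent to the same relation $Z_{\tau_0}\preceqsc S$, no presentation is preferred.
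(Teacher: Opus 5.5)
Your proposal is correct and follows the paper's proof. The paper obtains the signature and triangular equivalences by citing \Cref{thm:least-state}, whose proof is exactly your application of \Cref{prop:lossless-chart-factorization}\textup{(i)} to the charts of \Cref{prop:primitive-lossless-dynamics,thm:operational-topology}; it handles the coherent response through \Cref{prop:coherent-principal-response-target,prop:semimartingale-verifies-principal-response} and takes the transports from \Cref{cor:canonical-equivalence-lossless-charts}. Your extra check, using countable dense subsets and joint continuity to make the modelwise coherent identification hold simultaneously in $(s,t,K,S)$ off one null set, is a legitimate detail that the paper leaves implicit in its accessibility clause.
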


\begin{proof}
The signature and triangular equivalences are
\Cref{thm:least-state}.  For the coherent quadratic response,
\Cref{prop:principal-quadratic-forcing} fixes the same split completion,
\Cref{prop:semimartingale-verifies-principal-response} identifies its modelwise
coordinates, and
\Cref{prop:coherent-principal-response-target,prop:lossless-chart-factorization}
gives the same factorization criterion.  The transports are those of
\Cref{cor:canonical-equivalence-lossless-charts}.
\end{proof}

\begin{corollary}[One-test principle for second-order completeness]
\label{cor:one-test-second-order-completeness}
Let $\tau$ be accessible and let $\mathfrak D_\tau$ be a causal response
specification such that every $d\in\mathfrak D_\tau$ admits a modelwise
correct stable-causal solution map
$\Phi_d:\mathsf Z_{\eta;\mathfrak I,1}(H)\to\mathsf R_d$.  Suppose one jointly realized response
$d_\star\in\mathfrak D_\tau$ has a stable-causal decoder
\[
 \mathcal D_\star:\mathsf R_{d_\star}\longrightarrow
 \mathsf R_\tau^{\rm sig}
\]
such that
\[
 (\mathcal D_\star\circ\Phi_{d_\star})(Z_\tau)
 =\mathcal J_\tau(Z_\tau)
 \qquad \cP\text{-quasi surely}.
\]  Then for every causal state $S$,
\[
 S\text{ is }\mathfrak D_\tau\text{-dynamically sufficient}
 \quad\Longleftrightarrow\quad
 Z_\tau\preceqsc S.
\]
Consequently $Z_\tau$ is the unique least $\mathfrak D_\tau$-sufficient
state up to stable-causal equivalence.  Thus a response class need not
literally contain the signature system; it is enough that one of its joint
responses stably determines the standard operator-valued step-two It\^o signature.
\end{corollary}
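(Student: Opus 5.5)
The plan is to reduce the corollary to the lossless-chart factorization of \Cref{prop:lossless-chart-factorization}, applied to the signature chart $(\mathcal J_\tau,\mathcal D_\tau^{\rm sig})$ of \Cref{prop:primitive-lossless-dynamics}. The distinguished response $d_\star$ and its decoder $\mathcal D_\star$ play the role of the factorization $D_{\mathfrak R}=D_\star\Pi_\star$ in \Cref{thm:descent-before-completion}\textup{(v)}. The two directions are handled separately.

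\emph{Sufficiency of $Z_\tau$.} Suppose $Z_\tau\preceqsc S$ via a stable-causal $R$ with $R(S)=Z_\tau$ $P$-a.s. for every $P\in\cP$. For each $d\in\mathfrak D_\tau$, the composite $\Phi_d R:\mathsf S\to\mathsf R_d$ is stable-causal, since compositions of continuous maps satisfying \eqref{eq:stable-causal-readout} again satisfy it. Because $\Phi_d$ is modelwise correct, $\Phi_d(R(S))=\Phi_d(Z_\tau)=Y^{P,d}$ $P$-a.s. Thus $S$ stably realizes every $d$, so $S$ is dynamically sufficient.

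\emph{Necessity.} If $S$ is dynamically sufficient, it has in particular a stable-causal readout $\Psi_\star:\mathsf S\to\mathsf R_{d_\star}$ with $\Psi_\star(S)=Y^{P,d_\star}=\Phi_{d_\star}(Z_\tau)$ $P$-a.s. (the last equality is modelwise correctness of $\Phi_{d_\star}$). Set $R:=\mathcal D_\tau^{\rm sig}\mathcal D_\star\Psi_\star$. This is stable-causal as a composite. By hypothesis, $\mathcal D_\star\Psi_\star(S)=\mathcal D_\star\Phi_{d_\star}(Z_\tau)=\mathcal J_\tau(Z_\tau)$ quasi surely, and therefore $R(S)=\mathcal D_\tau^{\rm sig}\mathcal J_\tau Z_\tau=Z_\tau$, using $\mathcal D_\tau^{\rm sig}\mathcal J_\tau=I$. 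Since $\cP$ may be uncountable, the null sets should be taken lawwise: for each $P$ separately, the finitely many a.s. identities hold on a common $P$-full set. No aggregation is required, because $\preceqsc$ is defined lawwise.

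\emph{Leastness and uniqueness.} $Z_\tau$ is itself sufficient: take $S=Z_\tau$ and $R=I$ in the first direction. If $Z'$ is another least sufficient state, then $Z'\preceqsc Z_\tau$ by leastness of $Z'$, and $Z_\tau\preceqsc Z'$ by the equivalence above. Hence $Z'\equivsc Z_\tau$.

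The main, though mild, obstacle is to check that the decoded composite is a genuine stable-causal readout. This requires $\mathcal D_\star$ to be defined and continuous on all of $\mathsf R_{d_\star}$, not just on the realized support, and to respect truncations, which the hypothesis supplies. One must also confirm that the range of $\mathcal D_\star\Psi_\star$ lies in $\mathsf R_\tau^{\rm sig}$, where $\mathcal D_\tau^{\rm sig}$ is defined; this too is guaranteed by the stated codomain of $\mathcal D_\star$.
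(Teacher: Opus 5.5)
Your proposal is correct and follows the paper's own argument. For necessity, you compose the $d_\star$-readout with $\mathcal D_\star$ to realize the restarted signature, and your explicit factor $\mathcal D_\tau^{\rm sig}\mathcal D_\star\Psi_\star$ is exactly what \Cref{thm:least-state} supplies; for sufficiency, you compose the factor map $S\to Z_\tau$ with the solution maps $\Phi_d$, and your leastness and uniqueness deductions also match the paper.
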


\begin{proof}
If $S$ is $\mathfrak D_\tau$-sufficient, its readout of $d_\star$ composed
with $\mathcal D_\star$ realizes the step-two It\^o signature.  Hence
$Z_\tau\preceqsc S$ by \Cref{thm:least-state}.  Conversely, if
$Z_\tau\preceqsc S$, composing the factor map $S\to Z_\tau$ with the
solution maps $\Phi_d$ realizes every $d\in\mathfrak D_\tau$.
\end{proof}

\begin{corollary}[Rigidity of lossless stable-causal compression]
\label{cor:no-proper-stable-quotient}
Let $\tau=(\eta;\mathfrak I,\mathcal S_1)$ and let
$L=(L_A,L_Q)$ be norming.  Suppose
\[
 \pi:\mathsf Z_{\eta;\mathfrak I,1}(H)\longrightarrow\mathsf S
\]
is a stable-causal map into a causal Polish space and the whole bundled
response factors through $\pi$: there is a stable-causal
$\Psi:\mathsf S\to\mathsf R_{\tau,L}$ such that
\[
 \mathcal O_{\tau,L}=\Psi\circ\pi.
\]
Then $\pi$ has a stable-causal left inverse.  Consequently $\pi$ is a
homeomorphism onto a closed retract of $\mathsf S$ generated by a
stable-causal idempotent.  In
particular, no noninjective continuous causal quotient is lossless for the
complete norming response; if $\pi$ is surjective, it is a stable-causal
homeomorphism.

If $\tau$ is accessible and the quotient state $\pi(Z_\tau)$ jointly realizes
$\mathfrak T_{\tau,L}$, then
\[
 \pi(Z_\tau)\equivsc Z_\tau.
\]
Thus every lossless stable-causal compression of the canonical state belongs
to its stable-causal equivalence class.
\end{corollary}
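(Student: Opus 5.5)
The plan is to reduce the corollary to \Cref{prop:lossless-chart-factorization}\textup{(iii)}, applied to the norming triangular chart. Since $L$ satisfies \eqref{eq:norming-lower-bounds}, \Cref{thm:operational-topology}\textup{(iv)} provides the observation
\[
 \mathcal O_{\tau,L}:\mathsf Z_{\eta;\mathfrak I,1}(H)\to\mathsf R_{\tau,L}.
\]
This observation is a causal bi-Lipschitz embedding onto the closed Polish image $\mathsf R_{\tau,L}$, and it has a stable-causal decoder $\mathcal D_{\tau,L}$ with $\mathcal D_{\tau,L}\mathcal O_{\tau,L}=I$. So $(\mathcal O_{\tau,L},\mathcal D_{\tau,L})$ is a lossless chart between causal Polish spaces.

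Given the factorization $\mathcal O_{\tau,L}=\Psi\circ\pi$, I set $L_\pi:=\mathcal D_{\tau,L}\Psi:\mathsf S\to\mathsf Z_{\eta;\mathfrak I,1}(H)$. This map is a composite of stable-causal maps. Stable causality is closed under composition: one has $\rho_t R_2R_1=\rho_tR_2\rho_tR_1=\rho_tR_2\rho_tR_1\rho_t=\rho_tR_2R_1\rho_t$, and continuity is preserved. Then $L_\pi\pi=\mathcal D_{\tau,L}\mathcal O_{\tau,L}=I$.

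The conclusions of \Cref{prop:lossless-chart-factorization}\textup{(iii)} now follow.
\begin{itemize}
\item $P=\pi L_\pi$ is a stable-causal idempotent.
\item $\pi(\mathsf Z)=\operatorname{Fix}(P)$ is closed because $\mathsf S$ is Hausdorff.
\item $\pi$ is a homeomorphism onto this fixed set, with inverse $L_\pi|_{\operatorname{Fix}P}$.
\end{itemize}
A left inverse forces injectivity, so no noninjective continuous causal quotient can be lossless. If $\pi$ is surjective, then $\operatorname{Fix}(P)=\mathsf S$, and $\pi$ is a stable-causal homeomorphism with inverse $L_\pi$.

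For the last assertion, I assume $\tau$ is accessible and that $\pi(Z_\tau)$ jointly realizes $\mathfrak T_{\tau,L}$. The first step is to check that $\pi(Z_\tau)$ is itself a $\cP$-causal state. On the good set $G_{Z_\tau}$, $r_t\omega=r_t\omega'$ implies $\rho_tZ_\tau(\omega)=\rho_tZ_\tau(\omega')$. Then stable causality gives
\[
 \rho_t\pi Z_\tau(\omega)=\rho_t\pi\rho_tZ_\tau(\omega),
\]
and the same identity at $\omega'$ yields equality. Next, $Z_\tau\preceqsc\pi(Z_\tau)$. This follows either from \Cref{thm:least-state} applied to the joint realization, or directly from the explicit factor $L_\pi$, since $L_\pi(\pi(Z_\tau))=Z_\tau$ identically. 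Conversely, $\pi(Z_\tau)\preceqsc Z_\tau$ holds trivially through the readout $\pi$. Mutual factorization is the definition of $\pi(Z_\tau)\equivsc Z_\tau$.

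The only real obstacle is bookkeeping. First, $\mathsf R_{\tau,L}$ must be causal Polish so that \Cref{prop:lossless-chart-factorization} applies; \Cref{thm:operational-topology}\textup{(iv)} supplies this, and invariance under stopping comes from commutation of $\mathcal O_{\tau,L}$ with truncations. Second, the factor $L_\pi$ should be taken as the explicit decoder composite rather than extracted abstractly, so that the quasi-sure identities hold pointwise.
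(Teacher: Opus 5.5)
Your proposal is correct and follows the paper's proof. Both apply \Cref{prop:lossless-chart-factorization}\textup{(iii)} to the norming chart $(\mathcal O_{\tau,L},\mathcal D_{\tau,L})$, with left inverse $\mathcal D_{\tau,L}\Psi$ and idempotent $\pi\mathcal D_{\tau,L}\Psi$, and then get $\pi(Z_\tau)\preceqsc Z_\tau$ through $\pi$ and the reverse factorization through \Cref{thm:least-state}. Your alternative of reading the reverse factor directly off $\mathcal D_{\tau,L}\Psi$ is also valid, and it shows that the joint-realization hypothesis is automatic once $\mathcal O_{\tau,L}=\Psi\pi$ holds.
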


\begin{proof}
Apply part~\textup{(iii)} of
\Cref{prop:lossless-chart-factorization} with
$\mathcal O=\mathcal O_{\tau,L}$,
$\mathcal D=\mathcal D_{\tau,L}$, and the displayed factorization
$\mathcal O_{\tau,L}=\Psi\pi$.  The left inverse is
$\mathcal D_{\tau,L}\Psi$, and the image of $\pi$ is the closed fixed-point
set of the stable-causal idempotent
$\pi\mathcal D_{\tau,L}\Psi$.  Surjectivity therefore makes $\pi$ an
isomorphism.  In the accessible case, $\pi$ already gives
$\pi(Z_\tau)\preceqsc Z_\tau$, while joint sufficiency and
\Cref{thm:least-state} give the reverse factorization.
\end{proof}

\begin{remark}[Dependence on the operational topology]
Once accessibility is established, the least-state factorization of the
triangular bundle is deterministic.  The extension to
$\mathfrak D_\tau$ assumes that its solution maps are modelwise correct,
continuous, and stopping compatible.  Both the response norm and the
stable-causal preorder depend on $\mathfrak I$, and joint realization means a
single readout of the complete bundle.
\end{remark}

\begin{proof}[Proof of \Cref{thm:quadratic-response-reconstruction}]
Abstract response descent and completion are
\Cref{thm:descent-before-completion}, while
\Cref{prop:coherent-principal-response-target} gives the displayed
stable-causal isometric identification of the coherent completion with
$\mathsf Z_{\eta;\mathcal S_2,1}(H)$.  Injectivity of the signature and its
stable decoder are \Cref{prop:primitive-lossless-dynamics}; accessibility
identifies the fixed state $\mathcal A_2=Z_{\tau_0}$ modelwise with the common
classical realization.  This proves~\textup{(i)}.
Part~\textup{(ii)} is \Cref{prop:intrinsic-central-extension}, with restart
reconstruction supplied by \Cref{prop:quadratic-restart-reconstruction}.
The forward Brownian calculation, deterministic converse rigidity, and their
calibration dictionary are
\Cref{thm:Brownian-calibration-rigidity,thm:orthogonal-response-rigidity,cor:calibration-direct-sum-equivalence}.
The need for quantitative calibration is
\Cref{prop:qualitative-observability-no-topology}; response duality and path
completion are
\Cref{prop:principal-quadratic-forcing,thm:response-forced-completion,thm:operational-topology}.
The covariance endpoint distinction between the finite-rank completion
$\mathcal K(H)_{\rm sa}$ and the separately declared full normal test class
$\mathcal L(H)_{\rm sa}$ is recorded in
\Cref{cor:direct-sum-schatten-scale}; both induce the trace norm on
$\mathcal S_1(H)_{\rm sa}$.
This proves~\textup{(iii)}.
For~\textup{(iv)},
\Cref{prop:coherent-principal-response-target} supplies the global coherent
encoder--decoder pair, and
\Cref{prop:lossless-chart-factorization} gives the two mutually inverse
factorization maps displayed there.  Accessibility and
\Cref{prop:semimartingale-verifies-principal-response} identify the decoded
state and its response chart with the common classical coordinates and the
declared response package under every model.
Part~\textup{(v)} is
\Cref{prop:scalar-triangular-point-separation,cor:no-finite-scalar-lossless-test,cor:no-proper-stable-quotient}.
For~\textup{(vi)}, any finite product and stable-causal postcomposition of
principal coordinates is a stable-causal map
\[
 F:\mathsf Z_{\eta;\mathcal S_2,1}(H)\longrightarrow\mathsf T
\]
into its declared response target.  By
\Cref{prop:lossless-chart-factorization}~\textup{(ii)},
\[
 F=(F\mathcal D_{\mathfrak Q_2})\mathcal O_{\mathfrak Q_2},
\]
so it factors through the coherent chart.  If this derived chart is lossless,
with $D_FF=I_{\mathsf Z_{\eta;\mathcal S_2,1}(H)}$, apply
\Cref{prop:lossless-chart-factorization}~\textup{(i)} to $(F,D_F)$ and to
$(\mathcal O_{\mathfrak Q_2},\mathcal D_{\mathfrak Q_2})$; realizability of
either lossless chart is equivalent to stable-causal factorization through
the operational state space.  Evaluating the factorization at the fixed common
realization gives the corresponding statement for $\mathcal A_2$.  Apart from
Brownian calibration, the model-class input is accessibility, verified in
Part~II.
\end{proof}

\begin{proof}[Proof of \Cref{thm:quadratic-response-representability}]
Part~\textup{(iv)} of
\Cref{thm:quadratic-response-reconstruction} supplies the assignments
\[
 \Psi_S\longmapsto
 \mathcal D_{\mathfrak Q_2}\Psi_S,
 \qquad
 R\longmapsto\mathcal O_{\mathfrak Q_2}R.
\]
Here $R$ ranges over stable-causal maps
$\mathsf S\to\mathsf Z_{\eta;\mathcal S_2,1}(H)$ satisfying
$R(S)=\mathcal A_2$ quasi surely.  The composite starting from $R$ is the
identity because
$\mathcal D_{\mathfrak Q_2}\mathcal O_{\mathfrak Q_2}
=I_{\mathsf Z_{\eta;\mathcal S_2,1}(H)}$; the composite starting from
$\Psi_S$ agrees with the original realization on the realized source because
\[
 \Psi_S(S)=\mathcal O_{\mathfrak Q_2}(\mathcal A_2)
 \qquad\cP\text{-quasi surely}.
\]
This proves \eqref{eq:quadratic-response-universal-property}.  Leastness is the
forward implication, and $\mathcal A_2$ is jointly sufficient through
$\mathcal O_{\mathfrak Q_2}$.  Mutual least-state factorization gives
uniqueness up to $\equivsc$.
\end{proof}

\part{Common realization and compact perfection}

\section{Common causal realization and construction independence}
\label{sec:canonicalization}

The response state is selected before a law is imposed.  We construct one
total Borel raw-causal realization on the nondominated path space and identify
its classical version under every law.  Pathwise stochastic integration and
quasi-sure aggregation provide related model-independent mechanisms
\cite{Karandikar95,Nutz12,SonerTouziZhang11,Cohen12,Oberhauser16,
PerkowskiPromel16,BartlKupperNeufeld19}, while causal functional calculus gives
a stopped-path viewpoint \cite{ChiuCont22}; here quasi-sure equality is used
only after selecting the common raw field.

The defect-energy principle identifies all total Borel raw-causal
finite-variation schemes with vanishing upper-energy defect; resolvents and
localized kernels are examples.
Restart-stable compact domains then separate the limiting state from
finite-scale memory and support downstream perfection.

\subsection{Abstract causal canonicalization}

The first step is an upper-capacity canonicalization principle independent of
Hilbert and rough-path structure.

\begin{definition}[Selection-gauged causal target]
\label{def:selection-gauged-target}
Let $\Omega$ be a standard Borel space with Borel truncations $r_t$ satisfying
$r_T=I$ and $r_sr_t=r_{s\wedge t}$.  A \emph{realization-gauged causal target}
is a Polish space $(\mathsf Y,\tau_{\rm ref})$ with Borel truncations
$\rho_t$ satisfying the same identities and a complete metric $d_{\rm sel}$
on the underlying set such that:
\begin{enumerate}[label=\textup{(\roman*)},leftmargin=2.4em]
 \item $(y,z)\mapsto d_{\rm sel}(y,z)$ is Borel for
 $\tau_{\rm ref}\otimes\tau_{\rm ref}$;
 \item $d_{\rm sel}(y_n,y)\to0$ implies $y_n\to y$ in $\tau_{\rm ref}$;
 \item $d_{\rm sel}(\rho_ty,\rho_tz)\le d_{\rm sel}(y,z)$ for every $t$.
\end{enumerate}
A Borel map $F:\Omega\to\mathsf Y$ is \emph{raw causal} when
$\rho_tF=\rho_tFr_t$ for every $t$.
\end{definition}

\begin{lemma}[Upper-capacity $L^0$ completeness and causal closedness]
\label{lem:upper-capacity-L0-completeness}
Let $\cP$ be a nonempty family of laws on $\Omega$ and put
$c_{\cP}(E)=\sup_{P\in\cP}P^*(E)$.  Let
$(\mathsf Y,d_{\rm sel})$ be a realization-gauged causal target.  For Borel
sets, ``full-capacity'' will always mean that the complement is polar:
$G$ is $\cP$-full precisely when $c_{\cP}(G^c)=0$.  The weaker equality
$c_{\cP}(G)=1$ is not used as a substitute.  For Borel
maps $F,G:\Omega\to\mathsf Y$, put
\begin{equation}\label{eq:upper-capacity-L0-metric}
 d_{0,\cP}(F,G)
 :=\sup_{P\in\cP}E^P\bigl[1\wedge d_{\rm sel}(F,G)\bigr].
\end{equation}
After quotienting by quasi-sure equality, the resulting metric space of Borel
maps is complete.  Moreover:
\begin{enumerate}[label=\textup{(\roman*)},leftmargin=2.5em]
\item $d_{0,\cP}(F_n,F)\to0$ if and only if $F_n\to F$ in upper capacity for
$d_{\rm sel}$;
\item if $(F_n)$ is $d_{0,\cP}$-Cauchy and every $F_n$ is raw causal, a
deterministic subsequence converges pointwise outside one Borel polar set, and
the limit has a Borel representative $F^{\mathrm c}$ which is
$\cP$-causal on one Borel full-capacity set;
\item the limit is unique quasi surely; if $F_n\to F^P$ in $P$-probability for
every $P\in\cP$, then $F^{\mathrm c}=F^P$ $P$-almost surely.
\end{enumerate}
\end{lemma}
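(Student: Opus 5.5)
The plan is to treat $d_{0,\cP}$ as a uniform-over-laws version of the Ky Fan metric and to run the classical Riesz subsequence argument with $c_{\cP}$ in place of a single probability. First I check the metric axioms. Symmetry and the triangle inequality hold because $1\wedge d_{\rm sel}$ satisfies the triangle inequality and a supremum of pseudometrics is a pseudometric. Measurability of $\omega\mapsto d_{\rm sel}(F(\omega),G(\omega))$ follows from clause~(i) of \Cref{def:selection-gauged-target}. Moreover $d_{0,\cP}(F,G)=0$ exactly when $\{F\ne G\}$ is $P$-null for every $P$. Since this set is Borel, that is the same as polarity, so the quotient by quasi-sure equality is a genuine metric. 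For~(i) I use the two-sided Markov-type bounds, for $0<\eps\le1$,
\[
 \eps\,\sup_P P\bigl(d_{\rm sel}(F_n,F)>\eps\bigr)\le d_{0,\cP}(F_n,F)\le\eps+\sup_P P\bigl(d_{\rm sel}(F_n,F)>\eps\bigr).
\]
These give equivalence with convergence in upper capacity.

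For completeness and~(ii), take a Cauchy sequence and choose deterministic indices $n_k$ with $d_{0,\cP}(F_{n_k},F_{n_{k+1}})\le4^{-k}$. Put $E_k:=\{d_{\rm sel}(F_{n_k},F_{n_{k+1}})>2^{-k}\}$, a Borel set. Markov gives $\sup_PP(E_k)\le2^{-k}$ for $k\ge1$. The set $N:=\limsup_kE_k$ is Borel, and for each fixed $P$ we have $P(N)\le\sum_{k\ge j}2^{-k}$ for every $j$, so $P(N)=0$ and $N$ is polar. I use countable subadditivity only lawwise; subadditivity of $c_{\cP}$ itself is not needed. Off $N$, $(F_{n_k}(\omega))$ is $d_{\rm sel}$-Cauchy, so it converges in the complete metric $d_{\rm sel}$, hence also in $\tau_{\rm ref}$ by clause~(ii). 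On $N$ I set $F^{\rm c}$ equal to a fixed point $y_0$. Borel measurability of $F^{\rm c}$ is the step needing care, since the limit is taken in $d_{\rm sel}$ while measurability refers to $\tau_{\rm ref}$. The limit exists in the Polish topology $\tau_{\rm ref}$, and a pointwise $\tau_{\rm ref}$-limit of Borel maps into a metrizable space is Borel. The set $N^c$ is Borel, so $F^{\rm c}$ is Borel. Dominated convergence under each $P$, made uniform by the tail estimate $d_{\rm sel}(F_{n_k},F^{\rm c})\le2^{1-k}$ off $\bigcup_{i\ge k}E_i$, gives $d_{0,\cP}(F_{n_k},F^{\rm c})\le2^{1-k}+2^{1-k}\to0$. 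The Cauchy property then yields convergence of the full sequence.

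For causality, define $G:=N^c\cap\bigcap_{t\in\mathbb Q}\{\dots\}$ carefully. I expect this step to be the main obstacle, because $r_t$ may move points into $N$. My first attempt uses raw causality of each $F_{n_k}$ directly. If $\omega,\omega'\in N^c$ and $r_t\omega=r_t\omega'$, then $\rho_tF_{n_k}(\omega)=\rho_tF_{n_k}(r_t\omega)=\rho_tF_{n_k}(\omega')$ for all $k$. Clause~(iii) makes $\rho_t$ nonexpansive for $d_{\rm sel}$, so $\rho_tF_{n_k}(\omega)\to\rho_tF^{\rm c}(\omega)$ and likewise at $\omega'$. The two limits coincide, so $G_{F^{\rm c}}:=N^c$ works for every $t$ simultaneously, with no countable restriction on $t$ and no need for $r_t\omega\in N^c$. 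This shows that $F^{\rm c}$ is $\cP$-causal on the Borel full-capacity set $N^c$.

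For~(iii), uniqueness follows because two limits are at $d_{0,\cP}$-distance zero, that is, quasi surely equal by the first paragraph. If $F_n\to F^P$ in $P$-probability, then along the same subsequence $F_{n_k}\to F^{\rm c}$ in $P$-probability as well. Limits in probability in the metric $d_{\rm sel}$ are $P$-a.s.\ unique, so $F^{\rm c}=F^P$ $P$-a.s.
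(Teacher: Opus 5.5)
Your proof is correct and follows essentially the paper's argument: two-sided Markov-type bounds for (i), a fast Riesz subsequence with a Borel polar set, Borel measurability of the limit via $\tau_{\rm ref}$-convergence, causality passed to the limit through raw causality of each $F_{n_k}$ and nonexpansiveness of $\rho_t$, and uniqueness of limits in probability for (iii). The only differences are cosmetic. You apply Borel--Cantelli lawwise to $\limsup_kE_k$ rather than using subadditivity of $c_{\cP}$. You should also delete the abandoned sentence that defines $G$ by a countable intersection, since, as you then show, $N^c$ itself already serves as the causality set for all $t$.
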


\begin{proof}
For $0<\delta\le1$,
\[
 c_{\cP}(d_{\rm sel}(F,G)>\delta)
 \le \delta^{-1}d_{0,\cP}(F,G),
\]
while for every $0<\varepsilon<1$,
\[
 d_{0,\cP}(F,G)
 \le \varepsilon+c_{\cP}(d_{\rm sel}(F,G)>\varepsilon).
\]
This proves~\textup{(i)}.

Let $(F_n)$ be $d_{0,\cP}$-Cauchy.  Choose a subsequence $(F_{n_k})$ such that
$d_{0,\cP}(F_{n_{k+1}},F_{n_k})\le2^{-3k}$ and put
\begin{equation}\label{eq:canonicalization-tail-event}
 E_k:=\bigcup_{j\ge k}
 \{d_{\rm sel}(F_{n_{j+1}},F_{n_j})>2^{-j}\}.
\end{equation}
Markov's inequality and subadditivity give
$c_{\cP}(E_k)\le\sum_{j\ge k}2^{-2j}$.  Hence the subsequence is summably
Cauchy outside a polar set and converges there in the complete metric
$d_{\rm sel}$.  On $E_k^c$ its limit satisfies
\begin{equation}\label{eq:canonicalization-subsequence-rate}
 d_{\rm sel}(F_{n_k},F^{\mathrm c})
 \le\sum_{j\ge k}2^{-j}=2^{-k+1}.
\end{equation}
The $d_{\rm sel}$-Cauchy locus is Borel; on it convergence in
$d_{\rm sel}$ implies convergence in the reference Polish topology, so the
pointwise limit is Borel.  Extend it by a fixed base point off that locus.
The preceding tail estimate gives upper-capacity convergence of the
subsequence, and the Cauchy property plus the triangle inequality gives
upper-capacity convergence of the full sequence.  Part~\textup{(i)} then gives
$d_{0,\cP}$ convergence and proves completeness.

If the $F_n$ are raw causal and $x,y$ belong to the common Cauchy locus
with $r_tx=r_ty$, then
\[
 \rho_tF_n(x)=\rho_tF_n(r_tx)
 =\rho_tF_n(r_ty)=\rho_tF_n(y).
\]
Contractivity of $\rho_t$ passes this equality to the pointwise limit, proving
$\cP$-causality on the common full-capacity locus.  Uniqueness follows from
\[
 c_{\cP}(d_{\rm sel}(F,G)>\delta)
 \le c_{\cP}(d_{\rm sel}(F,F_n)>\delta/2)
   +c_{\cP}(d_{\rm sel}(F_n,G)>\delta/2).
\]
Modelwise identification is uniqueness of limits in probability.  This proves
\textup{(ii)}--\textup{(iii)}.
\end{proof}

\begin{proposition}[Causal canonicalization principle]
\label{thm:causal-canonicalization-principle}
Let $\cP$ be a nonempty family of probability laws on $\Omega$ and put
$c_{\cP}(E)=\sup_{P\in\cP}P^*(E)$.  Let $F_n^\theta:\Omega\to\mathsf Y$ be
total Borel raw-causal maps into a realization-gauged causal target.  Assume
that, for every $\delta>0$,
\begin{equation}
\label{eq:abstract-upper-capacity-Cauchy}
 \lim_{N\to\infty}\sup_{m,n\ge N}
 c_{\cP}\!\left(d_{\rm sel}(F_m^\theta,F_n^\theta)>\delta\right)=0.
\end{equation}
Then there are a deterministic subsequence $(n_k)$, a Borel $\cP$-polar set
$N$, and a Borel map $F^{\theta,\mathrm c}:\Omega\to\mathsf Y$ such that
\[
 d_{\rm sel}(F_{n_k}^\theta,F^{\theta,\mathrm c})\longrightarrow0
 \qquad\text{on }\Omega\setminus N.
\]
In fact the \emph{full sequence} converges to the same selector in upper
capacity:
\begin{equation}\label{eq:canonicalization-full-capacity-convergence}
 c_{\cP}\left(d_{\rm sel}(F_n^\theta,F^{\theta,\mathrm c})>\delta\right)
 \longrightarrow0
 \qquad(\delta>0).
\end{equation}
The selector is $\cP$-causal and its quasi-sure class is the unique Borel
$\cP$-causal upper-capacity limit of the sequence.  If, for every
$P\in\cP$, the full sequence converges in $P$-probability for $d_{\rm sel}$
to a modelwise primitive $F^P$, then
\[
 F^{\theta,\mathrm c}=F^P\qquad P\text{-a.s.}
\]
Two schemes satisfying \eqref{eq:abstract-upper-capacity-Cauchy} and having
the same modelwise primitives yield the same quasi-sure realization class.  For
a countable family of schemes with the same modelwise primitives, all resulting
selectors agree outside one common Borel polar set.
\end{proposition}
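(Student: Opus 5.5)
The plan is to reduce everything to \Cref{lem:upper-capacity-L0-completeness}.

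\emph{Step 1: Cauchy in the $L^0$ metric.} First I will show that hypothesis \eqref{eq:abstract-upper-capacity-Cauchy} makes $(F_n^\theta)$ Cauchy for $d_{0,\cP}$ of \eqref{eq:upper-capacity-L0-metric}. For every $0<\varepsilon<1$,
\[
 d_{0,\cP}(F_m^\theta,F_n^\theta)\le\varepsilon+c_{\cP}\bigl(d_{\rm sel}(F_m^\theta,F_n^\theta)>\varepsilon\bigr),
\]
which is the elementary bound already used in the proof of part~\textup{(i)} of the lemma. Each $F_n^\theta$ is Borel, so by condition~\textup{(i)} of \Cref{def:selection-gauged-target} these events are Borel. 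Taking $\sup_{m,n\ge N}$, letting $N\to\infty$ and then $\varepsilon\downarrow0$ gives the Cauchy property.

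\emph{Step 2: the selector.} Part~\textup{(ii)} of the lemma then supplies a deterministic subsequence $(n_k)$ with $d_{0,\cP}(F^\theta_{n_{k+1}},F^\theta_{n_k})\le2^{-3k}$. The complement $N$ of the $d_{\rm sel}$-Cauchy locus is a Borel polar set, because it is contained in $\bigcap_kE_k$ with $c_{\cP}(E_k)\to0$. Outside $N$ there is pointwise convergence to a Borel limit $F^{\theta,\mathrm c}$. This limit is $\cP$-causal on the full-capacity Cauchy locus: raw causality passes to the limit through contractivity of $\rho_t$ in $d_{\rm sel}$, which is condition~\textup{(iii)} of \Cref{def:selection-gauged-target}. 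The only delicate point I see is Borel measurability of that locus, which the lemma already handles via condition~\textup{(i)} and the reference Polish topology. I expect this to be the main obstacle if one is writing things from scratch, but here it is inherited from the lemma.

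\emph{Step 3: full-sequence convergence and uniqueness.} Full-sequence convergence \eqref{eq:canonicalization-full-capacity-convergence} follows from the completeness assertion of the lemma combined with its part~\textup{(i)}. Alternatively, one can use the triangle-inequality split
\[
 c_{\cP}\bigl(d_{\rm sel}(F_n,F^{\mathrm c})>\delta\bigr)\le c_{\cP}\bigl(d_{\rm sel}(F_n,F_{n_k})>\delta/2\bigr)+c_{\cP}\bigl(d_{\rm sel}(F_{n_k},F^{\mathrm c})>\delta/2\bigr),
\]
bounding the second term by $c_{\cP}(E_k)$ once $2^{-k+1}<\delta/2$, using \eqref{eq:canonicalization-subsequence-rate}. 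Uniqueness of the quasi-sure class among Borel $\cP$-causal upper-capacity limits is part~\textup{(iii)} of the lemma, via the same subadditive split. For the modelwise identification, note that $P^*\le c_{\cP}$, so upper-capacity convergence implies convergence in $P$-probability. Uniqueness of limits in probability in the complete metric then gives $F^{\theta,\mathrm c}=F^P$ $P$-a.s.

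\emph{Step 4: comparing schemes.} Take two schemes with the same modelwise primitives. Their selectors agree $P$-a.s. for every $P$, so the set where they differ is Borel by condition~\textup{(i)} and $P$-null for every $P$. Hence it is polar, and the two schemes give the same quasi-sure class. For a countable family, compare each selector with a fixed one. The union of countably many Borel polar sets is polar by countable subadditivity of $c_{\cP}$: each $P^*$ is countably subadditive and the supremum preserves vanishing. This yields one common Borel polar exceptional set.
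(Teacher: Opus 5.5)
Your proposal is correct and follows the paper's own proof: you convert the hypothesis, via the elementary bound behind part~(i) of the upper-capacity $L^0$ completeness lemma, into the Cauchy criterion for $d_{0,\cP}$. Completeness and causal closedness then supply the deterministic subsequence, the Borel $\cP$-causal selector, and full-sequence convergence in upper capacity, while the lemma's uniqueness and modelwise identification, together with a countable union of Borel mismatch polar sets, give scheme independence and synchronization. Your added details are exactly the steps the paper leaves implicit: the mismatch set is Borel, so $P^*=P$ on it and it is polar, and the triangle-inequality split handles the full sequence.
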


\begin{proof}
By \Cref{lem:upper-capacity-L0-completeness}\textup{(i)},
\eqref{eq:abstract-upper-capacity-Cauchy} is exactly the Cauchy criterion for
$d_{0,\cP}$.  Completeness and causal closedness give a Borel representative
$F^{\theta,\mathrm c}$, a deterministic quasi-surely convergent subsequence,
and convergence of the full sequence in upper capacity.  The same lemma gives
quasi-sure uniqueness and modelwise identification.  Applying uniqueness to
two schemes with the same modelwise primitives proves scheme independence;
a countable union of their Borel mismatch polar sets gives simultaneous
synchronization for a countable family.
\end{proof}

Thus moment rates and summable schedules verify
\eqref{eq:abstract-upper-capacity-Cauchy}, while the modelwise primitive fixes
the resulting quasi-sure state independently of the scheme.

\subsection{The semimartingale model class}

Let $H$ be a nonzero separable real Hilbert space and
\[
 \Omega_H:=C_0([0,T];H).
\]
For the stochastic sections write $\Omega:=\Omega_H$ and let $X$
be the coordinate process with raw filtration $\mathbb F^0$.  Let
$\Smax=\mathfrak S_{\Lambda,B}^{0,T}(H)$ be the nonempty family of laws $P$
for which
\begin{equation}\label{eq:hilbert-decomposition}
 X_t=M_t^P+\int_0^t b_r^P\,\dd r,
 \qquad \|b_r^P\|_H\le B,
\end{equation}
and the continuous local-martingale part has operator bracket
\begin{equation}\label{eq:hilbert-qv-density}
 \dd\qv{M^P}_t=a_t^P\,\dd t,
 \qquad a_t^P\in\mathcal S_1(H)_+,
 \qquad \Tr a_t^P\le\Lambda.
\end{equation}
All inequalities are understood $\dd t\otimes P$-almost everywhere.  Identify
\[
 H\widehat\otimes_2H\simeq\mathcal S_2(H),
 \qquad x\otimes y:h\mapsto\langle y,h\rangle x,
\]
so tensor flip is the Hilbert adjoint.  The inclusion
$\mathcal S_1(H)\hookrightarrow\mathcal S_2(H)$ is continuous.  The
standard Hilbert-space stochastic integral and operator-bracket calculus is
used
\cite{Metivier82,DaPratoZabczyk14}.

For a random variable $Z$ and a continuous process $Y$ set
\[
 \|Z\|_{\mathcal L_{\Smax}^q}:=\sup_{P\in\Smax}\|Z\|_{L^q(P)},
 \qquad
 \|Y\|_{\mathbb S_{\Smax}^q}:=
 \sup_{P\in\Smax}\Big\|\sup_{t\le T}\|Y_t\|\Big\|_{L^q(P)}.
\]
Write $c_{\Smax}(E)=\sup_{P\in\Smax}P^*(E)$.

\begin{definition}[Raw causality]
\label{def:raw-causality}
A map $F:[0,T]\times\Omega_H\to E$ is raw causal if
$F_t(x)=F_t(r_tx)$, where $(r_tx)_u=x_{u\wedge t}$.  The same convention is
used for increment fields.
\end{definition}

\begin{lemma}[Dimension-free first- and second-level moments]
\label{lem:enhanced-moments}
For every $0<\beta<1/2$ there are $q_0$ and $C=C(\beta,\Lambda,B,T)$,
independent of $H$, such that for $q\ge q_0$,
\[
 \sup_{P\in\Smax}\|X\|_{L^q(P;C^\beta(H))}\le C\sqrt q,
 \qquad
 \sup_{P\in\Smax}\|\mathbf X^{S,P}\|_{L^q(P;\mathcal R_\beta)}
 \le C\sqrt q,
\]
where $\mathbf X^{S,P}$ is the classical Hilbert--Schmidt Stratonovich lift
and $\mathcal R_\beta$ denotes the homogeneous step-two radius
\[
 \|\mathbf x\|_{\mathcal R_\beta}
 :=\|x\|_{\beta\text{-H\"ol}}
   +\|\mathbb x\|_{2\beta\text{-H\"ol}}^{1/2}.
\]
\end{lemma}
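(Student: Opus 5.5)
The plan is to reduce both bounds to scalar moment estimates on increments and then apply a Kolmogorov/Garsia--Rodemich--Rumsey argument with explicit constants; every step is arranged so that $\dim H$ never enters.

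\emph{First level.} Fix $P\in\Smax$ and $s<t$. Write $X_{s,t}=M^P_{s,t}+\int_s^tb_r^P\,\dd r$. The drift contributes at most $B|t-s|$. For the martingale part, the Hilbert-valued Burkholder--Davis--Gundy inequality with optimal growth gives
\[
E\|M_{s,t}\|^q\le (C\sqrt q)^q\,E\bigl(\Tr\qv{M}_{s,t}\bigr)^{q/2}\le (C\sqrt q)^q(\Lambda|t-s|)^{q/2}.
\]
One way to see this constant is to apply It\^o to $\|M\|^2$; for $H$-valued martingales Pinelis' inequality gives the $\sqrt q$ constant independent of dimension, since $H$ is $2$-smooth with constant $1$.

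\emph{Second level.} Write the Stratonovich lift as $\mathbb X^S_{s,t}=\frac12X_{s,t}^{\otimes2}+A_{s,t}$. Since $\|x\otimes x\|_2=\|x\|^2$, the symmetric part is controlled by the first level. For the area, split $A_{s,t}$ into the martingale--martingale part $\frac12\int_s^t\Anti(M_{s,r}\otimes\dd M_r)$ and the drift cross terms. The drift cross terms are bounded by $B|t-s|\sup_r\|X_{s,r}\|$. The martingale part is an $\mathcal S_2$-valued (hence Hilbert-valued) martingale. Its bracket trace is at most $\int_s^t\|M_{s,r}\|^2\Tr a_r\,\dd r\le\Lambda|t-s|\sup_r\|M_{s,r}\|^2$, using $\|x\otimes h\|_2=\|x\|\|h\|$ and $\Tr(\cdot)$ of the induced bracket. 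Applying the Hilbert BDG again, together with the first-level moment bound, gives
\[
\|A_{s,t}\|_{L^{q/2}(\mathcal S_2)}\le C q\,|t-s|,
\]
which is equivalently $\|A_{s,t}\|^{1/2}$ having $L^q$ norm at most $C\sqrt q\,|t-s|^{1/2}$.

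\emph{Assembly.} Apply the rough Kolmogorov criterion / GRR lemma to the pair $(X,\mathbb X^S)$, using Chen's relation, which holds deterministically. Choose $q_0$ with $\tfrac12-\tfrac1{q_0}>\beta$. The GRR constant depends only on $\beta$, $q$, $T$ and grows at most like a power of $q$ that is absorbed, since the embedding loss is $q$-uniform for $q\ge q_0$. This yields the $C\sqrt q$ bound uniformly in $P\in\Smax$.

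\emph{Main obstacle.} The main difficulty is the explicit $\sqrt q$-growth, dimension-free BDG for Hilbert-valued continuous martingales at both levels. I would cite Pinelis/Burkholder in $2$-smooth spaces, or derive it from the exponential martingale bound $E\exp(\lambda\|M_t\|-\lambda^2\Lambda t)\lesssim 1$ for $\|M\|$. That bound follows from It\^o's formula on $\cosh(\lambda\|M\|)$, and it would handle the area term conditionally on $\sup\|M_{s,\cdot}\|$.
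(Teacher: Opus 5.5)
Your proposal is correct and follows essentially the paper's route. You use the dimension-free Hilbert BDG with $\sqrt q$ growth for $X_{s,t}$. You then apply the covariance-weighted Hilbert--Schmidt bound $\|X_{s,r}\|^2\Tr a_r$ with BDG to get second-level increments of order $q\,|t-s|$. Finally you run a Chen-based dyadic/GRR chaining whose constants stay bounded for $q\ge q_0$, because the gap $\tfrac12-\beta-\tfrac1q$ is bounded below; this uniformity is what gives $C\sqrt q$, and it is the point your ``power of $q$ that is absorbed'' remark should state precisely.

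The main difference is in the second level. The paper bounds the whole Stratonovich level $\int_s^t X_{s,r}\otimes\dd M_r+\int_s^t X_{s,r}\otimes b_r\,\dd r+\frac12\qv{M}_{s,t}$ directly, rather than splitting off the symmetric part. This avoids the integration by parts your pathwise bound on the cross term $\int_s^t(\int_s^r b_u\,\dd u)\otimes\dd M_r$ implicitly requires.
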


\begin{proof}
Fix $P\in\Smax$ and put $h=t-s$.  Hilbert-space BDG and
\eqref{eq:hilbert-qv-density}, followed by the deterministic drift bound, give
for $q\ge2$
\begin{equation}\label{eq:enhanced-first-increment}
 \|X_{s,t}\|_{L^q(P;H)}\le C\sqrt q\,h^{1/2}.
\end{equation}
Write the classical Stratonovich second level as
\[
 \mathbb X^{S,P}_{s,t}
 =\int_s^t X_{s,r}\otimes\dd M_r^P
  +\int_s^t X_{s,r}\otimes b_r^P\,\dd r
  +\frac12\qv{M^P}_{s,t}.
\]
For the martingale term use the covariance-weighted identity
\[
 \|(u\mapsto X_{s,r}\otimes u)(a_r^P)^{1/2}\|_
 {\mathcal S_2(H,\mathcal S_2(H))}^2
 =\|X_{s,r}\|_H^2\Tr a_r^P.
\]
Maximal Hilbert-space BDG, \eqref{eq:enhanced-first-increment}, and
$\Tr a_r^P\le\Lambda$ yield
\[
 \left\|\int_s^t X_{s,r}\otimes\dd M_r^P\right\|_{L^q(P;\mathcal S_2)}
 \le C\sqrt{q\Lambda}
      \left(\int_s^t\|X_{s,r}\|_{L^q(P;H)}^2\,\dd r\right)^{1/2}
 \le Cq h.
\]
The drift term is bounded by $CB\sqrt q\,h^{3/2}$ and
$\|\qv{M^P}_{s,t}\|_2\le\Tr\qv{M^P}_{s,t}\le\Lambda h$.  Hence
\begin{equation}\label{eq:enhanced-second-increment}
 \|\mathbb X^{S,P}_{s,t}\|_{L^q(P;\mathcal S_2)}\le Cq h.
\end{equation}

Apply the deterministic chaining estimates in the proof of
\Cref{lem:dyadic-rough-reconstruction} to the multiplicative functional
$\mathbf X^{S,P}$ and the zero functional, with moment exponent $2q$ and
increment exponent $1/2$.  Once $q$ is large enough that
$\beta+1/(2q)<1/2$, \eqref{eq:enhanced-first-increment}--
\eqref{eq:enhanced-second-increment} give
\[
 \bigl\|\|X\|_{\beta\text{-H\"ol}}\bigr\|_{L^{2q}(P)}
 \le C\sqrt q,
 \qquad
 \bigl\|\|\mathbb X^{S,P}\|_{2\beta\text{-H\"ol}}\bigr\|_{L^q(P)}
 \le Cq.
\]
Taking the square root in the second estimate gives the claimed homogeneous
rough-path radius.  Relabeling $2q$ as $q$ and taking the supremum over $P$
completes the proof.
\end{proof}

\subsection{Causal resolvent construction}

For $x\in\Omega_H$ and $0<\eps\le T$ define
\begin{equation}\label{eq:causal-resolvent}
 Y_t^\eps(x)=\frac1\eps\int_0^t e^{-(t-r)/\eps}x_r\,\dd r,
 \qquad D_t^\eps(x)=x_t-Y_t^\eps(x).
\end{equation}
Then $\eps\dot Y^\eps=D^\eps$.  Put
\begin{align}
 J_t^\eps(x)
 &:=Y_t^\eps(x)\otimes x_t-\int_0^t\dd Y_r^\eps(x)\otimes x_r,
 \label{eq:causal-resolvent-tensor}\\
 Q_t^\eps(x)
 &:=x_t^{\otimes2}-J_t^\eps(x)-(J_t^\eps(x))^*,
 \label{eq:finite-resolvent-bracket}\\
 \mathsf E_t^\eps(x)
 &:=\frac2\eps\int_0^t(D_r^\eps(x))^{\otimes2}\,\dd r.
 \label{eq:accumulated-defect-energy}
\end{align}
All three maps are total, continuous in the raw uniform topology, and raw
causal.  Moreover
\begin{equation}\label{eq:resolvent-energy-identity}
 Q_t^\eps=(D_t^\eps)^{\otimes2}+\mathsf E_t^\eps.
\end{equation}
For every deterministic $s$,
\begin{equation}\label{eq:finite-stopping}
 J_t^\eps(r_sx)=J_{t\wedge s}^\eps(x),
 \qquad
 Q_t^\eps(r_sx)=Q_{t\wedge s}^\eps(x).
\end{equation}
For $s\in[0,T]$ set
\begin{equation}\label{eq:raw-shift}
 (\theta_sx)_v=x_{s+(v\wedge(T-s))}-x_s.
\end{equation}
The exact restart identities
\begin{align}
 D_{s+v}^\eps(x)&=D_v^\eps(\theta_sx)+e^{-v/\eps}D_s^\eps(x),
 \label{eq:D-restart}\\
 Y_{s+v}^\eps(x)-x_s&=Y_v^\eps(\theta_sx)-e^{-v/\eps}D_s^\eps(x)
 \label{eq:Y-restart}
\end{align}
hold pathwise.

Finite-variation approximation has classically been studied through
Wong--Zakai limits, regularization procedures, and rough-path corrections
\cite{WongZakai65,RussoVallois07,BerardBergeryVallois11,
FrizOberhauser09,GomesOhashiRussoTeixeira21}.  The defect-energy transfer
principle controls the second-order error by the
upper energy of the first-level regularization defect, uniformly over the
choice of kernel or approximation profile.

\begin{lemma}[Dimension-free tensor error transform]
\label{lem:dimension-free-tensor-error-transform}
Let $P\in\Smax$, let $q\ge2$, and let $Z$ be a predictable continuous
$H$-valued process such that
\[
 \left\|\left(\int_0^T\|Z_r\|_H^2\,\dd r\right)^{1/2}
 \right\|_{L^q(P)}<\infty.
\]
Define the $\mathcal S_2(H)$-valued error transform
\begin{equation}\label{eq:defect-error-transform}
 \mathcal E^P(Z)_t:=\int_0^t Z_r\otimes\dd X_r.
\end{equation}
Then
\begin{equation}\label{eq:defect-error-transform-bound}
 \|\mathcal E^P(Z)\|_{\mathbb S^q(P;\mathcal S_2)}
 \le C\bigl(\sqrt{q\Lambda}+B\sqrt T\bigr)
 \left\|\left(\int_0^T\|Z_r\|_H^2\,\dd r\right)^{1/2}
 \right\|_{L^q(P)},
\end{equation}
where $C$ is numerical and independent of $H,P,q$, and $Z$.
\end{lemma}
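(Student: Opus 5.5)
We split $X=M^P+\int_0^\cdot b_r^P\,\dd r$ as in \eqref{eq:hilbert-decomposition}, so that
\[
 \mathcal E^P(Z)_t=\int_0^t Z_r\otimes\dd M_r^P+\int_0^t Z_r\otimes b_r^P\,\dd r,
\]
and we bound each term in $\mathbb S^q(P;\mathcal S_2)$ separately. The constant is then the sum of a martingale constant proportional to $\sqrt{q\Lambda}$ and a drift constant proportional to $B\sqrt T$. Throughout we use the identification $H\widehat\otimes_2H\simeq\mathcal S_2(H)$ and the rank-one identity $\|u\otimes v\|_2=\|u\|\|v\|$.

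\emph{Drift term.} We treat this one pathwise. The pointwise bound
\[
 \sup_t\left\|\int_0^t Z_r\otimes b_r^P\,\dd r\right\|_2\le\int_0^T\|Z_r\|\,\|b_r^P\|\,\dd r\le B\sqrt T\left(\int_0^T\|Z_r\|^2\dd r\right)^{1/2}
\]
follows from Cauchy--Schwarz in time. Taking $L^q(P)$ norms then gives the $B\sqrt T$ contribution with constant one.

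\emph{Martingale term.} The process $N_t:=\int_0^t Z_r\otimes\dd M_r^P$ is a continuous $\mathcal S_2(H)$-valued local martingale; it is well defined because $Z$ is predictable and has the stated square integrability. Its scalar quadratic variation is
\[
 \dd\langle N\rangle_t=\|(u\mapsto Z_t\otimes u)(a_t^P)^{1/2}\|^2_{\mathcal S_2(H,\mathcal S_2(H))}\dd t=\|Z_t\|^2\Tr a_t^P\,\dd t\le\Lambda\|Z_t\|^2\dd t.
\]
This is the same covariance-weighted identity used in \Cref{lem:enhanced-moments}. It holds because for an orthonormal eigenbasis $(f_k)$ of $a_t^P$ one has $\|Z_t\otimes (a_t^P)^{1/2}f_k\|_2^2=\|Z_t\|^2\langle a_t^Pf_k,f_k\rangle$. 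We then apply the Hilbert-space BDG inequality in the Hilbert space $\mathcal S_2(H)$ with the sharp moment growth:
\[
 \Big\|\sup_t\|N_t\|_2\Big\|_{L^q}\le C\sqrt q\,\big\|\langle N\rangle_T^{1/2}\big\|_{L^q}.
\]
The constant $C\sqrt q$ is available for continuous Hilbert-valued martingales and is dimension free, since Hilbert-valued BDG reduces to the real case, for instance via Kallenberg--Sztencel or Pinelis. Combining with the bracket bound gives $C\sqrt{q\Lambda}\,\|(\int_0^T\|Z_r\|^2\dd r)^{1/2}\|_{L^q}$.

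\emph{Assembling.} The triangle inequality in $\mathbb S^q$ yields \eqref{eq:defect-error-transform-bound}. If $N$ is only a local martingale, we first apply the estimate to stopped processes and pass to the limit by monotone convergence; the right-hand side is finite by assumption.

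The only delicate point is securing a BDG constant of order $\sqrt q$ that is uniform in $H$, $P$ and $Z$. We take it from the standard dimension-free Hilbert-space BDG inequality, exactly as \Cref{lem:enhanced-moments} already does. The remaining steps are bookkeeping.
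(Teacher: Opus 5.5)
Your proposal is correct and follows the paper's argument. You split off the drift, bound the martingale part by the covariance-weighted identity $\|(h\mapsto Z_r\otimes h)(a_r^P)^{1/2}\|_{\mathcal S_2(H,\mathcal S_2(H))}^2=\|Z_r\|_H^2\Tr a_r^P$ combined with dimension-free maximal Hilbert-space BDG (constant of order $\sqrt q$) and $\Tr a_r^P\le\Lambda$, and bound the drift by Cauchy--Schwarz in time. Your remarks on localization and on the source of the $\sqrt q$ constant only spell out what the paper uses implicitly.
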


\begin{proof}
For $h\in H$ put $\Gamma_r^Zh:=Z_r\otimes h$.  The rank-one tensor identity
gives
\[
 \|\Gamma_r^Z(a_r^P)^{1/2}\|_{\mathcal S_2(H,\mathcal S_2(H))}^2
 =\|Z_r\|_H^2\Tr a_r^P.
\]
Maximal Hilbert-space BDG and $\Tr a_r^P\le\Lambda$ bound the martingale
part by the first term on the right of
\eqref{eq:defect-error-transform-bound}.  For the drift part,
\[
 \sup_{t\le T}\left\|\int_0^tZ_r\otimes b_r^P\,\dd r\right\|_2
 \le B\sqrt T\left(\int_0^T\|Z_r\|_H^2\,\dd r\right)^{1/2}.
\]
Adding the two estimates proves the claim.
\end{proof}

\begin{theorem}[Defect-energy transfer for the Hilbert--Schmidt primitive]
\label{thm:defect-energy-transfer}
Let $Y:\Omega_H\to C([0,T];H)$ be a total Borel raw-causal map whose
sample paths have finite variation, put $D:=X-Y$, and define
\begin{align}
 J_t^Y&:=Y_t\otimes X_t-\int_0^t\dd Y_r\otimes X_r,
 \label{eq:general-regularization-tensor}\\
 Q_t^Y&:=X_t^{\otimes2}-J_t^Y-(J_t^Y)^*.
 \label{eq:general-regularization-defect}
\end{align}
For $q\ge2$ set
\begin{align}
 \mathfrak d_q(Y)
 &:=\sup_{P\in\Smax}
 \left\|\left(\int_0^T\|X_r-Y_r\|_H^2\,\dd r\right)^{1/2}\right\|_{L^q(P)},
 \label{eq:general-defect-energy}\\
 \mathfrak d_q(Y,\widetilde Y)
 &:=\sup_{P\in\Smax}
 \left\|\left(\int_0^T\|Y_r-\widetilde Y_r\|_H^2\,\dd r\right)^{1/2}\right\|_{L^q(P)}.
 \label{eq:pairwise-regularization-energy}
\end{align}
Then $J^Y$ and $Q^Y$ are total Borel continuous-path fields satisfying the
exact stopping identities
\begin{equation}\label{eq:general-primitive-exact-stopping}
 J_t^Y(r_sx)=J_{t\wedge s}^Y(x),
 \qquad
 Q_t^Y(r_sx)=Q_{t\wedge s}^Y(x)
 \qquad(s,t\in[0,T]).
\end{equation}
In particular, they are raw causal as path-valued maps.
Under every $P\in\Smax$, let $\mathcal E^P$ be the transform in
\eqref{eq:defect-error-transform}.  Then, up to indistinguishability,
\begin{align}
 J^Y-J^P&=-\mathcal E^P(D),
 \label{eq:general-defect-J-error}\\
 Q^Y-\qv{M^P}
 &=\mathcal E^P(D)+\mathcal E^P(D)^*.
 \label{eq:general-defect-Q-error}
\end{align}
Moreover,
\begin{equation}\label{eq:general-defect-transfer-bound}
 \sup_{P\in\Smax}\left(
 \|J^Y-J^P\|_{\mathbb S^q(P;\mathcal S_2)}
 +\|Q^Y-\qv{M^P}\|_{\mathbb S^q(P;\mathcal S_2)}\right)
 \le C\bigl(\sqrt{q\Lambda}+B\sqrt T\bigr)\mathfrak d_q(Y),
\end{equation}
and for any two such regularizations,
\begin{equation}\label{eq:pairwise-defect-transfer-bound}
 \sup_{P\in\Smax}\left(
 \|J^Y-J^{\widetilde Y}\|_{\mathbb S^q(P;\mathcal S_2)}
 +\|Q^Y-Q^{\widetilde Y}\|_{\mathbb S^q(P;\mathcal S_2)}\right)
 \le C\bigl(\sqrt{q\Lambda}+B\sqrt T\bigr)
       \mathfrak d_q(Y,\widetilde Y).
\end{equation}
Here $C$ is numerical and independent of $H,P,q$ and the regularizations.
Thus $\mathfrak d_q(\cdot,\cdot)$ is an upper-energy pseudometric on
regularizations modulo zero $\mathfrak d_q$-distance, and the
$C([0,T];\mathcal S_2)^2$ primitive reconstruction is Lipschitz for this
pseudometric in the upper-$L^q$ seminorm
\[
 \sup_{P\in\Smax}
 \|\,\cdot\,\|_{L^q(P;C([0,T];\mathcal S_2)^2)}
\]
before any limiting scheme is chosen.
\end{theorem}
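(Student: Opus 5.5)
The plan is to reduce everything to the single identity $J^Y_t=\int_0^tY_r\otimes\dd X_r$ under each law. Once that holds, the defect formulas are algebraic and the quantitative bounds are \Cref{lem:dimension-free-tensor-error-transform} applied to $Z=D$ and $Z=Y-\widetilde Y$.

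\emph{Pathwise part.} Since $Y$ has finite-variation paths and $X$ is continuous, the integral $\int_0^t\dd Y_r\otimes x_r$ is a Riemann--Stieltjes integral in $\mathcal S_2(H)$. It is a limit of dyadic Riemann sums, each of which is a Borel function of $x$, so $J^Y$ and $Q^Y$ are total Borel fields with continuous paths. For \eqref{eq:general-primitive-exact-stopping}, fix $s\le t$ and evaluate at $r_sx$. On $[0,s]$, raw causality gives $Y_r(r_sx)=Y_r(x)$, so that part of the integral coincides with the one for $x$. On $[s,t]$ the path $r_sx$ is frozen at $x_s$, so that part equals $(Y_t(r_sx)-Y_s(x))\otimes x_s$. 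This cancels the change in the boundary term $Y_t(r_sx)\otimes x_s$, and we obtain $J^Y_t(r_sx)=J^Y_s(x)$. The cancellation does not require $Y_t(r_sx)=Y_s(x)$, which fails for resolvents. The identity for $Q^Y$ follows because $x^{\otimes2}$ is also stopped.

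\emph{Modelwise identification.} Fix $P\in\Smax$. The process $Y$ is continuous and adapted to $\mathbb F^0$, by raw causality, hence predictable. Because $Y$ has finite variation it has zero covariation with $X$, so the operator It\^o product formula, applied weakly via $\operatorname{Tr}(K^*\,\cdot)$ for finite-rank $K$, gives
\[
 Y_t\otimes X_t=\int_0^t\dd Y_r\otimes X_r+\int_0^tY_r\otimes\dd X_r .
\]
Hence $J^Y=\int_0^\cdot Y\otimes\dd X$ up to indistinguishability. Set $J^P:=\int_0^\cdot X\otimes\dd X$. Subtracting gives \eqref{eq:general-defect-J-error}, since $J^Y-J^P=-\int D\otimes\dd X=-\mathcal E^P(D)$. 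The operator It\^o formula $X_t^{\otimes2}=J^P_t+(J^P_t)^*+\qv{M^P}_t$ then yields \eqref{eq:general-defect-Q-error} after taking adjoints; recall that the tensor flip is the Hilbert adjoint. The technical point I expect to be the main obstacle is localization. Before $\int Y\otimes\dd X$ is known to be integrable, it is only a local martingale plus drift. I would stop at $\tau_n=\inf\{t:\|X_t\|+|Y|_{{\rm var},[0,t]}\ge n\}$, prove the identities on $[0,\tau_n]$, and let $n\to\infty$, noting that the identities are pathwise-indistinguishable statements that need no moments.

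\emph{Bounds.} Apply \Cref{lem:dimension-free-tensor-error-transform} with $Z=D$, and assume $\mathfrak d_q(Y)<\infty$, since otherwise there is nothing to prove. This bounds $\|\mathcal E^P(D)\|_{\mathbb S^q(P;\mathcal S_2)}$, and since the adjoint is an $\mathcal S_2$-isometry, \eqref{eq:general-defect-J-error}--\eqref{eq:general-defect-Q-error} give \eqref{eq:general-defect-transfer-bound} with $3C$ in place of $C$; we then take the supremum over $P$. For \eqref{eq:pairwise-defect-transfer-bound} we write $J^Y-J^{\widetilde Y}=\mathcal E^P(Y-\widetilde Y)$ and $Q^Y-Q^{\widetilde Y}=-\mathcal E^P(Y-\widetilde Y)-\mathcal E^P(Y-\widetilde Y)^*$, which follow from the same identification and need no finiteness of $\mathfrak d_q(Y)$ individually. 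The lemma with $Z=Y-\widetilde Y$ concludes. The pseudometric and Lipschitz statements are then immediate: $\mathfrak d_q$ is symmetric and satisfies the triangle inequality by Minkowski's inequality in $L^2(\dd r)$ and $L^q(P)$, and zero distance forces equal primitives in the displayed upper-$L^q$ seminorm.
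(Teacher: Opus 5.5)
Your proposal is correct and follows the paper's proof essentially step for step. The shared steps are: the Borel dyadic Riemann--Stieltjes construction; the split at $s$, where the boundary term $Y_t(r_sx)\otimes x_s$ cancels, giving exact stopping; integration by parts to identify $J^Y=\int_0^\cdot Y\otimes\dd X$ under each law; the operator It\^o formula for the $Q$-defect; and \Cref{lem:dimension-free-tensor-error-transform} applied with $Z=D$ and with $Z=Y-\widetilde Y$. The explicit localization you add is harmless extra care that the paper leaves implicit.
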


\begin{proof}
On the bounded-variation locus, the indefinite Stieltjes integral in
\eqref{eq:general-regularization-tensor} is the uniform-in-time limit of its
dyadic Riemann-sum paths.  Each Riemann-sum map is Borel with values in
$C([0,T];\mathcal S_2(H))$, so the limit is a Borel continuous-path field.
For $t\le s$, raw causality of $Y$ gives
$J_t^Y(r_sx)=J_t^Y(x)$.  For $t>s$, split the Stieltjes integral at $s$ and
use that $r_sx$ is constant afterwards:
\[
 \begin{aligned}
 J_t^Y(r_sx)
 &=Y_t(r_sx)\otimes x_s
   -\int_0^s\dd Y_r(r_sx)\otimes x_r
   -\bigl(Y_t(r_sx)-Y_s(r_sx)\bigr)\otimes x_s\\
 &=J_s^Y(x).
 \end{aligned}
\]
This proves the first identity in
\eqref{eq:general-primitive-exact-stopping}; the second follows algebraically
from \eqref{eq:general-regularization-defect}.  Thus the primitive pair is a
raw-causal map into the continuous-path target even though the regularizer
$Y$ itself need not obey exact stopping.
Raw causality makes $Y$ adapted under every $P$; its continuous
finite-variation paths are therefore predictable.  Integration by parts gives
\[
 J_t^Y=\int_0^tY_r\otimes\dd X_r.
\]
Subtracting $J^P=\int X\otimes\dd X$ proves
\eqref{eq:general-defect-J-error}.  The operator It\^o product formula gives
\[
 X_t^{\otimes2}=J_t^P+(J_t^P)^*+\qv{M^P}_t,
\]
which, together with \eqref{eq:general-regularization-defect}, proves
\eqref{eq:general-defect-Q-error}.

If $\mathfrak d_q(Y)=\infty$, the one-sided estimate is vacuous; otherwise
apply \Cref{lem:dimension-free-tensor-error-transform} with $Z=D$.
The two
defect identities, together with $\|A+A^*\|_2\le2\|A\|_2$, give
\eqref{eq:general-defect-transfer-bound}.  For two regularizations,
\[
 J^Y-J^{\widetilde Y}=\mathcal E^P(Y-\widetilde Y),
 \qquad
 Q^Y-Q^{\widetilde Y}
 =-\mathcal E^P(Y-\widetilde Y)
  -\mathcal E^P(Y-\widetilde Y)^*.
\]
If $\mathfrak d_q(Y,\widetilde Y)=\infty$, the pairwise estimate is again
vacuous; otherwise a second application of the lemma with
$Z=Y-\widetilde Y$ proves
\eqref{eq:pairwise-defect-transfer-bound}.
\end{proof}

\begin{corollary}[Scheme-independent canonicalization in the primitive topology]
\label{cor:defect-energy-scheme-independence}
Let $(Y^n)_n$ be total Borel raw-causal finite-variation regularizations with
$\mathfrak d_q(Y^n)\to0$ for one $q\ge2$.  Then
$(J^{Y^n},Q^{Y^n})$ is Cauchy in upper capacity in
$C([0,T];\mathcal S_2)^2$, every modelwise limit is
$(J^P,\qv{M^P})$, and every causal canonicalization furnished by
\Cref{thm:causal-canonicalization-principle} agrees quasi surely with the
common primitive $(\widehat J,Q)$ constructed below.  Trace-norm and
$\eta$-H\"older upgrades are treated under separate spatial-tail and modulus
hypotheses.
\end{corollary}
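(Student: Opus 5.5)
The plan is to combine the Lipschitz estimates of \Cref{thm:defect-energy-transfer} with the abstract canonicalization of \Cref{thm:causal-canonicalization-principle}. The target is $\mathsf Y=C([0,T];\mathcal S_2(H))^2$ with the uniform metric as $d_{\rm sel}$ and truncation by stopping the path. The reference topology is the same Polish topology, so conditions (i)--(iii) of \Cref{def:selection-gauged-target} are immediate. \Cref{thm:defect-energy-transfer} already shows that each $(J^{Y^n},Q^{Y^n})$ is a total Borel continuous-path field satisfying the exact stopping identities \eqref{eq:general-primitive-exact-stopping}. These identities are exactly raw causality in the sense $\rho_tF=\rho_tFr_t$, since $\rho_t$ stops the path at $t$.

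\emph{Cauchy property.} The pairwise bound \eqref{eq:pairwise-defect-transfer-bound} controls the difference of two primitives by $\mathfrak d_q(Y^m,Y^n)$. By Minkowski's inequality in $L^2(dr)$ and then in $L^q(P)$, we have $\mathfrak d_q(Y^m,Y^n)\le\mathfrak d_q(Y^m)+\mathfrak d_q(Y^n)$, because $Y^m-Y^n=(X-Y^n)-(X-Y^m)$. This tends to zero. Markov's inequality under each $P$, followed by the supremum over $\Smax$, then gives \eqref{eq:abstract-upper-capacity-Cauchy}:
\[
 c_{\Smax}\bigl(d_{\rm sel}>\delta\bigr)\le\delta^{-q}C^q(\sqrt{q\Lambda}+B\sqrt T)^q(\mathfrak d_q(Y^m)+\mathfrak d_q(Y^n))^q.
\]
For the modelwise limit, the one-sided bound \eqref{eq:general-defect-transfer-bound} gives $(J^{Y^n},Q^{Y^n})\to(J^P,\qv{M^P})$ in $L^q(P)$ uniformly in time, and hence in $P$-probability, for every $P$.

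\emph{Identification.} \Cref{thm:causal-canonicalization-principle} now produces a Borel $\Smax$-causal selector to which the full sequence converges in upper capacity. It equals $(J^P,\qv{M^P})$ $P$-a.s. for every $P$, and its quasi-sure class is unique among upper-capacity limits. The corollary asserts agreement with the common primitive $(\widehat J,Q)$ constructed below, which I take to be the canonicalization of the resolvent scheme $Y^\eps$. I would argue this in two steps. First, that scheme satisfies $\mathfrak d_q(Y^{\eps_k})\to0$ along the chosen schedule, which amounts to a uniform $L^q$ bound on $\int_0^T\|D^\eps_r\|^2dr$ via \eqref{eq:enhanced-first-increment} and the exponential kernel. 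Second, the resolvent scheme has the same modelwise primitives, so the scheme-independence clause of \Cref{thm:causal-canonicalization-principle} yields quasi-sure equality. The same clause applies to any other canonicalization built from a subsequence or a different scheme, because every such limit is an upper-capacity limit with the same modelwise values.

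The main obstacle is this identification with $(\widehat J,Q)$, since that object is defined later. The step needing care is the verification that $\mathfrak d_q(Y^\eps)\to0$ uniformly over $\Smax$. I would bound $\|D^\eps_r\|_{L^q}\le\frac1\eps\int_0^r e^{-(r-u)/\eps}\|X_{u,r}\|_{L^q}du+e^{-r/\eps}\|X_r\|_{L^q}\lesssim\sqrt q\,\eps^{1/2}+\sqrt q\,r^{1/2}e^{-r/\eps}$ uniformly in $P$, and integrate in $r$. The trace-norm and Hölder upgrades are explicitly excluded from this corollary, so the uniform $\mathcal S_2$ statement is all that needs to be shown.
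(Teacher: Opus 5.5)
Your proposal is correct and follows the paper's proof. Subadditivity of $\mathfrak d_q$ with the pairwise bound \eqref{eq:pairwise-defect-transfer-bound} gives the Cauchy criterion, the one-sided bound \eqref{eq:general-defect-transfer-bound} gives the modelwise limit, and \Cref{thm:causal-canonicalization-principle} does the identification. The step you flag as the main obstacle is not needed, though your kernel estimate is correct and reproduces the bound $\mathfrak d_q(Y^\eps)\le C\sqrt{q\eps}$ that the paper records in \eqref{eq:D-energy-moment}: since $(\widehat J,Q)$ is already shown to agree with $(J^P,\qv{M^P})$ under every $P$, its Borel mismatch set with any canonicalization is $P$-null for every $P$, hence polar.
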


\begin{proof}
For two indices $m,n$,
$\mathfrak d_q(Y^m,Y^n)\le
\mathfrak d_q(Y^m)+\mathfrak d_q(Y^n)$, so
\eqref{eq:pairwise-defect-transfer-bound} gives the upper-capacity Cauchy
criterion.  The one-sided estimate
\eqref{eq:general-defect-transfer-bound} gives the modelwise limit, and the
causal canonicalization principle identifies all resulting selectors with
the same quasi-sure realization class.
\end{proof}

\begin{proposition}[Dimension-free Hilbert resolvent estimate]
\label{prop:fubini-rate}
For every $P\in\Smax$, up to indistinguishability,
\begin{align}
 D_t^\eps&=\int_0^t e^{-(t-r)/\eps}\,\dd X_r,
 \label{eq:resolvent-stochastic-convolution}\\
 J_t^\eps&=\int_0^tY_r^\eps\otimes\dd X_r.
 \label{eq:stochastic-fubini}
\end{align}
Let
$J_t^P:=\int_0^tX_r\otimes\dd X_r\in\mathcal S_2(H)$.  Then
\begin{align}
 J^\eps-J^P&=-\int_0^\cdot D_r^\eps\otimes\dd X_r,
 \label{eq:resolvent-J-error}\\
 Q^\eps-\qv{M^P}
 &=\int_0^\cdot D_r^\eps\otimes\dd X_r
  +\left(\int_0^\cdot D_r^\eps\otimes\dd X_r\right)^*.
 \label{eq:resolvent-Q-error}
\end{align}
For every $q\ge2$,
\begin{equation}\label{eq:tensor-rate}
 \sup_{P\in\Smax}
 \left(\|J^\eps-J^P\|_{\mathbb S^q(P;\mathcal S_2)}
 +\|Q^\eps-\qv{M^P}\|_{\mathbb S^q(P;\mathcal S_2)}\right)
 \le Cq\sqrt\eps,
\end{equation}
where $C=C(\Lambda,B,T)$ is independent of $H,P,q,\eps$.
Furthermore
\begin{align}
 \sup_{t\le T}\|D_t^\eps\|_{\mathcal L_{\Smax}^q}
 &\le C(\sqrt{q\eps}+B\eps),
 \label{eq:D-pointwise-moment}\\
 \sup_{P\in\Smax}
 \left\|\left(\int_0^T\|D_r^\eps\|_H^2\,\dd r\right)^{1/2}\right\|_{L^q(P)}
 &\le C\sqrt{q\eps}.
 \label{eq:D-energy-moment}
\end{align}
For every $0<\beta_0<1/2$,
\begin{align}
 \sup_{P\in\Smax}
 \bigl\|\|D^\eps\|_{\infty;H}\bigr\|_{L^q(P)}
 &\le C_{\beta_0}\sqrt q\,\eps^{\beta_0},
 \label{eq:D-maximal-moment}\\
 \sup_{P\in\Smax}
 \bigl\|\|D^\eps\|_{\infty;H}^2\bigr\|_{L^q(P)}
 &\le C_{\beta_0}q\,\eps^{2\beta_0}.
 \label{eq:D-maximal-square-moment}
\end{align}
\end{proposition}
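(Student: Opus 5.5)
The plan is to reduce everything to the defect-energy transfer theorem \Cref{thm:defect-energy-transfer} with $Y=Y^\eps$. The main new input is the explicit stochastic-convolution form of $D^\eps$ and its moment bounds.

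\emph{Step 1: identities.} Under a fixed $P\in\Smax$, differentiate the pathwise relation $\eps\dot Y^\eps=D^\eps=X-Y^\eps$. This gives the linear equation $\dd D^\eps_t=\dd X_t-\eps^{-1}D^\eps_t\,\dd t$ with $D^\eps_0=0$. Variation of constants, or the product rule applied to $e^{t/\eps}D^\eps_t$, then yields \eqref{eq:resolvent-stochastic-convolution} up to indistinguishability. Since $Y^\eps$ is total, continuous, raw causal and of finite variation (indeed $C^1$), it is an admissible regularization in \Cref{thm:defect-energy-transfer}. That theorem's integration by parts gives \eqref{eq:stochastic-fubini}, and its defect identities \eqref{eq:general-defect-J-error}--\eqref{eq:general-defect-Q-error} are exactly \eqref{eq:resolvent-J-error}--\eqref{eq:resolvent-Q-error}.

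\emph{Step 2: pointwise and energy moments of $D^\eps$.} Split $D^\eps_t=\int_0^te^{-(t-r)/\eps}\dd M_r^P+\int_0^te^{-(t-r)/\eps}b_r^P\dd r$. For fixed $t$ the martingale part is the terminal value of a martingale in $u\le t$ with a deterministic integrand. Hilbert BDG with $\Tr a^P\le\Lambda$ bounds its $L^q$ norm by $C\sqrt{q\Lambda}\bigl(\int_0^te^{-2(t-r)/\eps}\dd r\bigr)^{1/2}\le C\sqrt{q\eps}$. The drift part is bounded by $B\eps$, which gives \eqref{eq:D-pointwise-moment}. For \eqref{eq:D-energy-moment}, apply Minkowski in $L^{q/2}(P)$ to $\int_0^T\|D^\eps_r\|^2\dd r$:
\[
\Bigl\|\int_0^T\|D^\eps_r\|^2\dd r\Bigr\|_{L^{q/2}}\le\int_0^T\|D^\eps_r\|_{L^q}^2\,\dd r\le CT(q\eps+B^2\eps^2).
\]
Taking square roots and using $\eps\le T$ gives $C\sqrt{q\eps}$. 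Plugging this into \eqref{eq:general-defect-transfer-bound} bounds the left side of \eqref{eq:tensor-rate} by $C(\sqrt{q\Lambda}+B\sqrt T)\sqrt{q\eps}\le Cq\sqrt\eps$ for $q\ge2$, which is \eqref{eq:tensor-rate}.

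\emph{Step 3: maximal bounds.} This is the main obstacle, since $e^{-(t-r)/\eps}$ depends on $t$ and so $D^\eps$ is not a martingale. I would avoid stochastic maximal inequalities and use a pathwise argument instead. Writing $D^\eps_t=X_t-Y^\eps_t$ as $\eps^{-1}\int_0^te^{-(t-r)/\eps}(X_t-X_r)\dd r+e^{-t/\eps}X_t$ gives
\[
\|D^\eps_t\|\le\|X\|_{\beta\text{-H\"ol}}\Bigl(\eps^{-1}\int_0^\infty e^{-u/\eps}u^{\beta}\dd u+\sup_{t}e^{-t/\eps}t^{\beta}\Bigr)\le C_\beta\|X\|_{\beta\text{-H\"ol}}\,\eps^{\beta}.
\]
Choosing $\beta=\beta_0$ and invoking the dimension-free moment bound of \Cref{lem:enhanced-moments} gives \eqref{eq:D-maximal-moment} for $q\ge q_0$. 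Smaller $q\ge2$ follow by Lyapunov's inequality, absorbing constants. Finally, \eqref{eq:D-maximal-square-moment} follows from \eqref{eq:D-maximal-moment} applied with exponent $2q$, since $\|\|D\|^2\|_{L^q}=\|\|D\|\|_{L^{2q}}^2$. All constants depend only on $(\beta_0,\Lambda,B,T)$, so they are independent of $H$, $P$ and $\eps$.
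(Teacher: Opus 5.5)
Your proposal is correct and follows the paper's proof. In both, integration by parts gives the convolution and Fubini identities, the BDG-plus-drift estimate and Minkowski give the pointwise and energy moments, \Cref{thm:defect-energy-transfer} with $Y=Y^\eps$ gives the tensor rate, and a pathwise H\"older bound on $D^\eps$ combined with \Cref{lem:enhanced-moments} gives the maximal moments. The only differences are cosmetic. You reprove inline the sup bound that the paper cites from \Cref{lem:resolvent-boundary-layer}. You also cover $2\le q<q_0$ explicitly by Lyapunov's inequality, a step the paper leaves implicit.
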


\begin{proof}
Semimartingale integration by parts applied to the deterministic convolution
in \eqref{eq:causal-resolvent} gives
\[
 D_t^\eps=\int_0^t e^{-(t-r)/\eps}\,\dd X_r,
 \qquad
 J_t^\eps=\int_0^tY_r^\eps\otimes\dd X_r.
\]
This proves \eqref{eq:resolvent-stochastic-convolution}--
\eqref{eq:stochastic-fubini}.

Write $D^\eps=D^{\eps,M}+D^{\eps,b}$ according to
\eqref{eq:hilbert-decomposition}.  Hilbert-space BDG and
\eqref{eq:hilbert-qv-density} yield, uniformly in $t$ and $P$,
\begin{align*}
 \|D_t^{\eps,M}\|_{L^q(P)}
 &\le C\sqrt q\left(
   \int_0^t e^{-2(t-r)/\eps}\Tr a_r^P\,\dd r
   \right)^{1/2}
 \le C\sqrt{q\Lambda\eps},\\
 \|D_t^{\eps,b}\|_H
 &\le B\int_0^te^{-(t-r)/\eps}\,\dd r
 \le B\eps.
\end{align*}
This is \eqref{eq:D-pointwise-moment}.  Minkowski's integral inequality gives
\begin{align*}
 \left\|\left(\int_0^T\|D_r^\eps\|_H^2\,\dd r\right)^{1/2}
 \right\|_{L^q(P)}
 &\le\left(\int_0^T\|D_r^\eps\|_{L^q(P;H)}^2\,\dd r\right)^{1/2}\\
 &\le C_{\Lambda,B,T}\sqrt{q\eps},
\end{align*}
where $B\eps\le B\sqrt{T\eps}$ for $0<\eps\le T$.  This proves
\eqref{eq:D-energy-moment}.

Apply \Cref{thm:defect-energy-transfer} with $Y=Y^\eps$.  The exact identities
\eqref{eq:general-defect-J-error}--\eqref{eq:general-defect-Q-error} become
\eqref{eq:resolvent-J-error}--\eqref{eq:resolvent-Q-error}, and
\eqref{eq:D-energy-moment} gives \eqref{eq:tensor-rate}.

Finally, \Cref{lem:resolvent-boundary-layer} gives pathwise
\[
 \|D^\eps(x)\|_{\infty;H}
 \le C_{\beta_0}\eps^{\beta_0}
      \|x\|_{\beta_0\text{-H\"ol}}.
\]
Together with \Cref{lem:enhanced-moments}, first at exponent $q$ and then at
$2q$, this proves
\eqref{eq:D-maximal-moment}--\eqref{eq:D-maximal-square-moment}.
\end{proof}

\begin{proposition}[Sharpness of the $\sqrt\eps$ scale]
\label{prop:Brownian-rate-sharpness}
If $e\in H$ is a unit vector and $X_t=\sigma W_t e$, then for every $t>0$
\[
 \lim_{\eps\downarrow0}\eps^{-1/2}
 \|Q_t^\eps-\sigma^2t(e\otimes e)\|_{L^2(\mathcal S_2)}
 =\sigma^2\sqrt{2t},
\]
and the primitive error has limit $\sigma^2\sqrt{t/2}$.  Hence the exponent
$1/2$ in \eqref{eq:tensor-rate} is dimension-free and sharp.
\end{proposition}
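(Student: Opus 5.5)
The plan is to specialize the exact error identities \eqref{eq:resolvent-J-error}--\eqref{eq:resolvent-Q-error} of \Cref{prop:fubini-rate} to the rank-one Brownian law and compute the resulting second moments in closed form by It\^o isometry. No chaining or BDG estimate is needed: the identities are exact, so the limit is an explicit calculation.

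\emph{Step 1: reduction to a scalar problem.} First take $\Lambda\ge\sigma^2$, so that the law of $X=\sigma W e$ belongs to $\Smax$, with $b^P=0$ and $a_t^P=\sigma^2(e\otimes e)$, $\Tr a_t^P=\sigma^2$. Then $\qv{M^P}_t=\sigma^2t(e\otimes e)$. By \eqref{eq:resolvent-stochastic-convolution},
\[
 D_t^\eps=\sigma d_t^\eps e,
 \qquad
 d_t^\eps:=\int_0^te^{-(t-r)/\eps}\,\dd W_r,
\]
which is a scalar Ornstein--Uhlenbeck process. It follows that
\[
 \int_0^tD_r^\eps\otimes\dd X_r=\sigma^2\Bigl(\int_0^td_r^\eps\,\dd W_r\Bigr)(e\otimes e).
\]
This integral is self-adjoint, and $\|e\otimes e\|_{\mathcal S_2}=1$. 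Hence \eqref{eq:resolvent-Q-error} gives
\[
 \|Q_t^\eps-\sigma^2t(e\otimes e)\|_{L^2(\mathcal S_2)}=2\sigma^2\bigl\|{\textstyle\int_0^t}d_r^\eps\,\dd W_r\bigr\|_{L^2},
\]
and \eqref{eq:resolvent-J-error} gives the primitive error $\|J_t^\eps-J_t^P\|_{L^2(\mathcal S_2)}=\sigma^2\|\int_0^td_r^\eps\,\dd W_r\|_{L^2}$.

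\emph{Step 2: It\^o isometry.} We have
\[
 \mathbb E\Bigl|\int_0^td_r^\eps\,\dd W_r\Bigr|^2=\int_0^t\mathbb E(d_r^\eps)^2\,\dd r,
 \qquad
 \mathbb E(d_r^\eps)^2=\frac\eps2\bigl(1-e^{-2r/\eps}\bigr).
\]
Integrating in $r$ yields
\[
 \frac{\eps t}2-\frac{\eps^2}4\bigl(1-e^{-2t/\eps}\bigr).
\]
After multiplying by $\eps^{-1}$ this tends to $t/2$. Consequently $\eps^{-1/2}$ times the covariance error tends to $2\sigma^2\sqrt{t/2}=\sigma^2\sqrt{2t}$, and the primitive error tends to $\sigma^2\sqrt{t/2}$, as claimed.

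\emph{Step 3: sharpness and dimension-freeness.} Estimate \eqref{eq:tensor-rate} at $q=2$ gives an upper bound of order $\sqrt\eps$. The computation above gives a matching lower bound of order $\sqrt\eps$ at every $t>0$, with a limiting constant independent of $H$ and of $e$, since only the span of $e$ enters. Therefore no exponent larger than $1/2$ can hold in \eqref{eq:tensor-rate} uniformly over $\Smax$, even in dimension one, and the exponent is the same in every dimension.

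The only point needing care is the identification in Step 1: the Stieltjes-defined $J^\eps$ must coincide with the stochastic integral, so that the error is exactly $-\int D^\eps\otimes\dd X$ with no residual term. This is supplied by \Cref{prop:fubini-rate} (through \Cref{thm:defect-energy-transfer}). I therefore expect no genuine obstacle beyond checking the elementary Ornstein--Uhlenbeck variance integral and its boundary-layer term $\eps^2(1-e^{-2t/\eps})/4$, which is $o(\eps)$.
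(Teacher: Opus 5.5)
Your proposal is correct and follows essentially the same route as the paper: specialize the exact error identities to the rank-one Brownian law, write the defect as a scalar Ornstein--Uhlenbeck process, and apply It\^o isometry. This gives rescaled variance limits $2\sigma^4t$ and $\sigma^4t/2$ for the covariance and primitive errors respectively. The only difference is that you keep $\sigma$ outside your $d^\eps$, whereas the paper absorbs it into $d^\eps$, which is purely notational.
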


\begin{proof}
Write $D_t^\eps=d_t^\eps e$.  Then
\[
 \dd d_t^\eps=\sigma\,\dd W_t-\eps^{-1}d_t^\eps\,\dd t,
 \qquad
 E[(d_t^\eps)^2]=\frac{\sigma^2\eps}{2}
                  (1-e^{-2t/\eps}).
\]
The identity
$Q_t^\eps-\sigma^2t(e\otimes e)
 =2\sigma\int_0^td_r^\eps\,\dd W_r\,(e\otimes e)$ gives
\[
 \eps^{-1}E\|Q_t^\eps-\sigma^2t(e\otimes e)\|_2^2
 =2\sigma^4\left(t-\frac\eps2(1-e^{-2t/\eps})\right)
 \longrightarrow2\sigma^4t.
\]
The identity
$J_t^\eps-J_t^P=-\sigma\int_0^td_r^\eps\,\dd W_r\,(e\otimes e)$ and It\^o
isometry give the rescaled variance limit $\sigma^4t/2$.
\end{proof}

\begin{lemma}[Causal Borel limit selector]
\label{lem:causal-Borel-limit}
Let $E$ be a separable Banach space with base point $0$ and define
\[
 \operatorname{Lim}_E:E^{\N}\longrightarrow E,
 \qquad
 \operatorname{Lim}_E((z_n)_n)
 :=\begin{cases}
    \lim_nz_n,&(z_n)_n\text{ converges in }E,\\
    0,&\text{otherwise}.
   \end{cases}
\]
Then $\operatorname{Lim}_E$ is Borel.  Suppose
$F_n:[0,T]\times\Omega_H\to E$ are Borel and
\[
 F_n(t,r_sx)=F_n(t\wedge s,x)
 \qquad(n\ge1).
\]
The coordinatewise selector
$F(t,x):=\operatorname{Lim}_E((F_n(t,x))_n)$ is Borel and satisfies the same
stopping identity.  On every Borel set on which $(F_n)_n$ converges uniformly
in $C([0,T];E)$, $F$ is the resulting continuous path-valued limit.
\end{lemma}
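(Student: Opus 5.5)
The plan has three parts: Borel measurability of $\operatorname{Lim}_E$, transfer of the stopping identity through the selector, and identification with the uniform limit.

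\emph{Borel measurability.} Fix a countable dense set $\{d_k\}\subset E$; only separability is needed, not completeness. The convergence locus is
\[
 \mathcal C:=\bigcap_{j\ge1}\bigcup_{N\ge1}\bigcap_{m,n\ge N}
 \{(z_n):\|z_m-z_n\|\le 1/j\},
\]
the Cauchy set, which is all convergent sequences because $E$ is Banach. Each set in the intersection is closed in the product topology of $E^{\N}$, so $\mathcal C$ is Borel. On $\mathcal C$, $\operatorname{Lim}_E$ is the pointwise limit of the continuous coordinate projections $\pi_n(z)=z_n$. A pointwise limit of Borel maps into a separable metric space is Borel; to check this, test preimages of closed balls around the $d_k$ via $\liminf$/$\limsup$ of the real Borel functions $\|\pi_n-d_k\|$. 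Off $\mathcal C$ the map is constant. Gluing the two pieces along the Borel partition $\{\mathcal C,\mathcal C^c\}$ gives a Borel map.

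\emph{Measurability of $F$ and the stopping identity.} The map $(t,x)\mapsto(F_n(t,x))_n\in E^{\N}$ is Borel, because the Borel $\sigma$-algebra of $E^{\N}$ is the product $\sigma$-algebra; this uses separability, so that $E^{\N}$ is second countable. Composing with $\operatorname{Lim}_E$ shows $F$ is Borel. For the stopping identity, the sequence $(F_n(t,r_sx))_n$ equals $(F_n(t\wedge s,x))_n$ entrywise, so applying the same deterministic map $\operatorname{Lim}_E$ gives $F(t,r_sx)=F(t\wedge s,x)$. In particular, both sequences converge or both diverge, so the fallback value $0$ is consistent.

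\emph{Identification with the uniform limit.} Suppose $G$ is a Borel set on which $(F_n(\cdot,x))$ converges uniformly in $C([0,T];E)$. This presupposes continuous paths there. Then for each $t$ the sequence converges in $E$, so $F(t,x)$ equals the pointwise limit, which is the uniform limit path and is therefore continuous.

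The only nontrivial point is the Borel measurability of $\operatorname{Lim}_E$ on the Cauchy locus; the rest is formal.
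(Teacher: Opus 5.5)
Your proposal is correct and follows the paper's proof essentially verbatim: the convergence locus is Borel via the Cauchy criterion, the limit map is Borel on it and is extended by the base point, the stopping identity holds because the two sequences coincide termwise, and the uniform-limit clause is immediate. One small correction: your aside that completeness is not needed is slightly off, since you do use completeness (correctly) to identify the Cauchy locus with the convergence locus, but this does not affect the argument.
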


\begin{proof}
The convergence set in $E^{\N}$ is Borel by the Cauchy criterion, and the
limit map is Borel there; extension by the base point proves the first claim.
For fixed $(s,t,x)$, the sequences
$(F_n(t,r_sx))_n$ and $(F_n(t\wedge s,x))_n$ coincide, so applying
$\operatorname{Lim}_E$ gives the stopping identity.  The last assertion is
immediate from uniform convergence.
\end{proof}

\begin{lemma}[Borel currying into continuous-path carriers]
\label{lem:Borel-continuous-section-currying}
Let $D$ be a compact metrizable space, let $E$ be Polish, and let
$F:D\times\Omega_H\to E$ be Borel.  If $G\subseteq\Omega_H$ is Borel and
$d\mapsto F(d,x)$ is continuous for every $x\in G$, then
\[
 x\longmapsto F(\cdot,x):G\longrightarrow C(D;E)
\]
is Borel for the uniform topology.  Consequently its extension by any fixed
continuous base-point path on $G^c$ is a Borel $C(D;E)$-valued map.  The same
statement applies to time-indexed fields with $D=[0,T]$ and increment fields
with $D=\Delta_T$.
\end{lemma}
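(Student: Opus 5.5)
The plan is to reduce the statement to a standard Borel-measurability fact about evaluation in the space $C(D;E)$, using compactness of $D$. Since $D$ is compact metrizable and $E$ is Polish, $C(D;E)$ with the uniform metric is Polish. Its Borel $\sigma$-algebra is then generated by the evaluation maps $\mathrm{ev}_d:f\mapsto f(d)$ for $d$ ranging over a countable dense set $D_0\subseteq D$. To see this, fix a compatible metric $\varrho$ on $E$. For fixed $g\in C(D;E)$, continuity and density of $D_0$ give
\[
 \sup_{d\in D}\varrho\bigl(f(d),g(d)\bigr)=\sup_{d\in D_0}\varrho\bigl(f(d),g(d)\bigr).
\]
The right side is a countable supremum of evaluation-measurable functions. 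Hence every open uniform ball is in the evaluation $\sigma$-algebra. Separability then expresses every open set as a countable union of balls, so the two $\sigma$-algebras agree.

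Next I treat the map $\Phi:G\to C(D;E)$, $\Phi(x)=F(\cdot,x)$. By the previous step it suffices to show that each composite $\mathrm{ev}_d\circ\Phi=F(d,\cdot)|_G$ is Borel on $G$ for $d\in D_0$. This holds because $x\mapsto(d,x)$ is continuous into $D\times\Omega_H$ and $F$ is jointly Borel. Equivalently, for each ball $B(g,\delta)$ I would write
\[
 \Phi^{-1}(B(g,\delta))=\bigcup_{k}\bigcap_{d\in D_0}\{x\in G:\varrho(F(d,x),g(d))\le\delta-1/k\},
\]
which is a countable Boolean combination of Borel sets. Continuity of $d\mapsto F(d,x)$ on $G$ is what makes the countable supremum equal the uniform distance.

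The extension step is routine. Let $\bar\Phi=\Phi$ on $G$ and $\bar\Phi\equiv g_0$ on $G^c$ for a fixed continuous path $g_0$. Then for any Borel set $U$,
\[
 \bar\Phi^{-1}(U)=\Phi^{-1}(U)\cup\bigl(G^c\text{ if }g_0\in U\bigr),
\]
which is Borel because $G$ is Borel. For the specializations, take $D=[0,T]$ or $D=\Delta_T$. Both are compact metrizable, with countable dense sets given by rational times or rational pairs.

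The only point needing care, which I expect to be the main obstacle, is the identification of the Borel $\sigma$-algebra of the uniform topology with the evaluation $\sigma$-algebra. This relies on separability of $C(D;E)$, which holds for compact metrizable $D$ and separable metric $E$. I would cite it as the standard fact and not reprove it. No measurable-selection or analytic-set argument is needed.
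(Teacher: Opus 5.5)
Your proposal is correct and takes essentially the same route as the paper: both identify the uniform Borel $\sigma$-algebra of $C(D;E)$ with the evaluation $\sigma$-algebra at a countable dense $D_0\subset D$, via the identity $\sup_{d\in D}=\sup_{d\in D_0}$ for continuous maps, and then use that each coordinate $x\mapsto F(d,x)$ is Borel. Your explicit preimage formula for uniform balls and your base-point extension argument spell out details that the paper leaves implicit.
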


\begin{proof}
Choose a countable dense subset $D_0\subset D$ and a bounded compatible
complete metric $d_E$ on $E$.  For continuous $f,g:D\to E$,
\[
 \sup_{d\in D}d_E(f(d),g(d))
 =\sup_{d\in D_0}d_E(f(d),g(d)).
\]
Hence the Borel sigma-field of $C(D;E)$ for the uniform topology is generated
by the evaluation maps at points of $D_0$.  Every coordinate
$x\mapsto F(d,x)$, $d\in D_0$, is Borel, and continuity of the sections makes
these coordinates determine the whole path.  This proves the first assertion;
the base-point extension is Borel because $G$ is Borel.
\end{proof}

Fix $\eps_n=T2^{-n}$ and define, for every
$(t,x)\in[0,T]\times\Omega_H$,
\begin{equation}\label{eq:coordinatewise-causal-selector}
 \widehat J_t(x)
 :=\operatorname{Lim}_{\mathcal S_2(H)}
    \bigl((J_t^{\eps_n}(x))_{n\ge1}\bigr).
\end{equation}
Let $G_J$ be the set on which $(J^{\eps_n})$ is uniformly Cauchy in
$C([0,T];\mathcal S_2(H))$.

\begin{lemma}[Exact-stopping primitive selector]
\label{lem:exact-stopping-primitive-selector}
The set $G_J$ is Borel, stopping-stable, and $\Smax$-full.  The field
$\widehat J$ in \eqref{eq:coordinatewise-causal-selector} is total Borel and
raw causal, and it satisfies on the whole raw space
\begin{equation}\label{eq:coordinatewise-selector-stopping}
 \widehat J_t(r_sx)=\widehat J_{t\wedge s}(x)
 \qquad(s,t\in[0,T],\ x\in\Omega_H).
\end{equation}
On $G_J$, $\widehat J$ is the uniform continuous-path limit of
$(J^{\eps_n})$.  Under every $P\in\Smax$ it is indistinguishable from the
classical primitive $J^P$.
\end{lemma}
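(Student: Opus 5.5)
We combine the stopping identities \eqref{eq:finite-stopping} for the finite resolvent tensors with \Cref{lem:causal-Borel-limit}, and get fullness of $G_J$ from the rate \eqref{eq:tensor-rate} by a Borel--Cantelli argument under the upper capacity.

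\emph{Borel measurability and stopping.} Each $J^{\eps_n}$ is total, continuous in the raw uniform topology, and satisfies $J^{\eps_n}_t(r_sx)=J^{\eps_n}_{t\wedge s}(x)$ everywhere. Hence \Cref{lem:causal-Borel-limit} with $E=\mathcal S_2(H)$ shows that $\widehat J$ is Borel and satisfies \eqref{eq:coordinatewise-selector-stopping} on the whole raw space. Raw causality is the case $t\le s$.

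The set $G_J$ is the uniform Cauchy locus
\[
 G_J=\bigcap_{k}\bigcup_{N}\bigcap_{m,n\ge N}
 \Bigl\{\sup_{t\in\mathbb Q\cap[0,T]\cup\{T\}}\|J^{\eps_m}_t-J^{\eps_n}_t\|_2\le 1/k\Bigr\}.
\]
Path continuity lets us restrict the supremum to rational times, so $G_J$ is Borel. For stopping stability, the identity $J^{\eps_n}_\cdot(r_sx)=J^{\eps_n}_{\cdot\wedge s}(x)$ gives
\[
 \sup_t\|J^{\eps_m}_t(r_sx)-J^{\eps_n}_t(r_sx)\|
 \le\sup_t\|J^{\eps_m}_t(x)-J^{\eps_n}_t(x)\|,
\]
so $x\in G_J$ implies $r_sx\in G_J$. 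On $G_J$ the pointwise limit is the uniform limit, which is a continuous path.

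\emph{Fullness.} This is the main step. Apply \eqref{eq:pairwise-defect-transfer-bound}, or \eqref{eq:tensor-rate} via the triangle inequality, with $q=2$:
\[
 \sup_P\bigl\|\|J^{\eps_{n+1}}-J^{\eps_n}\|_{\infty;\mathcal S_2}\bigr\|_{L^2(P)}
 \le C\sqrt{\eps_n}=C\sqrt T\,2^{-n/2}.
\]
By Markov's inequality,
\[
 c_{\Smax}\bigl(\|J^{\eps_{n+1}}-J^{\eps_n}\|_\infty>2^{-n/4}\bigr)\le C2^{-n/2}.
\]
These bounds are summable, and countable subadditivity of $c_{\Smax}$ applied to the tail unions shows that the exceptional set where infinitely many of these events occur is Borel and polar. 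Off this set the sequence is summably Cauchy in $C([0,T];\mathcal S_2)$, so $G_J^c$ is polar and $G_J$ is $\Smax$-full. No subsequence is needed, because the geometric schedule $\eps_n=T2^{-n}$ already makes the increments summable.

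\emph{Identification.} Under each $P$, \eqref{eq:tensor-rate} gives $J^{\eps_n}\to J^P$ in $L^2(P;C([0,T];\mathcal S_2))$. Since also $J^{\eps_n}\to\widehat J$ uniformly $P$-a.s. on $G_J$, uniqueness of limits in probability shows that the continuous processes $\widehat J$ and $J^P$ agree as paths $P$-a.s., i.e. they are indistinguishable.
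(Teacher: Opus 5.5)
Your proof is correct and follows essentially the same route as the paper. You get Borelness and the exact stopping identity from \Cref{lem:causal-Borel-limit} applied to the exactly stopping finite-scale tensors, stopping stability of $G_J$ from the sup-norm comparison, and fullness from \eqref{eq:tensor-rate} via Markov's inequality and capacity Borel--Cantelli along the dyadic scale. The only difference is cosmetic and occurs in the final identification: you use uniqueness of limits in probability, whereas the paper reruns the dyadic Borel--Cantelli argument under each fixed $P$ to get almost-sure uniform convergence to $J^P$, and the two are equivalent.
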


\begin{proof}
The Cauchy criterion makes $G_J$ Borel.  Estimate
\eqref{eq:tensor-rate} verifies the upper-capacity Cauchy criterion of
\Cref{thm:causal-canonicalization-principle}; Markov's inequality along the
dyadic scale and capacity Borel--Cantelli give
$c_{\Smax}(G_J^c)=0$.  Exact finite-scale stopping gives, for every $s$,
\[
 \sup_{t\le T}
 \|J_t^{\eps_n}(r_sx)-J_t^{\eps_m}(r_sx)\|_2
 \le
 \sup_{u\le T}
 \|J_u^{\eps_n}(x)-J_u^{\eps_m}(x)\|_2.
\]
Hence $G_J$ is stopping-stable.  Borel measurability and
\eqref{eq:coordinatewise-selector-stopping} follow from
\Cref{lem:causal-Borel-limit}; on $G_J$ the coordinatewise selector equals the
uniform limit.  Finally, for fixed $P$, the one-sided estimate
\eqref{eq:tensor-rate} and the same dyadic Borel--Cantelli argument give
uniform almost-sure convergence of $J^{\eps_n}$ to $J^P$.  Continuity of both
paths makes the identification simultaneous in time.
\end{proof}

Define
\begin{align}
 Q_t&:=X_t^{\otimes2}-\widehat J_t-\widehat J_t^*,
 \label{eq:common-Q}\\
 \mathbb X^I_{s,t}&:=\widehat J_t-\widehat J_s-X_s\otimes X_{s,t},
 \label{eq:common-I}\\
 A_{s,t}&:=\Anti\mathbb X^I_{s,t},
 \label{eq:common-area}\\
 \mathbb X^S_{s,t}&:=\mathbb X^I_{s,t}+\frac12Q_{s,t}
 =\frac12X_{s,t}^{\otimes2}+A_{s,t}.
 \label{eq:common-S}
\end{align}
The canonical group-valued state has cocycle chart
\begin{equation}\label{eq:common-second-order-state}
 \operatorname{ch}_\tau(\mathcal A_2)=(X,A,Q).
\end{equation}
The primitive representation $(X,\widehat J,Q)$ is recovered from
\[
 \widehat J_t=\frac12(X_t^{\otimes2}-Q_t)+A_{0,t},
\]
which follows from \eqref{eq:common-Q}--\eqref{eq:common-area}.

\begin{lemma}[Common weak-geometric H\"older locus]
\label{lem:common-weak-geometric-locus}
For $\alpha\in(1/3,1/2)$ define
\begin{equation}\label{eq:common-alpha-rough-locus}
 G_\alpha^{\rm wg}
 :=\left\{x\in G_J:
 \|X(x)\|_{\alpha\text{-H\"ol}}
 +\|\mathbb X^S(x)\|_{2\alpha\text{-H\"ol}}^{1/2}<\infty\right\}.
\end{equation}
Then $G_\alpha^{\rm wg}$ is Borel, invariant under deterministic stopping,
and has full $\Smax$-capacity.  On this set
$\mathbf X^S=(1,X,\mathbb X^S)$ is a weakly geometric
$\alpha$-H\"older rough path in the Hilbert--Schmidt tensor norm.  For every
$\eta\in(1/3,\alpha)$,
\begin{equation}\label{eq:weak-to-geometric-tuning}
 \mathbf X^S(x)\in G\Omega^2_\eta(H;\mathcal S_2),
 \qquad x\in G_\alpha^{\rm wg}.
\end{equation}
Consequently
\begin{equation}\label{eq:common-all-exponent-geometric-locus}
 G_{\rm geo}:=
 \bigcap_{\alpha\in\mathbb Q\cap(1/3,1/2)}G_\alpha^{\rm wg}
\end{equation}
is Borel, stopping-stable, has full capacity, and satisfies
\eqref{eq:weak-to-geometric-tuning} for every $1/3<\eta<1/2$.
\end{lemma}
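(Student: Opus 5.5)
Three things need proof: Borel measurability together with stopping invariance, full capacity, and the geometric upgrade from $\alpha$ to every $\eta<\alpha$. They are taken in that order.

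\emph{Borel measurability and stopping invariance.} On $G_J$ the maps $x\mapsto X(x)$ and $x\mapsto \widehat J(x)$ are Borel into continuous-path spaces by \Cref{lem:exact-stopping-primitive-selector} and \Cref{lem:Borel-continuous-section-currying}. Hence $\mathbb X^S$, built algebraically in \eqref{eq:common-Q}--\eqref{eq:common-S}, is a Borel continuous two-index field on $\Delta_T$. Its H\"older seminorms are suprema over rational pairs $(s,t)$, so they are Borel $[0,\infty]$-valued functions, and $G_\alpha^{\rm wg}$ is Borel.

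For stopping invariance, exact stopping of $X$ and \eqref{eq:coordinatewise-selector-stopping} give $\mathbb X^S_{u,v}(r_sx)=\mathbb X^S_{u\wedge s,v\wedge s}(x)$. Stopping therefore does not increase either seminorm, and $G_J$ is already stopping-stable.

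\emph{Full capacity.} For each fixed $P$, \Cref{lem:exact-stopping-primitive-selector} shows that $\widehat J$ is indistinguishable from $J^P$. It follows that $(X,\mathbb X^S)$ coincides $P$-a.s. with the classical Stratonovich lift $\mathbf X^{S,P}$. By \Cref{lem:enhanced-moments} with $\beta=\alpha$, the homogeneous radius of this lift has a finite $L^q(P)$ norm, bounded uniformly in $P$. Markov's inequality then gives
\[
 c_{\Smax}\bigl(\{\text{radius}>R\}\cap G_J\bigr)\le C\sqrt q/R\to0 .
\]
Since $\{\text{radius}=\infty\}\subseteq\{\text{radius}>R\}$ for every $R$, the set of infinite radius is polar. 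Combined with the fact that $G_J$ is full, this shows that $G_\alpha^{\rm wg}$ is full.

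\emph{Weak geometricity.} The rough-path axioms are pathwise on $G_J$.
\begin{itemize}
\item Chen's relation for $\mathbb X^I$ follows directly from \eqref{eq:common-I}.
\item Additivity of $Q_{s,t}$ then transfers Chen's relation to $\mathbb X^S$.
\item The symmetric part of $\mathbb X^S$ equals $\frac12X_{s,t}^{\otimes2}$ by \eqref{eq:common-S}.
\end{itemize}
So $\mathbf X^S$ is a weakly geometric $\alpha$-H\"older rough path in $\mathcal S_2$.

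\emph{Upgrade to geometric at exponent $\eta<\alpha$.} This is the main obstacle, because $H$ is infinite dimensional and the usual finite-dimensional interpolation argument needs a finite-dimensional reduction. The plan has three steps.
\begin{enumerate}
\item Fix increasing projections $P_n\uparrow I$. The projected path $(P_nX,(P_n\otimes P_n)\mathbb X^S)$ is a weakly geometric rough path in finite dimension.
\item In finite dimension, the standard geodesic or piecewise-smooth approximation theorem (Friz--Victoir) places weakly geometric $\alpha$-H\"older rough paths in the little-H\"older geometric closure at every $\eta<\alpha$.
\item The projections converge to $\mathbf X^S$ in the $\eta$-H\"older rough metric. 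Pointwise convergence of increments holds because $P_n\to I$ strongly and $P_n\otimes P_n\to I$ strongly on $\mathcal S_2$. Uniform $\alpha$-bounds, which follow since the projections are contractions, combined with interpolation, upgrade this pointwise convergence to $\eta$-H\"older convergence.
\end{enumerate}
Since $G\Omega^2_\eta$ is closed, $\mathbf X^S\in G\Omega^2_\eta$. The interpolation in step 3 requires a compactness argument: on the compact set $\Delta_T$, equicontinuous uniform convergence can be deduced from strong convergence applied to the compact set of increments.

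\emph{The intersection $G_{\rm geo}$.} This is a countable intersection of Borel, stopping-stable, full sets, so it inherits all three properties. Every $\eta\in(1/3,1/2)$ lies below some rational $\alpha\in(1/3,1/2)$, so \eqref{eq:weak-to-geometric-tuning} holds on $G_{\rm geo}$ for every such $\eta$.
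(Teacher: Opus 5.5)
Your proof is correct. For Borel measurability, stopping invariance, full capacity and weak geometricity it follows the paper's argument. The Markov bound is slightly more than you need: the locus is Borel and the classical radius is $P$-a.s.\ finite for each $P$, so polarity of the infinite-radius part is immediate.

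The genuine difference is the upgrade to $G\Omega^2_\eta(H;\mathcal S_2)$. The paper simply invokes \cite[Theorem~1.1]{GrongNilssenSchmeding22}, which states directly that in a Hilbert space with Hilbert--Schmidt second level every weakly geometric $\alpha$-H\"older rough path lies in the geometric $\eta$-closure for $1/3<\eta<\alpha$. You instead reduce to the finite-dimensional Friz--Victoir theorem through orthogonal projections, and that reduction is sound:
\begin{itemize}
\item $P_n$ and $T\mapsto P_nTP_n$ are contractions on $H$ and $\mathcal S_2$. They preserve Chen's relation and $\operatorname{Sym}\mathbb X^S_{s,t}=\frac12X_{s,t}^{\otimes2}$, so the projected lift is weakly geometric in $P_nH$.
\item For $x\in G_J$, $\mathbb X^S$ is continuous on the compact set $\Delta_T$. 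The increment sets are therefore compact, and strong convergence $P_n\to I$ is uniform on them.
\item The interpolation $[g]_{2\eta}\le[g]_{2\alpha}^{\eta/\alpha}\|g\|_\infty^{1-\eta/\alpha}$, with its first-level analogue, turns this uniform convergence into $\eta/2\eta$-H\"older convergence. Closedness of $G\Omega^2_\eta$ then finishes the argument.
\end{itemize}

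Your route is self-contained modulo the finite-dimensional theorem, and it shows exactly where the Hilbert--Schmidt norm enters: one needs norm-one projections compatible with the tensor norm. It also relies on compactness of the increment set rather than density of smooth signatures. That is why it can establish membership in the closure, whereas the projective convergence in \Cref{thm:linear-naturality} is applied to states already known to lie in it. The paper's citation is shorter and covers settings where such a compatible projective reduction is less immediate.
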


\begin{proof}
The H\"older seminorms in \eqref{eq:common-alpha-rough-locus} are countable
suprema over rational pairs, so the locus is Borel.  The definitions
\eqref{eq:common-I}--\eqref{eq:common-S} give, for every raw path and every
$s\le u\le t$,
\begin{align*}
 \mathbb X^S_{s,t}
 &=\mathbb X^S_{s,u}+\mathbb X^S_{u,t}
   +X_{s,u}\otimes X_{u,t},\\
 \operatorname{Sym}\mathbb X^S_{s,t}
 &=\frac12X_{s,t}^{\otimes2}.
\end{align*}
Thus finite H\"older radius already implies weak geometricity.  Exact stopping
of $\widehat J$ and $Q$ gives stopping of the whole lift and preserves the
finite-radius condition.

By \Cref{lem:exact-stopping-primitive-selector}, under every fixed
$P\in\Smax$ the selector $\widehat J$ agrees uniformly in time with $J^P$
outside a $P$-null set.  Hence $\mathbf X^S$ agrees there with
the classical Stratonovich lift $\mathbf X^{S,P}$.  The moment estimate
\Cref{lem:enhanced-moments} therefore gives
$P(G_\alpha^{\rm wg})=1$ for every $P$, and the complement is polar.

For Hilbert spaces equipped at level two with the Schatten-$2$ norm,
\cite[Theorem~1.1]{GrongNilssenSchmeding22} states that every weakly geometric
$\alpha$-H\"older rough path belongs to the geometric
$\eta$-H\"older closure for every $1/3<\eta<\alpha$.  This proves
\eqref{eq:weak-to-geometric-tuning}.  The last assertion follows by a
countable intersection and, for a prescribed $\eta<1/2$, choosing a rational
$\alpha\in(\eta,1/2)$.
\end{proof}

\begin{theorem}[Common realization of the Hilbert second-order state]
\label{cor:common-representative}
Each of the coordinate fields
\[
 \widehat J,\qquad Q,\qquad A,\qquad \mathbb X^I,\qquad \mathbb X^S
\]
is total Borel and raw causal, with its natural time or increment index.  On
$G_J$, for every fixed $x$, the algebraic finite-scale fields built from
$J^{\eps_n}(x)$ and $Q^{\eps_n}(x)$ converge to these coordinates uniformly
in time, and uniformly on $\Delta_T$ for the increment fields.  The
compatibility loci and compact cores introduced below upgrade this pathwise
uniform convergence to rough/H\"older convergence and continuity with respect
to the raw path variable.
They obey, for every raw path,
\begin{equation}\label{eq:selector-stopping}
 \widehat J_t(r_sx)=\widehat J_{t\wedge s}(x),\qquad
 Q_t(r_sx)=Q_{t\wedge s}(x).
\end{equation}
Under every $P\in\Smax$ they agree, simultaneously in time, with
the classical semimartingale primitive, operator bracket, It\^o area, and
Stratonovich lift.  There is a Borel stopping-stable full-capacity set $G_*$
such that for every $x\in G_*$ and every $1/3<\eta<1/2$,
\[
 (1,X(x),\mathbb X^S(x))\in G\Omega^2_\eta(H;\mathcal S_2),
 \qquad
 Q(x)\in C_0([0,T];\mathcal S_1(H)_+)
 \text{ and is trace-norm Lipschitz},
\]
with
\begin{equation}\label{eq:Q-trace-Lipschitz}
 \|Q_t-Q_s\|_{\mathcal S_1}\le\Lambda|t-s|.
\end{equation}
In particular $Q\in C_0^{0,\theta}([0,T];\mathcal S_1)$ for every
$0<\theta<1$.  Consequently the operational type
$(\eta;\mathcal S_2,\mathcal S_1)$ is accessible for $\Smax$: on $G_*$ use
$(X,A,Q)$, and on its Borel complement use a fixed base point of
$\mathsf Z_{\eta;\mathcal S_2,1}(H)$.  The resulting total Borel state is
$\Smax$-causal on the common domain and has the same quasi-sure coordinates.
Write $(\widehat J^{\mathrm c},Q^{\mathrm c})$ for the corresponding
continuous-path totalization: it equals $(\widehat J,Q)$ on $G_*$ and the
zero paths on $G_*^c$.  This is the single common map required by
\Cref{def:common-accessibility}; hence the accessibility hypothesis of
\Cref{thm:quadratic-response-reconstruction} holds for $\Smax$.
\end{theorem}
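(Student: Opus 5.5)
The proof assembles the preceding lemmas, adding one new ingredient: a trace-norm Lipschitz bound for $Q$ on a full-capacity set.

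\emph{Measurability and causality.} Total Borel measurability and the exact stopping identity for $\widehat J$ are \Cref{lem:exact-stopping-primitive-selector}. Then $Q$, $\mathbb X^I$, $A$ and $\mathbb X^S$ are continuous algebraic expressions in $(X,\widehat J)$ via \eqref{eq:common-Q}--\eqref{eq:common-S}. Hence they are Borel, and \eqref{eq:selector-stopping} follows algebraically. Stopping of the increment fields then follows from $X_t(r_sx)=X_{t\wedge s}(x)$, which gives raw causality.

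\emph{Pathwise convergence.} On $G_J$, the fields $J^{\eps_n}$ converge uniformly in $C([0,T];\mathcal S_2)$. The finite-scale analogues $Q^{\eps_n}=X^{\otimes2}-J^{\eps_n}-(J^{\eps_n})^*$ and the corresponding increment fields are continuous images of $J^{\eps_n}$, so they converge uniformly in $t$ and on $\Delta_T$. Under each $P$, the identification $\widehat J=J^P$ from \Cref{lem:exact-stopping-primitive-selector} yields $Q=\qv{M^P}$ by the operator It\^o product formula. It also yields $A=A^P$ and $\mathbb X^S=\mathbf X^{S,P}$, all simultaneously in time because the paths are continuous.

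\emph{The trace-norm Lipschitz property.} This is the main obstacle, since the convergence so far is only Hilbert--Schmidt. I would use \eqref{eq:resolvent-energy-identity}: $Q^{\eps}=(D^\eps)^{\otimes2}+\mathsf E^\eps$ is a sum of positive operators, and $\mathsf E^\eps$ is nondecreasing in the Loewner order. By \eqref{eq:D-maximal-square-moment}, $\sup_t\|D^{\eps_n}_t\|^2\to0$ quasi surely along the dyadic scale; a Borel--Cantelli argument over a summable subsequence should give this. So on a full-capacity set, $Q_t-Q_s$ is an $\mathcal S_2$-limit of positive operators $\mathsf E^{\eps_n}_t-\mathsf E^{\eps_n}_s$ up to vanishing rank-one terms, and is therefore positive semidefinite.

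Positivity reduces the trace norm to the trace. It then remains to show $\Tr(Q_t-Q_s)\le\Lambda(t-s)$ for all $s\le t$. Under each $P$, the model identity $Q=\qv{M^P}$ and \eqref{eq:hilbert-qv-density} give this for rational $s,t$ almost surely. The catch is that the exceptional set depends on $P$, so a union over laws is not available directly. I would instead define the Borel set
\[
 G_Q=\{x: Q_t-Q_s\ge0,\ \Tr(Q_t-Q_s)\le\Lambda(t-s)\ \ \forall s\le t\in\mathbb Q\}.
\]
The trace is a countable supremum of finite-rank trace functionals, each $\mathcal S_2$-continuous, so $\Tr$ is lower semicontinuous and $G_Q$ is Borel. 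For every $P$ its complement is $P$-null, and hence $G_Q^c$ is polar, since polarity means $P^*$-null for each $P$. Continuity in $\mathcal S_2$ together with lower semicontinuity of the trace then extends the bound to all $s,t$. Thus $Q$ is $\mathcal S_1$-valued, takes values in $\mathcal S_1(H)_+$ since $Q_0=0$, and is trace-norm $\Lambda$-Lipschitz. Taking the first-differences form for $s,t$ gives the same set. Stopping stability of $G_Q$ follows from \eqref{eq:selector-stopping}. The little-H\"older membership in $C_0^{0,\theta}$ for $\theta<1$ is immediate from the Lipschitz bound.

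\emph{Accessibility.} Set $G_*:=G_{\rm geo}\cap G_Q$, which is Borel, stopping-stable and full. \Cref{lem:common-weak-geometric-locus} gives the geometric membership for all $\eta$. On $G_*$ the triple $(X,A,Q)$ then satisfies the chart requirements of \Cref{prop:intrinsic-chart-equivalence}; the Chen identity \eqref{eq:operational-area-Chen} follows algebraically from \eqref{eq:common-I}. Defining the state as a base point off $G_*$ gives a Borel map by \Cref{lem:Borel-continuous-section-currying}. $\Smax$-causality on $G_*$ follows from exact stopping, and the modelwise identifications give \eqref{eq:common-accessibility-chart}.
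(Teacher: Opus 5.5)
Your argument follows the paper's proof closely. It uses the same rational-time Borel set $G_Q$ (positivity plus a trace bound read through finite-rank compressions), the same step "Borel and $P$-null for every $P$ implies polar", the same $G_*=G_{\rm geo}\cap G_Q$, and the same modelwise identification via the operator It\^o product formula.

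Your positivity argument via \eqref{eq:resolvent-energy-identity} is a different and correct ingredient. It is in fact deterministic: $\|D^{\eps}(x)\|_\infty\to0$ for every continuous path by the approximate-identity property of the kernel, so no Borel--Cantelli argument is needed, and $\mathsf E^\eps$ is Loewner-nondecreasing. Hence every increment $Q_{s,t}(x)$ is positive for \emph{every} $x\in G_J$. The paper obtains positivity only lawwise at rational times. Since you also impose positivity in the definition of $G_Q$, the argument is redundant as written; you could drop positivity from that definition.

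One small correction concerns continuity. You use the $\mathcal S_2$-continuity of $Q$ to pass from rational to arbitrary $s,t$. Stopping stability also needs it, because $Q_{u,v}(r_sx)=Q_{u\wedge s,v\wedge s}(x)$ involves irrational times when $s$ is irrational. That continuity holds only on $G_J$, so $G_Q$ should be defined inside $G_J$, as in the paper. This is harmless for $G_*$ because $G_{\rm geo}\subset G_J$.

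The genuine gap is the final measurability claim. \Cref{lem:Borel-continuous-section-currying} makes $x\mapsto(X,\mathbb X^S,Q)$ Borel only into the uniform carriers $C(\Delta_T;H\oplus\mathcal S_2)$ and $C([0,T];\mathcal S_2)$. The theorem asserts a Borel map into $\mathsf Z_{\eta;\mathcal S_2,1}(H)$, whose $d_{\eta;\mathcal S_2,1}$ topology (H\"older, with the trace norm on the covariance) is strictly finer. Borel measurability therefore does not transfer automatically.

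The paper supplies the missing step with Lusin--Souslin. The natural injections $G\Omega^2_\eta(H;\mathcal S_2)\to C(\Delta_T;H\oplus\mathcal S_2)$ and $C_0^{0,2\eta}([0,T];\mathcal S_1(H)_{\rm sa})\hookrightarrow C([0,T];\mathcal S_2(H)_{\rm sa})$ are continuous injections between Polish spaces. Hence their images are Borel and their inverses are Borel, and $G_*$ maps into those images because it consists of geometric lifts and trace-Lipschitz covariance paths.

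A direct alternative is to check that $x\mapsto d_{\eta;\mathcal S_2,1}$(state at $x$, $z_k$) is Borel for a countable dense family $(z_k)$. This uses rational-time suprema and the lower-semicontinuous formula $\|B\|_1=\sup_N\|P_NBP_N\|_1$ on $\mathcal S_2$. Once one of these arguments is added, your proof is complete.
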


\begin{proof}
Modelwise identification of $\widehat J$ is
\Cref{lem:exact-stopping-primitive-selector}; the
remaining coordinates then agree algebraically with the modelwise bracket,
It\^o area, and Stratonovich lift.  Equation \eqref{eq:selector-stopping} is
\eqref{eq:coordinatewise-selector-stopping}, and the same identity propagates
to all algebraic readouts.  The common geometric rough-path domain is
$G_{\rm geo}$ from \Cref{lem:common-weak-geometric-locus}.

Fix increasing finite-rank orthogonal projections $P_N\uparrow I_H$ and let
$G_Q$ be the set of $x\in G_J$ such that, for every rational $0\le s<t\le T$,
\begin{equation}\label{eq:common-Q-rational-locus}
 Q_{s,t}(x)\in\mathcal S_2(H)_{+},
 \qquad
 \sup_N\Tr(P_NQ_{s,t}(x)P_N)\le\Lambda(t-s).
\end{equation}
Positivity is closed in $\mathcal S_2(H)_{\rm sa}$, and every finite-rank
trace in \eqref{eq:common-Q-rational-locus} is Hilbert--Schmidt continuous;
hence $G_Q$ is Borel.  Modelwise bracket identification gives
$P(G_Q)=1$ for every $P\in\Smax$.

For $x\in G_Q$, continuity of $Q$ in $\mathcal S_2$ extends positivity and
\eqref{eq:common-Q-rational-locus} from rational to arbitrary pairs.  If
$A\ge0$, then
\[
 \Tr A=\sup_N\Tr(P_NAP_N),
\]
with the value $+\infty$ allowed.  Thus every increment $Q_{s,t}(x)$ is
trace class and
\[
 \|Q_t(x)-Q_s(x)\|_1=\Tr Q_{s,t}(x)\le\Lambda(t-s).
\]
This proves the asserted trace-norm Lipschitz property.  Exact stopping then
gives, for $0\le u\le v\le T$ and $s\in[0,T]$,
\[
 Q_{u,v}(r_sx)=Q_{u\wedge s,v\wedge s}(x).
\]
The all-time positivity and trace bounds are therefore preserved by stopping,
and \Cref{lem:exact-stopping-primitive-selector} gives $r_sx\in G_J$.
Consequently $G_Q$ is stopping-stable.

Set $G_*:=G_{\rm geo}\cap G_Q$.  It is Borel, stopping-stable, and has polar
complement.  The fields $(t,x)\mapsto\widehat J_t(x)$ and
$((s,t),x)\mapsto\mathbb X^S_{s,t}(x)$ are jointly Borel.  On $G_J$ their
sections are continuous, so
\Cref{lem:Borel-continuous-section-currying} makes
$x\mapsto(X(x),\mathbb X^S(x),Q(x))$ Borel in the uniform continuous-path
carrier.  For fixed $\eta$, the natural injections
\[
 G\Omega^2_\eta(H;\mathcal S_2)\longrightarrow
 C(\Delta_T;H\oplus\mathcal S_2),
 \qquad
 C_0^{0,2\eta}([0,T];\mathcal S_1(H)_{\rm sa})
 \hookrightarrow C([0,T];\mathcal S_2(H)_{\rm sa})
\]
are continuous injections between Polish spaces.  Lusin--Souslin makes their
images and inverse maps Borel.  Since $G_*$ consists of geometric lifts and
trace-norm Lipschitz covariance paths (hence little $2\eta$-H\"older paths), the
state is Borel there in the declared topology.  Base-point totalization on
$G_*^c$ gives the claimed fields $(\widehat J^{\mathrm c},Q^{\mathrm c})$.
Stopping stability and fullness of $G_*$ make this totalization quasi-surely
causal.  Its values on $G_*^c$ make the fields total; raw-path continuity is
the corewise conclusion of \Cref{thm:spatial-trace-tight-cores}.
\end{proof}

\begin{corollary}[Quantitative scheme-independent convergence to the common primitive]
\label{cor:defect-energy-common-capacity}
Let $Y^n$ be any sequence of total Borel raw-causal finite-variation
regularizations as in \Cref{thm:defect-energy-transfer}, and fix $q\ge2$.
For the Borel continuous-path totalization
$(\widehat J^{\mathrm c},Q^{\mathrm c})$ from
\Cref{cor:common-representative}, put
\[
 \Delta_n
 :=\|J^{Y^n}-\widehat J^{\mathrm c}\|_{\infty;\mathcal S_2}
   +\|Q^{Y^n}-Q^{\mathrm c}\|_{\infty;\mathcal S_2}.
\]
Then, for every $\delta>0$,
\begin{equation}\label{eq:defect-energy-common-capacity-rate}
 c_{\Smax}(\Delta_n>\delta)
 \le
 \left[
 \frac{C(\sqrt{q\Lambda}+B\sqrt T)
       \mathfrak d_q(Y^n)}{\delta}
 \right]^q.
\end{equation}
Consequently $\mathfrak d_q(Y^n)\to0$ implies that the full sequence
$(J^{Y^n},Q^{Y^n})$ converges in upper capacity to $(\widehat J,Q)$ in the
product uniform $C([0,T];\mathcal S_2)^2$ topology.  Convergence in
$\tau_{\eta;\mathcal S_2,\mathcal S_1}$ is obtained under the separate
spatial-tail and modulus hypotheses.
\end{corollary}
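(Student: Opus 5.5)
The plan is to reduce the capacity bound to a single-model moment estimate, uniform in $P$. Fix $P\in\Smax$. By \Cref{cor:common-representative}, the totalization $(\widehat J^{\mathrm c},Q^{\mathrm c})$ coincides with $(\widehat J,Q)$ on the full-capacity set $G_*$. Under $P$ these fields are indistinguishable from $(J^P,\qv{M^P})$, simultaneously in time. So, outside a $P$-null set,
\[
 \Delta_n=\|J^{Y^n}-J^P\|_{\infty;\mathcal S_2}+\|Q^{Y^n}-\qv{M^P}\|_{\infty;\mathcal S_2}.
\]
\Cref{thm:defect-energy-transfer} makes $J^{Y^n}$ and $Q^{Y^n}$ total Borel continuous-path fields, and the totalization is Borel. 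Hence $\Delta_n$ is a Borel random variable, and $P^*(\Delta_n>\delta)=P(\Delta_n>\delta)$.

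Next I would apply Markov's inequality at order $q$, followed by Minkowski in $L^q(P)$:
\[
 P(\Delta_n>\delta)\le\delta^{-q}\bigl(\|J^{Y^n}-J^P\|_{\mathbb S^q(P;\mathcal S_2)}+\|Q^{Y^n}-\qv{M^P}\|_{\mathbb S^q(P;\mathcal S_2)}\bigr)^q.
\]
The sum in parentheses is bounded by $C(\sqrt{q\Lambda}+B\sqrt T)\mathfrak d_q(Y^n)$, with $C$ numerical and independent of $P$, by the one-sided estimate \eqref{eq:general-defect-transfer-bound}. If $\mathfrak d_q(Y^n)=\infty$, the claimed bound is trivial. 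Taking the supremum over $P\in\Smax$ then gives \eqref{eq:defect-energy-common-capacity-rate}.

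For the consequence, note that $\mathfrak d_q(Y^n)\to0$ makes the right side of \eqref{eq:defect-energy-common-capacity-rate} tend to zero for each fixed $\delta$. That is exactly upper-capacity convergence of the full sequence to $(\widehat J^{\mathrm c},Q^{\mathrm c})$ in $C([0,T];\mathcal S_2)^2$. Since the totalization differs from $(\widehat J,Q)$ only on a polar set, the limit may equivalently be written as $(\widehat J,Q)$. This is consistent with \Cref{cor:defect-energy-scheme-independence} and the uniqueness clause of \Cref{thm:causal-canonicalization-principle}.

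The only delicate point is the identification step: one must use the indistinguishability statement, not just a fixed-time equality, so that the supremum over time inside $\Delta_n$ is controlled. This is supplied by \Cref{lem:exact-stopping-primitive-selector}, through the continuity of both paths. The final sentence about $\tau_{\eta;\mathcal S_2,\mathcal S_1}$ is only a pointer to later hypotheses and needs no proof here.
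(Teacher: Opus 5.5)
Your proposal is correct and follows the paper's proof: you identify $(\widehat J^{\mathrm c},Q^{\mathrm c})$ with $(J^P,\qv{M^P})$ up to indistinguishability under each $P$, apply the one-sided $L^q$ estimate of the defect-energy transfer theorem, then use Markov's inequality and take the supremum over $P\in\Smax$. Your extra remarks, on Borel measurability of $\Delta_n$ (so that $P^*=P$ on the event) and on the polar discrepancy between the totalization and $(\widehat J,Q)$, spell out details that the paper's three-line proof leaves implicit.
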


\begin{proof}
Under each $P\in\Smax$, \Cref{cor:common-representative} identifies
$(\widehat J^{\mathrm c},Q^{\mathrm c})$ with
$(J^P,\qv{M^P})$ up to indistinguishability.  Hence
the $L^q(P)$ estimate of \Cref{thm:defect-energy-transfer} applies directly
to $\Delta_n$.  Markov's inequality and the supremum over $P$ give
\eqref{eq:defect-energy-common-capacity-rate}.
\end{proof}

\begin{remark}[Corewise continuity and deterministic locality obstructions]
\label{rem:locality-obstruction-capacity-cores}
Chevyrev and Ferrucci prove that, for H\"older exponents at most $1/2$, a
local homogeneous rough-path lift cannot be defined on the entire deterministic
H\"older path space, and they obtain related boundedness obstructions for
L\'evy-area assignments \cite{ChevyrevFerrucci26}.  The common selector here is
total Borel and raw causal on the raw path space and continuous on
model-supported compact capacity cores; the rough-path identities hold on one
full-capacity domain.  Its continuity regime is therefore the capacity-core
topology rather than the full deterministic H\"older path space.
\end{remark}

\begin{theorem}[Least quadratic state for nondominated Hilbert semimartingales]
\label{thm:nondominated-quadratic-reconstruction}
Let $\Smax$ be the nondominated Hilbert semimartingale model class of
\Cref{sec:canonicalization}.  For $1/3<\eta<1/2$, put
$\tau_0=(\eta;\mathcal S_2,\mathcal S_1)$.  The quadratic response
specification $\mathfrak Q_2(H)$ admits the total $\Smax$-causal
reconstruction
\[
 \mathcal A_2:\Omega_H\longrightarrow
 \mathsf Z_{\eta;\mathcal S_2,1}(H),
 \qquad
 \operatorname{ch}_{\tau_0}(\mathcal A_2)=(X,A,Q),
\]
without an additional accessibility hypothesis.  The total quasi-sure state
constructed in Part~II is the unique least sufficient state for
$\mathfrak Q_2(H)$ on $\Smax$: a causal source realizes the complete quadratic
program if and only if it admits a stable-causal readout of $\mathcal A_2$.
Its area and covariance coordinates have the
$\mathcal S_2/\mathcal S_1$ state geometry selected by Brownian calibration
and the corresponding orthogonal direct-sum laws.  Every stable-causal derived
representation of the complete quadratic program factors through this state.
\end{theorem}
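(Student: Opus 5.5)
The plan is to assemble the theorem from Part~I and Part~II; no new estimates are needed. The first step is to verify the hypothesis of Part~I. By \Cref{cor:common-representative}, the operational type $\tau_0=(\eta;\mathcal S_2,\mathcal S_1)$ is accessible for $\Smax$. The witness is the total Borel state defined as $(X,A,Q)$ on the Borel, stopping-stable, full-capacity set $G_*$ and as a base point on $G_*^c$. On $G_*$ we have $(1,X,\mathbb X^S)\in G\Omega^2_\eta(H;\mathcal S_2)$ for every $1/3<\eta<1/2$. The covariance $Q$ is trace-norm Lipschitz by \eqref{eq:Q-trace-Lipschitz}, so it lies in $C_0^{0,2\eta}([0,T];\mathcal S_{1,\rm sa})$, because $2\eta<1$ and Lipschitz paths belong to the little-H\"older closure. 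The area satisfies the Chen identity \eqref{eq:operational-area-Chen} by the algebraic definitions \eqref{eq:common-I}--\eqref{eq:common-S}. \Cref{prop:intrinsic-chart-equivalence} therefore places the triple in $\mathsf Z_{\eta;\mathcal S_2,1}(H)$. $\Smax$-causality follows from the exact stopping identity \eqref{eq:selector-stopping} together with stopping-stability of $G_*$. Under each $P$, the modelwise identification in \Cref{cor:common-representative} gives $(A,Q)=(A^P,\qv{M^P})$, so \eqref{eq:common-accessibility-chart} holds. Define $\mathcal A_2$ as this state.

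Next, invoke \Cref{thm:quadratic-response-reconstruction} with $\cP=\Smax$. This gives the coherent encoder--decoder pair $(\mathcal O_{\mathfrak Q_2},\mathcal D_{\mathfrak Q_2})$ of \Cref{prop:coherent-principal-response-target}, the chart identity $\operatorname{ch}_{\tau_0}(\mathcal A_2)=(X,A,Q)$, and the $\mathcal S_2/\mathcal S_1$ geometry selected by \Cref{thm:Brownian-calibration-rigidity} and \Cref{cor:calibration-direct-sum-equivalence} (part~(iii)). \Cref{thm:quadratic-response-representability} then yields the equivalence: $S$ jointly realizes $\mathfrak Q_2(H)$ if and only if $\mathcal A_2\preceqsc S$. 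The factor is $\mathcal D_{\mathfrak Q_2}\Psi_S$ and the inverse assignment is $R\mapsto\mathcal O_{\mathfrak Q_2}R$. Since $\mathcal O_{\mathfrak Q_2}$ is itself a joint realization by $\mathcal A_2$, the state $\mathcal A_2$ is sufficient. Mutual factorization between any two least states gives uniqueness up to $\equivsc$.

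Finally, the claim that every stable-causal derived representation factors through $\mathcal A_2$ is part~(vi) of \Cref{thm:quadratic-response-reconstruction}, i.e.\ \Cref{prop:lossless-chart-factorization}(ii): $F=(F\mathcal D_{\mathfrak Q_2})\mathcal O_{\mathfrak Q_2}$, evaluated at $\mathcal A_2$.

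The main point needing care is whether the base-point totalization is legitimate, since the quasi-sure statements must hold for every $P$ with a single map. Polarity of $G_*^c$ handles this. Borel measurability in the declared Polish topology, rather than in the uniform carrier, comes from the Lusin--Souslin argument already given in \Cref{cor:common-representative}. I would simply cite these results, checking that the base point is stopping-fixed: the zero path is, so causality on $G_*$ suffices.
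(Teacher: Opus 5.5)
Your proposal is correct and follows the paper's proof, which consists of citing \Cref{cor:common-representative} for accessibility of $\tau_0$ on $\Smax$ and then applying \Cref{thm:quadratic-response-reconstruction,thm:quadratic-response-representability}. Your additional checks (the little-H\"older class of the Lipschitz covariance, the Chen identity from the algebraic definitions, causality from exact stopping, and the base-point totalization) are already packaged inside \Cref{cor:common-representative}, and, as you say, the derived-representation clause is part~(vi) of the reconstruction theorem.
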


\begin{proof}
The common representative theorem \Cref{cor:common-representative} supplies
accessibility.  Apply
\Cref{thm:quadratic-response-reconstruction,thm:quadratic-response-representability}.
\end{proof}

\begin{corollary}[Representation equivalence of the common state]
\label{prop:state-universal-property}
On $G_*$ the presentations
\[
 (X,A,Q),\qquad (X,\widehat J,Q),\qquad
 (X,\mathbb X^I,\mathbb X^S)
\]
are related by continuous algebraic readouts in their natural Hilbert--Schmidt
and trace-class topologies.  In particular
\[
 \widehat J_t=\frac12(X_t^{\otimes2}-Q_t)+A_{0,t},
 \qquad
 Q_t=2(\mathbb X^S_{0,t}-\mathbb X^I_{0,t}).
\]
\end{corollary}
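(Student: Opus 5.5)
The proof is purely algebraic once we restrict to $G_*$. There all the fields are defined pathwise by \eqref{eq:common-Q}--\eqref{eq:common-S}, and the regularity conclusions of \Cref{cor:common-representative} are available. We write down explicit maps between the three presentations and check that each is continuous in the stated topologies.

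\textbf{Step 1: the two displayed identities.} From \eqref{eq:common-I} at $s=0$ we get $\mathbb X^I_{0,t}=\widehat J_t$, because $X_0=0$ and $\widehat J_0=0$. By \eqref{eq:common-area}, $A_{0,t}=\Anti\widehat J_t$. By \eqref{eq:common-Q}, $\Sym\widehat J_t=\frac12(X_t^{\otimes2}-Q_t)$. Adding the symmetric and antisymmetric parts gives the first identity. The second identity is \eqref{eq:common-S} at $s=0$: there $\mathbb X^S_{0,t}-\mathbb X^I_{0,t}=\frac12Q_{0,t}=\frac12 Q_t$.

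\textbf{Step 2: explicit readouts.} The maps are as follows.
\begin{itemize}
\item $(X,A,Q)\mapsto(X,\widehat J,Q)$ is given by the formula above.
\item $(X,\widehat J,Q)\mapsto(X,A,Q)$ uses $A_{s,t}=\Anti(\widehat J_t-\widehat J_s-X_s\otimes X_{s,t})$.
\item $(X,\widehat J,Q)\mapsto(X,\mathbb X^I,\mathbb X^S)$ is given by \eqref{eq:common-I} and \eqref{eq:common-S}.
\item The converse direction uses $\widehat J_t=\mathbb X^I_{0,t}$ and $Q_t=2(\mathbb X^S_{0,t}-\mathbb X^I_{0,t})$.
\end{itemize}
Each map is a composition of four operations: linear maps, the adjoint, the projections $\Anti$ and $\Sym$, and bilinear tensor maps $(x,y)\mapsto x\otimes y$. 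These are continuous from $H\times H$ into $\mathcal S_2$, since $\|x\otimes y\|_2=\|x\|\|y\|$.

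\textbf{Step 3: the topologies.} On each channel we fix its natural norm. $X$ carries the uniform or H\"older norm in $H$. $A$, $\widehat J$, $\mathbb X^I$ and $\mathbb X^S$ carry Hilbert--Schmidt norms. $Q$ carries the trace norm.

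The only nontrivial point is the mismatch between the $\mathcal S_1$ topology on $Q$ and the $\mathcal S_2$ topology on the second levels.
\begin{itemize}
\item Maps that take $Q$ as an input into an $\mathcal S_2$ output are continuous, because $\|\cdot\|_2\le\|\cdot\|_1$.
\item The map recovering $Q$ from $(X,\widehat J)$ or from $(\mathbb X^I,\mathbb X^S)$ is only $\mathcal S_2$-continuous in general; \Cref{cor:split-topology-necessary} shows that no better is possible on the full ambient space.
\end{itemize}
I would handle this by stating continuity for the split topology. In that topology the symmetric defect $\mathbb X^S-\mathbb X^I=\frac12 Q$ is itself measured in $\mathcal S_1$. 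This is exactly the convention of \Cref{def:signature-response} and \Cref{prop:primitive-lossless-dynamics}. On $G_*$, \eqref{eq:Q-trace-Lipschitz} guarantees that this defect is trace class.

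The main obstacle is fixing this convention precisely. The algebra itself is routine. Once the defect channel is equipped with the trace norm, the readouts are continuous; the maps into and out of the $\mathcal S_2$ second levels are moreover Lipschitz on bounded sets, where the only non-linear ingredient is the bilinear tensor term $X\otimes X$. Together these give the corollary.
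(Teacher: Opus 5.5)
Your proposal is correct and follows the paper's own route: the paper's proof is simply the algebra of \eqref{eq:common-I}--\eqref{eq:common-S} evaluated at $s=0$, using $X_0=0$ and $\widehat J_0=0$, hence $Q_0=0$. You add a further point that the paper's one-line proof leaves implicit, namely that trace-norm recovery of $Q$ from the second levels requires the split convention of \Cref{def:signature-response}, under which $\mathbb X^S-\mathbb X^I=\frac12Q$ is measured in $\mathcal S_1$, and that your Lipschitz claims are cleanest in uniform-in-time norms, since $X_s\otimes X_{s,t}$ is not $2\eta$-H\"older.
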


\begin{proof}
This is the algebra in \eqref{eq:common-I}--\eqref{eq:common-S} at $s=0$.
\end{proof}

\subsection{Linear naturality and projective consistency}

The intrinsic central extension is functorial under bounded linear Hilbert
maps in the Hilbert--Schmidt/trace-class type, and the stochastic selector
respects this functoriality.  Finite-dimensional compressions converge in the
declared state topology.

For separable real Hilbert spaces $H,K$ and $L\in\mathcal L(H,K)$, write
$\ell_Lx=Lx$ and define
\begin{equation}
\label{eq:linear-state-pushforward}
 L_\sharp(x,A,q)
 :=\bigl(Lx,(LA_{s,t}L^*)_{s,t},(Lq_tL^*)_t\bigr).
\end{equation}

\begin{lemma}[Filtration reduction of bounded Hilbert characteristics]
\label{lem:filtration-reduction-characteristics}
Let $Y$ be a continuous $K$-valued process which is adapted to a filtration
$\mathbb G=(\mathcal G_t)$ contained in a filtration
$\mathbb F=(\mathcal F_t)$.  Suppose that in $\mathbb F$
\[
 Y_t=N_t+\int_0^t c_r\,\dd r,
 \qquad \|c_r\|_K\le C,
\]
where $N$ is a continuous local martingale, and suppose its pathwise operator
quadratic variation is absolutely continuous,
\[
 [Y]_t=\int_0^t\alpha_r\,\dd r,
 \qquad
 \alpha_r\in\mathcal S_1(K)_+,
 \qquad \Tr\alpha_r\le\Lambda'
\]
for $\dd r\otimes P$-almost every $(r,\omega)$.  Then, in the usual
augmentation of $\mathbb G$, $Y$ is a continuous special semimartingale
\[
 Y_t=\widetilde N_t+\int_0^t\widetilde c_r\,\dd r
\]
with $\widetilde N$ a continuous local martingale,
$\|\widetilde c_r\|_K\le C$, and
\[
 \dd[\widetilde N]_t=\widetilde\alpha_t\,\dd t,
 \qquad
 \widetilde\alpha_t\in\mathcal S_1(K)_+,
 \qquad
 \Tr\widetilde\alpha_t\le\Lambda'.
\]
\end{lemma}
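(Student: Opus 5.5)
The plan is the classical filtration-shrinkage argument of Stricker type, made quantitative enough to keep the two deterministic bounds. The drift is projected onto $\mathbb G$, and the bracket is read off pathwise. Throughout, work in the usual augmentation $\bar{\mathbb G}$ of $\mathbb G$. Continuous martingale properties, stopping times and indistinguishability transfer between $\mathbb G$ and $\bar{\mathbb G}$, so no generality is lost.

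\emph{Step 1 (projected drift).} Fix an orthonormal basis $(e_i)$ of $K$. Let $\widetilde c^{\,i}$ be the $\bar{\mathbb G}$-optional projection of the bounded scalar process $\langle c,e_i\rangle$; it is progressive. Define $\widetilde c_r:=E[c_r\mid\mathcal G_r]$ as a Bochner conditional expectation, realized coordinatewise by $\widetilde c^{\,i}$. Conditional Jensen for the norm gives $\|\widetilde c_r\|_K\le C$ almost surely for each $r$. To obtain this $\dd r\otimes P$-a.e. with a jointly measurable $K$-valued version, I would take the coordinate series on the progressive set where it converges with norm at most $C$, and set $\widetilde c=0$ elsewhere; Fubini shows that this set has full $\dd r\otimes P$ measure.

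\emph{Step 2 (martingale part).} Put $\widetilde N:=Y-\int_0^\cdot\widetilde c_r\,\dd r$. This process is continuous and $\bar{\mathbb G}$-adapted. Localize with the $\mathbb G$-stopping times $\tau_n:=\inf\{t:\|Y_t\|_K\ge n\}\wedge T$; these are also $\mathbb F$-stopping times. Then $N^{\tau_n}=Y^{\tau_n}-\int_0^{\cdot\wedge\tau_n}c$ is bounded by $n+CT$, hence a true $\mathbb F$-martingale. For $s<t$ and a set $G\in\mathcal G_s$, the tower property gives $E[(N^{\tau_n}_t-N^{\tau_n}_s)\one_G]=0$. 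By Fubini,
\[
E\Bigl[\one_G\int_{s\wedge\tau_n}^{t\wedge\tau_n}(c_r-\widetilde c_r)\,\dd r\Bigr]=0,
\]
because $\one_G\one_{\{r<\tau_n\}}$ is $\mathcal G_r$-measurable for $r\ge s$. Hence $\widetilde N^{\tau_n}$ is a bounded $\bar{\mathbb G}$-martingale, and $\widetilde N$ is a continuous $\bar{\mathbb G}$-local martingale. This is the special decomposition, and it is unique because continuous finite-variation local martingales are constant.

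\emph{Step 3 (bracket).} The pathwise operator quadratic variation $[Y]$ is the in-probability limit of $\sum Y_{t_i,t_{i+1}}^{\otimes2}$ in $\mathcal S_1$. It therefore does not depend on the filtration, and it is unchanged by removing the continuous finite-variation part. Hence $[\widetilde N]=[Y]=\int_0^\cdot\alpha_r\,\dd r$. The process $\alpha$ itself may fail to be $\mathbb G$-adapted, so define $\widetilde\alpha_t:=\limsup_{h\downarrow0}h^{-1}([Y]_t-[Y]_{t-h})$. Concretely, take the limit along $h=1/k$ in $\mathcal S_1$ where it exists, and $0$ otherwise. This is $\mathbb G$-progressive, because $[Y]$ is $\bar{\mathbb G}$-adapted and continuous. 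By Lebesgue differentiation for the $\mathcal S_1$-valued Bochner integral, $\widetilde\alpha=\alpha$ $\dd t\otimes P$-a.e. Therefore $\widetilde\alpha\in\mathcal S_1(K)_+$ and $\Tr\widetilde\alpha\le\Lambda'$ a.e.

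\emph{Main obstacle.} I expect the delicate part to be the measurability bookkeeping rather than the martingale identity. This means producing jointly measurable progressive versions of $\widetilde c$ and $\widetilde\alpha$ in infinite dimension, and checking that the localizing times and the martingale property survive passage to the augmentation. I would handle both through countable coordinates in $(e_i)$ and the projections $P_N$, as in the proof of \Cref{cor:common-representative}.
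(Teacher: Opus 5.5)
Your proof is correct. It has the same overall shape as the paper's: you project the drift onto $\mathbb G$, read the bracket pathwise, and recover its density from the difference quotients $n([Y]_t-[Y]_{(t-1/n)^+})$. The key martingale step, however, is done differently.

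\emph{Martingale step.} The paper cites Stricker's theorem to know in advance that $Y$ is a continuous special $\mathbb G$-semimartingale. It then identifies the new drift as the dual predictable projection of $\int_0^\cdot c_r\,\dd r$, with density $E[c\mid\mathcal P(\mathbb G)]$ on $([0,T]\times\Omega,\dd t\otimes P)$, and concludes by uniqueness of the special decomposition. You instead take the coordinatewise optional projection. You then verify directly, by localizing at hitting times of $Y$ (stopping times for both filtrations) and a Fubini computation, that $Y-\int_0^\cdot\widetilde c_r\,\dd r$ is a $\bar{\mathbb G}$-local martingale.

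\emph{What each route buys.} Yours is more elementary and self-contained. Because the drift is absolutely continuous with bounded density, the $\mathbb G$-decomposition can be exhibited rather than deduced, and conditional Jensen gives the bound $C$ pointwise in $r$. Stricker's theorem is really needed only for general $\mathbb F$-semimartingales, and the paper leaves the Bochner form of the predictable projection as a sketch. The paper's route produces a predictable drift density directly, whereas yours is optional. The two densities agree $\dd t\otimes P$-a.e., since both integrated drifts are continuous, adapted and of finite variation, and a continuous finite-variation local martingale null at zero vanishes. Either version suffices, because the drift integral is continuous and adapted.

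\emph{Bracket step.} Your pathwise Lebesgue differentiation gives convergence of the full sequence of quotients for $P$-a.e. path. This is slightly sharper than the paper's $L^1_{\rm loc}(\dd t\otimes P;\mathcal S_1)$ convergence followed by passage to a subsequence.

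\emph{Minor repair.} $N^{\tau_n}$ is bounded by $n+CT$ only on $\{\|Y_0\|_K<n\}$. Work instead with $N^{\tau_n}-N_0$, which is bounded by $2n+CT$ because $\tau_n=0$ on the complement. That bound is all your Fubini argument uses.
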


\begin{proof}
Stricker's theorem makes the $\mathbb G$-adapted $Y$ a continuous special
$\mathbb G$-semimartingale \cite{Stricker77,FoellmerProtter11}.  The dual
predictable projection of $A_t:=\int_0^t c_r\,\dd r$ has density
$\widetilde c=E[c\mid\mathcal P(\mathbb G)]$ under $\dd t\otimes P$; this is
the Bochner form of the standard predictable-projection construction
\cite[Chapter~3]{Metivier82}; see also \cite{Protter05}.  Uniqueness of the special
decomposition identifies $\int\widetilde c\,\dd r$ as the new
finite-variation characteristic, and conditional Jensen gives
$\|\widetilde c\|_K\le C$ almost everywhere.

Quadratic variation is pathwise, so $[\widetilde N]=[Y]$, a continuous
$\mathbb G$-predictable increasing path.  For $n\ge1$ set
\[
 \alpha_t^{(n)}
 :=n\bigl([Y]_t-[Y]_{(t-1/n)^+}\bigr).
\]
These predictable positive trace-class processes have trace at most
$\Lambda'$.  Banach-valued Lebesgue differentiation gives convergence in
$L^1_{\rm loc}(\dd t\otimes P;\mathcal S_1)$ to the derivative of $[Y]$;
an a.e. convergent subsequence defines predictable $\widetilde\alpha$.
Closedness preserves positivity and the trace bound.
\end{proof}

\begin{proposition}[Linear naturality and projective consistency]
\label{thm:linear-naturality}
Let $H,K$ be separable real Hilbert spaces and $L\in\mathcal L(H,K)$.
Then $L_\sharp$ is a continuous causal group-path map from
$\mathsf Z_{\eta;\mathcal S_2,1}(H)$ to
$\mathsf Z_{\eta;\mathcal S_2,1}(K)$ and commutes with the primitive,
It\^o, geometric, and stopping readouts.

If $P\in\mathfrak S_{\Lambda,B}^{0,T}(H)$ and
$P_L=(\ell_L)_\#P$, then
\[
 P_L\in\mathfrak S_{\|L\|^2\Lambda,\,\|L\|B}^{0,T}(K)
\]
and, writing $\mathcal A_2^H,\mathcal A_2^K$ for the two canonical selectors,
\begin{equation}
\label{eq:canonical-linear-naturality}
 \mathcal A_2^K(\ell_Lx)=L_\sharp\mathcal A_2^H(x)
\end{equation}
on the common state domain of $\mathcal A_2^H$, hence $P$-almost surely.

Let $(\Pi_n)$ be finite-rank orthogonal projections on $H$ with
$\Pi_n\to I_H$ strongly, and let $\mathfrak I(H)$ be any operator ideal
satisfying the hypotheses of \Cref{def:operational-state}.  Then, for every
$z\in\mathsf Z_{\eta;\mathfrak I,1}(H)$,
\begin{equation}
\label{eq:projective-state-convergence}
 d_{\eta;\mathfrak I,1}((\Pi_n)_\sharp z,z)\longrightarrow0.
\end{equation}
Consequently, on the common domain of the Hilbert--Schmidt realization
constructed here,
\[
 \mathcal A_2^H(\Pi_nx)=(\Pi_n)_\sharp\mathcal A_2^H(x)
 \longrightarrow\mathcal A_2^H(x)
\]
in the Hilbert--Schmidt/trace-class state topology.
\end{proposition}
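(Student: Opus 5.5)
The proof splits into three blocks, taken in order: deterministic properties of $L_\sharp$, the model-class and selector naturality, and projective convergence.

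\emph{Deterministic part.}
Put $\Gamma_L(x,a,q):=(Lx,LaL^*,LqL^*)$. This map is a group homomorphism of $\Gito$, because
\[
L\Anti(x\otimes y)L^*=\Anti(Lx\otimes Ly).
\]
Hence $L_\sharp$ is pointwise composition with $\Gamma_L$, commutes with increments $g_{s,t}$ and with stopping, and maps the geometric readout to $(Lx,L\mathbb X^SL^*)$. The latter is the image of the geometric rough path under the bounded linear map $L\oplus(L\otimes L)$. Since $\|LaL^*\|_{\mathcal S_p}\le\|L\|^2\|a\|_{\mathcal S_p}$, smooth finite-dimensional lifts go to smooth finite-dimensional lifts, so the geometric closure is preserved. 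The map is Lipschitz for $d_{\eta;\mathcal S_2,1}$ with constant $\max(\|L\|,\|L\|^2)$. The primitive, It\^o and geometric readouts commute with $L_\sharp$ by the formulas of \Cref{prop:intrinsic-chart-equivalence}.

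\emph{Model class and selector naturality.}
Under $P$, write $LX=LM^P+\int Lb^P\,\dd r$. Then $LM^P$ is a local martingale with bracket density $La^PL^*$, whose trace is at most $\|L\|^2\Lambda$, and the drift is bounded by $\|L\|B$. This decomposition lives in the filtration of $X$. \Cref{lem:filtration-reduction-characteristics} transfers it to the raw filtration of $\ell_Lx$, so $P_L\in\mathfrak S^{0,T}_{\|L\|^2\Lambda,\|L\|B}(K)$.

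For \eqref{eq:canonical-linear-naturality}, the key pathwise fact is that the resolvent is linear and commutes with $L$: $Y^\eps(Lx)=LY^\eps(x)$. Hence $J^\eps(Lx)=LJ^\eps(x)L^*$ exactly. Thus $x\in G_J^H$ gives $Lx\in G_J^K$, and the Lim selector satisfies $\widehat J^K(Lx)=L\widehat J^H(x)L^*$. By algebra the same holds for $Q$, $A$ and $\mathbb X^S$.

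The main obstacle is domain membership: showing $x\in G_*^H$ implies $Lx\in G_*^K$.
\begin{itemize}[leftmargin=2em]
\item \emph{Geometric loci.} H\"older finiteness is preserved because the map is bounded.
\item \emph{Covariance locus.} Positivity of $LQL^*$ is immediate, but the trace bound $\Tr(LQ_{s,t}L^*)\le\|L\|^2\Lambda(t-s)$ does not match the constant $\Lambda$ defining $G_Q^K$ for the class on $K$.
\end{itemize}
To fix the covariance locus, I would interpret $\mathcal A_2^K$ as the selector for the enlarged constants $(\|L\|^2\Lambda,\|L\|B)$. Its $G_J$ and algebraic fields do not depend on the constants; only the $G_Q$ threshold does. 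On the common domain, both sides are then given by the same formulas. Almost-sure equality under $P$ follows because $G_*^H$ is $P$-full.

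\emph{Projective convergence.}
Apply the Lipschitz bound above with $\|\Pi_n\|\le1$. It then suffices to show convergence on a dense set plus equicontinuity. For finite-dimensional smooth $z$ and finite-rank $q$, $(\Pi_n)_\sharp z=z$ once $\Pi_n$ dominates the relevant finite range, up to a term controlled by $\|\Pi_n v-v\|\to0$ on that finite range. Density of such $z$ holds by definition of $G\Omega^2_{\eta,\mathfrak I}$ and of the little-H\"older covariance space. The uniform Lipschitz bound (constant $1$) gives the standard $\varepsilon/3$ argument on all of $\mathsf Z_{\eta;\mathfrak I,1}(H)$.

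Alternatively, one can argue directly with compactness. By the little-H\"older property, the set of normalized increments is relatively compact in $\mathcal S_1$ and $\mathfrak I$. Then $\Pi_na\Pi_n\to a$ uniformly on compact sets in any ideal where finite ranks are dense, since $\Pi_n\to I$ strongly and the ideal norm is a cross norm.

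Finally, combining \eqref{eq:canonical-linear-naturality} for $L=\Pi_n$ with this convergence gives the last display. Since $\Pi_n$ is a contraction, the constants do not enlarge and the selector of $H$ itself is used.
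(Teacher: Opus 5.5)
Your proposal is correct and follows the paper's proof essentially step for step. The deterministic part uses the homomorphism identity $L\,\Anti(x\otimes y)L^*=\Anti(Lx\otimes Ly)$ together with the ideal inequalities. The transformed law is placed in the enlarged class by \Cref{lem:filtration-reduction-characteristics}. Selector naturality comes from pathwise commutation of the resolvent, tensor, and bracket with $L$. Projective convergence follows from contractivity of $(\Pi_n)_\sharp$, density of smooth finite-rank states, and an $\varepsilon/3$ argument.

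Your explicit check of domain membership makes precise a point the paper's ``passing to the defining limits'' leaves implicit. You show that $x\in G_*^H$ forces $\ell_Lx\in G_*^K$ once $G_Q^K$ is taken with threshold $\|L\|^2\Lambda$, and that no enlargement is needed when $L=\Pi_n$.
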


\begin{proof}
The identity
\[
 L\,\Anti(x\otimes y)L^*=\Anti(Lx\otimes Ly)
\]
shows that $L_\sharp$ is a homomorphism of the intrinsic central extensions.
The ideal inequalities
\[
 \|LAL^*\|_{\mathcal S_2(K)}\le\|L\|^2\|A\|_{\mathcal S_2(H)},
 \qquad
 \|LqL^*\|_{\mathcal S_1(K)}\le\|L\|^2\|q\|_{\mathcal S_1(H)}
\]
and $\|Lx\|_K\le\|L\|\|x\|_H$ give continuity.  Smooth signatures are mapped to smooth signatures, so geometric closure is
preserved.  All algebraic
readouts and stopping commute with $L_\sharp$.

In the original filtration under $P$, the image process $LX$ is a continuous
special semimartingale with finite-variation density $Lb^P$ and pathwise
quadratic variation
\[
 [LX]_t=\int_0^tLa_r^PL^*\,\dd r,
 \qquad
 \Tr(La_r^PL^*)\le\|L\|^2\Tr a_r^P.
\]
Apply \Cref{lem:filtration-reduction-characteristics} with
$C=\|L\|B$ and $\Lambda'=\|L\|^2\Lambda$ to the usual augmented
canonical filtration generated by $LX$.  It follows that the transformed law
belongs to
$\mathfrak S_{\|L\|^2\Lambda,\,\|L\|B}^{0,T}(K)$.

The causal resolvent and its tensor commute pathwise with $L$:
\[
 Y^{\eps,K}(\ell_Lx)=LY^{\eps,H}(x),
 \quad
 J^{\eps,K}(\ell_Lx)=LJ^{\eps,H}(x)L^*,
 \quad
 Q^{\eps,K}(\ell_Lx)=LQ^{\eps,H}(x)L^*.
\]
Passing to the defining limits proves
\eqref{eq:canonical-linear-naturality}.

For the last assertion, the ideal property makes each
$(\Pi_n)_\sharp$ contractive in the declared state metric.  Smooth
finite-dimensional signatures together with smooth finite-rank
$\mathcal S_1$-valued paths are dense in
$\mathsf Z_{\eta;\mathfrak I,1}(H)$.  On every such finite-dimensional state,
strong convergence $\Pi_n\to I_H$ is uniform on the finitely many spatial and
operator directions involved and hence gives convergence in the state
metric.  Contractivity and a three-term approximation argument prove
\eqref{eq:projective-state-convergence}; the canonical-state statement
follows from \eqref{eq:canonical-linear-naturality} with $L=\Pi_n$.
\end{proof}

\begin{corollary}[Projective observability without finite truncation]
\label{cor:projective-observability}
Assume $H$ is infinite dimensional and let $(\Pi_n)$ be finite-rank orthogonal
projections with $\Pi_n\to I_H$ strongly.  Then the map
\[
 z\longmapsto\bigl((\Pi_n)_\sharp z\bigr)_{n\ge1}
\]
is injective on $\mathsf Z_{\eta;\mathfrak I,1}(H)$, and every state is
recovered by the state-topology limit
\[
 z=\lim_{n\to\infty}(\Pi_n)_\sharp z.
\]
No single finite-rank compression $(\Pi_n)_\sharp$ is injective on the full
operational state space, hence no fixed finite truncation is lossless.  On the
common stochastic domain the same assertions hold for the canonical state
$\mathcal A_2$.
\end{corollary}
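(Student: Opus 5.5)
The corollary has three assertions: injectivity of the projective family, recovery of each state as a limit, and non-injectivity of every single compression. The first two come from \Cref{thm:linear-naturality}, the third from an explicit central needle. For any $z\in\mathsf Z_{\eta;\mathfrak I,1}(H)$, \eqref{eq:projective-state-convergence} gives $(\Pi_n)_\sharp z\to z$ in $d_{\eta;\mathfrak I,1}$, which is the displayed recovery formula. Injectivity then follows because the state space is metric, hence Hausdorff, so limits are unique. If $(\Pi_n)_\sharp z=(\Pi_n)_\sharp\widetilde z$ for every $n$, the two sequences coincide and converge to both $z$ and $\widetilde z$, so $z=\widetilde z$.

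For non-injectivity of a fixed compression, I fix $n$. Since $\Pi_n$ has finite rank and $H$ is infinite dimensional, I can choose orthonormal $u,v\in\ker\Pi_n$. I take the finite-rank skew operator $B:=\Anti(u\otimes v)\ne0$ and the pure-area needle $z:=\iota_{[0,T]}(B,0)$ from \Cref{lem:operational-needles}. That lemma places $z$ in the state space and gives $d_{\eta;\mathfrak I,1}(z,0)=\|B\|_{\mathfrak I}>0$, so $z\ne0$. Its first level is zero and $\Pi_nB\Pi_n^*=\Anti(\Pi_nu\otimes\Pi_nv)=0$, so by \eqref{eq:linear-state-pushforward} we get $(\Pi_n)_\sharp z=0=(\Pi_n)_\sharp 0$. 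Thus $(\Pi_n)_\sharp$ identifies two distinct states. Because $\Pi_n$ is a stable-causal map, this also shows that the fixed truncation cannot admit any decoder $\mathcal D$ with $\mathcal D(\Pi_n)_\sharp=I$, so it is not lossless. A covariance needle $\iota_{[0,T]}(0,u\otimes u)$ would serve equally well.

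For the canonical state, on the common domain \eqref{eq:canonical-linear-naturality} with $L=\Pi_n$ gives $(\Pi_n)_\sharp\mathcal A_2^H(x)=\mathcal A_2^H(\Pi_nx)$. The convergence and injectivity statements then transfer pointwise from the deterministic ones. Non-losslessness is stated on the full operational state space, so it needs no reachability input. I expect the only delicate step to be checking that the needle is a legitimate element of the state space, that is, that it has a geometric area component. That is exactly what \Cref{lem:operational-needles} supplies, so this step costs nothing beyond citing it.
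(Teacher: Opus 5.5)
Your proposal is correct and follows the paper's proof essentially step for step. Recovery comes from \eqref{eq:projective-state-convergence}, injectivity from uniqueness of state-topology limits, non-injectivity of each fixed $(\Pi_n)_\sharp$ from a central needle built on $\ker\Pi_n$, and the transfer to $\mathcal A_2$ from \eqref{eq:canonical-linear-naturality}. The only difference is the choice of needle: the paper uses the pure-covariance ramp with coefficient $e\otimes e$ for a single unit vector $e\in\ker\Pi_n$, which avoids the geometric-closure part of the needle lemma. Your area needle $\Anti(u\otimes v)$ does rely on that part, which is legitimate because the lemma supplies it.
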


\begin{proof}
The convergence is \eqref{eq:projective-state-convergence}.  If two states
have the same complete compression sequence, taking the state-topology limit
gives equality, proving injectivity of the projective observation.  For a
fixed finite-rank $\Pi_n$, choose a unit vector $e\in\ker\Pi_n$ and a nonzero
pure-covariance ramp with coefficient $e\otimes e$.  It is a nonzero
operational state but its $\Pi_n$-compression is zero.  The stochastic
statement follows from \eqref{eq:canonical-linear-naturality}.
\end{proof}

\subsection{Construction independence under localized kernels}

Localized causal convolutions give the same state as the exponential
resolvent: their estimates merely verify the defect-energy hypothesis, after
which canonicalization is scheme-independent.  This differs logically from
kernelwise regularization and rough-approximation results
\cite{RussoVallois07,BerardBergeryVallois11,FrizOberhauser09,
GomesOhashiRussoTeixeira21}.

For $0<\eps\le T$, let
$\mathsf K_\eps\in C^1([0,T];\mathcal L(H))$ have absolutely continuous
first derivative, satisfy $\mathsf K_\eps(0)=I_H$, and define
\[
 \ell_\eps^2:=\int_0^T\|\mathsf K_\eps(u)\|_{\rm op}^2\,\dd u.
\]
For $x\in\Omega$ put
\begin{align}
 \Delta_t^{\eps,\mathsf K}(x)
 &:=x_t+\int_0^t\mathsf K_\eps'(t-r)x_r\,\dd r,
 \label{eq:localized-kernel-defect}\\
 Y_t^{\eps,\mathsf K}(x)&:=x_t-\Delta_t^{\eps,\mathsf K}(x),\\
 J_t^{\eps,\mathsf K}(x)
 &:=Y_t^{\eps,\mathsf K}(x)\otimes x_t
   -\int_0^t\dd Y_r^{\eps,\mathsf K}(x)\otimes x_r,
 \label{eq:localized-kernel-tensor}\\
 Q_t^{\eps,\mathsf K}(x)
 &:=x_t^{\otimes2}-J_t^{\eps,\mathsf K}(x)
   -J_t^{\eps,\mathsf K}(x)^*.
\end{align}
The Stieltjes integral is pathwise well defined because
$Y^{\eps,\mathsf K}$ is absolutely continuous.

\begin{corollary}[Localized-kernel universality of the common state]
\label{thm:localized-kernel-universality}
There is a constant $C=C(\Lambda,B,T)$ such that, for every $q\ge2$ and
$P\in\Smax$,
\begin{align}
 \sup_{t\le T}\|\Delta_t^{\eps,\mathsf K}\|_{L^q(P)}
 &\le C\sqrt q\,\ell_\eps,
 \label{eq:localized-defect-rate}\\
 \|J^{\eps,\mathsf K}-J^P\|_{\mathbb S^q(P;\mathcal S_2)}
 +\|Q^{\eps,\mathsf K}-\qv{M^P}\|_{\mathbb S^q(P;\mathcal S_2)}
 &\le Cq\,\ell_\eps,
 \label{eq:localized-tensor-rate}
\end{align}
where $J^P=\int_0^\cdot X_r\otimes\dd X_r$.
If $\ell_\eps\le L\sqrt\eps$ and $\eps_n=T2^{-n}$, define the dyadic
selectors coordinatewise by \Cref{lem:causal-Borel-limit}, with
$J^{\eps_n,\mathsf K}$ in place of $J^{\eps_n}$.  These coordinate fields are
total Borel and raw causal and satisfy
\[
 \widehat J^{\mathsf K}=\widehat J,
 \qquad Q^{\mathsf K}=Q
 \qquad c_{\Smax}\text{-quasi surely},
\]
simultaneously in time.  Any countable family of such kernels can be
synchronized outside one Borel polar set.

If, for some $0<\delta<1$,
\[
 \sup_{0<t\le T}t^\delta\|\mathsf K_\eps(t)\|_{\rm op}
 +\int_0^T u^\delta\|\mathsf K_\eps'(u)\|_{\rm op}\,\dd u
 \le L_\delta\eps^\delta,
\]
then on every raw $\delta$-H\"older ball
$\{\|x\|_{\delta\text{-H\"ol}}\le R\}$,
\[
 \|\Delta^{\eps,\mathsf K}(x)\|_\infty
 \le L_\delta R\eps^\delta.
\]
\end{corollary}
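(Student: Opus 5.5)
The plan is to reduce everything to the defect-energy transfer theorem \Cref{thm:defect-energy-transfer}. The key preliminary step is a representation of the localized defect as a stochastic convolution. Fix $P\in\Smax$ and use $x_0=0$. Integrating by parts the deterministic $C^1$ kernel against the continuous semimartingale $X$ gives, with $\mathsf K_\eps(0)=I_H$,
\[
 \int_0^t\mathsf K_\eps'(t-r)X_r\,\dd r
 =-X_t+\int_0^t\mathsf K_\eps(t-r)\,\dd X_r .
\]
Hence $\Delta_t^{\eps,\mathsf K}=\int_0^t\mathsf K_\eps(t-r)\,\dd X_r$ up to indistinguishability. This is the analogue of \eqref{eq:resolvent-stochastic-convolution} in the proof of \Cref{prop:fubini-rate}.

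Next I would check the hypotheses of \Cref{thm:defect-energy-transfer} for $Y=Y^{\eps,\mathsf K}$. The identity $Y_t^{\eps,\mathsf K}(x)=-\int_0^t\mathsf K_\eps'(u)x_{t-u}\,\dd u$ shows that $Y^{\eps,\mathsf K}$ is total, continuous in the raw uniform topology, raw causal, and absolutely continuous in $t$; the latter uses the absolute continuity of $\mathsf K_\eps'$. For \eqref{eq:localized-defect-rate}, I would split $X=M^P+\int b^P\,\dd r$. For the martingale part, Hilbert-space BDG together with $\Tr a^P\le\Lambda$ gives the bound $C\sqrt q\,(\Lambda\int_0^t\|\mathsf K_\eps(t-r)\|_{\rm op}^2\,\dd r)^{1/2}\le C\sqrt{q\Lambda}\,\ell_\eps$. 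For the drift part, Cauchy--Schwarz gives the bound $B\int_0^t\|\mathsf K_\eps(u)\|_{\rm op}\,\dd u\le B\sqrt T\,\ell_\eps$. Minkowski's integral inequality, applied exactly as for \eqref{eq:D-energy-moment}, then gives $\mathfrak d_q(Y^{\eps,\mathsf K})\le C\sqrt q\,\ell_\eps$. Finally, \eqref{eq:general-defect-transfer-bound} yields \eqref{eq:localized-tensor-rate}, with $J^{Y}=\int Y\otimes\dd X$ identified in the same way as in that theorem.

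For the selector statement, assume $\ell_{\eps_n}\le L\sqrt{\eps_n}$ with $\eps_n=T2^{-n}$. Then the rates are summable after Markov's inequality at a fixed $q$, so \Cref{lem:causal-Borel-limit} makes the dyadic coordinatewise selectors total Borel. The identity \eqref{eq:general-primitive-exact-stopping} supplies the exact stopping that lemma needs, so the selectors are also raw causal. \Cref{cor:defect-energy-common-capacity} gives $c_{\Smax}(\Delta_n>\delta)\le (C q L\sqrt{\eps_n}/\delta)^q$ against the common totalization $(\widehat J^{\mathrm c},Q^{\mathrm c})$. A capacity Borel--Cantelli argument then shows that, outside a Borel polar set, $J^{\eps_n,\mathsf K}\to\widehat J$ and $Q^{\eps_n,\mathsf K}\to Q$ uniformly in time. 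On that set the coordinatewise selector is this uniform limit, which gives the quasi-sure equality simultaneously in $t$. For a countable family of kernels I would take the countable union of the exceptional polar sets, as in \Cref{thm:causal-canonicalization-principle}.

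The deterministic Hölder bound is a direct rewriting. Substitute $u=t-r$ and use $\int_0^t\mathsf K_\eps'(u)\,\dd u=\mathsf K_\eps(t)-I$ to get
\[
 \Delta_t^{\eps,\mathsf K}(x)=\mathsf K_\eps(t)(x_t-x_0)
 +\int_0^t\mathsf K_\eps'(u)\bigl(x_{t-u}-x_t\bigr)\,\dd u .
\]
The first term is bounded by $t^\delta\|\mathsf K_\eps(t)\|_{\rm op}R$ and the second by $R\int_0^Tu^\delta\|\mathsf K_\eps'(u)\|_{\rm op}\,\dd u$, so their sum is at most $L_\delta R\eps^\delta$. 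The step I expect to need the most care is the first one: justifying the integration by parts so that the pathwise Lebesgue convolution is indistinguishable from the stochastic convolution under every $P$. I would obtain this from Itô's product rule applied to $s\mapsto\mathsf K_\eps(t-s)X_s$ for fixed $t$, and then upgrade to a statement simultaneous in $t$ using continuity of both sides in $t$.
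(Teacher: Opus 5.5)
Your proposal is correct and follows the paper's proof essentially step for step. It uses the same representation $\Delta_t^{\eps,\mathsf K}=\int_0^t\mathsf K_\eps(t-r)\,\dd X_r$, then BDG and Cauchy--Schwarz for the defect, Minkowski for the energy, \Cref{thm:defect-energy-transfer} for the tensor rate, exact stopping of $J^{Y}$ for raw causality, and the same deterministic rewriting for the H\"older bound. The only variation is the selector step: you apply the capacity bound of \Cref{cor:defect-energy-common-capacity} directly against $(\widehat J^{\mathrm c},Q^{\mathrm c})$, whereas the paper first proves adjacent-scale uniform Cauchy convergence and then identifies the limit via \Cref{thm:causal-canonicalization-principle}. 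Your shortcut is legitimate and non-circular.
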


\begin{proof}
Deterministic integration by parts gives
\[
 \Delta_t^{\eps,\mathsf K}(x)
 =\mathsf K_\eps(t)x_t
  +\int_0^t\mathsf K_\eps'(u)(x_{t-u}-x_t)\,\dd u,
\]
which proves raw continuity and causality.  Under $P$, semimartingale
integration by parts yields
\[
 \Delta_t^{\eps,\mathsf K}
 =\int_0^t\mathsf K_\eps(t-r)\,\dd X_r,
 \qquad
 J_t^{\eps,\mathsf K}
 =\int_0^tY_r^{\eps,\mathsf K}\otimes\dd X_r.
\]
For every $t\le T$, Hilbert-space BDG and
\eqref{eq:hilbert-qv-density} give
\[
 \|\Delta_t^{\eps,\mathsf K,M}\|_{L^q(P)}
 \le C\sqrt{q\Lambda}\,\ell_\eps,
\]
while Cauchy--Schwarz gives
\[
 \|\Delta_t^{\eps,\mathsf K,b}\|_H
 \le B\sqrt T\,\ell_\eps.
\]
This proves \eqref{eq:localized-defect-rate}.  Minkowski's inequality gives
\begin{equation}\label{eq:localized-defect-energy}
 \sup_{P\in\Smax}
 \left\|\left(\int_0^T
       \|\Delta_r^{\eps,\mathsf K}\|_H^2\,\dd r
       \right)^{1/2}\right\|_{L^q(P)}
 \le C\sqrt q\,\ell_\eps.
\end{equation}
Apply \Cref{thm:defect-energy-transfer} with
$Y=Y^{\eps,\mathsf K}$ and $D=\Delta^{\eps,\mathsf K}$.  The exact error
identities and \eqref{eq:localized-defect-energy} give
\eqref{eq:localized-tensor-rate}.

For the dyadic schedule, let
$a_n=\|J^{\eps_{n+1},\mathsf K}-J^{\eps_n,\mathsf K}\|_\infty$.
By \eqref{eq:localized-tensor-rate} and
$\ell_{\eps}\le L\sqrt\eps$, for any fixed $q_*>2$,
\[
 \sup_{P\in\Smax}\|a_n\|_{L^{q_*}(P)}
 \le C_{q_*,L}(\sqrt{\eps_n}+\sqrt{\eps_{n+1}}).
\]
For $r_n=2^{-n\theta}$ with $0<\theta<1/2-1/q_*$, Markov and capacity
Borel--Cantelli give $a_n\le r_n$ eventually off one polar set; hence uniform
convergence.  \Cref{lem:causal-Borel-limit} gives total raw-causal fields.
Modelwise $L^q$ convergence identifies their primitive with $J^P$, while
\Cref{thm:causal-canonicalization-principle,cor:defect-energy-scheme-independence}
identifies it with $\widehat J$ quasi surely; $Q$ follows algebraically, and
the same principle synchronizes countably many kernels.  The displayed
integration-by-parts formula gives the final weighted H\"older bound.
\end{proof}

\subsection{Finite-scale memory is not a state invariant}
\label{sec:restart-memory}

Construction independence concerns the limiting state.  The finite-dimensional
restart memory carried by a particular regularization is an implementation
feature and may vary with the approximation scheme.

For a scalar causal convolution kernel $k$ write
\[
 D_t^k(x)=k(t)x_0+\int_0^t k(t-r)\,\dd x_r
\]
formally at the semimartingale level, with the equivalent integration-by-parts
pathwise realization used above.  Say that $k$ has an exact $m$-mode
(time-homogeneous linear) restart if its future convolution state can be
encoded by an $m$-dimensional linear state satisfying a constant-coefficient
ODE between input increments.  Equivalently, its impulse response is a
matrix-exponential coefficient, in the standard finite-dimensional linear
realization sense \cite{HoKalman66,Brockett70,Kailath80}.

\begin{proposition}[State--memory separation]
\label{prop:state-memory-separation}
There are two normalized localized kernel families, both satisfying the
hypotheses of \Cref{thm:localized-kernel-universality} and therefore selecting
the same reconstructed state $\mathcal A_2$, with different exact restart
complexities:
\begin{enumerate}[label=\textup{(\roman*)},leftmargin=2.4em]
\item the exponential kernel $k_\eps(t)=e^{-t/\eps}$ has a one-mode exact
restart;
\item any nonzero $C^2$ compactly supported profile
$k_\eps(t)=\phi(t/\eps)$ has no finite-dimensional exact time-homogeneous
linear restart.
\end{enumerate}
Thus finite-scale restart memory is not an invariant of the response-
reconstructed state.
\end{proposition}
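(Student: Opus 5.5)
The plan has two parts. First, both kernel families satisfy the hypotheses of \Cref{thm:localized-kernel-universality}, so they select the same state. Second, the restart complexity is a property of the scalar impulse response alone, and here the two kernels differ.

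\emph{Step 1: hypotheses and common state.} Put $\mathsf K_\eps(t):=k_\eps(t)I_H$ with normalization $k_\eps(0)=1$; for the profile family this means $\phi(0)=1$. Both kernels are $C^1$ with absolutely continuous derivative on $[0,T]$, since $\phi\in C^2$. For the length scale, the substitution $u=\eps v$ gives
\[
 \ell_\eps^2=\int_0^T|k_\eps(u)|^2\,\dd u\le\eps\int_0^\infty|\phi(v)|^2\,\dd v ,
\]
and similarly for $e^{-v}$. Hence $\ell_\eps\le L\sqrt\eps$ with $L=\|\phi\|_{L^2}$, respectively $L=2^{-1/2}$. \Cref{thm:localized-kernel-universality} then gives $\widehat J^{\mathsf K}=\widehat J$ and $Q^{\mathsf K}=Q$ quasi surely. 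The remaining coordinates $A$, $\mathbb X^I$ and $\mathbb X^S$ are algebraic readouts of $(X,\widehat J,Q)$ by \eqref{eq:common-I}--\eqref{eq:common-S}. So both families reconstruct the same $\mathcal A_2$ of \Cref{cor:common-representative}, off one common polar set by the synchronization clause.

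\emph{Step 2: the exponential kernel.} I would exhibit the one-mode realization directly. The defect satisfies $\eps\,\dd D^\eps=\eps\,\dd x-D^\eps\,\dd t$, a constant-coefficient linear ODE driven by the input increments. The restart identity \eqref{eq:D-restart} shows that the single coordinate $D_s^\eps$, multiplied by the factor $e^{-v/\eps}$, encodes the entire influence of the past on the future defect. Equivalently, $k_\eps(t)=c\,e^{Ft}b$ with $F=-1/\eps$ and $b=c=1$.

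\emph{Step 3: the compact profile (main obstacle).} The main difficulty is pinning down the notion of exact restart so that the obstruction is rigorous. I would use the standard realization fact cited in the text \cite{HoKalman66,Kailath80}: an exact $m$-mode time-homogeneous linear restart forces $k_\eps(t)=c^{\top}e^{Ft}b$ on $[0,T]$ for some $F\in\R^{m\times m}$ and $b,c\in\R^m$. The argument would expand the future of the convolution as a function of the stored state, then separate the future and past time variables; this is the Hankel-rank characterization. Any function of the form $c^{\top}e^{Ft}b$ is the restriction of an entire function of $t$.

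Now take $\eps$ small enough that $\eps\sup\operatorname{supp}\phi<T$. Then $k_\eps$ vanishes on a nondegenerate subinterval of $[0,T]$. By the identity theorem, an entire function vanishing on an interval is identically zero, so $k_\eps\equiv0$ on $[0,T]$. This contradicts $k_\eps(0)=\phi(0)=1$. For the finitely many large $\eps$ at which the support may cover $[0,T]$, the statement concerns the family, so it suffices to argue along small scales; alternatively one can note that the profile is not real analytic at the edge of its support.

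Steps 1–3 together give the separation: both families select the same $\mathcal A_2$, the first has a one-mode exact restart, and the second has no finite-dimensional exact restart. Hence restart memory is not a state invariant.
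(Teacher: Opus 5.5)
Your proposal is correct and follows the paper's proof. The $O(\sqrt\eps)$ bound on $\ell_\eps$ brings both families under \Cref{thm:localized-kernel-universality}, the exponential defect is a one-mode realization, and a realizable impulse response $c^{\top}e^{Ft}b$ is real analytic, so it cannot vanish on an open interval without vanishing identically.

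The paper reads the impulse response on all of $(0,\infty)$. There the compactly supported profile has an open vanishing tail for every $\eps$, so your restriction to small scales is not needed. Your fallback about non-analyticity at the support edge would in fact fail on $[0,T]$ once that edge lies at or beyond $T$. For example, $\phi(v)=(1-v)_+^3$ is a cubic polynomial on $[0,T]$ for $\eps\ge T$.
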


\begin{proof}
The exponential defect is governed by one scalar linear mode, so \textup{(i)}
is immediate.  A finite-dimensional time-homogeneous linear realization has
an impulse response of the form $c^*e^{tA}b$ and is therefore real analytic
for $t>0$.  A nonzero compactly supported profile vanishes on an open tail and
cannot have this form, proving \textup{(ii)}.  Both kernel families have
$L^2$ radius $O(\sqrt\eps)$, so
\Cref{thm:localized-kernel-universality} identifies their quasi-sure limit
with the same $\mathcal A_2$.
\end{proof}

\begin{corollary}[Rigidity of one-mode homogeneous restart]
\label{cor:one-mode-restart-rigidity}
Let $k:[0,\infty)\to\mathbb R$ be a normalized scalar impulse response,
$k(0)=1$.  If its convolution state admits an exact one-mode
time-homogeneous linear restart, then
\[
 k(t)=e^{at}
\]
for some $a\in\mathbb R$.  If the mode is contractive and dissipative, then
$k(t)=e^{-\lambda t}$ for some $\lambda\ge0$.  If its generator is strictly
dissipative---equivalently, if the one-mode semigroup is asymptotically
stable---then $\lambda>0$.  Thus the normalized contractive one-mode kernels
are exactly the exponentials $e^{-\lambda t}$ with $\lambda\ge0$, and the
strictly dissipative ones are those with $\lambda>0$.
\end{corollary}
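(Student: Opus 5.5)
The plan is to make the phrase ``exact one-mode time-homogeneous linear restart'' concrete as a scalar linear realization, and then solve the resulting scalar ODE. By the definition preceding \Cref{prop:state-memory-separation}, the convolution state $D^k_t$ is encoded by a one-dimensional state $z_t\in\mathbb R$. Between input increments, $z$ obeys $\dot z=az$ for a constant $a\in\mathbb R$, and an input increment $\dd x_r$ enters through a constant gain $b$. The defect is read out as $c\,z_t$. Time-homogeneity means the coefficients $(a,b,c)$ do not depend on the restart time.

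\emph{Step 1 (impulse response).} Feed a single unit impulse at time $0$, i.e.\ the input path $x=\one_{[0,\infty)}e$ in the formal representation $D^k_t=k(t)x_0+\int_0^tk(t-r)\,\dd x_r$. This gives $k(t)=c\,e^{ta}b$ for $t\ge0$. This is the $m=1$ case of the matrix-exponential form $c^*e^{tA}b$ already used in the proof of \Cref{prop:state-memory-separation}.

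\emph{Step 2 (normalization).} At $t=0$ we have $k(0)=cb=1$, and therefore $k(t)=e^{at}$. The one point needing care is that exactness must hold for every input history, not just a single impulse. To check this, apply the restart identity at a second impulse time $s$. Linearity and time-homogeneity then give the semigroup law $k(s+v)=k(s)k(v)$; this is the analogue of \eqref{eq:D-restart} with $e^{-v/\eps}$ replaced by $e^{va}$. A measurable solution of this multiplicative Cauchy equation with $k(0)=1$ is an exponential, which gives the same conclusion independently of any particular choice of $b,c$.

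\emph{Step 3 (dissipativity).} If the mode is contractive, then $|e^{ta}|\le1$ for all $t\ge0$, so $a\le0$; set $\lambda=-a\ge0$. The generator $a$ is strictly dissipative, equivalently $e^{ta}\to0$ as $t\to\infty$ (asymptotic stability), if and only if $a<0$, that is, $\lambda>0$.

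\emph{Step 4 (converse).} Each $e^{-\lambda t}$ is realized by the one-mode system $\dot z=-\lambda z$ with unit gain. Indeed, the restart identity holds exactly as in \eqref{eq:D-restart} with $\eps=1/\lambda$, and for $\lambda=0$ the state is just the running increment. This proves the ``exactly'' characterizations.

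The main obstacle is Step 2. There one has to justify that an exact one-mode restart forces a scalar linear realization with constant coefficients, rather than something that depends on the history of the input. Apart from that, the argument is elementary.
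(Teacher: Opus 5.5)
Your argument is correct and matches the paper's: the one-mode realization gives $k(t)=c\,e^{at}b$, the normalization $k(0)=1$ forces $cb=1$, and contractivity (respectively strict dissipativity) gives $a\le0$ (respectively $a<0$), while your Step~4 just makes explicit the trivial converse that the paper leaves implicit. The semigroup aside in Step~2 is redundant. As stated it also needs continuity at $0$ (or positivity of $k$): $k=\one_{\{0\}}$ is a measurable solution of $k(s+v)=k(s)k(v)$ with $k(0)=1$ on $[0,\infty)$, and it is not an exponential.
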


\begin{proof}
A one-mode time-homogeneous linear realization has scalar state equation
$\dot d=ad$ between input increments.  Its impulse response is therefore
$c e^{at}b$.  The normalization $k(0)=1$ gives $cb=1$ and hence
$k(t)=e^{at}$.  Contractivity and dissipation give $a\le0$; strict
dissipation, or equivalently asymptotic stability in one dimension, gives
$a<0$.
\end{proof}

Thus incoming memory closes the finite-scale response but is propagated and
dissipated, rather than retained in the scheme-independent limiting state; cf.
\Cref{rem:separation-of-reductions}.

\subsubsection{Exponential memory and temporal naturality}

For the exponential one-mode realization, the incoming defect closes restart
exactly and exposes the seam mechanism.

\begin{definition}[Resolvent with incoming memory]
\label{def:augmented-resolvent-state}
For an incoming memory $d\in H$ and a fresh increment path
$\eta\in\Omega$, define, for $0\le v\le T$,
\begin{align}
 \mathscr D_v^\eps(d,\eta)
 &:=e^{-v/\eps}d+D_v^\eps(\eta),
 \label{eq:augmented-defect}\\
 \mathscr Y_v^\eps(d,\eta)
 &:=\eta_v-\mathscr D_v^\eps(d,\eta)
   =Y_v^\eps(\eta)-e^{-v/\eps}d.
 \label{eq:augmented-smoothed-path}
\end{align}
The pair $(\mathscr Y^\eps,\mathscr D^\eps)$ is the resolvent
state with incoming memory.  Its initial values are
$\mathscr D_0^\eps=d$ and $\mathscr Y_0^\eps=-d$.
\end{definition}

\begin{proposition}[Exact restart law with memory]
\label{prop:augmented-restart-cocycle}
Let $u,v\ge0$ with $u+v\le T$.  Then
\begin{align}
 \mathscr D_{u+v}^\eps(d,\eta)
 &=\mathscr D_v^\eps\bigl(
     \mathscr D_u^\eps(d,\eta),\theta_u\eta
   \bigr),
 \label{eq:augmented-defect-cocycle}\\
 \mathscr Y_{u+v}^\eps(d,\eta)-\eta_u
 &=\mathscr Y_v^\eps\bigl(
     \mathscr D_u^\eps(d,\eta),\theta_u\eta
   \bigr).
 \label{eq:augmented-Y-cocycle}
\end{align}
In particular, for every raw path $x$, seam $s$, and
$0\le v\le T-s$,
\begin{align}
 D_{s+v}^\eps(x)
 &=\mathscr D_v^\eps\bigl(D_s^\eps(x),\theta_sx\bigr),
 \label{eq:augmented-original-D}\\
 Y_{s+v}^\eps(x)-x_s
 &=\mathscr Y_v^\eps\bigl(D_s^\eps(x),\theta_sx\bigr).
 \label{eq:augmented-original-Y}
\end{align}
Thus the boundary layer obtained by restarting with zero memory is exactly
the discarded term $e^{-v/\eps}D_s^\eps(x)$.
\end{proposition}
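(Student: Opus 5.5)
The plan is a direct computation from the definitions \eqref{eq:causal-resolvent}, \eqref{eq:raw-shift} and \Cref{def:augmented-resolvent-state}. I will derive \eqref{eq:augmented-defect-cocycle} as a one-mode semigroup identity for the defect. The $Y$-identity \eqref{eq:augmented-Y-cocycle} will then follow algebraically from it. Finally, I will specialize to zero memory to obtain \eqref{eq:augmented-original-D}--\eqref{eq:augmented-original-Y}.

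\emph{Step 1: restart for the raw defect.} First I record the exact restart formula for $D^\eps$ at a seam $u$. Writing $D_t^\eps(\eta)=\eta_t-\eps^{-1}\int_0^te^{-(t-r)/\eps}\eta_r\,\dd r$, I split the integral at $u$ for $t=u+v$. On $[u,u+v]$ I substitute $\eta_r=(\theta_u\eta)_{r-u}+\eta_u$; this is valid because $u+v\le T$, so the truncation in \eqref{eq:raw-shift} is inactive. The contribution of the constant $\eta_u$ is $\eta_u(1-e^{-v/\eps})$. The integral over $[0,u]$ equals $e^{-v/\eps}Y_u^\eps(\eta)$. Collecting terms gives
\[
 D_{u+v}^\eps(\eta)=D_v^\eps(\theta_u\eta)+e^{-v/\eps}\bigl(\eta_u-Y_u^\eps(\eta)\bigr)
 =D_v^\eps(\theta_u\eta)+e^{-v/\eps}D_u^\eps(\eta).
\]
This is exactly \eqref{eq:D-restart}, so it may simply be quoted; I redo it only to make the identity self-contained.

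\emph{Step 2: the memory cocycle.} From \eqref{eq:augmented-defect} and Step 1,
\[
 \mathscr D_{u+v}^\eps(d,\eta)=e^{-(u+v)/\eps}d+e^{-v/\eps}D_u^\eps(\eta)+D_v^\eps(\theta_u\eta)
 =e^{-v/\eps}\mathscr D_u^\eps(d,\eta)+D_v^\eps(\theta_u\eta).
\]
The right-hand side is $\mathscr D_v^\eps(\mathscr D_u^\eps(d,\eta),\theta_u\eta)$, which proves \eqref{eq:augmented-defect-cocycle}. For the $Y$-identity, note that $\theta_u\eta_v=\eta_{u+v}-\eta_u$. Hence
\[
 \mathscr Y_{u+v}^\eps(d,\eta)-\eta_u=(\theta_u\eta)_v-\mathscr D_{u+v}^\eps(d,\eta).
\]
Inserting \eqref{eq:augmented-defect-cocycle} turns the right-hand side into $\mathscr Y_v^\eps(\mathscr D_u^\eps(d,\eta),\theta_u\eta)$ by \eqref{eq:augmented-smoothed-path}, which is \eqref{eq:augmented-Y-cocycle}.

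\emph{Step 3: specialization.} Take $d=0$ and $\eta=x$, $u=s$. Since $\mathscr D^\eps(0,x)=D^\eps(x)$ and $\mathscr Y^\eps(0,x)=Y^\eps(x)$, the two cocycle identities become \eqref{eq:augmented-original-D}--\eqref{eq:augmented-original-Y}. Comparing \eqref{eq:augmented-original-D} with $D_v^\eps(\theta_sx)$ isolates the discarded boundary layer $e^{-v/\eps}D_s^\eps(x)$.

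No real obstacle is expected. The only points needing care are the bookkeeping of the constant $\eta_u$ in Step 1, and checking that the truncation $v\wedge(T-u)$ in $\theta_u$ is harmless because $u+v\le T$.
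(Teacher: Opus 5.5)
Your argument is correct and follows the paper's proof. The paper likewise quotes the raw restart identity \eqref{eq:D-restart}, adds the decayed memory term $e^{-(u+v)/\eps}d$ to obtain \eqref{eq:augmented-defect-cocycle}, and derives \eqref{eq:augmented-Y-cocycle} from $\mathscr Y=\eta-\mathscr D$. Your re-derivation of \eqref{eq:D-restart} by splitting the convolution at $u$, and your specialization $d=0$, $\eta=x$, $u=s$ for \eqref{eq:augmented-original-D}--\eqref{eq:augmented-original-Y}, are both valid; they simply make explicit what the paper states in one line.
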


\begin{proof}
The usual defect restart identity gives
$D_{u+v}^\eps(\eta)=D_v^\eps(\theta_u\eta)
+e^{-v/\eps}D_u^\eps(\eta)$.  Adding
$e^{-(u+v)/\eps}d$ proves
\eqref{eq:augmented-defect-cocycle}.  Since
$\mathscr Y=\eta-\mathscr D$, subtracting $\eta_u$ gives
\eqref{eq:augmented-Y-cocycle}.  The last two identities are the same
calculation with incoming memory $D_s^\eps(x)$ and future increment
$\theta_sx$.
\end{proof}

\begin{example}[Necessity of the incoming defect]
\label{ex:discarded-incoming-defect}
Take the deterministic path $x_t=t$.  Its resolvent is
\[
 Y_t^\eps=t-\eps(1-e^{-t/\eps}),
 \qquad
 D_t^\eps=\eps(1-e^{-t/\eps}).
\]
Stop the input at time $s$ and keep it constant afterwards.  The exact
post-$s$ resolvent is
\[
 Y_{s+v}^\eps(r_sx)
 =x_s-e^{-v/\eps}D_s^\eps(x),
 \qquad v\ge0.
\]
A restart from the fresh zero increment with zero incoming memory gives the
constant value $x_s$.  The discrepancy is
$-e^{-v/\eps}D_s^\eps(x)$, the boundary layer carried by the incoming defect.
\end{example}

The defect closes finite-scale restart and disappears in the limit.

For a path $\xi$ beginning at zero, set
\begin{equation}\label{eq:resolvent-A-operator}
 \mathcal A_v^\eps(\xi)
 :=e^{-v/\eps}\xi_v
   +\frac1\eps\int_0^ve^{-u/\eps}\xi_u\,\dd u.
\end{equation}
This is the integration-by-parts realization of
$\int_0^ve^{-u/\eps}\dd\xi_u$ and is defined for every continuous $\xi$.

\begin{lemma}[Exponential boundary layers]
\label{lem:resolvent-boundary-layer}
Let $0<\alpha<\beta<1$, and suppose
$\norm{x}_{\beta\text{-H\"ol}}\le R$.  Then
\begin{align}
 \norm{Y^\eps(x)}_{\beta\text{-H\"ol}}&\le R,
 &\norm{D^\eps(x)}_\infty&\le C_\beta R\eps^\beta,
 \notag\\
 \norm{D^\eps(x)}_{\alpha\text{-H\"ol}}
 &\le C_{\alpha,\beta}R\eps^{\beta-\alpha},
 &\sup_{v\le T}|\mathcal A_v^\eps(x)|
 &\le C_\beta R\eps^\beta.
 \label{eq:deterministic-resolvent-bounds}
\end{align}
Let
\[
 B_t=\one_{\{t>a\}}(1-e^{-(t-a)/\eps})u,
 \qquad |u|\le U\eps^\beta,
\]
and let $W$ be a bounded-variation path with
$\norm{W}_{\beta\text{-H\"ol}}\le M$.  If
$1/3<\alpha<\beta<1/2$, then
\begin{equation}\label{eq:exponential-layer-rough}
 \rho_\alpha(S_2(W+B),S_2(W))
 \le C U(1+M+U)\eps^{\beta-\alpha}.
\end{equation}
For a fixed finite sum of layers, the same estimate holds with the total
amplitude and a constant depending on their number.
\end{lemma}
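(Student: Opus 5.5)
The proof is purely deterministic. It rests on writing the resolvent as an exponential average over the past, and on one interpolation device: an increment is bounded both by $R h^\beta$ and by a quantity of order $R\eps^\beta$. In the latter, $h=t-s$, and taking the minimum and splitting into $h\le\eps$ and $h>\eps$ converts the pair into a factor $\eps^{\beta-\alpha}$ times an $\alpha$-H\"older (or $2\alpha$-H\"older) modulus. Throughout, I extend $x$ by $x_r:=0$ for $r<0$. Since $x_0=0$, this does not increase $\norm{x}_{\beta\text{-H\"ol}}$.

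\emph{First-line bounds in \eqref{eq:deterministic-resolvent-bounds}.} Substituting $r=t-\eps u$ in \eqref{eq:causal-resolvent} gives
\[
 Y_t^\eps(x)=\int_0^\infty e^{-u}x_{t-\eps u}\,\dd u,
 \qquad
 D_t^\eps(x)=\int_0^\infty e^{-u}(x_t-x_{t-\eps u})\,\dd u,
\]
where I use that $x_{t-\eps u}=0$ for $u>t/\eps$. Since $Y^\eps$ is an average of time-shifts of $x$ against a probability density, $\norm{Y^\eps}_{\beta\text{-H\"ol}}\le R$ follows at once. The defect formula gives
\[
 \norm{D^\eps}_\infty\le R\eps^\beta\int_0^\infty u^\beta e^{-u}\,\dd u=\Gamma(1+\beta)R\eps^\beta .
\]

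\emph{Second-line bounds.} For the $\alpha$-H\"older bound on $D^\eps$, I combine two increment estimates:
\[
 |D_{s,t}^\eps|\le|x_{s,t}|+|Y^\eps_{s,t}|\le2Rh^\beta,
 \qquad
 |D^\eps_{s,t}|\le 2\norm{D^\eps}_\infty\le 2\Gamma(1+\beta)R\eps^\beta .
\]
The minimum of the two is at most $C_{\alpha,\beta}R\eps^{\beta-\alpha}h^\alpha$. For the operator $\mathcal A_v^\eps$, I use $|x_u|\le Ru^\beta$ in each of the two terms of \eqref{eq:resolvent-A-operator}. This gives the bound
\[
 R\sup_{v}e^{-v/\eps}v^\beta+R\eps^{-1}\int_0^\infty e^{-u/\eps}u^\beta\,\dd u\le C_\beta R\eps^\beta .
\]

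\emph{Rough estimate \eqref{eq:exponential-layer-rough}.} The layer $B$ is one-dimensional and monotone along $u$. It satisfies
\[
 |B_{s,t}|\le\operatorname{Var}_{[s,t]}B\le U\eps^\beta\min(1,h/\eps)\le U\min(\eps^\beta,h^\beta).
\]
The first-level distance is therefore $\norm{B}_{\alpha\text{-H\"ol}}\le CU\eps^{\beta-\alpha}$, by the same interpolation as above. At level two, bilinearity gives
\[
 S_2(W+B)_{s,t}-S_2(W)_{s,t}=\int_s^tW_{s,r}\otimes\dd B_r+\int_s^tB_{s,r}\otimes\dd W_r+\tfrac12B_{s,t}^{\otimes2},
\]
where the last term uses one-dimensionality of $B$. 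I treat the three terms in turn.
\begin{enumerate}[label=\textup{(\alph*)},leftmargin=2.4em]
\item The first term is bounded by $Mh^\beta\cdot U\eps^\beta\min(1,h/\eps)$. For $h\le\eps$ this is $MUh^{\beta+1}\eps^{\beta-1}\le MU\eps^{2\beta-2\alpha}h^{2\alpha}$. For $h>\eps$ I use $h^{\beta-2\alpha}\le\eps^{\beta-2\alpha}$, which holds because $\beta<2\alpha$, and obtain the same bound.
\item For the second term I integrate by parts, writing it as $B_{s,t}\otimes W_{s,t}-\int_s^t\dd B_r\otimes W_{s,r}$. Both pieces admit the bound of (a).
\item The quadratic term is at most $\tfrac12U^2\min(\eps^\beta,h^\beta)^2\le CU^2\eps^{2(\beta-\alpha)}h^{2\alpha}$.
\end{enumerate}
Since $\eps\le T$, the factor $\eps^{2(\beta-\alpha)}$ is at most $C\eps^{\beta-\alpha}$. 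Summing the first-level and second-level contributions in the inhomogeneous $\alpha$-H\"older rough metric gives $CU(1+M+U)\eps^{\beta-\alpha}$.

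\emph{Finitely many layers.} For a fixed finite sum of layers, the same bilinear expansion applies with $B$ replaced by the sum. The cross terms between distinct layers are handled exactly as in (a) and (b), using the total-variation bound of each layer. This yields the stated estimate with the total amplitude, and with a constant depending on the number of layers.

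The main obstacle is the level-two cross term $\int W\otimes\dd B$. It requires the exponent bookkeeping $\beta<2\alpha$, which is where $\alpha>1/3$ and $\beta<1/2$ enter. Everything else is elementary averaging and the min-interpolation device.
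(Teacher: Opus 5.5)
Your proof is correct and follows essentially the same route as the paper. Both use the zero-extended exponential-kernel representation of $Y^\eps$ with translation invariance, and the min-interpolation between the $h^\beta$ and $\eps^\beta$ scales. For the rough estimate, both use the variation bound $|u|\min\{1,h/\eps\}$ for the layer, expand the second level, integrate one cross term by parts, and split into the regimes $h\le\eps$ and $h>\eps$ using $\beta<2\alpha$. Your explicit use of $\eps\le T$ to absorb $\eps^{2(\beta-\alpha)}$ into $\eps^{\beta-\alpha}$ makes precise a step the paper leaves implicit.
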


\begin{proof}
With $k_\eps(a)=\eps^{-1}e^{-a/\eps}\one_{\{a\ge0\}}$ and the zero
extension $\bar x$,
\[
 Y_t^\eps(x)=\int_0^\infty k_\eps(a)\bar x_{t-a}\,\dd a.
\]
Translation invariance of the H\"older seminorm gives the first bound, and
\[
 |D_t^\eps(x)|
 =\left|\int_0^\infty
  k_\eps(a)(x_t-\bar x_{t-a})\,\dd a\right|
 \le R\int_0^\infty k_\eps(a)a^\beta\,\dd a
 \le C_\beta R\eps^\beta.
\]
Since $\norm{D^\eps}_{\beta\text{-H\"ol}}\le2R$, H\"older interpolation
gives the third bound.  The last follows from
\eqref{eq:resolvent-A-operator} and $|x_u|\le Ru^\beta$.

On an interval $[p,q]$ of length $h$, the variation of $B$ is at most
$|u|\min\{1,h/\eps\}$.  Expanding the two signatures and integrating one
cross term by parts gives
\[
 |\mathbb S_2(W+B)_{p,q}-\mathbb S_2(W)_{p,q}|
 \le C\bigl(Mh^\beta V_{p,q}+V_{p,q}^2\bigr).
\]
For $h\le\eps$, use $V_{p,q}\le U\eps^{\beta-1}h$; for
$h\ge\eps$, use $V_{p,q}\le U\eps^\beta$.  Dividing by
$h^{2\alpha}$ in the two regimes yields
$C(MU+U^2)\eps^{2(\beta-\alpha)}$.  The first-level quotient is bounded
by $CU\eps^{\beta-\alpha}$.  This proves
\eqref{eq:exponential-layer-rough}; finite sums follow with their total
amplitude, including the quadratic cross terms.
\end{proof}

\begin{lemma}[Uniform resolvent seam]
\label{lem:resolvent-seam}
Let $1/3<\alpha<\beta<1/2$ and
$\norm{x}_{\beta\text{-H\"ol}}\le R$.  For $0\le v\le T-s$, put
\[
 \mathbb I_{s,s+v}^\eps(x)
 :=J_{s+v}^\eps(x)-J_s^\eps(x)-x_s\otimes x_{s,s+v}.
\]
Then
\begin{equation}\label{eq:resolvent-J-seam}
 \mathbb I_{s,s+v}^\eps(x)-J_v^\eps(\theta_sx)
 =-D_s^\eps(x)\otimes\mathcal A_v^\eps(\theta_sx),
\end{equation}
and the right-hand side is bounded by $CR^2\eps^{2\beta}$, uniformly in
$(s,v)$.  Moreover,
\begin{equation}\label{eq:resolvent-seam-rate}
 \sup_{s\in[0,T]}
 \rho_\alpha\!\left(
  S_2(Y_{s+\cdot}^\eps(x)-Y_s^\eps(x)),
  S_2(Y^\eps(\theta_sx))
 \right)
 \le CR(1+R)\eps^{\beta-\alpha},
\end{equation}
where the distance is taken on $[0,T-s]$.
\end{lemma}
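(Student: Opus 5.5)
We prove the three claims in order: the algebraic seam identity \eqref{eq:resolvent-J-seam}, then the uniform bound on its right-hand side, then the rough-path rate \eqref{eq:resolvent-seam-rate}. The identity comes from the pathwise restart identities \eqref{eq:D-restart}--\eqref{eq:Y-restart}. The two quantitative statements then come from \Cref{lem:resolvent-boundary-layer}, applied to the shifted path $\xi:=\theta_sx$. Note that $\xi$ starts at $0$ and satisfies $\|\xi\|_{\beta\text{-H\"ol}}\le R$ on $[0,T-s]$.

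\emph{Identity.} Since $Y^\eps$ is $C^1$, the definition \eqref{eq:causal-resolvent-tensor} gives
\[
 J_{s+v}^\eps-J_s^\eps
 =Y_{s+v}^\eps\otimes x_{s+v}-Y_s^\eps\otimes x_s
 -\int_s^{s+v}\dd Y_r^\eps\otimes x_r .
\]
Subtract $x_s\otimes x_{s,s+v}$ and regroup. Then $\mathbb I^\eps_{s,s+v}$ equals the same expression with $Y^\eps_r$ replaced by $Y^\eps_r-x_s$. This is the integration-by-parts realization of $\int_s^{s+v}(Y_r^\eps-x_s)\otimes\dd x_r$, and it stays purely Stieltjes in $\dd Y$. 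Next use \eqref{eq:Y-restart}:
\[
 Y^\eps_{s+u}-x_s=Y^\eps_u(\xi)-e^{-u/\eps}D^\eps_s(x).
\]
Since $x_{s+u}-x_s=\xi_u$, the first term reproduces exactly $J^\eps_v(\xi)$, by the same integration-by-parts formula for the fresh path. The second term contributes
\[
 -D^\eps_s(x)\otimes\Bigl(e^{-v/\eps}\xi_v+\eps^{-1}\int_0^ve^{-u/\eps}\xi_u\,\dd u\Bigr)
 =-D^\eps_s(x)\otimes\mathcal A_v^\eps(\xi).
\]
This follows from the product rule applied to the deterministic $C^1$ factor $e^{-u/\eps}$ against $\xi$, using $\xi_0=0$. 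The cancellations are all boundary terms, so the only real work here is careful bookkeeping.

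\emph{Uniform bound.} \Cref{lem:resolvent-boundary-layer} gives $\|D^\eps(x)\|_\infty\le C_\beta R\eps^\beta$. Applied to $\xi$, which has the same Hölder bound, it gives $\sup_v|\mathcal A_v^\eps(\xi)|\le C_\beta R\eps^\beta$. The rank-one norm identity then bounds the right-hand side of \eqref{eq:resolvent-J-seam} by $CR^2\eps^{2\beta}$, uniformly in $(s,v)$.

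\emph{Rough rate.} From \eqref{eq:Y-restart} at $v$ and at $0$,
\[
 Y^\eps_{s+u}(x)-Y^\eps_s(x)=Y^\eps_u(\xi)+\bigl(1-e^{-u/\eps}\bigr)D^\eps_s(x).
\]
This is precisely a single exponential layer $B$ with onset $a=0$ and amplitude vector $D^\eps_s(x)$. The amplitude satisfies $|D^\eps_s(x)|\le U\eps^\beta$ with $U=C_\beta R$. The layer is added to the bounded-variation path $W=Y^\eps(\xi)$, for which $\|W\|_{\beta\text{-H\"ol}}\le R=:M$ by the first bound of \Cref{lem:resolvent-boundary-layer}. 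The estimate \eqref{eq:exponential-layer-rough} then gives $CU(1+M+U)\eps^{\beta-\alpha}\le CR(1+R)\eps^{\beta-\alpha}$. The constants depend neither on $s$ nor on the interval length $T-s$ beyond $T$, so taking the supremum over $s$ proves \eqref{eq:resolvent-seam-rate}.

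The step we expect to need the most care is the identity. There one must make sure that every "stochastic-looking" integral against $\dd x$ is consistently interpreted through its pathwise integration-by-parts realization, since $x$ is merely Hölder. The signs of the boundary terms must cancel exactly, leaving only the $\mathcal A^\eps$ term.
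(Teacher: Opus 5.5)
Your proposal is correct and is essentially the paper's own argument: the restart identity \eqref{eq:Y-restart} shows that $Y^\eps_{s+\cdot}(x)-Y^\eps_s(x)-Y^\eps(\theta_sx)$ is a single exponential layer of amplitude $D^\eps_s(x)$, which is fed into \Cref{lem:resolvent-boundary-layer}. The same identity, once $J^\eps$ is read as the integration-by-parts integral $\int Y^\eps\otimes\dd x$, leaves exactly $-D^\eps_s(x)\otimes\mathcal A^\eps_v(\theta_sx)$. Your bookkeeping fills in details the paper leaves implicit: the constant shift by $x_s$, the vanishing lower boundary term because $(\theta_sx)_0=0$, and the bound $\|\theta_sx\|_{\beta\text{-H\"ol}}\le R$.
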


\begin{proof}
The restart identity gives
\begin{equation}\label{eq:Y-increment-restart}
 Y_{s+v}^\eps(x)-Y_s^\eps(x)
 =Y_v^\eps(\theta_sx)+(1-e^{-v/\eps})D_s^\eps(x).
\end{equation}
The correction is one exponential boundary layer, so
\Cref{lem:resolvent-boundary-layer} proves
\eqref{eq:resolvent-seam-rate}.  Interpreting
\eqref{eq:causal-resolvent-tensor} as the integration-by-parts integral
$\int Y^\eps\otimes\dd x$, subtract the restarted integral over
$[s,s+v]$.  Formula \eqref{eq:Y-restart} leaves
\[
 -D_s^\eps\otimes\int_0^ve^{-u/\eps}\,\dd(\theta_sx)_u,
\]
which is \eqref{eq:resolvent-J-seam}.  The bound follows from
\eqref{eq:deterministic-resolvent-bounds}.
\end{proof}

For a fresh future path $y\in\Omega$, define
\[
 (\lambda_sy)_t:=
 \begin{cases}
  0,&t\le s,\\
  y_{t-s},&t>s,
 \end{cases}
 \qquad
 x\otimes_sy:=r_sx+\lambda_sy.
\]
Only $y|_{[0,T-s]}$ is used.

\begin{lemma}[Stopping, shift, and concatenation formulas]
\label{lem:resolvent-operations}
Let $L=T-s$.  For $v\in[0,T]$,
\begin{align}
 &Y_v^\eps(\theta_sx)
 -\bigl(Y_{s+(v\wedge L)}^\eps(x)-Y_s^\eps(x)\bigr)
 \notag\\
 &\qquad
 =-(1-e^{-v/\eps})D_s^\eps(x)
 +\one_{\{v\ge L\}}(1-e^{-(v-L)/\eps})D_T^\eps(x).
 \label{eq:fixed-horizon-resolvent-shift}
\end{align}
Furthermore,
\begin{equation}\label{eq:resolvent-stopping-layer}
 Y_t^\eps(r_sx)-(r_sY^\eps(x))_t
 =\one_{\{t>s\}}(1-e^{-(t-s)/\eps})D_s^\eps(x),
\end{equation}
and the delayed path satisfies
\begin{equation}\label{eq:resolvent-delay-covariance}
 (Y^\eps,J^\eps,Q^\eps,\mathsf E^\eps)_t(\lambda_sy)
 =\begin{cases}
  (0,0,0,0),&t\le s,\\
  (Y^\eps,J^\eps,Q^\eps,\mathsf E^\eps)_{t-s}(y),&t>s.
 \end{cases}
\end{equation}
If $z=x\otimes_sy$, then, for $t\ge s$,
\begin{align}
 J_t^\eps(z)
 &=J_t^\eps(r_sx)+J_t^\eps(\lambda_sy)
   +x_s\otimes y_{t-s}
   -D_s^\eps(x)\otimes\mathcal A_{t-s}^\eps(y),
 \label{eq:finite-resolvent-J-concatenation}\\
 Q_t^\eps(z)
 &=Q_t^\eps(r_sx)+Q_t^\eps(\lambda_sy)
 \notag\\
 &\quad
  +D_s^\eps(x)\otimes\mathcal A_{t-s}^\eps(y)
  +\mathcal A_{t-s}^\eps(y)\otimes D_s^\eps(x).
 \label{eq:finite-resolvent-Q-concatenation}
\end{align}
Finally, if
\[
 \widetilde Y_t^\eps:=
 \begin{cases}
  Y_t^\eps(x),&t\le s,\\
  Y_s^\eps(x)+Y_{t-s}^\eps(y),&t>s,
 \end{cases}
\]
then
\begin{equation}\label{eq:resolvent-concatenation-layer}
 Y_t^\eps(z)-\widetilde Y_t^\eps
 =\one_{\{t>s\}}(1-e^{-(t-s)/\eps})D_s^\eps(x).
\end{equation}
\end{lemma}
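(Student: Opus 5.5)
All the identities in \Cref{lem:resolvent-operations} are deterministic consequences of the explicit convolution formula \eqref{eq:causal-resolvent}. I will verify them one by one, using the restart identities \eqref{eq:D-restart}--\eqref{eq:Y-restart} and the integration-by-parts realization \eqref{eq:resolvent-A-operator} wherever possible. I will work throughout with the formula
\[
 Y_t^\eps(x)=\int_0^\infty k_\eps(a)\bar x_{t-a}\,\dd a,
\]
where $k_\eps(a)=\eps^{-1}e^{-a/\eps}\one_{\{a\ge0\}}$ and $\bar x$ is the zero extension. The only new feature compared with \eqref{eq:Y-restart} is the fixed-horizon clamp $v\wedge L$ built into $\theta_s$.

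\emph{Shift formula.} For $v\le L$, the formula \eqref{eq:fixed-horizon-resolvent-shift} is exactly \eqref{eq:Y-increment-restart}, since then $(\theta_sx)_v=x_{s+v}-x_s$. For $v>L$, the shifted path is constant, equal to $x_T-x_s$, after time $L$. I will then apply the restart identity once more at time $L$ to the path $\theta_sx$, whose restart beyond $L$ is the zero path. This gives
\[
 Y^\eps_v(\theta_sx)=Y^\eps_L(\theta_sx)+(1-e^{-(v-L)/\eps})D_L^\eps(\theta_sx).
\]
Next I will use $D^\eps_L(\theta_sx)=D_T^\eps(x)-e^{-L/\eps}D_s^\eps(x)$, which is \eqref{eq:D-restart}. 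Collecting the exponentials should then produce the two stated layers. This bookkeeping of $e^{-L/\eps}$ factors is where sign errors are likely, so I will check the result on the path $x_t=t$ from \Cref{ex:discarded-incoming-defect}.

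\emph{Stopping and delay.} For the stopping layer \eqref{eq:resolvent-stopping-layer}, note that for $t>s$ the input $r_sx$ equals $x_s$ on $[s,t]$. Splitting the convolution integral at $s$ gives
\[
 Y^\eps_t(r_sx)=e^{-(t-s)/\eps}Y^\eps_s(x)+(1-e^{-(t-s)/\eps})x_s,
\]
and subtracting $Y_s^\eps(x)$ leaves $(1-e^{-(t-s)/\eps})D^\eps_s(x)$. The delay covariance \eqref{eq:resolvent-delay-covariance} follows from the substitution $r\mapsto r-s$ in each defining integral, because $\lambda_sy$ vanishes on $[0,s]$. The tensor, the defect $Q^\eps$ and the energy $\mathsf E^\eps$ are integrals against $\dd Y^\eps$ and $\dd x$, so they are translated in the same way.

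\emph{Concatenation.} For $z=x\otimes_sy$, the inputs $r_sx$ and $\lambda_sy$ add, so linearity of the convolution gives $Y^\eps(z)=Y^\eps(r_sx)+Y^\eps(\lambda_sy)$. Combining this with the stopping layer and the delay covariance yields \eqref{eq:resolvent-concatenation-layer} directly. For the tensor I will use the Stieltjes form $J_t=\int_0^tY_r\otimes\dd z_r$, which is valid because $Y^\eps$ is $C^1$. On $[0,s]$ this coincides with $J^\eps_t(r_sx)$, and $r_sx$ contributes no increment after $s$. Over $(s,t]$, the integrand decomposes as
\[
 Y^\eps_r(r_sx)+Y^\eps_r(\lambda_sy)
 =Y_s^\eps(x)+(1-e^{-(r-s)/\eps})D_s^\eps(x)+Y^\eps_{r-s}(y),
\]
integrated against $\dd y_{r-s}$. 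Since $Y_s^\eps(x)+D_s^\eps(x)=x_s$, the sum of the first two terms is $x_s-e^{-(r-s)/\eps}D^\eps_s(x)$. Integrating term by term gives the cross term $x_s\otimes y_{t-s}$ and the correction $-D_s^\eps(x)\otimes\mathcal A^\eps_{t-s}(y)$, where the latter uses the realization \eqref{eq:resolvent-A-operator}. Together with \eqref{eq:resolvent-delay-covariance}, this proves \eqref{eq:finite-resolvent-J-concatenation}.

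For $Q^\eps$ I will expand $z_t^{\otimes2}=(x_s+y_{t-s})^{\otimes2}$ and subtract $J+J^*$. The cross terms $x_s\otimes y_{t-s}+y_{t-s}\otimes x_s$ cancel against the symmetrized cross term in $J$, and the $D\otimes\mathcal A$ correction changes sign, which gives \eqref{eq:finite-resolvent-Q-concatenation}.

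The main obstacle is careful bookkeeping rather than any conceptual step. The points needing care are the fixed-horizon case $v\ge L$ in the shift formula and the consistent use of pathwise integration by parts, so that every identity holds for every continuous raw path and not only for paths of bounded variation. To secure the latter, I will check each identity first for piecewise $C^1$ paths and then extend it by uniform continuity of all the maps in the raw path.
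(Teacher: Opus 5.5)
Your proposal is correct and follows essentially the same route as the paper. You get the shift formula, including the terminal layer via $D_L^\eps(\theta_sx)=D_T^\eps(x)-e^{-L/\eps}D_s^\eps(x)$, and the stopping formula from the resolvent restart and stopping identities; the delay covariance from a change of variables; and the concatenation identities from linearity together with the single mixed term $-D_s^\eps(x)\otimes\mathcal A_{t-s}^\eps(y)$ in the integration-by-parts form of $J^\eps$. Your extra density step through piecewise $C^1$ paths is harmless but unnecessary: since $Y^\eps$ is $C^1$, the integral $\int Y^\eps\otimes\dd z$ is already a well-defined Stieltjes integral for every continuous $z$.
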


\begin{proof}
For $v\le L$, \eqref{eq:fixed-horizon-resolvent-shift} is
\eqref{eq:Y-increment-restart} with the sides reversed.  After $L$, the
shifted path is stopped and its defect decays exponentially; its value at
$L$ is $D_T^\eps-e^{-L/\eps}D_s^\eps$, which gives the terminal layer.
The same calculation proves \eqref{eq:resolvent-stopping-layer}.  Changing
variables in the convolution and in
\eqref{eq:causal-resolvent-tensor} proves
\eqref{eq:resolvent-delay-covariance}.

For concatenation, use linearity of $Y^\eps$ and expand
\eqref{eq:causal-resolvent-tensor}.  The only mixed term is the past defect
paired with the future increment, and
\[
 \int_s^te^{-(r-s)/\eps}\,\dd y_{r-s}
 =\mathcal A_{t-s}^\eps(y).
\]
This gives \eqref{eq:finite-resolvent-J-concatenation}; substituting it in
\eqref{eq:finite-resolvent-bracket} gives
\eqref{eq:finite-resolvent-Q-concatenation}.  The final identity follows
from linearity and the stopped-resolvent formula.
\end{proof}

\begin{lemma}[Trace-energy seam bounds]
\label{lem:trace-energy-seams}
Let $1/3<\alpha<\beta<1/2$.  If
$\|x\|_{\beta\text{-H\"ol}}\le R$ and
$\|y\|_{\beta\text{-H\"ol}}\le S$, then, uniformly in the seam $s$ and
$0<\eps\le T$,
\begin{align}
 \|\mathsf E^\eps(r_sx)-r_s\mathsf E^\eps(x)\|_{\infty;\mathcal S_1}
 &\le C R^2\eps^{2\beta},
 \label{eq:energy-stopping-seam}\\
 \|\mathsf E^\eps(\theta_sx)-\theta_s\mathsf E^\eps(x)\|_{\infty;\mathcal S_1}
 &\le C R^2\eps^{2\beta},
 \label{eq:energy-shift-seam}\\
 \sup_{t\ge s}\left\|
  \mathsf E_t^\eps(x\otimes_sy)-\mathsf E_s^\eps(x)
  -\mathsf E_{t-s}^\eps(y)\right\|_1
 &\le C(R^2+RS)\eps^{2\beta}.
 \label{eq:energy-concatenation-seam}
\end{align}
Here $(r_sF)_t=F_{t\wedge s}$ and
$(\theta_sF)_v=F_{s+(v\wedge(T-s))}-F_s$, followed by constant stopping at
the remaining horizon.
\end{lemma}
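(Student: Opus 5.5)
The plan is to reduce all three bounds to the exact pathwise restart identity \eqref{eq:D-restart} together with the deterministic boundary-layer bound $\|D^\eps(x)\|_\infty\le C_\beta R\eps^\beta$ from \Cref{lem:resolvent-boundary-layer}. The one structural fact used throughout is that $\mathsf E^\eps$ integrates rank-one tensors, so for $h,k\in H$
\[
 \|h^{\otimes2}\|_{\mathcal S_1}=\|h\|^2,
 \qquad
 \|h\otimes k+k\otimes h\|_{\mathcal S_1}\le2\|h\|\|k\|.
\]
Every trace-norm estimate therefore becomes a scalar estimate on $\|D^\eps\|_H$. No dimension or spatial tightness enters.

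\emph{Stopping seam \eqref{eq:energy-stopping-seam}.} For $t\le s$, causality gives $\mathsf E_t^\eps(r_sx)=\mathsf E_t^\eps(x)$. For $t=s+v>s$, apply \eqref{eq:D-restart} to $r_sx$. Since $\theta_s r_sx\equiv0$ and $D_s^\eps(r_sx)=D_s^\eps(x)$, this gives $D_{s+v}^\eps(r_sx)=e^{-v/\eps}D_s^\eps(x)$. The difference is then
\[
 \frac2\eps\int_0^{v}e^{-2u/\eps}\,\dd u\;D_s^\eps(x)^{\otimes2},
\]
whose trace norm is at most $\|D_s^\eps(x)\|^2\le C R^2\eps^{2\beta}$.

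\emph{Shift seam \eqref{eq:energy-shift-seam}.} Put $L=T-s$, and note that $\|\theta_sx\|_{\beta\text{-H\"ol}}\le R$. For $v\le L$, \eqref{eq:D-restart} gives $D_{s+v}^\eps(x)=D_v^\eps(\theta_sx)+e^{-v/\eps}D_s^\eps(x)$. Squaring produces two kinds of error terms in $\mathsf E^\eps_{s+v}(x)-\mathsf E^\eps_s(x)-\mathsf E^\eps_v(\theta_sx)$:
\begin{itemize}
 \item the pure memory term, with trace norm $\le\|D_s^\eps(x)\|^2$, as in the stopping case;
 \item the cross term, with trace norm
 \[
  \le\frac4\eps\int_0^v e^{-u/\eps}\,\dd u\,\|D^\eps(\theta_sx)\|_\infty\|D_s^\eps(x)\|
  \le CR^2\eps^{2\beta}.
 \]
\end{itemize}
For $v>L$, the left side $\theta_s\mathsf E^\eps(x)$ is frozen at its value at $L$. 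The shifted path is constant there, so its defect decays exponentially from $D_L^\eps(\theta_sx)$, and the extra energy it accumulates is at most $\|D_L^\eps(\theta_sx)\|^2\le CR^2\eps^{2\beta}$. Adding the two regimes proves the bound.

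\emph{Concatenation seam \eqref{eq:energy-concatenation-seam}.} Let $z=x\otimes_sy$. By causality, $D_r^\eps(z)=D_r^\eps(x)$ for $r\le s$, so the first part of the energy is exactly $\mathsf E_s^\eps(x)$. On $[0,T-s]$ we have $\theta_sz=y$, so \eqref{eq:D-restart} gives, for $t=s+v$,
\[
 D_{s+v}^\eps(z)=D_v^\eps(y)+e^{-v/\eps}D_s^\eps(x).
\]
The same expansion as in the shift seam then bounds the discrepancy by
\[
 \|D_s^\eps(x)\|^2+4\|D^\eps(y)\|_\infty\|D_s^\eps(x)\|\le C(R^2+RS)\eps^{2\beta},
\]
using $\|D^\eps(y)\|_\infty\le C_\beta S\eps^\beta$.

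The point needing most care is this last case. The concatenated path $z$ need not have controlled H\"older norm across the seam, so one must not apply \Cref{lem:resolvent-boundary-layer} to $z$ itself. The argument avoids this because only $D_s^\eps(x)$ (controlled by $R$) and $D^\eps(y)$ (controlled by $S$) ever appear, and the restart identity is exact pathwise. The secondary point is the horizon bookkeeping in the shift seam, which the constant-stopping convention in the definition of $\theta_sF$ handles as described above. All constants depend only on $\beta$, so the bounds are uniform in $s$ and $\eps$.
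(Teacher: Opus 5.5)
Your proof is correct and is essentially the paper's argument. Like the paper, you use the exact restart identity for $D^\eps$, rank-one trace-norm bounds on the memory and cross terms, and the boundary-layer estimate $\|D^\eps\|_\infty\le C_\beta R\eps^\beta$, and you treat the post-horizon piece of the shift by the same exponential-decay computation. One side remark is inaccurate but harmless: $x\otimes_s y$ is in fact $\beta$-H\"older with seminorm at most $R+S$, so nothing prevents applying the boundary-layer lemma to it; your argument simply never needs to.
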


\begin{proof}
For $t>s$, the stopped defect is
$D_t^\eps(r_sx)=e^{-(t-s)/\eps}D_s^\eps(x)$, whence
\[
 \mathsf E_t^\eps(r_sx)-\mathsf E_s^\eps(x)
 =(1-e^{-2(t-s)/\eps})(D_s^\eps(x))^{\otimes2}.
\]
This gives \eqref{eq:energy-stopping-seam}.  On the remaining horizon,
\eqref{eq:D-restart} gives
\[
 D_v^\eps(\theta_sx)=D_{s+v}^\eps(x)-e^{-v/\eps}D_s^\eps(x).
\]
Expanding the square in the definition of $\mathsf E^\eps$ yields
\eqref{eq:energy-shift-seam}; the two cross terms are bounded using
$\|u\otimes v\|_1=\|u\|\|v\|$, the integral of
$\eps^{-1}e^{-v/\eps}$, and
\eqref{eq:deterministic-resolvent-bounds}.  For the concatenated path,
\[
 D_{s+v}^\eps(x\otimes_sy)
 =D_v^\eps(y)+e^{-v/\eps}D_s^\eps(x).
\]
The same expansion gives \eqref{eq:energy-concatenation-seam}.  The stopped
terminal pieces have the same exponential form and satisfy the identical
bound.
\end{proof}

\begin{proposition}[Shift-equivariance and concatenation of the common state]
\label{thm:shift-equivariant-enhancement}
The domain $G_J$ is stable under deterministic stopping, every fixed-horizon
shift $\theta_s$, and finite temporal concatenation.  Let $x\in G_J$,
$s,v\in[0,T]$, and $u=s+(v\wedge(T-s))$.  Then
\begin{align}
 \widehat J_v(\theta_sx)
 &=\mathbb X^I_{s,u}(x),
 \label{eq:shift-J}\\
 Q_v(\theta_sx)&=Q_{s,u}(x),
 \label{eq:shift-Q}\\
 \mathbb X^\bullet_{a,b}(\theta_sx)
 &=\mathbb X^\bullet_{s+(a\wedge(T-s)),
                     s+(b\wedge(T-s))}(x),
 \quad 0\le a\le b\le T,\quad \bullet\in\{I,S\}.
 \label{eq:shift-lift}
\end{align}
If $x,y\in G_J$ are respectively a stopped past and a delayed future at
$s$, then $z=x+y\in G_J$ and
\begin{equation}\label{eq:J-concatenation}
 \widehat J_t(z)=\widehat J_t(x)+\widehat J_t(y)+x_s\otimes y_t.
\end{equation}
Consequently $Q(z)=Q(x)+Q(y)$, and both second levels concatenate by
Chen's rule.
\end{proposition}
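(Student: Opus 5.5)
The plan is to work entirely at finite scale $\eps_n=T2^{-n}$, using the exact identities of \Cref{lem:resolvent-operations}, and then pass to the limit on $G_J$. On $G_J$ the sequence $J^{\eps_n}(x)$ converges uniformly, so $J^{\eps_n}(x)\to\widehat J(x)$. All seam corrections must be shown to vanish pathwise along $\eps_n$ for every $x\in G_J$, with no H\"older hypothesis available. The seam terms are products $D_s^{\eps}(x)\otimes\mathcal A^{\eps}_v(\cdot)$ and terminal layers involving $D^\eps_T(x)$. I will control them with elementary bounds valid for every continuous path: $\|D^\eps_t(x)\|\le\omega_x(\eps^{1/2})+2\|x\|_\infty e^{-\eps^{-1/2}}$, where $\omega_x$ is the modulus of continuity, and $\sup_v\|\mathcal A_v^\eps(\xi)\|\le 2\|\xi\|_\infty$. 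Both follow from the convolution form of $D^\eps$ and from \eqref{eq:resolvent-A-operator}. Hence $D^{\eps_n}_s(x)\to0$ uniformly in $s$, the products tend to $0$ uniformly in time, and every seam correction disappears in the limit on all of $\Omega_H$.

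\emph{Shift.} Stopping stability of $G_J$ is already in \Cref{lem:exact-stopping-primitive-selector}. For $\theta_s$, I expand $J^\eps_v(\theta_sx)$ using \eqref{eq:fixed-horizon-resolvent-shift}, writing $Y^\eps(\theta_sx)$ as the shifted increment of $Y^\eps(x)$ plus the two exponential layers. For $v\le L=T-s$, \eqref{eq:resolvent-J-seam} gives
\[
 J_v^\eps(\theta_sx)=\mathbb I^\eps_{s,s+v}(x)+D_s^\eps(x)\otimes\mathcal A_v^\eps(\theta_sx).
\]
For $v>L$ the shifted path is constant. The Stieltjes integral then only adds the layer term $(Y^\eps_v-Y^\eps_L)(\theta_sx)\otimes(\theta_sx)_L$, which is bounded by $\|D^\eps_T(x)\|+\|D^\eps_s(x)\|$ times $2\|x\|_\infty$. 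So $J^{\eps_n}(\theta_sx)-\mathbb I^{\eps_n}_{s,s+(\cdot\wedge L)}(x)\to0$ uniformly. Because $\mathbb I^{\eps_n}$ converges uniformly on $G_J$, the sequence $J^{\eps_n}(\theta_sx)$ is uniformly Cauchy. This gives $\theta_sx\in G_J$ together with \eqref{eq:shift-J}. Then \eqref{eq:shift-Q} follows algebraically from \eqref{eq:common-Q}, since
\[
 (x_u-x_s)^{\otimes2}-\mathbb X^I_{s,u}-(\mathbb X^I_{s,u})^*=Q_{s,u},
\]
which is the Chen/It\^o defect identity for \eqref{eq:common-I}. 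Formula \eqref{eq:shift-lift} comes from inserting \eqref{eq:shift-J} into \eqref{eq:common-I} and using Chen's relation for $\mathbb X^I$ on $x$. That relation holds identically from the definition \eqref{eq:common-I}, so $\mathbb X^I_{a,b}(\theta_sx)=\mathbb X^I_{s+a,s+b}(x)$ after truncation at $L$; the Stratonovich case follows from \eqref{eq:common-S}.

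\emph{Concatenation.} Take $x=r_sx$ and $y=\lambda_sy'$. By \eqref{eq:resolvent-delay-covariance}, $y\in G_J$ is equivalent to $y'\in G_J$. I then apply \eqref{eq:finite-resolvent-J-concatenation}: the correction $-D^\eps_s(x)\otimes\mathcal A^\eps_{t-s}(y')$ vanishes uniformly by the bounds above. The remaining terms converge uniformly because $x,y\in G_J$, so $z\in G_J$ and \eqref{eq:J-concatenation} holds, with $x_s\otimes y_t$ equal to $x_s\otimes y'_{t-s}$ for $t>s$ and $0$ otherwise. For $Q$, the identity $z^{\otimes2}=x^{\otimes2}+y^{\otimes2}+x_s\otimes y+y\otimes x_s$ cancels the cross terms in \eqref{eq:common-Q}, which gives $Q(z)=Q(x)+Q(y)$. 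Chen concatenation of $\mathbb X^I$ and $\mathbb X^S$ is then a direct algebraic check from \eqref{eq:common-I}--\eqref{eq:common-S}. Finite concatenations follow by induction, using the shift and stopping stability just proved.

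The main obstacle I expect is the terminal-horizon case $v>L$ of the shift. There one has to confirm that the Stieltjes integral \eqref{eq:causal-resolvent-tensor}, evaluated on a path that is frozen after $L$, produces only a layer controlled by $D^\eps_T$ and $D^\eps_s$, and that the seam identity \eqref{eq:resolvent-J-seam}, proved under a H\"older hypothesis, holds as an exact algebraic identity for arbitrary continuous $x$. If needed, I will rederive it directly from \eqref{eq:Y-restart} and integration by parts, since it uses no H\"older bound.
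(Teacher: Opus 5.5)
Your proposal is correct and follows essentially the paper's route. It uses the exact finite-scale seam identities \eqref{eq:resolvent-J-seam} and \eqref{eq:finite-resolvent-J-concatenation}, shows the seam products vanish for every continuous path, passes to the uniform limit on $G_J$, and derives the identities for $Q$ and the lifts algebraically. The only differences are minor. The paper shows that both $D^\eps_s(x)$ and $\mathcal A^\eps_v(\theta_sx)$ tend to zero, whereas your bounded-times-vanishing estimate suffices. The terminal case you flag is also trivial: since $\theta_sx=r_{T-s}\theta_sx$, the exact stopping identity \eqref{eq:finite-stopping} gives $J^\eps_v(\theta_sx)=J^\eps_{T-s}(\theta_sx)$ for $v\ge T-s$, so the layer $(Y^\eps_v-Y^\eps_{T-s})(\theta_sx)\otimes(\theta_sx)_{T-s}$ you estimate actually cancels between the two terms of \eqref{eq:causal-resolvent-tensor}.
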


\begin{proof}
For a continuous path, the approximate-identity bounds implicit in
\eqref{eq:causal-resolvent} give, uniformly in $s$,
\[
 |D_s^\eps(x)|+\sup_v|\mathcal A_v^\eps(\theta_sx)|\longrightarrow0.
\]
Indeed, split the exponential integrals at a fixed $\delta>0$, use the
modulus of continuity on $[0,\delta]$, and then the factor
$e^{-\delta/\eps}$.  Hence \eqref{eq:resolvent-J-seam} and its terminal
version show that $J^{\eps_n}(\theta_sx)$ is uniformly Cauchy whenever
$J^{\eps_n}(x)$ is.  Passing to the limit yields \eqref{eq:shift-J}; the
identities for $Q$ and the second levels are algebraic.  Stopping follows
from \eqref{eq:finite-stopping}.  For concatenation, the error term in
\eqref{eq:finite-resolvent-J-concatenation} tends to zero uniformly, which
gives \eqref{eq:J-concatenation}; the remaining conclusions follow from
\eqref{eq:common-Q} and Chen's relation.  Iteration treats finitely many
seams.
\end{proof}

\begin{proposition}[Common bracket-variation and operator-envelope domains]
\label{prop:common-bracket-variation-domain}
Let $G_Q$ be the domain constructed in the proof of
\Cref{cor:common-representative}; equivalently,
\begin{equation}\label{eq:bracket-domain}
 \begin{split}
 G_Q=\{x\in G_J:\;&Q_t(x)-Q_s(x)\in\mathcal S_1(H)_+,\\
 &\Tr(Q_t(x)-Q_s(x))\le\Lambda(t-s)
 \text{ for all rational }0\le s<t\le T\}.
 \end{split}
\end{equation}
Fix a countable dense $\mathbb Q$-linear subspace $H_{\mathbb Q}\subset H$ and, for
$\Gamma\in\mathcal S_1(H)_+$, define
\begin{equation}\label{eq:common-Gamma-envelope-domain}
 G_Q^\Gamma
 :=\left\{x\in G_J:
 0\le\langle Q_{s,t}(x)h,h\rangle
 \le(t-s)\langle\Gamma h,h\rangle
 \text{ for all rational }s<t\text{ and }h\in H_{\mathbb Q}\right\}.
\end{equation}
Then $G_Q$ and $G_Q^\Gamma$ are Borel and
$c_{\Smax}(G_Q^c)=0$.  For every model subclass
$\mathcal P_\Gamma\subset\Smax$ satisfying
$0\preceq a_t^P\preceq\Gamma$ for $\dd t\otimes P$-almost every
$(t,\omega)$ and every $P\in\mathcal P_\Gamma$,
\[
 c_{\mathcal P_\Gamma}((G_Q^\Gamma)^c)=0.
\]
On $G_Q$, $Q$ is trace-norm Lipschitz and has bounded variation.  On
$G_Q^\Gamma$, for all $0\le s<t\le T$,
\begin{equation}\label{eq:common-Gamma-envelope-all-times}
 0\preceq Q_{s,t}\preceq(t-s)\Gamma,
 \qquad
 \|Q_t-Q_s\|_1\le(t-s)\Tr\Gamma.
\end{equation}
Both domains are stable under stopping, every shift, and finite temporal
concatenation.
\end{proposition}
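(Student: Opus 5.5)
The plan is to handle measurability, capacity, the all-times upgrade, and stability in that order, treating $G_Q$ and $G_Q^\Gamma$ in parallel.

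\emph{Measurability.} Both sets are countable intersections of Borel conditions on the Borel field $x\mapsto Q(x)\in C([0,T];\mathcal S_2(H))$ restricted to the Borel set $G_J$. For $G_Q$, as in the proof of \Cref{cor:common-representative}, positivity is a closed condition in $\mathcal S_2(H)_{\rm sa}$, and the trace bound is rewritten as $\sup_N\Tr(P_NQ_{s,t}P_N)\le\Lambda(t-s)$ with every finite-rank trace Hilbert--Schmidt continuous. This also shows the two descriptions of $G_Q$ coincide: for positive operators, $\Tr=\sup_N\Tr(P_N\cdot P_N)$. For $G_Q^\Gamma$ each condition $\langle Q_{s,t}h,h\rangle\in[0,(t-s)\langle\Gamma h,h\rangle]$ is a continuous scalar inequality, and only countably many are imposed.

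\emph{Capacity.} Fix $P$. By \Cref{lem:exact-stopping-primitive-selector} and \Cref{cor:common-representative}, $Q=\qv{M^P}=\int_0^\cdot a^P_r\,\dd r$ up to indistinguishability. The density bounds then give, outside one $P$-null set and simultaneously for all rational $s<t$ and $h\in H_{\mathbb Q}$, the inequality $0\le\langle Q_{s,t}h,h\rangle=\int_s^t\langle a_r^Ph,h\rangle\,\dd r\le(t-s)\langle\Gamma h,h\rangle$, and likewise positivity and the trace bound for $G_Q$. Since the sets are Borel, $P(G^c)=0$ for every $P$ in the relevant class, so the complements are polar.

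\emph{All-times upgrade.} On $G_Q$ the Lipschitz trace bound and positivity of increments are exactly the argument already given in \Cref{cor:common-representative}. Bounded variation follows because $\sum\|Q_{t_{i+1}}-Q_{t_i}\|_1\le\Lambda T$. On $G_Q^\Gamma$, continuity of $Q$ in $\mathcal S_2$ extends the quadratic-form inequalities to all $s<t$ and, by density of $H_{\mathbb Q}$ and continuity of quadratic forms, to all $h\in H$. This gives $0\preceq Q_{s,t}\preceq(t-s)\Gamma$. Summing $\langle Q_{s,t}e_k,e_k\rangle$ over an orthonormal basis then gives $\Tr Q_{s,t}\le(t-s)\Tr\Gamma$, which equals $\|Q_t-Q_s\|_1$ by positivity.

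\emph{Stability.} Stopping uses $Q_{u,v}(r_sx)=Q_{u\wedge s,v\wedge s}(x)$, valid for all real $u,v$, together with $r_sx\in G_J$. The resulting increment is either zero or an increment over a shorter interval, so the all-times inequalities pass over. Shifts use \Cref{thm:shift-equivariant-enhancement}: $\theta_sx\in G_J$ and $Q_{a,b}(\theta_sx)=Q_{s+(a\wedge(T-s)),\,s+(b\wedge(T-s))}(x)$, and the time length can only shrink. Concatenation uses $Q(z)=Q(x)+Q(y)$ with $z\in G_J$; each increment of $Q(z)$ splits at the seam into an increment of the stopped past and one of the delayed future. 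Each piece satisfies the bound with its own length, and these lengths sum to $t-s$. The delayed future is itself obtained from a $G_Q$-path by the delay covariance formula \eqref{eq:resolvent-delay-covariance}, which passes to the limit.

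The main obstacle is this last point: certifying that the delayed future $\lambda_sy$ inherits the envelope bounds. The passage from rational to real times must be done before invoking the shift and concatenation identities, because those identities produce non-rational increment endpoints.
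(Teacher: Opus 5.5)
Your proposal is correct and follows essentially the same route as the paper's proof. Borelness comes from countably many rational-time conditions, fullness from the modelwise identification $Q_{s,t}=\int_s^t a_r^P\,\dd r$, the extension to all times and all $h\in H$ from $\mathcal S_2$-continuity and density of $H_{\mathbb Q}$, and stability from the exact stopping, shift, and concatenation identities, with each seam-crossing increment split into two dominated positive pieces whose lengths add to $t-s$. Two of your remarks make explicit points the paper leaves implicit: that the rational-to-real upgrade must come before the shift and concatenation identities are applied, and that the finite-scale delay formula keeps the delayed future in the domain.
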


\begin{proof}
The proof of \Cref{cor:common-representative} already gives the Borelness,
fullness, and trace-norm Lipschitz property of $G_Q$.  The definition of
$G_Q^\Gamma$ is countable and Borel.  Under every
$P\in\mathcal P_\Gamma$,
\[
 Q_{s,t}=\int_s^t a_r^P\,\dd r,
 \qquad 0\preceq a_r^P\preceq\Gamma,
\]
so its countably many defining inequalities hold almost surely.

On $G_J$, $Q$ is continuous in $\mathcal S_2$.  Continuity in the time pair
and density of $H_{\mathbb Q}$ extend the quadratic-form inequalities to all
times and all $h\in H$.  A positive operator dominated by the trace-class
operator $(t-s)\Gamma$ is trace class, and its trace norm is at most
$(t-s)\Tr\Gamma$.  This proves
\eqref{eq:common-Gamma-envelope-all-times}.  Stability follows from
\eqref{eq:selector-stopping}, \eqref{eq:shift-Q}, and covariance additivity
under concatenation; for an increment crossing a seam, the two dominated
positive increments add and their time lengths sum to $t-s$.
\end{proof}

\section{Compact perfection: spatial tightness and restart-stable cores}
\label{sec:restart-approximation-geometry}

Capacity rough paths and nonlinear-expectation dynamics supply quasi-sure
enhancements in structured settings
\cite{BoedihardjoGengQian16,GengQianYang14,PengZhang17,PengZhang22}, and
quantitative scalar/driver cores support simultaneous flows
\cite{ZhaoCausalFlows25,ZhaoDriverCores25}.  In the present Hilbert setting,
however, a trace clock allows covariance to escape through orthogonal spatial
directions; the next obstruction identifies the missing control.

\begin{theorem}[Trace-clock obstruction to compact capacity cores]
\label{thm:trace-clock-compactness-obstruction}
Assume that $H$ is infinite dimensional and let $(e_n)$ be an orthonormal
sequence.  Let $\mathbb P_n$ be the law on $\Omega_H$ of
\[
 X_t=W_t e_n,
\]
where $W$ is a standard real Brownian motion.  Then
$\mathbb P_n\in\mathfrak S_{1,0}^{0,T}(H)$ for every $n$, and for every compact
$K\subset C_0([0,T];H)$,
\begin{equation}\label{eq:trace-clock-capacity-one-outside-compacts}
 \sup_{n\ge1}\mathbb P_n(K^c)
 =c_{\mathfrak S_{1,0}^{0,T}(H)}(K^c)=1.
\end{equation}
Consequently the trace-clock class is not uniformly tight and admits no compact
capacity core.  Moreover, no $\Gamma\in\mathcal S_1(H)_+$ satisfies
\[
 e_n\otimes e_n\preceq\Gamma
 \qquad\text{for every }n.
\]
\end{theorem}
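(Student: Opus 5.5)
Membership is checked first.  Under $\mathbb P_n$ the canonical process is $X=M=We_n$ with zero drift, and its operator bracket is $\qv{M}_t=t\,e_n\otimes e_n$.  The density is therefore $a_t=e_n\otimes e_n\in\mathcal S_1(H)_+$ with $\Tr a_t=1$, so $\mathbb P_n\in\mathfrak S_{1,0}^{0,T}(H)$.  The last assertion is immediate: $e_n\otimes e_n\preceq\Gamma$ for all $n$ gives $\langle\Gamma e_n,e_n\rangle\ge1$ for every $n$, hence $\Tr\Gamma\ge\sum_n\langle\Gamma e_n,e_n\rangle=\infty$, which contradicts $\Gamma\in\mathcal S_1$.

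For the capacity statement, fix a compact $K\subset C_0([0,T];H)$.  The map $x\mapsto x_T$ is continuous, so $K_T:=\{x_T:x\in K\}$ is compact in $H$.  The plan is to exploit that a compact set in $H$ is uniformly close to a finite-dimensional subspace, and so carries vanishing mass along an orthonormal sequence.  Given $\delta>0$, I would cover $K_T$ by finitely many $\delta$-balls centered at $y_1,\dots,y_m$.  Bessel's inequality gives $\langle y_j,e_n\rangle\to0$ for each $j$, so for large $n$,
\[
 \sup_{y\in K_T}|\langle y,e_n\rangle|\le2\delta .
\]
Consequently $\mathbb P_n(K)\le\mathbb P_n(|\langle X_T,e_n\rangle|\le2\delta)=\mathbb P(|W_T|\le2\delta)\le 4\delta/\sqrt{2\pi T}$.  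Letting $n\to\infty$ and then $\delta\downarrow0$ yields $\liminf_n\mathbb P_n(K^c)=1$.  Since every $\mathbb P_n$ belongs to the class, and every capacity value is at most one, this gives
\[
 \sup_n\mathbb P_n(K^c)=c_{\mathfrak S_{1,0}^{0,T}(H)}(K^c)=1 .
\]

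The consequences are then formal.  Uniform tightness would require a compact $K$ with $\sup_P P(K^c)<1$, which \eqref{eq:trace-clock-capacity-one-outside-compacts} excludes.  A compact capacity core is a compact, or $\sigma$-compact, set carrying nearly full capacity.  For the $\sigma$-compact variant I would note that each compact piece has capacity of its complement equal to one, and would then apply the same uniform estimate to a finite union of compacts, which is again compact.  Cores in the paper's sense presumably require compact sets whose complements have small capacity, and that is precisely what the display forbids.

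The main obstacle is the uniform smallness of $\sup_{y\in K_T}|\langle y,e_n\rangle|$.  The finite $\delta$-net argument above handles it cleanly, so I expect no real difficulty.  The only care needed is to state exactly which notion of capacity core is being ruled out.
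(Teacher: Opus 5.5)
Your proof is correct and is essentially the paper's argument. You use the endpoint projection onto $e_n$, the uniform decay of $\sup_{y\in K_T}|\langle y,e_n\rangle|$ on the compact set $K_T$, the small-ball bound for $W_T$, and the trace contradiction for $\Gamma$; your $\delta$-net plus Bessel argument supplies the uniform decay that the paper simply asserts.

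The $\sigma$-compact aside should be dropped rather than repaired. The paper's compact capacity core is an increasing sequence of compacts $K_R$ with $c(K_R^c)\to0$, which the display rules out directly because each $c(K_R^c)=1$. By contrast, a $\sigma$-compact set of full capacity is not excluded by your finite-union argument: for the countable family $\{\mathbb P_n\}$ alone, one exists by tightness of each single law.
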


\begin{proof}
Each law has bracket density $a_t^{\mathbb P_n}=e_n\otimes e_n$, hence trace
one.  Let $K\subset C_0([0,T];H)$ be compact.  Its endpoint image
$K_T:=\{x_T:x\in K\}$ is compact in $H$, and therefore
\[
 \sup_{y\in K_T}|\langle y,e_n\rangle|\longrightarrow0.
\]
Set $r_n:=\sup_{y\in K_T}|\langle y,e_n\rangle|$.  On $\{X\in K\}$,
\[
 |W_T|=|\langle X_T,e_n\rangle|\le r_n,
\]
so
\[
 \mathbb P_n(K)\le \mathbb P(|W_T|\le r_n)\longrightarrow0.
\]
Hence $\sup_n\mathbb P_n(K^c)=1$; the capacity equality follows because every
$\mathbb P_n$ belongs to the trace-clock class.  If a trace-class $\Gamma$
dominated every $e_n\otimes e_n$, then
$\langle\Gamma e_n,e_n\rangle\ge1$ for all $n$, contradicting
$\operatorname{Tr}\Gamma<\infty$.
\end{proof}

We next place the common realization on restart-stable compact capacity cores.
A fixed envelope $0\preceq a_t^P\preceq\Gamma$ is one source of the spatial
tightness missing in the preceding obstruction; the construction begins with
a dyadic rough-approximation estimate.

For a continuous step-two multiplicative functional
$\mathbf x=(1,x,\mathbb x)$ set
\[
 \rho_\alpha(\mathbf x,\widetilde{\mathbf x})
 :=\sup_{s<t}\frac{|x_{s,t}-\widetilde x_{s,t}|}{|t-s|^\alpha}
  +\sup_{s<t}\frac{|\mathbb x_{s,t}-\widetilde{\mathbb x}_{s,t}|}
        {|t-s|^{2\alpha}}.
\]

\begin{lemma}[Dyadic reconstruction with Chen cross terms]
\label{lem:dyadic-rough-reconstruction}
Let $\mathbf x=(1,x,\mathbb x)$ and
$\widetilde{\mathbf x}=(1,\widetilde x,\widetilde{\mathbb x})$ be random
continuous step-two multiplicative functionals.  Suppose that, for some
$r\ge4$, $\delta>0$, $\gamma>\alpha+1/r$, and every $s<t$,
\begin{align*}
 \norm{x_{s,t}-\widetilde x_{s,t}}_{L^r}
 &\le a_r\delta|t-s|^\gamma,\\
 \norm{\mathbb x_{s,t}-\widetilde{\mathbb x}_{s,t}}_{L^{r/2}}
 &\le b_r\delta^2|t-s|^{2\gamma},\\
 \norm{x_{s,t}}_{L^r}+\norm{\widetilde x_{s,t}}_{L^r}
 &\le c_r|t-s|^{1/2},
\end{align*}
and $1/2>\alpha+1/r$.  Then
\begin{equation}\label{eq:dyadic-reconstruction-moment}
 \norm{\rho_\alpha(\mathbf x,\widetilde{\mathbf x})}_{L^{r/2}}
 \le C\bigl(a_r\delta+b_r\delta^2+a_rc_r\delta\bigr).
\end{equation}
Here $C$ depends only on $(\alpha,T)$ and the reciprocals of the two
positive gaps in the assumptions; in particular, it is uniform in $r$ when
those gaps are bounded below.
\end{lemma}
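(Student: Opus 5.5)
The plan is a Garsia--Rodemich--Rumsey/Kolmogorov-type dyadic chaining, carried out separately for the first level and then for the second level. Chen's relation supplies the cross terms at level two.

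Put $\Delta=x-\widetilde x$, $\Delta\mathbb x=\mathbb x-\widetilde{\mathbb x}$, and let $D_n$ be the dyadic points of $[0,T]$ at mesh $T2^{-n}$. Define the level-$n$ maxima
\[
 K_n:=\max_{t_i\in D_n}|\Delta_{t_i,t_{i+1}}|,\qquad
 \mathbb K_n:=\max_{t_i\in D_n}|\Delta\mathbb x_{t_i,t_{i+1}}|,\qquad
 N_n:=\max_{t_i\in D_n}\bigl(|x_{t_i,t_{i+1}}|+|\widetilde x_{t_i,t_{i+1}}|\bigr).
\]
Replacing each maximum by the sum of $r$-th powers gives
\[
 \|K_n\|_{L^r}\le (2^n)^{1/r}a_r\delta(T2^{-n})^{\gamma},\qquad
 \|N_n\|_{L^r}\le 2^{n/r}c_r(T2^{-n})^{1/2},\qquad
 \|\mathbb K_n\|_{L^{r/2}}\le 2^{2n/r}b_r\delta^2(T2^{-n})^{2\gamma}.
\]

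\textbf{Step 1 (deterministic chaining).} For $s<t$ with $2^{-m-1}T<|t-s|\le 2^{-m}T$, write $[s,t]$ as a chain of at most two dyadic intervals at each level $n\ge m+1$. Continuity takes care of the endpoints, and the dyadic points are dense. The first level is additive, which gives
\[
 |\Delta_{s,t}|\le 2\sum_{n>m}K_n .
\]
At the second level, use Chen's rule for both functionals and subtract them. For a concatenation $[u,v]\cup[v,w]$, the difference of the cross terms is
\[
 x_{u,v}\otimes x_{v,w}-\widetilde x_{u,v}\otimes\widetilde x_{v,w}
 =\Delta_{u,v}\otimes x_{v,w}+\widetilde x_{u,v}\otimes\Delta_{v,w}.
\]
Applying this iteratively along the chain, as in the standard rough Kolmogorov criterion, yields
\[
 |\Delta\mathbb x_{s,t}|\le C\sum_{n>m}\mathbb K_n+C\Bigl(\sum_{n>m}K_n\Bigr)\Bigl(\sum_{n>m}N_n\Bigr).
\]
Dividing by $|t-s|^\alpha\sim 2^{-m\alpha}$, respectively by $|t-s|^{2\alpha}$, and bounding $2^{m\alpha}\sum_{n>m}K_n\le\sum_n 2^{n\alpha}K_n$ gives
\[
 \rho_\alpha(\mathbf x,\widetilde{\mathbf x})\le C\sum_n 2^{n\alpha}K_n+C\sum_n2^{2n\alpha}\mathbb K_n+C\Bigl(\sum_n2^{n\alpha}K_n\Bigr)\Bigl(\sum_n 2^{n\alpha}N_n\Bigr).
\]

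\textbf{Step 2 (moments).} Take $L^{r/2}$ norms and use Minkowski. The first sum has $L^r$ norm, and hence $L^{r/2}$ norm, at most $Ca_r\delta\sum_n2^{-n(\gamma-\alpha-1/r)}$; this is a geometric series because $\gamma>\alpha+1/r$. The second sum is bounded by $Cb_r\delta^2\sum_n2^{-2n(\gamma-\alpha-1/r)}$. For the product term, H\"older with $\|fg\|_{L^{r/2}}\le\|f\|_{L^r}\|g\|_{L^r}$ gives at most $Ca_r\delta\cdot c_r\sum_n2^{-n(1/2-\alpha-1/r)}$, which is summable because $1/2>\alpha+1/r$. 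The constants are the sums of these geometric series together with powers of $T$. They depend only on $(\alpha,T)$ and on the reciprocals of the gaps $\gamma-\alpha-1/r$ and $1/2-\alpha-1/r$, so they stay uniform in $r$ while the gaps are bounded below. The restriction $r\ge4$ only guarantees $r/2\ge2$, so that the $L^{r/2}$ norm is a norm and Minkowski applies.

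The main obstacle is the second-level chaining bookkeeping. The cross terms must be organized so that each one is a product of a level-$n$ first-level difference $K_n$ with a single path increment $N_{n'}$, never a sum over pairs of levels that would spoil summability. I would handle this through the standard form of the chain: $[s,t]=[s,\tau_1]\cup[\tau_1,\tau_2]\cup\cdots$ with the intermediate points monotone. Every cross term then pairs the accumulated increment of $\Delta$ on one side with the accumulated increment of $x$ or $\widetilde x$ on the other. Each accumulated increment is itself bounded by the corresponding dyadic sum, which produces exactly the product of sums displayed in Step 1.
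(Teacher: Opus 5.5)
Your proposal is correct and follows the paper's proof essentially step for step. Both use the dyadic adjacent-increment maxima controlled by the sum-of-powers inequality, and a left-to-right dyadic decomposition with at most two blocks per generation. Both then bound the Chen cross terms $\Delta_{I_i}\otimes x_{I_j}+\widetilde x_{I_i}\otimes\Delta_{I_j}$ by the product of the two weighted dyadic sums, and finish with Minkowski and H\"older for the moments. One minor point: Minkowski in $L^{r/2}$ needs only $r\ge2$, so it is not what $r\ge4$ is for, but this does not affect the argument.
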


\begin{proof}
An affine time change reduces the proof to $T=1$.  Write
$t_i^k=i2^{-k}$ and introduce the adjacent-increment maxima
\begin{align*}
 A_k&:=\max_{0\le i<2^k}
       |(x-\widetilde x)_{t_i^k,t_{i+1}^k}|,\\
 B_k&:=\max_{0\le i<2^k}
       |(\mathbb x-\widetilde{\mathbb x})_{t_i^k,t_{i+1}^k}|,\\
 C_k&:=\max_{0\le i<2^k}
       \bigl(|x_{t_i^k,t_{i+1}^k}|
             +|\widetilde x_{t_i^k,t_{i+1}^k}|\bigr).
\end{align*}
The elementary maximum inequality
$\|\max_i|Z_i|\|_{L^p}\le
 (\sum_i\|Z_i\|_{L^p}^p)^{1/p}$ gives
\begin{align}
 \|A_k\|_{L^r}
 &\le a_r\delta\,2^{-k(\gamma-1/r)},
 &\|B_k\|_{L^{r/2}}
 &\le b_r\delta^2\,2^{-k(2\gamma-2/r)},
 \label{eq:dyadic-maxima-error}\\
 \|C_k\|_{L^r}
 &\le c_r\,2^{-k(1/2-1/r)}.
 \label{eq:dyadic-maxima-base}
\end{align}

The deterministic chaining estimate is as follows.  Every open interval $(s,t)$
has a disjoint dyadic decomposition, ordered from left to right, containing
at most two intervals of each generation $k\ge k_0$, where
$2^{-k_0-1}<t-s\le2^{-k_0}$.  Endpoint pieces are obtained as limits;
continuity permits passage to those limits.  The identities below are first
applied to finite contiguous truncations.  If the weighted series diverge,
the resulting pathwise bounds are immediate; otherwise, let the truncations
exhaust the interval.  Additivity of the first level therefore yields
\begin{equation}\label{eq:dyadic-first-chain}
 \sup_{s<t}\frac{|(x-\widetilde x)_{s,t}|}{|t-s|^\alpha}
 \le C\sum_{k\ge0}2^{\alpha k}A_k.
\end{equation}
For an ordered collection of dyadic blocks $(I_j)$ in such a decomposition,
iterating Chen's identity gives
\begin{align*}
 (\mathbb x-\widetilde{\mathbb x})_{s,t}
 &=\sum_j(\mathbb x-\widetilde{\mathbb x})_{I_j}\\
 &\quad+\sum_{i<j}\Bigl(
   (x-\widetilde x)_{I_i}\otimes x_{I_j}
   +\widetilde x_{I_i}\otimes(x-\widetilde x)_{I_j}
 \Bigr).
\end{align*}
Consequently,
\begin{equation}\label{eq:dyadic-second-chain}
 \sup_{s<t}
 \frac{|(\mathbb x-\widetilde{\mathbb x})_{s,t}|}{|t-s|^{2\alpha}}
 \le C\sum_{k\ge0}2^{2\alpha k}B_k
   +C\left(\sum_{k\ge0}2^{\alpha k}A_k\right)
       \left(\sum_{k\ge0}2^{\alpha k}C_k\right).
\end{equation}
Indeed, the three unweighted sums over generations $k\ge k_0$ contain at
most two terms per generation, and multiplication by
$|t-s|^{-\alpha}$ or $|t-s|^{-2\alpha}$ is absorbed by the displayed
weights.

Minkowski's inequality, \eqref{eq:dyadic-maxima-error}--
\eqref{eq:dyadic-maxima-base}, and H\"older's inequality for the product in
\eqref{eq:dyadic-second-chain} now show
\begin{align*}
 \left\|\sum_k2^{\alpha k}A_k\right\|_{L^r}
 &\le a_r\delta\sum_k2^{-k(\gamma-\alpha-1/r)},\\
 \left\|\sum_k2^{2\alpha k}B_k\right\|_{L^{r/2}}
 &\le b_r\delta^2\sum_k2^{-k(2\gamma-2\alpha-2/r)},\\
 \left\|\sum_k2^{\alpha k}C_k\right\|_{L^r}
 &\le c_r\sum_k2^{-k(1/2-\alpha-1/r)}.
\end{align*}
All three series converge under the stated assumptions.  Combining these
bounds with \eqref{eq:dyadic-first-chain}--
\eqref{eq:dyadic-second-chain}, using monotonicity from $L^r$ to
$L^{r/2}$ for the first level, proves
\eqref{eq:dyadic-reconstruction-moment}; lower bounds for the two exponent
gaps make the geometric-series constants uniform in $r$.
\end{proof}

\begin{lemma}[Dyadic geometric resolvent bound]
\label{lem:dyadic-geometric-resolvent-bound}
Let $1/3<\alpha<1/2$ and
$0<\vartheta<1/2-\alpha$.  For the smooth resolvent path $Y^\eps$ and the
canonical geometric lift $\mathbf X^{S,P}$,
\begin{equation}\label{eq:dyadic-geometric-resolvent-rate}
 \sup_{P\in\Smax}
 \|\rho_\alpha(S_2(Y^\eps),\mathbf X^{S,P})\|_{L^q(P)}
 \le Cq\eps^\vartheta,
 \qquad q\ge q_0.
\end{equation}
The estimate is used only to choose summable dyadic core gauges.
\end{lemma}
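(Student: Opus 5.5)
The plan is to apply \Cref{lem:dyadic-rough-reconstruction} with $\mathbf x=S_2(Y^\eps)$, $\widetilde{\mathbf x}=\mathbf X^{S,P}$, moment exponent $r=2q$, and $\delta=\eps^{\vartheta'}$ for a slightly larger $\vartheta'$. The task is then to produce the three increment bounds of that lemma with exponents $\gamma>\alpha+1/r$ and $\gamma$ near the value allowed by interpolation. The base bound is immediate: \eqref{eq:enhanced-first-increment} gives $\|X_{s,t}\|_{L^r}\le C\sqrt r\,h^{1/2}$ with $h=t-s$. Since $Y^\eps$ is a convolution average of $X$ (the first bound in \eqref{eq:deterministic-resolvent-bounds} in moment form, or Minkowski applied to the kernel representation), the same bound holds for $Y^\eps_{s,t}$. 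Hence $c_r\le C\sqrt q$.

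For the first level, $x-\widetilde x=-D^\eps$, and I interpolate two estimates. The first is $\|D^\eps_{s,t}\|_{L^r}\le 2\sup_u\|D^\eps_u\|_{L^r}\le C\sqrt q\,\eps^{1/2}$ by \eqref{eq:D-pointwise-moment}. The second is $\|D^\eps_{s,t}\|_{L^r}\le \|X_{s,t}\|+\|Y^\eps_{s,t}\|\le C\sqrt q\,h^{1/2}$. Taking the geometric mean with weights $2\vartheta'$ and $1-2\vartheta'$ gives $\|D^\eps_{s,t}\|_{L^r}\le C\sqrt q\,\eps^{\vartheta'}h^{1/2-\vartheta'}$. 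So $\gamma=1/2-\vartheta'$, $\delta=\eps^{\vartheta'}$, and $a_r=C\sqrt q$. The condition $\gamma>\alpha+1/r$ holds for $q\ge q_0$ precisely because $\vartheta'<1/2-\alpha$; I would choose $\vartheta<\vartheta'<1/2-\alpha$ or simply $\vartheta'=\vartheta$.

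For the second level I write $S_2(Y^\eps)_{s,t}-\mathbb X^{S,P}_{s,t}$ using Chen at the base point. The antisymmetric part is controlled by $J^\eps-J^P$ plus cross terms of the form $D^\eps_s\otimes X_{s,t}$ and $Y^\eps_{s,t}\otimes D^\eps$. The symmetric parts are $\tfrac12(Y^{\eps\otimes2}_{s,t}-X^{\otimes2}_{s,t})$, since both lifts are geometric, so the symmetric part is quadratic in the first-level increments. An interval estimate is still needed: restarting as in \Cref{lem:resolvent-seam}, the increment difference equals a restarted error $\int_s^t D^\eps_{s,r}\otimes dX_r$ plus boundary-layer terms $D^\eps_s\otimes\mathcal A$-type terms. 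By \Cref{lem:dimension-free-tensor-error-transform} on $[s,t]$, the restarted error is bounded by $Cq(\int_s^t\|D^\eps_r-\ldots\|^2)^{1/2}$, which gives both $Cq\,\eps^{1/2}h^{1/2}$ and $Cq\,h$. The boundary-layer products are handled by Hölder together with the pointwise moments. Interpolating these as before yields $b_r\delta^2h^{2\gamma}$ with $b_r\le Cq$.

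Plugging into \eqref{eq:dyadic-reconstruction-moment} gives $C(\sqrt q\,\eps^{\vartheta}+q\eps^{2\vartheta}+q\eps^{\vartheta})\le Cq\eps^\vartheta$ in $L^{q}$. The gap constants are uniform in $q$ by the lemma's last clause, and the supremum over $P$ is harmless because all inputs are uniform in $\Smax$. I expect the main obstacle to be the second-level interval bound at the correct scale $\eps^{2\vartheta}h^{1-2\vartheta}$. The difficulty is that the incoming-defect cross terms $D^\eps_s\otimes(\cdot)_{s,t}$ are not small in $h$ for $h\ll\eps$. My first attempt would be the case split $h\le\eps$ versus $h\ge\eps$, as in the proof of \Cref{lem:resolvent-boundary-layer}, using the crude bound $Cq\,h$ in the first regime and the $\eps$-gain bound in the second.
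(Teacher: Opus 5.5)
Your overall plan is the paper's. You apply \Cref{lem:dyadic-rough-reconstruction} with $r=2q$, $\delta=\eps^{\vartheta}$, $\gamma=\tfrac12-\vartheta$. Your first-level interpolation $\|D^\eps_{s,t}\|_{L^{2q}}\le C\sqrt q\,(h^{1/2}\wedge\eps^{1/2})\le C\sqrt q\,\eps^{\vartheta}h^{\gamma}$ is the paper's. Your symmetric second level, written as a quadratic expression in $Y^\eps_{s,t}$ and $D^\eps_{s,t}$, already gives $Cq\,(h\wedge\sqrt{\eps h})$ from your bounds. You also control the restarted error $\int_s^tD^\eps_{s,r}\otimes\dd X_r$ by BDG through $\int_0^h(u\wedge\eps)\,\dd u$. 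The gap is exactly where you anticipated it, and the fallback you propose there is not justified as written.

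Write $\mathbb Y^\eps$ for the second level of $S_2(Y^\eps)$ and carry out your Chen-at-$0$ computation exactly. The cross term $\Anti(D^\eps_s\otimes X_{s,t})$ combines with the increment of $J^\eps-J^P$ into the restarted error, and no $\mathcal A$-type terms arise:
\[
 \Anti\bigl(\mathbb Y^\eps_{s,t}-\mathbb X^{S,P}_{s,t}\bigr)
 =\Anti\bigl(D^\eps_t\otimes Y^\eps_{s,t}\bigr)
 -\Anti\int_s^tD^\eps_{s,r}\otimes\dd X_r .
\]
The remaining boundary term is the problem. With the bounds you list, $\|D^\eps_t\|_{L^{2q}}\lesssim\sqrt{q\eps}$ and $\|Y^\eps_{s,t}\|_{L^{2q}}\lesssim\sqrt q\,h^{1/2}$, H\"older gives only $Cq\sqrt{\eps h}$. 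Since $\vartheta<\tfrac12-\alpha<\tfrac16$, the ratio $\sqrt{\eps h}/(\eps^{2\vartheta}h^{1-2\vartheta})=(\eps/h)^{1/2-2\vartheta}$ is unbounded as $h\downarrow0$. So the second-level hypothesis of the reconstruction lemma fails on intervals shorter than $\eps$.

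Your proposed ``crude bound $Cq\,h$ for $h\le\eps$'' does not follow from anything stated. As you note yourself, the individual cross terms really are of size $\sqrt{\eps h}$ there, and even an $O(qh)$ bound on $\mathbb Y^\eps_{s,t}$ itself needs an extra input.

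That input is the smoothness of the resolvent path. From $\eps\dot Y^\eps=D^\eps$ and \eqref{eq:D-pointwise-moment}, the identity $Y^\eps_{s,t}=\eps^{-1}\int_s^tD^\eps_r\,\dd r$ gives $\|Y^\eps_{s,t}\|_{L^{2q}}\le C\sqrt q\,h/\sqrt\eps$. Hence $\|Y^\eps_{s,t}\|_{L^{2q}}\le C\sqrt q\,(h^{1/2}\wedge h/\sqrt\eps)$, and so $\|D^\eps_t\otimes Y^\eps_{s,t}\|_{L^q}\le Cq\,(h\wedge\sqrt{\eps h})$. With this, every piece of the second-level difference is $O\bigl(q\,(h\wedge\sqrt{\eps h})\bigr)\le Cq\,\eps^{1-2\gamma}h^{2\gamma}$, using $\gamma>\tfrac14$. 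No case split is needed, and this is how the paper closes the argument.
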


\begin{proof}
Set $h=t-s$ and fix $0<\vartheta<1/2-\alpha$.  Choose $q_0$ so large that
\[
 \frac1{2q_0}<\frac12-\alpha-\vartheta,
\]
and put $\gamma:=1/2-\vartheta$.  For $q\ge q_0$, apply
\Cref{lem:dyadic-rough-reconstruction} with $r=2q$ and
$\delta=\eps^\vartheta$.

The stochastic convolution identity gives
\[
 \|D_{s,t}^\eps\|_{L^{2q}(P)}
 \le C\sqrt q\,(h^{1/2}\wedge\eps^{1/2}).
\]
For $0<\gamma<1/2$,
\[
 h^{1/2}\wedge\eps^{1/2}
 \le \eps^{1/2-\gamma}h^\gamma
 =\delta h^\gamma,
\]
so the first-level hypothesis holds with $a_r\le C\sqrt q$.

Let $\mathbb Y^\eps$ be the second level of $S_2(Y^\eps)$.  The following
second-level estimate is used in the compact-core gauges.  First,
\[
 \|Y_{s,t}^\eps\|_{L^{2q}(P;H)}
 \le C\sqrt q\left(h^{1/2}\wedge\frac{h}{\sqrt\eps}\right).
 \tag{*}
\]
Indeed, the $h^{1/2}$ bound follows from
$Y_{s,t}^\eps=X_{s,t}-D_{s,t}^\eps$, while
$\eps\dot Y^\eps=D^\eps$ and
\eqref{eq:D-pointwise-moment} give the $h/\sqrt\eps$ bound; the bounded-drift
terms are absorbed into $C=C(\Lambda,B,T)$.  Since both lifts are geometric,
\begin{align*}
 \Sym\bigl(\mathbb Y_{s,t}^\eps-\mathbb X_{s,t}^{S,P}\bigr)
 &=\frac12\left((Y_{s,t}^\eps)^{\otimes2}
                 -X_{s,t}^{\otimes2}\right).
\end{align*}
Using $X_{s,t}=Y_{s,t}^\eps+D_{s,t}^\eps$, the preceding first-level bounds
and H\"older's inequality therefore yield
\begin{equation}\label{eq:resolvent-second-symmetric-local}
 \left\|\Sym\bigl(\mathbb Y_{s,t}^\eps
                  -\mathbb X_{s,t}^{S,P}\bigr)\right\|_{L^q(P;\mathcal S_2)}
 \le Cq\,(h\wedge\sqrt{\eps h}).
\end{equation}

For the antisymmetric part, write $D_{s,r}^\eps=D_r^\eps-D_s^\eps$.
Since $dY_r^\eps=\eps^{-1}D_r^\eps\,dr$, integration by parts gives the exact
identity
\begin{align}
 \Anti\bigl(\mathbb Y_{s,t}^\eps-\mathbb X_{s,t}^{S,P}\bigr)
 &=\Anti\bigl(D_t^\eps\otimes Y_{s,t}^\eps\bigr)
   -\Anti\int_s^tD_{s,r}^\eps\otimes dM_r^P \notag\\
 &\quad
   -\Anti\int_s^tD_{s,r}^\eps\otimes b_r^P\,dr.
 \label{eq:resolvent-second-antisymmetric-decomposition}
\end{align}
To verify it, expand $X_{s,r}=Y_{s,r}^\eps+D_{s,r}^\eps$ in the classical
Stratonovich area.  The cross integral
$\int_s^tY_{s,r}^\eps\otimes dD_r^\eps$ has antisymmetric part
$\Anti(Y_{s,t}^\eps\otimes D_t^\eps)$, because
$dY_r^\eps\otimes D_r^\eps=\eps^{-1}D_r^\eps{}^{\otimes2}dr$ is symmetric;
the remaining increment term is the two integrals displayed above.  Notice
that the It\^o--Stratonovich correction is symmetric and hence disappears
after $\Anti$.

The boundary term in
\eqref{eq:resolvent-second-antisymmetric-decomposition} is bounded by
\eqref{eq:D-pointwise-moment} and $(*)$:
\[
 \|D_t^\eps\otimes Y_{s,t}^\eps\|_{L^q(P;\mathcal S_2)}
 \le Cq\,(h\wedge\sqrt{\eps h}).
\]
For the martingale term, the covariance-weighted Hilbert--Schmidt identity
and maximal BDG give
\begin{align*}
 \left\|\int_s^tD_{s,r}^\eps\otimes dM_r^P\right\|_{L^q(P;\mathcal S_2)}
 &\le C\sqrt{q\Lambda}
       \left(\int_s^t
       \|D_{s,r}^\eps\|_{L^{2q}(P;H)}^2\,dr\right)^{1/2}\\
 &\le Cq\left(\int_0^h(u\wedge\eps)\,du\right)^{1/2}
 \le Cq\,(h\wedge\sqrt{\eps h}).
\end{align*}
Finally,
\[
 \left\|\int_s^tD_{s,r}^\eps\otimes b_r^P\,dr\right\|_{L^q(P;\mathcal S_2)}
 \le CB\sqrt q\int_0^h(\sqrt u\wedge\sqrt\eps)\,du
 \le Cq\,(h\wedge\sqrt{\eps h}),
\]
where $h\le T$ is used in the last step.  Combining this estimate with
\eqref{eq:resolvent-second-symmetric-local} proves
\begin{equation}\label{eq:resolvent-second-local-increment}
 \|\mathbb Y_{s,t}^\eps-\mathbb X_{s,t}^{S,P}\|_{L^q(P;\mathcal S_2)}
 \le Cq\,(h\wedge\sqrt{\eps h}).
\end{equation}
Since $\gamma>1/4$,
\[
 h\wedge\sqrt{\eps h}
 \le \eps^{1-2\gamma}h^{2\gamma}
 =\delta^2h^{2\gamma},
\]
so the second-level hypothesis holds with $b_r\le Cq$.  Finally,
\Cref{lem:enhanced-moments} gives the base increment bound with
$c_r\le C\sqrt q$.

The inequalities
$\gamma>\alpha+1/(2q)$ and $1/2>\alpha+1/(2q)$ follow from the choice of
$q_0$.  Therefore \Cref{lem:dyadic-rough-reconstruction} yields
\[
 \|\rho_\alpha(S_2(Y^\eps),\mathbf X^{S,P})\|_{L^q(P)}
 \le C\bigl(\sqrt q\,\delta+q\delta^2+q\delta\bigr)
 \le Cq\eps^\vartheta,
\]
uniformly in $P$, after enlarging $C$ for $0<\eps\le T$.  This is
\eqref{eq:dyadic-geometric-resolvent-rate}.
\end{proof}

\subsection{Spatial trace tightness and compact-envelope realization}

\label{sec:master-core}

By \Cref{thm:trace-clock-compactness-obstruction}, the additional intrinsic
compactness datum is a uniform spatial trace profile.

\begin{definition}[Uniform spatial trace-tightness profile]
\label{def:uniform-spatial-trace-tightness}
Let $\varnothing\ne\mathcal P_{\rm sp}\subset
\mathfrak S_{\Lambda,0}^{0,T}(H)$.  A sequence of finite-rank orthogonal
projections $P_m\uparrow I_H$, together with numbers
$\vartheta_m\downarrow0$, is a \emph{uniform spatial trace-tightness profile}
if, with $R_m:=I-P_m$,
\begin{equation}\label{eq:uniform-spatial-trace-profile}
 \sup_{P\in\mathcal P_{\rm sp}}
 \left\|\Tr(R_ma_t^PR_m)\right\|_{L^\infty(\dd t\otimes P)}
 \le\vartheta_m,
 \qquad m\ge0.
\end{equation}
After passing to a subsequence and relabeling, one may assume
$\vartheta_m\le2^{-8m}$.  Writing $r_m=\operatorname{rank}P_m$, define
\begin{equation}\label{eq:spatial-trace-profile-modulus}
 \omega_{\boldsymbol\vartheta}(\eps)
 :=\inf_{m\ge0}\left(\sqrt{r_m\eps}+\sqrt{\vartheta_m}\right).
\end{equation}
Then $\omega_{\boldsymbol\vartheta}(\eps)\downarrow0$ as $\eps\downarrow0$.
\end{definition}

\begin{example}[Spatial trace tightness without a dominating envelope]
\label{ex:spatial-trace-tight-no-envelope}
Let $H$ be infinite dimensional, let $(e_n)$ be an orthonormal basis, and let
$\mathbb P_n$ be the law of $X_t=n^{-1/2}W_te_n$.  For the projection onto
$\operatorname{span}\{e_1,\ldots,e_m\}$,
\[
 \sup_{n\ge1}\Tr\bigl((I-P_m)a^{\mathbb P_n}(I-P_m)\bigr)
 =\sup_{n>m}\frac1n=\frac1{m+1}\longrightarrow0.
\]
Thus the family has a uniform spatial trace-tightness profile.  However, if
one positive trace-class operator $\Gamma$ dominated every
$n^{-1}e_n\otimes e_n$, then
$\langle\Gamma e_n,e_n\rangle\ge1/n$ for all $n$, contradicting
$\Tr\Gamma<\infty$.  Hence spatial trace tightness is strictly weaker than
the existence of a common trace-class operator envelope.
\end{example}

\begin{lemma}[Compact restart geometry of covariance profiles]
\label{lem:covariance-profile-compact-restart}
Fix a uniform spatial trace-tightness profile
$(P_m,\vartheta_m)$ and put $R_m=I-P_m$.  Let
$\mathscr Q_{\Lambda,\boldsymbol\vartheta}$ be the set of continuous paths
$q:[0,T]\to\mathcal S_1(H)_{\rm sa}$ with $q_0=0$ such that, for every
$0\le s\le t\le T$ and every $m$,
\begin{equation}\label{eq:abstract-covariance-profile-class}
 0\preceq q_{s,t},\qquad
 \Tr q_{s,t}\le\Lambda(t-s),\qquad
 \Tr(R_mq_{s,t}R_m)\le\vartheta_m(t-s).
\end{equation}
Then:
\begin{enumerate}[label=\textup{(\roman*)},leftmargin=2.5em]
\item $\mathscr Q_{\Lambda,\boldsymbol\vartheta}$ is compact in
$C([0,T];\mathcal S_1(H))$.  For every $\nu>1$, it is also compact in the
$\nu$-variation topology, and the identity from the uniform topology to the
$\nu$-variation topology is continuous on this class.
\item The class is closed under deterministic stopping, fixed-horizon future
shift, and covariance concatenation
\[
 (q\otimes_s\widetilde q)_t
 :=q_{t\wedge s}+\widetilde q_{(t-s)_+},
\]
with the usual constant-extension convention.  In particular, the spatial
trace profile is a restart congruence: no enlargement of
$(\Lambda,\boldsymbol\vartheta)$ is needed under these operations.
\end{enumerate}
\end{lemma}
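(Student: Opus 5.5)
Compactness in $C([0,T];\mathcal S_1(H))$ will come from Arzel\`a--Ascoli: the class is equicontinuous and its pointwise values lie in a fixed compact subset of $\mathcal S_1(H)$. Equicontinuity is immediate, since for positive $q_{s,t}$ we have $\|q_t-q_s\|_1=\Tr q_{s,t}\le\Lambda|t-s|$. For pointwise relative compactness, fix $t$ and consider $K_t:=\{q\in\mathcal S_1(H)_+:\Tr q\le\Lambda t,\ \Tr(R_mqR_m)\le\vartheta_mt\ \forall m\}$. For positive $q$ we estimate
\[
 \|q-P_mqP_m\|_1\le\|R_mq\|_1+\|P_mqR_m\|_1\le2\|q^{1/2}\|_2\|R_mq^{1/2}\|_2\le2\sqrt{\Lambda T\,\vartheta_mT},
\]
using $\|R_mq^{1/2}\|_2^2=\Tr(R_mqR_m)$. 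Each $P_mK_tP_m$ is a bounded subset of the finite-dimensional space $\mathcal L(P_mH)$, so $K_t$ is totally bounded. It is also closed in trace norm, because positivity, trace and the compressed traces are all trace-norm continuous; hence $K_t$ is compact. Closedness of $\mathscr Q_{\Lambda,\boldsymbol\vartheta}$ under uniform convergence follows by the same continuity applied increment by increment, and Arzel\`a--Ascoli then gives compactness in $C([0,T];\mathcal S_1)$.

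For the $\nu$-variation statement with $\nu>1$ I will use interpolation. Every $q$ in the class has $1$-variation at most $\Lambda T$: for a partition, $\sum\|q_{t_i,t_{i+1}}\|_1=\sum\Tr q_{t_i,t_{i+1}}\le\Lambda T$. The standard bound $|q-\widetilde q|_{\nu\text{-var}}\le|q-\widetilde q|_{1\text{-var}}^{1/\nu}\bigl(2\|q-\widetilde q\|_\infty\bigr)^{1-1/\nu}$ then holds, with the $1$-variation of the difference at most $2\Lambda T$. So uniform convergence within the class implies $\nu$-variation convergence. This gives continuity of the identity, and compactness transfers as the continuous image of a compact set.

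For part (ii), stopping replaces $q_{s,t}$ by $q_{s\wedge u,t\wedge u}$, an increment over a shorter interval, so each of the three inequalities holds with the smaller length. The fixed-horizon shift is handled the same way: it is an increment of $q$ over $[s+a\wedge L,s+b\wedge L]$, whose length is at most $b-a$. For concatenation with $u\le s\le v$, the increment splits as $q_{u,s}+\widetilde q_{0,v-s}$. Both pieces are positive, and the three defining functionals (positivity, trace, compressed trace) are additive and monotone. The bounds therefore add to $\Lambda(v-u)$ and $\vartheta_m(v-u)$, and $q_0=0$ and continuity are preserved. The only step that needs care is the spatial tail estimate converting trace tightness into trace-norm total boundedness via $q^{1/2}$; the rest is bookkeeping.
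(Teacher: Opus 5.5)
Your proof is correct and follows the paper's argument. The uniform trace-norm Lipschitz bound, together with the spatial-tail block estimate, gives Arzel\`a--Ascoli compactness. Your two-term splitting $q-P_mqP_m=R_mq+P_mqR_m$ even drops the extra $\Tr(R_mqR_m)$ term of the paper's four-block version, which makes no difference. The $1$-variation/uniform interpolation inequality then transfers compactness to $\nu$-variation, and restart closure is the same restriction-and-splitting of increments.
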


\begin{proof}
Every path in the class is increasing in the positive cone and
\[
 \|q_t-q_s\|_1=\Tr q_{s,t}\le\Lambda|t-s|,
\]
so the family is uniformly trace-norm Lipschitz.  For a positive trace-class
operator $B$ and an orthogonal projection $P$ with $R=I-P$, the four-block
decomposition and trace-ideal Cauchy--Schwarz give
\begin{equation}\label{eq:positive-block-tail}
 \|B-PBP\|_1
 \le2\sqrt{\Tr B\,\Tr(RBR)}+\Tr(RBR).
\end{equation}
Applying this to $B=q_t$ gives
\[
 \|q_t-P_mq_tP_m\|_1
 \le 2\sqrt{\Lambda T\,\vartheta_mT}+\vartheta_mT.
\]
The compressed values lie in a bounded subset of the finite-dimensional
space $P_m\mathcal S_1(H)P_m$; hence the set of all values $q_t$ is uniformly
totally bounded in trace norm.  Banach-valued Arzel\`a--Ascoli now gives
relative compactness in $C([0,T];\mathcal S_1)$.  The conditions in
\eqref{eq:abstract-covariance-profile-class} are closed under uniform
trace-norm convergence, so the class is compact.

For any Banach-valued bounded-variation paths $f,g$ and $\nu>1$,
\begin{equation}\label{eq:uniform-variation-interpolation}
 \|f-g\|_{\nu\text{-var}}
 \le
 (2\|f-g\|_\infty)^{1-1/\nu}
 (\|f\|_{1\text{-var}}+\|g\|_{1\text{-var}})^{1/\nu}.
\end{equation}
Every $q$ in the class has total trace variation at most $\Lambda T$;
therefore uniform convergence implies $\nu$-variation convergence on the
class, proving the first part.

Stopping and future shift restrict increments of $q$, so all three
bounds in \eqref{eq:abstract-covariance-profile-class} are preserved.  For a
concatenated increment crossing the seam, positivity and additivity split it
into one increment of $q$ and one of $\widetilde q$; the trace and every tail
trace then add with total time length $t-s$.  This proves the restart claim.
\end{proof}

\begin{proposition}[Covariance path--density isomorphism]
\label{prop:covariance-path-density-isomorphism}
Fix a spatial trace profile $(P_m,\vartheta_m)$ and write $R_m=I-P_m$.
Let $\mathscr A_{\Lambda,\boldsymbol\vartheta}$ be the space of Lebesgue-a.e.
equivalence classes of strongly measurable
$a:[0,T]\to\mathcal S_1(H)_{\rm sa}$ satisfying, for almost every $t$,
\begin{equation}\label{eq:covariance-density-profile-class}
 0\preceq a_t,
 \qquad
 \Tr a_t\le\Lambda,
 \qquad
 \Tr(R_ma_tR_m)\le\vartheta_m\quad(m\ge0).
\end{equation}
Equip $\mathscr A_{\Lambda,\boldsymbol\vartheta}$ with the inherited
$L^1([0,T];\mathcal S_1)$ metric
\begin{equation}\label{eq:covariance-density-L1-metric}
 d_{\rm den}(a,\widetilde a)
 :=\int_0^T\|a_t-\widetilde a_t\|_1\,\dd t.
\end{equation}
This is a closed, hence complete, subset of
$L^1([0,T];\mathcal S_1(H)_{\rm sa})$.
The Bochner integration map
\begin{equation}\label{eq:covariance-density-integration-map}
 \mathcal I_{\rm cov}:\mathscr A_{\Lambda,\boldsymbol\vartheta}
 \longrightarrow\mathscr Q_{\Lambda,\boldsymbol\vartheta},
 \qquad
 (\mathcal I_{\rm cov}a)_t:=\int_0^t a_r\,\dd r,
\end{equation}
is a bijection.  When $\mathscr Q_{\Lambda,\boldsymbol\vartheta}$ is equipped
with the trace-variation metric, it is an isometric bijection; in particular
that metric space is complete.  More explicitly, if
$q=\mathcal I_{\rm cov}a$ and
$\widetilde q=\mathcal I_{\rm cov}\widetilde a$, then
\begin{align}
 \|q-\widetilde q\|_{1\text{-var};\mathcal S_1}
 &=\int_0^T\|a_t-\widetilde a_t\|_1\,\dd t,
 \label{eq:covariance-density-variation-isometry}\\
 \|q-\widetilde q\|_{\infty;\mathcal S_1}
 &\le\int_0^T\|a_t-\widetilde a_t\|_1\,\dd t.
 \label{eq:covariance-density-uniform-contraction}
\end{align}
Thus the derivative class is the exact $L^1$ tangent coordinate of the
covariance path when the latter is equipped with trace variation.

The bijection intertwines restart operations.  For $s\in[0,T]$, define, up to
Lebesgue-a.e. equality,
\begin{align}
 (r_sa)_t&:=\one_{\{t<s\}}a_t,
 \label{eq:density-stopping-operation}\\
 (\theta_sa)_v&:=\one_{\{v<T-s\}}a_{s+v},
 \label{eq:density-shift-operation}\\
 (a\otimes_s\widetilde a)_t
 &:=\one_{\{t<s\}}a_t+\one_{\{t>s\}}\widetilde a_{t-s}.
 \label{eq:density-concatenation-operation}
\end{align}
Then
\begin{equation}\label{eq:covariance-density-restart-intertwining}
 \mathcal I_{\rm cov}(r_sa)=r_s\mathcal I_{\rm cov}(a),
 \qquad
 \mathcal I_{\rm cov}(\theta_sa)=\theta_s\mathcal I_{\rm cov}(a),
 \qquad
 \mathcal I_{\rm cov}(a\otimes_s\widetilde a)
 =\mathcal I_{\rm cov}(a)\otimes_s\mathcal I_{\rm cov}(\widetilde a).
\end{equation}
All three density operations preserve
$\mathscr A_{\Lambda,\boldsymbol\vartheta}$.

If $\Gamma\in\mathcal S_1(H)_+$, the additional pathwise order condition
\begin{equation}\label{eq:covariance-path-Gamma-order}
 0\preceq q_{s,t}\preceq(t-s)\Gamma
 \qquad(s\le t)
\end{equation}
is equivalent under \eqref{eq:covariance-density-integration-map} to
$0\preceq a_t\preceq\Gamma$ for almost every $t$.
\end{proposition}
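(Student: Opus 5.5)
The plan is to work first on the density side, where the profile conditions are pointwise in $t$, and then transport everything through Bochner integration. \emph{Closedness.} Suppose $a^n\to a$ in $L^1([0,T];\mathcal S_1)$. Pass to a subsequence converging in trace norm for almost every $t$. Positivity, the bound $\Tr a_t\le\Lambda$, and each tail bound $\Tr(R_ma_tR_m)\le\vartheta_m$ are trace-norm closed conditions, since $b\mapsto\Tr(R_mbR_m)$ is trace-norm continuous. So the limit satisfies \eqref{eq:covariance-density-profile-class}, and completeness follows from completeness of the Bochner $L^1$ space.

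\emph{Well-definedness of $\mathcal I_{\rm cov}$.} Since $\|a_t\|_1=\Tr a_t\le\Lambda$, the Bochner integral exists and $q=\mathcal I_{\rm cov}a$ is trace-norm Lipschitz with $q_0=0$. For each increment:
\begin{itemize}
\item $q_{s,t}=\int_s^ta_r\,\dd r$ is positive, because the positive cone is closed and convex;
\item $\Tr q_{s,t}\le\Lambda(t-s)$ and $\Tr(R_mq_{s,t}R_m)\le\vartheta_m(t-s)$, because the trace and the compressed trace are continuous linear functionals that commute with the Bochner integral.
\end{itemize}
Hence $q\in\mathscr Q_{\Lambda,\boldsymbol\vartheta}$. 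Injectivity is uniqueness of Bochner derivatives: Lebesgue differentiation for Bochner integrals recovers $a_t$ almost everywhere.

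\emph{Surjectivity (main obstacle).} Given $q\in\mathscr Q_{\Lambda,\boldsymbol\vartheta}$, we must produce a density, even though $\mathcal S_1$ lacks the Radon--Nikodym property in general. I would exploit that $\mathcal S_1=\mathcal K^*$ is a separable dual space and hence has the Radon--Nikodym property. Concretely, $q$ is Lipschitz into $\mathcal S_1$, so it is differentiable almost everywhere with a strongly measurable derivative $a$ satisfying $q_t=\int_0^ta_r\,\dd r$.

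As a safer alternative that avoids citing RNP, I would use the compressions $P_mqP_m$, which take values in a finite-dimensional space. These are Lipschitz, so each has an a.e. derivative $a^{(m)}$, and the $a^{(m)}$ are compatible under further compression. By \eqref{eq:positive-block-tail} applied to the increments,
\[
\|q_{s,t}-P_mq_{s,t}P_m\|_1\le C(t-s)\sqrt{\vartheta_m}.
\]
Dividing by $t-s$ and letting it shrink, this gives the pointwise derivative bound
\[
\|a^{(m)}_t-a^{(m')}_t\|_1\le C\sqrt{\vartheta_{m\wedge m'}},
\]
so $(a^{(m)})$ is uniformly Cauchy in $L^\infty(\mathcal S_1)$ and converges to a limit $a$. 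Passing to the limit in $P_mq_tP_m=\int_0^ta^{(m)}_r\,\dd r$ gives $q=\mathcal I_{\rm cov}a$. The pointwise profile conditions on $a$ then follow by differentiating the increment inequalities at Lebesgue points.

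\emph{Metric identities.} For the variation isometry \eqref{eq:covariance-density-variation-isometry}, take $f=a-\widetilde a$. The inequality $\|q-\widetilde q\|_{1\text{-var}}\le\int\|f\|_1$ is the triangle inequality for integrals over a partition. For the reverse inequality, approximate $f$ in $L^1$ by step functions subordinate to fine partitions; the martingale/Lebesgue-differentiation argument shows that the sums of averaged increments $\sum\|\int_{I}f\|_1$ converge to $\int\|f\|_1$. The uniform bound \eqref{eq:covariance-density-uniform-contraction} is immediate, and completeness of $\mathscr Q_{\Lambda,\boldsymbol\vartheta}$ in trace variation transfers from the density side through the isometric bijection.

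\emph{Restart intertwining and the $\Gamma$-envelope.} The three identities in \eqref{eq:covariance-density-restart-intertwining} follow by a change of variables in the Bochner integral, checked separately for $t\le s$ and $t>s$. Each density operation preserves the pointwise conditions, since these are imposed for almost every $t$. For the $\Gamma$ condition:
\begin{itemize}
\item if $0\preceq a_t\preceq\Gamma$ a.e., integrating gives $0\preceq q_{s,t}\preceq(t-s)\Gamma$;
\item conversely, if \eqref{eq:covariance-path-Gamma-order} holds, then for $h$ in a countable dense set and Lebesgue points $t$, $\langle a_th,h\rangle=\lim\langle q_{t,t+\delta}h,h\rangle/\delta$ lies in $[0,\langle\Gamma h,h\rangle]$, and density of the set in $H$ extends this to all $h$.
\end{itemize}
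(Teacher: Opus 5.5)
Your proposal is correct and follows the paper's proof essentially step for step. Closedness comes from an a.e.-convergent subsequence, and surjectivity from the Radon--Nikodym property of the separable dual $\mathcal S_1(H)=\mathcal K(H)^*$. Your preliminary remark that $\mathcal S_1$ lacks this property is a slip, which you immediately correct. The profile and $\Gamma$-bounds come from differentiating increments at Lebesgue points. The variation identity is the statement that total variation equals $\int\|\dot f\|_1$; you prove it via dyadic averages, whereas the paper cites it as standard.

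Your backup surjectivity argument through the finite-rank compressions $P_mqP_m$ and the block estimate \eqref{eq:positive-block-tail} is also valid. The compressed derivatives are uniformly Cauchy at rate $\sqrt{\vartheta_m}$, and $\|q_t-P_mq_tP_m\|_1\le Ct\sqrt{\vartheta_m}$. It has the small merit of using the spatial profile to avoid the Radon--Nikodym property altogether, although the paper does not need this.
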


\begin{proof}
If $a$ satisfies \eqref{eq:covariance-density-profile-class}, then positivity
and the two trace bounds integrate over every interval, so
$\mathcal I_{\rm cov}a\in\mathscr Q_{\Lambda,\boldsymbol\vartheta}$.
Conversely, every $q\in\mathscr Q_{\Lambda,\boldsymbol\vartheta}$ is
$\Lambda$-Lipschitz in trace norm.  The trace class has the Radon--Nikodym
property, hence $q$ is Bochner differentiable almost everywhere and
\[
 q_t=\int_0^t\dot q_r\,\dd r.
\]
Choose a countable dense set in $H$.  Differentiating the scalar increasing
paths $t\mapsto\langle q_t h,h\rangle$ on one common full-measure set gives
$\dot q_t\succeq0$.  Since trace and the maps
$B\mapsto\Tr(R_mBR_m)$ are continuous linear functionals on
$\mathcal S_1(H)$, differentiation of the corresponding scalar Lipschitz
paths gives
\[
 \Tr\dot q_t\le\Lambda,
 \qquad
 \Tr(R_m\dot q_tR_m)\le\vartheta_m
\]
for every $m$ on a common full-measure set.  Thus
$\dot q\in\mathscr A_{\Lambda,\boldsymbol\vartheta}$.  Bochner
differentiation also proves uniqueness of the density class, hence
\eqref{eq:covariance-density-integration-map} is bijective.

The density class is closed in $L^1$.  Indeed, if $a^n\to a$ in $L^1$, pass
to a subsequence converging in trace norm almost everywhere.  The positive
cone is trace-norm closed, while trace and every tail trace
$B\mapsto\Tr(R_mBR_m)$ are continuous; hence
\eqref{eq:covariance-density-profile-class} passes to the limit almost
everywhere.  Thus $a\in\mathscr A_{\Lambda,\boldsymbol\vartheta}$.

For an absolutely continuous Banach-valued path, total variation equals the
integral of the norm of its Bochner derivative.  Applied to
$q-\widetilde q$, this gives
\eqref{eq:covariance-density-variation-isometry}; the uniform estimate follows
by integration.  The identities
\eqref{eq:covariance-density-restart-intertwining} are immediate from the
three density formulas and Bochner integration, and the profile bounds are
preserved pointwise almost everywhere.

Finally, if $a_t\preceq\Gamma$ almost everywhere, integration gives
\eqref{eq:covariance-path-Gamma-order}.  Conversely, differentiate
\[
 0\le\langle q_{s,t}h,h\rangle
 \le(t-s)\langle\Gamma h,h\rangle
\]
for a countable dense set of $h$ and extend by continuity of quadratic forms;
this gives $0\preceq\dot q_t\preceq\Gamma$ almost everywhere.
\end{proof}

The operator envelope
\[
 0\preceq a_t^P\preceq\Gamma,
 \qquad \Gamma\in\mathcal S_1(H)_+,
\]
is a convenient sufficient realization of this profile and is retained in the
main construction because the later tangent theory uses the compact order
interval $[0,\Gamma]$.  The compactness theorem is proved at the intrinsic
spatial-profile level; the fixed-envelope cores are then recovered as a
closed operator-order subfamily.

Fix $\Gamma\in\mathcal S_1(H)_+$ and let
\[
 \Mmax=\mathfrak M_\Gamma^{0,T}(H)
 :=\{P\in\mathfrak S_{\Tr\Gamma,0}^{0,T}(H):
       0\preceq a_t^P\preceq\Gamma\}.
\]
Whenever an earlier ambient estimate is invoked on this fixed-envelope
family, it is understood with the specialized parameters
$(\Lambda,B)=(\Tr\Gamma,0)$, so that
$\Mmax\subset\mathfrak S_{\Tr\Gamma,0}^{0,T}(H)$.  The raw resolvent,
primitive, and Borel limit selectors are law-independent; this specialization
changes only the upper norms and the family over which they are taken.
Choose finite-rank orthogonal projections $P_m\uparrow I_H$ and write
$r_m:=\operatorname{rank}P_m$ so that
\begin{equation}\label{eq:Gamma-tail-profile}
 \tau_m:=\Tr((I-P_m)\Gamma)\le2^{-8m},
 \qquad m\ge0.
\end{equation}
The projections encode the compactness profile; the canonical state is
independent of this choice.  Define the spectral compactness
modulus
\begin{equation}\label{eq:trace-energy-modulus}
 \omega_\Gamma(\eps)
 :=\inf_{m\ge0}\left(\sqrt{r_m\eps}+\sqrt{\tau_m}\right).
\end{equation}
Then $\omega_\Gamma(\eps)\downarrow0$ as $\eps\downarrow0$.

\begin{lemma}[Trace-energy approximation under spatial trace tightness]
\label{cor:trace-energy-spatial-profile}
Let $\mathcal P_{\rm sp}$ have a profile
$(P_m,\vartheta_m)$ in the sense of
\Cref{def:uniform-spatial-trace-tightness}.  Then, for every $q\ge2$ and
$0<\eps\le T$,
\begin{equation}\label{eq:trace-energy-spatial-profile-rate}
 \sup_{P\in\mathcal P_{\rm sp}}
 \left\|\sup_{t\le T}
 \|\mathsf E_t^\eps-Q_t\|_{\mathcal S_1}\right\|_{L^q(P)}
 \le C_{\Lambda,T}q\,\omega_{\boldsymbol\vartheta}(\eps).
\end{equation}
\end{lemma}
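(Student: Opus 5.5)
The starting point is the exact identity \eqref{eq:resolvent-energy-identity}, $Q_t^\eps=(D_t^\eps)^{\otimes2}+\mathsf E_t^\eps$. Combined with the modelwise identification of $Q$ with $\qv{M^P}$ (here $B=0$ because $\mathcal P_{\rm sp}\subset\mathfrak S_{\Lambda,0}^{0,T}(H)$), this gives
\[
 \mathsf E_t^\eps-Q_t
 =\bigl(Q_t^\eps-\qv{M^P}_t\bigr)-(D_t^\eps)^{\otimes2}.
\]
The plan is to bound both terms in trace norm, uniformly in $t$, by splitting space with $P_m$ and optimizing over $m$ at the end.

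The rank-one term is easy. Since $\|(D_t^\eps)^{\otimes2}\|_1=\|D_t^\eps\|^2$, \eqref{eq:D-maximal-square-moment} gives an $L^q$ bound of order $q\eps^{2\beta_0}$. This is dominated by $q\omega_{\boldsymbol\vartheta}(\eps)$: take $m=0$, where $r_0\ge1$ (or $\vartheta_0$ is bounded by $\Lambda$ if $r_0=0$), so $\omega\gtrsim\min(\sqrt\eps,1)$. Choosing $\beta_0\ge1/4$ keeps $\eps^{2\beta_0}\le C\sqrt\eps$.

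The main work is the error transform $\mathcal E:=\mathcal E^P(D^\eps)=\int_0^\cdot D_r^\eps\otimes\dd X_r$, since $Q^\eps-\qv{M^P}=\mathcal E+\mathcal E^*$ by \eqref{eq:resolvent-Q-error}. We only have Hilbert--Schmidt control of it from \Cref{lem:dimension-free-tensor-error-transform}, and trace norm must be recovered from spatial tightness. Fix $m$ and write $\mathcal E=P_m\mathcal E P_m+P_m\mathcal E R_m+R_m\mathcal E$ (and similarly for the adjoint).
\begin{enumerate}[label=\textup{(\alph*)},leftmargin=2.5em]
\item The finite-rank block $P_m\mathcal EP_m$ has rank at most $r_m$, so $\|\cdot\|_1\le\sqrt{r_m}\|\cdot\|_2$. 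Then \eqref{eq:tensor-rate} or \eqref{eq:D-energy-moment} bounds it in $\mathbb S^q$ by $Cq\sqrt{r_m\eps}$.
\item The tail blocks contain $R_m$ either on the right, as $\int D\otimes R_m\dd X$, or on the left. The right-tail piece is not controlled by a Hilbert--Schmidt estimate. Instead I would use the pathwise identity before taking limits. For finite $\eps$,
\[
 \mathsf E^\eps_t=\tfrac2\eps\int_0^t(D_r^\eps)^{\otimes2}\,\dd r
\]
is positive and increasing. Compressing the identity with $R_m$ gives $R_m\mathsf E^\eps R_m=R_m Q R_m-R_m(D^\eps)^{\otimes2}R_m+R_m(\mathcal E+\mathcal E^*)R_m$. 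Taking traces, which is legitimate for positive operators, with $\Tr R_mQ_tR_m\le\vartheta_m t$, gives the scalar identity
\[
 \Tr R_m\mathsf E^\eps_tR_m\le\vartheta_m T+2\int_0^t\langle R_mD_r^\eps,R_m\dd M_r\rangle .
\]
The scalar martingale on the right has bracket bounded by $\int\|R_mD\|^2\vartheta_m\,\dd r$, so by BDG and \eqref{eq:D-energy-moment} it is of size $\sqrt{q\vartheta_m}\sqrt{q\eps}$. Hence $\Tr R_m\mathsf E^\eps R_m$ is $O(q\vartheta_m+q\eps)$ in $L^q$, uniformly in $t$ by monotonicity. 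Applying the positive-block inequality \eqref{eq:positive-block-tail} to $B=\mathsf E^\eps_t$ and to $B=Q_t$, with total traces bounded by a moment estimate (respectively by $\Lambda T$), gives
\[
 \|\mathsf E^\eps_t-P_m\mathsf E^\eps_tP_m\|_1+\|Q_t-P_mQ_tP_m\|_1
 \lesssim\sqrt{\vartheta_m}+\sqrt{\eps}
\]
in $L^q$, with polynomial-in-$q$ constants.
\item It then remains to compare $P_m\mathsf E^\eps P_m$ with $P_mQP_m$. This difference equals $P_m(\mathcal E+\mathcal E^*-(D^\eps)^{\otimes2})P_m$, which is handled by (a).
\end{enumerate}

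Summing the pieces gives the bound $Cq\bigl(\sqrt{r_m\eps}+\sqrt{\vartheta_m}\bigr)$, uniformly over $P$ and $t$ by the maximal form of BDG. Taking the infimum over $m$ yields \eqref{eq:trace-energy-spatial-profile-rate}.

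The main obstacle is step (b): making the total-trace control of $\mathsf E^\eps$ uniform in $L^q$ with the correct $q$-dependence, since $\sqrt{\Tr B\,\Tr R_mBR_m}$ mixes moments. I would handle this with Cauchy--Schwarz in $L^{2q}$, using $\Tr\mathsf E^\eps_t\le\Tr Q^\eps_t$ and bounding that by $\Lambda T$ plus the martingale fluctuation above.
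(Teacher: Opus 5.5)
Your proposal is correct and, once steps (b)--(c) are assembled, it is the paper's proof. The paper uses the same split $\mathsf E^\eps_t-Q_t=P_m(\mathsf E^\eps_t-Q_t)P_m+(\mathsf E^\eps_t-P_m\mathsf E^\eps_tP_m)-(Q_t-P_mQ_tP_m)$, bounds the compressed block by $\sqrt{r_m}$ times the Hilbert--Schmidt rate, and bounds both tails by \eqref{eq:positive-block-tail} plus trace moments; your preliminary three-block splitting of $\mathcal E^P(D^\eps)$ is never actually used.

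The one difference is that the paper reads the tail trace directly as $\Tr(R_m\mathsf E^\eps_TR_m)=\frac{2}{\eps}\int_0^T\|R_mD^\eps_r\|^2\,\dd r$. It then applies \eqref{eq:D-energy-moment} to the martingale $R_mX$, whose bracket trace density is at most $\vartheta_m$, and obtains $Cq\vartheta_m$ with no $q\sqrt{\vartheta_m\eps}$ cross term. If you do the same, and keep $(D^\eps_t)^{\otimes2}$ inside the compressed block where it costs only $\sqrt{r_m}\,\|D^\eps_t\|^2$, you no longer need $\sqrt\eps\lesssim\omega_{\boldsymbol\vartheta}(\eps)$. That inequality holds when $r_0\ge1$ (since $r_m\ge r_0$), but it fails in the degenerate case $P_0=0$: an upper bound on $\vartheta_0$ gives no lower bound on $\omega_{\boldsymbol\vartheta}$, so your ``take $m=0$'' remark does not cover that case.
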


\begin{proof}
Put $R_m=I-P_m$.  On the finite-dimensional block,
\begin{align*}
 \|P_m(\mathsf E_t^\eps-Q_t)P_m\|_1
 &\le\sqrt{r_m}\,\|\mathsf E_t^\eps-Q_t\|_2\\
 &\le\sqrt{r_m}\left(
   \|Q_t^\eps-Q_t\|_2+\|D_t^\eps\|_H^2\right).
\end{align*}
Choose $\beta_0\in(1/4,1/2)$.  By
\eqref{eq:tensor-rate} and \eqref{eq:D-maximal-square-moment},
\begin{align*}
 &\sup_{P\in\mathcal P_{\rm sp}}
 \left\|\sup_{t\le T}
   \|P_m(\mathsf E_t^\eps-Q_t)P_m\|_1\right\|_{L^q(P)}\\
 &\qquad\le C\sqrt{r_m}\,q
   \bigl(\sqrt\eps+\eps^{2\beta_0}\bigr)
 \le C_{\beta_0,T}q\sqrt{r_m\eps}.
\end{align*}
The last inequality uses $2\beta_0>1/2$ and $0<\eps\le T$.

Apply the block estimate \eqref{eq:positive-block-tail} with $P=P_m$.
The spatial profile gives, under every $P\in\mathcal P_{\rm sp}$,
\[
 \Tr Q_T\le T\Lambda,
 \qquad \Tr(R_mQ_TR_m)\le T\vartheta_m.
\]
The resolvent commutes with $R_m$, so
\[
 R_m\mathsf E_t^\eps(X)R_m
 =\mathsf E_t^\eps(R_mX).
\]
Apply \eqref{eq:D-energy-moment} to $X$ and to the projected martingale
$R_mX$, whose bracket trace density is bounded by $\vartheta_m$.  At exponent
$2q$ this yields
\[
 \sup_{P\in\mathcal P_{\rm sp}}
 \|\Tr\mathsf E_T^\eps\|_{L^q(P)}\le C_{\Lambda,T}q,
 \qquad
 \sup_{P\in\mathcal P_{\rm sp}}
 \|\Tr(R_m\mathsf E_T^\eps R_m)\|_{L^q(P)}
 \le C_{\Lambda,T}q\vartheta_m.
\]
Since both operator paths are increasing, \eqref{eq:positive-block-tail}
and H\"older's inequality control their maximal tails by
$C_{\Lambda,T}q\sqrt{\vartheta_m}$.  Combining the finite block and the two
tails, then minimizing over $m$, proves
\eqref{eq:trace-energy-spatial-profile-rate}.
\end{proof}

\begin{corollary}[Trace-energy approximation under a compact covariance envelope]
\label{lem:trace-energy-approximation}
For every $q\ge2$ and $0<\eps\le T$,
\begin{equation}\label{eq:trace-energy-Lq-rate}
 \sup_{P\in\Mmax}
 \left\|\sup_{t\le T}
   \|\mathsf E_t^\eps-Q_t\|_{\mathcal S_1}
 \right\|_{L^q(P)}
 \le C_\Gamma q\,\omega_\Gamma(\eps).
\end{equation}
In particular, $\mathsf E^\eps\to Q$ uniformly in time and in upper
probability for the trace norm.
\end{corollary}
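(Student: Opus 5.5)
The plan is to deduce the corollary from \Cref{cor:trace-energy-spatial-profile} by showing that the fixed-envelope family $\Mmax$ is a special case of a spatially trace-tight family. We take $\mathcal P_{\rm sp}:=\Mmax\subset\mathfrak S_{\Tr\Gamma,0}^{0,T}(H)$, with ambient parameters $(\Lambda,B)=(\Tr\Gamma,0)$. The profile is $(P_m,\tau_m)$ from \eqref{eq:Gamma-tail-profile}. It then suffices to check \eqref{eq:uniform-spatial-trace-profile} with $\vartheta_m=\tau_m$, and to note that $\omega_{\boldsymbol\vartheta}=\omega_\Gamma$ for this choice.

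The key step is the tail bound. For $P\in\Mmax$ we have $0\preceq a_t^P\preceq\Gamma$ for $\dd t\otimes P$-almost every $(t,\omega)$. Conjugation by $R_m=I-P_m$ preserves operator order, so $0\preceq R_ma_t^PR_m\preceq R_m\Gamma R_m$. Taking traces, which are monotone on positive trace-class operators, gives
\[
 \Tr(R_ma_t^PR_m)\le\Tr(R_m\Gamma R_m)=\Tr((I-P_m)\Gamma)=\tau_m,
\]
where we use cyclicity and $R_m^2=R_m$. This holds uniformly in $P$, almost everywhere. The bound $\tau_m\le2^{-8m}$ already gives the normalization in \Cref{def:uniform-spatial-trace-tightness}, so no relabeling is needed. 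Since $\tau_m\downarrow0$ and the $P_m$ increase to $I$, $(P_m,\tau_m)$ is a uniform spatial trace-tightness profile, and \eqref{eq:spatial-trace-profile-modulus} coincides with \eqref{eq:trace-energy-modulus}.

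Applying \eqref{eq:trace-energy-spatial-profile-rate} then gives \eqref{eq:trace-energy-Lq-rate} with $C_\Gamma:=C_{\Tr\Gamma,T}$. The only point needing care is that the earlier estimates it relies on (\eqref{eq:tensor-rate}, \eqref{eq:D-maximal-square-moment}, \eqref{eq:D-energy-moment}) are invoked with the specialized parameters. The paper already stipulates this, and $Q$ is the law-independent selector identified modelwise with $\qv{M^P}$ by \Cref{cor:common-representative}.

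For the final claim, Markov's inequality at a fixed $q$, followed by the supremum over $P$, gives
\[
 c_{\Mmax}\bigl(\sup_t\|\mathsf E^\eps_t-Q_t\|_1>\delta\bigr)\le(C_\Gamma q\,\omega_\Gamma(\eps)/\delta)^q\to0
\]
because $\omega_\Gamma(\eps)\downarrow0$. I expect no genuine obstacle; the only thing to verify is the order-to-trace tail inequality above.
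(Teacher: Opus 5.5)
Your proposal is correct and is the paper's argument: the envelope $0\preceq a_t^P\preceq\Gamma$ yields the spatial profile $\vartheta_m=\tau_m=\Tr((I-P_m)\Gamma)$ with $\Lambda=\Tr\Gamma$, so $\omega_{\boldsymbol\vartheta}=\omega_\Gamma$ and the spatial-profile trace-energy lemma applies directly. You also spell out two steps the paper leaves implicit: the order-to-trace tail inequality (conjugation by $R_m$ plus cyclicity) and the Markov step behind the upper-probability convergence.
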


\begin{proof}
The order bound $a_t^P\preceq\Gamma$ induces the spatial profile
$\vartheta_m=\tau_m=\Tr((I-P_m)\Gamma)$ and
$\Lambda=\Tr\Gamma$.  Hence
$\omega_{\boldsymbol\vartheta}=\omega_\Gamma$, and
\Cref{cor:trace-energy-spatial-profile} gives
\eqref{eq:trace-energy-Lq-rate}.
\end{proof}

Define the spatial compactness gauge
\begin{equation}\label{eq:spatial-tail-gauge}
 \mathfrak T_\Gamma(x):=
 \sup_{m\ge0}2^m\|(I-P_m)x\|_{\infty;H}.
\end{equation}

Fix
\begin{equation}\label{eq:core-exponents}
 \frac13<\alpha<\beta<\frac12,
 \qquad
 0<\gamma<\frac12,
 \qquad
 0<\kappa<\min\{\tfrac12-\alpha,\beta-\alpha\}.
\end{equation}
For the dyadic scales $\eps_n=T2^{-n}$ choose a strictly increasing
sequence $(n_j)$ and put $\delta_j:=\eps_{n_j}$ so that
\begin{equation}\label{eq:trace-adapted-scale}
 \omega_\Gamma(\delta_j)+\delta_j^\kappa+\delta_j^{2\beta}
 \le2^{-8j},\qquad j\ge0.
\end{equation}
This is possible by \eqref{eq:trace-energy-modulus}.  Set
\begin{align}
 a_n(x)&:=\|J^{\eps_{n+1}}(x)-J^{\eps_n}(x)\|_{\infty;\mathcal S_2},
 \label{eq:gauge-tensor-increment}\\
 b_n(x)&:=\rho_\alpha\bigl(S_2(Y^{\eps_{n+1}}(x)),
                           S_2(Y^{\eps_n}(x))\bigr),
 \label{eq:gauge-signature-increment}\\
 c_j(x)&:=\|\mathsf E^{\delta_{j+1}}(x)
             -\mathsf E^{\delta_j}(x)\|_{\infty;\mathcal S_1}.
 \label{eq:gauge-trace-energy-increment}
\end{align}
Define
\begin{equation}\label{eq:Hilbert-core-gauge}
 \mathfrak R_\Gamma(x):=
 \max\left\{1,\|x\|_{\beta\text{-H\"ol}},\mathfrak T_\Gamma(x),
 \sup_n\eps_n^{-\gamma/2}a_n(x)^{1/2},
 \sup_n\eps_n^{-\kappa/2}b_n(x)^{1/2},
 \sup_j2^{2j}c_j(x)^{1/2}\right\}
\end{equation}
on the paths for which the resulting limits satisfy
\begin{equation}\label{eq:Hilbert-core-compatibility}
 \mathbb X^S_{s,t}=\widehat J_t-\widehat J_s-X_s\otimes X_{s,t}
                   +\frac12Q_{s,t},
 \qquad
 0\preceq Q_{s,t}\preceq(t-s)\Gamma,
 \qquad
 \mathsf E^{\delta_j}\longrightarrow Q
 \text{ in }C([0,T];\mathcal S_1),
\end{equation}
and set $\mathfrak R_\Gamma=\infty$ otherwise.  Put
\[
 \mathcal C_R:=\{x:\mathfrak R_\Gamma(x)\le R\},
 \qquad
 G_*^\Gamma:=\bigcup_{R\ge1}\mathcal C_R.
\]

\begin{lemma}[Common full-capacity $\Gamma$-compatibility event]
\label{lem:common-Gamma-compatibility-event}
Let
\begin{align*}
 G_\Gamma^{\rm comp}:=G_Q^\Gamma\cap G_\alpha^{\rm wg}
 &\cap\left\{J^{\eps_n}\longrightarrow\widehat J
       \text{ in }C([0,T];\mathcal S_2)\right\}\\
 &\cap\left\{S_2(Y^{\eps_n})\longrightarrow\mathbf X^S
       \text{ in }\rho_\alpha\right\}\\
 &\cap\left\{(\mathsf E^{\delta_j})_j
       \text{ is Cauchy in }C([0,T];\mathcal S_1)\right\}\\
 &\cap\left\{\|\mathsf E^{\delta_j}-Q\|_{\infty;\mathcal S_2}
       \longrightarrow0\right\}.
\end{align*}
Then $G_\Gamma^{\rm comp}$ is Borel and
\[
 c_{\Mmax}\bigl((G_\Gamma^{\rm comp})^c\bigr)=0.
\]
On this single event all relations in
\eqref{eq:Hilbert-core-compatibility} hold simultaneously.
\end{lemma}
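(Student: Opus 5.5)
The plan is to treat the six constituent events one at a time. For each we check Borel measurability and $\Mmax$-polarity of the complement. A finite intersection of Borel full-capacity sets is then Borel and full, since $c_{\Mmax}$ is countably subadditive. Throughout, $\Mmax\subset\mathfrak S_{\Tr\Gamma,0}^{0,T}(H)$, so every earlier estimate over $\Smax$ applies with $(\Lambda,B)=(\Tr\Gamma,0)$.

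The first two events are already in hand. By \Cref{prop:common-bracket-variation-domain} (applied with $\mathcal P_\Gamma=\Mmax$), $G_Q^\Gamma$ is Borel and $\Mmax$-full. By \Cref{lem:common-weak-geometric-locus}, $G_\alpha^{\rm wg}$ is Borel and full.

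The event $\{J^{\eps_n}\to\widehat J \text{ in } C([0,T];\mathcal S_2)\}$ contains $G_J$, because on $G_J$ the selector \eqref{eq:coordinatewise-causal-selector} is the uniform limit by \Cref{lem:exact-stopping-primitive-selector}. It is Borel because both fields are Borel continuous-path fields on $G_J$ (\Cref{lem:Borel-continuous-section-currying}), so the event is $G_J$ intersected with the countable supremum condition $\lim_n\sup_{t\in\mathbb Q}\|J^{\eps_n}_t-\widehat J_t\|_2=0$.

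For the rough convergence $S_2(Y^{\eps_n})\to\mathbf X^S$ in $\rho_\alpha$, we use \Cref{lem:dyadic-geometric-resolvent-bound}. Under each $P$, $\mathbf X^S=\mathbf X^{S,P}$ almost surely, and the lemma gives $\sup_P\|\rho_\alpha(S_2(Y^{\eps_n}),\mathbf X^{S,P})\|_{L^q}\le Cq\eps_n^\vartheta$. Fixing $q$ large and applying Markov's inequality along the dyadic scale with thresholds $\eps_n^{\vartheta/2}$ gives $\sum_n c_{\Mmax}(\rho_\alpha>\eps_n^{\vartheta/2})<\infty$. Capacity Borel--Cantelli then shows that the complement is polar. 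The quantity $\rho_\alpha$ is a countable supremum over rational pairs of continuous fields, so the event is Borel.

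The trace-energy events are handled in the same way with \Cref{lem:trace-energy-approximation}. Since $Q=\qv{M^P}$ $P$-a.s. by \Cref{cor:common-representative}, we get $\sup_P\|\,\|\mathsf E^{\delta_j}-Q\|_{\infty;\mathcal S_1}\|_{L^q}\le C_\Gamma q\,\omega_\Gamma(\delta_j)\le C_\Gamma q2^{-8j}$ by \eqref{eq:trace-adapted-scale}. Markov's inequality with threshold $2^{-4j}$ and capacity Borel--Cantelli give $\|\mathsf E^{\delta_j}-Q\|_{\infty;\mathcal S_1}\le 2^{-4j}$ eventually off a polar set. This yields both the $\mathcal S_1$-Cauchy property and the $\mathcal S_2$ convergence to $Q$. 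Both events are Borel via countable rational-time suprema, using continuity in time of $\mathsf E^\eps$, and $\mathcal S_2$-continuity of $Q$ on $G_J$. For the $\mathcal S_1$ norm we use lower semicontinuity through finite-rank compressions, as in the proof of \Cref{cor:common-representative}.

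It remains to verify \eqref{eq:Hilbert-core-compatibility} on the intersection. The first identity is \eqref{eq:common-S} combined with \eqref{eq:common-I}, and it holds on $G_J$. The order bound holds on $G_Q^\Gamma$ for all times by \eqref{eq:common-Gamma-envelope-all-times}. For the last relation, the $\mathcal S_1$-Cauchy limit of $\mathsf E^{\delta_j}$ exists in $C([0,T];\mathcal S_1)$. It agrees with the $\mathcal S_2$ limit $Q$ because $\mathcal S_1\hookrightarrow\mathcal S_2$ continuously, so $\mathsf E^{\delta_j}\to Q$ in trace norm.

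The main obstacle is the Borel measurability of the trace-norm events. Here we have to express $\|\cdot\|_{\mathcal S_1}$ of differences of $\mathcal S_2$-valued Borel fields as a countable supremum, for example $\sup_N\|P_N(\cdot)P_N\|_1$ with trace-norm duality over a countable dense family of finite-rank contractions. The probabilistic part is routine given the stated rates.
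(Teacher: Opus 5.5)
Your proposal is correct and is essentially the paper's argument: the first two factors come from \Cref{prop:common-bracket-variation-domain} and \Cref{lem:common-weak-geometric-locus}, the remaining factors are Borel by Cauchy criteria and rational-time suprema and are full by \eqref{eq:tensor-rate}, \eqref{eq:dyadic-geometric-resolvent-rate}, \Cref{lem:trace-energy-approximation} and \eqref{eq:trace-adapted-scale} together with Markov and capacity Borel--Cantelli, and the trace-norm Cauchy limit is identified with $Q$ through the continuous inclusion $\mathcal S_1\hookrightarrow\mathcal S_2$. The measurability step you flag as the main obstacle is lighter than you suggest: each $x\mapsto\mathsf E^{\delta_j}(x)$ is already continuous into $C([0,T];\mathcal S_1)$, so the Cauchy event is Borel directly, and the Markov bound can be applied to adjacent differences under each $P$ using the trace-class bracket $\qv{M^P}$; your lower-semicontinuous description of $\|\cdot\|_{\mathcal S_1}$ on $\mathcal S_2$ via finite-rank compressions is nonetheless also valid.
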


\begin{proof}
The first two factors are Borel and full by
\Cref{prop:common-bracket-variation-domain,lem:common-weak-geometric-locus}.
The remaining factors are Borel by the Cauchy criterion in their separable
complete targets and, for uniform convergence, a supremum over rational times;
the rough-path coordinate is Borel by the currying--Lusin--Souslin argument in
\Cref{cor:common-representative}.  Their fullness follows from
\eqref{eq:tensor-rate}, \eqref{eq:dyadic-geometric-resolvent-rate}, and
\Cref{lem:trace-energy-approximation,eq:trace-adapted-scale} by Markov and
capacity Borel--Cantelli.  The trace-energy Cauchy limit is $Q$ because
$\mathcal S_1\hookrightarrow\mathcal S_2$ continuously and the
$\mathcal S_2$ limit is unique.  The algebraic identity follows from the
definitions of the common It\^o and geometric coordinates together with the
declared convergences; $G_Q^\Gamma$ supplies the order inequality.
\end{proof}

\begin{lemma}[Spatial-profile compact containment]
\label{lem:spatial-profile-compact-containment}
Let $\mathcal P_{\rm sp}$ have a profile $(P_m,\vartheta_m)$ relabeled so
that $\vartheta_m\le2^{-8m}$, and put
\[
 \mathfrak T_{\boldsymbol\vartheta}(x)
 :=\sup_{m\ge0}2^m\|(I-P_m)x\|_{\infty;H}.
\]
For every $R<\infty$, the set
\[
 \{x:\|x\|_{\beta\text{-H\"ol}}\le R,
       \mathfrak T_{\boldsymbol\vartheta}(x)\le R\}
\]
is compact in $C_0([0,T];H)$.  Moreover
\begin{equation}\label{eq:spatial-profile-path-tail-moment}
 \sup_{P\in\mathcal P_{\rm sp}}
 \left\|2^m\|(I-P_m)X\|_\infty\right\|_{L^q(P)}
 \le C_{\Lambda,T}\sqrt q\,2^{-3m},
 \qquad q\ge2, m\ge0,
\end{equation}
and therefore
\[
 c_{\mathcal P_{\rm sp}}
 \bigl(\mathfrak T_{\boldsymbol\vartheta}>R\bigr)
 \le Ce^{-cR^2},
 \qquad R\ge1.
\]
\end{lemma}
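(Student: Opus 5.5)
The plan has three parts: a deterministic compactness argument, a moment bound on the spatial tails, and a capacity tail estimate.

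\emph{Compactness.} I would use Arzel\`a--Ascoli in $C_0([0,T];H)$. The $\beta$-H\"older bound $R$ gives equicontinuity. It also gives the uniform bound $\|x_t\|\le RT^\beta$, since $x_0=0$. What remains is that, for each $t$, the set of values $\{x_t\}$ is totally bounded in $H$. Fix $\varepsilon>0$ and choose $m$ with $2^{-m}R<\varepsilon$. Then $\|(I-P_m)x_t\|\le 2^{-m}R$. Moreover $P_mx_t$ lies in a bounded subset of the finite-dimensional space $P_mH$. Hence the values are covered by finitely many $2\varepsilon$-balls. The constraints are also closed under uniform convergence. The H\"older seminorm is a supremum of continuous functionals, and so is each tail term $\|(I-P_m)x\|_\infty$, which is $1$-Lipschitz in the uniform norm. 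So the set is closed and relatively compact, hence compact.

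\emph{Tail moments.} Let $P\in\mathcal P_{\rm sp}$. Since $\mathcal P_{\rm sp}$ is driftless, $(I-P_m)X=(I-P_m)M^P$ is a continuous $H$-valued local martingale. Its bracket trace density is $\Tr(R_ma_t^PR_m)\le\vartheta_m$. Maximal Hilbert-space BDG then gives
\[
\bigl\|\|(I-P_m)X\|_\infty\bigr\|_{L^q(P)}\le C\sqrt q\,(\vartheta_mT)^{1/2}\le C_T\sqrt q\,2^{-4m},
\]
uniformly in $P$. Multiplying by $2^m$ gives the bound $C\sqrt q\,2^{-3m}$ in \eqref{eq:spatial-profile-path-tail-moment}. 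This is the easy step.

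\emph{Capacity tail.} I expect the Gaussian tail to be the one step needing care. The plan is to combine a union bound over $m$ with sub-Gaussian tails for each term. For the latter, I would use the BDG constant $C\sqrt q$ together with the exponential martingale inequality for a martingale whose bracket trace is at most $\vartheta_mT$. This gives
\[
P\bigl(\|(I-P_m)X\|_\infty>r\bigr)\le C\exp\!\bigl(-cr^2/(\vartheta_mT)\bigr).
\]
A dimension-free version holds in Hilbert space, for example via Pinelis' inequality; alternatively one optimizes Markov's inequality over $q\sim R^2$ from the moment bound. Apply this with $r=2^{-m}R$. The exponent becomes $-cR^2 2^{-2m}/\vartheta_m\le -cR^2 2^{6m}$. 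Summing over $m\ge0$ gives
\[
c_{\mathcal P_{\rm sp}}(\mathfrak T_{\boldsymbol\vartheta}>R)\le\sum_m Ce^{-cR^22^{6m}}\le C'e^{-cR^2},\qquad R\ge1.
\]
The supremum over $P$ preserves the bound because all constants are uniform in $P$. A Borel-measurability remark is needed for the capacity statement: $\mathfrak T_{\boldsymbol\vartheta}$ is a countable supremum of continuous functionals, so the event $\{\mathfrak T_{\boldsymbol\vartheta}>R\}$ is Borel.
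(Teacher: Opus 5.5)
Your proposal is correct and follows essentially the paper's argument. Compactness comes from the finite-dimensional projections $P_mx$ together with the uniform tail bound $\|(I-P_m)x\|_\infty\le R2^{-m}$; you package this via Arzel\`a--Ascoli with pointwise total boundedness, while the paper uses a diagonal subsequence. The moment bound comes from maximal Hilbert BDG with bracket trace at most $T\vartheta_m$, and the Gaussian capacity tail from optimizing Markov's inequality in $q\sim R^2 2^{6m}$ followed by a union bound over $m$; your sub-Gaussian/Pinelis variant is an equivalent way to package that last step.
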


\begin{proof}
For fixed $m$, the projected paths $P_mx$ lie in a finite-dimensional space
and are uniformly bounded and equicontinuous, hence are relatively compact.
A diagonal subsequence converges for every $m$, while the definition of
$\mathfrak T_{\boldsymbol\vartheta}$ gives the uniform tail estimate
$\|(I-P_m)x\|_\infty\le R2^{-m}$.  This proves relative compactness.  The
uniform limit inherits both the $\beta$-H\"older bound and every spatial-tail
inequality, so the displayed set is closed and hence compact.

Under $P\in\mathcal P_{\rm sp}$, $(I-P_m)X$ is a Hilbert martingale with
bracket trace at most $T\vartheta_m$.  Hilbert maximal BDG gives
\[
 \sup_{P\in\mathcal P_{\rm sp}}
 \left\|2^m\|(I-P_m)X\|_\infty\right\|_{L^q(P)}
 \le C_{\Lambda,T}\sqrt q\,2^m\sqrt{\vartheta_m}
 \le C_{\Lambda,T}\sqrt q\,2^{-3m},
\]
which is \eqref{eq:spatial-profile-path-tail-moment}.  Optimizing Markov's
inequality and summing over $m$ proves the capacity estimate.
\end{proof}

\begin{corollary}[Compact containment from a trace-class envelope]
\label{lem:Hilbert-compact-containment}
For every $R<\infty$, the set
\[
 \{x:\|x\|_{\beta\text{-H\"ol}}\le R,
       \mathfrak T_\Gamma(x)\le R\}
\]
is compact in $C_0([0,T];H)$.  Moreover
\begin{equation}\label{eq:spatial-tail-capacity}
 c_{\Mmax}(\mathfrak T_\Gamma>R)\le Ce^{-cR^2},
 \qquad R\ge1.
\end{equation}
\end{corollary}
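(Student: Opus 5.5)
The plan is to obtain this as a direct specialization of \Cref{lem:spatial-profile-compact-containment}.

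First I check that $\Mmax$ carries a uniform spatial trace-tightness profile in the sense of \Cref{def:uniform-spatial-trace-tightness}. Every $P\in\Mmax$ is driftless with $0\preceq a_t^P\preceq\Gamma$, so $\Mmax\subset\mathfrak S_{\Tr\Gamma,0}^{0,T}(H)$. With $R_m=I-P_m$, the operator inequality is preserved under compression, so $R_ma_t^PR_m\preceq R_m\Gamma R_m$ holds $\dd t\otimes P$-almost everywhere. Taking traces gives
\[
 \Tr(R_ma_t^PR_m)\le\Tr(R_m\Gamma R_m)=\Tr((I-P_m)\Gamma)=\tau_m\le2^{-8m}
\]
by \eqref{eq:Gamma-tail-profile}. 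Hence $(P_m,\vartheta_m)$ with $\vartheta_m:=\tau_m$ is a profile. It already satisfies the normalization $\vartheta_m\le2^{-8m}$, so no relabeling is needed, and $\mathfrak T_{\boldsymbol\vartheta}=\mathfrak T_\Gamma$ with $\Lambda=\Tr\Gamma$.

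Next I apply \Cref{lem:spatial-profile-compact-containment} with $\mathcal P_{\rm sp}=\Mmax$. Its first assertion is deterministic: it only uses the projections $P_m$ and the definition of the gauge. It therefore gives compactness of $\{\|x\|_{\beta\text{-H\"ol}}\le R,\ \mathfrak T_\Gamma(x)\le R\}$ in $C_0([0,T];H)$ verbatim. Its capacity assertion gives
\[
 c_{\Mmax}(\mathfrak T_\Gamma>R)\le Ce^{-cR^2},
\]
with $C,c$ depending only on $(\Tr\Gamma,T)$, which is \eqref{eq:spatial-tail-capacity}.

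The one step I would re-examine is the tail bound inside that lemma, which passes from the moment estimate \eqref{eq:spatial-profile-path-tail-moment} to the Gaussian-type capacity tail. The argument is a union bound over $m$:
\[
 c(\mathfrak T>R)\le\sum_m c\bigl(2^m\|R_mX\|_\infty>R\bigr).
\]
Each summand is bounded by Markov's inequality at an optimized exponent $q\sim R^22^{6m}$. This yields terms of order $e^{-cR^22^{6m}}$, and these sum to at most $Ce^{-cR^2}$ for $R\ge1$. Since everything needed is already supplied by the profile lemma, I do not expect any genuine obstacle.
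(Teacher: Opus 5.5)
Your proposal is correct and is the paper's own proof: the envelope $0\preceq a_t^P\preceq\Gamma$ induces the profile $\vartheta_m=\tau_m=\Tr((I-P_m)\Gamma)$ with $\Lambda=\Tr\Gamma$ and $\mathfrak T_{\boldsymbol\vartheta}=\mathfrak T_\Gamma$, and then \Cref{lem:spatial-profile-compact-containment} gives both the compactness and the capacity tail. Your re-check of the union bound with an optimized Markov exponent matches that lemma's argument. The finitely many $m$ for which the optimal exponent would fall below $2$ are harmlessly absorbed into the constant $C$.
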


\begin{proof}
The envelope induces the profile
$\vartheta_m=\tau_m=\Tr((I-P_m)\Gamma)$, for which
$\mathfrak T_{\boldsymbol\vartheta}=\mathfrak T_\Gamma$.
Apply \Cref{lem:spatial-profile-compact-containment}; its capacity estimate is
exactly \eqref{eq:spatial-tail-capacity} for $\mathcal P_{\rm sp}=\Mmax$.
\end{proof}

\begin{lemma}[Spatially trace-tight upgrade from Hilbert--Schmidt to trace norm]
\label{lem:dominated-trace-continuity}
Let $A_n,A\in\mathcal S_1(H)_+$ satisfy
\[
 \sup_n\Tr A_n+\Tr A<\infty.
\]
Suppose there are finite-rank orthogonal projections $P_m\uparrow I$ such that,
with $R_m=I-P_m$,
\begin{equation}\label{eq:trace-tight-positive-family}
 \lim_{m\to\infty}
 \left(\sup_n\Tr(R_mA_nR_m)+\Tr(R_mAR_m)\right)=0.
\end{equation}
If $\|A_n-A\|_{\mathcal S_2}\to0$, then
$\|A_n-A\|_{\mathcal S_1}\to0$.  The conclusion is uniform for families
with a common trace bound, a common tail modulus in
\eqref{eq:trace-tight-positive-family}, and a common Hilbert--Schmidt
convergence modulus.  In particular, the hypothesis holds whenever
$0\preceq A_n,A\preceq C\Gamma$ for one
$\Gamma\in\mathcal S_1(H)_+$.
\end{lemma}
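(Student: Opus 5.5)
The plan is a three-term split through the finite-rank compression $P_m$, followed by the positive block-tail inequality \eqref{eq:positive-block-tail}. Fix $m$ and write
\[
 A_n-A=P_m(A_n-A)P_m+\bigl(A_n-P_mA_nP_m\bigr)-\bigl(A-P_mAP_m\bigr).
\]

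\emph{Finite block.} Since $P_m(A_n-A)P_m$ has rank at most $r_m=\operatorname{rank}P_m$, Cauchy--Schwarz on the singular values gives
\[
 \|P_m(A_n-A)P_m\|_1\le\sqrt{r_m}\,\|A_n-A\|_2 .
\]

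\emph{Tails.} Apply \eqref{eq:positive-block-tail} to the positive operators $B=A_n$ and $B=A$. With $\Lambda_0:=\sup_n\Tr A_n+\Tr A$ and $t_m:=\sup_n\Tr(R_mA_nR_m)+\Tr(R_mAR_m)$, this yields
\[
 \|A_n-P_mA_nP_m\|_1+\|A-P_mAP_m\|_1\le 4\sqrt{\Lambda_0 t_m}+2t_m .
\]
Inequality \eqref{eq:positive-block-tail} itself follows from the four-block decomposition of $B$. The off-diagonal block satisfies $\|PBR\|_1\le\|B^{1/2}P\|_2\|B^{1/2}R\|_2=\sqrt{\Tr(PBP)\Tr(RBR)}$, which is where positivity of $B$ is used.

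\emph{Conclusion.} Combining the two bounds,
\[
 \limsup_n\|A_n-A\|_1\le4\sqrt{\Lambda_0t_m}+2t_m
\]
for every $m$, and this tends to $0$ as $m\to\infty$ by \eqref{eq:trace-tight-positive-family}. For the uniform version I take the infimum over $m$ of $\sqrt{r_m}\,\omega_{\rm HS}+4\sqrt{\Lambda_0t_m}+2t_m$, where $\omega_{\rm HS}$ is the common Hilbert--Schmidt convergence modulus. This quantity depends only on the common trace bound, the common tail modulus, and $\omega_{\rm HS}$, so the uniform statement follows.

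\emph{Envelope case.} If $0\preceq A_n,A\preceq C\Gamma$, then $\Tr A_n\le C\Tr\Gamma$, and $R_mA_nR_m\preceq CR_m\Gamma R_m$. Hence $\Tr(R_mA_nR_m)\le C\Tr(R_m\Gamma)\to0$ for any $P_m\uparrow I$, since $\Gamma$ is trace class. The same bounds hold for $A$, so \eqref{eq:trace-tight-positive-family} is satisfied.

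The only nontrivial step is the off-diagonal trace-class bound inside \eqref{eq:positive-block-tail}; everything else is routine.
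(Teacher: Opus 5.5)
Your proposal is correct and follows essentially the same route as the paper: the three-term split through $P_m$, the positive block-tail estimate \eqref{eq:positive-block-tail} for the two tails, finite-rank norm comparison on the compressed block, and the envelope bound $\Tr(R_mA_nR_m)\le C\Tr(R_m\Gamma R_m)$. Your explicit $\sqrt{r_m}$ constant, the stated quantitative modulus, and the off-diagonal H\"older estimate simply spell out steps the paper states briefly.
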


\begin{proof}
The block estimate \eqref{eq:positive-block-tail}, the common trace bound, and
\eqref{eq:trace-tight-positive-family}
make the tails of $A_n$ and $A$ uniformly small.  On the finite-dimensional
range of $P_m$, Hilbert--Schmidt convergence implies trace-norm convergence.
The three-term estimate
\[
 \|A_n-A\|_1
 \le\|A_n-P_mA_nP_m\|_1
   +\|P_m(A_n-A)P_m\|_1
   +\|A-P_mAP_m\|_1
\]
proves the assertion by first fixing $m$ and then letting $n\to\infty$.
If $A_n,A\preceq C\Gamma$, the required tail modulus follows from
$\Tr(R_mA_nR_m)\vee\Tr(R_mAR_m)\le C\Tr(R_m\Gamma R_m)$.
\end{proof}

\begin{lemma}[Uniform finite-scale closure principle]
\label{lem:uniform-finite-scale-closure}
Let $X$ be a topological space, let $(Y,d)$ be complete, and let
$F_n:X\to Y$ be continuous.  Suppose $x^k\to x$ and that, for some
$r(n)\downarrow0$,
\begin{equation}\label{eq:uniform-finite-scale-Cauchy-tail}
 \sup_k\sup_{m\ge n}d(F_m(x^k),F_n(x^k))\le r(n).
\end{equation}
Then $(F_n(x))_n$ is Cauchy, and if $F_\infty(x^k)$ and $F_\infty(x)$ denote
the corresponding limits, then
\[
 F_\infty(x^k)\longrightarrow F_\infty(x).
\]
More generally, if \eqref{eq:uniform-finite-scale-Cauchy-tail} holds uniformly
on a set $K$, then $F_n$ converges uniformly on $K$ to a continuous limit; the
same conclusion holds on the closure of $K$ whenever the same tail modulus is
stable under limits.
\end{lemma}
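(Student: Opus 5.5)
The argument is a standard uniform-Cauchy/completeness argument, organized so that the uniform tail \eqref{eq:uniform-finite-scale-Cauchy-tail} does all the work. Three steps, in order.

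\emph{Step 1: the limits exist.} For each fixed $k$, the bound \eqref{eq:uniform-finite-scale-Cauchy-tail} makes $(F_n(x^k))_n$ Cauchy, so $F_\infty(x^k)$ exists because $Y$ is complete. Letting $m\to\infty$ in \eqref{eq:uniform-finite-scale-Cauchy-tail} gives
\[
 d(F_\infty(x^k),F_n(x^k))\le r(n)\qquad\text{for all }k,n.
\]
For the limit point, fix $m\ge n$. Continuity of $F_m$ and $F_n$, together with $x^k\to x$, lets us pass to the limit in $d(F_m(x^k),F_n(x^k))\le r(n)$. Since $d$ is continuous on $Y\times Y$, this yields $d(F_m(x),F_n(x))\le r(n)$. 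Hence $(F_n(x))_n$ is Cauchy, its limit $F_\infty(x)$ exists, and $d(F_\infty(x),F_n(x))\le r(n)$.

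\emph{Step 2: continuity along the sequence.} This is the $\varepsilon/3$ argument:
\[
 d(F_\infty(x^k),F_\infty(x))\le d(F_\infty(x^k),F_n(x^k))+d(F_n(x^k),F_n(x))+d(F_n(x),F_\infty(x)).
\]
By Step 1 the first and third terms are each at most $r(n)$. First fix $n$ with $2r(n)<\varepsilon/2$. Then continuity of $F_n$ makes the middle term smaller than $\varepsilon/2$ for all large $k$.

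\emph{Step 3: the set version.} If the tail bound holds uniformly on $K$, letting $m\to\infty$ gives $\sup_{y\in K}d(F_\infty(y),F_n(y))\le r(n)$. So $F_n\to F_\infty$ uniformly on $K$, and a uniform limit of continuous maps is continuous (the same three-term estimate, now at an arbitrary point of $K$).

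For the closure, let $y\in\overline K$. In a general topological space we may not have a sequence in $K$ converging to $y$, so we use a net $y_\alpha\to y$ in $K$ instead. The passage to the limit from Step 1 works verbatim for nets and gives $d(F_m(y),F_n(y))\le r(n)$. The hypothesis that the tail modulus is stable under limits is exactly what this step requires, so the same tail bound holds on all of $\overline K$, and the previous paragraph applies to $\overline K$ in place of $K$.

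I do not expect a real obstacle. The only care needed is that every "$\le$" inequality survives the limits because $d$ and the $F_n$ are continuous, and that the closure step uses nets rather than sequences when $X$ is not first countable.
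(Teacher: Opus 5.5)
Your proof is correct and follows the paper's argument. You pass the uniform tail bound to $x$ by continuity of $F_m,F_n$, use the three-term estimate $d(F_\infty(x^k),F_\infty(x))\le 2r(n)+d(F_n(x^k),F_n(x))$ letting $k\to\infty$ before $n\to\infty$, and take the supremum over $K$ for the set version. Your net argument for $\overline K$ is a useful addition. It actually shows that the tail bound passes to the closure automatically from continuity of the $F_n$, so the stated stability hypothesis is verified rather than assumed.
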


\begin{proof}
For fixed $m\ge n$, continuity gives
$d(F_m(x),F_n(x))\le r(n)$ after passing $k\to\infty$, so $(F_n(x))$ is
Cauchy.  For every $n$,
\[
 d(F_\infty(x^k),F_\infty(x))
 \le r(n)+d(F_n(x^k),F_n(x))+r(n).
\]
First let $k\to\infty$ and then $n\to\infty$.  The uniform statement is the
same estimate with the supremum over $K$.
\end{proof}

\begin{lemma}[Borel and closed compatibility locus]
\label{lem:core-compatibility-locus}
The set of paths on which the three finite-scale sequences in
\eqref{eq:gauge-tensor-increment}--\eqref{eq:gauge-trace-energy-increment}
are Cauchy and satisfy \eqref{eq:Hilbert-core-compatibility} is Borel.  If
$x^k\to x$ uniformly and
\[
 \sup_k\mathfrak R_\Gamma(x^k)<\infty,
\]
then $x$ belongs to the same compatibility locus and the primitive,
signature, and trace-energy limits of $x^k$ converge uniformly, without
subsequence extraction, to the corresponding limits of $x$.  In particular,
the algebraic compatibility relations and the covariance envelope pass to
the limit.
\end{lemma}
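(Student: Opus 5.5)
For Borel measurability I will write the locus as a countable intersection. Each finite-scale map $x\mapsto J^{\eps_n}(x)$, $S_2(Y^{\eps_n}(x))$, $\mathsf E^{\delta_j}(x)$ is continuous from the raw uniform topology into a separable target: $C([0,T];\mathcal S_2)$, the $\rho_\alpha$-closed space of smooth lifts, and $C([0,T];\mathcal S_1)$ respectively. The Cauchy condition for each sequence is therefore a $\bigcap_k\bigcup_N\bigcap_{m,n\ge N}$ combination of Borel sets. On the Cauchy set the limits are Borel by \Cref{lem:causal-Borel-limit} together with \Cref{lem:Borel-continuous-section-currying}. The algebraic identity in \eqref{eq:Hilbert-core-compatibility} is a closed condition on continuous readouts of these limits. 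The order inequality $0\preceq Q_{s,t}\preceq(t-s)\Gamma$ reduces, by continuity in the time pair and density of $H_{\mathbb Q}$, to countably many rational quadratic-form inequalities, exactly as for $G_Q^\Gamma$ in \Cref{prop:common-bracket-variation-domain}. The last condition in \eqref{eq:Hilbert-core-compatibility} is the Cauchy property together with identification of the $\mathcal S_1$-limit with $Q$, and that identification is a closed equation in $\mathcal S_2$.

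For the closure statement, fix $R\ge\sup_k\mathfrak R_\Gamma(x^k)$. The gauge gives the telescoped tail bounds
\[
 \sup_k\sup_{m\ge n}\|J^{\eps_m}(x^k)-J^{\eps_n}(x^k)\|_{\infty;\mathcal S_2}\le R^2\sum_{i\ge n}\eps_i^{\gamma}=:r_1(n).
\]
The analogous bounds $r_2(n)=R^2\sum_{i\ge n}\eps_i^\kappa$ hold for the signature metric $\rho_\alpha$, and $r_3(j)=R^2\sum_{i\ge j}2^{-4i}$ for the trace energies. All three tails tend to zero. Since each finite-scale map is continuous in the raw uniform topology, \Cref{lem:uniform-finite-scale-closure} applies to all three sequences. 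It shows that the sequences are Cauchy at $x$ and that the limits at $x^k$ converge to the limits at $x$ without extracting a subsequence. For the signature I would use continuity of $x\mapsto S_2(Y^\eps(x))$ into $\rho_\alpha$. This holds because $Y^\eps$ is $C^1$ with derivative $\eps^{-1}D^\eps$, which depends continuously on $x$ uniformly, so $S_2(Y^\eps)$ is continuous even in bounded variation.

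It remains to pass \eqref{eq:Hilbert-core-compatibility} to the limit. The algebraic identity involves $X$, $\widehat J$, $Q=X^{\otimes2}-\widehat J-\widehat J^*$ and $\mathbb X^S$. All of these converge uniformly on $\Delta_T$ in $\mathcal S_2$: the second-level $\rho_\alpha$ convergence dominates uniform convergence, and $X^k\to X$ uniformly. So the identity is preserved. The envelope $\langle Q_{s,t}h,h\rangle\in[0,(t-s)\langle\Gamma h,h\rangle]$ passes pointwise to the limit because $Q(x^k)\to Q(x)$ in $\mathcal S_2$. Finally, the $\mathcal S_1$-limit of $\mathsf E^{\delta_j}(x^k)$ equals $Q(x^k)$ for each $k$, and by uniform convergence in $k$ these converge to $\lim_j\mathsf E^{\delta_j}(x)$ in $\mathcal S_1$. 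Since $Q(x^k)\to Q(x)$ in $\mathcal S_2$, uniqueness of limits under $\mathcal S_1\hookrightarrow\mathcal S_2$ identifies the $\mathcal S_1$-limit at $x$ with $Q(x)$.

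I expect the main obstacle to be the continuity of the finite-scale rough maps $x\mapsto S_2(Y^{\eps}(x))$ in $\rho_\alpha$ under uniform convergence alone. I would handle it by the bounded-variation argument above. As a backup I would use the uniform $\beta$-Hölder bound $\le R$ together with interpolation from \Cref{lem:resolvent-boundary-layer}.
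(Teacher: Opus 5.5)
Your proposal is correct and follows the paper's proof. Borel measurability comes from the Cauchy loci together with closed conditions at rational times, and closure comes from feeding the telescoped gauge tails into \Cref{lem:uniform-finite-scale-closure}; your explicit $C^1$ argument also supplies the finite-scale $\rho_\alpha$-continuity that the paper only asserts.

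The one variation is how the $\mathcal S_1$ trace-energy limit at $x$ is identified with $Q(x)$. The paper does this directly at $x$, using $\mathsf E^\delta=Q^\delta-D^\delta\otimes D^\delta$ and the boundary-layer bound. You instead pass through the $x^k$, each of which already lies in the locus so that $\lim_j\mathsf E^{\delta_j}(x^k)=Q(x^k)$, and conclude by uniqueness of $\mathcal S_2$ limits; this is equally valid.
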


\begin{proof}
Finite-scale continuity and the Cauchy criterion make the three Cauchy loci
Borel.  Their targets are Polish, so on each locus the limit map is Borel;
extend it by a fixed base point off that locus.  After composing the
trace-class limit with the continuous injection
\[
 C([0,T];\mathcal S_1)\hookrightarrow
 C([0,T];\mathcal S_2),
\]
equality with the declared $Q$-coordinate is therefore Borel.  The algebraic
identities and order inequalities are inverse images of closed sets at
rational time pairs; continuity extends them to all pairs.  Hence the full
compatibility locus is Borel.

Suppose now that $x^k\to x$ uniformly and
$\mathfrak R_\Gamma(x^k)\le R$.  For the primitive, signature, and trace-energy
approximants, the weighted Cauchy coordinates provide a deterministic tail
modulus uniform in $k$, while every fixed finite-scale map is raw-uniform
continuous.  Apply \Cref{lem:uniform-finite-scale-closure} in the corresponding
complete targets.  It yields existence of the limits at $x$ and convergence,
without subsequence extraction, of the limiting primitive, signature, and
trace-energy paths.  The trace-energy application is taken directly in
$C([0,T];\mathcal S_1)$.  At finite scale,
\[
 \mathsf E^\delta=Q^\delta-D^\delta\otimes D^\delta.
\]
The common $\beta$-H\"older bound and
\Cref{lem:resolvent-boundary-layer} make the last term vanish uniformly as
$\delta\downarrow0$.  Hence the trace-energy limit agrees in
$\mathcal S_2$ with the algebraic covariance read from the primitive and
signature limits.  Its $\mathcal S_1$ representative is therefore the same
operator; equivalently one may use
\Cref{lem:dominated-trace-continuity} under the common $\Gamma$-envelope.
Positivity and $0\preceq Q_{s,t}\preceq(t-s)\Gamma$ are closed in trace
norm.  All relations in \eqref{eq:Hilbert-core-compatibility} consequently
pass to the limit.
\end{proof}

\begin{proposition}[Gauge-wise stopping, shift, and concatenation estimates]
\label{prop:gaugewise-restart}
Let $x\in\mathcal C_R$, $y\in\mathcal C_S$, and $s\in[0,T]$.  There is a
constant $C$, depending only on the fixed exponents and $T$, such that
\begin{align}
 \|r_sx\|_{\beta\text{-H\"ol}}
 &\le R,
 &\|\theta_sx\|_{\beta\text{-H\"ol}}
 &\le2R,
 &\|x\otimes_sy\|_{\beta\text{-H\"ol}}
 &\le C(R+S),
 \label{eq:gauge-path-operations}\\
 \mathfrak T_\Gamma(r_sx)
 &\le\mathfrak T_\Gamma(x),
 &\mathfrak T_\Gamma(\theta_sx)
 &\le2\mathfrak T_\Gamma(x),
 &\mathfrak T_\Gamma(x\otimes_sy)
 &\le\mathfrak T_\Gamma(x)+\mathfrak T_\Gamma(y).
 \label{eq:gauge-spatial-operations}
\end{align}
For every $n,j$,
\begin{align}
 a_n(r_sx)
 &\le a_n(x),
 \notag\\
 a_n(\theta_sx)
 &\le C\bigl(a_n(x)+R^2\eps_n^{2\beta}\bigr),
 \notag\\
 a_n(x\otimes_sy)
 &\le C\bigl(a_n(x)+a_n(y)+(R+S)^2\eps_n^{2\beta}\bigr),
 \label{eq:gauge-a-operations}\\
 b_n(r_sx)+b_n(\theta_sx)
 &\le C\bigl(b_n(x)+R(1+R)\eps_n^{\beta-\alpha}\bigr),
 \notag\\
 b_n(x\otimes_sy)
 &\le C\bigl((1+S)b_n(x)+(1+R)b_n(y)
       +(1+R+S)^2\eps_n^{\beta-\alpha}\bigr),
 \label{eq:gauge-b-operations}\\
 c_j(r_sx)+c_j(\theta_sx)
 &\le C\bigl(c_j(x)+R^2\delta_j^{2\beta}\bigr),
 \notag\\
 c_j(x\otimes_sy)
 &\le C\bigl(c_j(x)+c_j(y)
       +(R^2+RS+S^2)\delta_j^{2\beta}\bigr).
 \label{eq:gauge-c-operations}
\end{align}
The estimates
\eqref{eq:gauge-path-operations}--\eqref{eq:gauge-c-operations} are
envelope-free: they remain valid, with the same constants, for any two path
classes on which the corresponding numerical gauge coordinates are bounded by
$R,S$, and with $\mathfrak T_\Gamma$ replaced by the spatial gauge associated
with the same projections $(P_m)$.  Compatibility-locus preservation follows
separately from the algebraic identities used below.
Consequently, with
\[
 \Gamma_*(R,S):=1+C_*(1+R+S)^2,
\]
one has, uniformly in $s$,
\[
 r_s\mathcal C_R\cup\theta_s\mathcal C_R
 \subset\mathcal C_{\Gamma_*(R,0)},
 \qquad
 x\otimes_sy\in\mathcal C_{\Gamma_*(R,S)}.
\]
The stopping inclusion remains valid after substituting a Borel stopping time
for $s$.
\end{proposition}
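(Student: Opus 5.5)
The plan is to prove each family of estimates separately from the exact finite-scale identities already established for the resolvent, and then assemble the gauge inclusions. The path-level bounds \eqref{eq:gauge-path-operations}--\eqref{eq:gauge-spatial-operations} are elementary. Stopping freezes increments, so Hölder and tail seminorms can only decrease. The shift $\theta_s$ subtracts the constant $x_s$, which at most doubles the sup-norm tail $\|(I-P_m)\theta_sx\|_\infty$. For concatenation, an increment crossing the seam splits into two pieces, each controlled by $R$ or $S$ at a cost $|t-s|^\beta$, with a constant depending only on $\beta$. Since the spatial projections commute with all three operations, these bounds are envelope-free.

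The primitive gauge $a_n$ comes next. For stopping, \eqref{eq:finite-stopping} gives $J^{\eps}(r_sx)=r_sJ^\eps(x)$, hence $a_n(r_sx)\le a_n(x)$. For the shift, I would use the seam identity \eqref{eq:resolvent-J-seam}:
\[
J^\eps_v(\theta_sx)=J^\eps_{s+v}(x)-J^\eps_s(x)-x_s\otimes x_{s,s+v}+D_s^\eps(x)\otimes\mathcal A^\eps_v(\theta_sx),
\]
together with its terminal version from \eqref{eq:fixed-horizon-resolvent-shift}. Differencing at scales $\eps_n,\eps_{n+1}$ leaves $2a_n(x)$ plus two seam terms. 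Each seam term is bounded by $CR^2\eps_n^{2\beta}$ through \eqref{eq:deterministic-resolvent-bounds}, because $\eps_{n+1}=\eps_n/2$. Concatenation is handled the same way by \eqref{eq:finite-resolvent-J-concatenation} and \eqref{eq:resolvent-delay-covariance}: the only non-additive term is $D_s^\eps(x)\otimes\mathcal A^\eps_{t-s}(y)$, of size $C R S\eps^{2\beta}$. The trace-energy gauge $c_j$ follows identically from the three seam bounds of \Cref{lem:trace-energy-seams}, applied at $\delta_j$ and $\delta_{j+1}$ and combined with the triangle inequality.

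I expect the signature gauge $b_n$ to be the main obstacle, since $\rho_\alpha$ is not a norm and Chen cross terms enter. For stopping and shift, I would write the operated resolvent path via \eqref{eq:resolvent-stopping-layer} and \eqref{eq:fixed-horizon-resolvent-shift} as the correspondingly operated path plus a finite sum of exponential boundary layers. Each layer has amplitude at most $C R\eps^\beta$ by the bound on $D^\eps$. \Cref{lem:resolvent-boundary-layer} then gives $\rho_\alpha(S_2(W+B),S_2(W))\le CR(1+R)\eps^{\beta-\alpha}$ with $W$ the operated path; the estimate should be stated first for bounded-variation $W$ and passed to the resolvent paths, whose $\beta$-Hölder norms are at most $2R$.

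Stopping and shifting the smooth signature commute with the rough distance without enlargement, because $\rho_\alpha$ restricted to a subinterval is no larger. Comparing the scales $\eps_n$ and $\eps_{n+1}$ therefore yields $b_n(x)$ plus two layer errors. For concatenation, I would again remove the layer \eqref{eq:resolvent-concatenation-layer} and compare $\widetilde Y$ at the two scales. On pairs $s'<s<t'$, Chen's relation produces cross terms of the form $(\text{first-level difference of }x)\otimes y$ and its mirror. These are bounded by $b_n(x)\,S$ and $R\,b_n(y)$ times $|t'-s'|^{2\alpha}$, which accounts for the weights $(1+S)$ and $(1+R)$.

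To assemble the inclusions, insert these bounds into \eqref{eq:Hilbert-core-gauge}. Since $\eps_n^{2\beta-\gamma}$, $\eps_n^{\beta-\alpha-\kappa}$ and $2^{4j}\delta_j^{2\beta}\le 2^{-4j}$ (by \eqref{eq:trace-adapted-scale}) are bounded, each weighted coordinate is at most $C(1+R+S)^2$. This gives the stated $\Gamma_*(R,S)$.

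Two points remain. Membership in the compatibility locus follows from stability of $G_J$ and $G_Q^\Gamma$ under the three operations (\Cref{thm:shift-equivariant-enhancement,prop:common-bracket-variation-domain}), together with the observation that the finite-scale seam errors vanish in the limit. For a Borel stopping time, the stopping inclusion holds pointwise since every bound is uniform in $s$.
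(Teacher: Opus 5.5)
Your proposal is correct and follows essentially the same route as the paper's proof. Both arguments proceed in the same order: elementary Hölder and spatial-tail bounds; the exact finite-scale stopping, seam, and concatenation identities with adjacent-scale triangle inequalities for $a_n$ and $c_j$; exponential boundary layers plus the Chen cross term (giving the factors $1+S$ and $1+R$) for $b_n$; then the exponent gaps for the quadratic reindexing, and \Cref{thm:shift-equivariant-enhancement} with vanishing seam errors for compatibility. The only cosmetic point is that the resolvent paths $Y^\eps$ are already $C^1$, so \Cref{lem:resolvent-boundary-layer} applies to them directly and no passage from bounded-variation paths is needed.
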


\begin{proof}
Stopping is H\"older-contractive; a fixed-horizon shift has one
constant-extension boundary, and a seam-crossing increment splits at $s$.
This proves \eqref{eq:gauge-path-operations}.  Since $P_m$ commutes with all
three path operations, the triangle inequality gives
\eqref{eq:gauge-spatial-operations}.

Exact stopping of $J^\eps$, the shift formula
\eqref{eq:resolvent-J-seam}, and the concatenation identity
\eqref{eq:finite-resolvent-J-concatenation} give
\eqref{eq:gauge-a-operations}, using
\eqref{eq:deterministic-resolvent-bounds} for the boundary terms.

For any of the three operations $\mathsf O_s$, equations
\eqref{eq:resolvent-stopping-layer},
\eqref{eq:fixed-horizon-resolvent-shift}, and
\eqref{eq:resolvent-concatenation-layer} show that
$Y^\eps(\mathsf O_sx)$ differs from $\mathsf O_sY^\eps(x)$ by at most two
exponential layers, of amplitude $CR\eps^\beta$ (one input) or
$C(R+S)\eps^\beta$ (two inputs).  Thus
\Cref{lem:resolvent-boundary-layer,lem:resolvent-seam} gives, uniformly in
$s$,
\begin{align*}
 \rho_\alpha\bigl(S_2(Y^\eps(r_sx)),
                  r_sS_2(Y^\eps(x))\bigr)
 &\le CR(1+R)\eps^{\beta-\alpha},\\
 \rho_\alpha\bigl(S_2(Y^\eps(\theta_sx)),
                  \theta_sS_2(Y^\eps(x))\bigr)
 &\le CR(1+R)\eps^{\beta-\alpha},\\
 \rho_\alpha\bigl(S_2(Y^\eps(x\otimes_sy)),
                  S_2(Y^\eps(x))\otimes_sS_2(Y^\eps(y))\bigr)
 &\le C(1+R+S)^2\eps^{\beta-\alpha}.
\end{align*}
Contractivity of stopping, the adjacent-scale triangle inequality, and the
explicit cross term in step-two concatenation give
\[
 b_n(r_sx)+b_n(\theta_sx)
 \le C\{b_n(x)+R(1+R)\eps_n^{\beta-\alpha}\},
\]
while local Lipschitz continuity on the two rough balls supplies the factors
$1+S$ and $1+R$ in the two-input bound
\eqref{eq:gauge-b-operations}.  These factors are the contribution of the
cross increment and cannot be dropped.  All bounds are uniform in seam and
scale.  \Cref{lem:trace-energy-seams} at adjacent scales gives
\eqref{eq:gauge-c-operations}.

On $\mathcal C_R$,
\[
 a_n(x)\le R^2\eps_n^\gamma,
 \qquad
 b_n(x)\le R^2\eps_n^\kappa,
 \qquad
 c_j(x)\le R^22^{-4j}.
\]
The seam terms are summable because
\[
 \beta-\frac\gamma2>0,
 \qquad
 \beta-\alpha-\kappa>0,
 \qquad
 2^{2j}\delta_j^\beta\le2^{-2j}
\]
by \eqref{eq:core-exponents} and \eqref{eq:trace-adapted-scale}.  Applying
$\sqrt{u+v+w}\le\sqrt u+\sqrt v+\sqrt w$ in
\eqref{eq:gauge-a-operations}--\eqref{eq:gauge-c-operations} gives, uniformly
in seam and scale,
\begin{align*}
 \sup_n\eps_n^{-\gamma/2}a_n(x\otimes_sy)^{1/2}
 &\le C_T(1+R+S),\\
 \sup_n\eps_n^{-\kappa/2}b_n(x\otimes_sy)^{1/2}
 &\le C_T(1+R+S)^{3/2},\\
 \sup_j2^{2j}c_j(x\otimes_sy)^{1/2}
 &\le C(1+R+S).
\end{align*}
The respective seam contributions are
$(1+R+S)\eps_n^{\beta-\gamma/2}$,
$(1+R+S)\eps_n^{(\beta-\alpha-\kappa)/2}$, and
$(1+R+S)2^{2j}\delta_j^\beta$, so the preceding gaps control their suprema.
Together with \eqref{eq:gauge-path-operations} and
\eqref{eq:gauge-spatial-operations}, every coordinate of
$\mathfrak R_\Gamma(x\otimes_sy)$ is therefore bounded by
$C_T(1+R+S)^2$.  The same calculation with one input gives the stopping and
shift bounds.  Enlarging one fixed constant $C_*$ gives
\[
 \mathfrak R_\Gamma(r_sx)\vee
 \mathfrak R_\Gamma(\theta_sx)
 \le\Gamma_*(R,0),
 \qquad
 \mathfrak R_\Gamma(x\otimes_sy)\le\Gamma_*(R,S),
\]
which is the asserted quadratic reindexing.

\Cref{thm:shift-equivariant-enhancement} preserves primitive/signature
compatibility and the covariance envelope, while the trace-energy seam errors
vanish along $(\delta_j)$.  Thus all operations preserve the compatibility
locus of \Cref{lem:core-compatibility-locus}.  Since the estimates are uniform
in the deterministic seam and $x\mapsto r_{\tau(x)}x$ is Borel for a Borel
stopping time $\tau$, substitution $s=\tau(x)$ proves the final assertion
without an uncountable intersection.
\end{proof}

\begin{theorem}[Compact perfection under uniform spatial trace tightness]
\label{thm:spatial-trace-tight-cores}
Let $\mathcal P_{\rm sp}\subset\mathfrak S_{\Lambda,0}^{0,T}(H)$ have a
uniform spatial trace-tightness profile
$(P_m,\vartheta_m)$, relabeled so that
$\vartheta_m\le2^{-8m}$.  Choose a scale $\delta_j\downarrow0$ such that
\[
 \omega_{\boldsymbol\vartheta}(\delta_j)
 +\delta_j^\kappa+\delta_j^{2\beta}\le2^{-8j}.
\]
Put
\[
 \mathfrak T_{\boldsymbol\vartheta}(x)
 :=\sup_{m\ge0}2^m\|(I-P_m)x\|_{\infty;H},
\]
let $a_n,b_n,c_j$ be the finite-scale increments in
\eqref{eq:gauge-tensor-increment}--\eqref{eq:gauge-trace-energy-increment},
and define
\begin{equation}\label{eq:spatial-profile-core-gauge}
 \mathfrak R_{\boldsymbol\vartheta}(x)
 :=\max\left\{1,\|x\|_{\beta\text{-H\"ol}},
 \mathfrak T_{\boldsymbol\vartheta}(x),
 \sup_n\eps_n^{-\gamma/2}a_n(x)^{1/2},
 \sup_n\eps_n^{-\kappa/2}b_n(x)^{1/2},
 \sup_j2^{2j}c_j(x)^{1/2}\right\}
\end{equation}
on the locus where the finite-scale limits exist,
$\mathsf E^{\delta_j}\to Q$ in
$C([0,T];\mathcal S_1)$, and the following profile compatibility replaces
the operator-order clause in \eqref{eq:Hilbert-core-compatibility}:
\begin{equation}\label{eq:spatial-profile-Q-compatibility}
 Q_{s,t}\in\mathcal S_1(H)_+,
 \qquad \Tr Q_{s,t}\le\Lambda(t-s),
 \qquad \Tr(R_mQ_{s,t}R_m)\le\vartheta_m(t-s)
 \quad(m\ge0).
\end{equation}
Set $\mathfrak R_{\boldsymbol\vartheta}=\infty$ off this locus and let
$\mathcal C_R^{\boldsymbol\vartheta}$ be its sublevels.  Then there are
constants $C,c,C_*>0$, depending only on the fixed parameters and the spatial
profile, such that the following hold.
\begin{enumerate}[label=\textup{(\roman*)},leftmargin=2.5em]
\item For every $R\ge1$, $\mathcal C_R^{\boldsymbol\vartheta}$ is compact in
the raw uniform topology and
\[
 c_{\mathcal P_{\rm sp}}
 \bigl((\mathcal C_R^{\boldsymbol\vartheta})^c\bigr)
 \le Ce^{-cR^2}.
\]
\item On every $\mathcal C_R^{\boldsymbol\vartheta}$ and for every
$0<\zeta<1$, the map
\[
 x\longmapsto (X(x),A(x),Q(x))
\]
is continuous with values in
\[
 C_0([0,T];H)\times C^{0,2\alpha}(\mathcal S_2(H)_{\rm sk})
 \times C^{0,\zeta}(\mathcal S_1(H)_{\rm sa}).
\]
For every $\nu>1$, $x\mapsto Q(x)$ is also continuous in the
$\nu$-variation topology, and every covariance path satisfies
$[Q(x)]_{1;\mathcal S_1}\le\Lambda$.
\item With $\Gamma_*(R,S)=1+C_*(1+R+S)^2$, uniformly in the seam $s$,
\[
 r_s\mathcal C_R^{\boldsymbol\vartheta}
 \cup\theta_s\mathcal C_R^{\boldsymbol\vartheta}
 \subset\mathcal C_{\Gamma_*(R,0)}^{\boldsymbol\vartheta},
 \qquad
 x\in\mathcal C_R^{\boldsymbol\vartheta},\ y\in\mathcal C_S^{\boldsymbol\vartheta}
 \Longrightarrow
 x\otimes_sy\in\mathcal C_{\Gamma_*(R,S)}^{\boldsymbol\vartheta}.
\]
The stopping inclusion remains valid for Borel stopping times, and the limiting
state obeys the stopping, shift, and Chen concatenation identities on the
resulting full-capacity domain.
\item The finite-scale approximations converge uniformly on each core:
\begin{align*}
 \sup_{x\in\mathcal C_R^{\boldsymbol\vartheta}}
 \|J^{\eps_n}(x)-\widehat J(x)\|_\infty
 &\le C_R\eps_n^\gamma,\\
 \sup_{x\in\mathcal C_R^{\boldsymbol\vartheta}}
 \rho_\alpha(S_2(Y^{\eps_n}(x)),\mathbf X^S(x))
 &\le C_R\eps_n^\kappa,\\
 \sup_{x\in\mathcal C_R^{\boldsymbol\vartheta}}
 \|\mathsf E^{\delta_j}(x)-Q(x)\|_{\infty;\mathcal S_1}
 &\le C_R2^{-4j}.
\end{align*}
\end{enumerate}
The fixed-envelope result is recovered by taking
$\vartheta_m=\Tr((I-P_m)\Gamma)$ and then imposing the closed
operator-order restriction made explicit below.
\end{theorem}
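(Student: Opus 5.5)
The plan is to repeat the fixed-envelope construction (\Cref{lem:common-Gamma-compatibility-event,lem:core-compatibility-locus,prop:gaugewise-restart}) with the spatial profile in place of $\Gamma$ throughout. The $\Gamma$-order clause is replaced by \eqref{eq:spatial-profile-Q-compatibility}, and compactness is drawn from the profile rather than from an order interval. The key point is that every estimate used there is either envelope-free or depends on $\Gamma$ only through $(\Tr\Gamma,\tau_m)$. \Cref{prop:gaugewise-restart} says this explicitly for the gauge operations. \Cref{cor:trace-energy-spatial-profile} and \Cref{lem:spatial-profile-compact-containment} supply the profile versions of the trace-energy rate and the path-tail moment.

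\emph{Part (i), capacity.} For the capacity bound, I would estimate each gauge coordinate separately and take a union bound over them. The H\"older norm is controlled by \Cref{lem:enhanced-moments} with Gaussian-type moments $C\sqrt q$, and $\mathfrak T_{\boldsymbol\vartheta}$ by \eqref{eq:spatial-profile-path-tail-moment}. For $a_n$, $b_n$, $c_j$ I would use \eqref{eq:tensor-rate}, \eqref{eq:dyadic-geometric-resolvent-rate} and \eqref{eq:trace-energy-spatial-profile-rate}, all with moments linear in $q$. The gaps $\gamma<1/2$ and $\kappa<1/2-\alpha$, together with the choice of $\delta_j$, leave summable margins $\eps_n^{1/2-\gamma}$, $\eps_n^{\vartheta-\kappa}$ and $2^{-4j}$. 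Markov's inequality at optimized $q\sim R^2$ then gives $e^{-cR^2}$ tails after summation over $n,j$.

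The compatibility locus has polar complement. This follows as in \Cref{lem:common-Gamma-compatibility-event}, except that $G_Q^\Gamma$ is replaced by the set defined by \eqref{eq:spatial-profile-Q-compatibility} at rational pairs; modelwise this holds because $Q=\int a^P\,\dd r$ and \eqref{eq:uniform-spatial-trace-profile} integrates.

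\emph{Part (i), compactness.} The sublevel set $\mathcal C_R^{\boldsymbol\vartheta}$ lies inside the compact set of \Cref{lem:spatial-profile-compact-containment}. For closedness I would rerun \Cref{lem:core-compatibility-locus}: the uniform-in-$k$ Cauchy tails and \Cref{lem:uniform-finite-scale-closure} pass all limits to $x$. The trace-energy limit is taken directly in $C([0,T];\mathcal S_1)$, and \eqref{eq:spatial-profile-Q-compatibility} is closed in trace norm. Lower semicontinuity of every gauge coordinate under uniform convergence keeps the limit in $\mathcal C_R^{\boldsymbol\vartheta}$.

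\emph{Part (iv).} This is immediate from summing the gauge tails on the core: $\sum_{m\ge n}a_m\le R^2\sum_{m\ge n}\eps_m^\gamma\lesssim R^2\eps_n^\gamma$, and similarly for $b_n$ and $c_j$.

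\emph{Part (ii).} Continuity follows from (iv) by uniform limits of raw-continuous finite-scale maps. This gives $X$ and $A$ in $\rho_\alpha$, hence $2\alpha$-H\"older area; little-H\"older membership holds because the limit of smooth lifts is geometric. It also gives $Q$ in $C([0,T];\mathcal S_1)$. Since $Q(x)\in\mathscr Q_{\Lambda,\boldsymbol\vartheta}$, the paths are trace-norm $\Lambda$-Lipschitz. Interpolating between the uniform distance and the common Lipschitz bound gives $\zeta$-H\"older continuity for $\zeta<1$, and \Cref{lem:covariance-profile-compact-restart} gives $\nu$-variation continuity.

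\emph{Part (iii).} This follows from \eqref{eq:gauge-path-operations}--\eqref{eq:gauge-c-operations}, which are stated to hold verbatim with the spatial gauge. Locus preservation uses \Cref{thm:shift-equivariant-enhancement} for primitive/signature compatibility and \Cref{lem:covariance-profile-compact-restart}(ii) for \eqref{eq:spatial-profile-Q-compatibility}. The trace-energy limits are preserved by \Cref{lem:trace-energy-seams}.

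The main obstacle is the trace-norm identification of the limit covariance in the closedness step. Without an envelope, \Cref{lem:dominated-trace-continuity} must be fed the common tail modulus $\vartheta_m$ coming from \eqref{eq:spatial-profile-Q-compatibility}, and I must check that the $\mathcal S_1$-limit of $\mathsf E^{\delta_j}$ agrees with the algebraic $Q$. The trace-norm convergence itself comes from the $c_j$ gauge, so I would first try to conclude directly via $\mathsf E^\delta=Q^\delta-D^\delta\otimes D^\delta$, uniqueness in $\mathcal S_2$, and this tail modulus.

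Finally, the fixed-envelope statement is recovered by taking $\vartheta_m=\Tr((I-P_m)\Gamma)$ and intersecting with the closed condition $0\preceq Q_{s,t}\preceq(t-s)\Gamma$.
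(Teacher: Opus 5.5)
Your proposal is correct and follows the paper's proof essentially step by step. It obtains fullness of the profile-compatibility locus from rational-time tests and Markov/Borel--Cantelli, compactness from \Cref{lem:spatial-profile-compact-containment} plus the uniform finite-scale closure lemma, part~(iv) and corewise continuity from telescoped gauge tails, and part~(iii) from the envelope-free gauge estimates together with the restart-closed profile class. The only differences are cosmetic: you get the $e^{-cR^2}$ tail by a scale-by-scale union bound with Markov at $q\sim R^2$, whereas the paper first sums the $L^{2q}$ norms of the weighted gauge coordinates into a $C\sqrt q$ moment bound for $\mathfrak R_{\boldsymbol\vartheta}(X)$, and the two routes are equivalent. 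Your explicit lower-semicontinuity check that the limit point stays in the sublevel spells out a step the paper leaves implicit.
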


\begin{proof}
Steps 1--5 depend only on the spatial trace profile.  Operator domination
enters in the fixed-envelope specialization.

\emph{Step 1: Borel compatibility and upper-capacity gauge bounds.}
The Borel argument of \Cref{lem:core-compatibility-locus} applies verbatim:
replace the operator-order test by the countable rational-time profile tests,
using trace-norm continuity of
$A\mapsto\Tr(R_mAR_m)$.  Indeed, positivity and the total trace bound on
rational increments first give
\[
 \|Q_t-Q_s\|_1=\Tr(Q_t-Q_s)\le\Lambda(t-s),
 \qquad s<t\ \text{rational}.
\]
The rational $\mathcal S_1$-Lipschitz bound has a unique trace-norm continuous
extension.  Its image in $\mathcal S_2$ equals the declared continuous
covariance coordinate $Q$ by uniqueness, and trace-norm continuity then
extends positivity and every tail inequality to arbitrary times.  Thus the
rational tests define exactly the declared profile locus.

We first verify that the compatibility locus on which the gauge is finite is
common and full.  The modelwise bracket identity gives, simultaneously for
every rational $s<t$ and every $m$,
\[
 \Tr(R_mQ_{s,t}R_m)
 =\int_s^t\Tr(R_ma_r^PR_m)\,\dd r
 \le\vartheta_m(t-s),
 \qquad
 \Tr Q_{s,t}\le\Lambda(t-s).
\]
By \eqref{eq:trace-energy-spatial-profile-rate}, Markov's inequality, and
capacity Borel--Cantelli, the chosen sequence
$(\mathsf E^{\delta_j})_j$ converges in
$C([0,T];\mathcal S_1)$ outside one polar set.  Its image under the continuous
inclusion $\mathcal S_1\hookrightarrow\mathcal S_2$ is the already selected
common covariance $Q$, so uniqueness of the $\mathcal S_2$ limit identifies
the trace-class limit with $Q$.  Thus $Q$ is trace-norm continuous on this
same full-capacity domain.  The functionals
$A\mapsto\Tr(R_mAR_m)$ are trace-norm continuous, and the rational-time
inequalities above therefore extend to every $s<t$ without enlarging the
exceptional set.  Hence the full compatibility locus has full capacity.

The first-level H\"older estimate, the tensor increment estimate for $a_n$,
and the geometric resolvent estimate for $b_n$ use only the trace clock
$\Lambda$ and therefore hold uniformly on $\mathcal P_{\rm sp}$.  Choose
$\kappa<\chi<1/2-\alpha$.  The tensor and geometric estimates give
\begin{align*}
 \sup_{P\in\mathcal P_{\rm sp}}
 \bigl\|\eps_n^{-\gamma/2}a_n^{1/2}\bigr\|_{L^{2q}(P)}
 &\le C\sqrt q\,\eps_n^{(1-2\gamma)/4},\\
 \sup_{P\in\mathcal P_{\rm sp}}
 \bigl\|\eps_n^{-\kappa/2}b_n^{1/2}\bigr\|_{L^{2q}(P)}
 &\le C\sqrt q\,\eps_n^{(\chi-\kappa)/2}.
\end{align*}
Both exponents are positive, so geometric summation yields the
profile-independent estimate
\begin{equation}\label{eq:master-two-gauge-moments}
 \sup_{P\in\mathcal P_{\rm sp}}
 \left\|\sup_n\eps_n^{-\gamma/2}a_n^{1/2}
       +\sup_n\eps_n^{-\kappa/2}b_n^{1/2}
 \right\|_{L^{2q}(P)}
 \le C\sqrt q.
\end{equation}
The spatial tail estimate
\eqref{eq:spatial-profile-path-tail-moment} from
\Cref{lem:spatial-profile-compact-containment} shows that
$\mathfrak T_{\boldsymbol\vartheta}(X)$ has a uniform $C\sqrt q$ moment
bound.  The trace-energy estimate
\Cref{cor:trace-energy-spatial-profile} and the adapted choice of
$(\delta_j)$ give
\[
 \sup_{P\in\mathcal P_{\rm sp}}\|c_j\|_{L^q(P)}
 \le Cq2^{-8j},
 \qquad
 \sup_{P\in\mathcal P_{\rm sp}}
 \left\|\sup_j2^{2j}c_j^{1/2}\right\|_{L^{2q}(P)}
 \le C\sqrt q.
\]
Together with \eqref{eq:spatial-profile-path-tail-moment} and the
first-level H\"older moments this now yields, on a common full-capacity
domain,
\[
 \sup_{P\in\mathcal P_{\rm sp}}
 \|\mathfrak R_{\boldsymbol\vartheta}(X)\|_{L^{2q}(P)}
 \le C\sqrt q,
 \qquad q\ge q_0.
\]
Optimizing Markov's inequality in $q$ therefore gives
\begin{equation}\label{eq:spatial-profile-core-tail}
 c_{\mathcal P_{\rm sp}}
 \bigl((\mathcal C_R^{\boldsymbol\vartheta})^c\bigr)
 \le Ce^{-cR^2},
 \qquad R\ge1.
\end{equation}

\emph{Step 2: compactness and corewise continuity.}
Let $x^k\in\mathcal C_R^{\boldsymbol\vartheta}$.  The compact-containment
part of \Cref{lem:spatial-profile-compact-containment} gives, along a
subsequence, $x^k\to x$ uniformly.  Fixed-scale continuity and the weighted
Cauchy tails in \eqref{eq:spatial-profile-core-gauge} let
\Cref{lem:uniform-finite-scale-closure} pass the primitive, lift, and
trace-energy limits to $x$ in their three complete targets.  The last
convergence is directly in trace norm, and the closed profile class
$\mathscr Q_{\Lambda,\boldsymbol\vartheta}$ of
\Cref{lem:covariance-profile-compact-restart} preserves all covariance
conditions.  For every $x^k$ the remaining compatibility identity is
\[
 \mathbb X^S_{s,t}(x^k)
 =\widehat J_t(x^k)-\widehat J_s(x^k)
  -X_s(x^k)\otimes X_{s,t}(x^k)+\frac12Q_{s,t}(x^k).
\]
The four coordinates converge in their uniform $H$, $\mathcal S_2$, and
$\mathcal S_1$ topologies, so this identity passes to $x$.  Thus the sublevel
is compact.  Its gauge bounds telescope:
\[
 \sum_{k\ge n}a_k\le C_R\eps_n^\gamma,
 \qquad
 \sum_{k\ge n}b_k\le C_R\eps_n^\kappa,
 \qquad
 \sum_{\ell\ge j}c_\ell\le C_R2^{-4j}.
\]
These are the rates in part~\textup{(iv)} and, by the same closure argument,
give corewise continuity.  Since profile covariances are trace-norm Lipschitz
with constant $\Lambda$, uniform convergence and the interpolation inequality
\[
 [Q^k-Q]_{\theta;\mathcal S_1}
 \le
 \bigl(2\|Q^k-Q\|_{\infty;\mathcal S_1}\bigr)^{1-\theta}
 [Q^k-Q]_{1;\mathcal S_1}^{\theta}
 \le
 (2\Lambda)^\theta
 \bigl(2\|Q^k-Q\|_{\infty;\mathcal S_1}\bigr)^{1-\theta}
\]
give continuity in every $C^{0,\theta}$, $0<\theta<1$; continuity in every
$\nu$-variation topology, $\nu>1$, follows from
\Cref{lem:covariance-profile-compact-restart}.

\emph{Step 3: restart stability.}
The first-level spatial gauge is compatible with the path operations because
$P_m$ acts only on the Hilbert coordinate:
\[
 \mathfrak T_{\boldsymbol\vartheta}(r_sx)
 \le\mathfrak T_{\boldsymbol\vartheta}(x),
 \quad
 \mathfrak T_{\boldsymbol\vartheta}(\theta_sx)
 \le2\mathfrak T_{\boldsymbol\vartheta}(x),
 \quad
 \mathfrak T_{\boldsymbol\vartheta}(x\otimes_sy)
 \le\mathfrak T_{\boldsymbol\vartheta}(x)
     +\mathfrak T_{\boldsymbol\vartheta}(y).
\]
The envelope-free clause of \Cref{prop:gaugewise-restart} and
\Cref{lem:trace-energy-seams} give the $a_n,b_n,c_j$ bounds, with no
$\Gamma$-dependent constant; the profile class itself is restart-closed by
\Cref{lem:covariance-profile-compact-restart}.  The weighted estimates yield
the uniform polynomial reindexing in part~\textup{(iii)}.  Uniformity in $s$
permits pointwise substitution of a Borel stopping time, and passage to the
limits gives the exact stopping, shift, and Chen identities.

Finally, if $a_t^P\preceq\Gamma$, then
\[
 \Tr((I-P_m)a_t^P(I-P_m))
 \le\Tr((I-P_m)\Gamma),
\]
so the $\Gamma$-envelope construction is the stated special case.
\end{proof}

For the remainder of the spatial-profile dynamics, write
\begin{equation}\label{eq:spatial-profile-full-domain}
 G_*^{\boldsymbol\vartheta}
 :=\bigcup_{n\ge1}\mathcal C_n^{\boldsymbol\vartheta}.
\end{equation}
By part~\textup{(i)}, its complement is
$\mathcal P_{\rm sp}$-polar, and parts~\textup{(ii)}--\textup{(iii)} give a
restart-stable compact exhaustion with corewise continuous state.  For a
$\Gamma$-induced profile it may be intersected with $G_*^\Gamma$.

\begin{corollary}[Fixed-envelope restriction of the spatial-profile capacity cores]
\label{thm:master-causal-cores}
The sublevels $\mathcal C_R$ of \eqref{eq:Hilbert-core-gauge} are
compact in the raw uniform topology, and
\begin{equation}\label{eq:master-core-tail}
 c_{\Mmax}(\mathcal C_R^c)\le Ce^{-cR^2}.
\end{equation}
On every $\mathcal C_R$, for every $0<\theta<1$, the map
\[
 x\longmapsto\bigl(X(x),A(x),Q(x)\bigr)
\]
is continuous in
\[
 C_0([0,T];H)\times C^{0,2\alpha}(\mathcal S_2(H)_{\rm sk})
 \times C^{0,\theta}(\mathcal S_1(H)_{\rm sa}).
\]
For every $\nu>1$, the covariance component is also continuous as a map
\begin{equation}\label{eq:core-Q-nu-variation-continuity}
 x\longmapsto Q(x):\mathcal C_R
 \longrightarrow C^{\nu\text{-var}}([0,T];\mathcal S_1(H)_{\rm sa}).
\end{equation}
Every covariance path in the core is trace-norm Lipschitz, with
\begin{equation}\label{eq:core-uniform-Q-Lipschitz}
 [Q(x)]_{1;\mathcal S_1}\le\Tr\Gamma,
 \qquad x\in\mathcal C_R.
\end{equation}
Moreover,
\begin{align}
 \sup_{x\in\mathcal C_R}
 \|J^{\eps_n}(x)-\widehat J(x)\|_\infty
 &\le C_R\eps_n^\gamma,
 \label{eq:J-corewise-rate}\\
 \sup_{x\in\mathcal C_R}
 \rho_\alpha(S_2(Y^{\eps_n}(x)),\mathbf X^S(x))
 &\le C_R\eps_n^\kappa,
 \label{eq:geometric-corewise-rate}\\
 \sup_{x\in\mathcal C_R}
 \|\mathsf E^{\delta_j}(x)-Q(x)\|_{\infty;\mathcal S_1}
 &\le C_R2^{-4j}.
 \label{eq:trace-energy-corewise-rate}
\end{align}
Furthermore there is $C_*$ such that, with
\begin{equation}\label{eq:master-reindex}
 \Gamma_*(R,S)=1+C_*(1+R+S)^2.
\end{equation}
uniformly in the seam $s$,
\begin{equation}\label{eq:master-stop-shift}
 r_s\mathcal C_R\cup\theta_s\mathcal C_R
 \subset\mathcal C_{\Gamma_*(R,0)},
 \qquad
 x\in\mathcal C_R,\ y\in\mathcal C_S
 \Longrightarrow x\otimes_sy\in\mathcal C_{\Gamma_*(R,S)}.
\end{equation}
The first inclusion remains valid after pointwise substitution of a Borel
stopping time.  The limiting state obeys the exact stopping, shift, and Chen
concatenation identities on $G_*^\Gamma$.
\end{corollary}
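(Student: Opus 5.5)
The plan is to obtain the corollary as the specialization of \Cref{thm:spatial-trace-tight-cores} to $\mathcal P_{\rm sp}=\Mmax$. The profile is $\vartheta_m=\tau_m=\Tr((I-P_m)\Gamma)\le2^{-8m}$, and $\Lambda=\Tr\Gamma$. The domination $a_t^P\preceq\Gamma$ gives $\Tr(R_ma_t^PR_m)\le\tau_m$. For this profile $\omega_{\boldsymbol\vartheta}=\omega_\Gamma$ and $\mathfrak T_{\boldsymbol\vartheta}=\mathfrak T_\Gamma$, so the scale $(\delta_j)$ of \eqref{eq:trace-adapted-scale} is admissible in the theorem. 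The two gauges $\mathfrak R_\Gamma$ and $\mathfrak R_{\boldsymbol\vartheta}$ then have identical numerical coordinates. They differ only in the compatibility clause: \eqref{eq:Hilbert-core-compatibility} imposes $0\preceq Q_{s,t}\preceq(t-s)\Gamma$ in place of \eqref{eq:spatial-profile-Q-compatibility}. Since $0\preceq Q_{s,t}\preceq(t-s)\Gamma$ implies $\Tr Q_{s,t}\le(t-s)\Tr\Gamma$ and $\Tr(R_mQ_{s,t}R_m)\le(t-s)\tau_m$, we get
\[
 \mathcal C_R=\mathcal C_R^{\boldsymbol\vartheta}\cap
 \{x:\ 0\preceq Q_{s,t}(x)\preceq(t-s)\Gamma\ \text{for all }s\le t\}.
\]

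I will carry out the argument in four steps.

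\emph{Compactness.} I will show that the added order condition is closed along uniformly convergent sequences in $\mathcal C_R^{\boldsymbol\vartheta}$. By \Cref{thm:spatial-trace-tight-cores}(ii), such convergence forces $Q(x^k)\to Q(x)$ uniformly in $\mathcal S_1$. Quadratic-form inequalities pass to such limits. Alternatively, this is exactly the envelope clause of \Cref{lem:core-compatibility-locus}. Hence $\mathcal C_R$ is a closed subset of the compact set $\mathcal C_R^{\boldsymbol\vartheta}$, and so is compact.

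\emph{Capacity bound.} I will write $\mathcal C_R^c\subseteq(\mathcal C_R^{\boldsymbol\vartheta})^c\cup (G_Q^\Gamma)^c$. The second set is $\Mmax$-polar by \Cref{prop:common-bracket-variation-domain}. Note that on the compatibility locus the rational-time envelope tests extend to all times, as in \Cref{lem:common-Gamma-compatibility-event}. Subadditivity of $c_{\Mmax}$ then gives \eqref{eq:master-core-tail} from part (i) of the theorem.

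\emph{Continuity and rates.} Continuity of $(X,A,Q)$, the $\nu$-variation continuity \eqref{eq:core-Q-nu-variation-continuity}, and the rates \eqref{eq:J-corewise-rate}--\eqref{eq:trace-energy-corewise-rate} all restrict from the larger core $\mathcal C_R^{\boldsymbol\vartheta}$ to the subset $\mathcal C_R$. The Lipschitz bound \eqref{eq:core-uniform-Q-Lipschitz} follows from $\|Q_{s,t}\|_1=\Tr Q_{s,t}\le(t-s)\Tr\Gamma$.

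\emph{Restart stability.} The numerical gauge bounds are exactly those of \Cref{prop:gaugewise-restart}, which already yields \eqref{eq:master-stop-shift} with $\Gamma_*$ from \eqref{eq:master-reindex}. What remains is to preserve the order clause under the three path operations. For stopping and shift, increments of the new covariance are increments of $Q(x)$ over subintervals, by \eqref{eq:selector-stopping} and \eqref{eq:shift-Q}. For concatenation, $Q(x\otimes_sy)=Q(x)+Q(y)$ by \Cref{thm:shift-equivariant-enhancement}. A seam-crossing increment therefore splits into two dominated pieces whose time lengths add up to $t-s$, exactly as in the proof of \Cref{prop:common-bracket-variation-domain}. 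Borel stopping times are handled by pointwise substitution, since the bounds are uniform in $s$. The exact stopping, shift and Chen identities on $G_*^\Gamma$ are inherited from part (iii) of the theorem.

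The main obstacle I expect is the second step. The tail estimate for $\mathcal C_R^{\boldsymbol\vartheta}$ is proved relative to the profile compatibility locus, so I must check that the envelope locus differs from it only by a single Borel polar set and that this set does not depend on $R$. I will obtain this by intersecting with the event $G_\Gamma^{\rm comp}$ of \Cref{lem:common-Gamma-compatibility-event}, which is Borel and $\Mmax$-full.
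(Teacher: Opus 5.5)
Your proposal is correct and follows essentially the paper's own route. You specialize \Cref{thm:spatial-trace-tight-cores} to $\vartheta_m=\Tr((I-P_m)\Gamma)$ and $\Lambda=\Tr\Gamma$, identify $\mathcal C_R$ as the operator-order subfamily of $\mathcal C_R^{\boldsymbol\vartheta}$ (closed under trace-norm convergence of $Q$), obtain the tail bound by subadditivity against an $R$-independent $\Mmax$-polar set, and take restart stability from \Cref{prop:gaugewise-restart}. The only difference is cosmetic: you use $(G_Q^\Gamma)^c$ as the exceptional set, whereas the paper intersects with the full compatibility event $G_\Gamma^{\rm comp}$. Either choice suffices.
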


\begin{proof}
Set
\[
 \vartheta_m:=\Tr((I-P_m)\Gamma),
 \qquad \Lambda:=\Tr\Gamma.
\]
Then $\omega_{\boldsymbol\vartheta}=\omega_\Gamma$, so the scale
\eqref{eq:trace-adapted-scale} is admissible for
\Cref{thm:spatial-trace-tight-cores}.  Set
$\mathcal F_\Gamma:=G_\Gamma^{\rm comp}$, the global (not fixed-radius)
$\Gamma$-compatibility locus.  Then
\[
 c_{\Mmax}(\mathcal F_\Gamma^c)=0
\]
by \Cref{lem:common-Gamma-compatibility-event}.  On $\mathcal F_\Gamma$ the
numerical coordinates of
$\mathfrak R_\Gamma$ and
$\mathfrak R_{\boldsymbol\vartheta}$ coincide, while
$0\preceq Q_{s,t}\preceq(t-s)\Gamma$ implies all profile tail inequalities.
Hence
\[
\mathcal C_R
 =\mathcal C_R^{\boldsymbol\vartheta}\cap\mathcal F_\Gamma
\]
is the closed operator-order subfamily of the profile core.  Indeed, the
profile core already enforces the common convergence and algebraic
compatibility conditions, while the remaining operator-order inequalities are
closed under trace-norm convergence.  Moreover,
\[
 c_{\Mmax}(\mathcal C_R^c)
 \le c_{\Mmax}\bigl((\mathcal C_R^{\boldsymbol\vartheta})^c\bigr)
    +c_{\Mmax}(\mathcal F_\Gamma^c)
 \le Ce^{-cR^2},
\]
which proves \eqref{eq:master-core-tail}.  The exhaustion
$\bigcup_R\mathcal C_R$ has full capacity, while each fixed sublevel has the
displayed exponential complement bound.  The profile theorem gives
compactness, both stated continuities,
and the three rates.  Its Lipschitz constant
$\Lambda=\Tr\Gamma$ proves
\eqref{eq:core-uniform-Q-Lipschitz}.

For inputs in the fixed-envelope cores, the proof of
\Cref{prop:gaugewise-restart} preserves the compatibility identities and the
operator envelope, while its numerical estimates give the stated quantitative
reindexing.  Hence
\eqref{eq:master-stop-shift} and its Borel stopping-time version follow.
The exact limiting identities are those of
\Cref{thm:shift-equivariant-enhancement}.  Thus the fixed-envelope result is a
closed fixed-envelope restriction of the full-capacity spatial-profile
exhaustion.
\end{proof}

\begin{corollary}[Corewise trace-variation convergence]
\label{cor:core-bracket-variation}
On $G_*^\Gamma$, $Q$ has finite variation in trace norm and
\[
 \|Q_t-Q_s\|_1\le(t-s)\Tr\Gamma.
\]
For every $R<\infty$ and $1<\nu<\infty$,
\begin{equation}\label{eq:trace-energy-variation-rate}
 \sup_{x\in\mathcal C_R}
 \|\mathsf E^{\delta_j}(x)-Q(x)\|_{\nu\text{-var};\mathcal S_1}
 \le C_{R,\nu}2^{-4j(1-1/\nu)}.
\end{equation}
More generally, if two Banach-valued paths have uniformly bounded total
variation, then their $\nu$-variation distance is bounded by a constant
times their uniform distance to the power $1-1/\nu$.
\end{corollary}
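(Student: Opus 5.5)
The plan has three parts: the trace-variation bound on $G_*^\Gamma$, the abstract interpolation inequality, and then the rate \eqref{eq:trace-energy-variation-rate} obtained by combining them with the uniform core rate.

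\emph{Trace-variation bound on $G_*^\Gamma$.} Every $x\in G_*^\Gamma$ lies in some $\mathcal C_R$, and the compatibility clause \eqref{eq:Hilbert-core-compatibility} gives $0\preceq Q_{s,t}\preceq(t-s)\Gamma$. So each increment is positive and trace class, and $\|Q_t-Q_s\|_1=\Tr Q_{s,t}\le(t-s)\Tr\Gamma$, exactly as in \eqref{eq:common-Gamma-envelope-all-times}. Summing over any partition shows that the total trace variation is at most $T\Tr\Gamma$.

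\emph{Abstract interpolation.} Let $f,g$ be Banach-valued paths and put $h=f-g$. For any partition,
\[
\sum_i\|h_{t_i,t_{i+1}}\|^\nu\le\Bigl(\max_i\|h_{t_i,t_{i+1}}\|\Bigr)^{\nu-1}\sum_i\|h_{t_i,t_{i+1}}\|.
\]
The maximum is at most $2\|h\|_\infty$, and the sum is at most $\|f\|_{1\text{-var}}+\|g\|_{1\text{-var}}$. Taking the $\nu$-th root gives \eqref{eq:uniform-variation-interpolation}. This is the general statement at the end of the corollary, already recorded in \Cref{lem:covariance-profile-compact-restart}.

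\emph{The rate \eqref{eq:trace-energy-variation-rate}.} Apply the interpolation with $f=\mathsf E^{\delta_j}(x)$ and $g=Q(x)$, and take the uniform distance from \eqref{eq:trace-energy-corewise-rate}, which is at most $C_R2^{-4j}$. Raising it to the power $1-1/\nu$ produces the claimed factor $2^{-4j(1-1/\nu)}$. The remaining task, and the main point to check, is a bound on the trace variation of $\mathsf E^{\delta_j}(x)$ that is uniform over $x\in\mathcal C_R$ and over $j$.

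\emph{Uniform variation of $\mathsf E^{\delta_j}$.} I would not attempt a direct pathwise estimate. The definition $\mathsf E^\eps_t=\frac2\eps\int_0^t(D^\eps_r)^{\otimes2}\dd r$ shows that $\mathsf E^\eps$ is increasing in the positive cone. Its trace variation is therefore $\Tr\mathsf E^\eps_T=\|\mathsf E^\eps_T\|_1$. By the uniform core rate this is at most
\[
\|Q_T\|_1+\|\mathsf E^{\delta_j}_T-Q_T\|_1\le T\Tr\Gamma+C_R.
\]
This bound is uniform in $j$ and in $x\in\mathcal C_R$, so the constant $C_{R,\nu}$ depends only on $R$, $\nu$, $T$ and $\Gamma$. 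Inserting it into the interpolation inequality completes the argument.
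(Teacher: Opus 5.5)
Your proposal is correct and follows the paper's proof essentially step for step. The envelope $0\preceq Q_{s,t}\preceq(t-s)\Gamma$ gives the trace-Lipschitz bound, and monotonicity of $\mathsf E^{\delta_j}$ in the positive cone reduces its trace variation to $\Tr\mathsf E^{\delta_j}_T$, which the uniform core rate \eqref{eq:trace-energy-corewise-rate} bounds on $\mathcal C_R$; the interpolation inequality \eqref{eq:uniform-variation-interpolation} then yields \eqref{eq:trace-energy-variation-rate}.
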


\begin{proof}
The first assertion follows from
$0\preceq Q_{s,t}\preceq(t-s)\Gamma$.  Each
$\mathsf E^{\delta_j}$ is positive and increasing, so its trace variation is
$\Tr\mathsf E_T^{\delta_j}$.  The uniform trace convergence
\eqref{eq:trace-energy-corewise-rate} bounds these terminal traces uniformly
on $\mathcal C_R$.  The interpolation estimate
\eqref{eq:uniform-variation-interpolation}, applied to
$f=\mathsf E^{\delta_j}-Q$, proves
\eqref{eq:trace-energy-variation-rate}.
\end{proof}

\part{Propagation consequences and backward state selection}

\section{Propagation consequences: higher signatures and flows}
\label{sec:propagation-consequences}

\subsection{Higher-signature propagation}
\label{sec:all-order-generation}

Once the quadratic state is commonly realized, higher signatures are
stable-causal readouts on the full operational state space.
Under the vector-field assumptions below, rough--Young flows are continuous
on the stated $\alpha$-regular realization cores; they become full-state
stable-causal readouts when $p>1/\eta$.  The Euler field of
\Cref{sec:Euler-state-interface} is instead constructed by first-level
completion and agrees with the corewise propagated flow under the additional
regularity in \Cref{thm:Euler-state-interface}\textup{(iii)}.  For
$1/3<\eta<1/2$, degree two is the last primitive rough level, so every fixed
higher geometric or It\^o level is generated continuously.  We work on
\[
 \mathsf Z_{\eta;\mathcal S_2,1}(H).
\]

\subsubsection{Hilbert tensor algebras and mixed rough--Young equations}

For $N\ge0$, put $H^{\otimes_2 0}:=\R$ and let
\[
 T^{(N)}_2(H):=\bigoplus_{k=0}^N H^{\otimes_2 k}
\]
be the truncated Hilbert tensor algebra with the truncated concatenation
product.  We write $\pi_k$ for the projection onto degree $k$ and identify
$\mathcal S_2(H)$ with $H\otimes_2H$ through the rank-one convention used
throughout the paper.

Let $z\in\mathsf Z_{\eta;\mathcal S_2,1}(H)$ have cocycle chart $(x,A,q)$,
and write
\[
 \mathbf x^S=(1,x,\mathbb x^S),
 \qquad
 \mathbb x^S_{s,t}=\frac12x_{s,t}^{\otimes2}+A_{s,t}.
\]
By definition, $\mathbf x^S$ belongs to the little $\eta$-H\"older geometric
rough-path closure in the Hilbert tensor norm.  The additive field $q$ is the
increment field of a $2\eta$-H\"older $\mathcal S_1(H)_{\rm sa}$-valued path,
and hence also of a $2\eta$-H\"older $H^{\otimes_2 2}$-valued path.  Since
$3\eta>1$, a path controlled by $x$ can be integrated against $q$ in the
Young sense.

For a restart time $s$, consider the triangular equations in $T^{(N)}_2(H)$
\begin{align}
 \dd \mathbf S^{S,N}_{s,r}
 &=\mathbf S^{S,N}_{s,r}\otimes \dd\mathbf x^S_r,
 &\mathbf S^{S,N}_{s,s}&=1,
 \label{eq:all-order-geometric-equation}\\
 \dd \mathbf S^{I,N}_{s,r}
 &=\mathbf S^{I,N}_{s,r}\otimes \dd\mathbf x^S_r
   -\frac12\mathbf S^{I,N}_{s,r}\otimes \dd q_r,
 &\mathbf S^{I,N}_{s,s}&=1.
 \label{eq:all-order-Ito-equation}
\end{align}
The first is the geometric signature equation.  In the second, right
multiplication by $\dd q$ raises degree by two and is a Young term.
Equivalently, with
$S^{I,(-1)}:=0$ and $S^{I,(0)}:=1$, its homogeneous levels satisfy
\begin{equation}
 \label{eq:Ito-level-recursion}
 \dd S^{I,(k)}_{s,r}
 =S^{I,(k-1)}_{s,r}\otimes \dd\mathbf x^S_r
  -\frac12S^{I,(k-2)}_{s,r}\otimes \dd q_r,
 \qquad 1\le k\le N.
\end{equation}
Thus $q$ transports the correction from degree $k-2$.

For a Banach space $E$, recall that an $E$-valued path $Y$ is
\emph{$x$-controlled} if there is a path
$Y':[s,T]\to\mathcal L(H,E)$ such that
\[
 Y_{u,v}=Y'_u x_{u,v}+R^Y_{u,v},
 \qquad |Y'|_{\eta}+|R^Y|_{2\eta}<\infty.
\]
We use the usual controlled-path norm with the initial values included.

\begin{lemma}[Triangular mixed rough--Young closure]
\label{lem:mixed-rough-Young-closure}
Fix $N\ge2$ and $R<\infty$.  Suppose
\[
 \|\mathbf x^S\|_{\eta;[s,T]}+|q|_{2\eta;[s,T]}\le R.
\]
Set $U^{(-1)}:=0$, $U^{(0)}:=1$, and define recursively
\begin{equation}
\label{eq:mixed-triangular-recursion}
 U^{(k)}_{s,t}
 :=\int_s^t U^{(k-1)}_{s,r}\otimes\dd\mathbf x^S_r
   -\frac12\int_s^t U^{(k-2)}_{s,r}\otimes\dd q_r,
 \qquad 1\le k\le N.
\end{equation}
Then every level is well defined and
$U^{(k)}_{s,\cdot}$ is $x$-controlled with Gubinelli derivative
\[
 h\longmapsto U^{(k-1)}_{s,\cdot}\otimes h.
\]
For fixed $N$ the controlled norms of all levels are bounded by a constant
$C_{N,R}$, uniformly in the restart time $s$.  On bounded sets, the map
$(\mathbf x^S,q)\mapsto(U^{(1)},\ldots,U^{(N)})$ is locally Lipschitz in the
rough-path/controlled-path metric and the $2\eta$-H\"older norm of $q$.

The same conclusions hold in variation topology: if $2<p<3$ and
$1\le\nu\le p/2$, then on sets with bounded rough $p$-variation norm of
$\mathbf x^S$ and bounded $\nu$-variation norm of $q$, the solution map is
locally Lipschitz for the rough $p$-variation metric and the
$\nu$-variation norm, with all levels controlled in the $p$-variation sense.
On the common domain the H\"older and variation constructions coincide by
levelwise uniqueness.
\end{lemma}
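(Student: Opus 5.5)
The plan is an induction on the level $k$, using the triangular structure of \eqref{eq:mixed-triangular-recursion}. Level $k$ involves only levels $k-1$ and $k-2$, so no fixed-point argument is needed. Each level is obtained from the previous two by one rough integral and one Young integral.

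The inductive hypothesis at stage $k-1$ has two parts. First, $U^{(k-1)}_{s,\cdot}$ is $x$-controlled with derivative $h\mapsto U^{(k-2)}_{s,\cdot}\otimes h$ and controlled norm at most $C_{k-1,R}$. Second, $U^{(k-2)}$ is controlled and in particular $\eta$-H\"older.

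\emph{Rough term.} The right-multiplication map $Y\mapsto Y\otimes\cdot$ is bounded from $H^{\otimes_2 j}$ into $\mathcal L(H,H^{\otimes_2(j+1)})$ in the Hilbert tensor norm. Hence $r\mapsto U^{(k-1)}_{s,r}\otimes(\cdot)$ is a controlled integrand. Its Gubinelli derivative acts on $\mathbb x^S$ through the tensor identification $\mathcal S_2\simeq H\otimes_2H$. The standard sewing lemma (valid since $3\eta>1$) then gives
\[
 \int_s^t U^{(k-1)}\otimes\dd\mathbf x^S
 =U^{(k-1)}_{s,u}\otimes x_{u,t}+U^{(k-2)}_{s,u}\otimes\mathbb x^S_{u,t}+O(|t-u|^{3\eta}).
\]
The germ and remainder bounds are polynomial in $R$ and in the controlled norms of the lower levels.

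\emph{Young term.} The integral $\int U^{(k-2)}\otimes\dd q$ is a Young integral, because $U^{(k-2)}$ is $\eta$-H\"older, $q$ is $2\eta$-H\"older in $\mathcal S_1\hookrightarrow H^{\otimes_22}$, and $\eta+2\eta>1$. This term is $2\eta$-H\"older, so it enters only the remainder $R^{U^{(k)}}$.

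Combining the two terms identifies the derivative of $U^{(k)}$ as $U^{(k-1)}\otimes h$. Its remainder is the sum of $U^{(k-2)}_{s,u}\otimes\mathbb x^S_{u,t}$, the sewing error and the Young piece, all of order $2\eta$. The bounds are uniform in $s$, since every estimate is taken on $[s,T]\subset[0,T]$ with constants depending only on the lengths.

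\emph{Local Lipschitz dependence.} The same induction runs on differences of two solutions. The stability estimates for rough integration of controlled paths and for Young integration are bilinear, so the difference at level $k$ is bounded by $C_{N,R}$ times the sum of three quantities: the rough distance, $|q-\widetilde q|_{2\eta}$, and the controlled distances at the lower levels. The main obstacle is bookkeeping. One must compare controlled paths with respect to two different drivers $x$ and $\widetilde x$, using the usual controlled-distance $d_{x,\widetilde x}$. One must also keep the constants polynomial so that they are finite for fixed $N$; no Gronwall step is needed because the recursion is strictly triangular.

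\emph{Variation version.} Repeat the argument with the $p$-variation sewing lemma and Young integration, which requires $1/p+1/\nu\ge 3/p>1$ with $\nu\le p/2$, using controls $\omega$ in place of $|t-s|$.

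\emph{Coincidence of constructions.} On the common domain, both constructions yield limits of the same compensated Riemann sums, so they agree level by level by uniqueness of the sewing limit.
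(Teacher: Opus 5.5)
Your proposal is correct and follows the paper's own argument. It is a strictly triangular induction: the rough integral of the controlled integrand $U^{(k-1)}\otimes(\cdot)$ supplies the Gubinelli derivative, the Young term $\int U^{(k-2)}\otimes\dd q$ is absorbed into the $2\eta$ remainder, and difference versions of the rough and Young estimates give local Lipschitz dependence with no Gronwall step. In the variation version, your ``repeat the argument'' leaves one point implicit that the paper states: $\nu\le p/2$ is needed not only for the Young condition but also to put the correction term, whose increments have finite $\nu$-variation, into the $p/2$-variation controlled remainder.
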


\begin{proof}
Induct on $k$, the case $k=1$ being the rough integral of a constant.
If $U^{(k-1)}$ is controlled with derivative $U^{(k-2)}$, the first integral
in \eqref{eq:mixed-triangular-recursion} is a controlled rough integral and
the second is Young because $\eta+2\eta>1$.  Moreover,
\[
 U^{(k)}_{s;u,v}
 =U^{(k-1)}_{s,u}\otimes x_{u,v}+O(|v-u|^{2\eta});
\]
and the Young term is $O(|v-u|^{2\eta})$.  This gives the stated derivative;
the standard rough/Young estimates and their difference versions close the
induction, yielding $C_{N,R}$ and local Lipschitz continuity.  No
order-uniform constant is asserted.

For the variation formulation, use controlled rough paths of finite
$p$-variation.  Each previously constructed level has finite $p$-variation;
the correction integral is Young because
$1/p+1/\nu>1$, and its increments have finite $\nu$-variation, hence finite
$p/2$-variation because $\nu\le p/2$.  It therefore belongs to the controlled
remainder.  The corresponding rough and Young difference estimates give the
stated local Lipschitz bound by the same induction.
\end{proof}

\begin{lemma}[Dyadic reconstruction of a controlled remainder]
\label{lem:dyadic-controlled-remainder}
Let $E$ be a separable Banach space.  Let $X:[0,T]\to H$,
$Y':[0,T]\to\mathcal L(H,E)$, and
$R:\Delta_T\to E$ be random continuous fields with $R_{t,t}=0$ and
\begin{equation}\label{eq:controlled-remainder-coboundary}
 R_{s,t}-R_{s,u}-R_{u,t}=Y'_{s,u}X_{u,t},
 \qquad s\le u\le t.
\end{equation}
Suppose that for some $q>1$ and all $s<t$,
\begin{align}
 \|X_{s,t}\|_{L^{2q}}&\le C_X|t-s|^{1/2},
 \label{eq:controlled-dyadic-X-moment}\\
 \|Y'_{s,t}\|_{L^{2q}}&\le C_{Y'}|t-s|^{1/2},
 \label{eq:controlled-dyadic-derivative-moment}\\
 \|R_{s,t}\|_{L^q}&\le C_R|t-s|.
 \label{eq:controlled-dyadic-remainder-moment}
\end{align}
If $0<\alpha<1/2$ and
\begin{equation}\label{eq:controlled-dyadic-exponent-gap}
 2\alpha<1-\frac1q,
\end{equation}
then, after modification on one null set,
$X$ and $Y'$ are $\alpha$-H\"older and $R$ is $2\alpha$-H\"older.  Moreover
\begin{equation}\label{eq:controlled-dyadic-moment-bound}
 \|[X]_\alpha\|_{L^{2q}}
 +\|[Y']_\alpha\|_{L^{2q}}
 +\|[R]_{2\alpha}\|_{L^q}
 \le C_{\alpha,q,T}
 \bigl(C_X+C_{Y'}+C_R+C_XC_{Y'}\bigr).
\end{equation}
Thus $Y$, defined by
$Y_{s,t}=Y'_sX_{s,t}+R_{s,t}$, is an $X$-controlled path with derivative
$Y'$.
\end{lemma}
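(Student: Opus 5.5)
The plan is to reduce everything to the deterministic dyadic chaining already used in \Cref{lem:dyadic-rough-reconstruction}, now applied to a controlled-path triple $(X,Y',R)$ in place of a pair of multiplicative functionals. First rescale time so that $T=1$. With $t_i^k=i2^{-k}$, introduce the adjacent dyadic maxima
\[
 A_k:=\max_i|X_{t_i^k,t_{i+1}^k}|,\qquad
 B_k:=\max_i\|Y'_{t_i^k,t_{i+1}^k}\|,\qquad
 C_k:=\max_i|R_{t_i^k,t_{i+1}^k}|.
\]
By the elementary maximum inequality and \eqref{eq:controlled-dyadic-X-moment}--\eqref{eq:controlled-dyadic-remainder-moment},
\[
 \|A_k\|_{L^{2q}}\le C_X2^{-k(1/2-1/(2q))},\qquad
 \|B_k\|_{L^{2q}}\le C_{Y'}2^{-k(1/2-1/(2q))},\qquad
 \|C_k\|_{L^q}\le C_R2^{-k(1-1/q)}.
\]
Hypothesis \eqref{eq:controlled-dyadic-exponent-gap} gives $\alpha<\tfrac12-\tfrac1{2q}$ and $2\alpha<1-\tfrac1q$. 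Hence the weighted series $\sum_k2^{\alpha k}A_k$ and $\sum_k2^{\alpha k}B_k$ converge in $L^{2q}$, and $\sum_k2^{2\alpha k}C_k$ converges in $L^q$, each with geometric constants depending only on $(\alpha,q)$.

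Next comes the pathwise chaining. For $s<t$, decompose $(s,t)$ into ordered dyadic blocks $I_1,\dots,I_M$, with at most two blocks per generation $k\ge k_0$. Additivity of $X$ and $Y'$ then gives
\[
 |X_{s,t}|/|t-s|^\alpha\le C\sum_k2^{\alpha k}A_k,
\]
and the same bound for $Y'$ with $B_k$ in place of $A_k$. For $R$, iterate the coboundary identity \eqref{eq:controlled-dyadic-remainder-moment}'s companion \eqref{eq:controlled-dyadic-X-moment}, namely \eqref{eq:controlled-dyadic-remainder-moment} paired with \eqref{eq:controlled-remainder-coboundary}, along this ordered decomposition:
\[
 R_{s,t}=\sum_jR_{I_j}+\sum_{i<j}Y'_{I_i}X_{I_j}.
\]
Telescoping the cross terms uses $Y'_{s,u_j}=\sum_{i<j}Y'_{I_i}$. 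The resulting estimate is
\[
 \frac{|R_{s,t}|}{|t-s|^{2\alpha}}
 \le C\sum_k2^{2\alpha k}C_k
 +C\Bigl(\sum_k2^{\alpha k}B_k\Bigr)\Bigl(\sum_k2^{\alpha k}A_k\Bigr).
\]
As in \Cref{lem:dyadic-rough-reconstruction}, endpoint pieces are handled by finite truncations and continuity of the fields. This avoids any convergence issue, since the bounds are trivial whenever a series diverges.

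Taking norms, Minkowski's inequality gives the $L^{2q}$ bounds for $[X]_\alpha$ and $[Y']_\alpha$. Hölder's inequality with $\tfrac1{2q}+\tfrac1{2q}=\tfrac1q$ gives the $L^q$ bound on the product term, which is exactly the $C_XC_{Y'}$ contribution in \eqref{eq:controlled-dyadic-moment-bound}.

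Finiteness almost surely follows from the moment bounds. Because the fields are assumed continuous, no modification is genuinely needed beyond discarding one null set where the series diverge, on which one sets the fields to zero. Finally, $Y_{s,t}:=Y'_sX_{s,t}+R_{s,t}$ is well defined as an increment field precisely because of \eqref{eq:controlled-remainder-coboundary}: a direct check gives $Y_{s,u}+Y_{u,t}=Y_{s,t}$. Together with the Hölder regularity just proved, this makes $Y$ an $X$-controlled path with derivative $Y'$.

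The main obstacle is the remainder chaining, because $R$ is not additive. I expect the care to lie in ordering the blocks and writing the accumulated cross term as an increment of $Y'$ over a union of earlier blocks, so that it is bounded by the $Y'$-chain sum rather than by a sum over pairs.
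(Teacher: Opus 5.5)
Your proposal is correct and follows essentially the same route as the paper. Both arguments use dyadic adjacent-increment maxima controlled by the maximum inequality, pathwise dyadic chaining for $X$ and $Y'$, the iterated coboundary expansion $R_{s,t}=\sum_jR_{I_j}+\sum_{j\ge2}Y'_{s,a_j}X_{I_j}$ over an ordered dyadic decomposition, and H\"older's inequality to produce the $C_XC_{Y'}$ term. The only difference is cosmetic: you bound the cross sum directly by the product of the two weighted chain series, whereas the paper bounds it by $[Y']_\alpha[X]_\alpha\sum_j|I_j|^\alpha$, and both yield the same estimate.
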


\begin{proof}
Use dyadic times $t_i^n=iT2^{-n}$ and set
\begin{align*}
 A_n&:=\max_i|X_{t_i^n,t_{i+1}^n}|,\\
 B_n&:=\max_i\|Y'_{t_i^n,t_{i+1}^n}\|,\\
 C_n&:=\max_i\|R_{t_i^n,t_{i+1}^n}\|.
\end{align*}
Put
\[
 \rho_q:=\frac12-\frac1{2q},
 \qquad
 \sigma_q:=1-\frac1q.
\]
The elementary maximum inequality gives
\begin{align*}
 \|A_n\|_{L^{2q}}&\le C_XT^{1/2}2^{-n\rho_q},\\
 \|B_n\|_{L^{2q}}&\le C_{Y'}T^{1/2}2^{-n\rho_q},\\
 \|C_n\|_{L^q}&\le C_RT2^{-n\sigma_q}.
\end{align*}
By \eqref{eq:controlled-dyadic-exponent-gap}, the weighted series
\[
 \mathcal A:=\sum_n2^{\alpha n}A_n,
 \qquad
 \mathcal B:=\sum_n2^{\alpha n}B_n,
 \qquad
 \mathcal C:=\sum_n2^{2\alpha n}C_n
\]
converge respectively in $L^{2q},L^{2q},L^q$, and dyadic chaining gives
\[
 [X]_\alpha\le C\mathcal A,
 \qquad
 [Y']_\alpha\le C\mathcal B.
\]

For the remainder, decompose any interval $(s,t)$ into an ordered disjoint
family of dyadic blocks, with at most two blocks of each generation.  Iterating
\eqref{eq:controlled-remainder-coboundary} over a finite truncation of this
family gives
\[
 R_{s,t}
 =\sum_jR_{I_j}
  +\sum_{j\ge2}Y'_{s,a_j}X_{I_j},
 \qquad I_j=[a_j,b_j].
\]
The first sum is bounded by $C\mathcal C|t-s|^{2\alpha}$.  Since
$\sum_j|I_j|^\alpha\le C_\alpha|t-s|^\alpha$, the second satisfies
\[
 \sum_{j\ge2}\|Y'_{s,a_j}\|\,|X_{I_j}|
 \le C_\alpha[Y']_\alpha[X]_\alpha|t-s|^{2\alpha}.
\]
Passing through finite truncations yields
\[
 [R]_{2\alpha}
 \le C_\alpha\bigl(\mathcal C+\mathcal A\mathcal B\bigr).
\]
H\"older's inequality proves \eqref{eq:controlled-dyadic-moment-bound}.
Finally set $Y_{s,t}:=Y'_sX_{s,t}+R_{s,t}$.  By
\eqref{eq:controlled-remainder-coboundary} and additivity of $X$,
\[
 Y_{s,t}-Y_{s,u}-Y_{u,t}
 =Y'_sX_{u,t}+Y'_{s,u}X_{u,t}-Y'_uX_{u,t}=0.
\]
The increments are additive and, after choosing $Y_0$, define the asserted
controlled path.
\end{proof}

Fix $P\in\Smax$ and a restart time $s$.  Define the classical Hilbert-tensor
It\^o tower recursively by
\begin{equation}\label{eq:classical-Ito-tensor-tower}
 \mathrm I_{s,t}^{P,(0)}:=1,\qquad
 \mathrm I_{s,t}^{P,(k)}
 :=\int_s^t\mathrm I_{s,r}^{P,(k-1)}\otimes\dd X_r,
 \qquad k\ge1.
\end{equation}

\begin{lemma}[Fixed-order moment scaling for the classical It\^o tower]
\label{lem:classical-Ito-tower-moments}
For every integer $k\ge0$ and $q\ge2$, there is
$C_{k,q}=C_{k,q}(\Lambda,B,T)$, independent of $P$ and $s$, such that
for $s\le t\le T$,
\begin{equation}\label{eq:classical-Ito-tower-maximal-moment}
 \left\|\sup_{s\le r\le t}
 \|\mathrm I_{s,r}^{P,(k)}\|_2\right\|_{L^q(P)}
 \le C_{k,q}|t-s|^{k/2}.
\end{equation}
For $k\ge0$ and $s\le u\le v\le T$,
\begin{equation}\label{eq:classical-Ito-tower-increment-moment}
 \|\mathrm I_{s,v}^{P,(k)}-\mathrm I_{s,u}^{P,(k)}\|_{L^q(P)}
 \le C_{k,q}|v-u|^{1/2}.
\end{equation}
In particular every level is a well-defined continuous Hilbert-tensor
semimartingale with moments of every fixed order.
\end{lemma}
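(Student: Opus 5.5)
The plan is induction on $k$, proving the maximal bound \eqref{eq:classical-Ito-tower-maximal-moment} for all $q\ge2$ at level $k$ before moving to level $k+1$. The cases $k=0$ and $k=1$ are immediate: level zero is constant, and $\mathrm I^{P,(1)}_{s,r}=X_{s,r}$. For $k=1$, maximal Hilbert BDG with $\Tr a^P\le\Lambda$ together with the drift bound $B|t-s|$ gives $C\sqrt q\,|t-s|^{1/2}$.

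For the inductive step, split according to \eqref{eq:hilbert-decomposition}:
\[
 \mathrm I^{P,(k)}_{s,r}=\int_s^r\mathrm I^{P,(k-1)}_{s,u}\otimes\dd M^P_u
 +\int_s^r\mathrm I^{P,(k-1)}_{s,u}\otimes b^P_u\,\dd u .
\]
\begin{enumerate}[label=\textup{(\alph*)},leftmargin=2.5em]
\item For the martingale term, use the covariance-weighted identity already employed in \Cref{lem:dimension-free-tensor-error-transform}, namely $\|(h\mapsto Z\otimes h)(a^P)^{1/2}\|_{\mathcal S_2}^2=\|Z\|_2^2\Tr a^P$, with $Z$ now a Hilbert tensor of degree $k-1$. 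Maximal BDG then gives
\[
 C\sqrt{q\Lambda}\,\Bigl\|\Bigl(\int_s^t\|\mathrm I^{P,(k-1)}_{s,u}\|_2^2\,\dd u\Bigr)^{1/2}\Bigr\|_{L^q}.
\]
Minkowski's integral inequality bounds this by $C\bigl(\int_s^t\|\mathrm I^{(k-1)}_{s,u}\|_{L^q}^2\,\dd u\bigr)^{1/2}$. The induction hypothesis then gives $\bigl(\int_s^t (u-s)^{k-1}\,\dd u\bigr)^{1/2}\lesssim|t-s|^{k/2}$.
\item For the drift term, use $\|b^P\|\le B$ and Minkowski, which give $B\int_s^t C(u-s)^{(k-1)/2}\,\dd u\lesssim|t-s|^{(k+1)/2}\le T^{1/2}|t-s|^{k/2}$.
\end{enumerate}
All constants depend only on $(k,q,\Lambda,B,T)$. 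They are independent of $P$, $s$ and $\dim H$, because the tensor-norm identity is dimension free.

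For the increment bound \eqref{eq:classical-Ito-tower-increment-moment}, do not use Chen's relation. Instead write
\[
 \mathrm I^{(k)}_{s,v}-\mathrm I^{(k)}_{s,u}=\int_u^v\mathrm I^{(k-1)}_{s,r}\otimes\dd X_r
\]
and apply the same BDG/Minkowski estimate on $[u,v]$. Since $\sup_{r\le T}\|\mathrm I^{(k-1)}_{s,r}\|_2$ has a bounded $L^q$ norm by the maximal estimate at level $k-1$ (using $|t-s|\le T$), this yields $C_{k,q}|v-u|^{1/2}$. Continuity, and the semimartingale property of each level, follow because each level is a stochastic integral of a continuous adapted integrand with finite moments.

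The main obstacle is well-posedness of the Hilbert-tensor stochastic integral at each level. One must check that $h\mapsto\mathrm I^{(k-1)}\otimes h$ is predictable and Hilbert–Schmidt against $(a^P)^{1/2}$ into $H^{\otimes_2 k}$. The rank-one identity settles this once the inductive moment bound provides the necessary integrability, so the integrability and the estimate are proved together inside the induction.
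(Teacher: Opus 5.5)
Your proof is correct and follows the paper's argument. It is an induction on $k$: the rank-one identity $\|(h\mapsto Z\otimes h)(a^P)^{1/2}\|_{\mathcal S_2}^2=\|Z\|_2^2\Tr a^P$, maximal BDG and the trace clock handle the martingale part, the drift is bounded by $B$, and increments are re-estimated on $[u,v]$ using the level-$(k-1)$ maximal bound on $[s,T]$. The only cosmetic difference is in how you reach $|t-s|^{k/2}$: you move the $L^q$ norm inside the time integral by Minkowski and use the pointwise level-$(k-1)$ bound, whereas the paper extracts $\sqrt{t-s}\,\sup_{s\le u\le t}\|\mathrm I^{P,(k-1)}_{s,u}\|_2$ and uses the maximal bound directly; both give the same scaling.
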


\begin{proof}
Induct on $k$, writing $h=t-s$.  The tensoring operator
$\Gamma_u z:=\mathrm I_{s,u}^{P,(k-1)}\otimes z$ satisfies
\[
 \|\Gamma_u(a_u^P)^{1/2}\|_2^2
 =\|\mathrm I_{s,u}^{P,(k-1)}\|_2^2\Tr a_u^P.
\]
Maximal Hilbert-space BDG, the trace-clock bound, and
\eqref{eq:classical-Ito-tensor-tower} therefore give
\begin{align*}
 &\left\|\sup_{s\le r\le t}\left\|
   \int_s^r\mathrm I_{s,u}^{P,(k-1)}\otimes\dd M_u^P
  \right\|_2\right\|_{L^q(P)}\\
 &\qquad\le C\sqrt{q\Lambda h}\,
 \left\|\sup_{s\le u\le t}
 \|\mathrm I_{s,u}^{P,(k-1)}\|_2\right\|_{L^q(P)}
 \le C_{k,q}h^{k/2}.
\end{align*}
The drift part is bounded by
\[
 B h\left\|\sup_{s\le u\le t}
 \|\mathrm I_{s,u}^{P,(k-1)}\|_2\right\|_{L^q(P)}
 \le B\sqrt T\,C_{k-1,q}h^{k/2}.
\]
This proves \eqref{eq:classical-Ito-tower-maximal-moment} by induction.

For increments, apply the same estimates on $[u,v]$ to
\[
 \mathrm I_{s,v}^{P,(k)}-\mathrm I_{s,u}^{P,(k)}
 =\int_u^v\mathrm I_{s,r}^{P,(k-1)}\otimes\dd X_r
\]
using the bound on $[s,T]$.  The martingale and drift terms are respectively
$O(|v-u|^{1/2})$ and $O(|v-u|)$, proving
\eqref{eq:classical-Ito-tower-increment-moment}.
\end{proof}

\begin{proposition}[Tensor covariation recursion for the classical It\^o tower]
\label{prop:tensor-Ito-bracket-recursion}
For $k\ge2$, the scalar covariations obtained by contracting
$\mathrm I^{P,(k-1)}$ and $X$ against algebraic simple tensors admit a unique
$H^{\otimes_2 k}$-valued finite-variation representative, denoted by
$\qv{\mathrm I^{P,(k-1)},X}$, and it satisfies
\begin{equation}\label{eq:tensor-Ito-bracket-recursion}
 \dd\qv{\mathrm I^{P,(k-1)},X}_r
 =\mathrm I_{s,r}^{P,(k-2)}\otimes\dd Q_r.
\end{equation}
Here $H^{\otimes_{\rm alg}j}$ denotes the algebraic tensor product.  Thus
``tensor covariation'' in this proposition means this contraction-defined
representative; the strong Hilbert-norm conversion is proved separately in
\Cref{lem:strong-Hilbert-rough-Stratonovich-tower}.
\end{proposition}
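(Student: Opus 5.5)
The plan is to reduce to scalar covariations of real semimartingales, identify them via the recursion \eqref{eq:classical-Ito-tensor-tower}, and then show that the resulting scalar family comes from a single $H^{\otimes_2 k}$-valued finite-variation process.

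\emph{Reduction to scalar contractions.} Fix algebraic simple tensors $\phi\in H^{\otimes_{\rm alg}(k-1)}$ and $h\in H$. Put $Z^\phi:=\langle\mathrm I^{P,(k-1)}_{s,\cdot},\phi\rangle$ and $X^h:=\langle X,h\rangle$. By \Cref{lem:classical-Ito-tower-moments}, $Z^\phi$ is a continuous real semimartingale. Contracting \eqref{eq:classical-Ito-tensor-tower} against $\phi=\phi'\otimes e$ gives
\[
 Z^\phi_r=\int_s^r\langle\mathrm I^{P,(k-2)}_{s,u},\phi'\rangle\,\dd\langle X_u,e\rangle .
\]
This uses the weak definition of the Hilbert stochastic integral, tested against a simple tensor, and it extends by linearity to algebraic tensors. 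The scalar covariation rule $[\int Y\,\dd N,N']=\int Y\,\dd[N,N']$ then gives
\[
 \dd[Z^\phi,X^h]_r=\langle\mathrm I^{P,(k-2)}_{s,r},\phi'\rangle\,\dd\langle Q_r e,h\rangle ,
\]
because $[\langle X,e\rangle,\langle X,h\rangle]=\langle \qv{M^P}e,h\rangle=\langle Qe,h\rangle$ modelwise. This identification comes from \Cref{cor:common-representative}.

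\emph{Candidate representative.} Define
\[
 K_t:=\int_s^t\mathrm I^{P,(k-2)}_{s,r}\otimes\dd Q_r ,
\]
a Bochner–Stieltjes integral in $H^{\otimes_2 k}$. It is well defined pathwise because $Q$ has trace-norm bounded variation with $\|\dd Q\|_1\le\Lambda\,\dd r$, and because $\|a\otimes b\|_2\le\|a\|_2\|b\|_2\le\|a\|_2\|b\|_1$. So $K$ is continuous with variation at most $\Lambda\int\|\mathrm I^{(k-2)}\|_2\,\dd r$, which is finite by \eqref{eq:classical-Ito-tower-maximal-moment}. Contracting $K$ against $\phi\otimes h$ reproduces the scalar covariation computed above. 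This gives existence and the recursion \eqref{eq:tensor-Ito-bracket-recursion}. Here $\otimes\dd Q$ is understood through the identification of $\mathcal S_1\subset\mathcal S_2$ with $H\otimes_2H$ under the paper's rank-one convention; checking the index ordering $\langle Qe,h\rangle$ against that convention is the one place that needs care.

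\emph{Uniqueness.} Suppose two continuous finite-variation $H^{\otimes_2 k}$-valued processes have the same contractions against all algebraic simple tensors, almost surely for each tensor. Take a countable family of such tensors that is total in $H^{\otimes_2 k}$, for instance products of vectors from a countable orthonormal basis. Then the two processes agree on a common null-set complement, for all times simultaneously by continuity, and totality forces equality.

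I expect the main obstacle to be justifying the scalar contraction identity for the Hilbert-valued integral, specifically exchanging the pairing with $\phi$ and the stochastic integral. This is standard for weak integrals but should be stated through the Metivier calculus. Measurability and joint versions over countably many tensors are routine.
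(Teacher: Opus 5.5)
Your proposal is correct and follows essentially the same route as the paper. Both arguments apply the scalar bracket rule to the contractions $\langle\mathrm I^{P,(k-1)},u\otimes v\rangle$ and $\langle X,w\rangle$, take the pathwise Bochner--Stieltjes candidate controlled by $\|\mathrm I^{P,(k-2)}\|_2\,\|\dd Q\|_1$, and obtain uniqueness from totality of simple tensors. Your use of a countable orthonormal-basis family to get one common null set is a slightly more careful version of the paper's density remark. The index-ordering concern you flag is harmless, because $Q$ is self-adjoint.
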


\begin{proof}
The construction and all fixed-order integrability statements are supplied by
\Cref{lem:classical-Ito-tower-moments}.  The martingale part of
$\mathrm I^{P,(k-1)}$ is
\[
 \int_s^\cdot \mathrm I_{s,r}^{P,(k-2)}\otimes\dd M_r^P.
\]
Let $u\in H^{\otimes_{\rm alg}(k-2)}$ and $v,w\in H$.  The scalar stochastic-integral
bracket rule gives
\begin{align*}
 &\dd\Bigl[
   \langle\mathrm I^{P,(k-1)},u\otimes v\rangle,
   \langle X,w\rangle
  \Bigr]_r\\
 &\qquad
 =\langle\mathrm I_{s,r}^{P,(k-2)},u\rangle
   \,\dd\langle Q_rv,w\rangle
 =\left\langle
   \mathrm I_{s,r}^{P,(k-2)}\otimes\dd Q_r,
   u\otimes v\otimes w
  \right\rangle.
\end{align*}
Algebraic simple tensors are dense in $H^{\otimes_2 k}$.  Moreover the right
side of \eqref{eq:tensor-Ito-bracket-recursion} has finite variation because
\[
 \|\mathrm I_{s,r}^{P,(k-2)}\otimes\dd Q_r\|_2
 \le\|\mathrm I_{s,r}^{P,(k-2)}\|_2\,\|\dd Q_r\|_1.
\]
The scalar identities therefore determine the displayed finite-variation
candidate uniquely.
\end{proof}

\begin{lemma}[Strong Hilbert rough--Stratonovich conversion for the It\^o tower]
\label{lem:strong-Hilbert-rough-Stratonovich-tower}
Fix $P\in\Smax$, a restart time $s$, and $k\ge1$, and set
$\mathrm I^{P,(-1)}:=0$.  For every
$1/3<\alpha<1/2$, after modification on one $P$-null set,
$\mathrm I_{s,\cdot}^{P,(k-1)}$ is controlled by $X$, with derivative
\[
 h\longmapsto \mathrm I_{s,\cdot}^{P,(k-2)}\otimes h
\]
and a $2\alpha$-H\"older remainder.  On the modelwise canonical rough-path
domain one has, in $H^{\otimes_2 k}$ and simultaneously in the terminal time,
\begin{equation}\label{eq:strong-Hilbert-rough-Stratonovich-conversion}
 \int_s^\cdot
 \mathrm I_{s,r}^{P,(k-1)}\otimes\dd\mathbf X_r^{S,P}
 =
 \int_s^\cdot
 \mathrm I_{s,r}^{P,(k-1)}\otimes\dd X_r
 +\frac12\int_s^\cdot
 \mathrm I_{s,r}^{P,(k-2)}\otimes\dd Q_r.
\end{equation}
Consequently the rough integral is the Hilbert-valued Stratonovich integral,
and
\begin{equation}\label{eq:classical-mixed-Ito-recursion}
 \int_s^t\mathrm I_{s,r}^{P,(k-1)}\otimes\circ\dd X_r
 -\frac12\int_s^t\mathrm I_{s,r}^{P,(k-2)}\otimes\dd Q_r
 =\mathrm I_{s,t}^{P,(k)}.
\end{equation}
On the modelwise canonical rough-path domain, the first integral in this last
display is the rough integral against $\mathbf X^{S,P}$ and the second is the
Young--Stieltjes integral.  Thus the classical It\^o tower solves the mixed
triangular equation \eqref{eq:Ito-level-recursion}.
\end{lemma}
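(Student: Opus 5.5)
The plan is to establish the controlled structure first, via \Cref{lem:dyadic-controlled-remainder}, and then identify the rough integral with the It\^o integral plus half the tensor covariation, which is the standard route. Fix $P$, $s$ and $k\ge1$, and set $Y:=\mathrm I^{P,(k-1)}_{s,\cdot}$ and $Y'_u h:=\mathrm I^{P,(k-2)}_{s,u}\otimes h$. Define the remainder
\[
R_{u,v}:=Y_{u,v}-Y'_uX_{u,v}=\int_u^v \mathrm I^{P,(k-2)}_{s,u,r}\otimes \dd X_r,
\]
where $\mathrm I^{(k-2)}_{s,u,r}$ denotes the increment of level $k-2$ between $u$ and $r$. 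The coboundary identity \eqref{eq:controlled-remainder-coboundary} then follows by splitting the integral at the intermediate time. The moment hypotheses come from \Cref{lem:classical-Ito-tower-moments}. The increment estimate \eqref{eq:classical-Ito-tower-increment-moment} gives the $|t-s|^{1/2}$ bound for $Y'$, and \Cref{lem:enhanced-moments} gives it for $X$. For $R$, one more BDG step is needed: the integrand is an increment of level $k-2$ over $[u,r]$, so its $L^q$ size is $O(|r-u|^{1/2})$, and integrating against $\dd X$ gives $O(|v-u|)$. Here I would use the covariance-weighted Hilbert--Schmidt identity as in the proof of \Cref{lem:classical-Ito-tower-moments}. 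Since all moments are available, I take $q$ large so that $2\alpha<1-1/q$. \Cref{lem:dyadic-controlled-remainder} then yields the controlled structure with a $2\alpha$-H\"older remainder after modification on a null set. Taking a countable union over rational $\alpha$ gives one common null set for all $\alpha$.

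For the conversion \eqref{eq:strong-Hilbert-rough-Stratonovich-conversion}, I would compare the compensated Riemann sums on a dyadic partition of $[s,t]$:
\[
\sum_i \Bigl(Y_{t_i}\otimes X_{t_i,t_{i+1}}+Y'_{t_i}\mathbb X^{S,P}_{t_i,t_{i+1}}\Bigr),
\]
where $Y'_{t_i}$ acts on the first tensor leg of $\mathbb X^{S}$. The first sum converges in probability to the It\^o integral in $H^{\otimes_2 k}$. For this I would use Hilbert BDG to bound the $L^2$ error by the integrated $\mathcal S_2$ oscillation of $Y$, which tends to zero by continuity and the moment bounds. In the second sum, decompose $\mathbb X^S=\mathbb X^I+\tfrac12 Q$ via \eqref{eq:common-S}. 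The $Q$ part is a Riemann--Stieltjes sum that converges to $\tfrac12\int \mathrm I^{(k-2)}\otimes \dd Q$, since $Q$ has trace-norm bounded variation and the integrand is continuous. The $\mathbb X^I$ part is a sum of martingale-difference-type terms $\mathrm I^{(k-2)}_{t_i}\otimes\int_{t_i}^{t_{i+1}}X_{t_i,r}\otimes\dd X_r$. The martingale parts are orthogonal, and the second moment is bounded by $\sum_i|t_{i+1}-t_i|^2\to0$, with drift contributions of order $|\pi|^{1/2}$. The rough integral is the a.s. limit of the compensated sums on the rough-path domain, by the sewing lemma together with the controlled structure of step one. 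Uniqueness of limits in probability then gives the identity at fixed $t$. Continuity in $t$ of both sides extends it simultaneously in the terminal time, with rational times first and then a continuity argument.

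Identity \eqref{eq:classical-mixed-Ito-recursion} is then a rearrangement: the left side equals the It\^o integral of level $k-1$, which is $\mathrm I^{(k)}$ by \eqref{eq:classical-Ito-tensor-tower}. The Stratonovich interpretation follows from \Cref{prop:tensor-Ito-bracket-recursion}, because the correction term equals $\tfrac12\dd\qv{\mathrm I^{(k-1)},X}$. I expect the main obstacle to be the strong Hilbert-space convergence of the $\mathbb X^I$ sum in $H^{\otimes_2 k}$. That convergence requires a dimension-free BDG estimate for the tensor $\mathrm I^{(k-2)}\otimes(\cdot)\otimes \dd X$ with the trace clock, rather than merely weak convergence against simple tensors. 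I would handle it with the same rank-one identity $\|\Gamma a^{1/2}\|_2^2=\|\cdot\|_2^2\Tr a$ used repeatedly earlier.
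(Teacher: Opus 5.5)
Your proposal is correct and follows essentially the paper's route. You use the same remainder $R_{u,v}=\int_u^v(\mathrm I^{P,(k-2)}_{s,r}-\mathrm I^{P,(k-2)}_{s,u})\otimes\dd X_r$, the same coboundary, and the same moment inputs, and you apply \Cref{lem:dyadic-controlled-remainder} for the controlled structure. For the conversion, you compare compensated Riemann sums using $\mathbb X^{S,P}=\mathbb X^{I,P}+\tfrac12 Q$: the $\mathbb X^{I}$-sum vanishes because its martingale part has orthogonal increments with second moment $O(|\pi|)$ and its drift part is $O(|\pi|^{1/2})$, while the $Q$-sum converges as a Riemann--Stieltjes sum. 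The only cosmetic difference is how you obtain simultaneity in the terminal time: you identify at fixed rational times and extend by continuity, whereas the paper uses stopped sums converging directly in $C([s,T];H^{\otimes_2 k})$.
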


\begin{proof}
For $k=1$, the integrand $\mathrm I^{P,(0)}\equiv1$ is controlled with zero
derivative and zero remainder.  Both sides of
\eqref{eq:strong-Hilbert-rough-Stratonovich-conversion} then equal
$X_{s,\cdot}$.  Assume henceforth $k\ge2$.

The increment decomposition
\[
 \mathrm I_{s;u,v}^{P,(k-1)}
 =\mathrm I_{s,u}^{P,(k-2)}\otimes X_{u,v}
  +R^{P,(k-1)}_{u,v},
 \qquad
 R^{P,(k-1)}_{u,v}
 :=\int_u^v
    (\mathrm I_{s,r}^{P,(k-2)}-\mathrm I_{s,u}^{P,(k-2)})
    \otimes\dd X_r
\]
gives the controlled structure quantitatively.  By \eqref{eq:classical-Ito-tower-increment-moment}, for every fixed
order and every sufficiently large $q$,
\[
 \|\mathrm I_{s,r}^{P,(k-2)}-\mathrm I_{s,u}^{P,(k-2)}\|_{L^{2q}}
 \le C_{k,q}|r-u|^{1/2}.
\]
Maximal BDG for the martingale part of $R^{P,(k-1)}$ and the bounded drift
then give
\[
 \|R^{P,(k-1)}_{u,v}\|_{L^q}\le C_{k,q}|v-u|.
\]
Its exact coboundary identity is
\begin{equation}\label{eq:Ito-tower-remainder-coboundary}
 R^{P,(k-1)}_{u,w}
 -R^{P,(k-1)}_{u,v}
 -R^{P,(k-1)}_{v,w}
 =\bigl(\mathrm I_{s,v}^{P,(k-2)}
        -\mathrm I_{s,u}^{P,(k-2)}\bigr)\otimes X_{v,w}.
\end{equation}
This follows by expanding at $v$.  Fix $1/3<\alpha<1/2$ and choose $q$ with
\[
 2\alpha<1-\frac1q.
\]
Apply \Cref{lem:dyadic-controlled-remainder} with
\[
 Y'_u:h\longmapsto\mathrm I_{s,u}^{P,(k-2)}\otimes h.
\]
The preceding moment estimates verify the hypotheses, so on one full set
$\mathrm I^{P,(k-1)}$ is controlled by $X$ with derivative
$h\mapsto\mathrm I^{P,(k-2)}\otimes h$ and a $2\alpha$-H\"older remainder.
For the fixed $k$, intersect over rational
$\alpha\in(1/3,1/2)$.  When the tower is used simultaneously through a fixed
order $N$, intersect additionally over $1\le j\le N$; a larger rational
exponent implies every smaller one.  Thus all required structures hold on one
$P$-full set, without a
two-parameter Kolmogorov theorem for the nonadditive remainder.

For the strong conversion in the Hilbert tensor norm, put
\[
 G_rh:=\mathrm I_{s,r}^{P,(k-1)}\otimes h,
 \qquad
 G'_r(h_1\otimes h_2)
 :=\mathrm I_{s,r}^{P,(k-2)}\otimes h_1\otimes h_2.
\]
The preceding argument shows that $(G,G')$ is controlled by $X$.  Let
$\Pi^n=\{s=t_0^n<\cdots<t_{m_n}^n=T\}$ be deterministic partitions with
$|\Pi^n|\to0$.  For $r\in[s,T]$, define the stopped sums
\begin{align*}
 S_r^{n,1}
 &:=\sum_{i:t_i^n<r}
 G_{t_i^n}X_{t_i^n,t_{i+1}^n\wedge r},\\
 S_r^{n,2}
 &:=\sum_{i:t_i^n<r}
 G'_{t_i^n}\mathbb X^{I,P}_{t_i^n,t_{i+1}^n\wedge r}.
\end{align*}
After localization, predictable-step approximation and Hilbert-space BDG give
\[
 S^{n,1}
 \longrightarrow
 \int_s^\cdot G_r\,\dd X_r
 \quad\text{in probability in }
 C([s,T];H^{\otimes_2 k}).
\]
Write $X=M^P+\int_0^\cdot b_u^P\,\dd u$.  On the same stopped set, the
identity
\begin{align*}
 S_{r,\mathrm{mart}}^{n,2}
 &:=\sum_{i:t_i^n<r}\int_{t_i^n}^{t_{i+1}^n\wedge r}
   G'_{t_i^n}\bigl(X_{t_i^n,u}\otimes\dd M_u^P\bigr),\\
 S_{r,\mathrm{drift}}^{n,2}
 &:=\sum_{i:t_i^n<r}\int_{t_i^n}^{t_{i+1}^n\wedge r}
   G'_{t_i^n}\bigl(X_{t_i^n,u}\otimes b_u^P\bigr)\,\dd u
\end{align*}
gives $S^{n,2}=S_{\mathrm{mart}}^{n,2}+S_{\mathrm{drift}}^{n,2}$.
Hilbert-space BDG, the trace clock, fixed-order moments, and the bounded drift
then give
\begin{align*}
 E^P\sup_{r\in[s,T]}
   \|S_{r,\mathrm{mart}}^{n,2}\|_2^2
 &\le C_R\sum_i|t_{i+1}^n-t_i^n|^2
 \le C_RT|\Pi^n|,\\
 E^P\sup_{r\in[s,T]}
   \|S_{r,\mathrm{drift}}^{n,2}\|_2
 &\le C_R\sum_i|t_{i+1}^n-t_i^n|^{3/2}
 \le C_RT|\Pi^n|^{1/2}.
\end{align*}
Hence, after removing the localization,
\[
 S^{n,2}
 \longrightarrow0
 \quad\text{in probability in }
 C([s,T];H^{\otimes_2 k}).
\]
Since $Q$ has finite trace variation and $G'$ is continuous, the corresponding
stopped Riemann--Stieltjes sums
\[
 S_r^{n,Q}
 :=\frac12\sum_{i:t_i^n<r}
 G'_{t_i^n}Q_{t_i^n,t_{i+1}^n\wedge r}
\]
satisfy
\[
 S^{n,Q}
 \longrightarrow
 \frac12\int_s^\cdot
 \mathrm I_{s,r}^{P,(k-2)}\otimes\dd Q_r
\]
uniformly in $C([s,T];H^{\otimes_2 k})$.  Using
$\mathbb X^{S,P}=\mathbb X^{I,P}+Q/2$ in the compensated sums gives
\[
 \int_s^\cdot
 \mathrm I_{s,r}^{P,(k-1)}\otimes\dd\mathbf X_r^{S,P}
 =
 \int_s^\cdot
 \mathrm I_{s,r}^{P,(k-1)}\otimes\dd X_r
 +\frac12\int_s^\cdot
 \mathrm I_{s,r}^{P,(k-2)}\otimes\dd Q_r.
\]
The right side is the Hilbert-valued Stratonovich integral, proving the claim.
\end{proof}

\begin{corollary}[All-order signature propagation from the least state]
\label{thm:all-order-generation}
Let $1/3<\eta<1/2$ and $N\ge2$.

\textup{(i) Geometric tower.}
For every $z\in\mathsf Z_{\eta;\mathcal S_2,1}(H)$,
\eqref{eq:all-order-geometric-equation} has a unique solution.  Its increments
form the unique multiplicative degree-$N$ geometric extension of
$\mathbf x^S$; in particular,
\[
 \pi_1\mathbf S^{S,N}_{s,t}=x_{s,t},
 \qquad
 \pi_2\mathbf S^{S,N}_{s,t}=\mathbb x^S_{s,t}.
\]

\textup{(ii) It\^o tower.}
Equation~\eqref{eq:all-order-Ito-equation} has a unique mixed rough--Young
solution.  Its increments form a multiplicative tensor family, and its first
two levels are
\[
 \pi_1\mathbf S^{I,N}_{s,t}=x_{s,t},
 \qquad
 \pi_2\mathbf S^{I,N}_{s,t}
 =\mathbb x^S_{s,t}-\frac12q_{s,t}
 =\mathbb x^I_{s,t}.
\]
The family is compatible under degree truncation:
\[
 \pi_{\le M}\mathbf S^{S,N}=\mathbf S^{S,M},
 \qquad
 \pi_{\le M}\mathbf S^{I,N}=\mathbf S^{I,M},
 \qquad 2\le M\le N.
\]

\textup{(iii) Stability and naturality.}
For each fixed $N$, both solution maps are locally Lipschitz on bounded
subsets of the state topology into the corresponding inhomogeneous rough
path topology.  They are causal, commute with deterministic stopping,
future shift, and finite concatenation, and are natural under bounded linear
maps between separable Hilbert spaces.

\textup{(iv) Semimartingale identification.}
If $z=Z_\tau$ is the accessible canonical state and $P$ is one of its
semimartingale laws, then, $P$-almost surely, the homogeneous coordinates of
$\mathbf S^{S,N}$ and $\mathbf S^{I,N}$ are respectively the classical
Stratonovich and It\^o iterated integrals up to order $N$.
\end{corollary}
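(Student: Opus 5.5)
The plan is to reduce everything to the triangular mixed rough--Young closure of \Cref{lem:mixed-rough-Young-closure}, applied restartwise, and then to identify levels by uniqueness.

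For \textup{(i)}, take the recursion \eqref{eq:mixed-triangular-recursion} with $q\equiv0$. This yields well-defined controlled levels $U^{(k)}_{s,\cdot}$ that solve \eqref{eq:all-order-geometric-equation}. Uniqueness follows levelwise by induction: each level is an integral of the previous, already unique, level. The first two levels come directly from the recursion, since $U^{(1)}=x_{s,\cdot}$ and $U^{(2)}_{s,t}=\int_s^t x_{s,r}\otimes\dd\mathbf x^S_r=\mathbb x^S_{s,t}$, the latter being the rough integral of a path against its own lift. For multiplicativity, Chen's identity $\mathbf S_{s,t}=\mathbf S_{s,u}\otimes\mathbf S_{u,t}$ follows because both sides solve the same linear equation on $[u,T]$ with the same value at $u$, and uniqueness gives equality. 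To identify the result with the extension of $\mathbf x^S$ and to get geometricity, I would approximate $\mathbf x^S$ in $G\Omega^2_\eta$ by smooth finite-dimensional signatures. For these the equation gives the classical truncated signature, which is geometric. Local Lipschitz continuity of the solution map then passes geometricity to the limit, and Lyons' extension uniqueness for $\eta>1/3$ gives the uniqueness claim.

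For \textup{(ii)}, I apply the same lemma with the $q$-term present. This is the recursion \eqref{eq:Ito-level-recursion}, and the Young integrals are well defined because $3\eta>1$. Level one is $x$, and level two is $\mathbb x^S-\tfrac12q=\mathbb x^I$, by the degree count in the recursion. Multiplicativity again follows from uniqueness on $[u,T]$: right multiplication by the fixed tensor $\mathbf S^{I}_{s,u}$ commutes with both driving terms, since the equation is linear and right-invariant in the initial datum. Truncation compatibility holds because the recursion is triangular, so level $k$ depends only on levels below $k$.

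For \textup{(iii)}, local Lipschitz continuity is the lemma's estimate. The restart-uniform constant $C_{N,R}$ then gives continuity of the two-parameter family in the inhomogeneous metric. The operations need only short arguments:
\begin{itemize}
\item Causality and stopping hold because the solution on $[s,t]$ uses only the data on $[s,t]$, and stopped data give constant solutions.
\item Shift is a reparametrization of the equation.
\item Concatenation is Chen's identity combined with Chen for the state.
\end{itemize}
Naturality under $L\in\mathcal L(H,K)$ comes from $L^{\otimes k}$ intertwining both integrals together with the homomorphism $L_\sharp$ of \Cref{thm:linear-naturality}; uniqueness then closes the argument.

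For \textup{(iv)}, the Itô part is exactly \Cref{lem:strong-Hilbert-rough-Stratonovich-tower}. The classical tower satisfies \eqref{eq:classical-mixed-Ito-recursion} on the modelwise rough domain, and under accessibility $(\mathbf X^{S,P},\qv{M^P})$ coincides almost surely with the chart of $Z_\tau$. Levelwise uniqueness in \textup{(ii)} then identifies the levels, using a finite intersection of null sets over $k\le N$ together with continuity in $(s,t)$ to reach rational restart times and then all restart times. For the Stratonovich tower I would argue analogously. Either I use a Stratonovich version of the conversion lemma, or I rewrite the classical Stratonovich iterated integrals through Itô integrals plus $\tfrac12\dd Q$ corrections, which gives the geometric recursion with $q=0$ in the rough sense. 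I expect this to be the main obstacle: one must justify that the Hilbert Stratonovich iterated integrals equal the rough integrals against $\mathbf X^{S,P}$ at every level, simultaneously in the restart time. My first attempt would use the compensated Riemann-sum argument of \Cref{lem:strong-Hilbert-rough-Stratonovich-tower} with $G'$ built from the Stratonovich tower, reusing its controlled-remainder estimates through \Cref{lem:dyadic-controlled-remainder}.
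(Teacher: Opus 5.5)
Your proposal is correct and follows essentially the paper's proof. Like the paper, you apply \Cref{lem:mixed-rough-Young-closure} levelwise for existence, uniqueness and local Lipschitz continuity; use uniqueness for Chen, stopping, shift and truncation compatibility; use $L^{\otimes k}$ for naturality; and combine \Cref{lem:strong-Hilbert-rough-Stratonovich-tower} with levelwise uniqueness for the It\^o identification.

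The differences are minor:
\begin{itemize}
\item You build the geometric tower from the $q=0$ case of the lemma plus smooth approximation, where the paper simply cites the Hilbert geometric extension theorem.
\item You pass through rational restart times and then continuity, where the paper identifies at $s=0$ and uses Chen plus truncated tensor inversion. That inversion is also what supplies the jointly continuous classical version your continuity step needs.
\item Your compensated-sum plan for the Stratonovich tower spells out what the paper compresses into ``uniqueness identifies''.
\item In your Chen step, the fixed tensor $\mathbf S^{I}_{s,u}$ acts by left, not right, multiplication, because the equation is driven on the right.
\end{itemize}
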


\begin{proof}
The Hilbert geometric extension theorem gives the unique solution of
\eqref{eq:all-order-geometric-equation}; uniqueness yields truncation
compatibility and multiplicativity.

For the It\^o equation, apply
\Cref{lem:mixed-rough-Young-closure} degree by degree to
\eqref{eq:Ito-level-recursion}.  It gives existence, uniqueness, and local
Lipschitz continuity in each fixed inhomogeneous tensor topology.  Autonomy,
concatenation, and uniqueness give the Chen, stopping, and shift identities;
applying $L^{\otimes k}$ levelwise gives linear naturality.

Under fixed $P$, uniqueness identifies the geometric equation with the
classical Stratonovich tower.  For the It\^o tower
$(\mathrm I^{P,(k)})_{k\le N}$, the bracket recursion and strong conversion
in \Cref{prop:tensor-Ito-bracket-recursion,lem:strong-Hilbert-rough-Stratonovich-tower}
show levelwise that the Stratonovich correction
$\frac12\mathrm I^{P,(k-2)}\otimes\dd Q$ is cancelled by the Young term in
\eqref{eq:Ito-level-recursion}.  Hence
\[
 S^{I,(k)}=\mathrm I^{P,(k)},\qquad 0\le k\le N,
\]
for $s=0$.  Chen's relation and truncated tensor inversion determine all
restarted increments; continuity makes the identity simultaneous in
$0\le s\le t\le T$ on the same full set.
\end{proof}

\begin{corollary}[Corewise all-order continuity]
\label{cor:core-all-order-continuity}
Fix $N\ge2$ and $R<\infty$, and assume that the operational exponent satisfies
$1/3<\eta\le\alpha$.  On $\mathcal C_R$, the geometric and It\^o
all-order readouts
\[
 x\longmapsto \mathbf S^{S,N}(x),
 \qquad
 x\longmapsto \mathbf S^{I,N}(x)
\]
are continuous.  The degree-$N$ geometric signatures of the smooth resolvent
paths converge uniformly on $\mathcal C_R$ to $\mathbf S^{S,N}$, and the
corresponding bracket-corrected towers formed with the trace-energy
approximations converge uniformly to $\mathbf S^{I,N}$.  More precisely, the
towers driven by $S_2(Y^{\eps_n})$ and $\mathsf E^{\delta_j}$ converge jointly
as $n,j\to\infty$, and hence along every prescribed diagonal.
For any exponent $1/3<\eta<1/2$, this hypothesis can be arranged by choosing
$\eta\le\alpha<\beta<1/2$ before constructing the cores.
\end{corollary}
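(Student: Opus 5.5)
The proof composes two continuity facts. First, on $\mathcal C_R$ the state map $x\mapsto(X(x),A(x),Q(x))$ is continuous into $C_0([0,T];H)\times C^{0,2\alpha}(\mathcal S_2)\times C^{0,\theta}(\mathcal S_1)$ by \Cref{thm:master-causal-cores}, with the rough lift $\mathbf X^S$ converging in $\rho_\alpha$ by \eqref{eq:geometric-corewise-rate}. Second, \Cref{lem:mixed-rough-Young-closure} and \Cref{thm:all-order-generation}(iii) make the degree-$N$ towers locally Lipschitz on bounded sets of the state topology.

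\emph{Step 1: convert the core topology into the operational topology.} Since $\eta\le\alpha$, the $\alpha$-H\"older rough metric dominates the $\eta$-metric (using $T<\infty$). On $\mathcal C_R$ we have the uniform bound $\|x\|_{\beta\text{-H\"ol}}\le R$, and the convergence rate gives a uniform $\rho_\alpha$-radius for $\mathbf X^S$. The first-level uniform convergence then upgrades to $\eta$-H\"older convergence by interpolation against the $\beta$-bound. $Q$ is $\Tr\Gamma$-Lipschitz in trace norm, so taking $\theta=2\eta<1$ gives continuity of $Q$ in the $2\eta$-H\"older trace topology. Hence $x\mapsto z(x)$ is continuous from $\mathcal C_R$ into a bounded subset of $\mathsf Z_{\eta;\mathcal S_2,1}(H)$.

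\emph{Step 2: compose with the tower maps.} Composing with the locally Lipschitz tower maps gives continuity of $\mathbf S^{S,N}$ and $\mathbf S^{I,N}$.

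\emph{Step 3: approximation.} For the geometric tower, $S_N(Y^{\eps_n})$ is the geometric signature of $S_2(Y^{\eps_n})$. The rate \eqref{eq:geometric-corewise-rate} gives $\rho_\alpha$-convergence uniformly on $\mathcal C_R$, with uniformly bounded radius, because the $\rho_\alpha$-distance to $\mathbf X^S$ is at most $C_R$. The Lipschitz estimate on this bounded set then yields uniform convergence of $\mathbf S^{S,N}$.

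For the It\^o tower, feed the pair $(S_2(Y^{\eps_n}),\mathsf E^{\delta_j})$ into the mixed recursion \eqref{eq:mixed-triangular-recursion}. Here $\mathsf E^{\delta_j}$ is a $C^1$ increasing trace-class path with trace variation uniformly bounded on $\mathcal C_R$ (\Cref{cor:core-bracket-variation}). This makes the H\"older setting awkward, so I would use the variation formulation of \Cref{lem:mixed-rough-Young-closure}. Take $p=1/\alpha'$ with $\alpha'<\alpha$ close, and $\nu$ with $1<\nu\le p/2$. \eqref{eq:trace-energy-variation-rate} controls $\|\mathsf E^{\delta_j}-Q\|_{\nu\text{-var}}\le C_R2^{-4j(1-1/\nu)}$, and the $\rho_\alpha$-rate controls the rough $p$-variation distance. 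The local Lipschitz bound then gives
\[
 \sup_{x\in\mathcal C_R}d\bigl(\mathbf S^{I,N}[Y^{\eps_n},\mathsf E^{\delta_j}](x),\mathbf S^{I,N}(x)\bigr)\le C_{R,N}\bigl(\eps_n^\kappa+2^{-4j(1-1/\nu)}\bigr).
\]
This bound is jointly in $(n,j)$, so convergence holds along any diagonal. Levelwise uniqueness identifies the variation solution with the H\"older solution at the limit.

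\textbf{Main obstacle.} The hardest step is checking that the approximating inputs stay in one bounded set on which the variation Lipschitz constant is uniform. The point needing care is the uniform bound on $\|\mathsf E^{\delta_j}\|_{\nu\text{-var}}$, which I would obtain from positivity and monotonicity together with the terminal trace bound. The final remark is immediate: fix $\eta$, then choose $\alpha\in[\eta,1/2)$ and $\beta\in(\alpha,1/2)$ in \eqref{eq:core-exponents} before building the cores.
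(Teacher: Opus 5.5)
Your proposal is correct and follows essentially the paper's route. The exact readouts are continuous by composing the corewise state continuity of \Cref{thm:master-causal-cores} with the locally Lipschitz tower maps of \Cref{thm:all-order-generation}. For the approximation claim, the corewise $\rho_\alpha$ rate and the trace-energy $\nu$-variation rate, with the common total-variation bound of \Cref{cor:core-bracket-variation}, are carried into the mixed $p$-/$\nu$-variation topology with $1/\alpha<p<3$ and $1<\nu\le p/2$, and the variation form of \Cref{lem:mixed-rough-Young-closure} then gives the limits. Your additions (the interpolation upgrade of the first level, the explicit joint rate in $(n,j)$, and the uniform variation bound from monotonicity) only make explicit what the paper's short proof leaves implicit.
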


\begin{proof}
Continuity of the exact readouts follows from
\Cref{thm:master-causal-cores,thm:all-order-generation}.  For the approximation
claim choose
\[
 1/\alpha<p<3,
 \qquad 1<\nu<p/2.
\]
The geometric rate in \Cref{thm:master-causal-cores}, the uniform
trace-energy rate and common total-variation bound in
\Cref{cor:core-bracket-variation}, and
\Cref{lem:homogeneous-holder-variation-transfer} give joint convergence of
the two drivers in the mixed $p$-/\,$\nu$-variation topology.  Now
\Cref{lem:mixed-rough-Young-closure} and the locally Lipschitz geometric
extension yield both asserted uniform all-order limits.
\end{proof}

\begin{corollary}[Third-order propagation identity]
\label{cor:third-level-derived}
For $z=(x,A,q)\in\mathsf Z_{\eta;\mathcal S_2,1}(H)$, let
$S^{S,(3)}$ and $S^{I,(3)}$ be the third homogeneous levels furnished by
\Cref{thm:all-order-generation}.  Define the mixed transport coordinates
\begin{equation}
 \label{eq:mixed-third-order-transports}
 B^{q x}_{s,t}:=\int_s^t q_{s,u}\otimes \dd x_u,
 \qquad
 B^{x q}_{s,t}:=\int_s^t x_{s,u}\otimes \dd q_u,
\end{equation}
where both integrals are Young integrals.  Then
\begin{equation}
 \label{eq:third-order-Ito-Stratonovich-state}
 S^{S,(3)}_{s,t}
 =S^{I,(3)}_{s,t}
  +\frac12B^{q x}_{s,t}
  +\frac12B^{x q}_{s,t}.
\end{equation}
Moreover,
\begin{align}
 B^{q x}_{s,t}
 &=B^{q x}_{s,u}+B^{q x}_{u,t}+q_{s,u}\otimes x_{u,t},
 \label{eq:q-x-Chen}\\
 B^{x q}_{s,t}
 &=B^{x q}_{s,u}+B^{x q}_{u,t}+x_{s,u}\otimes q_{u,t}.
 \label{eq:x-q-Chen}
\end{align}
Thus the third-order coordinates are transported from the existing first- and
second-order channels and are derived restart coordinates of the step-two
state.
\end{corollary}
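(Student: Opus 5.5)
The plan is to compare the level-three equations of the two towers in \Cref{thm:all-order-generation} directly, using linearity of the rough integral in the controlled integrand. Fix $s$. By \eqref{eq:all-order-geometric-equation} and \eqref{eq:Ito-level-recursion}, together with \Cref{lem:mixed-rough-Young-closure}, the two third levels are
\[
 S^{S,(3)}_{s,t}=\int_s^t S^{S,(2)}_{s,r}\otimes\dd\mathbf x^S_r,
 \qquad
 S^{I,(3)}_{s,t}=\int_s^t S^{I,(2)}_{s,r}\otimes\dd\mathbf x^S_r
 -\frac12\int_s^t S^{I,(1)}_{s,r}\otimes\dd q_r .
\]
In each case the integrand is $x$-controlled with Gubinelli derivative $h\mapsto S^{\bullet,(1)}_{s,\cdot}\otimes h=x_{s,\cdot}\otimes h$. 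The first step is therefore to record, from \Cref{thm:all-order-generation}(ii), the identities $S^{I,(1)}_{s,\cdot}=x_{s,\cdot}$ and $S^{I,(2)}_{s,\cdot}=S^{S,(2)}_{s,\cdot}-\frac12 q_{s,\cdot}$. With these, the Young term in $S^{I,(3)}$ is exactly $-\frac12B^{xq}_{s,t}$.

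The second step is to subtract the two rough integrals. The rough integral is linear on the space of controlled pairs $(Y,Y')$. The pairs for $S^{S,(2)}$ and $S^{I,(2)}$ have the same derivative, so their difference is the controlled pair $(\frac12 q_{s,\cdot},0)$. This pair has zero Gubinelli derivative, and its remainder is the $2\eta$-H\"older increment field of $q$. I then need the following lemma: for a controlled pair $(Y,0)$ with $Y$ $2\eta$-H\"older, the compensated Riemann sums
\[
 \sum Y_{u}\otimes x_{u,v}+0\cdot\mathbb x^S_{u,v}
\]
reduce to Young sums. Since $2\eta+\eta>1$, these converge to the Young integral $\int Y\otimes\dd x$. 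Applying this to $Y=\frac12q_{s,\cdot}$ gives
\[
 S^{S,(3)}_{s,t}-S^{I,(3)}_{s,t}
 =\frac12\int_s^t q_{s,r}\otimes\dd x_r+\frac12\int_s^t x_{s,r}\otimes\dd q_r
 =\frac12B^{qx}_{s,t}+\frac12B^{xq}_{s,t},
\]
which is \eqref{eq:third-order-Ito-Stratonovich-state}.

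The Chen identities \eqref{eq:q-x-Chen}--\eqref{eq:x-q-Chen} follow from additivity of the Young integral over $[s,u]\cup[u,t]$. On $[u,t]$ one splits $q_{s,r}=q_{s,u}+q_{u,r}$, which gives
\[
 \int_u^t q_{s,r}\otimes\dd x_r=B^{qx}_{u,t}+q_{s,u}\otimes x_{u,t},
\]
and the $xq$ case is identical with $x_{s,r}=x_{s,u}+x_{u,r}$. Both Young integrals are well defined because $x$ is $\eta$-H\"older and $q$ is $2\eta$-H\"older in $\mathcal S_1\hookrightarrow H^{\otimes_2 2}$, with $3\eta>1$.

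I expect the main obstacle to be the second step. There one must verify carefully that the two second levels carry the same Gubinelli derivative, namely $x_{s,\cdot}\otimes h$ and not something involving $\mathbb x^S$; this is supplied by \Cref{lem:mixed-rough-Young-closure}. One must also check that the rough integral of a zero-derivative controlled path coincides with the Young integral. The latter holds because the second-order compensation term vanishes identically and the sewing limit is unique.
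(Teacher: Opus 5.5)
Your proposal is correct and follows essentially the same route as the paper. The paper also writes $S^{S,(3)}$ as the rough integral of $S^{S,(2)}=S^{I,(2)}+\frac12 q$ against $\mathbf x^S$ and splits it by linearity into the It\^o rough term plus $\frac12 B^{qx}$. It compares this with the It\^o recursion, whose Young term is $-\frac12 B^{xq}$, and gets the Chen identities by splitting the Young integrals at $u$. Your explicit check that $(\frac12 q_{s,\cdot},0)$ is a controlled pair with zero Gubinelli derivative, so its rough integral is the Young integral, makes precise a step the paper uses only implicitly.
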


\begin{proof}
At degree three, \eqref{eq:all-order-Ito-equation} gives
\[
 S^{I,(3)}_{s,t}
 =\int_s^t S^{I,(2)}_{s,u}\otimes\circ\dd x_u
  -\frac12\int_s^t x_{s,u}\otimes\dd q_u.
\]
Since $S^{S,(2)}=S^{I,(2)}+q/2$,
\[
 S^{S,(3)}_{s,t}
 =\int_s^t S^{I,(2)}_{s,u}\otimes\circ\dd x_u
  +\frac12\int_s^t q_{s,u}\otimes\dd x_u,
\]
which proves \eqref{eq:third-order-Ito-Stratonovich-state}.  Splitting either
Young integral at $u$ and using additivity of $x$ and $q$ proves
\eqref{eq:q-x-Chen}--\eqref{eq:x-q-Chen}.
\end{proof}

\begin{remark}[The threshold $\eta=1/3$]
\label{rem:last-primitive-level}
For $1/3<\eta<1/2$, every fixed higher level is a continuous readout of the
whole step-two path, not of one isolated increment $(x_{s,t},A_{s,t})$.  At
or below $1/3$, step-two sewing fails on the full H\"older class, so a
degree-three enhancement or selecting structure is generally required.  This
concerns universal sufficiency, not every individual path.
\end{remark}

\begin{corollary}[All-order generation from the least state]
\label{cor:all-order-minimality}
Let $\tau=(\eta;\mathcal S_2,\mathcal S_1)$ be accessible, write
$\operatorname{ch}_\tau(z)=(x(z),A(z),q(z))$, and put
\[
 \mathsf Q_\eta
 :=C_0^{0,2\eta}\bigl([0,T];\mathcal S_1(H)_{\rm sa}\bigr)
\]
with its natural truncations.  The projective
families
\[
 \mathbf S^{S,\infty}:=(\mathbf S^{S,N})_{N\ge2},
 \qquad
 \mathbf S^{I,\infty}:=(\mathbf S^{I,N})_{N\ge2},
\]
endowed with the projective-limit topologies generated by their finite-degree
Hilbert-tensor projections, are stable-causal readouts of $Z_\tau$.

\textup{(i) Geometric quotient.}
Let $\pi_{\rm geom}(z):=(x(z),A(z))$, with the image topology and induced
truncations.  The geometric tower factors through $\pi_{\rm geom}$ and is a
lossless representation of precisely this quotient.  Its degree-one and
degree-two coordinates give the stable-causal decoder
\[
 x=\pi_1\mathbf S^{S,\infty},
 \qquad
 A=\Anti\pi_2\mathbf S^{S,\infty}.
\]
Consequently a causal source state $S$ realizes the geometric tower if and only
if $\pi_{\rm geom}(Z_\tau)\preceqsc S$.  The covariance path is not determined
by this tower.

\textup{(ii) Lossless augmented towers.}
For $\bullet\in\{S,I\}$, define
\[
 \mathcal T^\bullet(z)
 :=\bigl(\mathbf S^{\bullet,\infty}(z),q(z)\bigr)
\]
and equip its image with the topology inherited from the product of the
projective tower space and $\mathsf Q_\eta$.  Then $\mathcal T^\bullet$ is a
stable-causal homeomorphism onto its image.  A stable-causal decoder is
\[
 \mathcal D^\bullet(\mathbf s,q)
 :=\operatorname{ch}_\tau^{-1}
   \bigl(\pi_1\mathbf s,\Anti\pi_2\mathbf s,q\bigr),
 \qquad
 \mathcal D^\bullet\mathcal T^\bullet=I.
\]
Thus both augmented towers are lossless representations of the complete
state.

\textup{(iii) Algebraic recovery from the It\^o tower.}
The unaugmented It\^o tower determines $q$ algebraically through
\[
 q=x^{\otimes2}-\pi_2\mathbf S^{I,\infty}
   -\bigl(\pi_2\mathbf S^{I,\infty}\bigr)^*.
\]
In infinite dimension this formula does not in general define a continuous
decoder from the standard projective Hilbert-tensor topology to the
$\mathcal S_1$ state topology.  The explicit $q$ coordinate in
$\mathcal T^I$ retains this trace-class topology.  It may be omitted when the
declared covariance range has equivalent $\mathcal S_2$ and $\mathcal S_1$
topologies, in particular when $H$ is finite dimensional.

\textup{(iv) Least-state consequence.}
For every causal source state $S$ and either $\bullet\in\{S,I\}$,
\[
 S\text{ jointly realizes }\mathcal T^\bullet(Z_\tau)
 \quad\Longleftrightarrow\quad
 Z_\tau\preceqsc S.
\]
Hence $Z_\tau$ remains the unique least sufficient state for every response
program containing either augmented tower and whose remaining responses are
stable-causal readouts of $Z_\tau$.
\end{corollary}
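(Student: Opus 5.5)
The plan is to reduce every clause to \Cref{thm:all-order-generation}, the chart isomorphism \Cref{prop:intrinsic-chart-equivalence}, and the lossless-chart machinery of \Cref{prop:lossless-chart-factorization}.

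\emph{Stable causality of the projective towers.} By \Cref{thm:all-order-generation}\textup{(iii)}, each finite-degree map $z\mapsto\mathbf S^{\bullet,N}(z)$ is locally Lipschitz on bounded sets and commutes with deterministic stopping. A projective-limit topology is the initial topology of its finite-degree projections, and truncation compatibility identifies these projections with $\mathbf S^{\bullet,N}$. Hence $\mathbf S^{\bullet,\infty}$ is continuous. Stopping commutes levelwise, so the readout is stable-causal on the state space, and it is a readout of $Z_\tau$ by composition.

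\emph{Part (i).} The geometric equation \eqref{eq:all-order-geometric-equation} is driven only by $\mathbf x^S=(1,x,\tfrac12x^{\otimes2}+A)$, so $\mathbf S^{S,\infty}$ factors through $\pi_{\rm geom}$. The map on $\pi_{\rm geom}$-values is continuous in the image topology because the rough-path metric is $d_\eta$ restricted to the $(x,A)$ coordinates. Conversely, the decoder $(\pi_1,\Anti\pi_2)$ is continuous from the projective topology, since projection to degree two is continuous. It commutes with stopping, which gives losslessness of the quotient. The factorization criterion then follows from \Cref{prop:lossless-chart-factorization}\textup{(i)} applied to the state $\pi_{\rm geom}(Z_\tau)$. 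For non-determination of $q$, compare the states $(0,0,0)$ and a pure-covariance ramp from \Cref{lem:operational-needles}. Their geometric towers coincide, being the trivial rough path, while their $q$ coordinates differ.

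\emph{Parts (ii) and (iv).} The decoder $\mathcal D^\bullet$ is a composite of continuous projections with the chart inverse $\operatorname{ch}_\tau^{-1}$. The chart inverse is an isometry by \Cref{prop:intrinsic-chart-equivalence}, which requires only the $\mathcal S_2$ topology on $A$ and the $\mathcal S_1$ topology on $q$. Both are supplied by the degree-two projection and the explicit $\mathsf Q_\eta$ factor. For $\bullet=I$, note that $\Anti\pi_2\mathbf S^{I}=\Anti(\mathbb x^S-\tfrac12q)=A$. Hence $\mathcal D^\bullet\mathcal T^\bullet=I$, and \Cref{prop:lossless-chart-factorization} gives a homeomorphism onto a closed retract together with part~(iv). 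The final leastness statement follows by composing with the remaining readouts, as in \Cref{cor:one-test-second-order-completeness}.

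\emph{Part (iii).} The algebraic identity is $\pi_2\mathbf S^I=\tfrac12x^{\otimes2}+A-\tfrac12q$ together with $A^*=-A$. I expect the main obstacle to be the discontinuity claim, which is handled by transplanting \Cref{cor:split-topology-necessary}. Take the pure-covariance states $z_n$ with $q^{(n)}_t=T^{2\eta-1}tn^{-1}P_n$. Their It\^o towers are built from $x=0$ and the Young terms in $q$, and every level is controlled by $\|C_n\|_2=n^{-1/2}$ via \Cref{lem:mixed-rough-Young-closure} in the Hilbert-tensor norm. Since level $2k$ is a multiple of $q$-iterated integrals whose Hilbert--Schmidt norm scales like $\|C_n\|_2^{k}$, all finite-degree projections tend to $0$, while $d(z_n,0)=1$ in the $\mathcal S_1$ topology.

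In finite dimension the $\mathcal S_1$ and $\mathcal S_2$ norms are equivalent, and the same holds whenever the declared covariance range carries equivalent topologies. In that case the displayed formula is a continuous decoder, so the $q$ coordinate may be omitted.
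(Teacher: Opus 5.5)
Your proposal is correct and follows essentially the same route as the paper. You take the readout and truncation properties from \Cref{thm:all-order-generation}, and you recover $x$ and $A=\Anti\pi_2\mathbf S^{\bullet,\infty}$, with the self-adjoint $q$ removed by antisymmetrization. You then use the lossless-chart factorization of \Cref{prop:lossless-chart-factorization} for (ii) and (iv), and for the discontinuity in (iii) a pure-covariance sequence with $\|C_n\|_1=1$ and $\|C_n\|_2=n^{-1/2}$. The paper states your scaling claim explicitly as $\pi_{2k}\mathbf S^{I,\infty}_{s,t}(z_n)=\frac{(-1/2)^k}{k!}(q^n_{s,t})^{\otimes k}$ with vanishing odd levels, which is what the mixed recursion yields when the geometric driver is trivial.
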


\begin{proof}
The stable-causal readout assertions and degree-truncation compatibility are
\Cref{thm:all-order-generation}.  For the geometric tower,
\[
 \pi_1\mathbf S^{S,\infty}=x,
 \qquad
 \pi_2\mathbf S^{S,\infty}=\frac12x^{\otimes2}+A.
\]
Thus its antisymmetric degree-two coordinate recovers $A$, while its defining
equation is independent of $q$.  This proves~\textup{(i)}.

For the It\^o tower,
\[
 \pi_1\mathbf S^{I,\infty}=x,
 \qquad
 \pi_2\mathbf S^{I,\infty}
 =\frac12x^{\otimes2}+A-\frac12q.
\]
Since $q$ is self-adjoint, antisymmetrization again recovers $A$, and the
symmetric defect gives the formula in~\textup{(iii)}.  Retaining $q$ in its
declared $\mathsf Q_\eta$ topology makes $\mathcal D^\bullet$ continuous; the
decoder commutes with truncations and is a left inverse of
$\mathcal T^\bullet$.  This proves~\textup{(ii)}.

To see the topological obstruction in infinite dimension, choose an
orthonormal sequence $(e_j)$ and set
\[
 B_n:=\frac1n\sum_{j=1}^n e_j\otimes e_j,
 \qquad
 q_t^n:=\frac tT B_n,
 \qquad
 z_n:=\operatorname{ch}_\tau^{-1}(0,0,q^n).
\]
Then
\[
 \|B_n\|_{\mathcal S_1}=1,
 \qquad
 \|B_n\|_{\mathcal S_2}=n^{-1/2}\longrightarrow0.
\]
Thus $z_n$ does not converge to the zero state in the declared
$\mathcal S_1$ topology.  Its geometric tower is trivial, whereas the odd
It\^o levels vanish and, for every fixed $k\ge1$,
\[
 \pi_{2k}\mathbf S^{I,\infty}_{s,t}(z_n)
 =\frac{(-1/2)^k}{k!}\bigl(q_{s,t}^n\bigr)^{\otimes k}.
\]
Every fixed It\^o level therefore converges to the trivial level in its Hilbert
tensor norm.  The unaugmented It\^o towers converge projectively to the
identity, so no continuous inverse into the trace-class state topology exists
in general.  This proves the qualification in~\textup{(iii)}.

Finally, the encoder--decoder pairs
$(\mathcal T^\bullet,\mathcal D^\bullet)$ identify joint realization of either
augmented tower with stable-causal factorization through $Z_\tau$.
\Cref{prop:lossless-chart-factorization,thm:least-state} gives~\textup{(iv)}.
\end{proof}

\subsection{Coherent flow propagation}

\label{sec:dynamical-propagation}

The deterministic rough/Young map propagates $(\mathbf X^S,Q)$ into
$E=\mathbb R^m$.  A lift-to-cocycle viewpoint
\cite{BailleulRiedelScheutzow17} complements universal measurable SDE
representations \cite{Kallenberg96,PrzybylowiczEtAl24}.  Here the flow is a
downstream state response: compact capacity cores support one jointly
measurable, parameter-continuous cocycle, identified modelwise with the
classical It\^o flow.

\paragraph{Rough-path metric.}
For a geometric step-two $p$-rough path
$\mathbf z=(z,\mathbb z)$, use the standard homogeneous radius
\[
 \|\mathbf z\|_{p\text{-var}}
 :=\|z\|_{p\text{-var}}
   +\|\mathbb z\|_{(p/2)\text{-var}}^{1/2}
\]
and homogeneous distance
\begin{equation}\label{eq:homogeneous-pvar-distance}
 d_{p\text{-var}}^{\rm hom}(\mathbf z,\widetilde{\mathbf z})
 :=\|z-\widetilde z\|_{p\text{-var}}
   +\|\mathbb z-\widetilde{\mathbb z}\|_{(p/2)\text{-var}}^{1/2}.
\end{equation}
State estimates use the unrooted second level; flow estimates use this
homogeneous metric.

\begin{lemma}[H\"older/BV transfer to the mixed-driver topology]
\label{lem:homogeneous-holder-variation-transfer}
Let $1/3<\alpha<1/2$ and $p>1/\alpha$.  For any two continuous step-two
multiplicative functionals,
\begin{equation}\label{eq:homogeneous-holder-variation-transfer}
 d_{p\text{-var}}^{\rm hom}(\mathbf x,\widetilde{\mathbf x})
 \le C_{\alpha,p,T}\left(
  \rho_\alpha(\mathbf x,\widetilde{\mathbf x})
  +\rho_\alpha(\mathbf x,\widetilde{\mathbf x})^{1/2}
 \right).
\end{equation}
Consequently an $O(\eps^\kappa)$ estimate in the unrooted H\"older distance
implies an $O(\eps^{\kappa/2})$ estimate in the homogeneous rough-path
metric.

If, in addition, $q$ and $\widetilde q$ are Banach-valued paths of finite
variation, then for every $\nu>1$,
\begin{equation}\label{eq:uniform-BV-to-nuvar-transfer}
 \|q-\widetilde q\|_{\nu\text{-var}}
 \le
 \bigl(2\|q-\widetilde q\|_\infty\bigr)^{1-1/\nu}
 \bigl(\|q\|_{1\text{-var}}+
       \|\widetilde q\|_{1\text{-var}}\bigr)^{1/\nu}.
\end{equation}
Consequently, on a common total-variation ball, convergence in the unrooted
$\alpha$-H\"older rough-path distance together with uniform convergence of
the finite-variation channel implies convergence for
\[
 d_{p\text{-var}}^{\rm hom}(\mathbf x,\widetilde{\mathbf x})
 +\|q-\widetilde q\|_{\nu\text{-var}}.
\]
Alternatively, if $q$ and $\widetilde q$ are $2\eta$-H\"older and
$\nu>1/(2\eta)$, then
\begin{equation}\label{eq:holder-to-nuvar-transfer}
 \|q-\widetilde q\|_{\nu\text{-var}}
 \le T^{2\eta}[q-\widetilde q]_{2\eta}.
\end{equation}
The Young restrictions enter only at the application: one takes
$\nu\le p/2$ in the triangular tensor equation and
$\nu<p/(p-1)$ in the finite-output mixed RDE.
\end{lemma}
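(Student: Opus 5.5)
The proof has three parts: the H\"older-to-variation transfer \eqref{eq:homogeneous-holder-variation-transfer}, the interpolation \eqref{eq:uniform-BV-to-nuvar-transfer}, and the H\"older bound \eqref{eq:holder-to-nuvar-transfer}. Each reduces to an elementary estimate on a single partition followed by a supremum over partitions.

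For \eqref{eq:homogeneous-holder-variation-transfer}, fix a partition $(t_i)$ of $[0,T]$ and write $\rho:=\rho_\alpha(\mathbf x,\widetilde{\mathbf x})$.

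\begin{enumerate}[label=\textup{(\arabic*)},leftmargin=2.5em]
\item For the first level, the H\"older bound gives $|(x-\widetilde x)_{t_i,t_{i+1}}|\le\rho|t_{i+1}-t_i|^\alpha$.
\item Since $\alpha p>1$, $\sum_i|t_{i+1}-t_i|^{\alpha p}\le T^{\alpha p-1}\sum_i|t_{i+1}-t_i|=T^{\alpha p}$. Hence $\|x-\widetilde x\|_{p\text{-var}}\le T^\alpha\rho$.
\item For the second level, $|(\mathbb x-\widetilde{\mathbb x})_{t_i,t_{i+1}}|\le\rho|t_{i+1}-t_i|^{2\alpha}$ and $2\alpha\cdot p/2=\alpha p>1$. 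The same computation gives $\|\mathbb x-\widetilde{\mathbb x}\|_{(p/2)\text{-var}}\le T^{2\alpha}\rho$.
\item Taking the square root yields the $\rho^{1/2}$ term, which proves \eqref{eq:homogeneous-holder-variation-transfer}.
\item The rate consequence follows because $\rho\le C\eps^\kappa$ with $\eps\le T$ gives $\rho+\rho^{1/2}\le C'\eps^{\kappa/2}$.
\end{enumerate}

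For \eqref{eq:uniform-BV-to-nuvar-transfer}, put $f=q-\widetilde q$ and fix a partition.

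\begin{enumerate}[label=\textup{(\arabic*)},leftmargin=2.5em]
\item Each increment satisfies $|f_{t_i,t_{i+1}}|^\nu\le(2\|f\|_\infty)^{\nu-1}|f_{t_i,t_{i+1}}|$.
\item Summing over $i$ gives $\sum_i|f_{t_i,t_{i+1}}|^\nu\le(2\|f\|_\infty)^{\nu-1}\|f\|_{1\text{-var}}$.
\item By the triangle inequality, $\|f\|_{1\text{-var}}\le\|q\|_{1\text{-var}}+\|\widetilde q\|_{1\text{-var}}$.
\item Taking $\nu$-th roots and the supremum over partitions gives the bound.
\end{enumerate}

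The consequence on a common total-variation ball then follows by adding this estimate to the first one.

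For \eqref{eq:holder-to-nuvar-transfer}, each increment satisfies $|f_{t_i,t_{i+1}}|^\nu\le[f]_{2\eta}^\nu|t_{i+1}-t_i|^{2\eta\nu}$. Since $2\eta\nu>1$, the sum is at most $[f]_{2\eta}^\nu T^{2\eta\nu}$, as in step (2) of the first part. This gives the constant $T^{2\eta}$.

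No step is a genuine obstacle. The only points requiring care are the exponent conditions $\alpha p>1$ and $2\eta\nu>1$, which make the partition sums collapse to a power of $T$, and the homogeneous square root on the second level. The closing remark on the Young restrictions ($\nu\le p/2$ and $\nu<p/(p-1)$) is not part of the claim proved here; it only records where the estimate is applied.
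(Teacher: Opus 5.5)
Your proof is correct and follows the paper's argument essentially line by line. Both fix a single partition, use $\alpha p>1$ (respectively $2\eta\nu>1$) to bound $\sum_i|t_{i+1}-t_i|^{\alpha p}$ by $T^{\alpha p}$, take the homogeneous square root on the second level, and sum the interpolation bound $\|f_{u,v}\|^\nu\le(2\|f\|_\infty)^{\nu-1}\|f_{u,v}\|$ against $\|q\|_{1\text{-var}}+\|\widetilde q\|_{1\text{-var}}$, yielding the same constants $C_{\alpha,p,T}=T^\alpha$ and $T^{2\eta}$.
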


\begin{proof}
For every partition, the first-level H\"older estimate gives
\[
 \|x-\widetilde x\|_{p\text{-var}}
 \le T^\alpha[x-\widetilde x]_\alpha.
\]
Applying the partition estimate to the second level gives
\[
 \|\mathbb x-\widetilde{\mathbb x}\|_{(p/2)\text{-var}}
 \le T^{2\alpha}
      [\mathbb x-\widetilde{\mathbb x}]_{2\alpha}.
\]
Taking the square root in the second term proves
\eqref{eq:homogeneous-holder-variation-transfer}.
For a partition $\Pi$, put $f=q-\widetilde q$.  Then
\[
 \sum_{[u,v]\in\Pi}\|f_{u,v}\|^\nu
 \le (2\|f\|_\infty)^{\nu-1}
      \bigl(\|q\|_{1\text{-var}}+\|\widetilde q\|_{1\text{-var}}\bigr).
\]
Taking the supremum over $\Pi$ and then the $\nu$-th root proves
\eqref{eq:uniform-BV-to-nuvar-transfer}.  If $f=q-\widetilde q$ is
$2\eta$-H\"older, then
\[
 \sum_{[u,v]\in\Pi}\|f_{u,v}\|^\nu
 \le [f]_{2\eta}^\nu
      \sum_{[u,v]\in\Pi}|v-u|^{2\eta\nu}
 \le [f]_{2\eta}^\nu T^{2\eta\nu},
\]
which proves \eqref{eq:holder-to-nuvar-transfer} and the final assertions.
\end{proof}

\subsubsection{Rough Stratonovich and bracket-corrected It\^o representations}

Throughout this subsubsection, fix a nonempty martingale family
$\mathcal P_{\rm sp}\subset\mathfrak S_{\Lambda,0}^{0,T}(H)$ with the
uniform spatial trace-tightness profile $(P_m,\vartheta_m)$ of
\Cref{def:uniform-spatial-trace-tightness}.  The common perfection domain is
$G_*^{\boldsymbol\vartheta}$ from
\eqref{eq:spatial-profile-full-domain}.  Thus the stochastic estimates below
use the trace clock, while the spatial profile supplies common compact
perfection; domination by one trace-class operator is not assumed.

Fix $m\ge1$ and set $E=\R^m$.  The driver is Hilbert valued and the solution
space is finite dimensional.  Write
\[
 V:E\to\mathcal L(H,E),
 \qquad V_0:E\to E,
\]
and, for $h,k\in H$, define on rank-one tensors
\begin{equation}\label{eq:finite-output-second-coefficient}
 \mathcal W_V(y)(h\otimes k)
 :=D(V(\cdot)k)(y)[V(y)h].
\end{equation}

\begin{lemma}[Automatic Hilbert--Schmidt compatibility]
\label{lem:finite-output-S2-compatibility}
If $V\in\operatorname{Lip}^\rho(E;\mathcal L(H,E))$ with $\rho>1$, then
\eqref{eq:finite-output-second-coefficient} has a unique extension
\[
 \widetilde{\mathcal W}_V(y)\in
 \mathcal L(\mathcal S_2(H),E),
\]
and
\[
 y\longmapsto\widetilde{\mathcal W}_V(y)
 \in\operatorname{Lip}^{\rho-1}
 \bigl(E;\mathcal L(\mathcal S_2(H),E)\bigr).
\]
Its restriction to $\mathcal S_1(H)$ defines the trace contraction
\[
 \langle\mathcal W_V(y),q\rangle_{\rm tr}
 :=\widetilde{\mathcal W}_V(y)q,
 \qquad q\in\mathcal S_1(H).
\]
\end{lemma}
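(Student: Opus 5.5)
The plan is to build the extension from the finite-dimensional structure of $E$.

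\emph{Explicit formula.} Fix a basis $(f_i)_{i\le m}$ of $E=\mathbb R^m$ and write $V(y)h=\sum_i\langle v_i(y),h\rangle f_i$, where $v_i:E\to H$ is the Riesz representative of the $i$-th coordinate of $V(y)$. Since $V\in\operatorname{Lip}^\rho$ with $\rho>1$, each $v_i$ is $C^1$, and $Dv_i\in\operatorname{Lip}^{\rho-1}(E;\mathcal L(E,H))$. Expanding the derivative in the definition gives
\[
 \mathcal W_V(y)(h\otimes k)
 =\sum_{i,j}\langle v_j(y),h\rangle\,\langle Dv_i(y)f_j,k\rangle\,f_i .
\]
This suggests the candidate
\[
 \widetilde{\mathcal W}_V(y)T:=\sum_{i,j}\langle T,\,v_j(y)\otimes Dv_i(y)f_j\rangle_{\mathcal S_2}\,f_i ,
\]
where the Hilbert--Schmidt pairing is taken with the rank-one tensor built from the two vectors $v_j(y)$ and $Dv_i(y)f_j$. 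With the paper's rank-one convention, one checks that $\langle h\otimes k,\,a\otimes b\rangle_{\mathcal S_2}=\langle h,a\rangle\langle k,b\rangle$, so the candidate agrees with $\mathcal W_V(y)$ on rank-one tensors. Each coefficient is a finite sum of pairings with fixed elements of $\mathcal S_2$, so the candidate is bounded on $\mathcal S_2$ with
\[
 \|\widetilde{\mathcal W}_V(y)\|_{\mathcal L(\mathcal S_2,E)}\le C_m\max_{i,j}\|v_j(y)\|\,\|Dv_i(y)\|,
\]
which is finite and locally bounded. Uniqueness follows because finite-rank operators are dense in $\mathcal S_2$.

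\emph{Regularity.} The map $y\mapsto\widetilde{\mathcal W}_V(y)$ is a finite sum of products of the bilinear map $(a,b)\mapsto\langle\cdot,a\otimes b\rangle$, which is continuous from $H\times H$ into $\mathcal L(\mathcal S_2,\mathbb R)$ with norm $\|a\|\|b\|$. The factors are $v_j\in\operatorname{Lip}^\rho\subset\operatorname{Lip}^{\rho-1}$ and $y\mapsto Dv_i(y)f_j\in\operatorname{Lip}^{\rho-1}$. Products of bounded $\operatorname{Lip}^{\rho-1}$ maps under a continuous bilinear map stay in $\operatorname{Lip}^{\rho-1}$, with constants depending on $m$ and $\|V\|_{\operatorname{Lip}^\rho}$. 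This proves the stated $\operatorname{Lip}^{\rho-1}$ membership.

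\emph{Trace contraction.} The inclusion $\mathcal S_1\hookrightarrow\mathcal S_2$ is contractive, so the restriction to $\mathcal S_1$ is well defined and bounded, which gives the trace contraction.

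The main point to handle carefully is the Stein-type convention for $\operatorname{Lip}^\rho$ when $\rho$ is not an integer, so that the product rule for $\operatorname{Lip}^{\rho-1}$ applies. I would cite the standard fact that this class is closed under products composed with continuous bilinear maps. One should also confirm that the paper's orientation of $h\otimes k$ matches the pairing; if it is reversed, the two slots are simply swapped.
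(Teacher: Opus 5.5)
Your proposal is correct and follows the paper's proof essentially line by line. Both use the coordinate representatives $v_i=V^*f_i$, the explicit pairing $\sum_{i,j}\langle T,v_j(y)\otimes Dv_i(y)f_j\rangle_{\mathcal S_2}f_i$, uniqueness by density of finite-rank operators in $\mathcal S_2(H)$, and $\operatorname{Lip}^{\rho-1}$ regularity from the $\operatorname{Lip}^\rho$ bounds on $V$. Your orientation check $\langle h\otimes k,a\otimes b\rangle_{\mathcal S_2}=\langle h,a\rangle\langle k,b\rangle$ is right for the paper's rank-one convention, so no slot swap is needed.
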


\begin{proof}
Let $(e_i)_{i=1}^m$ be the standard basis and write
$V(y)h=(\langle v_i(y),h\rangle_H)_{i=1}^m$, where
$v_i(y)=V(y)^*e_i$.  For $T\in\mathcal S_2(H)$ set
\[
 [\widetilde{\mathcal W}_V(y)T]_i
 :=\sum_{a=1}^m
   \langle T,v_a(y)\otimes D_av_i(y)\rangle_{\mathcal S_2}.
\]
On rank-one tensors this is exactly
\eqref{eq:finite-output-second-coefficient}.  The finite sum gives the
Hilbert--Schmidt bound, and the asserted regularity follows from the
$\operatorname{Lip}^\rho$ bounds on $V$.  Uniqueness follows from finite-rank
density in $\mathcal S_2(H)$.
\end{proof}

\begin{proposition}[Finite-dimensional rough/finite-variation stability]
\label{prop:finite-output-mixed-stability}
Fix $p\in(2,3)$ and $1<\nu<p/(p-1)$.  Let $V_0$ be bounded globally
Lipschitz.
\begin{enumerate}[label=\textup{(\roman*)},leftmargin=2.4em]
\item If $V\in\operatorname{Lip}^\rho$ for some $\rho>p$, the solution map of
\[
 \dd Z=V_0(Z)\,\dd t+V(Z)\,\dd\mathbf z
\]
is locally Lipschitz on bounded geometric $p$-rough-path balls for
$d_{p\text{-var}}^{\rm hom}$.
\item If $V\in\operatorname{Lip}^{\rho+1}$ for some $\rho>p$, the solution map
of
\[
 \dd Z=V_0(Z)\,\dd t+V(Z)\,\dd\mathbf z
 -\frac12\langle\mathcal W_V(Z),\dd q\rangle_{\rm tr}
\]
is locally Lipschitz on bounded sets for
\[
 d_{p\text{-var}}^{\rm hom}(\mathbf z,\widetilde{\mathbf z})
 +\|q-\widetilde q\|_{\nu\text{-var};\mathcal S_1}.
\]
Both solution maps are causal under terminal restriction and obey the usual
subinterval-restriction and concatenation laws.  They do not commute with
stopping of $(\mathbf z,q)$ alone when $V_0\ne0$; exact stop preservation is
recovered by adjoining the finite-variation clock channel $\ell_t=t$, with
its canonical Young augmentation, and stopping $(\ell,\mathbf z,q)$
simultaneously.  More precisely,
for every driver radius $R$ and compact initial set $K\subset\R^m$, there is
$C_{R,K}$ such that solutions $Z,\widetilde Z$ starting in $K$ satisfy
\begin{equation}\label{eq:finite-output-mixed-stability}
 \|Z-\widetilde Z\|_\infty
 \le C_{R,K}\left(
 d_{p\text{-var}}^{\rm hom}(\mathbf z,\widetilde{\mathbf z})
 +\|q-\widetilde q\|_{\nu\text{-var};\mathcal S_1}
 +|Z_0-\widetilde Z_0|\right),
\end{equation}
whenever the two rough drivers and the two finite-variation drivers have
radius at most $R$.  In part~\textup{(i)}, the $q$-term is omitted.
\end{enumerate}
\end{proposition}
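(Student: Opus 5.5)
The plan is to reduce both parts to the standard local Lipschitz theory for finite-dimensional rough differential equations with an additional Young drift. The drift channels are $V_0(Z)\,\dd t$ and, in part~\textup{(ii)}, $\langle\mathcal W_V(Z),\dd q\rangle_{\rm tr}$. Since the output space $E=\R^m$ is finite dimensional, a Hilbert-valued driver causes no structural difficulty. The only infinite-dimensional input is that the coefficient acts continuously on the driver space. For part~\textup{(i)} this is $V(y)\in\mathcal L(H,E)$ with second-level action through $\mathcal S_2(H)=H\otimes_2H$. For part~\textup{(ii)} it is \Cref{lem:finite-output-S2-compatibility}, which places $\widetilde{\mathcal W}_V\in\operatorname{Lip}^{\rho}(E;\mathcal L(\mathcal S_2,E))$ when $V\in\operatorname{Lip}^{\rho+1}$. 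Its trace-class restriction is bounded because $\mathcal S_1\hookrightarrow\mathcal S_2$ contractively.

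For part~\textup{(i)}, I treat $V_0\,\dd t$ as a Young/Riemann drift. The clean route is to augment the driver to $(\ell,\mathbf z)$ with $\ell_t=t$ and its canonical cross-integrals, which are Young because $1/p+1>1$. The augmented map $(\ell,\mathbf z)\mapsto$ lift is locally Lipschitz in $d^{\rm hom}_{p\text{-var}}$. I then apply the Davie/Friz--Victoir local Lipschitz estimate for RDEs with $\operatorname{Lip}^\rho$ fields, $\rho>p$, on bounded balls. Global existence without blow-up comes from boundedness of $V$ in $\operatorname{Lip}^\rho$ and from $V_0$ being bounded and Lipschitz. The Banach-space version of this estimate holds verbatim. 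For part~\textup{(ii)}, I augment further with the finite-variation channel $q\in C^{\nu\text{-var}}(\mathcal S_1)$. The condition $1/p+1/\nu>1$, that is $\nu<p/(p-1)$, makes every cross-integral between $z$ and $q$ a Young integral, so the joint driver $(\ell,\mathbf z,q)$ is a $p$-rough path in $\R\oplus H\oplus\mathcal S_1$. Its lift depends locally Lipschitz on $d^{\rm hom}_{p\text{-var}}+\|q-\widetilde q\|_{\nu\text{-var}}$ by the standard Young-pairing estimate. The combined vector field $(V_0,V,-\tfrac12\widetilde{\mathcal W}_V)$ is then $\operatorname{Lip}^{\rho}$ with $\rho>p$. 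This is where the extra derivative on $V$ is used. Applying the same RDE stability theorem yields \eqref{eq:finite-output-mixed-stability}, with the constant depending on $R$ and on $K$ through the bounded initial set.

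The structural identities come from uniqueness. Causality under terminal restriction, subinterval restriction, and concatenation follow from the flow property of unique solutions. The failure of stop-commutation when $V_0\ne0$ is immediate, because stopping $(\mathbf z,q)$ leaves the drift $V_0(Z)\,\dd t$ running. Stopping $\ell$ together with $(\mathbf z,q)$ freezes every driver, so the stopped solution is constant afterwards, which gives exact stop preservation.

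I expect the main obstacle to be the joint lift in part~\textup{(ii)}: the mixed rough--Young cross-terms between $z$ and $q$ in the homogeneous metric. The target is a local Lipschitz bound for these cross-terms. My first attempt is the Young--Loève estimate $\|\int z\otimes\dd q\|_{\text{var}}\lesssim\|z\|_{p\text{-var}}\|q\|_{\nu\text{-var}}$, applied to differences by bilinearity. The resulting $p/2$- or $1$-variation bounds should then be absorbed, after square roots, into the homogeneous radius on bounded balls.
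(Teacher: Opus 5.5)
Your reduction breaks where you feed the augmented driver $(\ell,\mathbf z)$, respectively $(\ell,\mathbf z,q)$, into the standard Davie/Friz--Victoir local Lipschitz theorem. That theorem treats all driver components alike. It needs \emph{every} vector field integrated against the joint $p$-rough path to be $\operatorname{Lip}^\gamma$ with $\gamma>p$; for $p\in(2,3)$ this means bounded $C^2$ with H\"older second derivative. Its second-order Davie term contains $DV_0\,V$ and $DV\,V_0$, paired with the cross-integrals $\int z\otimes\dd\ell$ and $\int\ell\otimes\dd z$. The statement only assumes $V_0$ bounded and globally Lipschitz. Your sentence that the combined field $(V_0,V,-\tfrac12\widetilde{\mathcal W}_V)$ is $\operatorname{Lip}^\rho$ with $\rho>p$ is therefore false under the hypotheses. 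It holds for $V$ and, by \Cref{lem:finite-output-S2-compatibility}, for $\widetilde{\mathcal W}_V$, but not for $V_0$. The same defect already affects part~(i). Adjoining $\ell_t=t$ is fine for the stop-preservation discussion. It cannot, however, be the mechanism that proves the estimate at this regularity of $V_0$.

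The missing idea is to keep $V_0(Z)\,\dd t$ out of the rough/Young lift and treat it as an ordinary drift; this is what the paper's proof does. It runs the controlled-path fixed point directly for the mixed equation, with control $\omega(s,t)=\|\mathbf z\|_{p\text{-var};[s,t]}^p+\|\widetilde{\mathbf z}\|_{p\text{-var};[s,t]}^p+\|q\|_{\nu\text{-var};[s,t]}^\nu+\|\widetilde q\|_{\nu\text{-var};[s,t]}^\nu+(t-s)$. The three terms are handled as follows.
The $\mathbf z$-term is a controlled rough integral.
The $q$-term is a Young integral bounded by Young--Lo\`eve; this is where $1/p+1/\nu>1$ and $V\in\operatorname{Lip}^{\rho+1}$ are used.
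The time integral goes into the controlled remainder, with $\bigl|\int_s^t(V_0(Z_r)-V_0(\widetilde Z_r))\,\dd r\bigr|\le L(t-s)\|Z-\widetilde Z\|_{\infty;[s,t]}$, so only the Lipschitz constant of $V_0$ enters.
One gets the local difference bound on intervals where $\omega$ is below a threshold. A greedy partition whose number of pieces is bounded on the driver ball then covers $[0,T]$, and iteration gives the estimate.

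If you keep a joint lift for the pair $(\mathbf z,q)$ alone, route the Lipschitz conclusion through the inhomogeneous distance of that lift. On the ball, $\|\mathbb z-\widetilde{\mathbb z}\|_{(p/2)\text{-var}}\le2R\,d^{\rm hom}_{p\text{-var}}(\mathbf z,\widetilde{\mathbf z})$, and the bilinear Young cross-terms must enter linearly rather than through square roots.
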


\begin{proof}
By \Cref{lem:finite-output-S2-compatibility}, the second coefficient extends
to the Hilbert tensor completion.  For two drivers set
\[
 \omega(s,t):=
 \|\mathbf z\|_{p\text{-var};[s,t]}^p
 +\|\widetilde{\mathbf z}\|_{p\text{-var};[s,t]}^p
 +\|q\|_{\nu\text{-var};[s,t]}^\nu
 +\|\widetilde q\|_{\nu\text{-var};[s,t]}^\nu
 +(t-s).
\]
On an interval where $\omega(s,t)$ is below a fixed threshold, the controlled
rough-path fixed-point estimate and the Young--Loeve inequality give
\[
 \|Z-\widetilde Z\|_{\mathcal D;[s,t]}
 \le C_R\left(
 |Z_s-\widetilde Z_s|
 +d_{p\text{-var};[s,t]}^{\rm hom}
    (\mathbf z,\widetilde{\mathbf z})
 +\|q-\widetilde q\|_{\nu\text{-var};[s,t]}
 \right),
\]
where $\mathcal D$ is the controlled norm.  Here $1/p+1/\nu>1$ is the Young
condition and $\operatorname{Lip}^{\rho+1}$ controls
$\widetilde{\mathcal W}_V$.  A greedy partition has uniformly bounded length
on a driver ball; iteration proves \eqref{eq:finite-output-mixed-stability}.
Taking $q=\widetilde q=0$ gives part~\textup{(i)}, while uniqueness gives
terminal causality, restriction, and concatenation; see
\cite{Lyons98,FrizHairer20,GrongNilssenSchmeding22}.
\end{proof}

\begin{proposition}[Common Hilbert-driven Stratonovich flow]
\label{thm:common-Stratonovich-flow}
Fix $p\in(1/\alpha,3)$.  Assume $V_0$ is bounded globally Lipschitz and
$V\in\operatorname{Lip}^\rho(\R^m;\mathcal L(H,\R^m))$ for some $\rho>p$.
For $x\in G_*^{\boldsymbol\vartheta}$, let $\Phi^S_{s,t}(x,y)$ solve
\begin{equation}\label{eq:common-Stratonovich-SDE}
 \dd Z_t=V_0(Z_t)\,\dd t+V(Z_t)\,\dd\mathbf X_t^S,
 \qquad Z_s=y.
\end{equation}
Then $\Phi^S$ is jointly Borel, corewise jointly continuous, raw causal in the
terminal time, and satisfies the flow and future-shift cocycle laws (with the
shift variable restricted to the remaining interval $0\le v\le T-s$).  Under
every $P\in\mathcal P_{\rm sp}$ each deterministic section is the classical Stratonovich
strong solution driven by the $H$-valued martingale $X$.
\end{proposition}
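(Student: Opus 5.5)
The construction is deterministic on the common perfection domain: on $G_*^{\boldsymbol\vartheta}$ the geometric lift $\mathbf X^S(x)$ is a fixed geometric rough path, and the flow is obtained by applying the rough-path solution map of \Cref{prop:finite-output-mixed-stability}\textup{(i)} pathwise. Off the domain we set $\Phi^S_{s,t}(x,y):=y$.

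\emph{Existence and corewise continuity.} On each core $\mathcal C_R^{\boldsymbol\vartheta}$, \Cref{thm:spatial-trace-tight-cores}\textup{(ii)} gives continuity of $x\mapsto\mathbf X^S(x)$ in the unrooted $\alpha$-H\"older distance $\rho_\alpha$, with a uniform bound on the rough radius. \Cref{lem:homogeneous-holder-variation-transfer}, used with $p>1/\alpha$, converts this into continuity in $d^{\rm hom}_{p\text{-var}}$ on a bounded $p$-variation ball. The stability estimate \eqref{eq:finite-output-mixed-stability}, applied on each subinterval $[s,T]$, then makes $(x,y)\mapsto\Phi^S_{s,\cdot}(x,y)$ Lipschitz on $\mathcal C_R^{\boldsymbol\vartheta}\times K$ for every compact $K$. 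Continuity in $(s,t)$ follows from the same estimate together with continuity of the solution in time and the flow property. Together these give joint continuity on each core.

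\emph{Joint Borel measurability.} The cores $\mathcal C_n^{\boldsymbol\vartheta}$ are compact, hence Borel, and they exhaust $G_*^{\boldsymbol\vartheta}$. On each of them $\Phi^S$ is jointly continuous, so $\Phi^S$ is Borel on $G_*^{\boldsymbol\vartheta}$. Since $G_*^{\boldsymbol\vartheta}$ is Borel, the base-point extension is jointly Borel.

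\emph{Algebraic laws.} Raw causality in the terminal time comes from terminal causality of the solution map combined with exact stopping of the lift in \eqref{eq:selector-stopping}: $\Phi^S_{s,t}(x,y)$ depends only on $\mathbf X^S|_{[s,t]}$, and this coincides with $\mathbf X^S(r_tx)|_{[s,t]}$. Stopping stability of $G_*^{\boldsymbol\vartheta}$ keeps $r_tx$ in the domain. The flow law $\Phi_{u,t}\circ\Phi_{s,u}=\Phi_{s,t}$ follows from uniqueness and concatenation of solutions. For the shift cocycle, \eqref{eq:shift-lift} identifies $\mathbf X^S(\theta_sx)$ on $[0,T-s]$ with the time-shifted lift of $x$, and the ODE is autonomous, so $\Phi_{0,v}(\theta_sx,y)=\Phi_{s,s+v}(x,y)$.

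\emph{Identification under each $P$.} Fix $P$. By \Cref{cor:common-representative}, $\mathbf X^S=\mathbf X^{S,P}$ $P$-almost surely. Next I would show that the rough solution driven by the It\^o-lift-enhanced Stratonovich rough path is the classical Stratonovich strong solution. Let $Z$ be the classical Stratonovich solution; it is adapted and continuous. Its It\^o form is controlled by $X$ with derivative $V(Z)$, and the remainder moments can be checked via \Cref{lem:dyadic-controlled-remainder}. As in \Cref{lem:strong-Hilbert-rough-Stratonovich-tower}, the compensated Riemann sums against $\mathbb X^{S,P}=\mathbb X^{I,P}+Q/2$ converge to the It\^o integral plus $\tfrac12\langle\mathcal W_V(Z),\dd Q\rangle_{\rm tr}$. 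This is exactly the Stratonovich correction, so $Z$ solves the RDE pathwise, and uniqueness of RDE solutions gives equality for each deterministic $(s,y)$.

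I expect this identification step to be the main obstacle. It requires the martingale part of the compensated sums to vanish in $\mathcal S_2$ and the trace pairing to handle the Hilbert-valued correction. Note that the paper's proposition has no separate drift hypothesis, since $\mathcal P_{\rm sp}$ is driftless.
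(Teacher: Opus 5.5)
Your proposal is correct and follows essentially the paper's route. You transfer the core topology to homogeneous $p$-variation, apply the deterministic stability of \Cref{prop:finite-output-mixed-stability}\textup{(i)} to the common lift, read off measurability, causality and the cocycle laws from the compact exhaustion, exact stopping, shift equivariance and uniqueness, and identify modelwise through $\mathbf X^S=\mathbf X^{S,P}$. The one difference is that you write out the rough-versus-Stratonovich identification with the controlled-path and compensated-sum argument of \Cref{lem:strong-Hilbert-rough-Stratonovich-tower}, where the paper simply invokes the standard property of the Stratonovich lift. One small correction: the solution map is only continuous in the raw variable $x$; the Lipschitz bound holds in the driver metric and in $y$.
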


\begin{proof}
Apply \Cref{prop:finite-output-mixed-stability}(i) to the common lift.
\Cref{lem:homogeneous-holder-variation-transfer} carries the core topology
to the homogeneous $p$-variation topology.  Deterministic continuity then
gives measurability, corewise continuity, and the cocycle; the common
Stratonovich lift gives the modelwise identity.
\end{proof}

\begin{proposition}[Bracket-corrected finite-dimensional It\^o flow]
\label{thm:common-Ito-flow}
Fix $p\in(1/\alpha,3)$ and choose
\begin{equation}\label{eq:mixed-variation-exponents}
 1<\nu<\frac p{p-1}.
\end{equation}
Assume $V_0$ is bounded globally Lipschitz and
$V\in\operatorname{Lip}^{\rho+1}(\R^m;\mathcal L(H,\R^m))$ for some
$\rho>p$.  For $x\in G_*^{\boldsymbol\vartheta}$ define $\Psi$ by
\begin{equation}\label{eq:common-Ito-mixed-RDE}
 \dd Z
 =V_0(Z)\,\dd t+V(Z)\,\dd\mathbf X^S
 -\frac12\langle\mathcal W_V(Z),\dd Q\rangle_{\rm tr}.
\end{equation}
Then $\Psi$ is jointly Borel, nonanticipative under terminal restriction,
corewise jointly continuous, and obeys the restriction, concatenation, and
cocycle laws.  This nonanticipativity is the identity used in
\Cref{def:stable-causal}; exact stop preservation is stronger and, when
$V_0\ne0$, requires the clock augmentation described in
\Cref{prop:finite-output-mixed-stability}.  Under every
$P\in\mathcal P_{\rm sp}$, $\Psi$ equals the classical strong It\^o solution
\begin{equation}\label{eq:common-Ito-SDE-from-rough-flow}
 \dd Z_t=V_0(Z_t)\,\dd t+V(Z_t)\,\dd X_t.
\end{equation}
\end{proposition}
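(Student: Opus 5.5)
We prove the proposition in three stages: first the deterministic mixed solution map, then its measurability and continuity on the cores, then the modelwise identification with the It\^o solution.

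\emph{Deterministic properties.} For $x\in G_*^{\boldsymbol\vartheta}$, the pair $(\mathbf X^S(x),Q(x))$ is a geometric $\alpha$-H\"older rough path together with a trace-norm Lipschitz covariance path, so $[Q]_{1;\mathcal S_1}\le\Lambda$. \Cref{lem:homogeneous-holder-variation-transfer} places this pair in the mixed $p$-/$\nu$-variation topology with $1<\nu<p/(p-1)$. Then \Cref{prop:finite-output-mixed-stability}(ii) gives existence, uniqueness and the local Lipschitz estimate \eqref{eq:finite-output-mixed-stability}. Two of the claimed properties are inherited from that deterministic map:
\begin{itemize}
\item Restriction, concatenation and the cocycle/flow laws follow from uniqueness together with the Chen and concatenation identities of \Cref{thm:spatial-trace-tight-cores}(iii) and \Cref{thm:shift-equivariant-enhancement}.
\item Nonanticipativity under terminal restriction is inherited from the causality of the deterministic map and the exact stopping identities \eqref{eq:selector-stopping}.
\end{itemize}

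\emph{Measurability and continuity.} On each core $\mathcal C_R^{\boldsymbol\vartheta}$, part~(ii) of \Cref{thm:spatial-trace-tight-cores} makes $x\mapsto(\mathbf X^S,Q)$ continuous into the H\"older/$\nu$-variation topology. Composing with the locally Lipschitz solution map yields joint continuity in $(s,t,x,y)$ on $[0,T]^2\times\mathcal C_R^{\boldsymbol\vartheta}\times\mathbb R^m$. The cores are compact sets exhausting $G_*^{\boldsymbol\vartheta}$, so $\Psi$ is jointly Borel there; on the polar complement we totalize by a base point. The Borel currying lemma \Cref{lem:Borel-continuous-section-currying} handles the path-valued version.

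\emph{Modelwise identification.} This is the main obstacle. Fix $P$ and a deterministic $(s,y)$. Let $Z$ be the classical strong It\^o solution of \eqref{eq:common-Ito-SDE-from-rough-flow}, which exists since $V,V_0$ are Lipschitz with $\mathcal S_2$-compatible coefficients. We aim to show that $Z$ solves the mixed RDE \eqref{eq:common-Ito-mixed-RDE} driven by $(\mathbf X^{S,P},\qv{M^P})$; uniqueness then gives $\Psi=Z$ $P$-a.s., and by \Cref{cor:common-representative} this lift agrees $P$-a.s. with the common lift. The argument proceeds in four steps.
\begin{enumerate}
\item Show that $V(Z)$ is $X$-controlled with Gubinelli derivative $h\otimes k\mapsto D(V(\cdot)k)(Z)[V(Z)h]=\mathcal W_V(Z)$. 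We would follow \Cref{lem:strong-Hilbert-rough-Stratonovich-tower}, with the remainder handled by \Cref{lem:dyadic-controlled-remainder} using BDG moment bounds.
\item Convert the rough integral into It\^o form. Compensated Riemann sums with $\mathbb X^{S,P}=\mathbb X^{I,P}+\tfrac12 Q$ give
\[
 \int V(Z)\,\dd\mathbf X^S=\int V(Z)\,\dd X+\tfrac12\int\langle\mathcal W_V(Z),\dd Q\rangle_{\rm tr}.
\]
The $\mathbb X^{I,P}$ sums vanish in probability by the BDG estimate of that lemma.
\item Use the trace-class Riemann--Stieltjes convergence for the $Q$-term, which needs $\widetilde{\mathcal W}_V$ to be continuous into $\mathcal L(\mathcal S_2,\mathbb R^m)$; this is supplied by \Cref{lem:finite-output-S2-compatibility}.
\item Simultaneity in $(s,t,y)$ comes from continuity in $y$ and countable dense sets of restart times and initial data.
\end{enumerate}
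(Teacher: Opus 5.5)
Your proposal is correct and follows essentially the same route as the paper. The shared steps are deterministic well-posedness and local Lipschitz stability from \Cref{prop:finite-output-mixed-stability}\textup{(ii)}, transfer of the core topology via \Cref{lem:homogeneous-holder-variation-transfer}, and modelwise identification through the Hilbert-valued It\^o--Stratonovich conversion followed by uniqueness; in that conversion the correction $\frac12\langle\mathcal W_V(Z),\dd Q\rangle_{\rm tr}$ is cancelled by the subtracted term.

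Your steps 1--3 spell out, in the manner of \Cref{lem:strong-Hilbert-rough-Stratonovich-tower}, the conversion that the paper asserts in one line. For step 1, \Cref{lem:dyadic-controlled-remainder} applies most cleanly to $Z$ itself with derivative $V(Z)$, since that remainder satisfies the exact coboundary identity the lemma requires; $V(Z)$ is then controlled by composition.
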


\begin{proof}
Use \Cref{prop:finite-output-mixed-stability}(ii).  On each core the
covariance paths lie in a common total-variation ball, so
\Cref{lem:homogeneous-holder-variation-transfer} carries the state topology
to the mixed-driver topology.  Under fixed $P$, the rough integral is
Stratonovich, and by
\eqref{eq:finite-output-second-coefficient} the Hilbert-valued
It\^o--Stratonovich conversion reads
\[
 V(Z)\circ\dd X
 =V(Z)\,\dd X
  +\frac12\langle\mathcal W_V(Z),\dd Q\rangle_{\rm tr}.
\]
The term subtracted in \eqref{eq:common-Ito-mixed-RDE} cancels this correction.
The two uniqueness statements then identify the fields simultaneously in time.
\end{proof}

\begin{corollary}[Corewise and full-state flow regimes]
\label{cor:full-state-flow-readout}
The preceding two flow propositions require only $p>1/\alpha$ and therefore
give continuous propagated responses on the $\alpha$-regular profile cores.
If instead
\[
 \frac1\eta<p<3,
\]
then the Stratonovich solution-field map is a stable-causal
readout on the whole state space
$\mathsf Z_{\eta;\mathcal S_2,1}(H)$.  Under the It\^o vector-field
assumptions, choose additionally
\begin{equation}\label{eq:full-state-mixed-variation-window}
 \frac1{2\eta}<\nu<\frac p{p-1}.
\end{equation}
Then the bracket-corrected It\^o solution-field map is likewise a continuous
stable-causal readout on the whole operational state space.  In both cases
the target carries the compact-parameter local-uniform topology.  These
full-state statements concern nonanticipativity under terminal restriction;
exact stop preservation still requires the clock augmentation when
$V_0\ne0$.
\end{corollary}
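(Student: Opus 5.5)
The plan is to reduce both full-state assertions to the deterministic stability estimate of \Cref{prop:finite-output-mixed-stability}. The only new input is that the operational state topology of $\mathsf Z_{\eta;\mathcal S_2,1}(H)$ already dominates the mixed-driver topology on bounded sets. This is what the window $1/\eta<p$, and for the It\^o case $\nu>1/(2\eta)$, should buy. Stable causality then follows from uniqueness and terminal restriction, exactly as in the corewise propositions.

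\emph{Step 1: driver map.} For $z$ with chart $(x,A,q)$, the geometric readout $\mathbf x^S=(x,\frac12x^{\otimes2}+A)$ lies in $G\Omega^2_{\eta}(H;\mathcal S_2)$. I would apply \eqref{eq:homogeneous-holder-variation-transfer} with $\alpha$ replaced by $\eta$, which is legitimate because $p>1/\eta$. This shows that $z\mapsto\mathbf x^S$ is continuous into geometric $p$-rough paths for $d^{\rm hom}_{p\text{-var}}$ and maps state-bounded sets to bounded driver balls. The unrooted H\"older distance of $\mathbf x^S$ is controlled by $d_{\eta;\mathcal S_2,1}$ through the identity $\mathbb x^S=\frac12x^{\otimes2}+A$, together with the local Lipschitz bound on $x\mapsto x^{\otimes2}$ on bounded sets. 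For the It\^o case, \eqref{eq:holder-to-nuvar-transfer} with $\nu>1/(2\eta)$ gives $\|q-\widetilde q\|_{\nu\text{-var};\mathcal S_1}\le T^{2\eta}[q-\widetilde q]_{2\eta;\mathcal S_1}$. The constraint $\nu<p/(p-1)$ in \eqref{eq:full-state-mixed-variation-window} is nonempty because $1/(2\eta)<3/2<p/(p-1)$, since $p<3$. The driver map is therefore continuous and bounded on bounded sets into the topology required by part~(ii).

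\emph{Step 2: solution fields.} Composing with \eqref{eq:finite-output-mixed-stability} gives local Lipschitz continuity of $(z,y)\mapsto Z$ in $y$ ranging over a compact set $K$ and $z$ in a state ball. Restriction to subintervals $[s,t]$, with the driver restricted accordingly, gives uniform continuity of $(s,t,y)\mapsto\Phi_{s,t}(z,y)$ on compact parameter sets. Continuity in $(s,t)$ follows from the concatenation law and the uniform smallness of the driver's control $\omega$ on short intervals. Together these give continuity into the compact-parameter local-uniform target. Completeness of that target is standard, so continuity of the full readout follows.

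\emph{Step 3: stable causality.} Terminal restriction of the solution on $[s,t]$ depends only on the driver on $[s,t]$, and stopping $z$ at $t$ leaves this piece unchanged. The readout identity \eqref{eq:stable-causal-readout} therefore follows from uniqueness, with the target truncation taken to be terminal restriction. As in \Cref{thm:common-Ito-flow}, I would note that exact stop preservation is not claimed when $V_0\neq0$.

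\emph{Main obstacle.} The delicate point is Step 1's claim that the little-H\"older state ball maps into a bounded $p$-variation rough ball with a genuinely continuous dependence. This includes the square-root loss in the homogeneous metric. My first attempt would be to check that the loss only degrades the modulus, and does not destroy continuity.
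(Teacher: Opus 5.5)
Your proposal is correct and follows essentially the same route as the paper. The paper transfers the $\eta$-H\"older geometric coordinate and the $2\eta$-H\"older covariance coordinate to the mixed $p$-/$\nu$-variation topology via \Cref{lem:homogeneous-holder-variation-transfer} with $\alpha=\eta$ and \eqref{eq:holder-to-nuvar-transfer}, checks that the $\nu$-window is nonempty using $\eta>1/3$ and $p<3$, and then invokes the deterministic stability estimate for continuity and uniqueness for terminal nonanticipativity. The square-root loss you flag is harmless, since \eqref{eq:homogeneous-holder-variation-transfer} still gives a modulus tending to zero, and the appeal to completeness of the target in Step~2 plays no role in the continuity argument.
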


\begin{proof}
Every state has an $\eta$-H\"older geometric rough coordinate and a
$2\eta$-H\"older covariance coordinate.  Apply
\Cref{lem:homogeneous-holder-variation-transfer} with $\alpha=\eta$.
For the It\^o equation also use
\eqref{eq:holder-to-nuvar-transfer}.  The interval in
\eqref{eq:full-state-mixed-variation-window} is nonempty because
$\eta>1/3$ and $p<3$.  The deterministic stability theorem gives continuity
in the full state topology, locally uniformly in the flow parameters;
uniqueness gives terminal nonanticipativity and the cocycle laws.
\end{proof}

\begin{remark}[Finite-dimensional output]
The driver and its second-order state are Hilbert-valued, while the nonlinear
solution space is $\R^m$.  In this setting the second coefficient extends
automatically to $\mathcal S_2(H)$ by
\Cref{lem:finite-output-S2-compatibility}.
\end{remark}

\begin{corollary}[It\^o dynamics as a propagated response]
\label{thm:state-propagated-Ito-dynamics}
Retain the assumptions of \Cref{thm:common-Ito-flow} and let
$\varnothing\ne\cP\subset\mathcal P_{\rm sp}$.  For every fixed starting pair
$(s,y)\in[0,T]\times\mathbb R^m$ and every $1\le r<\infty$, the state-based
section
\[
 x\longmapsto \Psi_{s,\cdot}(x,y)
\]
has a representative in
$\mathbb L_{\cP}^r(C([0,T];\mathbb R^m))$.  More precisely:
\begin{enumerate}[label=\textup{(\roman*)},leftmargin=2.5em]
\item on the common full-capacity domain $G_*^{\boldsymbol\vartheta}$ the full field
$(x,s,t,y)\mapsto\Psi_{s,t}(x,y)$ is jointly Borel, nonanticipative under
terminal restriction, locally jointly continuous on each profile core
$\mathcal C_R^{\boldsymbol\vartheta}$, and satisfies the exact
flow, terminal-causality, and future-shift identities simultaneously in all
parameters, with every future shift restricted to the remaining time
interval; for $V_0\ne0$, exact stop preservation refers to the
clock-augmented field;
\item under every $P\in\cP$, each deterministic section is the classical strong
It\^o solution of \eqref{eq:common-Ito-SDE-from-rough-flow};
\item the fixed-parameter regular upper-$L^r$ class is obtained directly from
state propagation; the independent Euler construction and its equality with
this class are established in \Cref{thm:Euler-state-interface};
\item with the solution field equipped with the compact-parameter
local-uniform topology, every stable-causal functional of that
field is a corewise continuous propagated response.  Under the additional
full-state roughness conditions of \Cref{cor:full-state-flow-readout}, it is a
stable-causal readout of $\mathcal A_2$ and therefore inherits
construction independence from the common state.
\end{enumerate}
Thus these dynamics are state-propagated; the Euler construction below identifies the
same field independently.
\end{corollary}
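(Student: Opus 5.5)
The argument assembles the deterministic flow theory of \Cref{thm:common-Ito-flow} with the common perfection domain $G_*^{\boldsymbol\vartheta}$ and the exponential core tails of \Cref{thm:spatial-trace-tight-cores}; it requires no new stochastic estimate beyond moment bounds for the classical strong solution. Part~(i) comes first. On $G_*^{\boldsymbol\vartheta}$ the map $x\mapsto(\mathbf X^S(x),Q(x))$ is Borel, by the currying and Lusin--Souslin argument of \Cref{cor:common-representative}. By \Cref{thm:spatial-trace-tight-cores}(ii) it is continuous on each core $\mathcal C_R^{\boldsymbol\vartheta}$ into the $\alpha$-H\"older rough topology times the uniform trace-norm topology. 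Every core covariance satisfies $[Q]_{1;\mathcal S_1}\le\Lambda$, so \Cref{lem:homogeneous-holder-variation-transfer} moves this to the mixed $p$-/$\nu$-variation topology. \Cref{prop:finite-output-mixed-stability}(ii) then gives local Lipschitz dependence on the driver and the initial point, uniformly over compact initial sets. Restriction to subintervals handles the $(s,t)$-parameters, so the field $(x,s,t,y)\mapsto\Psi_{s,t}(x,y)$ is jointly continuous on each core. Joint Borel measurability on the countable union $G_*^{\boldsymbol\vartheta}=\bigcup_n\mathcal C_n^{\boldsymbol\vartheta}$ follows because a map that is continuous on each closed piece of a countable Borel cover is Borel.

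The flow, terminal-causality and future-shift identities hold simultaneously in all parameters. This is because uniqueness for the deterministic mixed RDE gives them for every driver. The driver identities themselves (stopping, shift restricted to $[0,T-s]$, Chen concatenation of $\mathbf X^S$ and additivity of $Q$) hold pathwise on the stopping- and shift-stable domain, by \Cref{thm:spatial-trace-tight-cores}(iii) and \Cref{thm:shift-equivariant-enhancement}. For $V_0\ne0$, exact stop preservation is quoted for the clock-augmented field $(\ell,\mathbf X^S,Q)$ described in \Cref{prop:finite-output-mixed-stability}.

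Part~(ii) is the modelwise identification already contained in \Cref{thm:common-Ito-flow}. Under $P$ the common lift coincides with the classical Stratonovich lift, and $Q$ coincides with $\qv{M^P}$, off one $P$-null set. The Hilbert It\^o--Stratonovich conversion cancels the bracket term, and pathwise uniqueness identifies each deterministic section with the strong It\^o solution simultaneously in time.

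For the $\mathbb L^r_{\cP}$ representative in part~(iii), totalize the section by a fixed base path on the polar complement of $G_*^{\boldsymbol\vartheta}$. The result is a Borel $C([0,T];\R^m)$-valued map that equals the strong solution $P$-a.s. for each $P$. Its upper $L^r$ norm is finite because $V_0$ is bounded and $V$ is bounded, since $\operatorname{Lip}^{\rho+1}$ coefficients are bounded. Then BDG together with $\Tr a^P\le\Lambda$ gives $\sup_P E^P\|Z\|_\infty^r\le C(1+|y|^r)$ uniformly in $P$. Regularity of the class, meaning that it is an upper-capacity limit of quasi-continuous maps, comes from the cores. The restriction to $\mathcal C_R^{\boldsymbol\vartheta}$ is continuous, and $c(\mathcal C_R^{c})\le Ce^{-cR^2}$. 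Truncating the section continuously outside $\mathcal C_R^{\boldsymbol\vartheta}$, for instance by a Tietze extension on the compact core, and using the uniform moment bound with H\"older's inequality, gives convergence in upper $L^r$ as $R\to\infty$. The Euler comparison is explicitly deferred to \Cref{thm:Euler-state-interface}, so nothing more is needed here.

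Part~(iv) is composition. A stable-causal functional of the field composed with the corewise continuous field is continuous on each core. Under the conditions of \Cref{cor:full-state-flow-readout}, the field itself is a stable-causal readout of $\mathsf Z_{\eta;\mathcal S_2,1}(H)$, hence of $\mathcal A_2$. Construction independence then follows from \Cref{cor:defect-energy-scheme-independence} and \Cref{thm:causal-canonicalization-principle}. The main obstacle is the regularity claim in (iii): the upper-$L^r$ approximation off the cores must be controlled uniformly in $P$. I would handle it with the Cauchy--Schwarz split $E^P[\|Z\|^r\one_{(\mathcal C_R^{\boldsymbol\vartheta})^c}]\le(E^P\|Z\|^{2r})^{1/2}c(\mathcal C_R^{c})^{1/2}$.
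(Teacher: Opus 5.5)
\paragraph{Verdict.} There is a gap in the regularity step of (iii); everything else matches the paper's route.

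\paragraph{Where you agree with the paper.} Parts (i), (ii) and (iv) follow the paper's proof. You compose the corewise continuous common state with the mixed solution map of \Cref{prop:finite-output-mixed-stability}. You get the flow, terminal-causality and shift identities from deterministic uniqueness, take the modelwise identification from \Cref{thm:common-Ito-flow}, and take the full-state clause from \Cref{cor:full-state-flow-readout}. Your uniform moment bound in (iii) is also correct: the family is driftless and $V_0,V$ are bounded.

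\paragraph{The gap in (iii).} Here you reprove by hand what the paper obtains by citing \Cref{thm:quasi-continuous-characterization}, and the hand-made estimate is incomplete. Put $Z:=\Psi_{s,\cdot}(\cdot,y)$ and let $G_R$ be a Dugundji extension of $Z|_{\mathcal C_R^{\boldsymbol\vartheta}}$. Then $G_R$ is only bounded by $S_R:=\sup_{x\in\mathcal C_R^{\boldsymbol\vartheta}}\|Z(x)\|_\infty$, and
\[
 \|Z-G_R\|_{\mathcal L_{\cP}^r}^r
 \le 2^{r-1}\Bigl[\Bigl(\sup_{P\in\cP}E^P\|Z\|_\infty^{2r}\Bigr)^{1/2}
 c_{\cP}\bigl((\mathcal C_R^{\boldsymbol\vartheta})^c\bigr)^{1/2}
 +S_R^r\,c_{\cP}\bigl((\mathcal C_R^{\boldsymbol\vartheta})^c\bigr)\Bigr].
\]
Your Cauchy--Schwarz split controls only the first term. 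Compactness of the core makes $S_R$ finite but gives no growth rate. So, as written, you have not shown that $\|Z-G_R\|_{\mathcal L_{\cP}^r}\to0$.

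\paragraph{How to repair it.} Truncate in value before extending. Let $T_N$ be radial truncation to the ball of radius $N$ in $C([0,T];\mathbb R^m)$, and extend $T_N\circ Z|_{\mathcal C_R^{\boldsymbol\vartheta}}$ instead. The extension $G_{N,R}$ then has sup norm at most $N$, and
\[
 \|Z-G_{N,R}\|_{\mathcal L_{\cP}^r}
 \le\|Z-T_NZ\|_{\mathcal L_{\cP}^r}
 +2N\,c_{\cP}\bigl((\mathcal C_R^{\boldsymbol\vartheta})^c\bigr)^{1/r}.
\]
Your $2r$-moment bound makes the $r$-th power tails uniformly small, so choose $N$ first and then $R$.

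This is exactly the converse half of \Cref{thm:quasi-continuous-characterization}. Since $(\mathcal C_R^{\boldsymbol\vartheta})^c$ is open with capacity tending to zero, your corewise continuity already is quasi-continuity. You can therefore simply cite that theorem, as the paper does.

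As an alternative, you could supply a rough a priori bound showing that $S_R$ grows at most polynomially in $R$, which the tail $Ce^{-cR^2}$ beats. That bound is not in the proposal as it stands.

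\paragraph{A point of description.} The regular class $\mathbb L^r_{\cP}$ is the upper-$L^r$ closure of bounded continuous maps. It is not merely an ``upper-capacity limit of quasi-continuous maps'': convergence in capacity does not give membership without the uniform tail condition.
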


\begin{proof}
Parts~\textup{(i)}--\textup{(ii)} follow by composing the common state with
the mixed solution map, using
\Cref{thm:spatial-trace-tight-cores,prop:finite-output-mixed-stability,thm:common-Ito-flow};
uniqueness gives the flow, terminal-causality, and future-shift identities.

For part~\textup{(iii)}, fix $(s,y)$ and $r<\infty$.  The trace-clock bound
$\Tr a_t^P\le\Lambda$ and the global bounded/Lipschitz vector-field
hypotheses give, by the classical BDG--Gronwall estimate, a number $r'>r$ such
that
\[
 \sup_{P\in\cP}
 E^P\!\left[\sup_{t\in[s,T]}|\Psi_{s,t}(X,y)|^{r'}\right]<\infty.
\]
Hence the $r$-th powers have uniformly vanishing tails, while part~\textup{(i)}
gives continuity on every $\mathcal C_R^{\boldsymbol\vartheta}$.  Since
$c_{\cP}((\mathcal C_R^{\boldsymbol\vartheta})^c)
 \le c_{\mathcal P_{\rm sp}}((\mathcal C_R^{\boldsymbol\vartheta})^c)
 \to0$, this is a
quasi-continuous representative.  The characterization in
\Cref{thm:quasi-continuous-characterization} therefore yields
$\Psi_{s,\cdot}(\cdot,y)\in
\mathbb L_{\cP}^r(C([0,T];\mathbb R^m))$.

Part~\textup{(iv)} is closure under composition on the profile cores; its
full-state assertion is \Cref{cor:full-state-flow-readout}, together with
construction independence of the selected state.
\end{proof}

\subsection{Independent Euler completion and state equivalence}
\label{sec:Euler-state-interface}

The rough--Young solution map above propagates the selected state.  An
independent construction of the same It\^o dynamics is obtained from
first-level Euler schemes.  Pathwise Euler convergence under rough and
stochastic integrability conditions is developed in
\cite{AllanKwossekLiuPromel25}; here the schemes are completed in upper-$L^r$
uniformly over a nondominated law family.  The two constructions operate at
different regularity levels: global state-Lipschitz continuity suffices for
the common Euler/It\^o solution, whereas continuity as a readout of
$(\mathbf X^S,Q)$ requires the stronger rough vector-field hypotheses.

For a deterministic starting pair $(s,y)$ and a partition
$\pi=\{s=t_0<\cdots<t_N=T\}$, let $\ell_\pi(u)=t_i$ on
$[t_i,t_{i+1})$ and put $Y_t^{\pi;s,y}(x)=y$ for $t\le s$.  For $t\ge s$
define the raw continuous Euler map by
\begin{equation}\label{eq:raw-Euler-starting-pair}
\begin{split}
 Y_t^{\pi;s,y}(x)
 &=y+\int_s^t
 b\bigl(\ell_\pi(u),Y_{\ell_\pi(u)}^{\pi;s,y}(x)\bigr)\,\dd u\\
 &\quad+\sum_{i=0}^{N-1}
 \sigma\bigl(t_i,Y_{t_i}^{\pi;s,y}(x)\bigr)
 \bigl(x_{t\wedge t_{i+1}}-x_{t\wedge t_i}\bigr).
\end{split}
\end{equation}
Finite induction shows that this map is continuous in the raw uniform
topology and causal in the terminal time.

For $R<\infty$, write
\begin{equation}\label{eq:Euler-field-Banach-space}
 \mathscr B_R
 :=C\bigl([0,T]\times\overline B_R;
          C([0,T];\mathbb R^m)\bigr),
\end{equation}
where a solution started from $(s,y)$ is frozen at $y$ on $[0,s]$.

\begin{theorem}[Euler-generated It\^o field and state equivalence]
\label{thm:Euler-state-interface}
Let
$\varnothing\ne\mathcal P_{\rm sp}
 \subset\mathfrak S_{\Lambda,0}^{0,T}(H)$
have the uniform spatial trace-tightness profile $(P_m,\vartheta_m)$ fixed
above.  Let
\[
 b:[0,T]\times\mathbb R^m\to\mathbb R^m,
 \qquad
 \sigma:[0,T]\times\mathbb R^m\to\mathcal L(H,\mathbb R^m)
\]
be deterministic and continuous.  Assume, uniformly in time, that they are
globally Lipschitz in the state variable and have linear growth, with the
operator norm used for $\sigma$.
\begin{enumerate}[label=\textup{(\roman*)},leftmargin=2.5em]
\item For every deterministic $(s,y)$, every $1\le r<\infty$, and every
sequence of deterministic partitions $|\pi_n|\to0$, the schemes
$Y^{\pi_n;s,y}$ converge, independently of the partition sequence, in
\[
 \mathbb L_{\mathcal P_{\rm sp}}^r
 \bigl(C([0,T];\mathbb R^m)\bigr)
\]
to a common It\^o solution $Y^{s,y}$.  Under every
$P\in\mathcal P_{\rm sp}$ its projection is the unique classical strong
solution
\begin{equation}\label{eq:general-common-Ito-SDE}
 Y_t^{s,y}
 =y+\int_s^t b(u,Y_u^{s,y})\,\dd u
   +\int_s^t\sigma(u,Y_u^{s,y})\,\dd X_u,
 \qquad t\in[s,T].
\end{equation}
The stochastic term is the common integral of
\Cref{thm:uniform-BDG-extension} and its modelwise projections are understood
in the corresponding bracket-energy quotient.

\item There is a total jointly Borel raw-causal field
\[
 \Phi:\Omega\times\Delta_T\times\mathbb R^m\longrightarrow\mathbb R^m
\]
whose fixed-parameter sections represent the limits in part~\textup{(i)}.
For every $R$, the polar-equivalence class of the frozen fixed-parameter
family admits a compatible Borel representative
$\Phi^R:\Omega\to\mathscr B_R$ such that
\begin{equation}\label{eq:Euler-field-all-upper-Lr}
 \Phi^R\in\bigcap_{1\le r<\infty}
 \mathbb L_{\mathcal P_{\rm sp}}^r(\mathscr B_R).
\end{equation}
One diagonal of continuous finite-parameter interpolation fields, each built
from finitely many maps \eqref{eq:raw-Euler-starting-pair}, selects $\Phi$.
There are a Borel full-capacity set $G_{\rm E}$ and an increasing compact core
such that, simultaneously in every model and parameter,
$(s,t,y)\mapsto\Phi_{s,t}(x,y)$ is locally continuous on $G_{\rm E}$ and the
raw-path/parameter field is jointly continuous on compact parameter sets over
each core.  On $G_{\rm E}$, $\Phi$ and every $\Phi^R$ agree throughout their
parameter domains, and
\begin{equation}\label{eq:Euler-perfect-flow-law}
 \Phi_{s,t}(x,y)
 =\Phi_{u,t}\bigl(x,\Phi_{s,u}(x,y)\bigr)
\end{equation}
for every $x\in G_{\rm E}$, $0\le s\le u\le t\le T$, and
$y\in\mathbb R^m$.

\item Suppose additionally that
\begin{equation}\label{eq:Euler-rough-coefficient-interface}
 b(t,y)=V_0(y),\qquad \sigma(t,y)=V(y),
\end{equation}
where $V_0$ is bounded globally Lipschitz and, for some
$p_{\rm rp}\in(1/\alpha,3)$ and $\rho>p_{\rm rp}$,
\[
 V\in\operatorname{Lip}^{\rho+1}
 \bigl(\mathbb R^m;\mathcal L(H,\mathbb R^m)\bigr).
\]
Let $\Psi$ be the bracket-corrected state flow of
\Cref{thm:common-Ito-flow} on $G_*^{\boldsymbol\vartheta}$, totalized by
$\Psi_{s,t}(x,y)=y$ on its complement.  There is one Borel
$\mathcal P_{\rm sp}$-polar set $N_{\rm id}$, independent of
$P,s,t,y$.  Define
\begin{equation}\label{eq:Euler-state-common-domain}
 G_{\rm id}
 :=\bigl(G_{\rm E}\cap G_*^{\boldsymbol\vartheta}\bigr)
   \setminus N_{\rm id},
\end{equation}
Then $G_{\rm id}$ is Borel with polar complement, and
\begin{equation}\label{eq:Euler-state-field-equality}
 \Phi_{s,t}(x,y)=\Psi_{s,t}(x,y)
 \qquad
 (x\in G_{\rm id},\ 0\le s\le t\le T,\ y\in\mathbb R^m).
\end{equation}
Thus the independent first-level Euler construction and the corewise
continuous state propagation produce the same common It\^o cocycle.  If
$p_{\rm rp}>1/\eta$, the latter is the full-state continuous readout of
\Cref{cor:full-state-flow-readout}.
\end{enumerate}
\end{theorem}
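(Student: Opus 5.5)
The proof has three stages: a uniform-in-law Euler estimate in upper-$L^r$, a quasi-sure selection of the whole field via Kolmogorov-type parameter continuity and compact cores, and a modelwise identification with $\Psi$ made simultaneous in all parameters by continuity.

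\emph{Part (i).} Fix $(s,y)$ and $r<\infty$. For $P\in\mathcal P_{\rm sp}$ the raw Euler map \eqref{eq:raw-Euler-starting-pair} evaluated at $X$ coincides $P$-a.s.\ with the usual predictable Euler scheme, since the integrand $\sigma(t_i,Y_{t_i})$ is $\mathcal F_{t_i}$-measurable. The drift vanishes on this martingale family, so only the trace clock $\Tr a^P\le\Lambda$ is needed. Hilbert-space BDG with the covariance-weighted identity $\|\sigma a^{1/2}\|_{\mathcal S_2(H,\R^m)}^2\le\|\sigma\|_{\rm op}^2\Tr a$, together with Gronwall, gives moment bounds and the Cauchy estimate
\[
 \sup_{P}E^P\Bigl[\sup_t|Y^{\pi;s,y}_t-Y^{\pi';s,y}_t|^{r'}\Bigr]\le C\,\varpi(|\pi|\vee|\pi'|),
\]
uniformly in $P$. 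Here $\varpi$ comes from the uniform continuity of $b,\sigma$ on compacts in time, combined with the linear-growth tail. Each raw map is continuous, and continuous functions with uniformly integrable $r'$-moments lie in $\mathbb L^r_{\mathcal P_{\rm sp}}$ by \Cref{thm:quasi-continuous-characterization}, using the compact cores of \Cref{thm:spatial-trace-tight-cores}. Completeness of that space gives the limit, and partition independence follows by interlacing two sequences. The modelwise projection solves \eqref{eq:general-common-Ito-SDE}, because the common integral of \Cref{thm:uniform-BDG-extension} projects onto the classical one and Euler converges modelwise. Uniqueness then follows from the Lipschitz hypotheses.

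\emph{Part (ii).} To pass to the field, I will prove Kolmogorov-type parameter estimates uniform in $P$ and in the partition:
\[
 E^P\|Y^{\pi;s,y}-Y^{\pi;s',y'}\|_\infty^{r}\le C(|s-s'|^{r/2}+|y-y'|^r).
\]
The frozen convention supplies the $s$-increment through BDG on $[s,s']$. On dyadic parameter grids, choose a summable diagonal of partitions with rates so that capacity Borel--Cantelli (as in \Cref{lem:exact-stopping-primitive-selector}) gives uniform convergence on dyadic grids off one polar set. Continuous multilinear interpolation between grid points then produces fields $\Phi^{(n)}$ that are raw-continuous and raw-causal. Their limit is selected by $\operatorname{Lim}$ from \Cref{lem:causal-Borel-limit}, which makes it total Borel and raw causal. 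The set $G_{\rm E}$ is the Cauchy locus intersected with a Hölder-in-parameter locus. The compact core is a sublevel set of the summed weighted Cauchy gauges intersected with the raw compact sets of \Cref{lem:spatial-profile-compact-containment}, and \Cref{lem:uniform-finite-scale-closure} gives joint continuity over each core. The flow law \eqref{eq:Euler-perfect-flow-law} holds modelwise a.s.\ at rational $(s,u,t)$ and dyadic $y$ by strong uniqueness. It extends to all parameters on $G_{\rm E}$ by continuity, since Lipschitz dependence on $y$ transfers to the limit. For $\Phi^R$, apply \Cref{lem:Borel-continuous-section-currying} and then the quasi-continuity characterization.

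\emph{Part (iii).} By \Cref{thm:common-Ito-flow}, under each $P$ and for fixed $(s,y)$, $\Psi_{s,\cdot}(x,y)$ is the strong solution. Part (i) identifies it with $\Phi_{s,\cdot}(x,y)$, so the two fields agree $P$-a.s.\ at each fixed parameter. The main obstacle is obtaining a single law-independent exceptional set, because a union over uncountably many $P$ of null sets need not be polar. I will therefore define
\[
 N_{\rm id}:=\bigcup_{(s,y)\ \text{rational}}\{x\in G_{\rm E}\cap G_*^{\boldsymbol\vartheta}:\Phi_{s,\cdot}(x,y)\ne\Psi_{s,\cdot}(x,y)\}.
\]
This set is Borel, since both fields are Borel. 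It is polar because each term is $P$-null for every $P$, so $c_{\mathcal P_{\rm sp}}$ vanishes on each term and hence on the countable union by subadditivity. On the complement, both fields are continuous in $(s,t,y)$: $\Psi$ by \Cref{prop:finite-output-mixed-stability}, and $\Phi$ by part (ii). Equality on a dense parameter set then propagates to all parameters. The final remark about $p_{\rm rp}>1/\eta$ is \Cref{cor:full-state-flow-readout}.
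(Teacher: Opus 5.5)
Your proposal is correct. Parts (i) and (iii) are essentially the paper's arguments: trace-clock BDG with $\operatorname{tr}(GaG^*)\le\|G\|_{\rm op}^2\Tr a$, a consistency modulus from time-continuity plus a moment tail, Gronwall, and \Cref{thm:quasi-continuous-characterization}; then a countable dense-skeleton union of polar mismatch sets. Two small points in (i). A continuous raw Euler map is trivially quasi-continuous, so you do not need the compact cores there. And to make the stochastic term the common integral, as the statement requires, you must also pass the Euler integrands to a limit $\Sigma^{s,y}$ in $\mathbb H^r_{\mathcal P_{\rm sp}}$, by the same Gronwall estimate in energy norm.

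The real difference is in (ii). The paper proves the increment bound $C(|s-r|^{1/2}+|y-z|)$ only for the limit sections $Y^{s,y}$; after the later start time both solve the same SDE, so only initial-state stability enters. It chains these classes on anisotropic grids to obtain $\Phi^R$ and the Kolmogorov bound. Only then does it substitute Euler maps at the finitely many nodes of each level along a fast diagonal, before applying $\operatorname{Lim}$. This keeps the analytic chaining free of partition bookkeeping.

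You chain the raw Euler interpolants directly, which produces the selector and its parameter continuity in one pass. The cost is that your partition-uniform bound $E^P\|Y^{\pi;s,y}-Y^{\pi;s',y'}\|_\infty^r\le C(|s-s'|^{r/2}+|y-y'|^r)$ holds only when $\pi$ contains both start times. Otherwise the first Euler steps evaluate $\sigma$ at different times, and that mismatch is controlled only by the time modulus of $\sigma$, which the hypotheses do not make H\"older. So you must choose the level-$n$ partitions to contain the level-$n$ start-time grid. The parent-corner edges compare values at one node computed with $\pi_n$ and with $\pi_{n+1}$. You must make these errors summable against the growing number of level-$n$ nodes, using that $\varpi$ is uniform over $s\in[0,T]$ and $|y|\le R$.

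Finally, the flow law at rational parameters needs more than strong uniqueness. You must identify the pointwise composition $\Phi_{u,\cdot}(\cdot,\Phi_{s,u}(\cdot,y))$ with the strong solution started from the random $\mathcal F_u^0$-measurable state. The paper does this in \Cref{lem:Euler-random-initial-restart} by finite-valued approximation, locality of the integrals, initial-state stability, and continuity in $y$ on the regular set. The resulting countably many polar exceptional sets must then be removed from your $G_{\rm E}$.
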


\begin{proof}
Part~\textup{(i)} is \Cref{lem:trace-clock-Euler-sections}.  The spatial
profile gives uniform tightness and the explicit compact capacity cores of
\Cref{thm:spatial-trace-tight-cores}; hence
\Cref{lem:Euler-parameter-field-completion,lem:one-set-Euler-perfection}
give part~\textup{(ii)}, with $G_{\rm E}:=G_{\rm fl}$.  Under
\eqref{eq:Euler-rough-coefficient-interface},
\Cref{thm:common-Ito-flow} identifies $\Psi$ under each model with the same
classical strong solution as $\Phi$.  Both fields are locally continuous in
all flow parameters outside one polar set.  The comparison clause of
\Cref{lem:one-set-Euler-perfection}, which is the field-comparison application
of \Cref{lem:dense-skeleton-one-set-perfection}, gives part~\textup{(iii)}.
\end{proof}

\begin{remark}[Euler completion and parameterwise realization]
Part~\textup{(ii)} selects the field from finite families of Euler sections,
deterministic parameter interpolation, and one fast diagonal argument.  It
combines fixed-parameter upper-$L^r$ convergence with a single total field;
the parameter sections away from the interpolation nodes are supplied by the
completed field.  Equation \eqref{eq:Euler-perfect-flow-law} holds at
deterministic times on one common polar complement and permits Borel
substitution for the intermediate time.
\end{remark}

\begin{remark}[State, coherent flow, and derivative jet]
\label{rem:dynamics-propagation-not-reconstruction}
The nonlinear flow is a deterministic corewise continuous response of the
realized state; its global Borel version and common cocycle domain are
realization properties.  Under the full-state conditions of
\Cref{cor:full-state-flow-readout}, it is a stable-causal readout.
A declared flow derivative is propagated only when continuously recoverable
from the relevant topology, and otherwise requires its own jet completion.
\end{remark}

\section{Covariance tangents and backward first-jet selection}
\label{sec:covariance-backward-selection}

Throughout, the model class is $\Mmax$.  The pathwise covariance coordinate
$Q$ realizes simultaneously the Hilbertian form behind the
Kunita--Watanabe/GKW projection \cite{KunitaWatanabe67,Galtchouk76}.
Covariance responses first quotient its varying null spaces and then complete
the resulting fibre jet in common upper energy.  This precedes quasi-sure
identification and differs from single-law functional It\^o inversion
\cite{ContFournie13}.  The covariance channel supplies the tangent fibres;
skew area remains indispensable to forward rough dynamics.

\subsection{Covariance density and energy geometry}
\label{sec:covariance-tangent-tomography}

Fix the compact-envelope regime of \Cref{sec:master-core} and write
\begin{equation}\label{eq:Gamma-order-interval}
 \mathcal I_\Gamma
 :=\{a\in\mathcal S_1(H)_+:0\preceq a\preceq\Gamma\}.
\end{equation}
It is trace-norm compact: if $P_n\uparrow I$ is
finite-rank and $R_n=I-P_n$, positivity and trace-ideal
Cauchy--Schwarz give, uniformly for $a\in\mathcal I_\Gamma$,
\[
 \|a-P_naP_n\|_1
 \le2\sqrt{\Tr(\Gamma)\Tr(R_n\Gamma R_n)}
       +\Tr(R_n\Gamma R_n)\longrightarrow0.
\]
Finite-dimensional compactness then applies to the compressions.  No separate
tomography of arbitrary covariance sets is needed.

On $G_*^\Gamma$, \Cref{thm:master-causal-cores} gives a trace-norm
Lipschitz covariance path.  We first isolate the deterministic differentiation
mechanism used to choose its predictable density.

\begin{proposition}[Predictable differentiation of causal trace-class paths]
\label{prop:canonical-predictable-differentiation}
Let $q:\Omega_H\to C([0,T];\mathcal S_2(H)_{\rm sa})$ be total Borel and raw
causal, and let $G$ be a Borel stopping-stable set on which every $q(x)$ takes
values in $\mathcal S_1(H)_{\rm sa}$ and is Bochner absolutely continuous in
trace norm.  Put $\Delta_n=T2^{-n}$ and $t_k^n=k\Delta_n$, and define lagged
dyadic averages
\[
 (\mathscr D_nq)_t
 :=\begin{cases}
 0,&0\le t\le\Delta_n,\\[1mm]
 \Delta_n^{-1}\bigl(q_{t_k^n}-q_{t_{k-1}^n}\bigr),
 &t_k^n<t\le t_{k+1}^n,
 \end{cases}
\]
as an $\mathcal S_2(H)_{\rm sa}$-valued process.  Let
$\iota_1:\mathcal S_1(H)_{\rm sa}\hookrightarrow
\mathcal S_2(H)_{\rm sa}$ denote the continuous inclusion and set
\[
 L_1(q):=\bigl\{(t,x):(\mathscr D_nq)_t(x)\in
 \iota_1(\mathcal S_1(H)_{\rm sa})\ \text{for every }n\bigr\}.
\]
Define the totalized trace-class derivative by
\begin{equation}\label{eq:canonical-predictable-differentiation-totalization}
 (\mathscr Dq)_t(x):=
 \begin{cases}
 \operatorname{Lim}_{\mathcal S_1(H)_{\rm sa}}
 \bigl((\iota_1^{-1}(\mathscr D_nq)_t(x))_{n\ge1}\bigr),
 &(t,x)\in L_1(q),\\
 0,&(t,x)\notin L_1(q),
 \end{cases}
\end{equation}
where the Borel selector of \Cref{lem:causal-Borel-limit} returns zero when
the displayed sequence does not converge.  Then $\mathscr Dq$ is a total
Borel predictable
$\mathcal S_1(H)_{\rm sa}$-valued process and, for every $x\in G$,
\begin{equation}\label{eq:canonical-predictable-differentiation}
 q_t(x)=\int_0^t(\mathscr Dq)_r(x)\,\dd r,
 \qquad
 \mathscr Dq(x)=\dot q(x)\quad\text{a.e.}
\end{equation}
If the path field $q$ obeys exact stopping, shift, or concatenation identities,
then, on the corresponding loci where the transformed paths also belong to
$G$, the derivative classes obey the restriction, time-shift, and
concatenation identities almost everywhere.  The same conclusion holds for
substitution of a bounded raw stopping time whenever the stopped path remains
in $G$.
\end{proposition}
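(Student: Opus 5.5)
The plan is to separate the statement into four assertions and treat them in order: measurability and predictability of $\mathscr Dq$; the integral representation on $G$; the restart identities; and stopping-time substitution.

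\emph{Measurability and predictability.} For fixed $n$, $(\mathscr D_nq)_t(x)$ is a finite combination of the Borel evaluations $x\mapsto q_{t_k^n}(x)$, multiplied by indicators of the deterministic intervals $(t_k^n,t_{k+1}^n]$. Raw causality of $q$ gives $q_{t_k^n}(x)=q_{t_k^n}(r_{t_k^n}x)$. Each $\mathscr D_nq$ is therefore a left-continuous step process whose value on $(t_k^n,t_{k+1}^n]$ is $\sigma(r_{t_k^n})$-measurable, hence predictable.

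The set $\iota_1(\mathcal S_1(H)_{\rm sa})$ is the image of a continuous injection between Polish spaces. By Lusin--Souslin it is Borel and $\iota_1^{-1}$ is Borel on it, so $L_1(q)$ is a predictable set (a countable intersection). I would then apply the selector $\operatorname{Lim}_{\mathcal S_1}$ of \Cref{lem:causal-Borel-limit} coordinatewise to the sequence $\iota_1^{-1}(\mathscr D_nq)$ and extend by zero off $L_1(q)$. Since the selector is Borel, $\mathscr Dq$ is a Borel function of countably many predictable processes, hence total Borel and predictable.

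\emph{The identity \eqref{eq:canonical-predictable-differentiation}.} Fix $x\in G$. Then $q(x)$ takes values in $\mathcal S_1$, so $L_1(q)\supseteq[0,T]\times\{x\}$. Absolute continuity, together with the Radon--Nikodym property of $\mathcal S_1$ already used in \Cref{prop:covariance-path-density-isomorphism}, gives a Bochner derivative $\dot q\in L^1(\mathcal S_1)$ with $q_t=\int_0^t\dot q_r\,\dd r$.

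Next I would write the lagged average as a conditional expectation. For $t\in(t_k^n,t_{k+1}^n]$ one has $(\mathscr D_nq)_t=\Delta_n^{-1}\int_{t_{k-1}^n}^{t_k^n}\dot q_r\,\dd r$. This is the dyadic average of $\dot q$ over the preceding dyadic interval, i.e.\ $E[\dot q\mid\mathcal F_{n}](\,\cdot-\Delta_n)$, a conditional expectation composed with a shift by $\Delta_n$.

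The main obstacle is showing that this lagged average converges a.e.\ in trace norm. The shift by one mesh is what prevents a direct appeal to Lebesgue differentiation. I would first try to write it as
\[
\Delta_n^{-1}\int_{t_{k-1}^n}^{t_k^n}\dot q = \frac{q_{t_k^n}-q_{t_{k-1}^n}}{\Delta_n},
\]
a difference quotient over an interval of length $\Delta_n$ contained in $[t-2\Delta_n,t]$. At a Lebesgue point $t$ of $\dot q$ (Bochner--Lebesgue differentiation in $\mathcal S_1$), averages over intervals $I\ni$ lying within $[t-2h,t]$ with $|I|=h$ converge to $\dot q_t$, because
\[
\Bigl\|\frac1h\int_I(\dot q-\dot q_t)\Bigr\|\le\frac{2}{2h}\int_{t-2h}^t\|\dot q-\dot q_t\|\to0 .
\]
So the limit exists a.e., and where it exists the selector returns it; this yields $\mathscr Dq(x)=\dot q(x)$ a.e. Integrating then gives the representation.

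\emph{Restart identities.} The stopping, shift and concatenation identities for the path field translate into identities between the paths $q(r_sx)$, $q(\theta_sx)$, $q(x\otimes_sy)$ and the stopped, shifted and concatenated versions of $q(x)$, $q(y)$. When all of these paths lie in $G$, uniqueness of Bochner derivatives (as in \Cref{prop:covariance-path-density-isomorphism}, formulas \eqref{eq:density-stopping-operation}--\eqref{eq:density-concatenation-operation}) gives the corresponding a.e.\ identities for $\mathscr Dq$, since it equals the derivative a.e. For a bounded raw stopping time $\tau$, I would apply the same argument pointwise in $x$ with $s=\tau(x)$, using only that the stopped path lies in $G$. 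No uncountable intersection is needed, because the a.e.\ statement is taken separately for each fixed $x$.
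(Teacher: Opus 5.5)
Your proposal is correct and follows the paper's proof essentially step for step. The shared steps are: elementary predictability of the lagged averages via raw causality, Lusin--Souslin for the Borel image and inverse of $\iota_1$, the Borel limit selector for the totalization, trace-norm convergence at Bochner--Lebesgue points because the lagged interval lies in $(t-2\Delta_n,t)$, and pathwise differentiation of the exact stopping, shift, and concatenation identities, with the seam $\tau(x)$ fixed once $x$ is fixed. The conditional-expectation reformulation is an unnecessary detour, since your widened-window Lebesgue-point estimate is already the whole argument.
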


\begin{proof}
Each lagged average is elementary predictable as an $\mathcal S_2$-valued
process.  By Lusin--Souslin, $\iota_1$ has Borel image and Borel inverse;
hence $L_1(q)$ is predictable and the pulled-back averages are Borel there.
The selector in
\eqref{eq:canonical-predictable-differentiation-totalization} therefore gives
a total predictable $\mathcal S_1$-valued process.  At Bochner Lebesgue points
the lagged intervals lie in $(t-2\Delta_n,t)$ and their averages converge in
trace norm to $\dot q_t$, proving
\eqref{eq:canonical-predictable-differentiation}.
Differentiating the exact path identities gives the equivariance statements;
for random stopping the seam is fixed once the raw path is fixed.
\end{proof}

Apply this construction to the common covariance path.

\begin{proposition}[Common predictable covariance density]
\label{prop:common-predictable-covariance-density}
Let $\Delta_n=T2^{-n}$ and $t_k^n=k\Delta_n$.  Define the
$\mathcal S_2(H)_{\rm sa}$-valued elementary predictable process
\begin{equation}\label{eq:common-covariance-density-approximants}
 \mathfrak a_t^{(n)}
 :=\begin{cases}
 0,&0\le t\le \Delta_n,\\[1mm]
 \Delta_n^{-1}\bigl(Q_{t_k^n}-Q_{t_{k-1}^n}\bigr),
 &t_k^n<t\le t_{k+1}^n,\quad 1\le k<2^n.
 \end{cases}
\end{equation}
Let $\mathscr DQ$ be the total trace-class derivative furnished by
\Cref{prop:canonical-predictable-differentiation}; its approximants are
\eqref{eq:common-covariance-density-approximants}.  Define
\begin{equation}\label{eq:common-covariance-density}
 \mathfrak a_t
 :=\begin{cases}
 (\mathscr DQ)_t,&(\mathscr DQ)_t\in\mathcal I_\Gamma,\\
 0,&\text{otherwise.}
 \end{cases}
\end{equation}
Then $\mathfrak a$ is a total Borel predictable
$\mathcal I_\Gamma$-valued process.  For every $x\in G_*^\Gamma$, it is a
predictable representative of the canonical derivative class associated with
$Q(x)$ by \Cref{prop:covariance-path-density-isomorphism}.  In particular,
$Q(x)$ is Bochner absolutely continuous in $\mathcal S_1(H)$ and
\begin{equation}\label{eq:common-covariance-pathwise-density}
 Q_t(x)=\int_0^t\mathfrak a_r(x)\,\dd r,
 \qquad
 0\preceq\mathfrak a_r(x)\preceq\Gamma
 \quad\text{for a.e. }r.
\end{equation}
Under every $P\in\Mmax$,
\begin{equation}\label{eq:common-density-modelwise}
 \mathfrak a_t=a_t^P
 \qquad \dd t\otimes P\text{-a.e.}
\end{equation}
It is the same density under every law and is unique in the common product
class: for any other such $\widetilde{\mathfrak a}$,
\begin{equation}\label{eq:common-density-upper-product-uniqueness}
 \sup_{P\in\Mmax}
 E^P\int_0^T
 \|\mathfrak a_t-\widetilde{\mathfrak a}_t\|_1\,\dd t=0.
\end{equation}
\end{proposition}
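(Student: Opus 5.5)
The plan is to apply \Cref{prop:canonical-predictable-differentiation} to the common covariance field $q=Q$ with $G=G_*^\Gamma$. We then truncate to $\mathcal I_\Gamma$ and read off the remaining claims from the pathwise density isomorphism and the modelwise bracket identification.

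\emph{Hypotheses of the differentiation proposition.} By \Cref{cor:common-representative}, $Q$ is total Borel with values in $C([0,T];\mathcal S_2(H)_{\rm sa})$. It obeys exact stopping \eqref{eq:selector-stopping}, so in particular it is raw causal. The set $G_*^\Gamma$ is Borel and stopping-stable by \Cref{thm:master-causal-cores} and \Cref{prop:common-bracket-variation-domain}. On this set, $0\preceq Q_{s,t}\preceq(t-s)\Gamma$ and $Q$ is $\Tr\Gamma$-Lipschitz in trace norm. Lipschitz paths in $\mathcal S_1$, a space with the Radon--Nikodym property, are Bochner absolutely continuous. The proposition therefore gives a total predictable $\mathscr DQ$ whose dyadic approximants are exactly \eqref{eq:common-covariance-density-approximants}, and it gives $Q_t(x)=\int_0^t(\mathscr DQ)_r(x)\,\dd r$ for $x\in G_*^\Gamma$.

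\emph{Truncation and pathwise density.} The order interval $\mathcal I_\Gamma$ is trace-norm compact, as shown just above, and hence Borel. So the set $\{(\mathscr DQ)_t\in\mathcal I_\Gamma\}$ is predictable, and $\mathfrak a$ is a total predictable $\mathcal I_\Gamma$-valued process. For $x\in G_*^\Gamma$, the $\Gamma$-clause of \Cref{prop:covariance-path-density-isomorphism} says that the canonical derivative of $Q(x)$ lies in $[0,\Gamma]$ almost everywhere. Hence $\mathfrak a(x)=\mathscr DQ(x)$ for a.e. $t$, and \eqref{eq:common-covariance-pathwise-density} follows.

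\emph{Modelwise identification and uniqueness.} Fix $P\in\Mmax$. By \Cref{cor:common-representative}, $Q=\qv{M^P}=\int_0^\cdot a_r^P\,\dd r$ up to indistinguishability, and $G_*^\Gamma$ has full $P$-measure. For $P$-a.e. $x$, two densities of the same absolutely continuous path agree Lebesgue-a.e., by uniqueness in the Bochner density bijection. The step needing care is that $a^P$ is only defined as a $\dd t\otimes P$ class. We handle this with Fubini, using joint measurability of both processes: the set where they differ has $\dd t$-null sections for $P$-a.e. $x$, which gives \eqref{eq:common-density-modelwise}.

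For \eqref{eq:common-density-upper-product-uniqueness}, take any other common density $\widetilde{\mathfrak a}$ in the same product class, meaning it equals $a^P$ $\dd t\otimes P$-a.e. for every $P$. Then the integrand vanishes $\dd t\otimes P$-a.e. for each $P$, and taking the supremum over $P$ gives zero.
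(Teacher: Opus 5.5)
Your proposal is correct and follows the paper's own route: apply \Cref{prop:canonical-predictable-differentiation} to $Q$ on the Borel stopping-stable set $G_*^\Gamma$, restrict to $\mathcal I_\Gamma$ using the $\Gamma$-clause of \Cref{prop:covariance-path-density-isomorphism}, and identify the result with $a^P$ by uniqueness of Bochner derivatives. Your Fubini step only makes explicit the passage from pathwise a.e.\ equality to the $\dd t\otimes P$ and product-class statements, which the paper compresses into a single line.
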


\begin{proof}
The totalization in \Cref{prop:canonical-predictable-differentiation} is
Borel and predictable; restriction to the Borel closed interval
$\mathcal I_\Gamma$ gives \eqref{eq:common-covariance-density}.  On
$G_*^\Gamma$ the derivative lies in that interval almost everywhere, and
\Cref{prop:covariance-path-density-isomorphism} yields
\eqref{eq:common-covariance-pathwise-density} on $G_*^\Gamma$.

Under $P$, $Q_t=\int_0^ta_r^P\,\dd r$ in $\mathcal S_1$; uniqueness of
Bochner derivatives gives \eqref{eq:common-density-modelwise}, and the same
argument integrated over the product space gives
\eqref{eq:common-density-upper-product-uniqueness}.
\end{proof}

\begin{corollary}[Restart covariance of the common density]
\label{cor:common-density-restart-covariance}
Let $x,y\in G_*^\Gamma$ and let $s\in[0,T]$.  Up to equality for Lebesgue
almost every time,
\begin{align}
 \mathfrak a_t(r_sx)
 &=\one_{\{t<s\}}\mathfrak a_t(x),
 \label{eq:common-density-stopping}\\
 \mathfrak a_v(\theta_sx)
 &=\one_{\{v<T-s\}}\mathfrak a_{s+v}(x),
 \label{eq:common-density-shift}\\
 \mathfrak a_t(x\otimes_sy)
 &=\one_{\{t<s\}}\mathfrak a_t(x)
   +\one_{\{t>s\}}\mathfrak a_{t-s}(y).
 \label{eq:common-density-concatenation}
\end{align}
If $\tau$ is a bounded raw stopping time and $x\in G_*^\Gamma$, then
\begin{equation}\label{eq:common-density-random-stopping}
 \mathfrak a_t(r_\tau x)
 =\one_{\{t<\tau(x)\}}\mathfrak a_t(x)
 \qquad\text{for a.e. }t.
\end{equation}
Thus deterministic restart and random stopping transport the covariance-energy
fibres by the corresponding restriction and time-shift rules, with the same
common density representative.
\end{corollary}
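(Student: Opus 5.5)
The plan is to reduce all four identities to the path-level restart identities for $Q$ and then differentiate, using the density isomorphism \Cref{prop:covariance-path-density-isomorphism} and the derivative equivariance of \Cref{prop:canonical-predictable-differentiation}.

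\emph{Step 1: path identities.} For $x,y\in G_*^\Gamma$ the exact stopping identity \eqref{eq:selector-stopping} gives $Q(r_sx)=r_sQ(x)$. The shift formula \eqref{eq:shift-Q} gives $Q(\theta_sx)=\theta_sQ(x)$. The concatenation part of \Cref{thm:shift-equivariant-enhancement} gives $Q(x\otimes_sy)=Q(x)\otimes_sQ(y)$, in the sense of the covariance concatenation of \Cref{lem:covariance-profile-compact-restart}. By \Cref{thm:master-causal-cores} (equivalently \Cref{prop:gaugewise-restart}), the transformed paths stay in $G_*^\Gamma$. So \eqref{eq:common-covariance-pathwise-density} holds for them, and the density of each transformed $Q$ is represented by $\mathfrak a$ evaluated at the transformed path.

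\emph{Step 2: differentiate.} Write $q=Q(x)=\mathcal I_{\rm cov}\mathfrak a(x)$ and $\widetilde q=Q(y)=\mathcal I_{\rm cov}\mathfrak a(y)$. The intertwining \eqref{eq:covariance-density-restart-intertwining} gives
\[
r_sq=\mathcal I_{\rm cov}(r_s\mathfrak a(x)),\qquad
\theta_sq=\mathcal I_{\rm cov}(\theta_s\mathfrak a(x)),\qquad
q\otimes_s\widetilde q=\mathcal I_{\rm cov}(\mathfrak a(x)\otimes_s\mathfrak a(y)).
\]
Step~1 shows that these same paths also equal $\mathcal I_{\rm cov}$ applied to $\mathfrak a(r_sx)$, $\mathfrak a(\theta_sx)$ and $\mathfrak a(x\otimes_sy)$. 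Injectivity of $\mathcal I_{\rm cov}$ on a.e.-classes then yields \eqref{eq:common-density-stopping}--\eqref{eq:common-density-concatenation} almost everywhere. The density formulas \eqref{eq:density-stopping-operation}--\eqref{eq:density-concatenation-operation} are exactly the stated right-hand sides.

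\emph{Step 3: random stopping.} Fix $x$; then $s=\tau(x)$ is a deterministic number. Apply Step~2 with this $s$, using that $r_\tau x=r_{\tau(x)}x\in G_*^\Gamma$ by stopping stability (\Cref{prop:gaugewise-restart}, Borel stopping-time clause). This gives \eqref{eq:common-density-random-stopping}.

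The main obstacle is ensuring that the transformed paths lie in the domain where $\mathfrak a$ really is the derivative. The truncation to $\mathcal I_\Gamma$ in \eqref{eq:common-covariance-density} must not alter the value, and the lagged dyadic averages must converge at Lebesgue points of the transformed path. Both hold once membership in $G_*^\Gamma$ is secured, which is why Step~1 leans on the restart closure of the cores rather than on the pointwise selector.
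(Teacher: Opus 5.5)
Your proposal is correct and follows the paper's argument. Both derive the four identities by differentiating the exact path-level stopping, shift, and concatenation identities for $Q$ on $G_*^\Gamma$, and both treat random stopping by fixing the seam $s=\tau(x)$ pathwise. You phrase the differentiation through the intertwining identities of \Cref{prop:covariance-path-density-isomorphism} plus uniqueness of Bochner derivatives, which is the same mechanism as the equivariance clause of \Cref{prop:canonical-predictable-differentiation}. Your explicit check, via the restart inclusions of \Cref{thm:master-causal-cores}, that the transformed paths stay in $G_*^\Gamma$ is exactly the locus condition the paper's one-line proof leaves implicit.
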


\begin{proof}
Differentiate the exact stopping, shift, and concatenation identities using
\Cref{prop:canonical-predictable-differentiation}; at the pathwise seam
$s=\tau(x)$ this also gives \eqref{eq:common-density-random-stopping}.
\end{proof}

\begin{definition}[Covariance-energy fiber]
\label{def:covariance-energy-fiber}
For $a\in\mathcal S_1(H)_+$ define
\begin{equation}\label{eq:covariance-energy-fiber}
 \mathscr E_a
 :=\overline{H/\ker a}^{\,\|\cdot\|_a},
 \qquad
 \|[h]\|_a^2:=\langle ah,h\rangle_H.
\end{equation}
The map
\begin{equation}\label{eq:covariance-energy-fiber-embedding}
 \iota_a:[h]\longmapsto a^{1/2}h
\end{equation}
extends isometrically onto
$\overline{\operatorname{Ran}a^{1/2}}\subset H$.  Elements of $\mathscr E_a$
are called covariance-energy vectors.
\end{definition}

\begin{corollary}[Covariance-energy response quotient]
\label{cor:covariance-fiber-energy-quotient}
For $h,k\in H$ the following are equivalent:
\[
 [h]=[k]\text{ in }\mathscr E_a,
 \qquad
 \langle a(h-k),h-k\rangle=0,
 \qquad
 a^{1/2}h=a^{1/2}k.
\]
Consequently every continuous response built only from covariance pairings
$\langle a\cdot,\cdot\rangle$ descends uniquely to $\mathscr E_a$, and two
directions have the same response exactly when their difference lies in
$\ker a$.  Thus $\mathscr E_a$ is the response quotient for first-order
covariance-energy directions, defined before quasi-sure identification.
\end{corollary}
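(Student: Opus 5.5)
The plan is to prove the three-way equivalence by elementary Hilbert-space algebra, and then to read off the descent statement from the definition of $\mathscr E_a$ as the quotient $H/\ker a$ before completion. Take $a\in\mathcal S_1(H)_+$ and $h,k\in H$, and put $d:=h-k$. By definition of the quotient $H/\ker a$ with the seminorm $\|[h]\|_a^2=\langle ah,h\rangle$, the classes satisfy $[h]=[k]$ in $\mathscr E_a$ exactly when $\|[d]\|_a=0$, that is, when $\langle ad,d\rangle=0$. Here one should note first that $\|\cdot\|_a$ is a norm on $H/\ker a$. This is the usual fact that the null space of the positive semidefinite form $\langle a\cdot,\cdot\rangle$ equals $\ker a$, and it will come out of the next step. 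Granting it, the map $H/\ker a\to\mathscr E_a$ is injective, so equality in the completion agrees with equality in the quotient.

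The second equivalence rests on the identity $\langle ad,d\rangle=\langle a^{1/2}d,a^{1/2}d\rangle=\|a^{1/2}d\|^2$, which uses the positive square root from the functional calculus. It shows directly that $\langle ad,d\rangle=0$ if and only if $a^{1/2}d=0$, that is, $a^{1/2}h=a^{1/2}k$. It also yields $\ker a^{1/2}=\ker a$: if $a^{1/2}d=0$ then $ad=a^{1/2}a^{1/2}d=0$, and conversely $ad=0$ gives $\|a^{1/2}d\|^2=\langle ad,d\rangle=0$. This closes the loop and confirms that $\|\cdot\|_a$ is a norm on $H/\ker a$, which is also why $\iota_a$ in \eqref{eq:covariance-energy-fiber-embedding} is a well-defined isometry.

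For the consequence, take a response $F$ depending on $h$ only through covariance pairings $\langle a h,\cdot\rangle$ or $\langle a\cdot,h\rangle$, continuous for $\|\cdot\|_a$. Since $\langle ah,g\rangle=\langle a^{1/2}h,a^{1/2}g\rangle$, such pairings depend only on $a^{1/2}h$, hence only on $[h]$, so $F$ descends to $H/\ker a$. Continuity for $\|\cdot\|_a$ then extends it uniquely to the completion $\mathscr E_a$, by the descent and completion mechanism of \Cref{thm:descent-before-completion}(i)--(ii). For the converse, if two directions have the same response for every covariance pairing, take the quadratic response $\langle a(h-k),h-k\rangle$, which itself lies in the class; its vanishing forces $h-k\in\ker a$. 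No law enters at any point, so the quotient is defined before quasi-sure identification. The only point requiring care is making precise what "built only from covariance pairings" means, and I would fix it as responses that factor through $h\mapsto a^{1/2}h$ continuously.
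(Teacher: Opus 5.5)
Your argument is correct and is essentially the paper's: the equivalence comes from $\langle a(h-k),h-k\rangle=\|a^{1/2}(h-k)\|^2$ (the isometry $[h]\mapsto a^{1/2}h$), descent from the fact that covariance pairings vanish on $\ker a$ in each argument, and exactness from the energy form itself being one of the descended responses. You are more explicit than the paper about $\ker a^{1/2}=\ker a$ and about injectivity of $H/\ker a\to\mathscr E_a$; note only that extending to the completion requires uniform continuity, not mere continuity, which the pairings have because they are bounded in $\|\cdot\|_a$.
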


\begin{proof}
The equivalence is immediate from the definition of the seminorm and the
isometric embedding $[h]\mapsto a^{1/2}h$.  A covariance pairing vanishes on
$\ker a$ in each argument, so it is representative-independent and factors
through the quotient.  Exactness follows because the norm itself is one of the
descended responses.
\end{proof}

\begin{lemma}[Borel realization of the covariance-energy bundle]
\label{lem:Borel-covariance-energy-bundle}
Let
\begin{equation}\label{eq:Borel-covariance-energy-bundle}
 \mathscr E_\Gamma^\sharp
 :=\left\{(a,u)\in\mathcal I_\Gamma\times H:
 u\in\overline{\operatorname{Ran}a^{1/2}}\right\}.
\end{equation}
Then $\mathscr E_\Gamma^\sharp$ is a Borel subset of
$\mathcal I_\Gamma\times H$.  Via the fiberwise isometries $\iota_a$, it gives
the disjoint union $\bigsqcup_{a\in\mathcal I_\Gamma}\mathscr E_a$ a
canonical standard Borel Hilbert-bundle structure.  In particular, if
$a_t$ is a predictable Borel $\mathcal I_\Gamma$-valued process, a section
$U_t\in\mathscr E_{a_t}$ is predictable Borel exactly when
$(a_t,\iota_{a_t}U_t)$ is predictable Borel as an
$\mathscr E_\Gamma^\sharp$-valued map.
\end{lemma}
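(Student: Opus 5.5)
The plan is to show that membership $u\in\overline{\operatorname{Ran}a^{1/2}}$ is equivalent to a countable family of Borel conditions in $(a,u)$. This gives Borelness of $\mathscr E_\Gamma^\sharp$. The bundle structure then follows from standard-Borel transfer.

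The key identity is $\overline{\operatorname{Ran}a^{1/2}}=(\ker a^{1/2})^\perp=(\ker a)^\perp=\overline{\operatorname{Ran}a}$ for positive $a$. Writing $\Pi_a$ for the orthogonal projection onto this closed range, we have $u\in\overline{\operatorname{Ran}a^{1/2}}$ iff $\Pi_au=u$. So the main step is Borel (in fact Baire-one) dependence of $a\mapsto\Pi_a$ in the strong operator topology on $\mathcal I_\Gamma$. Here I would use $\Pi_a=\text{s-}\lim_{n\to\infty}f_n(a)$ with $f_n(\lambda)=\lambda/(\lambda+1/n)$, i.e.\ $f_n(a)=a(a+n^{-1})^{-1}$. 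For fixed $n$ the map $a\mapsto a(a+n^{-1})^{-1}$ is trace-norm-to-operator-norm continuous on the positive cone, by the resolvent identity $(a+\epsilon)^{-1}-(b+\epsilon)^{-1}=(a+\epsilon)^{-1}(b-a)(b+\epsilon)^{-1}$ with resolvents bounded by $\epsilon^{-1}$. The spectral theorem and dominated convergence give strong convergence to $\Pi_a$ for each $a$. Hence $(a,u)\mapsto\Pi_au=\lim_n f_n(a)u$ is a pointwise limit of continuous maps $\mathcal I_\Gamma\times H\to H$, so it is Borel. It follows that $\mathscr E_\Gamma^\sharp=\{(a,u):\|\Pi_au-u\|=0\}$ is Borel.

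For the bundle structure, $\mathcal I_\Gamma\times H$ is Polish, since $\mathcal I_\Gamma$ is trace-norm compact as shown before \eqref{eq:Gamma-order-interval}. A Borel subset of a Polish space is standard Borel. Each fiber $\{u:(a,u)\in\mathscr E_\Gamma^\sharp\}$ is exactly $\iota_a(\mathscr E_a)$, because $\iota_a$ is an isometry onto $\overline{\operatorname{Ran}a^{1/2}}$ by \Cref{def:covariance-energy-fiber}. Transporting along the bijection $(a,\xi)\mapsto(a,\iota_a\xi)$ therefore defines the Borel structure on $\bigsqcup_a\mathscr E_a$. The Hilbert-bundle operations are Borel on $\mathscr E_\Gamma^\sharp$: the inner product is restricted from $H$, addition and scalar multiplication stay within fibers, and the projection to $a$ is continuous. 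A countable measurable field of frames can be taken as $\Pi_ae_k$ for an orthonormal basis $(e_k)$ of $H$, and these are Borel by the previous paragraph. The characterization of predictable sections is then tautological: $U$ is predictable Borel iff its transported image $(a_t,\iota_{a_t}U_t)$ is a predictable Borel $\mathscr E_\Gamma^\sharp$-valued map. The one point to check is that $\mathscr E_\Gamma^\sharp$ carries the trace sigma-field from $\mathcal I_\Gamma\times H$, which holds because it is a Borel subset.

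The main obstacle is the measurable dependence of the range projection, since $a\mapsto\Pi_a$ is discontinuous (the rank can jump). The resolvent approximation above resolves it without any continuity claim, provided one uses strong rather than norm convergence.
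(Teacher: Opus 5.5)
Your proposal is correct and follows essentially the paper's argument: the paper likewise characterizes membership by $\|R_m(a)u-u\|_H\to0$, where $R_m(a)=f_m(a)$ is operator-norm continuous on $\mathcal I_\Gamma$ and converges strongly to the support projection of $a$. The differences are immaterial: the paper takes $f_m(\lambda)=\sqrt{\lambda/(\lambda+m^{-1})}$ with continuous functional calculus, where you take $\lambda/(\lambda+n^{-1})$ with the resolvent identity; both then transport the Borel structure through the fiberwise isometries $\iota_a$, and your remarks on the total family $\Pi_a e_k$ and the trace $\sigma$-field only make explicit what the paper leaves to that transport step.
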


\begin{proof}
For $m\ge1$ set
\[
 R_m(a):=a^{1/2}(a+m^{-1}I)^{-1/2}
       =f_m(a),
 \qquad
 f_m(\lambda)=\sqrt{\frac{\lambda}{\lambda+m^{-1}}}.
\]
Since trace-norm convergence implies operator-norm convergence and $f_m$ is
continuous on $[0,\|\Gamma\|_{\rm op}]$, the map
$a\mapsto R_m(a)$ is operator-norm continuous on $\mathcal I_\Gamma$.
Moreover $R_m(a)$ converges strongly to the support projection of $a$, whose
range is $\overline{\operatorname{Ran}a^{1/2}}$.  Hence
\[
 (a,u)\in\mathscr E_\Gamma^\sharp
 \quad\Longleftrightarrow\quad
 \|R_m(a)u-u\|_H\longrightarrow0.
\]
The right-hand condition is Borel in $(a,u)$, proving the first assertion.
The remaining statements transport this Borel structure through the
fiberwise isometries \eqref{eq:covariance-energy-fiber-embedding}.
\end{proof}

\subsection{Program-dependent visibility: covariance, area, and dynamics}
\label{sec:first-order-visibility}

\begin{remark}[Program-relative visibility of the area channel]
\label{cor:program-dependent-area-visibility}
At a martingale restart, Hilbert--Schmidt pairings of skew It\^o area are
centered and have second moment $O(h^2)$, so signed linear mean-area germs do
not enter this covariance derivative.  The invisibility is program-relative:
the quadratic response recovers $A=\operatorname{Anti}V$, forward flows read
it, and terminal data or drivers may use it environmentally.  Thus $\ker a$
is removed by an exact quotient, whereas omitting area requires an explicit
program restriction.
\end{remark}

Consequently $Q$, but not skew area, generates the fibres:
\begin{equation}\label{eq:state-to-covariance-energy-geometry}
 (X,A,Q)\longmapsto
 (\mathfrak a_t,\mathscr E_{\mathfrak a_t}).
\end{equation}

\subsection{Backward first-jet state selection}
\label{sec:backward-covariance-face}

The derivative leg reads $Q$ through
$(\mathfrak a_t,\mathscr E_{\mathfrak a_t})$ and factors as
\begin{equation}\label{eq:quadratic-to-backward-interface}
 (X,A,Q)\longmapsto(X,Q)
 \longmapsto (X,Q,\mathfrak a_t,\mathscr E_{\mathfrak a_t}).
\end{equation}
Area nevertheless remains environmental unless every terminal, driver, and
continuation response factors through $(X,A,Q)\mapsto(X,Q)$; it never enters
the covariance-energy fibre of $Z$.

We work on a nonempty law family $\varnothing\ne\cP\subset\Mmax$ and
formulate the backward interface for scalar responses.  The common stochastic
integral is the stable-causal closed embedding
\[
 I:\mathbb H_{\cP}^2\longrightarrow\mathbb S_{\cP}^2
\]
from \Cref{thm:uniform-BDG-extension}; two-sided BDG reconstructs the jet
topology.  Locally,
\[
 \|Y\|_{\mathbb S_{\cP}^2}
 :=\sup_{P\in\cP}\left(E^P\sup_{t\le T}|Y_t|^2\right)^{1/2},
\]
and $\mathbb H_{\cP}^2$ is the upper-energy completion of elementary
predictable scalar integrands.  For terminal variables write
\[
 \|\xi\|_{\mathcal L_{\cP}^2}
 :=\sup_{P\in\cP}(E^P|\xi|^2)^{1/2}.
\]

For an elementary predictable coefficient $Z$, define its active covariance
section by
\[
 \mathfrak J_{\mathfrak a}^0Z
 :=\mathfrak a^{1/2}Z.
\]
Since $\mathfrak a=a^P$ $\dd t\otimes P$-a.e.,
\begin{equation}\label{eq:active-coordinate-isometry}
 \|Z\|_{\mathbb H_{\cP}^2}
 =\sup_{P\in\cP}
 \left(E^P\int_0^T
 \|(\mathfrak J_{\mathfrak a}^0Z)_t\|_H^2\,\dd t\right)^{1/2}.
\end{equation}
First quotient the elementary active sections by the null space of the
displayed upper-energy seminorm, and let
$\mathbb U_{\mathfrak a,\cP}^2$ be the resulting completion.  Then
\eqref{eq:active-coordinate-isometry} extends uniquely to an isometric
bijection
\begin{equation}\label{eq:completed-active-coordinate-map}
 \mathfrak J_{\mathfrak a}:\mathbb H_{\cP}^2
 \longrightarrow\mathbb U_{\mathfrak a,\cP}^2.
\end{equation}
The target is this common active-section closure, so
$\mathfrak J_{\mathfrak a}$ is onto its declared common space.

\begin{remark}[Interfaces in the backward construction]
\label{rem:backward-construction-interfaces}
The selected covariance $Q$ determines the energy fibers.  On an admissible
raw testing core, covariance cross-responses construct the fiber-valued jet;
condition~\textup{(A2)} places its active section in one common upper-energy
class and gives its lawwise GKW interpretation.  Condition~\textup{(B4)}
identifies that class with a solver martingale coordinate.  Value-only
compression is governed by the closed-relation or restricted-range criteria.
\end{remark}

\subsubsection{Raw covariance-response jets}

\begin{proposition}[Covariance cross-responses and the orthogonal-martingale boundary]
\label{prop:orthogonal-martingale-derivative-boundary}
Fix $P\in\cP$ and let $M$ be a continuous square-integrable scalar
$P$-martingale.  Its Galtchouk--Kunita--Watanabe decomposition relative to the
closed stable subspace generated by $X$ is
\begin{equation}\label{eq:GKW-backward-boundary}
 M_t-M_0=\int_0^t\langle z_r^P,\dd X_r\rangle_H+N_t^P,
\end{equation}
where $N^P$ is strongly orthogonal to every square-integrable stochastic
integral against $X$.  In particular, for every $h\in H$,
\[
 [N^P,\langle X,h\rangle]=0,
 \qquad
 \frac{\dd[M,\langle X,h\rangle]_t}{\dd t}
 =\langle a_t^Pz_t^P,h\rangle_H
\]
whenever the displayed density is defined.  Hence covariance cross-responses
determine only the energy class of $z^P$ and are blind to $N^P$.

The value trajectory still contains $N^P$.  A common channel $I(Z)$ is exact
only if every $N^P=0$ and one $Z\in\mathbb H_{\cP}^2$ projects to every
$z^P$: respectively modelwise and common-energy representability.
\end{proposition}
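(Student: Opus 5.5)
The plan is to fix $P$ and work in the Hilbert space $\mathcal M^2_0(P)$ of continuous square-integrable martingales started at zero. Under $P\in\cP\subset\Mmax$ the canonical process is a martingale ($B=0$), with bracket density $a^P$ bounded by $\Gamma$. The stable subspace $\mathcal L(X)$ consists of the integrals $\int\langle z,\dd X\rangle$. We may take $z$ in the completion $L^2_a(P)$ of predictable $H$-valued fields under $E^P\int_0^T\langle a_rz_r,z_r\rangle\dd r$, with $a=a^P$; the integrals themselves lie in $\mathcal M^2_0(P)$.

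First, It\^o isometry shows that $z\mapsto\int\langle z,\dd X\rangle$ is an isometry from $L^2_a(P)$ into $\mathcal M^2_0(P)$. Hence $\mathcal L(X)$ is closed. It is also stable under stopping, because $\int\langle z\one_{[0,\tau]},\dd X\rangle$ is the stopped integral.

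The GKW theorem \cite{KunitaWatanabe67,Galtchouk76} then decomposes $M-M_0$ as its orthogonal projection onto $\mathcal L(X)$ plus a remainder $N^P$. Stability of $\mathcal L(X)$ upgrades orthogonality to strong orthogonality: if $N\perp\mathcal L(X)$, then $E[N_\tau L_\tau]=E[N_TL^\tau_T]=0$ for every stopping time $\tau$, so $NL$ is a martingale and $[N,L]=0$. This gives \eqref{eq:GKW-backward-boundary}.

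For fixed $h\in H$ the scalar process $\langle X,h\rangle=\int\langle h,\dd X\rangle$ lies in $\mathcal L(X)$, so $[N^P,\langle X,h\rangle]=0$. The scalar bracket rule for stochastic integrals gives
\[
\dd\Bigl[\int\langle z,\dd X\rangle,\langle X,h\rangle\Bigr]_t=\langle a_tz_t,h\rangle\,\dd t .
\]
Summing the two contributions yields the stated density $\langle a_t^Pz_t^P,h\rangle$. If $z$ is only an $L^2_a$-limit of elementary fields, the identity passes to the limit by the Kunita--Watanabe inequality, since $\|a^{1/2}(z_n-z)\|\to0$ in $L^2(\dd t\otimes P)$.

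Blindness then follows at once. The cross-responses depend on $z^P$ only through $a^Pz^P$, and $a^Pz^P$ is the same for $z$ and $z+k$ whenever $k_t\in\ker a_t$. This is exactly the fibre quotient of \Cref{cor:covariance-fiber-energy-quotient}, i.e. $z^P$ is seen only through its class in $\mathscr E_{a_t}$. The remainder $N^P$ does not enter the responses at all.

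For the final assertion, suppose $M=M_0+I(Z)$ under every $P$ for a single $Z\in\mathbb H^2_{\cP}$. Uniqueness of the GKW decomposition forces $N^P=0$ and forces $z^P$ to be the projection of $Z$ in the $a^P$-energy quotient. Hence both conditions are necessary: modelwise, every $N^P$ must vanish, and in common energy, the classes $z^P$ must all come from one common $Z$. Conversely, the value trajectory contains $N^P$, because $N^P=M-M_0-\int\langle z^P,\dd X\rangle$ is determined by $M$.

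The main obstacle is technical and concerns the Hilbert-valued integrals. One must handle integrands lying only in the energy completion; these are possibly unbounded in $H$, though $a^{1/2}z$ is square integrable. The ``whenever defined'' clause then has to be read as an equality of densities $\dd t\otimes P$-a.e., which I would derive through the scalar limits described above.
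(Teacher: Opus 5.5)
Your proposal is correct and takes the same route as the paper: the Galtchouk--Kunita--Watanabe orthogonal projection onto the closed stable subspace generated by $X$, strong orthogonality of the remainder $N^P$, and the bracket density $\dd\qv{X}^P_t=a_t^P\,\dd t$, with the final assertions read off from uniqueness of the decomposition. You supply details that the paper's one-line proof leaves to the cited references: closedness via It\^o isometry, the upgrade to strong orthogonality via stopping stability, and passage to energy-completion integrands via the Kunita--Watanabe inequality.
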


\begin{proof}
This is the Kunita--Watanabe/Galtchouk orthogonal projection
\cite{KunitaWatanabe67,Galtchouk76}.  Strong orthogonality and
$\dd\qv{X}^P_t=a_t^P\dd t$ give the two bracket identities; the remaining
claims follow.
\end{proof}

Fix an at most countable orthonormal basis $(e_j)$ of $H$ and let
\[
 H_0:=\operatorname{span}_{\mathbb R}\{e_j:j\text{ is a basis index}\}
\]
be its dense algebraic finite-span.  For candidate trajectories, the testing
program supplies a pathwise algorithm producing
$[Y,\langle X,h\rangle]^{\rm pw}$ for $h\in H_0$, linearly in $h$.  At
points where these covariations are absolutely continuous, write
\begin{equation}\label{eq:backward-covariance-cotangent}
 \lambda_t^Y(h)
 :=\frac{\dd [Y,\langle X,h\rangle]^{\rm pw}_t}{\dd t},
 \qquad h\in H_0.
\end{equation}
Only the basis coordinates are used for measurability; real-linear extension
to $H_0$ is part of the testing structure.

\begin{proposition}[Energy-bounded Riesz reconstruction of the covariance first jet]
\label{prop:covariance-first-jet-riesz-forcing}
Fix a point $(t,\omega)$ at which $\mathfrak a_t$ and the linear functional
$\lambda_t^Y$ on $H_0$ are defined.  Assume that
\begin{equation}\label{eq:backward-cotangent-energy-bound}
 |\lambda_t^Y(h)|
 \le g_t\,\langle\mathfrak a_th,h\rangle_H^{1/2},
 \qquad h\in H_0,
\end{equation}
for some finite $g_t$.  Then $\lambda_t^Y$ vanishes on
$H_0\cap\ker\mathfrak a_t$, descends uniquely to a continuous functional on
the covariance-energy fiber $\mathscr E_{\mathfrak a_t}$, and there is a
unique
\[
 D_Q^{\rm pw}Y_t\in\mathscr E_{\mathfrak a_t}
\]
such that
\begin{equation}\label{eq:backward-first-jet-chart}
 \lambda_t^Y(h)
 =\left\langle D_Q^{\rm pw}Y_t,[h]\right\rangle_{\mathscr E_{\mathfrak a_t}},
 \qquad h\in H_0,
\end{equation}
with $\|D_Q^{\rm pw}Y_t\|_{\mathscr E_{\mathfrak a_t}}\le g_t$.
Since $\mathfrak a_t$ is bounded, the energy bound also makes
$\lambda_t^Y$ continuous for the $H$-norm.  Let $\bar\lambda_t^Y$ be its
unique $H$-continuous extension and let $c_t^Y\in H$ be the ordinary
$H$-Riesz representative of $\bar\lambda_t^Y$.  Under the fiber isometry
$\iota_{\mathfrak a_t}$, put
$u_t^Y:=\iota_{\mathfrak a_t}D_Q^{\rm pw}Y_t$.  Then
\begin{equation}\label{eq:backward-pseudoinverse-factorization}
 c_t^Y=\mathfrak a_t^{1/2}u_t^Y,
 \qquad
 u_t^Y=\lim_{\delta\downarrow0}
 (\mathfrak a_t+\delta I)^{-1/2}c_t^Y
 \quad\text{in }H.
\end{equation}
Moreover,
\begin{equation}\label{eq:backward-jet-exact-dual-norm}
 \|D_Q^{\rm pw}Y_t\|_{\mathscr E_{\mathfrak a_t}}
 =\sup_{\substack{h\in H_0\\
          \langle\mathfrak a_th,h\rangle\le1}}|\lambda_t^Y(h)|
 =\sup_{\substack{h\in H\\
          \langle\mathfrak a_th,h\rangle\le1}}|\bar\lambda_t^Y(h)|.
\end{equation}
This common value is the least constant $g$ for which
\eqref{eq:backward-cotangent-energy-bound} holds.  Conversely every fiber
vector defines a continuous covariance cotangent functional satisfying that
bound.

If $(t,\omega)\mapsto\mathfrak a_t(\omega)$ and the countable basis
coordinates $(t,\omega)\mapsto\lambda_t^Y(e_j)$ are predictable, then the
jet has a canonical total predictable realization.  Namely, define
\begin{align}
 c_t^{Y,\mathrm{tot}}
 &:=\operatorname{Lim}_H\left(
   \left(\sum_{j\le N}\lambda_t^Y(e_j)e_j\right)_{N\ge1}\right),
 \label{eq:backward-total-Riesz-vector}\\
 u_t^{Y,\mathrm{tot}}
 &:=\operatorname{Lim}_H\left(
   \left((\mathfrak a_t+n^{-1}I)^{-1/2}
          c_t^{Y,\mathrm{tot}}\right)_{n\ge1}\right),
 \label{eq:backward-total-pseudogradient}
\end{align}
and set $u_t^{Y,\mathrm{tot}}=0$ whenever
$u_t^{Y,\mathrm{tot}}\notin\overline{\operatorname{Ran}\mathfrak a_t^{1/2}}$.
Then $(\mathfrak a_t,u_t^{Y,\mathrm{tot}})$ is a predictable section of the
Borel bundle in \Cref{lem:Borel-covariance-energy-bundle}; on every point where
the energy bound holds it equals
$\iota_{\mathfrak a_t}D_Q^{\rm pw}Y_t$.  Thus no measurable selection is made
from an uncountable family of representatives.
\end{proposition}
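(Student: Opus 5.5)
The proof has two parts: a pointwise Riesz argument at a fixed $(t,\omega)$, and then a measurability argument for the totalized formulas. Write $a:=\mathfrak a_t$ and $\lambda:=\lambda_t^Y$.

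\emph{Pointwise part.} If $h\in H_0\cap\ker a$, the bound \eqref{eq:backward-cotangent-energy-bound} gives $\lambda(h)=0$. More generally $\lambda(h)=\lambda(k)$ whenever $[h]=[k]$ in $\mathscr E_a$: linearity reduces this to $h-k$, and the bound kills it because $\langle a(h-k),h-k\rangle=0$ by \Cref{cor:covariance-fiber-energy-quotient}. Hence $\lambda$ descends to the image of $H_0$ in $\mathscr E_a$, with norm at most $g_t$. That image is dense, since $H_0$ is $H$-dense and $\|[h]\|_a\le\|a\|_{\rm op}^{1/2}\|h\|$. So $\lambda$ extends uniquely to $\mathscr E_a$, and Riesz in the Hilbert space $\mathscr E_a$ produces a unique $D_Q^{\rm pw}Y_t$ satisfying \eqref{eq:backward-first-jet-chart}.

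\emph{Dual-norm formula and converse.} The first equality in \eqref{eq:backward-jet-exact-dual-norm} is the Riesz norm computed on the dense set $\{[h]:h\in H_0\}$. The second follows because $\bar\lambda$ is $H$-continuous, bounded by $\|a\|^{1/2}g$, and the $a$-unit balls intersected with $H_0$ approximate those in $H$ in the $\|\cdot\|_a$ seminorm. Leastness of $g$ and the converse statement are immediate from Cauchy--Schwarz in $\mathscr E_a$.

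\emph{Pseudoinverse factorization.} Put $u:=\iota_aD_Q^{\rm pw}Y_t\in\overline{\operatorname{Ran}a^{1/2}}$. Then $\bar\lambda(h)=\langle u,a^{1/2}h\rangle_H=\langle a^{1/2}u,h\rangle_H$, so $c=a^{1/2}u$. Consequently
\[
(a+\delta I)^{-1/2}c=f_\delta(a)u,\qquad f_\delta(\mu)=\sqrt{\mu/(\mu+\delta)},
\]
and $f_\delta(a)$ converges strongly to the support projection of $a$, which fixes $u$. This gives \eqref{eq:backward-pseudoinverse-factorization}.

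\emph{Measurability.} The partial sums $\sum_{j\le N}\lambda_t^Y(e_j)e_j$ are predictable $H$-valued maps, so $c^{Y,\mathrm{tot}}$ is predictable by \Cref{lem:causal-Borel-limit}. Where $\bar\lambda$ exists, $\lambda(e_j)=\langle c,e_j\rangle$, so these sums converge to $c$. The maps $a\mapsto(a+n^{-1}I)^{-1/2}$ are operator-norm continuous on $\mathcal I_\Gamma$, by continuous functional calculus as in \Cref{lem:Borel-covariance-energy-bundle}. Hence the inner sequence in \eqref{eq:backward-total-pseudogradient} is jointly Borel, and the selector gives a predictable $u^{Y,\mathrm{tot}}$. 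Membership $(a,u)\in\mathscr E_\Gamma^\sharp$ is a Borel condition by that lemma, so resetting to $0$ off this set keeps the section predictable. At points where the energy bound holds, the preceding factorization shows the sequence converges to $\iota_aD_Q^{\rm pw}Y_t$, which lies in the bundle, so the totalized section agrees with the jet there.

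The step I expect to need the most care is the second equality in \eqref{eq:backward-jet-exact-dual-norm}. There $h$ ranges over all of $H$ while $H_0$ is dense only in $H$-norm, so I plan to argue through the isometry $\iota_a$: approximate $a^{1/2}h$ by $a^{1/2}h_n$ with $h_n\in H_0$, and renormalize the $h_n$ to stay in the unit $a$-ball.
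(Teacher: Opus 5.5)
Your proposal is correct and takes essentially the same route as the paper. Both arguments use Riesz representation on the fiber via density of the image of $H_0$ (equivalently of $\mathfrak a_t^{1/2}H_0$ in $\overline{\operatorname{Ran}\mathfrak a_t^{1/2}}$), then the identity $c_t^Y=\mathfrak a_t^{1/2}u_t^Y$ with spectral convergence of $(\mathfrak a_t/(\mathfrak a_t+\delta I))^{1/2}$ to the support projection, and finally limit selectors, functional calculus, and zero extension through the Borel bundle $\mathscr E_\Gamma^\sharp$.

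The step you flag as delicate is simpler than you expect. Once you have $\bar\lambda_t^Y(h)=\langle u_t^Y,\mathfrak a_t^{1/2}h\rangle_H$ on all of $H$, both suprema equal $\|u_t^Y\|_H$, because $\mathfrak a_t^{1/2}H_0\subset\mathfrak a_t^{1/2}H$ is already dense in the closed range. This is exactly how the paper gets the dual-norm formula, and it makes the renormalization unnecessary.
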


\begin{proof}
The bound annihilates $H_0\cap\ker\mathfrak a_t$ and is continuous for
$h\mapsto\|\mathfrak a_t^{1/2}h\|$.  Density of
$\mathfrak a_t^{1/2}H_0$ in
$\overline{\operatorname{Ran}\mathfrak a_t^{1/2}}$ and Riesz representation
give the unique fibre vector and \eqref{eq:backward-jet-exact-dual-norm}.

The operator bound also extends $\lambda_t^Y$ to $H$, with Riesz vector
$c_t^Y$.  For $u_t^Y=\iota_{\mathfrak a_t}D_Q^{\rm pw}Y_t$ and then by
density,
\[
 \langle c_t^Y,h\rangle_H
 =\bar\lambda_t^Y(h)
 =\langle u_t^Y,\mathfrak a_t^{1/2}h\rangle_H
 =\langle\mathfrak a_t^{1/2}u_t^Y,h\rangle_H.
\]
Thus $c_t^Y=\mathfrak a_t^{1/2}u_t^Y$.  Functional calculus gives
\[
 (\mathfrak a_t+\delta I)^{-1/2}c_t^Y
 =\left(\frac{\mathfrak a_t}{\mathfrak a_t+\delta I}\right)^{1/2}u_t^Y
 \longrightarrow u_t^Y
\]
by spectral dominated convergence.  The converse is fibre Cauchy--Schwarz.

For predictability, each
\[
 c_t^{Y,N}:=\sum_{j\le N}\lambda_t^Y(e_j)e_j
\]
is predictable.  The Borel limit selector gives
\eqref{eq:backward-total-Riesz-vector}.  Borel functional calculus for
$(a,c)\mapsto(a+n^{-1}I)^{-1/2}c$ and a second limit selector give
\eqref{eq:backward-total-pseudogradient}, which equals $u_t^Y$ on the valid
locus.  Since $\mathscr E_\Gamma^\sharp$ is Borel by
\Cref{lem:Borel-covariance-energy-bundle}, zero extension yields the claimed
total predictable section.
\end{proof}

The preceding proposition gives the pointwise measurable Riesz
reconstruction.  The following definition adds the nondominated requirement
that its active section belong to one common completion.

\begin{definition}[Admissible backward testing core]
\label{def:admissible-backward-testing-core}
A linear operational class $\mathscr C_{\rm bw}^{1,2}$ of bounded
raw-adapted continuous trajectories is an \emph{admissible backward testing
core} if it is invariant under deterministic stopping and the following hold.
\begin{enumerate}[label=\textup{(A\arabic*)},leftmargin=2.8em]
\item Under every $P\in\cP$, each $Y\in\mathscr C_{\rm bw}^{1,2}$ is a
continuous square-integrable special semimartingale.  Write its normalized
canonical decomposition as
\[
 Y=Y_0+M^{Y,P}+A^{Y,P},
 \qquad M^{Y,P}\in\mathcal M^2(P),
\]
with $A^{Y,P}$ predictable of finite variation and both summands zero at
time zero.  For every $h\in H_0$, the pathwise covariation agrees $P$-a.s.
with the classical covariation of $M^{Y,P}$ with $\langle X,h\rangle$.
The agreement is required on one common full set for the countable basis
coordinates; real linearity then gives it simultaneously on $H_0$.
\item The covariations are pathwise absolutely continuous and their countable
basis densities $\lambda^Y(e_j)$ have predictable total versions.  For each
$Y$ there are a nonnegative predictable $g^Y$ and a $\cP$-polar set $N_Y$
such that, on $N_Y^c$, for
Lebesgue-almost every $t$, simultaneously for every $h\in H_0$,
\[
 |\lambda_t^Y(h)|
 \le g_t^Y\langle\mathfrak a_th,h\rangle_H^{1/2},
\]
and
\[
 \sup_{P\in\cP}E^P\int_0^T|g_t^Y|^2\,\dd t<\infty.
\]
Let $U^Y:=\iota_{\mathfrak a}D_Q^{\rm pw}Y$ be the predictable Riesz
section constructed in \Cref{prop:covariance-first-jet-riesz-forcing}.  It is
required that
\[
 [U^Y]\in\mathbb U_{\mathfrak a,\cP}^2
       =\operatorname{Ran}\mathfrak J_{\mathfrak a},
 \qquad
 [D_Q^{\rm pw}Y]_{\mathbb H_{\cP}^2}
 :=\mathfrak J_{\mathfrak a}^{-1}[U^Y].
\]
Equivalently, there are bounded elementary predictable integrands $Z^{Y,n}$
such that
\[
 \lim_{n\to\infty}\sup_{P\in\cP}E^P\int_0^T
 \|\mathfrak a_t^{1/2}Z_t^{Y,n}-U_t^Y\|_H^2\,\dd t=0.
\]
Thus the requirement is stronger than membership of $[U^Y]$ in the product of
the modelwise finite-energy spaces.
\item The covariation algorithm is linear and stopping compatible:
\begin{equation}\label{eq:backward-jet-stopping}
 D_Q^{\rm pw}(r_tY)=\one_{[0,t]}D_Q^{\rm pw}Y,
 \qquad 0\le t\le T.
\end{equation}
\item The observable testing graph defined in
\eqref{eq:backward-observable-testing-graph} is separable for the product
metric of $\mathbb S_{\cP}^2\times\mathbb H_{\cP}^2$.
\end{enumerate}
Thus \textup{(A2)} is a genuine common-completion assumption, stronger than
uniform modelwise energy; \textup{(A4)} only makes the completion causal
Polish.
\end{definition}

\begin{corollary}[Modelwise GKW identification of the covariance jet]
\label{cor:modelwise-GKW-covariance-jet}
Let $Y\in\mathscr C_{\rm bw}^{1,2}$ and fix $P\in\cP$.  Suppose the
$P$-martingale part of $Y$ has Galtchouk--Kunita--Watanabe decomposition
\[
 M_t^Y-M_0^Y
 =\int_0^t\langle z_r^P,\dd X_r\rangle_H+N_t^P
\]
as in \eqref{eq:GKW-backward-boundary}, and suppose the pathwise covariation
algorithm agrees with classical covariation under $P$.  Then, for
$\dd t\otimes P$-almost every point,
\begin{equation}\label{eq:modelwise-GKW-jet-identification}
 D_Q^{\rm pw}Y_t=[z_t^P]_{\mathscr E_{a_t^P}},
 \qquad
 \iota_{\mathfrak a_t}D_Q^{\rm pw}Y_t
 =(a_t^P)^{1/2}z_t^P,
\end{equation}
and
\begin{equation}\label{eq:modelwise-GKW-jet-energy}
 \|D_Q^{\rm pw}Y_t\|_{\mathscr E_{\mathfrak a_t}}^2
 =\langle a_t^Pz_t^P,z_t^P\rangle_H.
\end{equation}
In particular the jet is independent of the chosen representative of the GKW
integrand: two coefficients define the same covariance jet exactly when their
difference has zero $a_t^P$-energy.  The orthogonal martingale $N^P$ does not
enter the jet, although it remains visible in the value trajectory.
\end{corollary}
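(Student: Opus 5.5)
The plan is to compute the pathwise cotangent $\lambda^Y_t$ under $P$ from the GKW decomposition, and then read off the jet from the uniqueness part of \Cref{prop:covariance-first-jet-riesz-forcing}. I would proceed in four steps.

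\emph{Step 1: the density of $\lambda^Y$ under $P$.} By (A1), on one $P$-full set the pathwise covariation $[Y,\langle X,e_j\rangle]^{\rm pw}$ agrees with the classical covariation $[M^{Y,P},\langle X,e_j\rangle]$ for every basis index $j$. Linearity then gives the same agreement for all $h\in H_0$. Now insert the GKW decomposition \eqref{eq:GKW-backward-boundary}. By \Cref{prop:orthogonal-martingale-derivative-boundary}, $[N^P,\langle X,h\rangle]=0$ and
\[
 \frac{\dd[M^{Y},\langle X,h\rangle]_t}{\dd t}=\langle a_t^Pz_t^P,h\rangle_H .
\]
Countably many basis densities are involved, so their Lebesgue points hold on one common $\dd t\otimes P$-full set. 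On that set $\lambda_t^Y(e_j)=\langle a_t^Pz_t^P,e_j\rangle$ for every $j$, and by linearity on all of $H_0$. Finally, \eqref{eq:common-density-modelwise} lets us replace $a^P_t$ by $\mathfrak a_t$ off a further $\dd t\otimes P$-null set.

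\emph{Step 2: identification of the jet.} At such points the functional $h\mapsto\langle (a_t^P)^{1/2}z_t^P,(a_t^P)^{1/2}h\rangle$ equals $\langle [z_t^P],[h]\rangle_{\mathscr E_{a_t^P}}$. It therefore satisfies the energy bound \eqref{eq:backward-cotangent-energy-bound} with $g_t=\langle a_t^Pz_t^P,z_t^P\rangle^{1/2}$. This requires $z^P_t$ to have finite energy, which holds a.e. because $z^P$ is a square-integrable integrand, i.e. $\int\langle a^Pz^P,z^P\rangle\,\dd t<\infty$ a.s. The representing vector is unique by \Cref{prop:covariance-first-jet-riesz-forcing}, since $\mathfrak a^{1/2}_tH_0$ is dense in $\overline{\operatorname{Ran}\mathfrak a_t^{1/2}}$. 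Hence $D_Q^{\rm pw}Y_t=[z_t^P]$. Applying $\iota_{\mathfrak a_t}$ gives the second identity in \eqref{eq:modelwise-GKW-jet-identification}, and the fiber norm identity \eqref{eq:modelwise-GKW-jet-energy} is immediate.

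\emph{Step 3: the point I expect to be the main obstacle.} The difficulty lies in making sure the version of $D_Q^{\rm pw}Y$ used here is the one fixed by (A2), namely the predictable section $U^Y$. That section is built on the polar complement $N_Y^c$ and may be zero-extended elsewhere. I would settle this by invoking the final clause of \Cref{prop:covariance-first-jet-riesz-forcing}: wherever the energy bound holds, the total predictable realization $u^{Y,\mathrm{tot}}_t$ coincides with $\iota_{\mathfrak a_t}D_Q^{\rm pw}Y_t$. By Step 2 the energy bound holds $\dd t\otimes P$-a.e., so the totalization agrees a.e. with $(a^P_t)^{1/2}z^P_t$. I must also check that (A2)'s bound $g^Y$ does not conflict with the GKW value; the least-constant characterization \eqref{eq:backward-jet-exact-dual-norm} shows that both produce the same dual norm.

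\emph{Step 4: the remaining assertions.} Representative independence follows from Step 2 together with \Cref{cor:covariance-fiber-energy-quotient}: two coefficients $z,z'$ give the same class in $\mathscr E_{a_t^P}$ iff $\langle a_t^P(z-z'),z-z'\rangle=0$. The absence of $N^P$ from the jet is the vanishing bracket $[N^P,\langle X,h\rangle]=0$ used in Step 1. The fact that $N^P$ remains in the value trajectory is just the decomposition $Y=Y_0+M^{Y,P}+A^{Y,P}$ itself.
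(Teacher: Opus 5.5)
Your proposal is correct and follows essentially the same route as the paper: compute $\lambda_t^Y$ from the GKW bracket identity on one common $\dd t\otimes P$-full set for the basis coordinates, replace $a_t^P$ by $\mathfrak a_t$ via \Cref{prop:common-predictable-covariance-density}, and conclude by uniqueness in the Riesz reconstruction of \Cref{prop:covariance-first-jet-riesz-forcing}; the final assertions then come from the fiber quotient and the strong orthogonality of $N^P$. Your Step~3, which matches the predictable totalization $u^{Y,\mathrm{tot}}$, is a harmless extra check that the paper leaves implicit, and there is nothing to reconcile with the bound $g^Y$, since it only dominates the dual norm determined by $\lambda^Y$.
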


\begin{proof}
By \Cref{prop:common-predictable-covariance-density},
$\mathfrak a_t=a_t^P$ for $\dd t\otimes P$-almost every point.  The bracket
identity in \Cref{prop:orthogonal-martingale-derivative-boundary} holds on a
common $\dd t\otimes P$-full set for the countable basis coordinates.
Intersecting these full sets and using real linearity gives, simultaneously
for $h\in H_0$,
\[
 \lambda_t^Y(h)=\langle a_t^Pz_t^P,h\rangle_H
 =\langle[z_t^P],[h]\rangle_{\mathscr E_{a_t^P}}.
\]
Uniqueness in the Riesz reconstruction
\Cref{prop:covariance-first-jet-riesz-forcing} proves
\eqref{eq:modelwise-GKW-jet-identification}; the norm identity is the
definition of the covariance-energy fiber.  The final statements follow from
$[z]=[\widetilde z]$ if and only if
$(a_t^P)^{1/2}(z-\widetilde z)=0$ and from the strong orthogonality of $N^P$.
\end{proof}

\begin{corollary}[Identification with the common martingale integrand]
\label{cor:common-integrand-equals-covariance-jet}
Let $Y\in\mathscr C_{\rm bw}^{1,2}$ and suppose that its martingale part is
represented, under every $P\in\cP$, by the modelwise projection of one common
upper-energy class $Z\in\mathbb H_{\cP}^2$ and its integral class $I(Z)$.
If the pathwise covariation
algorithm agrees modelwise with classical covariation, then
\begin{equation}\label{eq:common-integrand-jet-identity}
 [D_Q^{\rm pw}Y]_{\mathbb H_{\cP}^2}=Z.
\end{equation}
Thus on the martingale-representation branch of the backward interface, the
response-generated covariance jet coincides with the conventional stochastic
integrand.
\end{corollary}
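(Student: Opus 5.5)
The plan is to compare the two classes through the isometry $\mathfrak J_{\mathfrak a}:\mathbb H_{\cP}^2\to\mathbb U_{\mathfrak a,\cP}^2$ of \eqref{eq:completed-active-coordinate-map}. By definition in \textup{(A2)}, $[D_Q^{\rm pw}Y]_{\mathbb H_{\cP}^2}=\mathfrak J_{\mathfrak a}^{-1}[U^Y]$ with $U^Y=\iota_{\mathfrak a}D_Q^{\rm pw}Y$. So it suffices to prove
\[
 \sup_{P\in\cP}E^P\int_0^T\|U_t^Y-(\mathfrak J_{\mathfrak a}Z)_t\|_H^2\,\dd t=0,
\]
which reduces to a lawwise identification. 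Both sides are elements of the common completion, so their upper-energy distance is the supremum over $P$ of the modelwise energy distances of their projections.

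First fix $P\in\cP$. By hypothesis, the martingale part $M^{Y,P}$ equals the modelwise projection of $I(Z)$, that is, $\int\langle z^P,\dd X\rangle$ with $z^P$ the modelwise projection of $Z$. Hence the GKW orthogonal part $N^P$ vanishes, and the GKW integrand is $z^P$ up to $a^P$-null energy. \Cref{cor:modelwise-GKW-covariance-jet} then gives $U^Y_t=(a^P_t)^{1/2}z^P_t$ for $\dd t\otimes P$-a.e. point. Next, the modelwise projection of $\mathfrak J_{\mathfrak a}Z$ must be identified with $(a^P)^{1/2}z^P$. For elementary $Z$ this holds by the definition $\mathfrak J^0_{\mathfrak a}Z=\mathfrak a^{1/2}Z$ together with \eqref{eq:common-density-modelwise}. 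For general $Z$, take elementary $Z^n\to Z$ in $\mathbb H_{\cP}^2$. The modelwise projections converge in $P$-energy, since the upper energy dominates each $P$-energy. Isometry then gives $\mathfrak J_{\mathfrak a}Z^n\to\mathfrak J_{\mathfrak a}Z$ in $\mathbb U^2$, so the $P$-projections converge in $L^2(\dd t\otimes P;H)$ to $(a^P)^{1/2}z^P$.

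Combining these, for each $P$ the integrand above vanishes $\dd t\otimes P$-a.e., so the supremum is zero. Injectivity of $\mathfrak J_{\mathfrak a}$ then yields \eqref{eq:common-integrand-jet-identity}.

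The main obstacle is making precise what ``modelwise projection'' of a class in the completions means. The key point is that the map from $\mathbb H_{\cP}^2$ (respectively $\mathbb U^2_{\mathfrak a,\cP}$) to $L^2(P)$-energy classes is contractive and well defined on the quotient. I would verify this first on elementary integrands and extend it by density, so that the equality of all modelwise projections implies zero upper-energy distance.
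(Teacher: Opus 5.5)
Your proposal is correct and follows the paper's route. You make a lawwise identification through \Cref{cor:modelwise-GKW-covariance-jet}, where the orthogonal part vanishes because the martingale part is already $\pi_P I(Z)$, and then use the sup-over-laws norm identity to pass from modelwise equality to equality of common classes. The only difference is that you work in the active coordinate $\mathbb U^2_{\mathfrak a,\cP}$ through $\mathfrak J_{\mathfrak a}$. The step you flag as the main obstacle is therefore just the energy-product embedding \eqref{eq:energy-product-embedding} transported by that isometry. The paper instead applies \eqref{eq:energy-product-embedding} directly to $j_P(D_Q^{\rm pw}Y)=j_P(Z)$ in $\mathbb H^2(P)$, so it needs no separate verification of that step.
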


\begin{proof}
For fixed $P$, the GKW projection coefficient of the martingale part $I(Z)$ is
$j_P(Z)$ modulo zero $a_t^P$-energy.  By
\Cref{cor:modelwise-GKW-covariance-jet},
\[
 j_P(D_Q^{\rm pw}Y)=j_P(Z)
 \qquad\text{in }\mathbb H^2(P).
\]
Taking the supremum over $P$ in the energy-product embedding
\eqref{eq:energy-product-embedding} proves
\eqref{eq:common-integrand-jet-identity}.
\end{proof}

\begin{remark}[Intrinsic active covariance coordinate]
The identity $[D_Q^{\rm pw}Y]_{\mathbb H_{\cP}^2}=Z$ concerns common
upper-energy classes, whose intrinsic active coordinate is
\[
 [U^Y]=[\iota_{\mathfrak a}D_Q^{\rm pw}Y]
    =\mathfrak J_{\mathfrak a}Z,
\]
the common upper-$L^2$ limit of $\mathfrak a^{1/2}Z^n$ along elementary
representatives.  Only this coordinate is observable: neither a distinguished
$Z_t\in H$, nor a pointwise inverse of $\mathfrak a_t$, nor a representative
modulo $\ker\mathfrak a_t$ is selected.
The normalized innovation notation
$\mathfrak a^{\dagger/2}\!\cdot X$ would require additional integrability or
closed-range hypotheses and is not used here.
\end{remark}

Admissibility quotients $\ker\mathfrak a_t$ before completion and makes the
construction linear and stopping compatible; hence the following graph and
its closure are linear and truncation invariant.

Define the observable testing graph
\begin{equation}\label{eq:backward-observable-testing-graph}
 \mathfrak G_{Q,\cP}^{\rm bw,0}
 :=\left\{
   \bigl([Y]_{\mathbb S^2},[D_Q^{\rm pw}Y]_{\mathbb H^2}\bigr):
   Y\in\mathscr C_{\rm bw}^{1,2}\right\}
 \subset \mathbb S_{\cP}^2\times\mathbb H_{\cP}^2,
\end{equation}
where the two coordinates are respectively the quasi-sure process class and
the common upper-energy class; the fiber kernel has already been quotiented.
This raw relation is single-valued over its value projection: equality in
$\mathbb S_{\cP}^2$ gives indistinguishable continuous versions under every
$P$, hence equal classical covariations by admissibility and zero energy for
the jet difference under every law.

\subsubsection{Response-forced geometry, closed recovery, and compression}

\begin{definition}[Stable first-jet state and value--martingale response target]
\label{def:stable-backward-first-jet-state}
The \emph{stable backward first-jet space} is
\begin{equation}\label{eq:stable-backward-first-jet-space}
 \mathscr W_{Q,\cP}^{1,2}
 :=\overline{\mathfrak G_{Q,\cP}^{\rm bw,0}}
 ^{\,\mathbb S_{\cP}^2\times\mathbb H_{\cP}^2}.
\end{equation}
Its truncations are
\[
 \rho_t^{\mathscr W}(Y,Z):=(r_tY,\one_{[0,t]}Z),
 \qquad 0\le t\le T,
\]
and its graph metric is
\begin{equation}\label{eq:backward-graph-metric}
 d_{\mathscr W}((Y,Z),(\widetilde Y,\widetilde Z))
 :=\|Y-\widetilde Y\|_{\mathbb S_{\cP}^2}
   +\|Z-\widetilde Z\|_{\mathbb H_{\cP}^2}.
\end{equation}

The raw \emph{value--martingale response image} and its completion are
\begin{align}
 \mathscr R_{\rm VI}^{\rm bw,0}
 &:=\left\{\bigl([Y]_{\mathbb S^2},I(D_Q^{\rm pw}Y)\bigr):
       Y\in\mathscr C_{\rm bw}^{1,2}\right\},
 \label{eq:backward-VI-raw-response}\\
 \mathscr R_{\rm VI}^{\rm bw}
 &:=\overline{\mathscr R_{\rm VI}^{\rm bw,0}}
 ^{\,\mathbb S_{\cP}^2\times\mathbb S_{\cP}^2}.
 \label{eq:backward-VI-response-completion}
\end{align}
By the standing separability condition and completeness of the ambient Banach
products, both $\mathscr W_{Q,\cP}^{1,2}$ and
$\mathscr R_{\rm VI}^{\rm bw}$ are Polish.  The deterministic truncations
are continuous contractions, so these completions are causal Polish spaces in
the sense of \Cref{def:stable-causal}.

The response truncation is $(Y,M)\mapsto(r_tY,r_tM)$ and the response metric is
\begin{equation}\label{eq:backward-VI-response-metric}
 d_{\rm VI}((Y,M),(\widetilde Y,\widetilde M))
 :=\|Y-\widetilde Y\|_{\mathbb S_{\cP}^2}
   +\|M-\widetilde M\|_{\mathbb S_{\cP}^2}.
\end{equation}
Finally define the vertical space
\begin{equation}\label{eq:backward-vertical-space}
 \mathcal N_Q^{\rm vert}
 :=\{Z\in\mathbb H_{\cP}^2:(0,Z)\in\mathscr W_{Q,\cP}^{1,2}\}.
\end{equation}
\end{definition}

\begin{proposition}[Exact martingale calibration at deterministic and stopping horizons]
\label{prop:backward-exact-martingale-calibration}
Let $\tau:\Omega\to[0,T]$ be a bounded raw stopping time.  There is a
canonical contractive truncation
\[
 \mathsf T_\tau:\mathbb H_{\cP}^2\longrightarrow\mathbb H_{\cP}^2,
 \qquad
 \mathsf T_\tau Z=:\one_{[0,\tau]}Z,
\]
obtained as the upper-energy limit of dyadic predictable stopping
truncations.  It satisfies
\begin{equation}\label{eq:common-integral-random-stopping}
 I(\one_{[0,\tau]}Z)=r_\tau I(Z)
 \qquad\text{in }\mathbb S_{\cP}^2,
\end{equation}
and, for bounded raw stopping times $\sigma,\tau$,
\begin{equation}\label{eq:random-stopping-semigroup}
 \mathsf T_\sigma\mathsf T_\tau
 =\mathsf T_{\sigma\wedge\tau}.
\end{equation}
The truncation is continuous in the stopping horizon: if
$\|\tau_n-\tau\|_\infty\to0$, where
$\|\sigma-\tau\|_\infty:=\sup_{\omega\in\Omega}|\sigma(\omega)-\tau(\omega)|$
is the raw uniform norm, then, for every
$Z\in\mathbb H_{\cP}^2$,
\begin{equation}\label{eq:random-stopping-horizon-continuity}
 \|\mathsf T_{\tau_n}Z-\mathsf T_\tau Z\|_{\mathbb H_{\cP}^2}
 +\|r_{\tau_n}I(Z)-r_\tau I(Z)\|_{\mathbb S_{\cP}^2}
 \longrightarrow0.
\end{equation}
Moreover it obeys the exact stopping-horizon It\^o isometry
\begin{equation}\label{eq:backward-stopping-Ito-isometry}
 \boxed{
 \|\one_{[0,\tau]}Z\|_{\mathbb H_{\cP}^2}
 =\|I_\tau(Z)\|_{\mathcal L_{\cP}^2}.}
\end{equation}
For deterministic $\tau=t$ this is
\begin{equation}\label{eq:backward-terminal-Ito-isometry}
 \|\one_{[0,t]}Z\|_{\mathbb H_{\cP}^2}
 =\|I_t(Z)\|_{\mathcal L_{\cP}^2}.
\end{equation}
Consequently,
\begin{equation}\label{eq:backward-terminal-path-calibration}
 \|Z\|_{\mathbb H_{\cP}^2}
 =\|I_T(Z)\|_{\mathcal L_{\cP}^2}
 \le\|I(Z)\|_{\mathbb S_{\cP}^2}
 \le2\|Z\|_{\mathbb H_{\cP}^2}.
\end{equation}
In particular, on the stable first-jet graph,
\begin{equation}\label{eq:backward-exact-terminal-response-metric}
 d_{\mathscr W}((Y,Z),(\widetilde Y,\widetilde Z))
 =\|Y-\widetilde Y\|_{\mathbb S_{\cP}^2}
  +\|I_T(Z-\widetilde Z)\|_{\mathcal L_{\cP}^2}.
\end{equation}
Thus terminal and stopped martingale responses calibrate the jet geometry
isometrically; the whole-path martingale response is their Doob propagation.
\end{proposition}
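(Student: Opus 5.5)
We construct $\mathsf T_\tau$ first on elementary predictable integrands and extend it by the modelwise It\^o isometry.

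\emph{Dyadic approximation of $\tau$.} Let $\tau_n:=2^{-n}T\lceil 2^nT^{-1}\tau\rceil$ be the usual dyadic upper approximations. Each $\tau_n$ is a raw stopping time taking finitely many values, and $\|\tau_n-\tau\|_\infty\le T2^{-n}$. For an elementary predictable $Z$, the product $\one_{[0,\tau_n]}Z$ is again elementary predictable, since $\one_{(t_k,t_{k+1}]}\one_{\{\tau_n>t_k\}}$ is predictable. In the limit, $\one_{[0,\tau]}Z$ is predictable because $\one_{[0,\tau]}$ is a predictable set for a raw stopping time.

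\emph{Isometries on elementary integrands.} Under every $P$ the classical isometry gives
\[
 E^P|I_\tau(Z)|^2=E^P\int_0^\tau\langle a_t^PZ_t,Z_t\rangle\,\dd t ,
\]
and $\|\one_{[0,\tau]}Z\|_{\mathbb H^2(P)}$ equals the right-hand side. Taking $\sup_P$ yields \eqref{eq:backward-stopping-Ito-isometry} for elementary $Z$. Contractivity $\|\one_{[0,\tau]}Z\|_{\mathbb H_{\cP}^2}\le\|Z\|_{\mathbb H_{\cP}^2}$ is immediate modelwise. Contractivity lets $\mathsf T_\tau$ extend uniquely to $\mathbb H_{\cP}^2$. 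The extension agrees with the upper-energy limit of $\mathsf T_{\tau_n}$: on elementary $Z$ with $\|Z\|_\infty$ bounded,
\[
 \|\mathsf T_{\tau_n}Z-\mathsf T_\tau Z\|^2\le \|Z\|_\infty^2\,\Lambda\,\|\tau_n-\tau\|_\infty ,
\]
and a $3\varepsilon$ argument with density then handles general $Z$. The same estimate, using $\Tr a^P\le\Lambda$ uniformly over $\Mmax$, proves the $\mathbb H$-part of \eqref{eq:random-stopping-horizon-continuity} for arbitrary $\tau_n\to\tau$.

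\emph{Stopping identities.} On elementary $Z$ the identity $I(\one_{[0,\tau]}Z)=r_\tau I(Z)$ holds pathwise. Both sides are continuous in $Z$ from $\mathbb H_{\cP}^2$ to $\mathbb S_{\cP}^2$, by the upper BDG bound $\|I(Z)\|_{\mathbb S}\le2\|Z\|_{\mathbb H}$ from \Cref{thm:uniform-BDG-extension} together with the contractivity of $r_\tau$ in $\mathbb S_{\cP}^2$. This proves \eqref{eq:common-integral-random-stopping}. The $\mathbb S$-part of \eqref{eq:random-stopping-horizon-continuity} follows because $r_{\tau_n}I(Z)-r_\tau I(Z)=I(\mathsf T_{\tau_n}Z-\mathsf T_\tau Z)$ up to sign conventions, after splitting $[0,\tau_n\vee\tau]$ into the two truncations. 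Alternatively, approximate $Z$ by elementary integrands and use the Doob bound. The semigroup law \eqref{eq:random-stopping-semigroup} holds on elementary integrands since $\one_{[0,\sigma]}\one_{[0,\tau]}=\one_{[0,\sigma\wedge\tau]}$, and it extends by continuity. The stopping isometry \eqref{eq:backward-stopping-Ito-isometry} for general $Z$ passes to the limit in the same way, since both sides are continuous.

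\emph{Terminal calibration and the graph metric.} The deterministic case $\tau=t$ gives \eqref{eq:backward-terminal-Ito-isometry}, and $t=T$ gives the equality in \eqref{eq:backward-terminal-path-calibration}. The middle inequality is trivial, since $|I_T|\le\sup_t|I_t|$. The last inequality is Doob's $L^2$ maximal inequality applied modelwise, after which we take $\sup_P$. Substituting the terminal isometry into \eqref{eq:backward-graph-metric} gives \eqref{eq:backward-exact-terminal-response-metric}.

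\emph{Main obstacle.} The delicate point is making sense of the supremum identity in the completed space. The upper energy is a supremum over laws, so approximation in $\mathbb H_{\cP}^2$ must be uniform in $P$. The uniform trace-clock bound is exactly what supplies this in the horizon-continuity estimate. We must also check that the modelwise projections $j_P$ commute with $\mathsf T_\tau$, so that the isometry computed law by law identifies the completed objects rather than merely their modelwise shadows.
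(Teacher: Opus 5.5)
Your proposal is correct and follows essentially the paper's route: dyadic ceilings $\tau_n$, the trace-clock estimate $\|\mathsf T_\sigma H-\mathsf T_\tau H\|_{\mathbb H_{\cP}^2}^2\le M^2\Lambda\|\sigma-\tau\|_\infty$ for bounded elementary integrands combined with density and contractivity, the modelwise It\^o isometry followed by $\sup_P$, and Doob's inequality for the factor $2$. The only difference is bookkeeping. The paper identifies $j_P(\mathsf T_\tau Z)=\one_{[0,\tau]}j_P(Z)$ for every $Z$ and then uses injectivity of the product embedding, whereas you verify the identities on elementary integrands and extend them by continuity. One small correction: for non-grid $\tau$, the process $\one_{[0,\tau]}Z$ is not elementary, so your elementary-case identity $I(\one_{[0,\tau]}Z)=r_\tau I(Z)$ does not follow directly from the step-integral formula; it comes from the exact pathwise identities for the $\tau_n$ by passing to the limit.
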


\begin{proof}
Let $\Delta_n=T2^{-n}$ and let
\[
 \tau_n:=\min\{k\Delta_n:0\le k\le2^n,\ k\Delta_n\ge\tau\}
\]
be the dyadic ceiling of $\tau$.  Then $\tau_n\downarrow\tau$, and
$\one_{[0,\tau_n]}$ is elementary predictable: on
$(k\Delta_n,(k+1)\Delta_n]$ it is the
$\mathcal F_{k\Delta_n}^0$-measurable coefficient
$\one_{\{\tau>k\Delta_n\}}$.  Its multiplication operator is therefore
contractive on $\mathbb H_{\cP}^2$.  Given $Z$ there, choose bounded elementary
$H$ with
$\|Z-H\|_{\mathbb H_{\cP}^2}<\varepsilon$.  If
\[
 \sup_{(t,\omega)}\|H_t(\omega)\|_H\le M,
\]
the uniform trace clock $\Tr a_t^P\le\Lambda$ gives
\[
 \|\one_{(\tau,\tau_n]}H\|_{\mathbb H_{\cP}^2}^2
 \le M^2\Lambda\,\Delta_n.
\]
By contractivity,
\[
 \|\one_{[0,\tau_n]}Z-\one_{[0,\tau_m]}Z\|_{\mathbb H_{\cP}^2}
 \le2\varepsilon
   +\|\one_{[0,\tau_n]}H-\one_{[0,\tau_m]}H\|_{\mathbb H_{\cP}^2},
\]
so the Cauchy limit defines $\mathsf T_\tau Z$.  This uses only the trace
clock; the stronger operator envelope serves fiber compactness.  Also
$Y\mapsto r_\tau Y$ is contractive on $\mathbb S_{\cP}^2$.

For bounded elementary $H$ and bounded raw stopping times $\sigma,\tau$,
\begin{equation}\label{eq:elementary-stopping-horizon-bound}
 \|\mathsf T_\sigma H-\mathsf T_\tau H\|_{\mathbb H_{\cP}^2}^2
 \le M^2\Lambda\,\|\sigma-\tau\|_\infty.
\end{equation}
Indeed, their difference is supported on the interval between the horizons.
Density and contractivity give the first term in
\eqref{eq:random-stopping-horizon-continuity}.  The second follows from
\eqref{eq:common-integral-random-stopping} and the upper $L^2$ BDG/Doob bound
for the common integral.

Under every $P\in\cP$, the modelwise projection of $\mathsf T_\tau Z$ is the
classical stopped integrand $\one_{[0,\tau]}j_P(Z)$.  Classical stochastic
integration gives
\[
 \pi_P(I(\mathsf T_\tau Z))
 =\int_0^{\cdot\wedge\tau}j_P(Z)_r\,\dd X_r
 =\pi_P(r_\tau I(Z)).
\]
The product embedding proves \eqref{eq:common-integral-random-stopping} and
uniqueness of the common class with these projections, hence independence of
the dyadic approximation.  Applying the characterization twice gives
\[
 j_P(\mathsf T_\sigma\mathsf T_\tau Z)
 =\one_{[0,\sigma]}\one_{[0,\tau]}j_P(Z)
 =\one_{[0,\sigma\wedge\tau]}j_P(Z),
\]
and proves \eqref{eq:random-stopping-semigroup}.  Finally, It\^o isometry yields
\[
 E^P|I_\tau(Z)|^2
 =E^P\int_0^\tau
   \|(a_r^P)^{1/2}j_P(Z)_r\|_H^2\,\dd r
 =E^P\int_0^\tau
   \langle a_r^Pj_P(Z)_r,j_P(Z)_r\rangle_H\,\dd r.
\]
Taking $\sup_P$ proves \eqref{eq:backward-stopping-Ito-isometry}; $\tau=t$ is
the deterministic case.  Evaluation at $T$, Doob's inequality, and the same
identity for $Z-\widetilde Z$ give respectively
\eqref{eq:backward-terminal-path-calibration} and
\eqref{eq:backward-exact-terminal-response-metric}.
\end{proof}

\begin{proposition}[Response-forced geometry of the backward first jet]
\label{thm:backward-response-forced-geometry}
The map
\begin{equation}\label{eq:backward-VI-encoder}
 \mathcal J_{\rm VI}:\mathscr W_{Q,\cP}^{1,2}
 \longrightarrow\mathscr R_{\rm VI}^{\rm bw},
 \qquad
 \mathcal J_{\rm VI}(Y,Z):=(Y,I(Z)),
\end{equation}
is a stable-causal bi-Lipschitz homeomorphism.  More precisely, for every two
first-jet states,
\begin{align}
 &\|Y-\widetilde Y\|_{\mathbb S_{\cP}^2}
   +\|Z-\widetilde Z\|_{\mathbb H_{\cP}^2}
 \notag\\
 &\qquad\le
 d_{\rm VI}\bigl(\mathcal J_{\rm VI}(Y,Z),
                   \mathcal J_{\rm VI}(\widetilde Y,\widetilde Z)\bigr)
 \notag\\
 &\qquad\le
 \|Y-\widetilde Y\|_{\mathbb S_{\cP}^2}
   +2\|Z-\widetilde Z\|_{\mathbb H_{\cP}^2}.
 \label{eq:backward-VI-BDG-equivalence}
\end{align}
Its inverse is
\begin{equation}\label{eq:backward-VI-decoder}
 \mathcal D_{\rm VI}(Y,M):=(Y,I^{-1}M),
\end{equation}
where $I^{-1}$ is taken on the closed range of the common stochastic integral.
Consequently
\[
 \boxed{
 \text{value response}+\text{martingale response}
 \quad\Longrightarrow\quad
 \mathbb S_{\cP}^2\times\mathbb H_{\cP}^2
 \text{ first-jet geometry}.}
\]
In particular, the $\mathbb H_{\cP}^2$ topology is calibrated exactly by
the terminal martingale response and reconstructed from the full path response
by Doob propagation; it is not an independent ambient choice.
\end{proposition}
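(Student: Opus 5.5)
The plan is to reduce everything to the calibration inequality \eqref{eq:backward-terminal-path-calibration} of \Cref{prop:backward-exact-martingale-calibration}, applied to differences. First I check that $\mathcal J_{\rm VI}$ is well defined on all of $\mathscr W_{Q,\cP}^{1,2}$. On the raw graph $\mathfrak G_{Q,\cP}^{\rm bw,0}$ the map sends $([Y],[D_Q^{\rm pw}Y])$ to the raw response $([Y],I(D_Q^{\rm pw}Y))\in\mathscr R_{\rm VI}^{\rm bw,0}$. Because $I$ is linear, $I(Z)-I(\widetilde Z)=I(Z-\widetilde Z)$. Applying \eqref{eq:backward-terminal-path-calibration} to $Z-\widetilde Z$ then gives
\[
 \|Z-\widetilde Z\|_{\mathbb H_{\cP}^2}\le\|I(Z)-I(\widetilde Z)\|_{\mathbb S_{\cP}^2}\le2\|Z-\widetilde Z\|_{\mathbb H_{\cP}^2}.
\]
Adding $\|Y-\widetilde Y\|_{\mathbb S_{\cP}^2}$ to each side yields \eqref{eq:backward-VI-BDG-equivalence} for all pairs in the ambient product $\mathbb S_{\cP}^2\times\mathbb H_{\cP}^2$, not only on the graph.

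Next, the ambient map $(Y,Z)\mapsto(Y,I(Z))$ is therefore a bi-Lipschitz linear embedding of $\mathbb S_{\cP}^2\times\mathbb H_{\cP}^2$ into $\mathbb S_{\cP}^2\times\mathbb S_{\cP}^2$. By construction, it maps the raw graph onto the raw response image $\mathscr R_{\rm VI}^{\rm bw,0}$. A bi-Lipschitz map between complete metric spaces carries closures to closures: the image of the closure is complete, hence closed, and it contains the raw image densely. Consequently $\mathcal J_{\rm VI}$ maps $\mathscr W_{Q,\cP}^{1,2}$ bijectively onto $\mathscr R_{\rm VI}^{\rm bw}$, with inverse Lipschitz constant $1$. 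The range of $I$ is closed, again by the lower bound, so $I^{-1}$ is defined and bounded on it. The second coordinate of every limit point of $\mathscr R_{\rm VI}^{\rm bw,0}$ lies in that closed range. Together these give the formula \eqref{eq:backward-VI-decoder} for the inverse.

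For stable causality, I take $\rho_t^{\mathscr W}(Y,Z)=(r_tY,\one_{[0,t]}Z)$ and use \eqref{eq:common-integral-random-stopping} with deterministic $\tau=t$, which gives $I(\one_{[0,t]}Z)=r_tI(Z)$. Hence $\mathcal J_{\rm VI}\rho_t^{\mathscr W}=\rho_t^{\rm VI}\mathcal J_{\rm VI}$, and the same identity holds for $\mathcal D_{\rm VI}$. Exact equivariance implies the stable-causal identity \eqref{eq:stable-causal-readout}. Uniform continuity follows from the Lipschitz bounds. The closing assertions need only two facts. The $\mathbb H_{\cP}^2$ part of the metric equals $\|I_T(Z-\widetilde Z)\|_{\mathcal L^2_{\cP}}$, by \eqref{eq:backward-exact-terminal-response-metric}. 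The path response is then controlled by Doob's inequality.

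I expect the main obstacle to be the closure step. That step needs truncation invariance of both completions, so that $\rho_t$ preserves $\mathscr W_{Q,\cP}^{1,2}$ and $\mathscr R_{\rm VI}^{\rm bw}$. To obtain it, I will use (A3) on the raw core, together with contractivity of the truncations, to pass invariance to the closures.
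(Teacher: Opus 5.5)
Your proposal is correct and follows the paper's proof. The two-sided terminal/Doob calibration of \Cref{prop:backward-exact-martingale-calibration}, applied to differences, gives the bi-Lipschitz bound, so graph and response closures correspond; the closed range of $I$ from \Cref{thm:uniform-BDG-extension} gives the decoder; and $I(\one_{[0,t]}Z)=r_tI(Z)$ gives stable causality. One small correction to your closing remark: truncation invariance of the two completions (stopping invariance of the core plus \textup{(A3)}, passed to closures by continuity) is needed only for the causal structure, not for the closure/bijectivity step, which is purely metric.
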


\begin{proof}
On the testing graph, \eqref{eq:backward-VI-BDG-equivalence} follows from
\Cref{prop:backward-exact-martingale-calibration}; hence graph and response
Cauchy sequences coincide and the encoder extends to the completions.  By
\Cref{thm:uniform-BDG-extension}, $I$ has closed range, bounded inverse there,
and commutes with stopping.  Thus \eqref{eq:backward-VI-decoder} is the inverse
on the entire response completion and both maps commute with truncations.
\end{proof}

\begin{corollary}[No proper lossless quotient of the backward first-jet state]
\label{cor:no-proper-backward-first-jet-quotient}
Let
$\pi:\mathscr W_{Q,\cP}^{1,2}\to\mathsf S$ be a stable-causal map into a
causal Polish space.  Suppose the complete value--martingale response factors
through $\pi$: there is a stable-causal
$\Psi:\mathsf S\to\mathscr R_{\rm VI}^{\rm bw}$ such that
\[
 \mathcal J_{\rm VI}=\Psi\circ\pi.
\]
Then $\pi$ has the stable-causal left inverse
$\mathcal D_{\rm VI}\circ\Psi$.  Hence $\pi$ is injective and a homeomorphism
onto its image.  No noninjective continuous causal quotient is lossless for
the complete backward first-jet response.  A value-only compression is
therefore possible exactly when the graph can be reconstructed continuously
from its value projection, as characterized in
\Cref{thm:backward-compression-retention}.
\end{corollary}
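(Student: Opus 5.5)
The plan is to treat this as a direct instance of \Cref{prop:lossless-chart-factorization}\textup{(iii)}, applied with the encoder--decoder pair supplied by \Cref{thm:backward-response-forced-geometry}. Set $\mathcal O:=\mathcal J_{\rm VI}$ and $\mathcal D:=\mathcal D_{\rm VI}$. By that proposition both maps are stable-causal and satisfy $\mathcal D_{\rm VI}\mathcal J_{\rm VI}=I$ on $\mathscr W_{Q,\cP}^{1,2}$. The source and target are causal Polish, as recorded in \Cref{def:stable-backward-first-jet-state}.

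Given the factorization $\mathcal J_{\rm VI}=\Psi\circ\pi$, define $L:=\mathcal D_{\rm VI}\circ\Psi$. Then
\[
 L\pi=\mathcal D_{\rm VI}\Psi\pi=\mathcal D_{\rm VI}\mathcal J_{\rm VI}=I.
\]
Stable causality of $L$ follows because composites of stable-causal maps are stable-causal: if $\rho_tR=\rho_tR\rho_t$ for each factor, the identity passes through the composition. The left inverse immediately gives injectivity of $\pi$, and $L$ restricted to $\pi(\mathscr W_{Q,\cP}^{1,2})$ is a continuous inverse, so $\pi$ is a homeomorphism onto its image. This image is the closed fixed set of the idempotent $\pi L$, exactly as in the proof of \Cref{prop:lossless-chart-factorization}. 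The claim that no noninjective continuous causal quotient is lossless is the contrapositive: any lossless quotient admits such a $\Psi$ and is therefore injective.

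The final sentence concerns value-only compression, and it is the only part needing care. I would take $\pi$ to be the value projection $(Y,Z)\mapsto Y$ onto its image. By the argument above, that projection is lossless exactly when the full response $(Y,I(Z))$, equivalently the graph, is recovered by a stable-causal map from $Y$. For the precise criterion (closed value domain with trivial vertical space $\mathcal N_Q^{\rm vert}$, or continuity of the restricted inverse), I would only point forward to \Cref{thm:backward-compression-retention} and not prove it here. The main obstacle is purely expository: stating that this sentence is a reformulation of losslessness for the particular map $\pi$, not a new claim.
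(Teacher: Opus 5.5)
Your proposal is correct and is exactly the paper's argument: apply \Cref{prop:lossless-chart-factorization}\textup{(iii)} to the lossless chart $(\mathcal J_{\rm VI},\mathcal D_{\rm VI})$ of \Cref{thm:backward-response-forced-geometry}, whose left inverse $\mathcal D_{\rm VI}\Psi$ yields injectivity and the homeomorphism onto the image. Your check that composites of stable-causal maps are stable-causal, and your treatment of the final sentence as a reformulation deferred to \Cref{thm:backward-compression-retention}, only spell out what the paper leaves implicit.
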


\begin{proof}
Apply \Cref{prop:lossless-chart-factorization}\textup{(iii)} to the lossless
chart $(\mathcal J_{\rm VI},\mathcal D_{\rm VI})$ constructed in
\Cref{thm:backward-response-forced-geometry}.  Its canonical left inverse is
$\mathcal D_{\rm VI}\Psi$.
\end{proof}

\begin{proposition}[Closed response relation, unbounded recovery, and genuine compression]
\label{thm:backward-compression-retention}
Regard
\[
 \mathscr W_{Q,\cP}^{1,2}
 \subset\mathbb S_{\cP}^2\times\mathbb H_{\cP}^2
\]
as the closed linear relation obtained from the raw testing relation.  Its
multivalued part is
\[
 \operatorname{mul}\mathscr W_{Q,\cP}^{1,2}
 :=\{Z:(0,Z)\in\mathscr W_{Q,\cP}^{1,2}\}
 =\mathcal N_Q^{\rm vert}.
\]

\emph{Closability and single-valued closed recovery.}  The following are
equivalent:
\begin{enumerate}[label=\textup{(\roman*)},leftmargin=2.5em]
\item $\mathcal N_Q^{\rm vert}=\{0\}$;
\item the closure of the raw covariance-differential relation is the graph of
a single-valued operator;
\item the raw covariance-differential relation is closable as an operator from
$\mathbb S_{\cP}^2$ to $\mathbb H_{\cP}^2$;
\item there is a unique closed linear operator
\[
 D_Q^{\rm cl}:\operatorname{Dom}(D_Q^{\rm cl})
 \subset\mathbb S_{\cP}^2\longrightarrow\mathbb H_{\cP}^2
\]
such that
\begin{equation}\label{eq:backward-closed-operator-graph}
 \mathscr W_{Q,\cP}^{1,2}=\operatorname{Graph}(D_Q^{\rm cl}),
 \qquad
 \operatorname{Dom}(D_Q^{\rm cl})
 =\pi_Y(\mathscr W_{Q,\cP}^{1,2}).
\end{equation}
\end{enumerate}
When these conditions hold, write $D_Q:=D_Q^{\rm cl}$.

\emph{Continuous compression in the inherited value topology.}  Under these
equivalent conditions, the following are equivalent:
\begin{enumerate}[label=\textup{(\alph*)},leftmargin=2.5em]
\item the inverse graph map
\[
 J_Q:\operatorname{Dom}(D_Q)\longrightarrow\mathscr W_{Q,\cP}^{1,2},
 \qquad J_QY=(Y,D_QY),
\]
is continuous when the domain carries the topology inherited from
$\mathbb S_{\cP}^2$;
\item $D_Q:\operatorname{Dom}(D_Q)\to\mathbb H_{\cP}^2$ is continuous for that
value topology;
\item whenever $Y^n,Y\in\operatorname{Dom}(D_Q)$ and
$Y^n\to Y$ in $\mathbb S_{\cP}^2$, one has
$D_QY^n\to D_QY$ in $\mathbb H_{\cP}^2$;
\item the value domain $\operatorname{Dom}(D_Q)$ is closed in
$\mathbb S_{\cP}^2$.
\end{enumerate}
Equivalently,
\begin{equation}\label{eq:backward-closed-domain-criterion}
 \boxed{
 \begin{aligned}
 &\text{value-process compression in the inherited }
       \mathbb S_{\cP}^2\text{ topology}\\
 &\quad\Longleftrightarrow\quad
 \mathcal N_Q^{\rm vert}=\{0\}
 \ \text{and}\ 
 \operatorname{Dom}(D_Q)\text{ is closed in }\mathbb S_{\cP}^2.
 \end{aligned}}
\end{equation}
Under these conditions $J_Q$ is a stable-causal homeomorphism from the
value domain onto the graph.  If $\mathcal N_Q^{\rm vert}=\{0\}$ but the value
domain is not closed, then $D_Q$ is a closed unbounded response operator for
the inherited value norm.  Its graph norm is
\begin{equation}\label{eq:backward-response-graph-norm}
 \|Y\|_{\operatorname{gr}D_Q}
 :=\|Y\|_{\mathbb S_{\cP}^2}+\|D_QY\|_{\mathbb H_{\cP}^2}.
\end{equation}
Relabelling the graph by $Y$ equipped with this stronger norm retains the full
jet geometry and is not a genuine value-process compression.

If instead $0\ne Z_*\in\mathcal N_Q^{\rm vert}$, no value-only realization can
reproduce the full backward response program: along a testing-graph sequence
\[
 Y^n\to0\quad\text{in }\mathbb S_{\cP}^2,
 \qquad
 D_Q^{\rm pw}Y^n\to Z_*\quad\text{in }\mathbb H_{\cP}^2,
\]
one has
\[
 I(D_Q^{\rm pw}Y^n)\longrightarrow I(Z_*)\ne0.
\]
\end{proposition}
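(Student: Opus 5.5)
This is standard closed-linear-relation theory on the Banach space $\mathbb S_{\cP}^2\times\mathbb H_{\cP}^2$, and I would prove it in three blocks. The only inputs from the paper are the following.
\begin{itemize}
\item $\mathscr W_{Q,\cP}^{1,2}$ is a closed linear subspace. It is the closure of the linear testing graph, and that graph is linear because the core and the covariation algorithm are linear (A3).
\item The truncations are continuous contractions.
\item The common integral $I$ is a closed-range embedding (\Cref{thm:backward-response-forced-geometry}).
\end{itemize}

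\emph{Closability block.} By linearity, $\mathscr W$ is the graph of a single-valued map exactly when $(0,Z)\in\mathscr W$ forces $Z=0$. Indeed, $(Y,Z),(Y,Z')\in\mathscr W$ gives $(0,Z-Z')\in\mathscr W$. This gives (i)$\Leftrightarrow$(ii). For (ii)$\Leftrightarrow$(iii), note that closability of the raw operator is by definition the statement that the closure of its graph is a graph. The raw relation is single-valued by the remark after \eqref{eq:backward-observable-testing-graph}, so it is a genuine operator. For (ii)$\Rightarrow$(iv), define $D_Q^{\rm cl}$ on $\pi_Y(\mathscr W)$ as the second coordinate; its graph is $\mathscr W$, so it is closed, and it is unique since a graph determines its operator. (iv)$\Rightarrow$(i) is immediate.

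\emph{Compression block.} (a)$\Leftrightarrow$(b)$\Leftrightarrow$(c) are definitional, since $J_Q=(\mathrm{id},D_Q)$ and the spaces are metric, so sequential and topological continuity agree. For (b)$\Rightarrow$(d), take $Y^n\in\operatorname{Dom}$ with $Y^n\to Y$. By the triangle inequality and linearity, continuity gives boundedness, so $D_QY^n$ is Cauchy with some limit $Z$. Closedness of $\mathscr W$ then puts $(Y,Z)\in\mathscr W$, so $Y\in\operatorname{Dom}$. For (d)$\Rightarrow$(b), $\operatorname{Dom}(D_Q)$ is a closed subspace of a Banach space, hence Banach, and $D_Q$ is a closed everywhere-defined operator on it. The closed graph theorem gives boundedness, and this is the one nontrivial step.

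This yields \eqref{eq:backward-closed-domain-criterion}, with one caveat I would address: the left side there must be read as ``$\mathscr W$ is the graph of a continuous map of the value coordinate''. Its necessity of $\mathcal N_Q^{\rm vert}=\{0\}$ follows because a lossless value-only realization is a left inverse of $\pi_Y$, which forces $\pi_Y$ to be injective. For stable causality of $J_Q$, truncation invariance of $\mathscr W$ gives $D_Q(r_tY)=\one_{[0,t]}D_QY$ on the domain, so $J_Q$ commutes with truncations. In the unbounded case, the domain with the graph norm \eqref{eq:backward-response-graph-norm} is isometric to $\mathscr W$ via $J_Q$. It therefore carries the full jet norm and is not a value compression; this is simply a remark.

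\emph{Vertical obstruction.} If $0\ne Z_*\in\mathcal N_Q^{\rm vert}$, then since $(0,Z_*)$ lies in the closure of the raw graph, there are testing elements with $Y^n\to0$ in $\mathbb S_{\cP}^2$ and $D_Q^{\rm pw}Y^n\to Z_*$ in $\mathbb H_{\cP}^2$. The bound $\|I(Z)\|_{\mathbb S}\le2\|Z\|_{\mathbb H}$ from \eqref{eq:backward-terminal-path-calibration} gives $I(D_Q^{\rm pw}Y^n)\to I(Z_*)$. The lower bound $\|I(Z_*)\|_{\mathbb S}\ge\|Z_*\|_{\mathbb H}>0$ shows $I(Z_*)\ne0$. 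So any value-only readout, being continuous, would send $Y^n$ toward the response of $0$, which is incompatible with the martingale response converging to $I(Z_*)\ne0$.

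The main obstacle I expect is the closed-graph step (d)$\Rightarrow$(b), together with making precise that ``value-process compression'' means exactly (a) under (i). Everything else is bookkeeping with linearity and closedness.
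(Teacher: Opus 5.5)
Your proposal is correct and follows the paper's proof essentially step for step. The shared steps are:
\begin{enumerate}
\item the vanishing-multivalued-part criterion for the equivalence of (i)--(iv);
\item Cauchy sequences plus graph closedness for (b)$\Rightarrow$(d), and the closed graph theorem on the Banach domain for the converse;
\item truncation invariance of $\mathscr W_{Q,\mathcal P}^{1,2}$ combined with single-valuedness for stable causality of $J_Q$;
\item terminal calibration for the vertical obstruction.
\end{enumerate}
For the last step you use $\|I(Z_*)\|_{\mathbb S}\ge\|Z_*\|_{\mathbb H}>0$, which is the paper's identity $\|I_T(Z_*)\|_{\mathcal L^2}=\|Z_*\|_{\mathbb H}$ read through the Doob bound. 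Your explicit reading of ``value-process compression'' as continuity of $J_Q$, with injectivity of the value projection forcing $\mathcal N_Q^{\rm vert}=\{0\}$, is a helpful clarification that the paper leaves implicit.
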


\begin{proof}
A closed linear relation is an operator graph exactly when its multivalued
part vanishes; because $\mathscr W_{Q,\cP}^{1,2}$ closes the raw relation,
this is exactly its closability criterion.  In that case
\eqref{eq:backward-closed-operator-graph} defines the unique closed operator
$D_Q$.

Continuity of $J_Q$ is continuity of $D_Q$, equivalently its sequential
form.  A continuous $D_Q$ sends every domain sequence converging in
$\mathbb S_{\cP}^2$ to a Cauchy sequence; graph closedness then places the
limit in the domain, so that domain is closed.  Conversely, a closed value
domain is Banach and the closed graph theorem makes $D_Q$ bounded.  This
proves \eqref{eq:backward-closed-domain-criterion}; otherwise $D_Q$ is
unbounded and the transported graph metric is exactly
\eqref{eq:backward-response-graph-norm}.  Stopping stability and uniqueness give
\[
 D_Q(r_tY)=\one_{[0,t]}D_QY,
\]
so $J_Q$ commutes with the truncations and is stable-causal.

If $Z_*\ne0$ is vertical, exact terminal calibration gives
\[
 \|I_T(Z_*)\|_{\mathcal L_{\cP}^2}
 =\|Z_*\|_{\mathbb H_{\cP}^2}>0.
\]
Thus the integral response separates graph limits with the same value
coordinate, excluding value-only realization.
\end{proof}

\begin{proposition}[Value-process compression on a nonlinear solver range]
\label{prop:nonlinear-solver-range-compression}
Let
\[
 \mathscr S_{\rm sol}\subset\mathscr W_{Q,\cP}^{1,2}
\]
be a closed, not necessarily linear, solver range invariant under the
first-jet truncations, and give
$\mathscr Y_{\rm sol}:=\pi_Y(\mathscr S_{\rm sol})$ the topology inherited
from $\mathbb S_{\cP}^2$.  The following are equivalent:
\begin{enumerate}[label=\textup{(\roman*)},leftmargin=2.5em]
\item the restricted value projection
\[
 \pi_Y|_{\mathscr S_{\rm sol}}:
 \mathscr S_{\rm sol}\longrightarrow\mathscr Y_{\rm sol}
\]
is injective and has a stable-causal inverse;
\item there is a unique stable-causal map
\[
 \mathcal Z_{\rm sol}:\mathscr Y_{\rm sol}
 \longrightarrow\mathbb H_{\cP}^2
\]
such that
\begin{equation}\label{eq:nonlinear-solver-graph}
 \mathscr S_{\rm sol}
 =\{(Y,\mathcal Z_{\rm sol}(Y)):Y\in\mathscr Y_{\rm sol}\};
\end{equation}
\item the complete response restricted to $\mathscr S_{\rm sol}$ admits a
lossless realization on the value-process state with its inherited topology.
\end{enumerate}
If $\mathscr S_{\rm sol}$ is in addition a closed linear relation, these
conditions reduce to vanishing of its vertical space together with closedness
of its value domain.  Thus
\Cref{thm:backward-compression-retention} is the linear special case, not
an automatic criterion for a nonlinear solver image.  In particular, a fixed
nonlinear dynamics may admit continuous recovery on its own solution range
even when the ambient covariance differential is unbounded.
\end{proposition}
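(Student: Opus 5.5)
The plan is to prove the cycle (i)$\Rightarrow$(ii)$\Rightarrow$(iii)$\Rightarrow$(i), then the linear reduction.

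For (i)$\Rightarrow$(ii), let $L:\mathscr Y_{\rm sol}\to\mathscr S_{\rm sol}$ be the stable-causal inverse of $\pi_Y|_{\mathscr S_{\rm sol}}$ and set $\mathcal Z_{\rm sol}:=\pi_Z\circ L$. Continuity holds because $\pi_Z$ is a contraction on the product metric \eqref{eq:backward-graph-metric}. The graph identity \eqref{eq:nonlinear-solver-graph} holds because $L$ is a two-sided inverse. For stable causality I would check \eqref{eq:stable-causal-readout}. Invariance of $\mathscr S_{\rm sol}$ under $\rho_t^{\mathscr W}$ gives $L(r_tY)=\rho_t^{\mathscr W}L(Y)$, and therefore
\[
 \mathcal Z_{\rm sol}(r_tY)=\one_{[0,t]}\mathcal Z_{\rm sol}(Y).
\]
The truncation on $\mathbb H_{\cP}^2$ is the contractive $\mathsf T_t$ of \Cref{prop:backward-exact-martingale-calibration}. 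Uniqueness of $\mathcal Z_{\rm sol}$ is immediate, since the second coordinate of a graph over $\mathscr Y_{\rm sol}$ is determined by the first.

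For (ii)$\Rightarrow$(iii), the realization on the value state is the map
\[
 Y\mapsto\mathcal J_{\rm VI}(Y,\mathcal Z_{\rm sol}(Y))=(Y,I(\mathcal Z_{\rm sol}Y)).
\]
It is stable-causal by \Cref{thm:backward-response-forced-geometry} and by composition with the stable-causal map $\mathcal Z_{\rm sol}$. It is lossless: the stable-causal decoder is $\pi_Y\circ\mathcal D_{\rm VI}$ composed back through $Y\mapsto(Y,\mathcal Z_{\rm sol}Y)$, and on the chart image it is a left inverse.

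For (iii)$\Rightarrow$(i), a lossless realization on the value state means two things. First, the restricted encoder $\mathcal J_{\rm VI}|_{\mathscr S_{\rm sol}}$ factors as $\Psi\circ\pi_Y|_{\mathscr S_{\rm sol}}$ with $\Psi$ stable-causal. Then \Cref{prop:lossless-chart-factorization}\textup{(iii)}, applied to the chart $(\mathcal J_{\rm VI},\mathcal D_{\rm VI})$ restricted to $\mathscr S_{\rm sol}$, produces the left inverse $\mathcal D_{\rm VI}\Psi$ of $\pi_Y|_{\mathscr S_{\rm sol}}$. Second, since $\pi_Y|_{\mathscr S_{\rm sol}}$ is onto $\mathscr Y_{\rm sol}$ by definition, this left inverse is a two-sided stable-causal inverse. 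Closedness of $\mathscr S_{\rm sol}$ ensures it is a closed retract, consistent with \Cref{prop:lossless-chart-factorization}, though bijectivity already holds by surjectivity.

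For the linear reduction, if $\mathscr S_{\rm sol}$ is a closed linear relation, injectivity of $\pi_Y$ on it is exactly the vanishing of $\operatorname{mul}\mathscr S_{\rm sol}$. Continuity of the inverse then follows from the closed-graph argument in the proof of \Cref{thm:backward-compression-retention}, applied verbatim with $\mathscr S_{\rm sol}$ in place of $\mathscr W_{Q,\cP}^{1,2}$: the inverse is continuous if and only if the value domain is closed.

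The main subtlety is the stable causality of the inverse in (i)$\Rightarrow$(ii) when truncations are only contractions rather than projections on the nonlinear range. I would handle it purely by the invariance hypothesis together with uniqueness of preimages, since no linear structure is available there.
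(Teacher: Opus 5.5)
Your proof is correct and is essentially the paper's argument. The paper likewise obtains $\mathcal Z_{\rm sol}$ from the inverse graph map via truncation invariance and uniqueness, and factors the complete response as $\mathcal J_{\rm VI}|_{\mathscr S_{\rm sol}}=(\mathcal J_{\rm VI}\circ J_{\rm sol})\circ\pi_Y|_{\mathscr S_{\rm sol}}$ with $J_{\rm sol}(Y)=(Y,\mathcal Z_{\rm sol}(Y))$. It then recovers $\mathcal Z_{\rm sol}$ by projecting the inverse readout $\mathcal D_{\rm VI}\circ\Psi$ to its integrand, and defers the linear case to \Cref{thm:backward-compression-retention}.

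The only thing to tighten is (ii)$\Rightarrow$(iii). State this factorization explicitly there; it follows at once from the graph identity. That factorization, rather than a left inverse of $\Psi$, is the form of losslessness your (iii)$\Rightarrow$(i) step actually consumes.
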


\begin{proof}
The first two statements are the graph characterization of continuous
single-valued recovery.  Put
$J_{\rm sol}(Y):=(Y,\mathcal Z_{\rm sol}(Y))$.  Then $J_{\rm sol}$ is the
canonical value decoder and the complete response factors as
\[
 \mathcal J_{\rm VI}|_{\mathscr S_{\rm sol}}
 =\bigl(\mathcal J_{\rm VI}\circ J_{\rm sol}\bigr)
    \circ\pi_Y|_{\mathscr S_{\rm sol}}.
\]
Truncation invariance and uniqueness give
\[
 \mathcal Z_{\rm sol}(r_tY)
 =\one_{[0,t]}\mathcal Z_{\rm sol}(Y),
\]
so the decoder is stable-causal.  Composition with
\Cref{thm:backward-response-forced-geometry} proves the forward implication in
the third equivalence.  Conversely, a lossless value-state realization,
followed by the integrand projection of its continuous inverse readout,
recovers $\mathcal Z_{\rm sol}$ and proves the reverse implication.
\Cref{thm:backward-compression-retention} gives the final assertion only
for a closed linear solver relation.
\end{proof}

\begin{theorem}[Raw covariance-response first-jet selection]
\label{thm:backward-first-jet-selection}
Assume an admissible testing core in the sense of
\Cref{def:admissible-backward-testing-core}.  Then the backward
value--martingale response program selects the stable first-jet state
$\mathscr W_{Q,\cP}^{1,2}$ with the following properties.
\begin{enumerate}[label=\textup{(\roman*)},leftmargin=2.5em]
\item Every energy-bounded covariance cross-response generates a unique
fiber-valued local jet $D_Q^{\rm pw}Y_t\in\mathscr E_{\mathfrak a_t}$ by Riesz
representation, with the regularized covariance-pseudoinverse formula
\eqref{eq:backward-pseudoinverse-factorization}.  The common-completion
condition \textup{(A2)} makes its active section a class in
$\mathbb H_{\cP}^2$, and under every law that class is the covariance-energy
class of the classical GKW coefficient.
\item The terminal martingale response calibrates the
$\mathbb H_{\cP}^2$ jet norm isometrically at every stopping horizon, while
the complete path response $(Y,I(Z))$ reconstructs the graph completion up to
stable-causal bi-Lipschitz equivalence.
\item No proper noninjective stable-causal quotient is lossless for
the complete first-jet response.
\item The completed first-jet state is a closed linear relation.  It is the
graph of a single-valued closed response operator exactly when its vertical
space vanishes.  Genuine compression to the inherited value-process topology
then occurs exactly when the value domain is closed; otherwise the recovery
operator is unbounded and the graph topology retains the jet.
\end{enumerate}
The derivative leg uses the covariance channel rather than skew area, while
the zero-order environmental state may still retain area.  Orthogonal
martingale components remain visible in the value trajectory but outside the
covariance jet.  Solver identification is governed separately by
\Cref{def:backward-compatible-interface}.
\end{theorem}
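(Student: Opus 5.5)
The theorem is an assembly statement, so the plan is to prove each clause by invoking the results of this section in order. No new estimate should be needed; the work is in checking that the admissibility hypotheses of \Cref{def:admissible-backward-testing-core} are exactly what each earlier result requires.

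For \textup{(i)}, fix $Y\in\mathscr C_{\rm bw}^{1,2}$. Take the $\cP$-polar set $N_Y$ from \textup{(A2)} and work on its complement. Off a Lebesgue-null set of times, the energy bound \eqref{eq:backward-cotangent-energy-bound} holds with $g_t=g^Y_t$. \Cref{prop:covariance-first-jet-riesz-forcing} then gives the unique fiber vector $D_Q^{\rm pw}Y_t\in\mathscr E_{\mathfrak a_t}$, the pseudoinverse formula \eqref{eq:backward-pseudoinverse-factorization}, and a total predictable section $U^Y$ of the Borel bundle of \Cref{lem:Borel-covariance-energy-bundle}. Here I use that $\mathfrak a$ is predictable by \Cref{prop:common-predictable-covariance-density} and that the basis densities are predictable by \textup{(A2)}. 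Clause \textup{(A2)} asserts $[U^Y]\in\operatorname{Ran}\mathfrak J_{\mathfrak a}$, and the isometric bijection \eqref{eq:completed-active-coordinate-map} turns this into a class in $\mathbb H^2_{\cP}$. The lawwise GKW identification is \Cref{cor:modelwise-GKW-covariance-jet}, applied to the martingale part $M^{Y,P}$ from \textup{(A1)}. Its hypothesis that pathwise and classical covariations agree is exactly the common-full-set clause of \textup{(A1)}.

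For \textup{(ii)}, the isometry at every stopping horizon is \eqref{eq:backward-stopping-Ito-isometry} in \Cref{prop:backward-exact-martingale-calibration}. The bi-Lipschitz reconstruction of the graph completion from $(Y,I(Z))$ is \Cref{thm:backward-response-forced-geometry}. One preliminary point must be settled first: $\mathscr W_{Q,\cP}^{1,2}$ should be a causal Polish space. Separability comes from \textup{(A4)}, and invariance under truncation comes from \textup{(A3)} together with \eqref{eq:common-integral-random-stopping}. Clause \textup{(iii)} is then \Cref{cor:no-proper-backward-first-jet-quotient}, since that corollary is just \Cref{prop:lossless-chart-factorization}\textup{(iii)} applied to the chart $(\mathcal J_{\rm VI},\mathcal D_{\rm VI})$.

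For \textup{(iv)}, I first need linearity of the raw graph \eqref{eq:backward-observable-testing-graph}. This follows from linearity of the core and of the covariation algorithm (\textup{(A3)}), plus uniqueness in the Riesz reconstruction. The graph is also single-valued over its value projection, as remarked after \eqref{eq:backward-observable-testing-graph}, so its closure is a closed linear relation. The operator-graph criterion and the closed-domain compression criterion are then exactly \Cref{thm:backward-compression-retention}. The closing remarks of the theorem follow from the factorization \eqref{eq:quadratic-to-backward-interface}, because the fibres are generated by $\mathfrak a=\mathscr DQ$ alone, and from the strong orthogonality of $N^P$ in \Cref{prop:orthogonal-martingale-derivative-boundary}.

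I expect the main obstacle to be the single-valuedness and linearity step in \textup{(iv)}. The difficulty is that the exceptional polar sets $N_Y$ depend on $Y$, so equality of value classes in $\mathbb S^2_{\cP}$ has to force zero jet energy under every law. I would argue this law by law: indistinguishable values give equal martingale parts and hence equal classical covariations, and \Cref{cor:modelwise-GKW-covariance-jet} then gives zero $a^P$-energy. Passing to the supremum over $P$ through the active-coordinate isometry \eqref{eq:active-coordinate-isometry} then gives equality in $\mathbb H^2_{\cP}$, which is what \textup{(iv)} needs.
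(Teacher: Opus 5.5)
Your proposal is correct and follows essentially the same route as the paper's proof. The paper also assembles the theorem by citation:
\begin{itemize}
\item for~(i), \Cref{prop:covariance-first-jet-riesz-forcing} together with \Cref{cor:modelwise-GKW-covariance-jet} and clause~\textup{(A2)};
\item for~(ii), \Cref{prop:backward-exact-martingale-calibration,thm:backward-response-forced-geometry};
\item for~(iii), \Cref{cor:no-proper-backward-first-jet-quotient};
\item for~(iv), \Cref{thm:backward-compression-retention};
\item for the closing assertions, the area-visibility remark and \Cref{prop:orthogonal-martingale-derivative-boundary}.
\end{itemize}
Your additional checks are causal Polishness of the completions, linearity and truncation invariance of the raw graph, and single-valuedness of the graph over its value projection via the law-by-law zero-energy argument. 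The paper records these in the text surrounding \eqref{eq:backward-observable-testing-graph} and \Cref{def:stable-backward-first-jet-state} rather than in the proof itself.
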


\begin{proof}[Proof of \Cref{thm:backward-first-jet-selection}]
Part~\textup{(i)} is
\Cref{prop:covariance-first-jet-riesz-forcing}, with modelwise identification
provided by \Cref{cor:modelwise-GKW-covariance-jet}; membership in the common
completion is precisely \textup{(A2)}.  The exact terminal calibration, its
pathwise propagation, and the global response decoder are
\Cref{prop:backward-exact-martingale-calibration,thm:backward-response-forced-geometry}.  The no-quotient statement is
\Cref{cor:no-proper-backward-first-jet-quotient}, and the exact distinction
between closed recovery and continuous value-only compression is
\Cref{thm:backward-compression-retention}.  Program-relative omission of
area from the derivative leg follows from
\Cref{cor:program-dependent-area-visibility}; the
orthogonal-martingale boundary is
\Cref{prop:orthogonal-martingale-derivative-boundary}.
\end{proof}

\subsubsection{Continuation, drivers, and solver-dependent consequences}

A backward-compatible continuation interface identifies the selected jet with
a solver's martingale coordinate and propagates terminal data.

\begin{definition}[Backward-compatible continuation interface]
\label{def:backward-compatible-interface}
A \emph{backward-compatible continuation interface} on the selected state
consists of the law family $\cP$ together with causal conditional response
operators $\mathcal E_{s,t}$ on a declared bounded operational class,
$0\le s\le t\le T$, such that:
\begin{enumerate}[label=\textup{(B\arabic*)},leftmargin=2.8em]
\item $\mathcal E_{s,t}$ is monotone, constant preserving and local at the
information available at time $s$;
\item the exact tower
\[
 \mathcal E_{r,s}\mathcal E_{s,t}=\mathcal E_{r,t},\qquad r\le s\le t,
\]
holds on the declared operational class and is compatible with deterministic
restart of the selected state;
\item under the usual right-continuous $P$-augmentation, every
$R$ in the declared response-supermartingale class is a square-integrable
continuous special semimartingale with normalized Doob--Meyer decomposition
\[
 R_t=R_0+M_t^{R,P}-K_t^{R,P},
 \qquad M_0^{R,P}=K_0^{R,P}=0.
\]
$K^{R,P}$ is predictable and nondecreasing.
\item for every declared $R$, the interface supplies a common upper-energy
class $Z^R\in\mathbb H_{\cP}^2$ such that
\[
 \pi_P\bigl(I(Z^R)\bigr)=M^{R,P}
 \qquad\text{in }\mathbb S^2(P),\qquad P\in\cP.
\]
Consequently the modelwise Galtchouk--Kunita--Watanabe component orthogonal to
the closed $X$-integral range vanishes, and $j_P(Z^R)$ is the modelwise GKW
energy class.  Whenever $R$ is supplied together with a declared first-jet
coordinate $Z$, compatibility of that declaration means $Z^R=Z$ in
$\mathbb H_{\cP}^2$.
\end{enumerate}
Clauses~\textup{(B1)}--\textup{(B2)} specify deterministic-time continuation
and restart compatibility, clause~\textup{(B3)} gives lawwise semimartingale
semantics, and clause~\textup{(B4)} supplies the common martingale channel.
\end{definition}

\begin{definition}[Admissible covariance-fiber driver]
\label{def:admissible-backward-driver}
Let $\mathcal P^0$ be the raw predictable sigma-field on
$[0,T]\times\Omega_H$, and set
\[
 \mathfrak B_{\mathfrak a}
 :=\left\{(t,\omega,y,u)\in[0,T]\times\Omega_H\times\mathbb R\times H:
       (\mathfrak a_t(\omega),u)\in\mathscr E_\Gamma^\sharp\right\}.
\]
This is measurable for
$\mathcal P^0\otimes\mathcal B(\mathbb R)\otimes\mathcal B(H)$.
An \emph{admissible backward driver} is a causal local law $F$ on the
covariance-energy fibers whose active-coordinate representation
\begin{equation}\label{eq:backward-driver-active-representation}
 \widehat F(t,\omega,y,u)
 :=F\bigl(t,\omega,y,
          \iota_{\mathfrak a_t(\omega)}^{-1}u\bigr),
 \qquad
 (\mathfrak a_t(\omega),u)\in\mathscr E_\Gamma^\sharp,
\end{equation}
is measurable for the restriction of
$\mathcal P^0\otimes\mathcal B(\mathbb R)\otimes\mathcal B(H)$ to
$\mathfrak B_{\mathfrak a}$.  For some $L<\infty$ it satisfies
\begin{align}
 |\widehat F(t,\omega,y,u)-\widehat F(t,\omega,\widetilde y,\widetilde u)|
 &\le L\bigl(|y-\widetilde y|+\|u-\widetilde u\|_H\bigr),
 \label{eq:backward-driver-active-Lipschitz}\\
 \sup_{P\in\cP}E^P\left(\int_0^T
 |\widehat F(t,\cdot,0,0)|\,\dd t\right)^2&<\infty.
 \label{eq:backward-driver-base-integrability}
\end{align}
Equivalently, the first inequality is the fiberwise Lipschitz estimate with
$\|z-\widetilde z\|_{\mathscr E_{\mathfrak a_t(\omega)}}$ in place of
$\|u-\widetilde u\|_H$.  On the admissible testing graph define the raw
running-source response
\begin{equation}\label{eq:backward-raw-running-channel}
 B_t^{F,0}[Y]
 :=\int_0^t\widehat F(r,\cdot,Y_r,U_r^Y)\,\dd r
 =\int_0^tF(r,\cdot,Y_r,D_Q^{\rm pw}Y_r)\,\dd r.
\end{equation}
The second equality is bundle notation: its measurable meaning is the first
active-coordinate formula.
\end{definition}

\begin{proposition}[Exact backward descent through the value and energy quotients]
\label{thm:backward-first-jet-descent}
Let $(Y,Z)$ and $(\widetilde Y,\widetilde Z)$ be two representatives of the
observable testing graph with
\[
 [Y]_{\mathbb S^2}=[\widetilde Y]_{\mathbb S^2},\qquad
 [Z]_{\mathbb H^2}=[\widetilde Z]_{\mathbb H^2}.
\]
Then:
\begin{enumerate}[label=\textup{(\roman*)},leftmargin=2.6em]
\item $I(Z)=I(\widetilde Z)$ in $\mathbb S_{\cP}^2$;
\item every admissible driver in the sense of
\Cref{def:admissible-backward-driver} assigns the same
$\mathbb S_{\cP}^2$ class to $B^{F,0}[Y]$ and
$B^{F,0}[\widetilde Y]$;
\item every continuous downstream constructor built from the value class, the
descended source and the common integral has the same output class.
\end{enumerate}
Conversely, the joint value/integral response is the restriction of the
completed encoder:
\begin{equation}\label{eq:backward-value-integral-response}
 \mathcal J_{\rm VI}\bigl(Y,D_Q^{\rm pw}Y\bigr)
 :=\bigl([Y]_{\mathbb S^2},I(D_Q^{\rm pw}Y)\bigr)
\end{equation}
separates the stable first-jet classes on the testing graph.  The local
covariance fiber, common upper-energy quotient, and process-version quotient
have already removed precisely the invisible directions.
\end{proposition}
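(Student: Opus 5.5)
The plan is to reduce everything to the fact that equal $\mathbb H_{\cP}^2$ classes have equal active coordinates $\mathfrak J_{\mathfrak a}$. Then I would pass through the common integral and the active-coordinate driver representation.

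\emph{Part (i).} The common stochastic integral $I:\mathbb H_{\cP}^2\to\mathbb S_{\cP}^2$ of \Cref{thm:uniform-BDG-extension} is defined on classes. By the Doob/BDG bound \eqref{eq:backward-terminal-path-calibration},
\[
 \|I(Z)-I(\widetilde Z)\|_{\mathbb S_{\cP}^2}\le2\|Z-\widetilde Z\|_{\mathbb H_{\cP}^2}=0 .
\]
This proves (i).

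\emph{Part (ii).} By admissibility (A2), the representatives $Z=[D_Q^{\rm pw}Y]$ and $\widetilde Z$ have active sections $U^Y$ and $U^{\widetilde Y}$ with $[U^Y]=\mathfrak J_{\mathfrak a}Z=\mathfrak J_{\mathfrak a}\widetilde Z=[U^{\widetilde Y}]$ in $\mathbb U^2_{\mathfrak a,\cP}$. Hence, for every $P\in\cP$,
\[
 E^P\int_0^T\|U^Y_t-U^{\widetilde Y}_t\|_H^2\,\dd t=0 .
\]
Equality of $\mathbb S^2$ classes gives $\sup_P E^P\sup_t|Y_t-\widetilde Y_t|^2=0$.

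The Lipschitz bound \eqref{eq:backward-driver-active-Lipschitz}, applied on the Borel bundle of \Cref{lem:Borel-covariance-energy-bundle}, then gives
\[
 \sup_{t}|B^{F,0}_t[Y]-B^{F,0}_t[\widetilde Y]|
 \le L\int_0^T\bigl(|Y_r-\widetilde Y_r|+\|U^Y_r-U^{\widetilde Y}_r\|_H\bigr)\dd r .
\]
Cauchy--Schwarz in time bounds its $\mathbb S_{\cP}^2$ norm by
\[
 L\sqrt T\bigl(\sqrt T\,\|Y-\widetilde Y\|_{\mathbb S^2_{\cP}}+\|U^Y-U^{\widetilde Y}\|\bigr)=0 .
\]
The integrability condition \eqref{eq:backward-driver-base-integrability} ensures that both sources lie in $\mathbb S_{\cP}^2$. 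Measurability of $\widehat F(r,\cdot,Y_r,U^Y_r)$ follows because $(\mathfrak a,U^Y)$ is a predictable bundle section.

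This step is the main obstacle. The jet is only defined modulo $\ker\mathfrak a_t$, so one must work with the active coordinate $U^Y$ and never with an $H$-representative of $Z$. Relatedly, one must check that the null sets coming from (A2) and the polar sets $N_Y$ are handled lawwise: this works since each estimate is taken under a single $P$ and then the supremum is taken.

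\emph{Part (iii)} is then immediate. A continuous constructor on $\mathbb S_{\cP}^2$ classes of (value, source, integral) receives equal inputs by (i)--(ii), so it produces equal outputs.

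\emph{Converse.} The formula \eqref{eq:backward-value-integral-response} is the restriction of $\mathcal J_{\rm VI}$ in \eqref{eq:backward-VI-encoder} to the graph $\mathfrak G^{\rm bw,0}_{Q,\cP}$. Separation follows from the lower bound in \eqref{eq:backward-VI-BDG-equivalence}: if two graph points have equal responses, then $\|Y-\widetilde Y\|+\|Z-\widetilde Z\|_{\mathbb H^2_{\cP}}=0$. The final sentence restates the construction. The fibre quotient is exact by \Cref{cor:covariance-fiber-energy-quotient}. The energy quotient is exact by the isometry \eqref{eq:completed-active-coordinate-map}. The version quotient is exact by the single-valuedness remark after \eqref{eq:backward-observable-testing-graph}. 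Together these show that nothing further is identified.
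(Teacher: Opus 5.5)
Your proposal is correct and follows the paper's proof essentially step for step: the upper BDG/Doob bound gives (i), the active-coordinate Lipschitz estimate with Cauchy--Schwarz and the base integrability gives (ii), continuous composition gives (iii), and terminal calibration gives the converse. The only cosmetic difference is that you obtain the converse from the lower bound in \eqref{eq:backward-VI-BDG-equivalence}, whereas the paper uses the identity $\|Z-\widetilde Z\|_{\mathbb H_{\cP}^2}=\|I_T(Z-\widetilde Z)\|_{\mathcal L_{\cP}^2}$ directly; that lower bound is itself derived from this terminal isometry, so the two routes coincide.
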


\begin{proof}
The upper BDG bound proves \textup{(i)}; the active-coordinate Lipschitz bound,
Cauchy--Schwarz, and \eqref{eq:backward-driver-base-integrability} prove
\textup{(ii)}; and descent followed by continuous composition proves
\textup{(iii)}.  Conversely, equal value/integral responses and terminal
calibration give
\[
 \|D_Q^{\rm pw}Y-D_Q^{\rm pw}\widetilde Y\|_{\mathbb H_{\cP}^2}
 =\|I_T(D_Q^{\rm pw}Y-D_Q^{\rm pw}\widetilde Y)\|_{\mathcal L_{\cP}^2}
 =0,
\]
which proves equality of the observable jet classes.
\end{proof}

\begin{remark}[Closed recovery is weaker than stable propagation]
The first part does not imply the second.  On $\ell^2$, the diagonal operator
\[
 D(x_1,x_2,\ldots)=(x_1,2x_2,3x_3,\ldots)
\]
with its natural domain has a closed graph and zero vertical space, but its
domain is not closed in $\ell^2$; indeed $n^{-1}e_n\to0$ while
$D(n^{-1}e_n)=e_n\not\to0$.  The response-selected jet state therefore uses
the graph topology unless the closed-domain criterion has been verified.
\end{remark}

\begin{remark}[Backward coordinate hierarchy]
\label{rem:backward-coordinate-hierarchy}
The conventional BSDE coordinates have different response-selected roles:
\[
 \begin{aligned}
 Y&:\ \text{retained value/restart response},\\
 Z&:\ \text{Riesz-generated covariance-energy class, retained unless its}
       \\[-1mm]
   &\hspace{9mm}\text{recovery is continuous in the evolutionary value topology},\\
 K&:\ \text{downstream finite-variation exactness readout}.
 \end{aligned}
\]
The cross-response generates $D_Q^{\rm pw}Y$, while the closed relation decides
single-valuedness and value-topology compression.  An orthogonal martingale
remains visible in $Y$ but not in this covariance derivative, so the solver
interface imposes common-integral representability.  There $Z$ is the graph
coordinate by \Cref{cor:common-integrand-equals-covariance-jet}; recovery from
$Y$ still requires a compression criterion.  A program observing an
orthogonal martingale component requires a separate retained coordinate.
\end{remark}

\begin{example}[Boundary generation in the classical Brownian $L^2$ theory]
\label{ex:classical-terminal-jet-propagation}
Let $W$ generate the filtration, take the zero driver, and let
\[
 Y_t^\xi=E[\xi\mid\mathcal F_t],
 \qquad
 Y_t^\xi=E\xi+\int_0^t Z_s^\xi\,\dd W_s.
\]
For $\xi,\eta\in L^2$ the It\^o isometry gives
\begin{equation}\label{eq:classical-terminal-jet-L2-bound}
 \|Z^\xi-Z^\eta\|_{\mathbb H^2}^2
 =E\left| (\xi-\eta)-E(\xi-\eta)\right|^2
 \le E|\xi-\eta|^2.
\end{equation}
Thus the $L^2$ terminal topology propagates the jet continuously.  Moreover,
\[
 \|\xi-\eta\|_{L^2}
 =\|Y_T^\xi-Y_T^\eta\|_{L^2}
 \le\|Y^\xi-Y^\eta\|_{\mathbb S^2},
\]
so it is also continuously recoverable from the value trajectory on this
solver range.  This is the compressible branch, while $(Y,Z)$ remains a
lossless first-jet chart.
\end{example}

\subsubsection{Terminal generation and evolutionary jet retention}

Fix one backward-compatible continuation interface and one admissible driver
$F$.  Let $\mathscr T_0$ be an operational terminal core with a chosen value
metric $d_{\mathscr T}$.  For each $\xi\in\mathscr T_0$, suppose the declared
operational solver selects a backward first-jet trajectory
\begin{equation}\label{eq:backward-reference-continuation}
 \mathcal G_T^0(\xi):=(Y^\xi,Z^\xi)
 \in\mathscr W_{Q,\cP}^{1,2},
 \qquad Y_T^\xi=\xi.
\end{equation}
The fixed dependence on the continuation interface and $F$ is suppressed in
the notation.  For this subsection a solver trajectory is supplied for each
$\xi$; its construction and uniqueness are separate inputs.  The notation
$Z^\xi=D_QY^\xi$ is used once the relevant value projection is known to be
injective.

\begin{corollary}[Terminal generation versus terminal enhancement]
\label{thm:terminal-propagation}
The selected terminal map extends uniquely to a continuous map
\[
 \mathcal G_T:\overline{\mathscr T_0}
 \longrightarrow\mathscr W_{Q,\cP}^{1,2}
\]
if and only if $\mathcal G_T^0$ is Cauchy-continuous.  Equivalently, for every
$d_{\mathscr T}$-Cauchy sequence $(\xi_n)$, the first-jet sequence
\[
 \bigl((Y^{\xi_n},Z^{\xi_n})\bigr)_{n\ge1}
\]
is Cauchy in $d_{\mathscr W}$.  In particular, it is sufficient that a
nondecreasing
modulus $\omega$ with $\omega(0)=0$ and
$\lim_{r\downarrow0}\omega(r)=0$ satisfy
\begin{equation}\label{eq:terminal-to-jet}
 d_{\mathscr W}\bigl((Y^\xi,Z^\xi),(Y^\eta,Z^\eta)\bigr)
 \le \omega\bigl(d_{\mathscr T}(\xi,\eta)\bigr),
 \qquad \xi,\eta\in\mathscr T_0.
\end{equation}
If Cauchy continuity fails, the first-jet completion cannot be the graph of a single-valued
continuous map over the zero-order terminal completion.  This happens, in
particular, if one terminal Cauchy sequence has non-Cauchy continuation jets,
or if two equivalent terminal Cauchy sequences yield different stable
first-jet limits.  Relative to the declared terminal and first-jet response
program, any completion retaining all such response limits must then enhance
the terminal state by a first-order lift.
\end{corollary}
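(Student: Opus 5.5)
The plan is to read the corollary as the standard extension theorem for maps from a dense subspace of a metric space into a complete metric space. Then I will add the two obstruction clauses as contrapositives. The target $\mathscr W_{Q,\cP}^{1,2}$ is complete, being a closed subset of the Banach product $\mathbb S_{\cP}^2\times\mathbb H_{\cP}^2$ with the graph metric $d_{\mathscr W}$ of \eqref{eq:backward-graph-metric}. The domain $\mathscr T_0$ is dense in $\overline{\mathscr T_0}$ by definition.

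\emph{Sufficiency.} Assume $\mathcal G_T^0$ is Cauchy-continuous. For $\xi\in\overline{\mathscr T_0}$, choose $\xi_n\in\mathscr T_0$ with $\xi_n\to\xi$. The jets $\mathcal G_T^0(\xi_n)$ are $d_{\mathscr W}$-Cauchy, so they have a limit; set $\mathcal G_T(\xi)$ equal to it. To see that the limit does not depend on the sequence, interleave two approximating sequences of $\xi$. The interleaved sequence is still Cauchy, so its image is Cauchy, and the two limits coincide. Taking constant sequences shows that $\mathcal G_T$ extends $\mathcal G_T^0$.

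Continuity of $\mathcal G_T$ comes from a diagonal argument. Given $\zeta_k\to\zeta$ in $\overline{\mathscr T_0}$, pick $\xi_k\in\mathscr T_0$ with $d_{\mathscr T}(\xi_k,\zeta_k)<1/k$ and $d_{\mathscr W}(\mathcal G_T^0(\xi_k),\mathcal G_T(\zeta_k))<1/k$. Then $(\xi_k)$ converges to $\zeta$, hence is Cauchy. The interleaving argument again forces $\mathcal G_T^0(\xi_k)\to\mathcal G_T(\zeta)$, and therefore $\mathcal G_T(\zeta_k)\to\mathcal G_T(\zeta)$. Uniqueness of the extension follows from density and the Hausdorff property.

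\emph{Necessity.} Suppose a continuous extension $\mathcal G_T$ exists. A Cauchy sequence in $\mathscr T_0$ converges in $\overline{\mathscr T_0}$. Its image under $\mathcal G_T$, which equals its image under $\mathcal G_T^0$, therefore converges and is in particular Cauchy.

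\emph{Modulus criterion.} If \eqref{eq:terminal-to-jet} holds, then $\omega(r)\to0$ as $r\downarrow0$ shows directly that $d_{\mathscr T}$-Cauchy sequences are mapped to $d_{\mathscr W}$-Cauchy sequences.

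\emph{Obstruction clauses.} Suppose Cauchy continuity fails. Then either some terminal Cauchy sequence has a non-Cauchy jet sequence, or two equivalent terminal Cauchy sequences produce distinct jet limits. The second case is exactly the failure of the interleaving step, since the interleaved sequence would then be Cauchy with a non-Cauchy image. In either case no single-valued continuous map on $\overline{\mathscr T_0}$ has the completed solver graph as its graph. The first-jet limits therefore carry information beyond the zero-order terminal class.

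Any completion that keeps these response limits must record the additional jet coordinate. In the spirit of \Cref{thm:program-refinement-enhancement}\textup{(iv)}, this means refining the terminal calibration by the pulled-back uniformity $d_{\mathscr T}+d_{\mathscr W}\circ\mathcal G_T^0$, which is a first-order lift.

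I expect the only delicate step to be stating the last clause precisely. It is a program-relative consequence of \Cref{thm:program-refinement-enhancement}, not an analytic estimate. The extension argument itself is routine.
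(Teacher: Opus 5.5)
Your proof is correct and follows essentially the paper's argument. You define the extension as the limit of the images of a representing Cauchy sequence and use interleaving for well-definedness and continuity; necessity comes from convergence in the completion, and the obstruction clauses are the contrapositive, with the second alternative reducing to the first via interleaving. Your diagonal continuity argument and your appeal to \Cref{thm:program-refinement-enhancement}\textup{(iv)} for the enhancement clause spell out steps that the paper's one-line proof leaves implicit.
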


\begin{proof}
Necessity is immediate.  Conversely, map a completion point to the limit of
the images of a representing Cauchy sequence; interleaving equivalent
representatives proves well-definedness and continuity.  The modulus implies
Cauchy continuity, while the stated alternatives are precisely the
obstructions to a single-valued graph over the terminal completion.
\end{proof}

\begin{proposition}[Boundary generation versus evolutionary value compression]
\label{prop:boundary-generation-evolutionary-retention}
Put
\[
 \mathscr S_T:=\overline{\mathcal G_T^0(\mathscr T_0)}
 ^{\,\mathscr W_{Q,\cP}^{1,2}}
\]
and assume that this closed terminal solver range is invariant under the
first-jet truncations.  Terminal generation and evolutionary value compression
are different requirements:
\begin{enumerate}[label=\textup{(\roman*)},leftmargin=2.5em]
\item terminal generation asks whether
$\mathcal G_T^0:\mathscr T_0\to\mathscr S_T$ is Cauchy-continuous for the
terminal-data topology;
\item evolutionary compression asks whether
$\pi_Y|_{\mathscr S_T}$ is injective and has a stable-causal inverse
for the inherited $\mathbb S_{\cP}^2$ value topology.
\end{enumerate}
These conditions concern different maps, and neither implication is assumed
without additional comparison hypotheses.  The following comparison is
sufficient for boundary generation to
imply evolutionary compression.  Suppose
\eqref{eq:terminal-to-jet} holds and there is a nondecreasing modulus
$\chi$ with $\chi(0)=0$ and $\lim_{r\downarrow0}\chi(r)=0$ such that
\begin{equation}\label{eq:terminal-value-metric-comparison}
 d_{\mathscr T}(\xi,\eta)
 \le\chi\bigl(\|Y^\xi-Y^\eta\|_{\mathbb S_{\cP}^2}\bigr),
 \qquad \xi,\eta\in\mathscr T_0.
\end{equation}
Then
\begin{equation}\label{eq:terminal-generation-implies-value-recovery}
 \|Z^\xi-Z^\eta\|_{\mathbb H_{\cP}^2}
 \le(\omega\circ\chi)
       \bigl(\|Y^\xi-Y^\eta\|_{\mathbb S_{\cP}^2}\bigr),
\end{equation}
and the induced closed terminal solver range is value-process compressible by
\Cref{prop:nonlinear-solver-range-compression}.
\end{proposition}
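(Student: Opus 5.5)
The estimate \eqref{eq:terminal-generation-implies-value-recovery} follows by composing the two moduli, and compressibility then follows by verifying clause~(ii) of \Cref{prop:nonlinear-solver-range-compression} on $\mathscr S_T$. Fix $\xi,\eta\in\mathscr T_0$. Since $d_{\mathscr W}$ dominates the $\mathbb H_{\cP}^2$ distance of the jet coordinates, \eqref{eq:terminal-to-jet} gives $\|Z^\xi-Z^\eta\|_{\mathbb H_{\cP}^2}\le\omega(d_{\mathscr T}(\xi,\eta))$. Now insert \eqref{eq:terminal-value-metric-comparison}; because $\omega$ is nondecreasing, this yields $\omega(d_{\mathscr T}(\xi,\eta))\le\omega(\chi(\|Y^\xi-Y^\eta\|_{\mathbb S_{\cP}^2}))$, which is \eqref{eq:terminal-generation-implies-value-recovery}. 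The composite $\omega\circ\chi$ is again a nondecreasing modulus vanishing at $0^+$.

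Next I would lift this to the closure $\mathscr S_T$. On the raw range $\mathcal G_T^0(\mathscr T_0)$, the estimate shows that the value projection is injective: equal $Y$ forces equal $Z$. It also shows that $\mathcal Z^0:Y^\xi\mapsto Z^\xi$ is uniformly continuous for the inherited $\mathbb S_{\cP}^2$ metric. Take a point $(Y,Z)\in\mathscr S_T$ and an approximating sequence $(Y^{\xi_n},Z^{\xi_n})\to(Y,Z)$ in $d_{\mathscr W}$. Then $Y^{\xi_n}$ is Cauchy, so the modulus makes $Z^{\xi_n}$ Cauchy. Interleaving two such sequences with the same value limit shows that the jet limit depends only on $Y$. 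Hence $\pi_Y|_{\mathscr S_T}$ is injective. The uniformly continuous map $\mathcal Z^0$ extends uniquely to $\mathcal Z_{\rm sol}$ on $\mathscr Y_{\rm sol}=\pi_Y(\mathscr S_T)$ with the same modulus, and $\mathscr S_T$ is the graph \eqref{eq:nonlinear-solver-graph}. Note that the extension lives exactly on the value image of the closed set, so closedness of $\mathscr S_T$ in the Banach product supplies the limit points.

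The step I expect to need care is stable causality of the inverse $J_{\rm sol}(Y)=(Y,\mathcal Z_{\rm sol}(Y))$. I would argue that truncation invariance of $\mathscr S_T$ gives $\rho_t^{\mathscr W}(Y,\mathcal Z_{\rm sol}Y)=(r_tY,\one_{[0,t]}\mathcal Z_{\rm sol}Y)\in\mathscr S_T$. By injectivity this point must equal $(r_tY,\mathcal Z_{\rm sol}(r_tY))$, so $\mathcal Z_{\rm sol}(r_tY)=\one_{[0,t]}\mathcal Z_{\rm sol}(Y)$, which is the required stopping identity. Continuity of $J_{\rm sol}$ comes from the modulus $\omega\circ\chi$. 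Thus clause~(ii) of \Cref{prop:nonlinear-solver-range-compression} holds, and that proposition gives value-process compressibility of $\mathscr S_T$.

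For the first part of the statement, that (i) and (ii) are distinct requirements, I would just record that they concern different maps: $\mathcal G_T^0$ on terminal data versus the inverse of $\pi_Y$ on the range. The comparison hypothesis \eqref{eq:terminal-value-metric-comparison} is exactly what bridges them.
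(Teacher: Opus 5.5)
Your proposal is correct and follows the paper's own argument. You bound $\|Z^\xi-Z^\eta\|_{\mathbb H_{\cP}^2}$ by $d_{\mathscr W}$ and compose the two moduli, pass to the closure to obtain injectivity of the value projection with a continuous inverse, and derive stable causality from truncation invariance of $\mathscr S_T$ together with uniqueness of the jet over a given value. The only imprecision is minor: on the closure the inherited modulus is the right limit $r\mapsto\lim_{s\downarrow r}(\omega\circ\chi)(s)$ rather than literally $\omega\circ\chi$, which is harmless because it still vanishes as $r\downarrow0$.
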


\begin{proof}
Under the two modulus estimates,
\[
 \|Z^\xi-Z^\eta\|_{\mathbb H_{\cP}^2}
 \le d_{\mathscr W}\bigl(\mathcal G_T^0(\xi),
                          \mathcal G_T^0(\eta)\bigr)
 \le(\omega\circ\chi)
       \bigl(\|Y^\xi-Y^\eta\|_{\mathbb S_{\cP}^2}\bigr).
\]
Passing to the closure makes the value projection injective with continuous
inverse; truncation stability and uniqueness give stable causality.
\end{proof}

\begin{proposition}[Running-source stability and the intrinsic defect]
\label{prop:backward-intrinsic-defect}
Fix an admissible driver $F$ in the sense of
\Cref{def:admissible-backward-driver}.  The raw source map
$B^{F,0}$ on the testing graph has a unique continuous extension
\begin{equation}\label{eq:backward-running-channel}
 B^F:\mathscr W_{Q,\cP}^{1,2}\longrightarrow\mathbb S_{\cP}^2,
 \qquad B^F|_{\mathfrak G_{Q,\cP}^{\rm bw,0}}=B^{F,0}.
\end{equation}
For a stable lift $(Y,Z)$ define its intrinsic defect by
\begin{equation}\label{eq:backward-defect}
 K_t^F[Y,Z]:=Y_0-B_t^F[Y,Z]+I_t(Z)-Y_t.
\end{equation}
Then
\begin{align}
 &\|B^F[Y,Z]-B^F[\widetilde Y,\widetilde Z]\|_{\mathbb S_{\cP}^2}
 \notag\\
 &\qquad\le
 LT\|Y-\widetilde Y\|_{\mathbb S_{\cP}^2}
 +L\sqrt T\|Z-\widetilde Z\|_{\mathbb H_{\cP}^2},
 \label{eq:backward-running-source-stability}\\
 &\|K^F[Y,Z]-K^F[\widetilde Y,\widetilde Z]\|_{\mathbb S_{\cP}^2}
 \notag\\
 &\qquad\le
 (2+LT)\|Y-\widetilde Y\|_{\mathbb S_{\cP}^2}
 +(2+L\sqrt T)\|Z-\widetilde Z\|_{\mathbb H_{\cP}^2}.
 \label{eq:backward-defect-stability}
\end{align}
In particular $B^F$ is the response-completed running-source channel and
$B^F,K^F$ are continuous first-jet readouts.  For a completed lift, the occasional notation
$\int_0^tF(r,\cdot,Y_r,Z_r)\,\dd r$ means this continuous extension; it does
not assert a distinguished pointwise representative of the energy class $Z$.

The cone
\[
 \mathbb K_{\cP}^{2,\uparrow}
 :=\{K\in\mathbb S_{\cP}^2:K_0=0
       \text{ and }K\text{ has a quasi-sure continuous nondecreasing version}\}
\]
is closed in $\mathbb S_{\cP}^2$.  Suppose
$R:=Y+B^F[Y,Z]$ belongs to the declared response-supermartingale class under
every $P\in\cP$ and the common upper-energy class in \textup{(B4)} of
\Cref{def:backward-compatible-interface} is the specified first-jet
coordinate, $Z^R=Z$ in $\mathbb H_{\cP}^2$.  Then
$K^F[Y,Z]\in\mathbb K_{\cP}^{2,\uparrow}$.  Moreover, for every $P\in\cP$
and every deterministic $s\le t$,
\begin{equation}\label{eq:defect-gap}
 Y_s-E^P\!\left[Y_t+B_t^F-B_s^F\mid\mathcal F_s^0\right]
 =E^P[K_t^F-K_s^F\mid\mathcal F_s^0],
\end{equation}
so conditional exactness under $P$ is equivalent to $P$-flatness of the defect
on $[s,t]$.
\end{proposition}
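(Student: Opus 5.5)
The plan is to prove the estimates on the raw testing graph first and then extend by density. For $(Y,Z)$ and $(\widetilde Y,\widetilde Z)$ in $\mathfrak G_{Q,\cP}^{\rm bw,0}$, write $U=\mathfrak J_{\mathfrak a}Z$ and $\widetilde U=\mathfrak J_{\mathfrak a}\widetilde Z$ for the active sections. The active-coordinate Lipschitz bound \eqref{eq:backward-driver-active-Lipschitz} gives, pathwise,
\[
 \sup_{t\le T}|B_t^{F,0}[Y]-B_t^{F,0}[\widetilde Y]|
 \le L\int_0^T|Y_r-\widetilde Y_r|\,\dd r+L\int_0^T\|U_r-\widetilde U_r\|_H\,\dd r .
\]
The first integral is at most $T\sup_r|Y_r-\widetilde Y_r|$. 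Cauchy--Schwarz bounds the second by $\sqrt T(\int_0^T\|U-\widetilde U\|_H^2\,\dd r)^{1/2}$. Taking $L^2(P)$ norms and the supremum over $P$, then using the isometry \eqref{eq:active-coordinate-isometry}--\eqref{eq:completed-active-coordinate-map}, yields \eqref{eq:backward-running-source-stability} on the graph. By Proposition~\ref{thm:backward-first-jet-descent}(ii), $B^{F,0}$ is well defined on classes. Since the graph is dense in $\mathscr W_{Q,\cP}^{1,2}$ by definition and $\mathbb S_{\cP}^2$ is complete, a uniformly continuous map extends uniquely, so $B^F$ exists and is unique, and the estimate passes to the closure. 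Condition \eqref{eq:backward-driver-base-integrability} ensures that $B^{F,0}$ actually lands in $\mathbb S_{\cP}^2$.

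For \eqref{eq:backward-defect-stability}, expand $K^F$ termwise. The term $Y_0$ contributes at most $\|Y-\widetilde Y\|_{\mathbb S^2_{\cP}}$, and so does $Y_t$. The term $B^F$ contributes the bound just proved. The term $I(Z)$ contributes at most $2\|Z-\widetilde Z\|_{\mathbb H^2_{\cP}}$ by \eqref{eq:backward-terminal-path-calibration}. Summing gives the stated constants. Continuity of both readouts follows immediately.

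For closedness of $\mathbb K_{\cP}^{2,\uparrow}$, take $K^n\to K$ in $\mathbb S_{\cP}^2$. Fix $P$ and pass to a subsequence with $\sup_t|K^n_t-K_t|\to0$ $P$-a.s. Uniform limits of continuous nondecreasing paths vanishing at $0$ keep these properties, so $K$ has such a version under every $P$. The issue is whether a \emph{quasi-sure} version exists. I expect this to be the main obstacle. I would handle it as follows. Choose the $K^n$ with $\|K^{n+1}-K^n\|_{\mathbb S^2_{\cP}}\le 4^{-n}$. Markov's inequality with capacity Borel--Cantelli, as in Lemma~\ref{lem:upper-capacity-L0-completeness}, then gives uniform convergence outside one polar set. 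Taking the limit selector of Lemma~\ref{lem:causal-Borel-limit} produces a single Borel continuous nondecreasing version.

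For the last assertion, fix $P$. Under (B3), $R=R_0+M^{R,P}-K^{R,P}$, and (B4) together with $Z^R=Z$ gives $\pi_P(I(Z))=M^{R,P}$. Substituting $R=Y+B^F$ into \eqref{eq:backward-defect} gives $K^F=Y_0-R_t+I_t(Z)=K^{R,P}+(Y_0-R_0)$, and $R_0=Y_0$ because $B^F_0=0$. Hence $K^F=K^{R,P}$ $P$-a.s. as processes, which is continuous, nondecreasing and null at zero. The previous paragraph then upgrades this to membership of the cone; the only care needed is taking versions through the modelwise projections $\pi_P$. Finally, rewrite $Y_s-Y_t-(B^F_t-B^F_s)=K^F_t-K^F_s-I_t(Z)+I_s(Z)$, take $E^P[\,\cdot\mid\mathcal F^0_s]$, and use the martingale property of the square-integrable $I(Z)$ under $P$ to kill the integral increment. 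This is \eqref{eq:defect-gap}. Since $K^F$ is nondecreasing, the conditional gap vanishes iff $K^F_t=K^F_s$ $P$-a.s., which is the stated equivalence.
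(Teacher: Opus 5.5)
Your Lipschitz estimates for $B^F$ and $K^F$, the extension by density, the identification $K^F=(Y_0-R_0)+K^{R,P}=K^{R,P}$, and the conditional-gap computation all follow the paper's proof. Your direct substitution using (B4) does the same job as the paper's appeal to Doob--Meyer uniqueness.

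The one place you argue differently is closedness of $\mathbb K_{\cP}^{2,\uparrow}$. You build a new representative as the quasi-sure uniform limit of a fast subsequence. The paper instead keeps an arbitrary fixed representative $K$ and bounds, for rational $s<t$ and $\varepsilon>0$,
\[
 c_{\cP}(K_s>K_t+\varepsilon)\le c_{\cP}\Bigl(2\sup_{r\le T}|K_r-K^n_r|>\varepsilon\Bigr)\le 4\varepsilon^{-2}\|K-K^n\|_{\mathbb S_{\cP}^2}^2\longrightarrow0,
\]
then takes a countable union and uses path continuity. Your route is valid, provided you add that the Borel--Cantelli limit agrees with $K$ quasi surely, which follows from the uniqueness clause of \Cref{lem:upper-capacity-L0-completeness}. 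However, the obstacle you anticipated is not really there. Elements of $\mathbb S_{\cP}^2$ are classes of Borel maps into $C([0,T];\mathbb R)$, and monotonicity and $K_0=0$ are closed, hence Borel, path conditions. A Borel set that is $P$-null for every $P\in\cP$ is polar by definition of $c_{\cP}$ (\Cref{prop:common-event}). So your own first step, $P$-a.s.\ monotonicity of a fixed representative under each $P$, already finishes closedness.

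This matters in your last paragraph. There you say the previous paragraph upgrades the modelwise monotonicity of $K^F$ to membership of the cone. But that Borel--Cantelli mechanism needs a sequence of cone elements converging to $K^F[Y,Z]$, and none is available: defects of testing-graph approximants need not be monotone. The correct justification is the one above, which is exactly the paper's remark that monotonicity is a Borel path property. Fix a Borel continuous-path representative of $K^F[Y,Z]$; its non-monotonicity event is Borel and $P$-null for every $P$, hence polar.

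Finally, the statement calls $B^F$ and $K^F$ first-jet \emph{readouts}, and the paper also checks stable causality, which your proposal omits. Condition (A3) gives $r_tB^{F,0}[Y]=r_tB^{F,0}[r_tY]$ on the testing graph. This passes by continuity to $r_tB^F[Y,Z]=r_tB^F[r_tY,\one_{[0,t]}Z]$, and \eqref{eq:common-integral-stopping} handles the integral term in $K^F$. You assert only continuity.
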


\vspace{\smallskipamount}
\begin{proof}
Integrating \eqref{eq:backward-driver-active-Lipschitz} and applying
Cauchy--Schwarz gives the first estimate on the dense testing graph.  Descent
and completion give its unique extension, with $B_0^F=0$; the defect formula
and upper Doob/BDG bound give the second estimate.  The raw identity
\[
 r_tB^{F,0}[Y]
 =r_tB^{F,0}[r_tY]
\]
uses the truncated jet on the right and passes by continuity to
\[
 r_tB^F[Y,Z]
 =r_tB^F[r_tY,\one_{[0,t]}Z].
\]
Thus $B^F$, and algebraically $K^F$, are stable-causal.

To prove closedness of the increasing cone, let $K^n\to K$ in
$\mathbb S_{\cP}^2$ with every $K^n$ quasi-surely nondecreasing.  For rational
$s<t$ and $\varepsilon>0$,
\[
 c_{\cP}(K_s>K_t+\varepsilon)
 \le c_{\cP}\left(2\sup_{r\le T}|K_r-K_r^n|>\varepsilon\right)
 \le4\varepsilon^{-2}\|K-K^n\|_{\mathbb S_{\cP}^2}^2
 \longrightarrow0.
\]
A countable union over rational $s,t,\varepsilon$, followed by continuity,
gives one polar exceptional set; continuity of evaluation preserves $K_0=0$.

Under each $P$, the common integral is classical and $Z^R=Z$ makes $I(Z)$ the
Doob--Meyer martingale part of $R$; uniqueness identifies its increasing part
with $K^F$.  Monotonicity is a Borel path property, so this common process lies
in $\mathbb K_{\cP}^{2,\uparrow}$.  Conditional expectation in
$Y=Y_0-B^F+I(Z)-K^F$ gives \eqref{eq:defect-gap}; its nonnegative defect
increment is conditionally zero exactly when it vanishes $P$-a.s.
\end{proof}

\subsubsection{Backward response and completion limits}

\begin{definition}[Backward first-jet trajectory]
\label{def:canonical-backward-first-jet}
Let $\xi$ and an admissible driver $F$ be operational data.  A
\emph{backward first-jet trajectory}
 is a stable lift $(Y,Z)\in\mathscr W_{Q,\cP}^{1,2}$ with
$Y_T=\xi$.  For every declared continuation seam $s\le t$, require the payoff
\[
 G_{s,t}[Y,Z]:=Y_t+B_t^F[Y,Z]-B_s^F[Y,Z]
\]
to belong to the declared domain of $\mathcal E_{s,t}$ or to its specified
continuous $L^2$ extension, and require
\begin{equation}\label{eq:backward-response-law}
 Y_s=\mathcal E_{s,t}\!\left[G_{s,t}[Y,Z]\right],
\end{equation}
and $K^F[Y,Z]$ has nondecreasing paths outside a polar set.  Algebraically,
\begin{equation}\label{eq:backward-terminal-equation}
 Y_t=\xi+B_T^F[Y,Z]-B_t^F[Y,Z]
      -(I_T(Z)-I_t(Z))+K_T^F-K_t^F.
\end{equation}
On a compressed ambient linear branch this becomes
$(Y,Z,K)=(Y,D_QY,K^F[Y,D_QY])$; on a nonlinear solver-compressed branch,
$D_QY$ is replaced by the restricted recovery $\mathcal Z_{\rm sol}(Y)$ of
\Cref{prop:nonlinear-solver-range-compression}.
\end{definition}

Thus the state is the response-selected stable first-jet lift.  With one law,
linear conditional expectation, and flat defect, it is the usual
Pardoux--Peng BSDE coordinate system \cite{PardouxPeng90}.

\begin{corollary}[Fixed-interface, fixed-driver response-completion extension]
\label{thm:backward-universal-limit}
Fix the law family $\cP$, one backward-compatible continuation interface
$(\mathcal E_{s,t})$, and one admissible driver $F$ in the sense of
\Cref{def:admissible-backward-driver}.  Let $\mathscr D_0$ be a
response-quotiented operational data core for this fixed triple and suppose
every $d\in\mathscr D_0$ carries a backward first-jet trajectory with terminal
response $\xi^d$.  Assume that $d\mapsto\xi^d$ is part of the declared data
metric and is Cauchy-continuous into $\mathcal L_{\cP}^2$.  If the
operational solution map
\[
 \Phi_{\rm bw}^{\rm op}:d\longmapsto(Y^d,Z^d)
\]
is Cauchy-continuous from the declared data metric into
$\mathscr W_{Q,\cP}^{1,2}$, then it extends uniquely to the metric completion
$\overline{\mathscr D_0}$:
\begin{equation}\label{eq:backward-universal-map}
 \Phi_{\rm bw}:\overline{\mathscr D_0}
 \longrightarrow\mathscr W_{Q,\cP}^{1,2}.
\end{equation}
Assume also that $\mathscr D_0$ carries declared truncations
$\rho_u^{\mathscr D}$ which preserve $\mathscr D_0$, are Cauchy-continuous
for the data metric, and form a truncation system:
\[
 \rho_T^{\mathscr D}=I,
 \qquad
 \rho_u^{\mathscr D}\rho_v^{\mathscr D}
 =\rho_{u\wedge v}^{\mathscr D},
 \qquad u,v\in[0,T].
\]
Assume, moreover, that
\begin{equation}\label{eq:backward-data-restart-compatibility}
 \Phi_{\rm bw}^{\rm op}(\rho_u^{\mathscr D}d)
 =\rho_u^{\mathscr W}\Phi_{\rm bw}^{\rm op}(d),
 \qquad d\in\mathscr D_0,\quad u\in[0,T].
\end{equation}
Then the truncations and the displayed compatibility extend to the two
completions; the extended maps still satisfy the truncation identities.
The terminal response extends simultaneously; writing its extension again as
$d\mapsto\xi^d$, continuity of terminal evaluation gives
$Y_T^d=\xi^d$ on $\overline{\mathscr D_0}$.  Thus the limit is independent of
the operational approximation in the declared data metric and retains the
terminal condition.  The running sources $B^F[Y^d,Z^d]$, common stochastic integrals,
and intrinsic defects converge as continuous readouts, and the limiting defect
remains in $\mathbb K_{\cP}^{2,\uparrow}$.  For every declared seam $s\le t$
put
\[
 \mathscr G_{s,t}^0
 :=\{G_{s,t}[Y^d,Z^d]:d\in\mathscr D_0\}.
\]
If $\mathcal E_{s,t}$ extends continuously from $\mathscr G_{s,t}^0$ to its
closure in $\mathcal L_{\cP}^2$, with values in
$\mathcal L_{\cP}^2$, and these extensions preserve the exact tower on their
common closed seam domains, then the seam identity
\eqref{eq:backward-response-law} passes to the limit and remains
consistent with that closed deterministic tower.

Let
\[
 \mathscr S_{\rm sol}
 :=\overline{\Phi_{\rm bw}(\overline{\mathscr D_0})}
 ^{\,\mathscr W_{Q,\cP}^{1,2}}.
\]
Assume in addition that this completed solver range is invariant under the
first-jet truncations.
If the equivalent conditions of
\Cref{prop:nonlinear-solver-range-compression} hold on
$\mathscr S_{\rm sol}$, then the completed solution map admits the value-only
realization
\[
 \Phi_{\rm bw}(d)
 =\bigl(Y^d,\mathcal Z_{\rm sol}(Y^d)\bigr).
\]
Only when the solver range is a closed linear relation does this reduce to the
vertical-space and closed-value-domain criterion of
\Cref{thm:backward-compression-retention}.
\end{corollary}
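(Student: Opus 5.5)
The argument is a chain of completion-by-uniform-continuity steps. Everything rests on the fact that $\mathscr W_{Q,\cP}^{1,2}$, $\mathcal L_{\cP}^2$ and $\mathbb S_{\cP}^2$ are complete metric spaces. The first task is the extension itself. A Cauchy-continuous map from a metric space into a complete space extends uniquely to the completion: send a completion point, represented by a Cauchy sequence $(d_n)$, to $\lim\Phi_{\rm bw}^{\rm op}(d_n)$. Independence of the representative comes from interleaving two equivalent sequences, which gives a single Cauchy sequence whose image is Cauchy. Continuity of the extension follows from the same interleaving argument; this is exactly the argument already used in \Cref{thm:terminal-propagation}. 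The terminal map $d\mapsto\xi^d$ extends in the same way into $\mathcal L_{\cP}^2$.

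Next come the truncations. Each $\rho_u^{\mathscr D}$ is Cauchy-continuous, so it extends to $\overline{\mathscr D_0}$. The identities $\rho_T^{\mathscr D}=I$ and $\rho_u\rho_v=\rho_{u\wedge v}$ hold on the dense core and pass to the extensions: the extensions of composites agree with composites of the extensions, by uniqueness of Cauchy-continuous extension. The compatibility \eqref{eq:backward-data-restart-compatibility} extends because both sides are Cauchy-continuous maps that agree on $\mathscr D_0$; here $\rho_u^{\mathscr W}$ is a contraction by \Cref{def:stable-backward-first-jet-state}.

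The terminal condition needs one observation: evaluation $Y\mapsto Y_T$ is $1$-Lipschitz from $\mathbb S_{\cP}^2$ to $\mathcal L_{\cP}^2$. The identity $Y_T^d=\xi^d$ therefore passes to limits. This also shows that the extended terminal response is determined by the extended solution map.

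The readouts are handled by earlier results. \Cref{prop:backward-intrinsic-defect} gives the Lipschitz bounds \eqref{eq:backward-running-source-stability}--\eqref{eq:backward-defect-stability} for $B^F$ and $K^F$, and $I$ is bounded by \eqref{eq:backward-terminal-path-calibration}. So all three readouts converge along approximating sequences. The limiting defect stays in $\mathbb K_{\cP}^{2,\uparrow}$ because that cone is closed, again by \Cref{prop:backward-intrinsic-defect}. Here I assume every core trajectory has defect in the cone, which holds because core trajectories are backward first-jet trajectories by \Cref{def:canonical-backward-first-jet}.

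For the seam law, $G_{s,t}$ is a continuous readout of the first-jet state into $\mathcal L_{\cP}^2$, built from evaluation and $B^F$. Hence $G_{s,t}[Y^d,Z^d]$ lies in the closure of $\mathscr G^0_{s,t}$. The two sides of \eqref{eq:backward-response-law} are then continuous functions of $d$ that agree on the core: the left side is $Y_s$, evaluated via $\mathbb S^2$, and the right side is the assumed continuous extension of $\mathcal E_{s,t}$. They therefore agree on the completion. Consistency with the tower is inherited from the hypothesis that the extensions preserve it on their closed seam domains.

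Finally, for the compression clause, apply \Cref{prop:nonlinear-solver-range-compression} to the closed, truncation-invariant set $\mathscr S_{\rm sol}$. Every $\Phi_{\rm bw}(d)$ lies in $\mathscr S_{\rm sol}$, so it equals $(Y^d,\mathcal Z_{\rm sol}(Y^d))$ by the graph representation \eqref{eq:nonlinear-solver-graph}. The reduction to the vertical-space and closed-domain criterion in the linear case is the remark already stated in that proposition.

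The main obstacle I expect is the seam step. One has to check carefully that the readout $G_{s,t}$ lands in $\mathcal L_{\cP}^2$ continuously, and that the limit identity holds as an equality of $\mathcal L_{\cP}^2$ classes rather than pathwise. Evaluating at a deterministic $s$ is a $1$-Lipschitz map from $\mathbb S_{\cP}^2$, and I would check this explicitly before invoking uniqueness of continuous extensions.
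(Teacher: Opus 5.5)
Your proposal is correct and follows the paper's proof essentially step for step: Cauchy-continuous extension to $\overline{\mathscr D_0}$; density for the truncation identities and the restart compatibility; continuity of terminal evaluation, of $I$, $B^F$ and $K^F$, and of the seam payoff $G_{s,t}$ (you use the same bound $\|Y^n-Y\|_{\mathbb S_{\cP}^2}+2\|B^F[Y^n,Z^n]-B^F[Y,Z]\|_{\mathbb S_{\cP}^2}$); closedness of $\mathbb K_{\cP}^{2,\uparrow}$; and finally \Cref{prop:nonlinear-solver-range-compression} applied to $\mathscr S_{\rm sol}$. Your remark that the core defects already lie in the cone, by \Cref{def:canonical-backward-first-jet}, makes explicit a point the paper leaves implicit.
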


\begin{proof}
Descent defines the map on the response quotient, and Cauchy continuity gives
its unique completion.  Cauchy continuity of $\rho_u^{\mathscr D}$ gives its
extension.  Since composition of continuous maps agrees on the dense data
core, the identities
$\rho_T^{\mathscr D}=I$ and
$\rho_u^{\mathscr D}\rho_v^{\mathscr D}
=\rho_{u\wedge v}^{\mathscr D}$ persist on the completion; the same density
argument passes \eqref{eq:backward-data-restart-compatibility} to the two
completed spaces.  Continuity
of the integral, source, defect, and terminal evaluation passes the remaining
readouts.  If $(Y^n,Z^n)\to(Y,Z)$ in the first-jet graph metric, then
\[
 \|G_{s,t}[Y^n,Z^n]-G_{s,t}[Y,Z]\|_{\mathcal L_{\cP}^2}
 \le \|Y^n-Y\|_{\mathbb S_{\cP}^2}
    +2\|B^F[Y^n,Z^n]-B^F[Y,Z]\|_{\mathbb S_{\cP}^2}
 \longrightarrow0.
\]
Thus the specified continuous seam extensions pass the response law to the
limit, while their common-domain hypothesis preserves the tower.  The
increasing cone is closed by \Cref{prop:backward-intrinsic-defect}.  The
last claim is \Cref{prop:nonlinear-solver-range-compression}; the
closed-domain criterion applies only to its closed linear special case.
\end{proof}

\begin{remark}[Data fixed by the backward completion]
\label{rem:backward-completion-data}
The theorem is a continuity extension for a fixed law family, continuation
interface, driver, testing core, and one-channel integral branch.  Solver
existence and uniqueness are inputs of that interface; variable drivers also
require running-source convergence in the data metric.  The quadratic state
$\mathcal A_2$ supplies the covariance fibers and common integral, while the
terminal data, continuation rule, and solver stability are specified
separately.  Accordingly, $D_Q^{\rm pw}Y$ is the GKW energy jet of the
backward response rather than a rough-path Gubinelli derivative.
\end{remark}

\section{Exterior and spectral representations}
\label{sec:further-exact-representations}

The exterior construction propagates the geometric quotient $(X,A)$
algebraically: it is lossless there, and its homogeneous energies recover area
spectral invariants.  Unlike the It\^o tower's scalar quasi-shuffle identities,
it uses the geometric Chen product.

\subsection{The exterior Chen face}

Let
\[
 \mathscr F^\wedge(H):=\bigoplus_{k\ge0}^{\ell^2}\bigwedge^kH
\]
be the Hilbert exterior Fock space.  We normalize its exterior powers so
that, for every orthonormal basis $(e_j)_j$ of $H$, the increasing wedges
$e_{i_1}\wedge\cdots\wedge e_{i_k}$ form an orthonormal basis of
$\bigwedge^kH$.  Exterior multiplication is the corresponding normalized
alternating product; equivalently, the algebraic map
$h_1\otimes\cdots\otimes h_k\mapsto h_1\wedge\cdots\wedge h_k$ is understood
with this Hilbert normalization.  All constants below use this convention.
If $A\in\mathcal S_2(H)_{\rm sk}$, its associated two-form is
\begin{equation}
 \label{eq:state-area-two-form}
 \alpha_A:=\sum_{i<j}\langle Ae_j,e_i\rangle
 e_i\wedge e_j\in\bigwedge^2H,
 \qquad
 \|\alpha_A\|^2=\frac12\|A\|_{\mathcal S_2}^2.
\end{equation}
The definition is independent of the orthonormal basis.  Write
$\exp_\wedge$ for the exponential with respect to exterior multiplication.
We work with the following \emph{exterior Chen image}:
\begin{equation}
\label{eq:exterior-Chen-image-group}
 \mathscr G_2^\wedge(H)
 :=\bigl\{\exp_\wedge(x+2\alpha_A):
          x\in H,\ A\in\mathcal S_2(H)_{\rm sk}\bigr\}
 \subset\mathscr F^\wedge(H),
\end{equation}
with the topology transported from $H\times\mathcal S_2(H)_{\rm sk}$ through
the degree-one and degree-two coordinates.
For $u=\sum_k u_k$ and $v=\sum_kv_k$ in the Fock space, the notation
$u\wedge v$ is used only when the degreewise Cauchy product
$w_n=\sum_{i+j=n}u_i\wedge v_j$ again belongs to
$\mathscr F^\wedge(H)$.  The theorem below proves that this partial product is
defined and closed on $\mathscr G_2^\wedge(H)$.

\begin{theorem}[Exterior Chen representation of the geometric state]
\label{thm:exterior-Chen-representation}
For $x\in H$ and $A\in\mathcal S_2(H)_{\rm sk}$, define
\begin{equation}
 \label{eq:exterior-state-exponential}
 \mathscr E(x,A):=\exp_\wedge(x+2\alpha_A).
\end{equation}
Then the following hold.

\textup{(i)} The series belongs to $\mathscr F^\wedge(H)$ and satisfies
\begin{equation}
 \label{eq:exterior-Fock-bound}
 \|\mathscr E(x,A)\|_{\mathscr F^\wedge}^2
 \le(1+\|x\|^2)\exp(4\|\alpha_A\|^2).
\end{equation}

\textup{(ii)} If
\[
 (x,A)\star(y,B)
 =\bigl(x+y,A+B+\Anti(x\otimes y)\bigr),
\]
then
\begin{equation}
 \label{eq:exterior-Chen-homomorphism}
 \mathscr E\bigl((x,A)\star(y,B)\bigr)
 =\mathscr E(x,A)\wedge\mathscr E(y,B).
\end{equation}
Thus the exterior product is closed on $\mathscr G_2^\wedge(H)$ and
$\mathscr E$ is a topological group isomorphism from the geometric step-two
state group onto its exterior Chen image, with the transported topology in
\eqref{eq:exterior-Chen-image-group}.  Its degree-one and degree-two coordinates
recover $x$ and $A$.

\textup{(iii)} Writing
$\mathscr E(x,A)=\sum_{k\ge0}\Xi_k(x,A)$ by homogeneous degree,
\begin{equation}
 \label{eq:even-odd-exterior-levels}
 \Xi_{2N}(x,A)=\frac{2^N}{N!}\alpha_A^{\wedge N},
 \qquad
 \Xi_{2N+1}(x,A)=\frac{2^N}{N!}
 x\wedge\alpha_A^{\wedge N}.
\end{equation}
For a state $z=(x,A,q)$, the joint map
\[
 z\longmapsto(\mathscr E(x,A),q)
\]
is a continuous lossless group isomorphism onto
$\mathscr G_2^\wedge(H)\times\mathcal S_1(H)_{\rm sa}$, where the second
factor is additive.  The covariance channel is exactly the information
forgotten by the exterior factor alone.
\end{theorem}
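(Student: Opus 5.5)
The plan is to work with one element $\beta=x+2\alpha_A$ in the commutative even part plus one odd vector, and to reduce everything to the algebra of the exterior exponential. The first step is the degree formula (iii). Since $x\wedge x=0$ and $\alpha_A$ is even (so it commutes with everything), the formal expansion gives $\exp_\wedge(x+2\alpha_A)=\exp_\wedge(2\alpha_A)\wedge(1+x)$. This yields \eqref{eq:even-odd-exterior-levels} directly:
\[
 \Xi_{2N}=\frac{2^N}{N!}\alpha_A^{\wedge N},\qquad \Xi_{2N+1}=\frac{2^N}{N!}x\wedge\alpha_A^{\wedge N}.
\]
We will define $\mathscr E$ by these levels, so that convergence is decided by norms.

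For (i), the plan is to bound $\|\alpha_A^{\wedge N}\|$. We diagonalize $A$ by the real skew spectral theorem, $\alpha_A=\sum_\ell b_\ell\, u_\ell\wedge v_\ell$ with orthonormal planes and $\sum b_\ell^2=\|\alpha_A\|^2$. Then
\[
 \alpha_A^{\wedge N}=N!\sum_{\ell_1<\dots<\ell_N}b_{\ell_1}\cdots b_{\ell_N}\,u_{\ell_1}\wedge v_{\ell_1}\wedge\cdots,
\]
with orthonormal summands under the stated normalization. Hence $\|\alpha_A^{\wedge N}\|^2=(N!)^2e_N(b^2)\le N!\,\|\alpha_A\|^{2N}$, using $e_N\le p_1^N/N!$. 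It follows that $\|\Xi_{2N}\|^2\le 4^N\|\alpha_A\|^{2N}/N!$. For the odd levels, wedging with $x$ is bounded by $\|x\|$ in this normalization, since $\|x\wedge\omega\|\le\|x\|\|\omega\|$ holds by orthogonal decomposition of $x$ relative to the span. Summing both parities gives \eqref{eq:exterior-Fock-bound}. This normalization-constant bookkeeping is where I expect the main friction, and the part to check most carefully is that the normalized wedge is contractive for a vector factor.

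For (ii), we first compute in finite rank. There $\Anti(x\otimes y)$ corresponds to $\alpha=\tfrac12 x\wedge y$, and the sign must be matched against the convention in \eqref{eq:state-area-two-form}. So $x+y+2\alpha_{A+B+\Anti(x\otimes y)}=(x+2\alpha_A)+(y+2\alpha_B)+x\wedge y$. Next we use the graded identity
\[
 \exp_\wedge(x+a)\wedge\exp_\wedge(y+b)=\exp_\wedge(x+y+a+b+x\wedge y)
\]
for odd $x,y$ and even $a,b$. This follows from the factorization $\exp(a)(1+x)\exp(b)(1+y)$ and $(1+x)\wedge(1+y)=1+x+y+x\wedge y=(1+x+y)\wedge\exp(x\wedge y)$, because $(x+y)\wedge x\wedge y=0$. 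The identity then passes to general $\mathcal S_2$ data by finite-rank approximation, using the level bounds of (i). These bounds make each degree of the Cauchy product converge and show the product lies in the Fock space, since it equals $\mathscr E$ of the product state. Injectivity and recovery follow from the levels $\Xi_1=x$ and $\Xi_2=2\alpha_A$, since $A\mapsto\alpha_A$ is an isometry up to the factor $\sqrt2$. Continuity of $\mathscr E$ and of its inverse in the transported topology is then tautological, and the group property holds because $\mathscr E$ is a homomorphism from a group.

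For the joint map, we pair $\mathscr E$ with the additive coordinate $q$. The product structure is the product of group laws, since $\star$ on the full state $(x,a,q)$ of \Cref{prop:intrinsic-central-extension} adds $q$ independently. The map is onto $\mathscr G_2^\wedge(H)\times\mathcal S_1(H)_{\rm sa}$, and it is lossless with the decoder $(\Xi_1,\tfrac12\Xi_2,q)$. The final remark, that $q$ is exactly what is forgotten, follows because $\mathscr E$ does not depend on $q$ and is injective in $(x,A)$.
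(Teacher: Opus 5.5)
Your proposal is correct and follows the paper's proof step by step: the factorization $\exp_\wedge(x+2\alpha_A)=(1+x)\wedge\exp_\wedge(2\alpha_A)$, the skew-spectral bound $\|\alpha_A^{\wedge N}\|^2=(N!)^2e_N\le N!\,\|\alpha_A\|^{2N}$, the identity $(1+x)\wedge(1+y)=(1+x+y)\wedge\exp_\wedge(x\wedge y)$ combined with $2\alpha_{\Anti(x\otimes y)}=x\wedge y$, and recovery of $(x,A)$ from the degree-one and degree-two levels. You depart from the paper in only two minor ways. First, you state the odd-level contraction $\|x\wedge\omega\|\le\|x\|\,\|\omega\|$ explicitly, which the paper leaves implicit; it is cleanest via an orthonormal basis with $e_1=x/\|x\|$. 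Second, you pass the product identity to infinite rank by finite-rank approximation, which uses boundedness of wedge multiplication at each fixed degree, whereas the paper reads each degree of the Cauchy product directly as a finite algebraic sum.
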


\begin{proof}
Since $x\wedge x=0$ and degree one commutes with degree two under exterior
multiplication,
\[
 \exp_\wedge(x+2\alpha_A)
 =(1+x)\wedge\sum_{N\ge0}\frac{2^N}{N!}\alpha_A^{\wedge N},
\]
which gives \eqref{eq:even-odd-exterior-levels}.  If
$a_1\ge a_2\ge\cdots$ are the positive skew singular values of $A$, then
\[
 \|\alpha_A^{\wedge N}\|^2=(N!)^2e_N(a_1^2,a_2^2,\ldots)
 \le N!\|\alpha_A\|^{2N}.
\]
Summation gives \eqref{eq:exterior-Fock-bound}.

For the product law, even exterior elements commute with homogeneous ones and
$x\wedge x=0$, so
\[
 \mathscr E(x,A)=(1+x)\wedge\exp_\wedge(2\alpha_A).
\]
In the next displays all products are first read degreewise: at each fixed
degree the Cauchy product is a finite algebraic sum.  Finite-degree truncation
therefore gives
\begin{align*}
 \mathscr E(x,A)\wedge\mathscr E(y,B)
 &=(1+x)\wedge(1+y)\wedge
   \exp_\wedge\bigl(2\alpha_A+2\alpha_B\bigr)\\
 &=(1+x+y+x\wedge y)\wedge
   \exp_\wedge\bigl(2\alpha_A+2\alpha_B\bigr).
\end{align*}
Now $x\wedge y$ is decomposable, so $(x\wedge y)^{\wedge2}=0$, and
\[
 (1+x+y)\wedge\exp_\wedge(x\wedge y)
 =1+x+y+x\wedge y.
\]
Since
$2\alpha_{\Anti(x\otimes y)}=x\wedge y$, the last two displays give
\[
 \mathscr E(x,A)\wedge\mathscr E(y,B)
 =\mathscr E\bigl(x+y,A+B+\Anti(x\otimes y)\bigr).
\]
By part~\textup{(i)} the candidate on the right belongs to
$\mathscr F^\wedge(H)$; hence the degreewise Cauchy product on the left is
defined in the sense preceding the theorem.  This proves
\eqref{eq:exterior-Chen-homomorphism} and closure on
$\mathscr G_2^\wedge(H)$.  The first two homogeneous coordinates recover $x$
and $2\alpha_A$, proving injectivity; the separate $q$ coordinate proves the
final assertion.
\end{proof}

\begin{remark}[Exterior quotient of the all-order geometric signature]
\label{rem:exterior-quotient-geometric-signature}
Let $\operatorname{Alt}_k:H^{\otimes_2 k}\to\bigwedge^kH$ denote the
continuous canonical quotient at degree $k$.  Interpreting
$\operatorname{Alt}\mathbf S^{S,\infty}$ levelwise, every state increment
satisfies
\[
 \bigl(\operatorname{Alt}_k S^{S,(k)}_{s,t}\bigr)_{k\ge0}
 =\mathscr E(x_{s,t},A_{s,t}).
\]
This follows from smooth finite-dimensional signatures and fixed-degree
continuity, equivalently uniqueness of the Chen homomorphism
\eqref{eq:exterior-Chen-homomorphism}.  The identity concerns the geometric
Chen product; the It\^o quasi-shuffle structure uses the separate covariance
channel.
\end{remark}

\begin{remark}[Compatibility with the little weak-trace endpoint]
\label{rem:exterior-endpoint-bridge}
Only the continuous area embedding into $\mathcal S_2(H)$ is used.  Hence the
stochastic endpoint of
\cite{ZhaoLevyArea26}, where
$A\in\mathcal S^0_{1,\infty}(H)\hookrightarrow\mathcal S_2(H)$, carries the
same readout and scalarization, although the Banach operational minimality
theorem and this endpoint realization use different topologies.
\end{remark}

\subsection{Even--odd exterior spectral response}

The full exterior element is lossless on $(x,A)$; its scalar energies are not.
Exactly, the even energies determine the area spectrum and the odd energies
locate the first-level increment relative to it.

\begin{theorem}[Even--odd exterior spectral decoding]
\label{thm:even-odd-spectral-decoding}
Let $A\in\mathcal S_2(H)_{\rm sk}$ have positive skew singular values
$a_1\ge a_2\ge\cdots$, and let
$\mathscr E(x,A)=\sum_{k\ge0}\Xi_k(x,A)$.  Define
\[
 \mathscr D_A(z):=\prod_{j\ge1}(1+za_j^2),
 \qquad z\in\mathbb C.
\]
Then
\begin{equation}
 \label{eq:even-exterior-spectral-series}
 \mathscr D_A(z)
 =\sum_{N\ge0}\frac{z^N}{4^N}\|\Xi_{2N}(x,A)\|^2,
 \qquad
 \det(\operatorname{Id}+zA^*A)=\mathscr D_A(z)^2.
\end{equation}
In particular, the even energy sequence determines the complete multiset of
nonzero positive skew singular values.

For $z>0$,
\begin{equation}
 \label{eq:odd-exterior-resolvent-series}
 \sum_{N\ge0}\frac{z^N}{4^N}\|\Xi_{2N+1}(x,A)\|^2
 =\mathscr D_A(z)
 \left\langle(\operatorname{Id}+zA^*A)^{-1}x,x\right\rangle.
\end{equation}
In particular, evaluating the even and odd series at $z=4$ gives the exact
Fock norm
\begin{equation}\label{eq:exterior-exact-Fock-norm}
 \boxed{
 \|\mathscr E(x,A)\|_{\mathscr F^\wedge}^2
 =\mathscr D_A(4)
 \left(1+\left\langle
   (\operatorname{Id}+4A^*A)^{-1}x,x
 \right\rangle\right).}
\end{equation}
Thus \eqref{eq:exterior-Fock-bound} is a direct consequence of
$\mathscr D_A(4)\le\exp(4\|\alpha_A\|^2)$ and
$(\operatorname{Id}+4A^*A)^{-1}\preceq I$.
Consequently, after the even sequence has determined $\mathscr D_A$, the odd
sequence determines the cyclic spectral measure
\[
 \nu_{x,A}(B):=
 \left\langle E_{A^*A}(B)x,x\right\rangle,
 \qquad B\subset[0,\infty)\text{ Borel},
\]
through
\begin{equation}
 \label{eq:cyclic-spectral-transform}
 \frac{\sum_{N\ge0}4^{-N}z^N\|\Xi_{2N+1}(x,A)\|^2}
      {\mathscr D_A(z)}
 =\int_{[0,\infty)}\frac{1}{1+z\lambda}\,
   \dd\nu_{x,A}(\lambda).
\end{equation}
Thus the scalar even--odd exterior response recovers the singular spectrum of
$A$ and the energy of $x$ in every $A^*A$-spectral subspace.  Directions inside a degenerate spectral subspace remain unresolved by these scalar spectral data.
\end{theorem}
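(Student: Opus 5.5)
The plan is to diagonalize $A$ by the real skew spectral theorem and compute every exterior level in an adapted orthonormal basis. Write $A=\sum_j a_j(u_j\otimes v_j-v_j\otimes u_j)$ with orthonormal pairs $(u_j,v_j)$. Up to a fixed sign convention, $\alpha_A=\sum_j a_j\,u_j\wedge v_j$. Set $\omega_j:=u_j\wedge v_j$; these are commuting, mutually orthogonal decomposable two-forms with $\omega_j^{\wedge2}=0$. Then
\[
 \alpha_A^{\wedge N}=N!\sum_{j_1<\cdots<j_N}a_{j_1}\cdots a_{j_N}\,\omega_{j_1}\wedge\cdots\wedge\omega_{j_N}.
\]
The wedges $\omega_{j_1}\wedge\cdots\wedge\omega_{j_N}$ are orthonormal in the normalization fixed before \eqref{eq:state-area-two-form}. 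Hence \eqref{eq:even-odd-exterior-levels} gives
\[
 \|\Xi_{2N}\|^2=4^N e_N(a_1^2,a_2^2,\ldots).
\]
Multiplying by $4^{-N}z^N$ and summing gives the generating function $\prod_j(1+za_j^2)=\mathscr D_A(z)$, which is convergent and entire since $\sum a_j^2<\infty$. The singular values of $A$ are $a_j$, each with multiplicity two, so $\det(\operatorname{Id}+zA^*A)=\mathscr D_A(z)^2$. The zeros of the entire function $\mathscr D_A$ are $-a_j^{-2}$, so the even energies determine the multiset of the $a_j$.

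For the odd levels I will decompose $x=x_0+\sum_j(\xi_ju_j+\zeta_jv_j)$, where $x_0\in\ker A$. Then $x\wedge\omega_{J}$ (with $J=\{j_1,\dots,j_N\}$) kills the components along $u_j,v_j$ for $j\in J$. The surviving terms $x_0\wedge\omega_J$ and $u_k\wedge\omega_J$, $v_k\wedge\omega_J$ ($k\notin J$) are mutually orthogonal across different $J$ and different $k$. Their norms are $\|x_0\|^2$ and $\xi_k^2+\zeta_k^2$. This gives
\[
 \|\Xi_{2N+1}\|^2=4^N\Bigl(\|x_0\|^2e_N(a^2)+\sum_k(\xi_k^2+\zeta_k^2)\,e_N(a^2_{\ne k})\Bigr),
\]
where $e_N(a^2_{\ne k})$ is the elementary symmetric function with $a_k$ omitted. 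Summing the generating series yields
\[
 \mathscr D_A(z)\Bigl(\|x_0\|^2+\sum_k\frac{\xi_k^2+\zeta_k^2}{1+za_k^2}\Bigr),
\]
which is exactly $\mathscr D_A(z)\langle(\operatorname{Id}+zA^*A)^{-1}x,x\rangle$, because $A^*A$ acts as $a_k^2$ on $\operatorname{span}\{u_k,v_k\}$ and as $0$ on $\ker A$. Taking $z=4$ and adding the even and odd parts gives \eqref{eq:exterior-exact-Fock-norm}. The comparison with \eqref{eq:exterior-Fock-bound} then follows from $1+4a^2\le e^{4a^2}$ together with $\|\alpha_A\|^2=\sum a_j^2$.

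For \eqref{eq:cyclic-spectral-transform}, I will divide by $\mathscr D_A(z)$, which is positive for $z>0$, and apply spectral calculus to $\langle(\operatorname{Id}+zA^*A)^{-1}x,x\rangle$. The left side is known for all $z>0$ once the even data are known. The Stieltjes-type transform $z\mapsto\int(1+z\lambda)^{-1}\dd\nu(\lambda)$ of a finite measure on $[0,\infty)$ determines the measure. I intend to prove this by analytic continuation in $z$ and Stieltjes inversion, or by noting that the functions $(1+z\lambda)^{-1}$ generate a dense subalgebra of $C_0$ plus constants on $[0,\infty]$. In the present case $\nu$ is atomic, supported on $\{0\}\cup\{a_k^2\}$, so partial fractions already suffice. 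The final remark is then immediate: the data depend on $x$ only through $\nu_{x,A}$, so rotating $x$ within a spectral subspace leaves all the energies unchanged.

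The main obstacle I expect is the orthogonality and normalization bookkeeping in the odd computation. One must check that the cross terms between different $J$ and different $k$ vanish. One must also check that the factor $N!$ and the factor $2^N/N!$ from \eqref{eq:even-odd-exterior-levels} combine to exactly $4^N e_N$. This relies on the Hilbert normalization of exterior products and on exchanging infinite sums. I will first carry out the argument for finite-rank $A$ and finite-dimensional $x$, and then pass to the limit by $\mathcal S_2$-continuity of each fixed level. That continuity follows from the multilinear bound $\|\alpha^{\wedge N}\|^2\le N!\|\alpha\|^{2N}$ already used in the proof of \Cref{thm:exterior-Chen-representation}.
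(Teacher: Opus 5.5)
Your proposal is correct and takes essentially the same route as the paper. Both arguments use the adapted skew basis $\alpha_A=\sum_j a_j\omega_j$, the identity $4^{-N}\|\Xi_{2N}\|^2=e_N(a_1^2,a_2^2,\ldots)$, the orthogonal decomposition $x=x_0+\sum_k x_k$ giving the odd-level formula and its resolvent generating series, and uniqueness of the Stieltjes-type transform for $\nu_{x,A}$. The differences are cosmetic. You reach infinite rank by approximation from finite rank, whereas the paper works directly with the orthonormal exterior basis and a coefficientwise bound. You also invert the transform through the residues of the atomic measure rather than citing Stieltjes uniqueness.
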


\begin{proof}
Choose mutually orthogonal oriented two-planes $E_j$ so that
\[
 \alpha_A=\sum_{j\ge1}a_j\omega_j,
 \qquad
 \omega_j=e_{2j-1}\wedge e_{2j},
\]
and decompose
\[
 x=x_0+\sum_{j\ge1}x_j,
 \qquad
 x_0\in\ker A,
 \quad x_j\in E_j.
\]
The even formula in \eqref{eq:even-odd-exterior-levels} gives
\[
 4^{-N}\|\Xi_{2N}\|^2
 =e_N(a_1^2,a_2^2,\ldots),
\]
and summing the elementary-symmetric expansion proves
\eqref{eq:even-exterior-spectral-series}.

For the odd levels, orthogonality of the exterior basis gives
\begin{align*}
 4^{-N}\|\Xi_{2N+1}\|^2
 =\sum_{|J|=N}
 \left(\prod_{j\in J}a_j^2\right)
 \left(\|x_0\|^2+\sum_{k\notin J}\|x_k\|^2\right).
\end{align*}
The series is absolutely convergent locally uniformly in $z$ because it is
bounded coefficientwise by $\|x\|^2e_N(a_1^2,a_2^2,\ldots)$.  Summing first
the $x_0$ term and then the contribution of each fixed $x_k$ yields
\[
 \mathscr D_A(z)\left(
 \|x_0\|^2+\sum_{k\ge1}\frac{\|x_k\|^2}{1+za_k^2}
 \right),
\]
which is \eqref{eq:odd-exterior-resolvent-series}.  Adding the even and odd
series at $z=4$ proves \eqref{eq:exterior-exact-Fock-norm}.  The spectral
theorem gives \eqref{eq:cyclic-spectral-transform}; uniqueness of the
Stieltjes transform recovers $\nu_{x,A}$.
\end{proof}

\begin{corollary}[Exterior lossless representation]
\label{cor:exterior-lossless-face}
Equip the restarted exterior response image with the topology and
truncations transported from the corresponding $(x,A,q)$ increment-field
space through its degree-one and degree-two coordinates.
On the principal Hilbert operational state space, the restarted response
\[
 z\longmapsto\bigl((\mathscr E(x_{s,t},A_{s,t}))_{s,t},q\bigr)
\]
is stable-causal and lossless.  Hence it represents the same causal state as
the restarted It\^o signature.  Its scalar even--odd energy response is a
canonical nonlinear spectral scalarization: it is not lossless, but it
recovers exactly the unitary-invariant data described in
\Cref{thm:even-odd-spectral-decoding}.
\end{corollary}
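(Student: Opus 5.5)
The plan is to compose three earlier results: the exterior Chen isomorphism of \Cref{thm:exterior-Chen-representation}, the lossless-chart machinery of \Cref{prop:lossless-chart-factorization} and \Cref{cor:canonical-equivalence-lossless-charts}, and the spectral decoding of \Cref{thm:even-odd-spectral-decoding}.

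\emph{Step 1 (the chart).} Define the encoder
\[
 \mathcal O^\wedge(z):=\bigl((\mathscr E(x_{s,t},A_{s,t}))_{s,t},q\bigr).
\]
By \Cref{thm:exterior-Chen-representation}(ii), the degree-one and degree-two coordinates of $\mathscr E(x_{s,t},A_{s,t})$ are $x_{s,t}$ and $2\alpha_{A_{s,t}}$. The identity $\|\alpha_A\|^2=\tfrac12\|A\|_{\mathcal S_2}^2$ makes $A\mapsto\alpha_A$ an isometric isomorphism of $\mathcal S_2(H)_{\rm sk}$ onto $\bigwedge^2H$, up to the factor $\sqrt2$. The decoder is therefore
\[
 \mathcal D^\wedge(\Xi,q):=\operatorname{ch}_\tau^{-1}\bigl(\pi_1\Xi_{0,\cdot},\,\tfrac12\alpha^{-1}(\pi_2\Xi),\,q\bigr),
\]
defined on the image and satisfying $\mathcal D^\wedge\mathcal O^\wedge=I$.

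\emph{Step 2 (topology and stable causality).} The image carries, by declaration, the topology transported from the $(x,A,q)$ increment-field space. Consequently both $\mathcal O^\wedge$ and $\mathcal D^\wedge$ are homeomorphisms, and continuity needs no further argument. For stable causality, truncation on the response side is also defined by transport. It remains to check that it coincides with applying $\mathscr E$ levelwise to the truncated increments; this holds because stopping of a group-valued path acts on the increments $(x_{s,t},A_{s,t})$ through $g_{s\wedge t',t\wedge t'}$. Chen multiplicativity (\Cref{thm:exterior-Chen-representation}(ii)) guarantees that the image consists of multiplicative exterior fields, so the restart structure is preserved as well.

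\emph{Step 3 (equivalence with the signature).} \Cref{prop:lossless-chart-factorization}(i) and \Cref{cor:canonical-equivalence-lossless-charts} then show that the exterior chart and the signature chart $(\mathcal J_\tau,\mathcal D_\tau^{\rm sig})$ of \Cref{prop:primitive-lossless-dynamics} have the same realizing sources, namely those $S$ with $Z\preceqsc S$. The explicit transports $\mathcal J_\tau\mathcal D^\wedge$ and $\mathcal O^\wedge\mathcal D_\tau^{\rm sig}$ relate the two presentations.

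\emph{Step 4 (scalarization is not lossless).} The even and odd energies $\|\Xi_k\|^2$ are continuous functions of $(x,A)$ by \eqref{eq:even-odd-exterior-levels}. By \Cref{thm:even-odd-spectral-decoding}, they recover exactly the spectrum of $A^*A$ and the measure $\nu_{x,A}$, and nothing more. Both sets of data are invariant under $(x,A)\mapsto(Ux,UAU^*)$ for orthogonal $U$, and the scalar energies forget $q$ entirely. To exhibit non-injectivity I will take $A=0$ and $x\neq0$, and compare with $Ux$ for a rotation $U$ with $Ux\ne x$: the pairs $(x,0)$ and $(Ux,0)$ have identical energies.

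I expect the main obstacle to be the bookkeeping of the factor $2\alpha_A$ versus the normalization of $\Anti$, together with checking that the transported truncations are genuinely the stopped exterior fields. Both reduce to the identity $2\alpha_{\Anti(x\otimes y)}=x\wedge y$, which is already used in the proof of \Cref{thm:exterior-Chen-representation}.
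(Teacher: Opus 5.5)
Your proposal is correct and follows essentially the same route as the paper. Injectivity comes from the degree-one and degree-two coordinates together with the retained $q$, and stable causality from the Chen homomorphism \eqref{eq:exterior-Chen-homomorphism} with the transported truncations. The least-state equivalence comes from \Cref{prop:lossless-chart-factorization} and \Cref{thm:least-state}, and the spectral clause from \Cref{thm:even-odd-spectral-decoding}. Your explicit non-injectivity witness makes concrete a point the paper leaves to the cited theorem: either zero-area straight lines $t\mapsto tx/T$ versus $t\mapsto tUx/T$, or simply two states differing only in $q$.
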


\begin{proof}
Stable causality and the restart law follow from
\eqref{eq:exterior-Chen-homomorphism}.  The degree-one and degree-two
coordinates recover $x$ and $A$, while the displayed response retains $q$.
The lossless-response criterion and \Cref{thm:least-state} give the
least-state statement.  The spectral assertion is
\Cref{thm:even-odd-spectral-decoding}.
\end{proof}

\section{Conclusion}

Operational state selection separates causal descent and calibrated completion
from common probabilistic realization.  For the degree-two Hilbert It\^o
program, this procedure selects
\[
 \operatorname{ch}(\mathcal A_2)=(X,A,Q),
 \qquad A\in\mathcal S_2(H)_{\rm sk},\quad
 Q\in\mathcal S_1(H)_{\rm sa}.
\]
Brownian experiments identify the Hilbert--Schmidt/operator coefficient gauges
and hence the dual \(\mathcal S_2/\mathcal S_1\) state geometry; the orthogonal
rigidity theorem gives the converse characterization.  The lossless
encoder--decoder pair yields the direct least-state property
\[
 S\text{ jointly realizes the quadratic program}
 \quad\Longleftrightarrow\quad
 \mathcal A_2\preceqsc S,
\]
so \(\mathcal A_2\) is unique up to stable-causal equivalence.

A single sequence of total Borel causal approximants constructs the common
state, while the laws verify its classical semantics.  Defect--energy transfer
makes the uniform-\(\mathcal S_2\) primitive independent of the chosen causal
finite-variation regularization whenever the first-level upper-energy defect
vanishes.  On driftless subclasses, uniform spatial trace tightness yields
restart-stable compact capacity cores; a fixed trace-class covariance envelope
is a sufficient closed specialization.

Each fixed higher signature level and the joint exterior--covariance
presentation are readouts of the selected state.  Rough--Young maps propagate
it to finite-dimensional flows on the compact cores.  For deterministic
continuous coefficients satisfying the stated global state-Lipschitz and
linear-growth bounds, Euler schemes construct a total jointly Borel raw-causal
field whose flow law holds outside one parameter-independent polar set.  Under
the additional rough-vector-field hypotheses, that field agrees there with the
bracket-corrected rough flow simultaneously in all parameters.

On compact covariance-envelope subclasses, covariance cross-responses select a
fiber-valued backward jet.  Its active component belongs to the common
upper-energy space under~\textup{(A2)}, stopped martingale responses calibrate
its norm, and the common-channel clause~\textup{(B4)} identifies it with the
solver's martingale coordinate.  The vertical space and the closedness of the
value domain characterize compression for closed linear response relations;
restricted nonlinear solver ranges are governed by the corresponding graph
criterion.

\appendix

\section{Abstract operational completion and tangent response geometry}
\label{sec:response-generated-spaces}

This appendix implements the response congruence and completion in
\Cref{thm:descent-before-completion}: the joint profile separates the quotient
classes and its Hausdorff initial uniformity supplies the geometry to be
completed.  Quasi-sure identification enters only after realization.

\begin{definition}[Response presentation and response congruence]
\label{def:response-presentation}
A \emph{response presentation} is a family
\[
 \mathfrak R
 =\bigl(X,(\mathsf Y_\alpha,\mathcal U_\alpha,R_\alpha)_{\alpha\in A}\bigr),
\]
where $X$ is a set, each $(\mathsf Y_\alpha,\mathcal U_\alpha)$ is a complete
Hausdorff uniform space, and
$R_\alpha:X\to\mathsf Y_\alpha$ is a map.  No topology or uniformity is
assumed on $X$.

The \emph{joint response profile} is
\[
 \mathbf R:X\longrightarrow
 \mathsf Y:=\prod_{\alpha\in A}\mathsf Y_\alpha,
 \qquad
 \mathbf R(x):=(R_\alpha(x))_{\alpha\in A}.
\]
Two candidates are \emph{response-equivalent} if
\[
 x\sim_{\mathfrak R}y
 \quad\Longleftrightarrow\quad
 \mathbf R(x)=\mathbf R(y).
\]
Write $X_{\mathfrak R}:=X/\!\sim_{\mathfrak R}$.  The induced injective map
\[
 \iota_{\mathfrak R}^0:X_{\mathfrak R}\longrightarrow\mathsf Y
\]
is called the \emph{response profile embedding}.  The initial uniformity on
$X_{\mathfrak R}$ induced by its coordinate responses is the
\emph{response-generated uniformity}; its topology is the
\emph{response-generated topology}.
\end{definition}

\begin{definition}[Response completion and complete faithful realization]
\label{def:response-completion}
The \emph{response completion} of $\mathfrak R$ is
\begin{equation}\label{eq:canonical-response-completion}
 \mathsf Z_{\mathfrak R}
 :=\overline{\iota_{\mathfrak R}^0(X_{\mathfrak R})}^{\,\mathsf Y},
\end{equation}
with the subspace uniformity from the product $\mathsf Y$.

A \emph{complete faithful realization} of $\mathfrak R$ consists of a complete
Hausdorff uniform space $E$, a uniformly continuous map
$j:X_{\mathfrak R}\to E$ with dense range, and uniformly continuous maps
$\widetilde R_\alpha:E\to\mathsf Y_\alpha$ such that
\[
 \widetilde R_\alpha\circ j=R_\alpha
 \quad\text{on }X_{\mathfrak R},
\]
the family $(\widetilde R_\alpha)_\alpha$ separates points of $E$, and the
uniformity of $E$ is the initial uniformity generated by this family.
\end{definition}

\begin{theorem}[Response completion]
\label{thm:canonical-response-completion}
For every response presentation $\mathfrak R$:
\begin{enumerate}[label=\textup{(\roman*)},leftmargin=2.4em]
\item $\mathsf Z_{\mathfrak R}$ is a complete Hausdorff uniform space and
$\iota_{\mathfrak R}^0(X_{\mathfrak R})$ is dense in it;
\item every response $R_\alpha$ extends uniquely to the coordinate restriction
\[
 \overline R_\alpha:\mathsf Z_{\mathfrak R}\to\mathsf Y_\alpha;
\]
\item every complete faithful realization
$(E,j,(\widetilde R_\alpha)_\alpha)$ is canonically uniformly isomorphic to
$\mathsf Z_{\mathfrak R}$: there is a unique uniform isomorphism
\[
 \Theta:E\longrightarrow\mathsf Z_{\mathfrak R}
\]
with $\Theta\circ j=\iota_{\mathfrak R}^0$ and
$\overline R_\alpha\circ\Theta=\widetilde R_\alpha$ for every $\alpha$.
\end{enumerate}
\end{theorem}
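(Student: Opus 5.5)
The proof is the standard construction of a Hausdorff completion via an embedding into a product of complete spaces. The plan is to check completeness and density directly, and then build the comparison map $\Theta$ from the universal property of initial uniformities.

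For (i), I use that a product of complete Hausdorff uniform spaces is complete and Hausdorff. A closed subspace of a complete space is complete, and a subspace of a Hausdorff space is Hausdorff. So $\mathsf Z_{\mathfrak R}$, being the closure of $\iota^0_{\mathfrak R}(X_{\mathfrak R})$ in $\mathsf Y$, is complete and Hausdorff, and the profile image is dense in it by definition of closure.

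For (ii), I take $\overline R_\alpha$ to be the restriction to $\mathsf Z_{\mathfrak R}$ of the coordinate projection $\mathrm{pr}_\alpha:\mathsf Y\to\mathsf Y_\alpha$. Projections are uniformly continuous, and on the dense image $\iota^0_{\mathfrak R}(X_{\mathfrak R})$ this restriction equals $R_\alpha$. Uniqueness holds because two continuous maps into a Hausdorff space that agree on a dense set are equal.

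For (iii), I define $\Theta:=(\widetilde R_\alpha)_\alpha:E\to\mathsf Y$.
\begin{itemize}
\item $\Theta$ is uniformly continuous, since each coordinate is.
\item $\Theta\circ j=\iota^0_{\mathfrak R}$, because $\widetilde R_\alpha\circ j=R_\alpha$ for every $\alpha$.
\item $\Theta$ is injective, because the family $(\widetilde R_\alpha)_\alpha$ separates points of $E$.
\item $\Theta$ is a uniform embedding onto its image. The uniformity of $E$ is the initial uniformity of the family $(\widetilde R_\alpha)_\alpha$, which is exactly the uniformity pulled back from the product $\mathsf Y$ through $\Theta$.
\end{itemize}

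Next I identify the image of $\Theta$ with $\mathsf Z_{\mathfrak R}$. Since $E$ is complete and $\Theta$ is a uniform embedding, $\Theta(E)$ is complete in $\mathsf Y$, hence closed because $\mathsf Y$ is Hausdorff. Since $j(X_{\mathfrak R})$ is dense in $E$, the set $\Theta(E)$ lies in the closure of $\Theta(j(X_{\mathfrak R}))=\iota^0_{\mathfrak R}(X_{\mathfrak R})$, which is $\mathsf Z_{\mathfrak R}$. Conversely, $\Theta(E)$ is closed and contains $\iota^0_{\mathfrak R}(X_{\mathfrak R})$, so it contains $\mathsf Z_{\mathfrak R}$. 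Hence $\Theta(E)=\mathsf Z_{\mathfrak R}$, and $\Theta$ is a uniform isomorphism onto $\mathsf Z_{\mathfrak R}$. The intertwining identity $\overline R_\alpha\circ\Theta=\widetilde R_\alpha$ holds coordinatewise by construction.

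For uniqueness of $\Theta$, any $\Theta'$ satisfying $\overline R_\alpha\circ\Theta'=\widetilde R_\alpha$ for all $\alpha$ has the same coordinates as $\Theta$ in the product, so $\Theta'=\Theta$. Uniqueness also follows from the condition $\Theta\circ j=\iota^0_{\mathfrak R}$ alone, by density of $j(X_{\mathfrak R})$ and the Hausdorff property.

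The only delicate point is the uniform-embedding claim in (iii), namely that the initial uniformity on $E$ coincides with the pullback of the product uniformity under $\Theta$. This follows because both are generated by the same subbase of entourages $(\widetilde R_\alpha\times\widetilde R_\alpha)^{-1}(U_\alpha)$. Everything else is routine.
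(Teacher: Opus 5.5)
Your proposal is correct and follows the paper's proof essentially step for step. Parts (i) and (ii) come from closed subspaces and coordinate projections of the complete Hausdorff product, and part (iii) shows that $\Theta=(\widetilde R_\alpha)_\alpha$ is a uniform embedding whose complete, hence closed, image equals the closure of $\iota^0_{\mathfrak R}(X_{\mathfrak R})$ because $j(X_{\mathfrak R})$ is dense in $E$. Your explicit subbase comparison for the initial uniformity and your two-sided inclusion argument for $\Theta(E)=\mathsf Z_{\mathfrak R}$ spell out steps the paper compresses into a single line.
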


\begin{proof}
The product of complete Hausdorff uniform spaces is complete and Hausdorff, so
its closed subspace $\mathsf Z_{\mathfrak R}$ has the same properties, proving
\textup{(i)}.  The coordinate projections of the product restrict to
$\mathsf Z_{\mathfrak R}$ and extend the original responses, giving
\textup{(ii)}; uniqueness follows from density and Hausdorffness.

For \textup{(iii)}, put
\[
 \widetilde{\mathbf R}:E\to\mathsf Y,
 \qquad
 \widetilde{\mathbf R}(e)
 :=(\widetilde R_\alpha(e))_{\alpha\in A}.
\]
Joint faithfulness and the initial-uniformity assumption make
$\widetilde{\mathbf R}$ a uniform embedding.  Since $E$ is complete, its image
is a complete subspace of the Hausdorff uniform space $\mathsf Y$ and hence is
closed.  Because $j(X_{\mathfrak R})$ is dense in $E$,
\[
 \widetilde{\mathbf R}(E)
 =\overline{\widetilde{\mathbf R}(j(X_{\mathfrak R}))}
 =\overline{\iota_{\mathfrak R}^0(X_{\mathfrak R})}
 =\mathsf Z_{\mathfrak R}.
\]
Thus $\widetilde{\mathbf R}$ is the required uniform isomorphism $\Theta$.
The intertwining identities give uniqueness.
\end{proof}

\begin{corollary}[Derived-response inheritance]
\label{cor:deterministic-response-inheritance}
Let $F_j:\mathsf Z_{\mathfrak R}\to\mathsf T_j$, $j\in J$, be uniformly
continuous maps into complete Hausdorff uniform spaces and adjoin to
$\mathfrak R$ the derived responses
\[
 x\longmapsto F_j\bigl(\iota_{\mathfrak R}^0([x])\bigr).
\]
Then the enlarged response presentation has the same response congruence,
the same response-generated uniformity, and the same response completion
up to the canonical identity isomorphism.
\end{corollary}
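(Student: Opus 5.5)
The argument reduces the enlarged presentation to the original one by showing that each adjoined response already factors uniformly continuously through the original completion. Write $\mathfrak R^+$ for $\mathfrak R$ together with the derived responses $G_j(x):=F_j(\iota_{\mathfrak R}^0([x]))$, and $\mathbf R^+=(\mathbf R,(G_j)_j)$ for its joint profile. Every $G_j$ is a function of $\mathbf R(x)$ alone, since $\iota_{\mathfrak R}^0([x])=\mathbf R(x)$. Hence $\mathbf R(x)=\mathbf R(y)$ implies $\mathbf R^+(x)=\mathbf R^+(y)$. The converse holds because $\mathbf R^+$ contains $\mathbf R$ as a subfamily. So the two response congruences coincide, $X_{\mathfrak R^+}=X_{\mathfrak R}$, and the quotient sets are literally equal.

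For the uniformities, I would argue as in \Cref{thm:program-refinement-enhancement}\textup{(iii)}. The enlarged initial uniformity is generated by the old coordinates together with the maps $G_j=F_j\circ\iota_{\mathfrak R}^0$. Each such map is uniformly continuous for the old initial uniformity, because it is the composite of the uniform embedding $\iota_{\mathfrak R}^0$, which is uniformly continuous into $\mathsf Z_{\mathfrak R}$ with the product subspace uniformity, and the uniformly continuous $F_j$. By minimality of the initial uniformity, the enlarged uniformity is therefore coarser than the old one. It is also finer, since it contains the old generators. The two response-generated uniformities are thus equal.

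For the completions, I would apply \Cref{thm:canonical-response-completion}\textup{(iii)} with $E=\mathsf Z_{\mathfrak R}$ and $j=\iota_{\mathfrak R}^0$. The extended responses are the coordinate restrictions $\overline R_\alpha$ together with the $F_j$. This is a complete faithful realization of $\mathfrak R^+$, for four reasons. First, $E$ is complete and $j$ has dense range by part \textup{(i)}. Second, the family separates points, because the $\overline R_\alpha$ already do. Third, its initial uniformity equals the subspace uniformity of $E$, because adding the uniformly continuous $F_j$ does not refine it. Fourth, the intertwining identities $F_j\circ j=G_j$ and $\overline R_\alpha\circ j=R_\alpha$ hold by construction. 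The uniqueness clause then gives a unique uniform isomorphism $\mathsf Z_{\mathfrak R}\cong\mathsf Z_{\mathfrak R^+}$ extending the identity on the common quotient. Concretely, it is $z\mapsto(z,(F_j(z))_j)$, whose inverse is the projection onto the $\mathsf Y$-factors; this is the canonical identity isomorphism.

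The only point needing care is the uniformity comparison. One must check that uniform continuity of $F_j$ on the completion transfers to the dense core through $\iota^0_{\mathfrak R}$, which holds because $\iota^0_{\mathfrak R}$ is a uniform embedding by definition of the initial uniformity. The remaining steps are bookkeeping.
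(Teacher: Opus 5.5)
Your proof is correct and follows the same route as the paper. The derived responses factor uniformly continuously through $\mathsf Z_{\mathfrak R}$, so neither the congruence nor the initial uniformity changes, and the uniqueness clause of \Cref{thm:canonical-response-completion} identifies the completions.

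You are more explicit than the paper in two places. You prove both directions of the congruence equality, where the paper only notes that it cannot become coarser. You also verify that $\mathsf Z_{\mathfrak R}$, with $\iota^0_{\mathfrak R}$ and the readouts $(\overline R_\alpha)_\alpha$ together with $(F_j)_j$, is a complete faithful realization of the enlarged presentation. This gives the graph map $z\mapsto(z,(F_j(z))_j)$ as the explicit isomorphism.
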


\begin{proof}
The original responses remain present, so the response congruence cannot
become coarser.  Every added response factors uniformly continuously through
$\mathsf Z_{\mathfrak R}$ and hence through the original response-generated
uniformity, so the enlarged family cannot generate a strictly finer
uniformity on $X_{\mathfrak R}$.  The uniformities coincide, and uniqueness of
completion in \Cref{thm:canonical-response-completion} identifies the completions.
\end{proof}

\begin{corollary}[Invariance under response-equivalent presentations]
\label{cor:response-presentation-invariance}
Suppose two response presentations have response quotients related by a
bijection that intertwines their joint response profiles after uniform
isomorphisms of the response targets.  Then their response-generated
uniformities correspond and their response completions are canonically
uniformly isomorphic.
\end{corollary}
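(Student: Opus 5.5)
The plan is to reduce everything to the uniqueness clause of \Cref{thm:canonical-response-completion}. Let the two presentations be $\mathfrak R=(X,(\mathsf Y_\alpha,R_\alpha)_{\alpha\in A})$ and $\mathfrak R'=(X',(\mathsf Y'_\beta,R'_\beta)_{\beta\in B})$. Let $\phi:X_{\mathfrak R}\to X'_{\mathfrak R'}$ be the given bijection of response quotients, and let $\Lambda:\mathsf Y\to\mathsf Y'$ be the uniform isomorphism of the product targets assembled from the target isomorphisms, so that $\iota'^0\circ\phi=\Lambda\circ\iota^0$ on $X_{\mathfrak R}$. I would read the hypothesis in this form: a reindexing of coordinates together with coordinatewise uniform isomorphisms, or more generally any uniform isomorphism of the full products. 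In every case the statement is used only through the intertwining identity.

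\textbf{Step 1: the uniformities correspond.} The response-generated uniformity on $X_{\mathfrak R}$ is the initial uniformity of $\iota^0$. Equivalently, it is the subspace uniformity transported from $\iota^0(X_{\mathfrak R})\subset\mathsf Y$, since initial uniformities along a product coincide with the pullback of the product uniformity. Because $\Lambda$ is a uniform isomorphism, it maps this subspace uniformly isomorphically onto $\Lambda\iota^0(X_{\mathfrak R})=\iota'^0(X'_{\mathfrak R'})$. Hence $\phi=(\iota'^0)^{-1}\Lambda\iota^0$ is a uniform isomorphism for the two response-generated uniformities. Injectivity of the profile embeddings makes these inverses legitimate on the images.

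\textbf{Step 2: the completions correspond.} A homeomorphism maps closures to closures, so $\Lambda(\mathsf Z_{\mathfrak R})=\Lambda(\overline{\iota^0(X_{\mathfrak R})})=\overline{\iota'^0(X'_{\mathfrak R'})}=\mathsf Z_{\mathfrak R'}$. The restriction $\Theta:=\Lambda|_{\mathsf Z_{\mathfrak R}}$ is therefore a uniform isomorphism extending $\phi$, in the sense that $\Theta\iota^0=\iota'^0\phi$. It is the unique uniformly continuous extension, by density and Hausdorffness (\Cref{thm:canonical-response-completion}(i)).

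For canonicity independent of how $\Lambda$ is chosen, I would alternatively regard $(\mathsf Z_{\mathfrak R'},\iota'^0\circ\phi,(\overline{R'_\beta})_\beta)$ as a complete faithful realization of $\mathfrak R$, with responses read through $\Lambda^{-1}$. Part (iii) of \Cref{thm:canonical-response-completion} then yields the unique intertwining isomorphism directly.

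The main obstacle is interpretive rather than technical. If the target isomorphisms are given only coordinatewise and the index sets differ, one must ensure that they assemble into a single uniform isomorphism of products. I would handle this by stating the hypothesis as the existence of $\Lambda$ on the closures of the profile images, which is all the argument uses.
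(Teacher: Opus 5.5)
Your proof is correct and follows essentially the paper's argument. The paper likewise transports the profile images by the product uniform isomorphism to identify the initial uniformities, maps the closures onto each other, and notes that the uniqueness clause of \Cref{thm:canonical-response-completion} yields the same canonical map.
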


\begin{proof}
After transporting the response targets by the given uniform isomorphisms,
the two profile embeddings have the same image up to the stated bijection.
Hence they induce the same initial uniformity on the response quotient.
Their closures in the corresponding product response spaces are therefore
identified by the product uniform isomorphism; equivalently apply the
uniqueness clause of \Cref{thm:canonical-response-completion}.
\end{proof}

\begin{remark}[Response-generated completion]
A topology alone does not determine a completion.  Here the response
presentation supplies the initial uniformity---the norm-induced uniformity in
the metric operator-ideal applications---and the selected state is the
completion of the response quotient in that uniformity.  The construction is
invariant under response-equivalent presentations, including presentations on
different carriers.
\end{remark}

\subsection{Sequential response reconstruction of restart algebra}
\label{sec:restart-representation-principle}

The lossless response recovers the state pointwise.  Sequential responses also
determine the restart multiplication.

Let $\Sigma$ be a bare set of elementary marked segments with observable
endpoint maps $\partial_-,\partial_+:\Sigma\to\mathsf Z_0$.  Let
$\mathsf W(\Sigma)$ be the free marked restart system of finite
seam-compatible words, including the empty word at each mark, with composition
given only by word concatenation.

\begin{definition}[Sequential response presentation and restart congruence]
\label{def:compositional-response-presentation}
A sequential response presentation is a family
$\mathcal O_\alpha:\mathsf W(\Sigma)\to\mathsf M_\alpha$ together with the
endpoint marks.  Write $\mathbf O$ for the full marked profile and put
\[
 w\sim_{\mathbf O}v\quad\Longleftrightarrow\quad \mathbf O(w)=\mathbf O(v).
\]
It is \emph{restart-congruent} if $w\sim_{\mathbf O}v$ implies
$awb\sim_{\mathbf O}avb$ whenever both contextual concatenations are
admissible; either contextual word may be empty.  No multiplication is assumed
on the response targets.
\end{definition}

\begin{theorem}[Restart composition from sequential responses]
\label{thm:algebraic-restart-reconstruction}
For every restart-congruent sequential presentation,
\[
 \mathfrak C_{\mathbf O}:=\mathsf W(\Sigma)/\!\sim_{\mathbf O}
\]
carries the unique marked partial semigroupoid product
\begin{equation}\label{eq:response-reconstructed-product}
 [w]\diamond_{\mathbf O}[v]:=[wv].
\end{equation}
The realized joint response image $\mathbf O(\mathsf W(\Sigma))$ carries the
transported product
\begin{equation}\label{eq:reconstructed-response-target-product}
 \mathbf O(w)\star_{\mathbf O}\mathbf O(v):=\mathbf O(wv).
\end{equation}
Moreover $\mathfrak C_{\mathbf O}$ is the least algebraic restart realization:
any restart realization through which all sequential responses factor admits a
unique surjective restart homomorphism onto $\mathfrak C_{\mathbf O}$ after
restriction to the generated sub-semigroupoid.
\end{theorem}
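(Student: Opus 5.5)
The statement to prove is \Cref{thm:algebraic-restart-reconstruction}. The argument is purely algebraic: a quotient of the free marked restart system by a congruence, followed by a universal property. I would carry it out in three steps.

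\emph{Step 1: well-definedness and uniqueness of the product.} First, note that $\sim_{\mathbf O}$ preserves endpoint marks, since the marks are part of the full marked profile $\mathbf O$. Hence admissibility of a concatenation $wv$ (that is, $\partial_+w=\partial_-v$) depends only on the classes $[w]$ and $[v]$. Now suppose $w\sim_{\mathbf O}w'$ and $v\sim_{\mathbf O}v'$ with all concatenations admissible. Apply restart congruence twice, each time with one empty context:
\[
 wv\sim_{\mathbf O}w'v,
 \qquad
 w'v\sim_{\mathbf O}w'v'.
\]
Transitivity gives $[wv]=[w'v']$, so \eqref{eq:response-reconstructed-product} is well defined. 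Associativity, and the identities given by the empty words at each mark, are inherited from word concatenation. Uniqueness holds in the following sense: any product making the quotient map $\mathsf W(\Sigma)\to\mathfrak C_{\mathbf O}$ a homomorphism must send $([w],[v])$ to $[wv]$, because the quotient map is surjective.

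\emph{Step 2: the transported product on the response image.} The map $[w]\mapsto\mathbf O(w)$ is a bijection of $\mathfrak C_{\mathbf O}$ onto $\mathbf O(\mathsf W(\Sigma))$, by the very definition of $\sim_{\mathbf O}$. Therefore \eqref{eq:reconstructed-response-target-product} is the transport of $\diamond_{\mathbf O}$ through this bijection. Its well-definedness is exactly the statement of Step 1.

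\emph{Step 3: leastness.} Let $\mathfrak C'$ be a restart realization equipped with a restart homomorphism $\phi:\mathsf W(\Sigma)\to\mathfrak C'$ and readouts $\mathcal O'_\alpha$ satisfying $\mathcal O_\alpha=\mathcal O'_\alpha\circ\phi$. Restrict to the generated sub-semigroupoid $\phi(\mathsf W(\Sigma))$ and define
\[
 \Theta(\phi(w)):=[w].
\]
This is well defined: if $\phi(w)=\phi(v)$, then every response agrees on $w$ and $v$, and the marks agree because $\phi$ respects marks, so $w\sim_{\mathbf O}v$. The map $\Theta$ is surjective, and it is a homomorphism because
\[
 \Theta(\phi(w)\phi(v))=\Theta(\phi(wv))=[wv]=[w]\diamond_{\mathbf O}[v].
\]
It is unique because its values on the generated elements are forced.

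\emph{Main obstacle.} The only delicate point is the partiality of the product. One must check that admissibility is class-invariant, which is why the marks must be included in $\mathbf O$. One must also check that restart congruence is applied only to admissible contextual concatenations, which is guaranteed once the marks agree.
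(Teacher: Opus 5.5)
Your proposal is correct and follows the paper's proof essentially step for step. Well-definedness of $[w]\diamond_{\mathbf O}[v]=[wv]$ comes from restart congruence, associativity and identities are inherited from word concatenation, $\star_{\mathbf O}$ is transported through the bijection $[w]\mapsto\mathbf O(w)$, and leastness comes from the map $\phi(w)\mapsto[w]$, which is well defined because all responses factor through $\phi$. Your explicit checks fill in details the paper leaves implicit: that admissibility of a concatenation depends only on the classes because the marks belong to the profile, and that restart congruence can be applied twice, each time with one empty context.
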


\begin{proof}
Restart congruence makes \eqref{eq:response-reconstructed-product} independent
of representatives.  Associativity and identities descend from free word
concatenation.  Since the joint response separates its own equivalence classes,
transporting the quotient product gives
\eqref{eq:reconstructed-response-target-product}.  If $J$ is another restart
realization and $J_*(w)=J_*(v)$, factorization of all responses through $J_*$
implies $w\sim_{\mathbf O}v$; hence $J_*(w)\mapsto[w]$ is the required unique
surjective homomorphism.
\end{proof}

\begin{proposition}[Quadratic restart rigidity]
\label{prop:quadratic-restart-reconstruction}
On the smooth finite-rank response skeleton, sequential concatenation obeys
\begin{equation}\label{eq:quadratic-signature-restart-law}
 x(wv)=x(w)+x(v),\qquad
 V(wv)=V(w)+V(v)+x(w)\otimes x(v).
\end{equation}
Therefore joint signature-response equivalence is a restart congruence.  Under
the global decoder \eqref{eq:primitive-decoder}, the reconstructed product is
\begin{equation}\label{eq:quadratic-restart-group-law}
 (x,A,q)\star(y,B,r)
 =\bigl(x+y,A+B+\Anti(x\otimes y),q+r\bigr).
\end{equation}
The first and covariance coordinates are autonomous additive blocks, whereas
the area channel is triangularly coupled to the first level.  The product is
Cauchy-continuous on the finite-rank skeleton and has a unique jointly
continuous extension, given by the same formula, to the complete state and
coherent response targets.
\end{proposition}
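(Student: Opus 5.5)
The plan has four steps: compute on smooth finite-rank words, transport through the decoder, invoke the abstract reconstruction, and extend by density.

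\emph{Step 1: the restart law on smooth words.} Take $w,v$ in the smooth finite-rank skeleton, with piecewise $C^1$ first levels in a finite-dimensional $E$. By definition, $x(w)$ is the total increment and $V(w)=\int x_{0,r}\otimes\dd x_r$ is the Riemann--Stieltjes second level. Splitting the integral at the seam and using additivity of increments gives
\[
 V(wv)=V(w)+V(v)+x(w)\otimes x(v),
\]
which is Chen's identity for smooth paths. The covariance coordinate of a smooth skeleton word is an additive finite-rank path, so $q(wv)=q(w)+q(v)$.

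\emph{Step 2: restart congruence.} Suppose $w\sim v$, meaning all restarted signature responses agree. Then every restarted increment of $awb$ is built from increments of $a$, $w$, $b$ and the cross tensors of Step~1. These agree with the corresponding data of $avb$, because the contextual words are identical and $x(w)=x(v)$. \Cref{thm:algebraic-restart-reconstruction} then supplies the product $[w]\diamond[v]=[wv]$.

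\emph{Step 3: transport through the decoder.} Apply \eqref{eq:primitive-decoder} to the result of Step~1. Since $x\otimes y$ differs from its antisymmetric part only by a symmetric tensor, taking $\Anti$ gives
\[
 A(wv)=A(w)+A(v)+\Anti(x\otimes y).
\]
For the symmetric defect, expand $(x+y)^{\otimes 2}$. The cross terms $x\otimes y+y\otimes x$ are exactly cancelled by $x\otimes y+(x\otimes y)^*$ coming from $V+V^*$, so $\mathfrak q(wv)=\mathfrak q(w)+\mathfrak q(v)$. This is \eqref{eq:quadratic-restart-group-law}, and it matches \Cref{prop:intrinsic-central-extension}. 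The same formulas show the block structure: the first-level and covariance coordinates are additive, while the area coordinate receives the cocycle $\Anti(x\otimes y)$.

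\emph{Step 4: continuity and extension.} This is the main point needing care. On the skeleton the product is additive in each coordinate, plus the bilinear cocycle. By the rank-one normalization $\|\Anti(x\otimes y)\|_2\le\|x\|\|y\|$, the cocycle is Lipschitz on bounded sets, so the product is Cauchy-continuous there.

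For the path-level statement I would fix the increment metric $d_{\eta;\mathcal S_2,1}$. The concatenated area on a seam-crossing interval gains the cross term $\Anti(x_{s,u}\otimes y_{u,t})$. Its $2\eta$-Hölder quotient is bounded by the product of the $\eta$-norms, which controls it. Density of the skeleton, from \Cref{prop:coherent-principal-response-target}, then gives a unique jointly continuous extension, and it is given by the same formula. Transport through the isometry $\mathcal O_{\mathfrak Q_2}$ extends the product to the coherent target.
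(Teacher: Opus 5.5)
Your proposal follows the paper's argument: the step-two Chen law on skeleton words, \Cref{thm:algebraic-restart-reconstruction} for the congruence and the quotient product, transport through the decoder \eqref{eq:primitive-decoder}, and the rank-one bound $\|\Anti(x\otimes y)\|_{\mathcal S_2}\le\|x\|\,\|y\|$ for Cauchy-continuity and the unique extension. Your path-level H\"older estimate for the seam term $\Anti(x_{s,u}\otimes y_{u,t})$ is a correct addition that the paper does not spell out, since its proof works only at the increment level.

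One small correction: on the skeleton the It\^o second level is $V_{s,t}=\int_s^t x_{s,r}\otimes\dd x_r-\frac12 q_{s,t}$, not the bare Riemann--Stieltjes integral. With your literal definition, $\mathfrak q(x,V)=x^{\otimes2}-V-V^*$ would vanish identically. Because $q$ is additive, however, the Chen law for $V$ and everything downstream are unchanged.
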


\begin{proof}
Equation \eqref{eq:quadratic-signature-restart-law} is the step-two tensor
concatenation law.  Applying
\Cref{thm:algebraic-restart-reconstruction} reconstructs the product on the
joint signature response.  The decoder gives
\eqref{eq:quadratic-restart-group-law}; its area cross term is
$\Anti(x\otimes y)$ and its covariance coordinate is additive.  To extend
this product from the incomplete skeleton, it remains to verify the Cauchy
property.  If $(x_n)$ and $(y_n)$ are Cauchy in $H$,
then they are bounded and
\[
 \|\Anti(x_n\otimes y_n)-\Anti(x_m\otimes y_m)\|_{\mathcal S_2}
 \le
 \|x_n-x_m\|_H\,\|y_n\|_H
 +\|x_m\|_H\,\|y_n-y_m\|_H.
\]
Thus the cross term is Cauchy, and the same estimate shows that its limit is
independent of the two approximating skeleton sequences.  Formula
\eqref{eq:quadratic-restart-group-law} therefore defines the unique product on
the completion.  The rank-one estimate also proves joint continuity, locally
uniformly on bounded sets.
\end{proof}

\begin{remark}[Observable origin of Chen and covariance laws]
The area Chen relation and covariance additivity are the coordinate form of
the reconstructed sequential product.  Probabilistic realization then selects
random trajectories with values in this response-generated algebra.
\end{remark}

\subsection{Infinitesimal response geometry}
\label{sec:infinitesimal-response-geometry}

The response uniformity measures finite errors; first-order geometry also
requires their scale along short experiments.  A declared scale therefore
quotients controlled germs by infinitesimal response indistinguishability.
We work with the metrizable subpresentations used in the operator-ideal
applications.

Let $z\in\mathsf Z_{\mathfrak R}$ and let
$(\mathsf Y_\beta,d_\beta)$, $\beta\in\mathfrak B$, be metric response targets
with extended readouts $\overline R_\beta$.  Put
\[
 \delta_\beta(u,v)
 :=d_\beta\bigl(\overline R_\beta(u),\overline R_\beta(v)\bigr).
\]

\begin{definition}[Scaled response germs]
\label{def:scaled-response-germs}
A \emph{scale system} at $z$ is a family
\[
 r_\beta:(0,1]\to(0,\infty),
 \qquad r_\beta(\eps)\downarrow0,
 \quad \beta\in\mathfrak B.
\]
A pointed curve $\gamma=(\gamma_\eps)_{\eps\downarrow0}$ in
$\mathsf Z_{\mathfrak R}$ is an \emph{$r$-controlled response germ at $z$} if
$\gamma_\eps\to z$ in the response-generated uniformity and
\begin{equation}\label{eq:scaled-response-germ-control}
 \limsup_{\eps\downarrow0}
 \frac{\delta_\beta(\gamma_\eps,z)}{r_\beta(\eps)}<\infty
 \qquad\text{for every }\beta.
\end{equation}
For two controlled germs set
\begin{equation}\label{eq:infinitesimal-response-equivalence}
 \gamma\sim_{z,r}^{(1)}\eta
 \quad\Longleftrightarrow\quad
 \frac{\delta_\beta(\gamma_\eps,\eta_\eps)}{r_\beta(\eps)}\longrightarrow0
 \quad\text{for every }\beta.
\end{equation}
The quotient
\begin{equation}\label{eq:response-tangent-quotient}
 T_{z,r}^{\rm resp}\mathsf Z_{\mathfrak R}
 :=\operatorname{Germ}_{z,r}(\mathsf Z_{\mathfrak R})/
      \!\sim_{z,r}^{(1)}
\end{equation}
is the \emph{scaled response tangent set}.  At this level it is a pointed set.
Linear, graded, or Lie structure descends from short-experiment operations
that are asymptotically congruent.
\end{definition}

\begin{definition}[Response differential]
\label{def:response-differential}
Let $F:\mathsf Z_{\mathfrak R}\to V$ take values in a Hausdorff topological
vector space and let $r_F(\eps)\downarrow0$.  We call $F$
\emph{response-differentiable at $z$ relative to $(r,r_F)$} if, for every
$r$-controlled germ $\gamma$, the limit
\begin{equation}\label{eq:response-differential}
 d^{\rm resp}F_z([\gamma])
 :=\lim_{\eps\downarrow0}
   \frac{F(\gamma_\eps)-F(z)}{r_F(\eps)}
\end{equation}
exists in $V$ and depends only on the class $[\gamma]$ in
$T_{z,r}^{\rm resp}\mathsf Z_{\mathfrak R}$.  A scalar-valued such functional
is a \emph{cotangent response functional}.  A gradient is defined after a
Hilbertian tangent geometry has been reconstructed.
\end{definition}

\begin{proposition}[Infinitesimal response quotient principle]
\label{thm:infinitesimal-response-quotient}
For every metric response subpresentation and every scale system:
\begin{enumerate}[label=\textup{(\roman*)},leftmargin=2.5em]
\item \eqref{eq:infinitesimal-response-equivalence} is an equivalence relation,
and $T_{z,r}^{\rm resp}\mathsf Z_{\mathfrak R}$ is the least quotient through
which every germ-level readout constant on infinitesimal response classes
factors;
\item every finitary operation on controlled germs which is asymptotically
congruent descends uniquely to the response tangent quotient, together with
all algebraic identities satisfied before quotienting;
\item every response-differentiable observable factors uniquely through the
tangent quotient.  If a descended block is linear and normalized increments
respect addition and scalar multiplication, then the resulting cotangent
functional is linear on that block.
\end{enumerate}
\end{proposition}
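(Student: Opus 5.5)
The plan is to treat this as a pure quotient-and-universal-property statement, working separately for each metric readout $\beta$ and then taking the intersection over $\beta$.

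\emph{Part (i).} Reflexivity and symmetry of $\sim_{z,r}^{(1)}$ are immediate because each $\delta_\beta$ is a pseudometric (pullback of $d_\beta$ under $\overline R_\beta$). For transitivity, the triangle inequality gives
\[
 \delta_\beta(\gamma_\eps,\zeta_\eps)\le\delta_\beta(\gamma_\eps,\eta_\eps)+\delta_\beta(\eta_\eps,\zeta_\eps),
\]
and dividing by $r_\beta(\eps)$ sends both terms to zero. One should also check that the relation stays inside controlled germs: if $\gamma$ is controlled and $\eta\sim\gamma$, then $\delta_\beta(\eta_\eps,z)\le\delta_\beta(\eta_\eps,\gamma_\eps)+\delta_\beta(\gamma_\eps,z)$ preserves the $\limsup$ bound. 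Leastness is the set-theoretic universal property of a quotient. A readout constant on classes factors uniquely through the projection. Conversely, any quotient through which all such readouts factor must be at least as fine, since the class map itself is one such readout. So the tangent set is initial among them.

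\emph{Part (ii).} Here I fix an $n$-ary operation $\Phi$ on controlled germs, asymptotically congruent in the sense that $\gamma^i\sim\eta^i$ for each $i$ implies $\Phi(\gamma^1,\dots,\gamma^n)\sim\Phi(\eta^1,\dots,\eta^n)$. Well-definedness on classes is then exactly this congruence, and uniqueness follows from surjectivity of the projection. Any identity $\Phi_1(\ldots)=\Phi_2(\ldots)$ that holds at germ level projects to an identity of classes. This is the standard fact that universal algebra identities descend to quotients by congruences.

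\emph{Part (iii).} By Definition~\ref{def:response-differential}, $d^{\rm resp}F_z$ is already assumed to depend only on $[\gamma]$. Unique factorization through the tangent quotient is therefore part (i) applied to the germ readout $\gamma\mapsto\lim(F(\gamma_\eps)-F(z))/r_F(\eps)$. For linearity on a block, suppose addition $\oplus$ and scaling are germ operations that descend by (ii), and that normalized increments respect them. By the latter I mean
\[
 \frac{F((\gamma\oplus\eta)_\eps)-F(z)}{r_F(\eps)}-\frac{F(\gamma_\eps)-F(z)}{r_F(\eps)}-\frac{F(\eta_\eps)-F(z)}{r_F(\eps)}\to0,
\]
and similarly for scaling. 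Passing to the limit in $V$, which is Hausdorff so limits are unique, gives additivity and homogeneity of $d^{\rm resp}F_z$.

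The main obstacle is purely definitional. I have to state exactly what ``asymptotically congruent'' and ``normalized increments respect addition'' mean, so that (ii) and (iii) are not circular. I would fix these as the displayed conditions above and note that the metric (rather than general uniform) setting is what makes the scaled comparison meaningful.
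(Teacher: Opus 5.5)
Your proposal is correct and follows the paper's proof essentially step for step. Reflexivity and symmetry come from the pseudometrics $\delta_\beta$, and transitivity from the triangle inequality divided by $r_\beta(\eps)$; leastness, and the descent of asymptotically congruent operations together with their identities, come from the quotient universal property; part (iii) comes from class-invariance of the normalized increment, which is built into the definition of $d^{\rm resp}F_z$, with linearity inherited from the descended operations.

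One small wording slip: every qualifying quotient maps onto $T_{z,r}^{\rm resp}\mathsf Z_{\mathfrak R}$ through the class map, so the tangent set is terminal (the coarsest) among such quotients rather than initial. That is exactly what your argument establishes.
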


\begin{proof}
Reflexivity and symmetry are immediate, while transitivity follows from the
triangle inequality after division by $r_\beta(\eps)$.  The factorization
claim is the universal property of quotienting by
\eqref{eq:infinitesimal-response-equivalence}.  Asymptotic congruence makes an
operation independent of representatives, so it descends together with its
identities.  The last assertion is exactly class-invariance of the normalized
increment, with linearity inherited from the descended first-order operations.
\end{proof}

\begin{corollary}[Hilbertian response differential]
\label{cor:hilbertian-response-gradient}
Suppose a linear tangent block is generated by a real vector space $V$ with a
positive semidefinite bilinear form $g_z$.  Let
\[
 N_z:=\{v:g_z(v,v)=0\},\qquad
 \mathscr H_z:=\overline{V/N_z}^{\,\|\cdot\|_{g_z}},\qquad
 \|[v]\|_{g_z}^2=g_z(v,v).
\]
If a scalar linear cotangent response $\lambda_z$ satisfies
\begin{equation}\label{eq:response-cotangent-energy-bound}
 |\lambda_z(v)|\le C_z\|[v]\|_{g_z},
\end{equation}
then there is a unique $\nabla_g^{\rm resp}F(z)\in\mathscr H_z$ such that
\begin{equation}\label{eq:response-Riesz-gradient}
 \lambda_z(v)
 =\langle\nabla_g^{\rm resp}F(z),[v]\rangle_{\mathscr H_z}.
\end{equation}
Thus $\lambda_z$ is the cotangent datum, whereas
$\nabla_g^{\rm resp}F(z)$ is its Riesz representative relative to $g_z$.
\end{corollary}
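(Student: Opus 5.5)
The plan is to work entirely on the finite-dimensional-or-separable pre-Hilbert space $V/N_z$ and then pass to the completion. First, the kernel: if $v\in N_z$, then \eqref{eq:response-cotangent-energy-bound} gives $|\lambda_z(v)|\le C_z\|[v]\|_{g_z}=0$, so $\lambda_z$ vanishes on $N_z$. Since $\lambda_z$ is linear, it descends to a well-defined linear functional $\widetilde\lambda_z$ on the quotient $V/N_z$, given by $\widetilde\lambda_z([v])=\lambda_z(v)$.

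Next, I will check that $g_z$ descends to a genuine inner product on $V/N_z$. The Cauchy--Schwarz inequality for positive semidefinite forms, $|g_z(u,v)|^2\le g_z(u,u)g_z(v,v)$, shows that $N_z$ is a linear subspace and that $g_z(u+n,v+m)=g_z(u,v)$ for $n,m\in N_z$. So $\langle[u],[v]\rangle:=g_z(u,v)$ is well defined and positive definite, and $\mathscr H_z$ is the Hilbert completion. The bound now reads $|\widetilde\lambda_z(\xi)|\le C_z\|\xi\|$ on the dense subspace $V/N_z$. Hence $\widetilde\lambda_z$ is uniformly continuous and extends uniquely to a bounded linear functional $\bar\lambda_z$ on $\mathscr H_z$ with norm at most $C_z$.

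The Riesz representation theorem then supplies a unique $\nabla_g^{\rm resp}F(z)\in\mathscr H_z$ with $\bar\lambda_z(\xi)=\langle\nabla_g^{\rm resp}F(z),\xi\rangle$ for all $\xi\in\mathscr H_z$. Restricting to $\xi=[v]$ gives \eqref{eq:response-Riesz-gradient}.

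For uniqueness, I only need the identity on classes $[v]$. If two vectors $w,w'\in\mathscr H_z$ both represent $\lambda_z$ there, then $w-w'$ is orthogonal to the dense set $V/N_z$, so $w=w'$. The remaining sentence of the statement, distinguishing the cotangent datum $\lambda_z$ from its Riesz representative, is interpretive and needs no argument.

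The only step that is not pure bookkeeping is confirming that $N_z$ is a subspace and that $g_z$ is representative-independent; this rests on Cauchy--Schwarz for semidefinite forms. The argument is the abstract version of the fiberwise reasoning in \Cref{prop:covariance-first-jet-riesz-forcing}, with $g_z(h,h)=\langle\mathfrak a_t h,h\rangle$.
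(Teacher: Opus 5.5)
Your proof is correct and follows the paper's argument exactly: the energy bound annihilates $N_z$, the functional descends continuously to $V/N_z$ and extends to the completion $\mathscr H_z$, and the Riesz representation theorem finishes it, with density giving uniqueness. Two small remarks. Your check, via semidefinite Cauchy--Schwarz, that $N_z$ is a subspace and that $g_z$ is well defined on classes supplies a detail the paper leaves implicit. Your description of $V/N_z$ as finite-dimensional or separable is unfounded, but harmless since nothing in the argument uses it.
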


\begin{proof}
The bound annihilates $N_z$ and makes $\lambda_z$ continuous on $V/N_z$.
Extend to the Hilbert completion and apply the Riesz representation theorem.
\end{proof}

\begin{remark}[First-order scale]
The response uniformity alone does not determine a Brownian, parabolic, or
rough first-order scale.  After fixing a scale, the tangent is the
corresponding germ quotient; on a Hilbertian tangent block, gradients are
Riesz representatives of cotangent responses.
\end{remark}

\section{Capacity, quasi-sure identification, and common upper-expectation spaces}
\label{sec:common-objects}

Throughout this section, \(\Omega\) is a Polish space and
\(\cP\subset\mathfrak P(\Omega)\) is nonempty.  For an arbitrary
\(A\subset\Omega\), define the upper capacity
\begin{equation}\label{eq:capacity}
  c_{\cP}(A):=\sup_{P\in\cP}P^*(A),
\end{equation}
where \(P^*\) denotes outer probability.  A set \(N\) is
\(\cP\)-\emph{polar} if \(c_{\cP}(N)=0\).  A property holds
\(\cP\)-quasi surely if it holds outside a polar set, and a set is
\(\cP\)-\emph{full} when its complement is \(\cP\)-polar.
The family \(\cP\) is \emph{uniformly tight} if, for every \(\eps>0\),
there is a compact \(K\subset\Omega\) with
\(\sup_{P\in\cP}P(K^c)<\eps\).

\begin{remark}[Polar equality and information]
The polar ideal compares versions of a common Borel field.  Upper-norm
completion provides the construction mechanism for common fields used below.
\end{remark}

\begin{definition}[Simultaneous lawwise realization]
\label{def:common-representation}
Let \(E\) be Polish, let \(Z:\Omega\to E\) be Borel, and for every
\(P\in\cP\) let \(Z^P\) be an \(E\)-valued random variable defined up to
\(P\)-almost-sure equality.  We say that \(Z\) \emph{realizes the family
simultaneously} if
\[
  Z=Z^P\qquad P\text{-almost surely for every }P\in\cP.
\]
Two simultaneous Borel realizations are identified if they agree
\(\cP\)-quasi surely.  Existence of a simultaneous Borel realization is an
additional compatibility property of the modelwise family.
\end{definition}

\begin{definition}[Compact capacity core]
\label{def:compact-core}
An increasing sequence of compact sets \((K_R)_{R\ge1}\) is a
\emph{compact capacity core} for \(\cP\) if
\[
  \tau_R:=c_{\cP}(K_R^c)\longrightarrow0.
\]
If \(E\) is Polish, a Borel map \(F:\Omega\to E\) is
\emph{core-continuous} if
\(F|_{K_R}\) is continuous for every \(R\).
\end{definition}

The union \(\bigcup_RK_R\) is a common full-capacity domain, compactly
exhausted by continuity sets shared by all models.

\begin{proposition}[Common full-measure event]
\label{prop:common-event}
If \(G\subset\Omega\) is universally measurable and $P(G)=1$ for every
$P\in\cP$, then \(G^c\) is \(\cP\)-polar.
\end{proposition}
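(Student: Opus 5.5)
The plan is to verify polarity directly from the definition \eqref{eq:capacity}: it suffices to show $P^*(G^c)=0$ for each $P\in\cP$, since then $c_{\cP}(G^c)=\sup_{P\in\cP}P^*(G^c)=0$.

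\emph{Step 1 (measurability of the complement).} I first check that $G^c$ is universally measurable: $G$ is $P$-measurable for every Borel probability $P$ on $\Omega$, and the $P$-completed $\sigma$-field is closed under complements. In particular $G^c$ lies in the completion of the Borel $\sigma$-field under each $P\in\cP$.

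\emph{Step 2 (outer measure on measurable sets).} I then fix $P\in\cP$. On $P$-measurable sets the outer probability $P^*$ agrees with the completed measure $\overline P$. Concretely, I choose Borel sets $B_1\subseteq G\subseteq B_2$ with $P(B_2\setminus B_1)=0$. Since $P(G)=1$, also $P(B_1)=1$. Then $B_1^c$ is a Borel cover of $G^c$ with $P(B_1^c)=0$, so $P^*(G^c)=0$.

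\emph{Step 3 (supremum).} Finally I take the supremum over $P\in\cP$ in \eqref{eq:capacity}. Every term vanishes, so $c_{\cP}(G^c)=0$ and $G^c$ is $\cP$-polar.

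There is no substantive obstacle here. The only point needing care is Step 2: the hypothesis $P(G)=1$ must be read as the value of the completed measure, and that value has to be converted into a statement about outer probability. Universal measurability is exactly what justifies this conversion. Without it, $P(G)=1$ would only control inner measure, and the conclusion could fail for non-measurable $G$. This is why the hypothesis is stated for universally measurable sets rather than arbitrary ones.
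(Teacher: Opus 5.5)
Your argument is correct and is exactly the paper's proof, which says in one line that universal measurability gives $P^*(G^c)=P(G^c)=0$ for each $P\in\cP$, whence $c_{\cP}(G^c)=0$; your Borel sandwich in Step 2 just spells out why $P^*$ agrees with the completed measure on $G^c$. One small correction to your closing remark: a hypothesis on inner probability, $P_*(G)=1$, would in fact suffice, since $P^*(G^c)=1-P_*(G)$ for arbitrary $G$; the reading under which the conclusion can fail for non-measurable $G$ is the outer one, $P^*(G)=1$ (compare the paper's remark that $c_{\cP}(G)=1$ alone does not give $c_{\cP}(G^c)=0$).
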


\begin{proof}
Universal measurability gives \(P^*(G^c)=P(G^c)=0\) for every \(P\).
\end{proof}

\begin{remark}
The equality \(c_{\cP}(G)=1\) alone does not imply the common full-measure
condition \(c_{\cP}(G^c)=0\).
\end{remark}

\subsection{Regular upper-expectation completions}
\label{sec:regular-completion}

We use the upper-$L^p$ regular completion, the Banach-space counterpart of
quasi-continuous common-version spaces; see
\cite{DenisHuPeng11,SonerTouziZhang11,Cohen12,Nutz12}.

Let $B$ be a separable Banach space and $1\le p<\infty$.  For a Borel map
$Z:\Omega\to B$, set
\begin{equation}\label{eq:upper-Lp-norm}
  \norm{Z}_{\mathcal L_{\cP}^p(B)}
  :=\sup_{P\in\cP}\left(E^P\norm{Z}^p\right)^{1/p}.
\end{equation}
Identify maps that agree $\cP$-quasi surely and denote by
$\mathcal L_{\cP}^p(B)$ the resulting space of finite-norm classes.  The
closure of $C_b(\Omega;B)$ in this norm is denoted by
$\mathbb L_{\cP}^p(B)$.  The latter is the regular capacitary completion; it
need not contain every Borel variable with finite upper moment.

A Borel map $Z:\Omega\to B$ is called $\cP$-\emph{quasi-continuous} if,
for every $\eps>0$, there is an open set $O\subset\Omega$ such that
$c_{\cP}(O)<\eps$ and $Z|_{O^c}$ is continuous.  An equivalence class is
quasi-continuous if it has such a representative.  The following theorem
characterizes the finite-upper-moment variables in the regular completion.

\begin{theorem}[Completeness of the upper-expectation space]
\label{thm:upper-Lp-complete}
For every separable Banach space $B$ and $1\le p<\infty$,
$\mathcal L_{\cP}^p(B)$ is a Banach space.  Consequently,
$\mathbb L_{\cP}^p(B)$ is a closed Banach subspace.  If $\cP=\{P\}$, these
spaces reduce to the usual $L^p(P;B)$ space and the $L^p(P)$ closure of
$C_b(\Omega;B)$, respectively.
\end{theorem}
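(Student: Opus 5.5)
The plan is to prove completeness of $\mathcal L_{\cP}^p(B)$ by the Riesz--Fischer subsequence argument, carried out with capacity in place of a single measure; the estimates are those of \Cref{lem:upper-capacity-L0-completeness}. Everything else is routine. The norm \eqref{eq:upper-Lp-norm} is a seminorm, since each $(E^P\|\cdot\|^p)^{1/p}$ is one and a supremum of seminorms is a seminorm. It vanishes exactly on quasi-surely zero maps: $\|Z\|=0$ implies $P(\|Z\|>0)=0$ for every $P$, and $\{\|Z\|>0\}$ is Borel, so it is polar by \Cref{prop:common-event}. Hence the quotient is a normed space.

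For completeness, take a Cauchy sequence $(Z_n)$ and pass to a subsequence with $\|Z_{n_{k+1}}-Z_{n_k}\|_{\mathcal L^p_{\cP}}\le 2^{-2k}$. Markov's inequality and subadditivity of $c_{\cP}$ give
\[
 c_{\cP}\bigl(\|Z_{n_{k+1}}-Z_{n_k}\|>2^{-k}\bigr)\le 2^{-kp}.
\]
The set $E_k=\bigcup_{j\ge k}\{\|Z_{n_{j+1}}-Z_{n_j}\|>2^{-j}\}$ therefore has capacity tending to zero. Off the Borel polar set $\bigcap_k E_k$, the subsequence is Cauchy in $B$ and converges pointwise. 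Define $Z$ as this limit on the Borel Cauchy locus and as $0$ elsewhere. Then $Z$ is Borel, because $B$ is separable and limits of Borel maps on a Borel set are Borel.

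Next, for each fixed $P$, Fatou's lemma gives
\[
 E^P\|Z-Z_{n_k}\|^p\le\liminf_j E^P\|Z_{n_j}-Z_{n_k}\|^p\le \sup_{j\ge k}\|Z_{n_j}-Z_{n_k}\|^p_{\mathcal L^p_{\cP}}.
\]
The right-hand side is independent of $P$, so taking $\sup_P$ yields $\|Z-Z_{n_k}\|\to0$; this is the key uniformity point. Finiteness of $\|Z\|$ and convergence of the whole sequence then follow from the triangle inequality and the Cauchy property. I expect this uniform-in-$P$ Fatou step to be the main subtlety, since the pointwise limit must be chosen independently of $P$. The capacity Borel--Cantelli step guarantees this, because the exceptional set is polar for all laws simultaneously.

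Finally, $\mathbb L^p_{\cP}(B)$ is the closure of the image of $C_b(\Omega;B)$, which consists of bounded Borel maps of finite norm. It is therefore a closed subspace of a Banach space, hence Banach. When $\cP=\{P\}$, polar sets are exactly $P$-null Borel-contained sets and the norm is the $L^p(P)$ norm. Thus $\mathcal L^p_{\{P\}}(B)=L^p(P;B)$, where every $P$-measurable class has a Borel representative since $B$ is separable, and $\mathbb L^p$ is the $L^p(P)$-closure of $C_b$ by definition.
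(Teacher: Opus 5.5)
Your proposal is correct and follows essentially the same route as the paper: a fast subsequence, the capacity Borel--Cantelli bound $c_{\cP}(\|Z_{n_{k+1}}-Z_{n_k}\|>2^{-k})\le 2^{-kp}$, a Borel limit on the common Cauchy locus, and Fatou's lemma with a right-hand side independent of $P$. You also check the norm axioms and the single-law identification, which the paper leaves implicit; the paper instead remarks that $C_b(\Omega;B)$ is dense in $L^p(P;B)$ for Polish $\Omega$, which goes beyond what the statement requires.
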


\begin{proof}
Let $(Z_n)$ be Cauchy.  Choose a subsequence $(Z_{n_k})$ such that
\[
  \norm{Z_{n_{k+1}}-Z_{n_k}}_{\mathcal L_{\cP}^p(B)}
  \le 2^{-2k},\qquad k\ge1.
\]
By Markov's inequality,
\[
  c_{\cP}\!\left(
    \norm{Z_{n_{k+1}}-Z_{n_k}}>2^{-k}
  \right)
  \le 2^{-kp}.
\]
The capacity Borel--Cantelli argument gives a common full-capacity set on
which $(Z_{n_k})$ is Cauchy in $B$.  Define $Z$ as its pointwise limit on
that Borel convergence set and as zero outside it.  Separability of $B$
ensures that $Z$ is Borel.  For each $k$, Fatou's lemma gives
\[
  \norm{Z-Z_{n_k}}_{\mathcal L_{\cP}^p(B)}
  \le \liminf_{\ell\to\infty}
      \norm{Z_{n_\ell}-Z_{n_k}}_{\mathcal L_{\cP}^p(B)},
\]
which tends to zero.  The full sequence converges because it is Cauchy.
Closedness of the regular completion follows from completeness.  When
$\cP=\{P\}$, bounded continuous $B$-valued maps are dense in
$L^p(P;B)$ because $\Omega$ is Polish and $B$ is separable.
\end{proof}

\begin{theorem}[Quasi-continuous characterization]
\label{thm:quasi-continuous-characterization}
Let $B$ be a separable Banach space and $1\le p<\infty$.  For
$Z\in\mathcal L_{\cP}^p(B)$, the following are equivalent:
\begin{enumerate}[label=(\roman*),leftmargin=2.2em]
\item $Z\in\mathbb L_{\cP}^p(B)$;
\item $Z$ has a $\cP$-quasi-continuous representative and
\begin{equation}\label{eq:uniform-p-tail}
  \lim_{N\to\infty}\sup_{P\in\cP}
  E^P\!\left[\norm{Z}^p\one_{\{\norm{Z}>N\}}\right]=0.
\end{equation}
\end{enumerate}
If $\cP$ is uniformly tight, quasi-continuity is equivalent to the existence
of a compact capacity core $(K_R)$ on which $Z$ is core-continuous.
\end{theorem}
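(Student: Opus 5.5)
We prove the two implications separately and then add the tight case. The route is the classical one for capacitary $L^p$ spaces, adapted to Banach values; we only use that $\Omega$ is Polish, $B$ is separable, and that $\mathcal L_{\cP}^p(B)$ is complete (\Cref{thm:upper-Lp-complete}).

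\emph{Step 1: (i)$\Rightarrow$(ii).} Choose $F_k\in C_b(\Omega;B)$ with $\norm{F_{k+1}-F_k}_{\mathcal L_{\cP}^p}\le 2^{-2k}$ and $F_k\to Z$.
\begin{itemize}
\item Upper semicontinuity of $\norm{F_{k+1}-F_k}$ is not available. We instead use the \emph{open} sets $O_k=\{\norm{F_{k+1}-F_k}>2^{-k}\}$, which are open because the $F_k$ are continuous. Markov's inequality gives $c_{\cP}(O_k)\le 2^{-kp}$.
\item Put $U_j=\bigcup_{k\ge j}O_k$. This set is open with $c_{\cP}(U_j)\le\sum_{k\ge j}2^{-kp}$, using countable subadditivity of the outer capacity.
\item On $U_j^c$ the $F_k$ converge uniformly, so the limit is continuous there. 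It coincides with $Z$ quasi surely, by the proof of \Cref{thm:upper-Lp-complete}.
\item Modify $Z$ on the polar set where it differs from the limit. The wrinkle is that the polar exceptional set must itself be covered by small open sets. We handle this by defining the representative as $\lim F_k$ wherever the limit exists and $0$ elsewhere; this representative is continuous on each $U_j^c$. This gives quasi-continuity.
\end{itemize}
For the uniform integrability tail \eqref{eq:uniform-p-tail}, split
$\norm{Z}^p\one_{\{\norm Z>N\}}\le 2^{p}\norm{Z-F_k}^p+2^p\norm{F_k}^p\one_{\{\norm{Z}>N\}}$. Since $F_k$ is bounded by $M_k$, the last term is at most $2^pM_k^p\,c_{\cP}(\norm Z>N)\le 2^pM_k^pN^{-p}\norm Z^p_{\mathcal L^p_{\cP}}$. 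Fix $k$ first and then let $N\to\infty$.

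\emph{Step 2: (ii)$\Rightarrow$(i).} This direction is the main obstacle, since it needs a Tietze-type extension with capacity control.
\begin{itemize}
\item Truncate with the radial retraction $\pi_N(b)=b\min\{1,N/\norm b\}$. Then $\norm{Z-\pi_N Z}_{\mathcal L^p_{\cP}}^p\le\sup_PE^P[\norm Z^p\one_{\{\norm Z>N\}}]\to0$, and $\pi_N Z$ is still quasi-continuous and bounded by $N$.
\item Given $\eps$, take an open $O$ with $c_{\cP}(O)<\eps$ and $\pi_NZ|_{O^c}$ continuous.
\item Dugundji's extension theorem extends $\pi_NZ|_{O^c}$ from the closed set $O^c$ of the metric space $\Omega$ to a continuous $F:\Omega\to B$ with values in the closed convex hull of the range, hence $\norm F\le N$.
\item Then $\norm{F-\pi_NZ}_{\mathcal L^p_{\cP}}^p\le(2N)^p\,c_{\cP}(O)<(2N)^p\eps$.
\item Choose $N$ first and $\eps$ afterwards. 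This shows $Z$ lies in the closure of $C_b$.
\end{itemize}

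\emph{Step 3: tight case.} Assume $\cP$ is uniformly tight.
\begin{itemize}
\item Quasi-continuity gives core-continuity. Pick open sets $O_R$ with $c_{\cP}(O_R)<1/R$ and compacts $L_R$ with $\sup_PP(L_R^c)<1/R$. Then $K_R=\bigcup_{r\le R}(L_r\cap O_r^c)$ is a finite union of compact sets. Restricted to each piece $L_r\cap O_r^c$, the map $Z$ is continuous. A finite union of closed pieces on each of which $Z$ is continuous gives continuity by the pasting lemma. Finally $c_{\cP}(K_R^c)\le 2/R$.
\item Core-continuity gives quasi-continuity. Each $K_R^c$ is open, $Z|_{K_R}$ is continuous, and $c_{\cP}(K_R^c)=\tau_R\to0$. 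This is exactly the definition, with $O=K_R^c$.
\end{itemize}
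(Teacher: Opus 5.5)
Your proposal is correct and follows the paper's proof essentially step for step: a fast Cauchy subsequence with open exceptional sets $\{\|F_{k+1}-F_k\|>2^{-k}\}$, radial truncation plus the Dugundji extension (choosing $N$ before the open set), and finite pasting of the compact pieces $L_r\cap O_r^c$ in the tight case. The only deviation is the tail bound in (i)$\Rightarrow$(ii). You use Markov on $\{\|Z\|>N\}$, while the paper takes $N>2\|Z_n\|_\infty$ so that $\|Z\|\le 2\|Z-Z_n\|$ on that set, and both arguments work. Your aside that upper semicontinuity of $\|F_{k+1}-F_k\|$ is unavailable is a misstatement, since that function is continuous, but nothing in the argument depends on it.
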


\begin{proof}
Suppose first that $Z_n\in C_b(\Omega;B)$ and $Z_n\to Z$ in
$\mathcal L_{\cP}^p(B)$.  Pass to a subsequence, still denoted by $(Z_n)$,
such that
\[
  \norm{Z_{n+1}-Z_n}_{\mathcal L_{\cP}^p(B)}\le 2^{-2n}.
\]
The sets
\[
  A_n:=\{\norm{Z_{n+1}-Z_n}>2^{-n}\}
\]
are open and satisfy $c_{\cP}(A_n)\le2^{-np}$.  For
$O_m:=\bigcup_{n\ge m}A_n$, the capacity of $O_m$ tends to zero, while
$(Z_n)$ is uniformly Cauchy on $O_m^c$.  Its pointwise limit on
$\bigcup_mO_m^c$, extended by one fixed base point to the polar complement, is Borel and
continuous on every $O_m^c$.  Since $Z_n\to Z$ in
$\mathcal L_{\cP}^p(B)$, a further subsequence converges to $Z$ quasi surely.
On $\bigcup_mO_m^c$ that subsequence also converges to the uniform limit just
constructed, so the limit agrees with $Z$ quasi surely.  Thus $Z$ has a
quasi-continuous representative.

To prove \eqref{eq:uniform-p-tail}, fix $n$ and take
$N>2\norm{Z_n}_\infty$.  On $\{\norm{Z}>N\}$ one has
$\norm{Z}\le2\norm{Z-Z_n}$, and therefore
\[
  \sup_{P\in\cP}E^P\!\left[
    \norm{Z}^p\one_{\{\norm{Z}>N\}}
  \right]
  \le 2^p\norm{Z-Z_n}_{\mathcal L_{\cP}^p(B)}^p.
\]
First let $N\to\infty$ with $n$ fixed and then let $n\to\infty$.

Conversely, assume (ii) and choose a quasi-continuous representative.  Let
$T_N:B\to B$ be radial truncation to the closed ball of radius $N$.  Then
\[
 \norm{Z-T_NZ}_{\mathcal L_{\cP}^p(B)}^p
 \le \sup_{P\in\cP}E^P\!\left[
   \norm{Z}^p\one_{\{\norm{Z}>N\}}
 \right]\longrightarrow0.
\]
Given $N$ and $\eps>0$, choose an open set $O$ of arbitrarily small
capacity such that $T_NZ$ is continuous on the closed set $O^c$.
The Banach-valued Tietze--Dugundji extension theorem \cite{Dugundji51} gives
$G\in C_b(\Omega;B)$ with $G=T_NZ$ on $O^c$ and
$\norm{G}_\infty\le N$.  Hence
\[
  \norm{T_NZ-G}_{\mathcal L_{\cP}^p(B)}
  \le 2N c_{\cP}(O)^{1/p}.
\]
After choosing $N$ and then $O$, this proves that $Z$ lies in the
$\mathcal L_{\cP}^p$-closure of $C_b(\Omega;B)$.

Finally suppose that $\cP$ is uniformly tight.  For each $m$, choose a
compact $C_m$ and an open $O_m$ such that
\[
  c_{\cP}(C_m^c)+c_{\cP}(O_m)\le2^{-m},
  \qquad Z|_{O_m^c}\ \text{is continuous}.
\]
The finite unions
\[
  K_R:=\bigcup_{m=1}^R(C_m\cap O_m^c)
\]
are compact, increase with $R$, and satisfy
$c_{\cP}(K_R^c)\le2^{-R}$.  The finite pasting lemma shows that $Z|_{K_R}$
is continuous.  The converse follows by taking $O=\Omega\setminus K_R$.
\end{proof}

\subsection{Uniform BDG and common stochastic integration}

Let $\Omega=C_0([0,T];H)$ with coordinate process $X$, and let $\cP$ be a
family of laws under which $X$ is a continuous local martingale.  For a
bounded elementary predictable process
\[
  H_t=\sum_{k=0}^{N-1}H_k\one_{(t_k,t_{k+1}]}(t),
\]
require $H_k$ to be Borel, bounded,
$\cF_{t_k}^0:=\sigma(X_s:s\le t_k)$-measurable, and valued in
$\mathcal L(H,\R^m)$.  Define the common step
integral pathwise by
\[
  I_t(H)
  :=\sum_{k=0}^{N-1}H_k
     \bigl(X_{t\wedge t_{k+1}}-X_{t\wedge t_k}\bigr).
\]
For $1\le p<\infty$, introduce
\begin{align}
  \norm{Y}_{\mathbb S_{\cP}^p}
  &:=\sup_{P\in\cP}
      \left(E^P\sup_{t\le T}\abs{Y_t}^p\right)^{1/p},
  \label{eq:S-p-norm}\\
  \norm{H}_{\mathbb H_{\cP}^p}
  &:=\sup_{P\in\cP}
      \left[
        E^P\left(
          \int_0^T
          \operatorname{tr}\!\left(
            H_t\,\dd\qv{X}^P_t\,H_t^*
          \right)
        \right)^{p/2}
      \right]^{1/p}.
  \label{eq:H-p-norm}
\end{align}
Here $\qv{X}^P$ is the positive trace-class operator bracket under $P$.  Define
$\mathbb S_{\cP}^p$ as the closed subspace of
$\mathcal L_{\cP}^p(C([0,T];\R^m))$ generated by finite-norm Borel
continuous-path processes adapted to the raw canonical filtration.  Thus an
element of $\mathbb S_{\cP}^p$ has an adapted continuous version under each
$P$, although the ambient class need not have a distinguished causal
representative.  The expression in \eqref{eq:H-p-norm} is a seminorm on
elementary predictable integrands.  First quotient by its null space and
then take the completion; the resulting Banach space is denoted by
$\mathbb H_{\cP}^p$.  Only elementary processes of finite seminorm enter
this completion.  For each $P\in\cP$, the natural contractions will be
denoted by
\[
 \pi_P:\mathbb S_{\cP}^p\to\mathbb S^p(P),
 \qquad
 j_P:\mathbb H_{\cP}^p\to\mathbb H^p(P).
\]
The second map is obtained by completing the identity on elementary
integrands; both maps have norm at most one.  Moreover,
\begin{equation}\label{eq:energy-product-embedding}
 \norm{H}_{\mathbb H_{\cP}^p}
 =\sup_{P\in\cP}\norm{j_P(H)}_{\mathbb H^p(P)},
 \qquad H\in\mathbb H_{\cP}^p.
\end{equation}
Indeed, the identity holds before completion, and if $H_n\to H$ then
\[
 \sup_{P\in\cP}\left|
  \|j_PH_n\|_{\mathbb H^p(P)}-\|j_PH\|_{\mathbb H^p(P)}
 \right|
 \le\|H_n-H\|_{\mathbb H_{\cP}^p},
\]
so it passes to the quotient completion.

\begin{theorem}[Two-sided uniform BDG extension]
\label{thm:uniform-BDG-extension}
For every $1\le p<\infty$, there are constants
$0<c_p\le C_p<\infty$, independent of
$\cP$, such that
\begin{equation}\label{eq:upper-BDG}
  c_p\norm{H}_{\mathbb H_{\cP}^p}
  \le \norm{I(H)}_{\mathbb S_{\cP}^p}
  \le C_p\norm{H}_{\mathbb H_{\cP}^p}
\end{equation}
for every finite-upper-energy elementary predictable $H$.  Hence $I$
extends uniquely to a
bounded injective linear map with closed range,
\[
  I:\mathbb H_{\cP}^p
  \longrightarrow
  \mathbb S_{\cP}^p.
\]
For every $P\in\cP$ and $H\in\mathbb H_{\cP}^p$, one has the commuting
identity
\begin{equation}\label{eq:modelwise-integral-projection}
 \pi_P(I(H))
 =\int_0^{\cdot}j_P(H)_s\,\dd X_s
 \qquad\text{in }\mathbb S^p(P).
\end{equation}
Moreover, for every deterministic $t$,
\begin{equation}\label{eq:common-integral-stopping}
 I(\one_{[0,t]}H)=r_tI(H).
\end{equation}
Thus, with the natural truncations on integrands and continuous paths, the
common stochastic integral is a stable-causal closed embedding.
\end{theorem}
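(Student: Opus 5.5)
The proof will be modelwise first and then uniform. Fix $P\in\cP$ and a bounded elementary predictable $H$ with values in $\mathcal L(H,\R^m)$. Under $P$, the pathwise step integral $I(H)$ coincides with the classical stochastic integral $\int_0^\cdot H_s\,\dd X_s$. This is a continuous $\R^m$-valued local martingale, because $X$ is a continuous local martingale under every $P\in\cP$. Its scalar quadratic variation is
\[
 [I(H)]_T=\int_0^T\operatorname{tr}\bigl(H_t\,\dd\qv{X}^P_t\,H_t^*\bigr).
\]
This identity follows from the operator-bracket calculus: on each interval $(t_k,t_{k+1}]$ the bracket of $H_kX$ is $H_k\,\dd\qv{X}^P H_k^*$.

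The two-sided BDG inequality for continuous $\R^m$-valued local martingales, applied with the dimension-free Hilbert-space constants, gives
\[
 c_p\bigl(E^P[I(H)]_T^{p/2}\bigr)^{1/p}\le\bigl(E^P\sup_t|I_t(H)|^p\bigr)^{1/p}\le C_p\bigl(E^P[I(H)]_T^{p/2}\bigr)^{1/p},
\]
with constants independent of $P$ and $m$, valid for all $1\le p<\infty$. Taking the supremum over $P$ on both sides yields \eqref{eq:upper-BDG}. The step that needs care is that the left-hand inequality survives the supremum. It does: $\sup_P$ of a quantity bounded below by $c_p$ times another gives $c_p\sup_P$ of the latter.

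Next comes the extension. $I(H)$ is a Borel, raw-adapted process with continuous paths and finite upper norm, so it lies in $\mathbb S_{\cP}^p$. The upper bound in \eqref{eq:upper-BDG} shows that $I$ is constant on the null space of the $\mathbb H_{\cP}^p$ seminorm. Hence $I$ extends uniquely by continuity to the completion $\mathbb H_{\cP}^p$. The lower bound passes to limits, so the extension is bounded below; it is therefore injective, and its range is complete, hence closed.

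For \eqref{eq:modelwise-integral-projection}, the identity holds on elementary integrands by the first step. Both sides are continuous in $H$: the left side because $\pi_P$ and $I$ are bounded, and the right side because $j_P$ is contractive and the classical BDG bound holds under $P$. Density then gives the identity on all of $\mathbb H_{\cP}^p$. For \eqref{eq:common-integral-stopping}, on elementary $H$ the pathwise formula gives $I(\one_{[0,t]}H)=I_{\cdot\wedge t}(H)=r_tI(H)$ directly. Multiplication by $\one_{[0,t]}$ is contractive on $\mathbb H_{\cP}^p$, and $r_t$ is contractive on $\mathbb S_{\cP}^p$, so the identity extends by density. The stable-causality conclusion then follows from these two intertwining identities together with continuity.

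The main obstacle I expect is bookkeeping rather than analysis. Integrand classes that are null for the seminorm must be shown to give integral classes that are quasi-surely zero, which is exactly the upper BDG bound. The constants must also be checked to be independent of $\cP$, which reduces to citing the standard BDG constants for continuous martingales.
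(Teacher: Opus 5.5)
Your proposal is correct and follows essentially the same route as the paper: under each $P$ you identify the pathwise step integral with the classical elementary It\^o integral and apply the two-sided classical BDG inequality, whose constants depend only on $p$, then take the supremum over $\cP$ and extend by completion; the projection and stopping identities pass from elementary integrands by continuity of $\pi_P$ and $j_P$ and by contractivity of the two truncations. Two of your points are only implicit in the paper's ``completeness gives the extension'': that seminorm-null integrands have quasi-surely null integrals, and that $I(H)$ lies in $\mathbb S_{\cP}^p$ once the upper BDG bound makes its norm finite.
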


\begin{proof}
Under each fixed $P$, the pathwise step integral is the classical elementary
It\^o integral and
\[
  \operatorname{tr}\qv{I(H)}_T
  =\int_0^T\operatorname{tr}\!\left(
    H_t\,\dd\qv{X}^P_t\,H_t^*
  \right).
\]
The two-sided classical BDG inequality \cite{RevuzYor99}, with constants
depending only on $p$, gives $c_pb_P(H)\le a_P(H)\le C_pb_P(H)$, where
\begin{align*}
 a_P(H)&:=\left(E^P\sup_{t\le T}\abs{I_t(H)}^p\right)^{1/p},\\
 b_P(H)&:=\left[
   E^P\left(
     \int_0^T\operatorname{tr}\!\left(
       H_t\,\dd\qv{X}^P_t\,H_t^*
     \right)
   \right)^{p/2}
 \right]^{1/p}.
\end{align*}
Taking the supremum proves \eqref{eq:upper-BDG}.  Completeness gives the
extension, while the lower estimate makes it injective and its range closed.
If $H_n$ is elementary and converges to $H$ in the upper
$\mathbb H^p$ norm, it converges in the corresponding $P$-norm for every
$P$.  The classical It\^o integrals therefore converge in $\mathbb S^p(P)$
to the usual integral, while \eqref{eq:upper-BDG} identifies the same limit
with the common extension.  Continuity of $\pi_P$ and $j_P$ gives
\eqref{eq:modelwise-integral-projection}.  Identity
\eqref{eq:common-integral-stopping} holds for elementary integrands by the
pathwise step-integral formula.  The truncation
$H\mapsto\one_{[0,t]}H$ is contractive in the upper energy norm and
$Y\mapsto r_tY$ is contractive in the upper process norm, so the identity
passes to the two completions.
\end{proof}

\begin{corollary}[Compatibility across integrability exponents]
\label{cor:common-integral-exponent-compatibility}
If $1\le p\le q<\infty$, Lyapunov's inequality and completion of the
identity on elementary classes give canonical contractions
\[
 \iota_{q,p}^{\mathbb S}:\mathbb S_{\cP}^q\longrightarrow\mathbb S_{\cP}^p,
 \qquad
 \iota_{q,p}^{\mathbb H}:\mathbb H_{\cP}^q\longrightarrow\mathbb H_{\cP}^p.
\]
Writing $I_p$ for the common integral at exponent $p$, these maps satisfy
\begin{equation}\label{eq:common-integral-exponent-compatibility}
 I_p\iota_{q,p}^{\mathbb H}
 =\iota_{q,p}^{\mathbb S}I_q.
\end{equation}
\end{corollary}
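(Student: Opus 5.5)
The plan is to construct the two contractions by density and then verify the intertwining identity on elementary integrands, where it holds pathwise, before extending by continuity. For the $\mathbb S$-side, Lyapunov's inequality under each fixed $P$ gives $(E^P\sup_t|Y_t|^p)^{1/p}\le(E^P\sup_t|Y_t|^q)^{1/q}$. Taking $\sup_{P\in\cP}$ yields $\|Y\|_{\mathbb S_{\cP}^p}\le\|Y\|_{\mathbb S_{\cP}^q}$ for every finite-norm Borel adapted continuous-path process. The identity map on quasi-sure classes is therefore a contraction from the generating set of $\mathbb S_{\cP}^q$ into $\mathbb S_{\cP}^p$. It extends uniquely to $\iota^{\mathbb S}_{q,p}$ on the closure, and since both spaces sit inside upper-$L^p$ spaces of quasi-sure classes, this extension is simply the inclusion of classes. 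This uses the completeness in \Cref{thm:upper-Lp-complete} together with the observation that $\mathcal L^q_{\cP}$-convergence implies $\mathcal L^p_{\cP}$-convergence.

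For the $\mathbb H$-side, the same Lyapunov inequality, applied to the random variable $\bigl(\int_0^T\operatorname{tr}(H_t\,\dd\qv{X}^P_tH_t^*)\bigr)^{1/2}$, gives $\|H\|_{\mathbb H^p_{\cP}}\le\|H\|_{\mathbb H^q_{\cP}}$ for elementary $H$. Consequently the $q$-null space maps into the $p$-null space, and the identity descends to a well-defined contraction between the quotiented elementary classes. It then extends by completion to $\iota^{\mathbb H}_{q,p}$. The one point needing care is that only elementary processes of finite upper seminorm enter each completion, and finite $q$-energy implies finite $p$-energy, so the domain is correct.

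For the identity \eqref{eq:common-integral-exponent-compatibility}, first take $H$ elementary with finite $q$-energy. Then $I_q(H)$ and $I_p(H)$ are both the same pathwise step integral, so $\iota^{\mathbb S}_{q,p}I_q(H)=I_p(H)=I_p\iota^{\mathbb H}_{q,p}(H)$ as quasi-sure classes. Both sides of \eqref{eq:common-integral-exponent-compatibility} are compositions of bounded linear maps $\mathbb H^q_{\cP}\to\mathbb S^p_{\cP}$: $I_q$ and $I_p$ are bounded by \Cref{thm:uniform-BDG-extension}, and the $\iota$ maps are contractions. Since the two sides agree on the dense set of elementary classes, they agree everywhere.

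The main obstacle is a bookkeeping issue rather than an estimate. One must check that $\iota^{\mathbb H}_{q,p}$ is well defined on the quotient by null spaces, which the seminorm monotonicity above settles. One should also note that $\iota^{\mathbb H}_{q,p}$ need not be injective, but injectivity is not claimed in the statement.
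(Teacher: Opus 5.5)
Your proposal is correct and follows the paper's proof: modelwise Lyapunov gives the contractions on the generating (respectively elementary) classes, these extend by completion, and the two bounded maps $I_p\iota^{\mathbb H}_{q,p}$ and $\iota^{\mathbb S}_{q,p}I_q$ agree on the dense elementary integrands, hence everywhere. Your closing caveat is too cautious: $\iota^{\mathbb S}_{q,p}$ is the inclusion of quasi-sure classes and $I_q$ is injective by the lower BDG bound, so the identity itself shows that $\iota^{\mathbb H}_{q,p}$ is injective, though the statement does not need this.
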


\begin{proof}
The contractions hold on elementary processes by Lyapunov's inequality and
therefore extend to the completions.  Both sides of
\eqref{eq:common-integral-exponent-compatibility} agree on elementary
integrands, which are dense in $\mathbb H_{\cP}^q$.
\end{proof}

Thus $I$ is exactly the closed extension from common elementary integrands in
upper energy.

\section{Common Euler fields and one-set perfection}
\label{sec:common-Euler-fields}

This appendix proves the three inputs used in
\Cref{thm:Euler-state-interface}.  Fixed-parameter Euler completion uses only
the trace clock $\Tr a_t^P\le\Lambda$.  The spatial profile enters later,
through uniform tightness and compact-core perfection, and through comparison
with the corewise continuous state propagation (and, under
\Cref{cor:full-state-flow-readout}, the full-state readout).  Throughout, solution paths started from
$(s,y)$ are frozen at $y$ before $s$.

\begin{lemma}[Euler completion under a trace clock]
\label{lem:trace-clock-Euler-sections}
Let
$\varnothing\ne\cP\subset\mathfrak S_{\Lambda,0}^{0,T}(H)$, and let
\[
 b:[0,T]\times\mathbb R^m\to\mathbb R^m,
 \qquad
 \sigma:[0,T]\times\mathbb R^m\to\mathcal L(H,\mathbb R^m)
\]
be deterministic and continuous.  Suppose that, for constants $L,K<\infty$,
\begin{align}
 &|b(t,z)-b(t,z')|
  +\|\sigma(t,z)-\sigma(t,z')\|_{\rm op}
  \le L|z-z'|,
 \label{eq:Euler-coefficient-Lipschitz}\\
 &|b(t,z)|+\|\sigma(t,z)\|_{\rm op}
  \le K(1+|z|).
 \label{eq:Euler-coefficient-growth}
\end{align}
For every deterministic $(s,y)$, every $q\ge2$, and every sequence of
partitions $\pi_n$ of $[s,T]$ with $|\pi_n|\to0$, the raw maps
$Y^{\pi_n;s,y}$ from \eqref{eq:raw-Euler-starting-pair} converge in
$\mathbb S_{\cP}^q$ to a limit $Y^{s,y}$ independent of the partitions.
If
\begin{equation}\label{eq:Euler-step-integrand}
 H_u^{\pi;s,y}
 :=\one_{(s,T]}(u)\,
 \sigma\bigl(\ell_\pi(u),Y_{\ell_\pi(u)}^{\pi;s,y}\bigr),
\end{equation}
then these integrands converge in $\mathbb H_{\cP}^q$ to the unique element
$\Sigma^{s,y}$ satisfying
\begin{equation}\label{eq:Euler-limit-modelwise-integrand}
 j_P\Sigma^{s,y}
 =\one_{(s,T]}\sigma(\cdot,\pi_PY^{s,y})
 \quad\text{in }\mathbb H^q(P),
 \qquad P\in\cP.
\end{equation}
The pair $(Y^{s,y},\Sigma^{s,y})$ solves
\begin{equation}\label{eq:Euler-common-solution-pair}
 Y_t^{s,y}
 =y+\int_s^{t\vee s}b(u,Y_u^{s,y})\,\dd u+I_t(\Sigma^{s,y})
 \quad\text{in }\mathbb S_{\cP}^q,
\end{equation}
and is the unique pair with the modelwise prescription
\eqref{eq:Euler-limit-modelwise-integrand}.  Under each $P\in\cP$ its
projection is the unique classical strong It\^o solution.

Moreover,
\begin{equation}\label{eq:Euler-section-regular-completion}
 Y^{s,y}\in
 \mathbb L_{\cP}^q\bigl(C([0,T];\mathbb R^m)\bigr),
\end{equation}
and, for every $R<\infty$, there are constants depending only on the displayed
parameters such that
\begin{align}
 \sup_{\substack{s\in[0,T]\\|y|\le R}}
 \|Y^{s,y}\|_{\mathbb S_{\cP}^q}
 &\le C_{q,R},
 \label{eq:Euler-section-moment}\\
 \|Y^{s,y}-Y^{r,z}\|_{\mathbb S_{\cP}^q}
 &\le C_{q,R}\bigl(|s-r|^{1/2}+|y-z|\bigr),
 \qquad s,r\in[0,T],\quad |y|,|z|\le R.
 \label{eq:Euler-section-parameter-increment}
\end{align}
The conclusions in
\eqref{eq:Euler-section-regular-completion}--\eqref{eq:Euler-section-parameter-increment}
also hold for every $1\le q<2$.  In that range, define
$(Y^{s,y},\Sigma^{s,y})$ as the image of the exponent-two pair under the
canonical contractions of
\Cref{cor:common-integral-exponent-compatibility}.  Then
\eqref{eq:common-integral-exponent-compatibility} gives
\eqref{eq:Euler-common-solution-pair} in $\mathbb S_{\cP}^q$, and
Lyapunov's inequality gives the stated bounds.  These lower-exponent pairs are
the limits of the same elementary Euler sequence and are therefore compatible
across all exponents.
\end{lemma}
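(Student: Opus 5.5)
The plan is to run the classical strong Euler convergence argument uniformly over $\cP$, using only the trace clock. Every estimate will be made under a fixed $P\in\cP$ with constants depending only on $(\Lambda,L,K,T,q,R)$, and then the supremum over $P$ will be taken.

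\emph{Step 1: uniform bounds for the scheme.} Fix $(s,y)$ and a partition $\pi$. Under each $P$, the raw map $Y^{\pi;s,y}$ is the classical Euler scheme. Its noise term is $\int_s^t H^{\pi;s,y}_u\,\dd X_u$, where the elementary integrand $H^{\pi;s,y}$ of \eqref{eq:Euler-step-integrand} is raw predictable and bounded on bounded sets. The key identity is
\[
\operatorname{tr}\bigl(H\,a^P H^*\bigr)\le\|H\|_{\rm op}^2\Tr a^P\le\Lambda\|H\|_{\rm op}^2 ,
\]
which I will feed into the BDG bound of \Cref{thm:uniform-BDG-extension}. Together with \eqref{eq:Euler-coefficient-growth} and Gronwall, this gives $\sup_\pi\sup_P E^P\sup_t|Y^{\pi;s,y}_t|^q\le C_{q,R}$ uniformly in $|y|\le R$. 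A one-step estimate then gives $E^P|Y^\pi_u-Y^\pi_{\ell_\pi(u)}|^q\le C|\pi|^{q/2}$.

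\emph{Step 2: Cauchy property and identification.} For two partitions $\pi,\pi'$, I will compare the schemes through the common refinement and write
\[
Y^\pi-Y^{\pi'}=\int(\text{drift difference})\,\dd u+I(H^\pi-H^{\pi'}).
\]
I will bound $\|H^\pi_u-H^{\pi'}_u\|_{\rm op}$ by $L\,|Y^\pi_{\ell_\pi(u)}-Y^{\pi'}_{\ell_{\pi'}(u)}|$ plus the time-modulus of $\sigma$. This step is the main obstacle. Since $\sigma$ is only continuous in time, the time-freezing error is only $o(1)$ and not a rate. I would control it by the modulus of continuity of $\sigma$ on $[0,T]\times\overline B_M$, and send the mass outside that ball to zero using the uniform moments and Markov's inequality. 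Gronwall then gives $\sup_P\|Y^\pi-Y^{\pi'}\|_{\mathbb S^q(P)}\to0$ as $|\pi|\vee|\pi'|\to0$, so the scheme is Cauchy in $\mathbb S^q_\cP$. By the same bound, $H^\pi$ is Cauchy in $\mathbb H^q_\cP$.

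Let $Y^{s,y},\Sigma^{s,y}$ denote the limits. Continuity of $\pi_P$ and $j_P$ and \eqref{eq:modelwise-integral-projection} give \eqref{eq:Euler-limit-modelwise-integrand} and \eqref{eq:Euler-common-solution-pair}. Under each $P$, classical pathwise uniqueness identifies the projection with the strong solution. Uniqueness of the common pair then follows from \eqref{eq:energy-product-embedding}, and interleaving two partition sequences shows that the limit does not depend on the sequence.

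\emph{Step 3: regularity in the parameters and lower exponents.} Each raw map $Y^\pi$ is continuous and bounded-growth on $\Omega$, so its radial truncations lie in $C_b$. The uniform moments at exponent $2q$ give the tail condition \eqref{eq:uniform-p-tail}, which places $Y^\pi$ in $\mathbb L^q_\cP$. Closedness of $\mathbb L^q_\cP$ (\Cref{thm:upper-Lp-complete}) then yields \eqref{eq:Euler-section-regular-completion}. The moment bound \eqref{eq:Euler-section-moment} passes to the limit from Step 1.

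For \eqref{eq:Euler-section-parameter-increment}, I will use the classical stability estimate under each $P$. The initial-value difference contributes $|y-z|$. The start-time difference contributes the increment of the solution over $[r,s]$, which is bounded by $C|s-r|^{1/2}$ by BDG with the trace clock, before Gronwall is applied. Constants remain $P$-free.

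Finally, the case $1\le q<2$ follows by pushing the exponent-two pair through the contractions of \Cref{cor:common-integral-exponent-compatibility} and applying Lyapunov's inequality, exactly as stated.
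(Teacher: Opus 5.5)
Your outline follows the paper's proof essentially step by step. The shared steps are: the trace-clock bound $\operatorname{tr}(Ga^PG^*)\le\|G\|_{\rm op}^2\Tr a^P$; BDG--Gronwall moments and grid-motion bounds; a localized time modulus of $(b,\sigma)$ on $[0,T]\times\overline B_M$, with the large-state mass removed by higher moments; Gronwall for the Cauchy property; identification through $\pi_P$, $j_P$ and \eqref{eq:energy-product-embedding}; the regular completion via truncation, uniform tails and closedness; initial-state stability plus a $|s-r|^{1/2}$ BDG increment for \eqref{eq:Euler-section-parameter-increment}; and Lyapunov for $q<2$. The paper inserts the continuous-time coefficients along each scheme and bounds a consistency profile, rather than comparing two frozen schemes directly, but that difference is cosmetic.

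The one step you assert without justification is that $H^{\pi;s,y}$ belongs to $\mathbb H_{\cP}^q$ and that its common integral is the raw Euler noise sum. The Euler integrand is elementary but not bounded. However, $\mathbb H_{\cP}^q$ is the upper-energy completion of \emph{bounded} elementary integrands, and \Cref{thm:uniform-BDG-extension} is stated only for those. ``Bounded on bounded sets'' is not enough: in the nondominated setting, finite upper energy does not by itself give approximability. Without this point, none of the following is justified:
\begin{enumerate}
\item the claim ``$H^\pi$ is Cauchy in $\mathbb H_{\cP}^q$'';
\item your decomposition $Y^\pi-Y^{\pi'}=\int(\cdots)\dd u+I(H^\pi-H^{\pi'})$ in $\mathbb S_{\cP}^q$;
\item the passage to \eqref{eq:Euler-common-solution-pair}.
\end{enumerate}
The paper makes the fix explicit, and it needs uniform integrability over $\cP$. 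Take radial truncations $T_M$, which are bounded elementary integrands, and some $q'>q$. Your Step~1 moments at exponent $q'$ then give
\[
 \|T_MH^{\pi}-T_NH^{\pi}\|_{\mathbb H_{\cP}^q}^q
 \le C N^{q-q'}\sup_{P\in\cP}E^P\int_s^T\bigl(1+|Y^{\pi}_{\ell_\pi(u)}|^{q'}\bigr)\dd u
 \longrightarrow0
 \qquad(M>N\to\infty).
\]
Hence $H^{\pi}\in\mathbb H_{\cP}^q$. Then \eqref{eq:modelwise-integral-projection} identifies $I(H^{\pi})$ with the raw sum under every $P$, and so quasi surely. Similarly, \eqref{eq:Euler-limit-modelwise-integrand} does not follow from continuity of $j_P$ alone. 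You must rerun your Step~2 Lipschitz/time-modulus estimate with the limit $\pi_PY^{s,y}$ in place of $Y^{\pi'}$ to identify $\lim j_PH^{\pi}$ with $\one_{(s,T]}\sigma(\cdot,\pi_PY^{s,y})$. With these two additions your argument is complete.
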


\begin{proof}
The common Cauchy estimate uses only the trace-clock bound and therefore
requires no fixed operator envelope.  For a positive trace-class operator $a$
and $G\in\mathcal L(H,\mathbb R^m)$,
\begin{equation}\label{eq:trace-clock-integrand-bound}
 \operatorname{tr}(GaG^*)
 =\|Ga^{1/2}\|_{\mathcal S_2}^2
 \le\|G\|_{\rm op}^2\Tr a.
\end{equation}
Thus every stochastic estimate below uses only
$\Tr a_t^P\le\Lambda$.

Fix $(s,y)$ and abbreviate $Y^n=Y^{\pi_n;s,y}$,
$H^n=H^{\pi_n;s,y}$, and $\ell_n=\ell_{\pi_n}$.  Finite-step BDG,
\eqref{eq:trace-clock-integrand-bound}, and Gronwall give, for every
$q\ge2$,
\begin{align}
 \sup_n\|Y^n\|_{\mathbb S_{\cP}^q}
 &\le C_q(1+|y|),
 \label{eq:Euler-uniform-upper-moment}\\
 \sup_n\sup_{P\in\cP}E^P\int_s^T
 |Y_u^n-Y_{\ell_n(u)}^n|^q\,\dd u
 &\le C_q(1+|y|^q)|\pi_n|^{q/2}.
 \label{eq:Euler-grid-motion}
\end{align}
Although $H^n$ need not be bounded, it belongs to
$\mathbb H_{\cP}^q$.  Indeed, radial truncations $T_MH^n$ are bounded
elementary predictable integrands.  Choose $q'>q$.  From
\eqref{eq:Euler-uniform-upper-moment}, linear growth, and
\eqref{eq:trace-clock-integrand-bound},
\[
 \|T_MH^n-T_NH^n\|_{\mathbb H_{\cP}^q}^q
 \le C N^{q-q'}
 \sup_{P\in\cP}E^P\int_s^T(1+|Y_{\ell_n(u)}^n|^{q'})\,\dd u
 \longrightarrow0
\]
as $M>N\to\infty$.  Hence the common step integral of $H^n$ is exactly the
raw sum in \eqref{eq:raw-Euler-starting-pair}.

To remove the time discretization, set, for $R\ge1$,
\[
 \omega_R(h):=
 \sup_{\substack{|u-v|\le h\\|z|\le R}}
 \left(|b(u,z)-b(v,z)|
 +\|\sigma(u,z)-\sigma(v,z)\|_{\rm op}\right).
\]
Continuity gives $\omega_R(h)\downarrow0$.  Combining this local modulus with
\eqref{eq:Euler-grid-motion}, and using a higher moment from
\eqref{eq:Euler-uniform-upper-moment} on the set where
$|Y_{\ell_n(u)}^n|>R$, gives
\begin{equation}\label{eq:Euler-consistency-profile}
 \delta_n^{(q)}:=\sup_{P\in\cP}E^P\int_s^T
 \begin{aligned}[t]
 &\bigl|b(\ell_n(u),Y_{\ell_n(u)}^n)-b(u,Y_u^n)\bigr|^q\\[-1mm]
 &+\bigl\|\sigma(\ell_n(u),Y_{\ell_n(u)}^n)
          -\sigma(u,Y_u^n)\bigr\|_{\rm op}^q
 \end{aligned}
 \dd u\longrightarrow0.
\end{equation}
Here one first lets $R\to\infty$ and then $n\to\infty$.

For
\[
 d_t(Y,Z):=\sup_{P\in\cP}
 \left(E^P\sup_{v\le t}|Y_v-Z_v|^q\right)^{1/q},
\]
the drift estimate, the uniform BDG inequality, H\"older's inequality, and
\eqref{eq:trace-clock-integrand-bound} yield
\begin{equation}\label{eq:Euler-common-Cauchy-Gronwall}
 d_t(Y^n,Y^k)^q
 \le C\int_s^t d_u(Y^n,Y^k)^q\,\dd u
     +C\bigl(\delta_n^{(q)}+\delta_k^{(q)}\bigr).
\end{equation}
Gronwall proves that $(Y^n)$ is Cauchy in $\mathbb S_{\cP}^q$.  The same
decomposition, now in the energy norm, gives
\begin{equation}\label{eq:Euler-integrand-Cauchy}
 \|H^n-H^k\|_{\mathbb H_{\cP}^q}^q
 \le C\int_s^T d_u(Y^n,Y^k)^q\,\dd u
     +C\bigl(\delta_n^{(q)}+\delta_k^{(q)}\bigr),
\end{equation}
so $(H^n)$ converges to some $\Sigma^{s,y}$.  Passing to the limit in the
Euler identity by \Cref{thm:uniform-BDG-extension} proves
\eqref{eq:Euler-common-solution-pair}.

For fixed $P$, the right-hand side of
\eqref{eq:Euler-integrand-Cauchy}, with the limiting process in place of
$Y^k$, also proves
$j_PH^n\to\one_{(s,T]}\sigma(\cdot,\pi_PY^{s,y})$ in
$\mathbb H^q(P)$.  This gives
\eqref{eq:Euler-limit-modelwise-integrand}.  Conversely, the norm identity
\eqref{eq:energy-product-embedding} makes the projection map injective, even
for degenerate brackets.  Its range consists of the compatible modelwise
families arising from common upper-energy elements.
The same BDG--Gronwall estimate proves uniqueness of the common pair.
Equation \eqref{eq:modelwise-integral-projection} then identifies each
projection with the classical It\^o solution; classical pathwise uniqueness
gives the strong-solution statement.  Running the construction at a higher
moment exponent and using the continuous embeddings, uniqueness also shows
that the process limits obtained at different exponents are compatible.

Each finite Euler map is continuous from the raw uniform path space into
$C([0,T];\mathbb R^m)$.  Its upper $q$-tail is uniformly negligible by
\eqref{eq:Euler-uniform-upper-moment} at a higher exponent.  Hence
\Cref{thm:quasi-continuous-characterization} puts every $Y^n$ in the regular
upper-$L^q$ completion.  Closedness and $Y^n\to Y^{s,y}$ give
\eqref{eq:Euler-section-regular-completion}.

Finally, the preceding moment argument gives
\eqref{eq:Euler-section-moment}.  Initial-state stability gives the
$|y-z|$ part of \eqref{eq:Euler-section-parameter-increment}.  If $s<r$,
BDG and the growth bound give
\[
 \sup_{P\in\cP}
 \left(E^P\sup_{u\in[s,r]}|Y_u^{s,y}-y|^q\right)^{1/q}
 \le C_{q,R}|r-s|^{1/2}.
\]
From time $r$ onward, apply initial-state stability to
$Y_r^{s,y}$ and $z$.  Together with the frozen paths this proves
\eqref{eq:Euler-section-parameter-increment}.  The exponent-$q$ results imply
the assertions below exponent two by Lyapunov's inequality and continuity of
the regular-completion embedding.
\end{proof}

\begin{lemma}[Deterministic anisotropic grid chaining]
\label{lem:deterministic-anisotropic-chaining}
Let $E$ be a Banach space,
$D=[0,T]\times[-R,R]^m$, and
\[
 \varrho((s,y),(r,z))=|s-r|^{1/2}+|y-z|.
\]
Use the nested affine dyadic tensor grids
\[
 \mathcal G_n
 :=\{kT4^{-n}:0\le k\le4^n\}
 \times
 \prod_{j=1}^m
 \{-R+2R\ell2^{-n}:0\le\ell\le2^n\}.
\]
Let $\mathcal E_n$ contain the nearest-neighbour level-$n$ edges and, for
$n\ge1$, the edges joining each new vertex to all corners of one containing
parent rectangle in $\mathcal G_{n-1}$.  With $d=m+2$,
\[
 |\mathcal E_n|\le C_{R,m,T}2^{dn},
 \qquad
 \varrho(v,w)\le C_{R,m,T}2^{-n}
 \quad ((v,w)\in\mathcal E_n).
\]
Assign compatible values $z_v\in E$ on the union of the grids, let $U_n$ be
their multilinear interpolant, and put
\[
 M_n:=\max_{(v,w)\in\mathcal E_n}\|z_v-z_w\|_E.
\]
Then
\begin{equation}\label{eq:deterministic-grid-interpolant-difference}
 \|U_{n+1}-U_n\|_{C(D;E)}\le C_{R,m,T}M_{n+1}.
\end{equation}
If $\sum_nM_n<\infty$, the interpolants converge uniformly to $U$ and, for
every $0<\delta<1$,
\begin{equation}\label{eq:deterministic-anisotropic-chain}
 [U]_{C_\varrho^\delta(D;E)}
 \le C_{R,m,T,\delta}
 \left(M_0+\sum_{n\ge0}2^{n\delta}M_n\right).
\end{equation}
\end{lemma}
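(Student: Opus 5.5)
The plan is a deterministic Kolmogorov--Garsia chaining on a parabolically scaled grid. I will first verify the two combinatorial claims. The grid $\mathcal G_n$ has $(4^n+1)(2^n+1)^m$ vertices. Each vertex has $O(m)$ nearest-neighbour edges and at most $2^{m+1}$ parent-corner edges. Hence $|\mathcal E_n|\le C\,4^n2^{mn}=C2^{(m+2)n}$. A nearest-neighbour time edge has length $T4^{-n}$, so its $\varrho$-length is $T^{1/2}2^{-n}$. A space edge has length $2R2^{-n}$. A parent-corner edge spans at most one parent cell, of $\varrho$-diameter at most $(T4^{-n+1})^{1/2}+\sqrt m\,2R2^{-n+1}$. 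This is why the time mesh is $4^{-n}$ against the spatial mesh $2^{-n}$: it makes every edge of generation $n$ have $\varrho$-length $O(2^{-n})$.

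For \eqref{eq:deterministic-grid-interpolant-difference}, I will use the fact that the grids are nested, so $U_n$ is also a multilinear function on each level-$(n+1)$ cell. Then $U_{n+1}-U_n$ is the level-$(n+1)$ multilinear interpolant of the nodal values $z_v-U_n(v)$, $v\in\mathcal G_{n+1}$. These vanish at old vertices. A multilinear interpolant is bounded by the maximum of its nodal values, since it is a convex combination of them. It therefore suffices to bound $\|z_v-U_n(v)\|$ for a new vertex $v$. By compatibility $U_n(v)$ is a convex combination of the values at the corners $w$ of a containing parent rectangle, so
\[
\|z_v-U_n(v)\|\le\max_w\|z_v-z_w\|\le M_{n+1},
\]
using the parent-corner edges in $\mathcal E_{n+1}$. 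The constant is in fact $1$, and $C_{R,m,T}$ only absorbs conventions.

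Summability gives uniform convergence $U=U_0+\sum_n(U_{n+1}-U_n)$. For the Hölder bound, fix $p\ne p'$ and choose $N$ with $2^{-N-1}<\varrho(p,p')\le2^{-N}$. I will split
\[
U(p)-U(p')=\bigl(U_N(p)-U_N(p')\bigr)+\sum_{n\ge N}\bigl[(U_{n+1}-U_n)(p)-(U_{n+1}-U_n)(p')\bigr].
\]
The tail is bounded by $2C\sum_{n>N}M_n\le C2^{-N\delta}\sum_n2^{n\delta}M_n$.

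The main step is the local term. $U_N$ is multilinear on level-$N$ cells, whose $\varrho$-diameter is $\asymp2^{-N}$. Inside a cell I will write $U_N(p)-U_N(p')$ as an affine-interpolation difference. Along a spatial coordinate, multilinearity gives a Lipschitz bound (cell nodal oscillation)$\times|y-z|\,2^N$. Along time, the factor is $|s-r|\,4^N\le(|s-r|^{1/2}2^N)^2\le|s-r|^{1/2}2^N$ when $|s-r|\le4^{-N}$. Both are at most $\varrho(p,p')2^N\le1$ times the nodal oscillation. I expect this anisotropic step to be the main obstacle.

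The nodal oscillation of a level-$N$ cell, that is, its maximum over nearest-neighbour edges, is bounded by $\sum_{k\le N}CM_k$. To see this, telescope each vertex value through its ancestors, or equivalently apply \eqref{eq:deterministic-grid-interpolant-difference} inductively to $U_k$. If $p,p'$ lie in different but adjacent cells, which happens since $\varrho(p,p')\lesssim2^{-N}$, I will pass through a common boundary point, using $O(1)$ cells and continuity of $U_N$. This yields
\[
\|U_N(p)-U_N(p')\|\le C\min\bigl(1,\varrho2^N\bigr)\sum_{k\le N}M_k\le C\sum_{k\le N}M_k.
\]
Finally $\sum_{k\le N}M_k\le2^{-N\delta}\sum_k2^{k\delta}M_k\cdot2^{N\delta}$, which is too crude on its own. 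So I will refine the bound via $\min(1,\varrho2^N)\,2^{k}2^{-k}$, bounding the level-$k$ contribution by $\varrho\,2^{k}M_k\le\varrho^\delta2^{k\delta}M_k$ for $k\le N$ (valid since $\varrho2^k\le1$ and $\delta<1$). Adding the $M_0$ term for the coarsest cell gives \eqref{eq:deterministic-anisotropic-chain}.
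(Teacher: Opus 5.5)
Your edge counting, your proof of \eqref{eq:deterministic-grid-interpolant-difference} (indeed with constant $1$), the split of $U(p)-U(p')$ at the level $N$ with $\varrho(p,p')\approx2^{-N}$, and the tail bound all match the paper. The gap is in the local term $U_N(p)-U_N(p')$. You bound the nodal oscillation of $U_N$ on level-$N$ cells by the telescoped sum $C\sum_{k\le N}M_k$. This gives $\|U_N(p)-U_N(p')\|\le C\varrho2^N\sum_{k\le N}M_k$, that is, $C2^{N\delta}\sum_{k\le N}M_k$ after division by $\varrho^\delta$. As you note yourself, this is not controlled by $\sum_k2^{k\delta}M_k$. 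The announced refinement does not follow from anything you derived. Your Lipschitz estimate is taken on level-$N$ cells, so it attaches the factor $2^N$ to every $M_k$. Rewriting $2^N=2^k2^{N-k}$ leaves an unabsorbed factor $2^{N-k}$. Nothing in the argument as written produces the claimed per-level contribution $\varrho2^kM_k$.

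The missing observation is the one the paper uses. By construction $\mathcal E_N$ contains every nearest-neighbour level-$N$ edge, so by the definition of $M_N$ each nearest-neighbour difference of level-$N$ nodal values is already at most $M_N$; no telescoping is needed. The points $p,p'$ meet only boundedly many level-$N$ cells: the time mesh is $T4^{-N}$ against $|s-r|\le4^{-N}$, and the spatial mesh is $2R2^{-N}$ against $|y-z|\le2^{-N}$. Chaining along such edges therefore gives $\|U_N(p)-U_N(p')\|\le CM_N\le C2^\delta\varrho^\delta2^{N\delta}M_N$, which closes the estimate.

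If you prefer your per-level route, it must be run layerwise. Set $V_k:=U_k-U_{k-1}$. By your own step for \eqref{eq:deterministic-grid-interpolant-difference}, $V_k$ is the level-$k$ multilinear interpolant of nodal values of norm at most $M_k$. Apply your anisotropic Lipschitz computation to $V_k$ on level-$k$ cells, using $4^k|s-r|\le2^k|s-r|^{1/2}$, which holds since $|s-r|\le4^{-N}\le4^{-k}$. This gives $\|V_k(p)-V_k(p')\|\le C2^k\varrho M_k\le C\varrho^\delta2^{k\delta}M_k$ for $k\le N$, and in addition $\|U_0(p)-U_0(p')\|\le CM_0\varrho$. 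This Schauder-type variant needs nearest-neighbour edges only at level zero.

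Either way, pairs with $\varrho(p,p')>1$, where your $N$ would be negative, must be handled separately, as the paper does: use the single level-zero cell and the fact that $\varrho^\delta\ge1$.
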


\begin{proof}
The time mesh is $T4^{-n}$ and every spatial mesh is $2R2^{-n}$, so the edge
count and $\varrho$-diameter estimates follow directly.  On each fine
rectangle, the norm of a multilinear interpolant is bounded by the largest
norm of its vertex values.  At old vertices $U_{n+1}-U_n=0$, while at every
new vertex the parent-corner edges give
\[
 \|z_v-U_n(v)\|_E\le C_{R,m,T}M_{n+1}.
\]
Consequently
\[
 \|U_{n+1}-U_n\|_\infty
 \le\max_{v\in\mathcal G_{n+1}}\|z_v-U_n(v)\|_E
 \le C_{R,m,T}M_{n+1},
\]
which proves \eqref{eq:deterministic-grid-interpolant-difference}.

If $2^{-(n+1)}<\varrho(a,b)\le2^{-n}$, then $a$ and $b$ cross at most a
bounded number of level-$n$ rectangles in the time direction and in each
spatial coordinate.  Joining their surrounding vertices by nearest-neighbour
edges and using multilinearity therefore gives
\[
 \|U_n(a)-U_n(b)\|_E\le C_{R,m,T}M_n.
\]
Moreover,
\[
 \|U-U_n\|_\infty\le C\sum_{k>n}M_k.
\]
Combining the chain estimate with this tail, dividing by
$\varrho(a,b)^\delta$, and using
$2^{n\delta}M_k\le2^{k\delta}M_k$ for $k\ge n$ proves
\eqref{eq:deterministic-anisotropic-chain} for $\varrho(a,b)\le1$.
When $\varrho(a,b)>1$, a level-zero coordinate chain gives
$\|U_0(a)-U_0(b)\|_E\le C_{R,m,T}M_0$, and the same uniform tail handles the
two endpoints.  Since $\varrho(a,b)^\delta\ge1$, this also yields the stated
bound at large scales.
\end{proof}

\begin{lemma}[Euler-generated parameter-field completion]
\label{lem:Euler-parameter-field-completion}
Assume the hypotheses of \Cref{lem:trace-clock-Euler-sections} and suppose
also that $\cP$ is uniformly tight on
$\Omega=C_0([0,T];H)$.  The fixed-parameter classes admit compatible
completions
\begin{equation}\label{eq:Euler-compatible-field-completions}
 \Phi^R\in\bigcap_{1\le q<\infty}
 \mathbb L_{\cP}^q(\mathscr B_R),
 \qquad R\in\mathbb N.
\end{equation}
Put
\[
 \varrho((s,y),(r,z)):=|s-r|^{1/2}+|y-z|.
\]
For every $q>m+2$ and every
$0<\delta<1-(m+2)/q$,
\begin{equation}\label{eq:Euler-anisotropic-Kolmogorov-bound}
 \sup_{P\in\cP}E^P\left[
  \sup_{\substack{(s,y)\ne(r,z)\\|y|,|z|\le R}}
  \frac{\|\Phi^R(s,y)-\Phi^R(r,z)\|_\infty^q}
       {\varrho((s,y),(r,z))^{\delta q}}
 \right]<\infty.
\end{equation}

There are a total jointly Borel raw-causal field
\begin{equation}\label{eq:Euler-total-field}
 \Phi:\Omega\times\Delta_T\times\mathbb R^m\to\mathbb R^m
\end{equation}
and one sequence $(F_n)$ with the following properties.  Each $F_n$ is a
continuous finite-parameter interpolation field assembled from finitely many
finite-step raw Euler maps.  There is one Borel full-capacity set
$G_{\rm reg}$, independent of the radius, moment exponent, and parameters,
such that
\begin{equation}\label{eq:Euler-raw-and-completed-field-agree}
 \Phi_{s,t}(x,y)=\Phi^R(x)(s,y)(t)
 \quad
 (x\in G_{\rm reg},\ |y|\le R,\ 0\le s\le t\le T),
\end{equation}
while the right-hand path is frozen before $s$.  In particular,
the field is locally jointly continuous in $(s,t,y)$ on $G_{\rm reg}$.
There is an increasing compact capacity core $(K_j^{\rm E})_{j\ge1}$ on
each member of which $(x,s,t,y)\mapsto\Phi_{s,t}(x,y)$ is jointly continuous
when the parameters range over compact sets.
\end{lemma}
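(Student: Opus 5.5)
The plan is to build the field from the grid values and control the interpolants with the deterministic chaining lemma, working first in upper moments and then upgrading to capacity statements.

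\emph{Step 1 (grid values and completion).} Fix $R\in\mathbb N$ and the nested anisotropic grids $\mathcal G_n$ of \Cref{lem:deterministic-anisotropic-chaining} on $D_R=[0,T]\times[-R,R]^m$. At every grid vertex $v=(s,y)$ take the fixed-parameter section $Y^{s,y}$ of \Cref{lem:trace-clock-Euler-sections}, realized as a regular quasi-continuous class in $\mathbb L^q_{\cP}(C([0,T];\mathbb R^m))$ for all $q$. For each vertex choose one finite-step Euler map $Y^{\pi_{k(v,n)};s,y}$, with the partition index large enough that $\|Y^{\pi;s,y}-Y^{s,y}\|_{\mathbb S^q_{\cP}}\le 2^{-2dn}2^{-n}$ for $q=q_n\uparrow\infty$. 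Let $F_n$ be the multilinear interpolant of these raw Euler maps over $\mathcal G_n$. Each $F_n$ is then a continuous $\mathscr B_R$-valued map of the raw path, built from finitely many Euler maps, and raw causal in the terminal time because every Euler map is.

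\emph{Step 2 (moment chaining).} Set $M_n:=\max_{(v,w)\in\mathcal E_n}\|z_v-z_w\|_\infty$. By the parameter increment bound \eqref{eq:Euler-section-parameter-increment}, the edge diameter $\varrho\le C2^{-n}$ and the count $|\mathcal E_n|\le C2^{dn}$, the maximum inequality gives $\sup_P\|M_n\|_{L^q(P)}\le C_{q,R}2^{dn/q}2^{-n}$. The Euler replacement errors are summable by the choice made in Step~1. Inserting this into \eqref{eq:deterministic-anisotropic-chain} with $\delta<1-d/q$ yields an upper-$L^q$ bound on $\sum_n2^{n\delta}M_n$. Two consequences follow:
\begin{itemize}
\item the anisotropic Kolmogorov bound \eqref{eq:Euler-anisotropic-Kolmogorov-bound};
\item via \eqref{eq:deterministic-grid-interpolant-difference}, $\sum_n\sup_P\|\,\|F_{n+1}-F_n\|_{\mathscr B_R}\|_{L^q(P)}<\infty$.
\end{itemize}
Markov's inequality and capacity Borel--Cantelli then give uniform convergence of $F_n$ outside a Borel polar set. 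Each $F_n$ lies in $\mathbb L^q_{\cP}(\mathscr B_R)$ by \Cref{thm:quasi-continuous-characterization}, using its continuity and uniform tails. Closedness of the regular completion therefore places the limit $\Phi^R$ in every $\mathbb L^q_{\cP}(\mathscr B_R)$. Compatibility across $R$ holds because the grids restrict and the fixed-parameter sections are the same.

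\emph{Step 3 (total field and core).} Define $\Phi$ by the Borel limit selector of \Cref{lem:causal-Borel-limit} applied coordinatewise to $(F_n)$, using a diagonal over $R$. This makes $\Phi$ total, jointly Borel and raw causal. Let $G_{\rm reg}$ be the countable intersection over $R$ of the Cauchy loci, further intersected with the locus where the Hölder gauge of Step~2 is finite. On $G_{\rm reg}$ the identity \eqref{eq:Euler-raw-and-completed-field-agree} holds and the field is locally jointly continuous. For the compact core, combine three ingredients:
\begin{itemize}
\item uniform tightness, which supplies compacts $C_j$;
\item closed sets $\{\sum_n 2^{n\delta}(\text{gauges})\le j\}$ and tail sets $\{\sup_n 2^{n\theta}\|F_{n+1}-F_n\|\le j\}$, each closed because the $F_n$ are continuous and each of capacity-small complement;
\item \Cref{lem:uniform-finite-scale-closure}, which gives joint continuity in $(x,s,t,y)$ on $K_j^{\rm E}:=C_j\cap(\text{these closed sets})$.
\end{itemize}

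The main obstacle is Step~1/2 bookkeeping. The grid values must be raw continuous Euler maps, not merely classes, for the core continuity to hold. The increment bound, however, is only available for the limiting sections, so one needs fast enough convergence of the finite-step schemes at each vertex, quantified uniformly in the parameter. I would first try to extract such a rate from the Gronwall estimate \eqref{eq:Euler-common-Cauchy-Gronwall} via the modulus $\delta_n^{(q)}$. Failing that, I would choose vertex-dependent partitions adaptively, which suffices because only finitely many vertices occur at each level.
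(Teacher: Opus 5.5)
Your route works, but it is organized differently from the paper's.

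\textbf{How the paper proceeds.} The paper separates two roles.
\begin{enumerate}
\item It first chains the \emph{limiting} classes $Y^{s,y}$, fixing one Borel representative per node of the union of the grids. The vertex values are therefore compatible across levels, and \Cref{lem:deterministic-anisotropic-chaining} applies verbatim, with $M_n$ controlled directly by \eqref{eq:Euler-section-parameter-increment}. This step gives $\Phi^R$, its membership in the regular completions, the Kolmogorov bound, and, by grid approximation, $\Phi^R(s,y)=Y^{s,y}$ at every parameter.
\item Only afterwards does it build the raw fields. For each $N$ it takes one fine interpolant approximating all $\Phi^R$ with $R\le N$. It replaces the finitely many nodes by Euler maps on a common refined partition and enforces $\|F_N-\Phi^R\|_{\mathcal L^k_{\cP}(\mathscr B_R)}\le 2^{-4N}$ for all $R,k\le N$.
\item This single fast sequence then yields $G_{\rm reg}$, the $\operatorname{Lim}$ totalization, and the compact cores.
\end{enumerate}
You instead chain the Euler interpolants themselves. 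This merges the two steps and gives H\"older and tail control directly on the approximating sequence, which is convenient for the cores. The price is one point you must make explicit.

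\textbf{The bookkeeping point.} An Euler map chosen once per vertex and reused at all later levels fails. At a level-$0$ vertex $v$ the error $z_v-Y^v$ is fixed, so on level-$n$ edges at $v$ one has $\|z_v-z_w\|_\infty\to\|z_v-Y^v\|_\infty\neq0$. At the new neighbours $w$ of $v$, the difference $F_{n+1}(w)-F_n(w)$ stays comparable to $z_v-Y^v$, so the interpolants are not uniformly Cauchy.

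The Euler map must therefore be refreshed at every level with accuracy $2^{-(2d+1)n}$, as your index $k(v,n)$ suggests. Then the vertex values are no longer compatible across levels. Both \eqref{eq:deterministic-grid-interpolant-difference} and \eqref{eq:deterministic-anisotropic-chain} use $U_{n+1}=U_n$ at old vertices, so neither applies as stated.

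You must add the refresh term $\rho_{n+1}:=\max_{v\in\mathcal G_n}\|z_v^{(n+1)}-z_v^{(n)}\|_\infty$ to the bound on $\|F_{n+1}-F_n\|_\infty$, and hence to the tail $\|U-U_n\|_\infty$ in the H\"older chain. Once $q_n\ge q$, you have $\sup_P\|\rho_n\|_{L^q(P)}\lesssim 2^{dn/q}2^{-(2d+1)n}$, so the modified chaining goes through. No uniform-in-parameter Euler rate is needed, since finitely many vertices per level suffice.

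\textbf{Two smaller items.}
\begin{itemize}
\item You still need the identification $\Phi^R(s,y)=Y^{s,y}$ off the grid. It follows from parameter continuity, the increment bound, and the integrable H\"older gauge.
\item Your ``diagonal over $R$'' must be made concrete, because the grids $\{-R+2R\ell 2^{-n}\}$ for different $R$ are not nested in one another. Either use one lattice exhausting $\mathbb R^m$ together with truncation in $y$, or obtain compatibility as the paper does, from uniqueness on a countable dense set plus continuity.
\end{itemize}
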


\begin{proof}
Let $E=C([0,T];\mathbb R^m)$ and first work on the tensor-product cube
\[
 D_R^\square:=[0,T]\times[-R,R]^m.
\]
The desired set $D_R=[0,T]\times\overline B_R$ is a closed subset of a
cube of this form, so restriction will give the stated completion.  Under
the metric $\varrho$, the time coordinate has weight two, and the homogeneous
parameter dimension is $d=m+2$.  After adjusting the radius in the constant,
\eqref{eq:Euler-section-parameter-increment} reads
\begin{equation}\label{eq:Euler-field-increment-input}
 \|Y^a-Y^{a'}\|_{\mathcal L_{\cP}^q(E)}
 \le C_{q,R}\varrho(a,a'),
 \qquad a,a'\in D_R^\square.
\end{equation}

Use nested tensor-product grids with time mesh $2^{-2n}$ and state mesh
$2^{-n}$.  On the countable union of the grids over all integer radii and
levels, choose once and for all one Borel representative of each
fixed-parameter solution class, reusing the same representative whenever a
node recurs.  Let $\mathcal E_n^R$ be the augmented edge set from
\Cref{lem:deterministic-anisotropic-chaining}; it contains the
nearest-neighbour and parent-corner edges.  There are at most
$C_R2^{dn}$ such edges, all of $\varrho$-length at most $C_R2^{-n}$.  With
\[
 M_n^R:=\max_{e\in\mathcal E_n^R}
 \|Y^{e^+}-Y^{e^-}\|_E,
\]
\eqref{eq:Euler-field-increment-input} and
$\max_i z_i^q\le\sum_i z_i^q$ give the upper-moment estimate
\begin{equation}\label{eq:Euler-edge-maximum-moment}
 \|M_n^R\|_{\mathcal L_{\cP}^q}
 \le\left(
   \sum_{e\in\mathcal E_n^R}
   \|Y^{e^+}-Y^{e^-}\|_{\mathcal L_{\cP}^q(E)}^q
 \right)^{1/q}
 \le C_{q,R}2^{-n(1-d/q)}.
\end{equation}
Let $U_n^R$ be the multilinear interpolant of the grid values.  Applying
\Cref{lem:deterministic-anisotropic-chaining} gives
\begin{align}
 \|U_{n+1}^R-U_n^R\|_{C(D_R^\square;E)}
 &\le C_RM_{n+1}^R,\label{eq:Euler-interpolant-difference}\\
 [U^R]_{C_\varrho^\delta(D_R^\square;E)}
 &\le C_R\left(M_0^R+\sum_{n\ge0}2^{n\delta}M_n^R\right),
 \label{eq:Euler-interpolant-Holder-chain}
\end{align}
where $U^R$ is the uniform limit whenever the series is finite.  Since
$\delta<1-d/q$, \eqref{eq:Euler-edge-maximum-moment} and Minkowski's
inequality make both series summable in $\mathcal L_{\cP}^q$.  Thus
$(U_n^R)$ is Cauchy in
$\mathcal L_{\cP}^q(C(D_R^\square;E))$, and its limit has the
$q$-integrable H\"older seminorm asserted in
\eqref{eq:Euler-anisotropic-Kolmogorov-bound}.  Restricting to $D_R$ gives
the claimed ball-indexed field.  Since every grid value belongs to the regular
upper-$L^q$ completion, the finite interpolants and then their limit belong to
the same closed regular completion.

For an arbitrary $a\in D_R^\square$, choose nested-grid points
$a_n\to a$.  The interpolation limit is continuous and agrees with
$Y^{a_n}$ at every recurring grid node, while
\eqref{eq:Euler-field-increment-input} gives
$Y^{a_n}\to Y^a$ in $\mathcal L_{\cP}^q(E)$.  Hence evaluation of the limit
at $a$ is exactly the prescribed fixed-parameter class $Y^a$.

Apply this construction for integer $R$ and integer $q>d$.  Uniqueness of the
fixed-parameter solution implies that two resulting completions agree on the
countable union of their grids; their parameter continuity then makes them
compatible after restriction in $R$ and passage to lower moment exponents.
Higher integer moments and Lyapunov's inequality give all finite real
exponents.  This proves
\eqref{eq:Euler-compatible-field-completions} and
\eqref{eq:Euler-anisotropic-Kolmogorov-bound}.

It remains to select one raw-causal representative.  For each $N$, first
approximate the compatible fields $\Phi^R$, $R\le N$, by one sufficiently
fine finite parameter interpolant.  Only finitely many start--state nodes are
used.  By \Cref{lem:trace-clock-Euler-sections}, choose a common deterministic
partition refinement such that the finite-step Euler map at every one of
these nodes approximates its solution section in every upper-$L^k(E)$ norm,
$k\le N$.  Replacing the nodes by those Euler maps and using the same
nonnegative multilinear weights produces a field $F_N$.  Coordinatewise
truncation of the initial state extends it to all $y$, and subtracting its
diagonal value and adding $y$ enforces $F_{N,s,s}(x,y)=y$.  These operations
are followed by setting the associated path equal to $y$ on $[0,s]$.
Continuity at the seam follows from the diagonal normalization.  The
operations preserve joint Borel measurability, raw causality, and continuity of
\[
 x\longmapsto F_N(x)|_{D_R}
 \quad\text{from }\Omega\text{ into }\mathscr B_R.
\]
Choose the approximations fast enough that, for every $R,k\le N$,
\begin{equation}\label{eq:Euler-field-fast-diagonal}
 \|F_N-\Phi^R\|_{\mathcal L_{\cP}^k(\mathscr B_R)}
 \le2^{-4N}.
\end{equation}
The resulting deterministic sequence satisfies all countably many constraints
simultaneously.

By Markov's inequality and \eqref{eq:Euler-field-fast-diagonal}, the sequence
is uniformly Cauchy in every $\mathscr B_R$ outside one Borel polar set; call
the common convergence set $G_{\rm reg}$.  On this set let $L^R$ be the
$\mathscr B_R$-limit and use the identity field off $G_{\rm reg}$.  Fatou's
lemma and \eqref{eq:Euler-field-fast-diagonal} show that this Borel map
represents $\Phi^R$ in every finite upper moment, compatibly in $R$.

To preserve raw causality on all paths, not only on $G_{\rm reg}$, use the
Borel selector $\operatorname{Lim}_{\mathbb R^m}$ of
\Cref{lem:causal-Borel-limit}.  For $s\le t$ put
\begin{equation}\label{eq:Euler-pointwise-Lim-totalization}
 \Phi_{s,t}(x,y)
 :=y+\operatorname{Lim}_{\mathbb R^m}
       \bigl((F_{N,s,t}(x,y)-y)_{N\ge1}\bigr).
\end{equation}
This field is total and jointly Borel.  It is raw causal pointwise on all of
$\Omega$: for every $N$, the two sequences in
\eqref{eq:Euler-pointwise-Lim-totalization} have identical terms at $x$ and
at $r_tx$.  On $G_{\rm reg}$, the convergence is locally uniform in all
parameters and $\Phi=L^R$ for $|y|\le R$, where $L^R$ is the chosen Borel
representative of the class $\Phi^R$.  This proves
\eqref{eq:Euler-raw-and-completed-field-agree} and the asserted local
parameter continuity.

Uniform tightness also yields compact continuity cores.  Choose compact
$C_j\subset\Omega$ with
$c_{\cP}(C_j^c)\le2^{-j-2}$.  For every pair $(R,j)$, the fast diagonal and
Markov's inequality allow a tail index $N_{R,j}$ such that
\[
 \sum_{n\ge N_{R,j}}
 c_{\cP}\!\left(
  \|F_{n+1}-F_n\|_{\mathscr B_R}>2^{-n}
 \right)
 \le2^{-R-j-2}.
\]
Define
\[
 G_j:=C_j\cap
 \bigcap_{R\ge1}\ \bigcap_{n\ge N_{R,j}}
 \left\{\|F_{n+1}-F_n\|_{\mathscr B_R}\le2^{-n}\right\}.
\]
Every set in the intersection is closed because the approximating field maps
are continuous; hence $G_j$ is compact.  Countable subadditivity gives
$c_{\cP}(G_j^c)\le2^{-j-1}$.  On $G_j$, for every $R$, the sequence
$F_n|_{D_R}$ is uniformly Cauchy after its $R$-dependent tail index.  The
finite unions
$K_j^{\rm E}:=\bigcup_{i=1}^jG_i$ therefore form an increasing compact
capacity core on each member of which the limit maps
$x\mapsto\Phi(x)|_{D_R}$ are continuous for every $R$.  Evaluation gives the
final joint-continuity assertion on all compact parameter sets.
\end{proof}

\begin{lemma}[Dense-skeleton one-set perfection]
\label{lem:dense-skeleton-one-set-perfection}
Let $(\Theta,d_\Theta)$ be a separable metric space, let $(E,d_E)$ be a
metric space, and let $F,G:\Omega\times\Theta\to E$ be jointly Borel fields.
Suppose there is a Borel $\cP$-full set $G_0$ on which the two parameter
sections are continuous.  If, for one countable dense set
$D\subset\Theta$,
\[
 F(\cdot,\theta)=G(\cdot,\theta)
 \quad\cP\text{-quasi surely},\qquad \theta\in D,
\]
then there is one Borel $\cP$-polar set $N$, independent of $\theta$, such
that
\[
 F(x,\theta)=G(x,\theta),
 \qquad x\in G_0\setminus N,\quad \theta\in\Theta.
\]
The same conclusion holds for a jointly Borel residual
$\mathfrak R:\Omega\times\Theta\to E$ whose parameter sections are
continuous on $G_0$: if $\mathfrak R(\cdot,\theta)=e_0$ quasi surely on
$D$, then it equals $e_0$ on $(G_0\setminus N)\times\Theta$.
\end{lemma}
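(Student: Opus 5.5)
The plan is the standard countable-union-plus-continuity argument. For each $\theta\in D$, the hypothesis gives a set where $F(\cdot,\theta)\neq G(\cdot,\theta)$ that is $\cP$-polar. We first check that this mismatch set is Borel. Joint Borel measurability of $F$ and $G$ makes $x\mapsto(F(x,\theta),G(x,\theta))$ Borel. The set $\{x: F(x,\theta)\ne G(x,\theta)\}$ is the preimage of the complement of the diagonal of $E\times E$, which is open. To avoid any product-$\sigma$-field issue in a nonseparable $E$, we instead write it as $\{x: d_E(F(x,\theta),G(x,\theta))>0\}$. This set is Borel as soon as $x\mapsto d_E(F(x,\theta),G(x,\theta))$ is Borel, and that holds whenever $(F,G)$ is Borel into $E\times E$ with its product Borel structure. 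If needed, we add the standing remark that the fields take values in a separable subset, as in all applications, where $E$ is Polish.

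Call these Borel mismatch sets $N_\theta$. Since $c_{\cP}(N_\theta)=0$, put $N:=\bigcup_{\theta\in D}N_\theta$. This set is Borel because $D$ is countable, and countable subadditivity of $c_{\cP}$, which follows from subadditivity of each outer measure $P^*$ and then taking the supremum, gives $c_{\cP}(N)\le\sum_{\theta\in D}c_{\cP}(N_\theta)=0$. Crucially, $N$ does not depend on $\theta\in\Theta$.

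Now fix $x\in G_0\setminus N$ and an arbitrary $\theta\in\Theta$. Choose $\theta_k\in D$ with $d_\Theta(\theta_k,\theta)\to0$. Then $F(x,\theta_k)=G(x,\theta_k)$ for every $k$, because $x\notin N$. Since $x\in G_0$, both sections are continuous at $\theta$, so $F(x,\theta_k)\to F(x,\theta)$ and $G(x,\theta_k)\to G(x,\theta)$ in $E$. Limits in a metric space are unique, so $F(x,\theta)=G(x,\theta)$.

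The residual statement is the same argument with $G\equiv e_0$. Its mismatch sets $\{d_E(\mathfrak R(\cdot,\theta),e_0)>0\}$ are Borel because $d_E(\cdot,e_0)$ is continuous. The only point needing care is the Borel measurability of the mismatch sets, so that the single exceptional set is Borel rather than merely polar; the continuity step itself is routine.
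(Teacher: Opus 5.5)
Your proof is correct and follows the paper's argument: take $N$ to be the countable union over $\theta\in D$ of the Borel polar mismatch sets, then extend the equality from $D$ to all of $\Theta$ by continuity of the parameter sections on $G_0$. The paper additionally adjoins the polar set $G_0^c$ to $N$, which is harmless but unnecessary for a conclusion stated on $G_0\setminus N$; and your extra care about Borelness of the mismatch sets, which the paper simply asserts, genuinely needs your separability caveat when $E$ is nonseparable, since Borel measurability of $F$ and $G$ separately only makes $(F,G)$ measurable for $\mathcal B(E)\otimes\mathcal B(E)$.
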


\begin{proof}
For each $\theta\in D$, the Borel mismatch event is polar.  Let $N$ be their
countable union together with the polar continuity-exception set $G_0^c$.
Equality on the dense set then extends to all of $\Theta$ by continuity.  The
residual formulation is the special case $G\equiv e_0$.
\end{proof}

\begin{lemma}[Random-initial restart for the Euler field]
\label{lem:Euler-random-initial-restart}
Let $\Phi$ be the field of
\Cref{lem:Euler-parameter-field-completion}.  Fix $P\in\cP$,
$u\in[0,T]$, $q\ge2$, and
$Z\in L^q(P,\mathcal F_u^0;\mathbb R^m)$.  Choose an
$\mathcal F_u^0$-measurable Borel representative, still denoted by $Z$.
Define
$t\mapsto\Phi_{u,t}(\cdot,Z)$ by pointwise composition on $G_{\rm reg}$ and
by the constant path $Z$ off $G_{\rm reg}$.  This is a Borel
$C([u,T];\mathbb R^m)$-valued variable and represents the unique classical
solution started from $Z$; a different representative gives the same
$P$-equivalence class.  Moreover, for
$Z,Z'\in L^q(P,\mathcal F_u^0;\mathbb R^m)$, with similarly chosen Borel
representatives,
\begin{equation}\label{eq:Euler-random-initial-stability}
 \left(E^P\sup_{t\in[u,T]}
 |\Phi_{u,t}(\cdot,Z)-\Phi_{u,t}(\cdot,Z')|^q\right)^{1/q}
 \le C_q\|Z-Z'\|_{L^q(P)}.
\end{equation}
\end{lemma}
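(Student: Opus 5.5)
The plan is to reduce the random initial condition to the deterministic starting pairs of \Cref{lem:trace-clock-Euler-sections} by freezing $Z$ on the past, and to pass to general $Z$ through simple-function approximation.

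\emph{Measurability.} On $G_{\rm reg}$ the field $(t,y)\mapsto\Phi_{u,t}(x,y)$ is locally jointly continuous by \Cref{lem:Euler-parameter-field-completion}. Hence $x\mapsto\Phi_{u,\cdot}(x,Z(x))$ is the composition of the Borel map $x\mapsto(x,Z(x))$ with a Carath\'eodory-type map. That map is Borel in $x$ and continuous in $y$, so the composition is Borel into $C([u,T];\mathbb R^m)$ by \Cref{lem:Borel-continuous-section-currying}. Off $G_{\rm reg}$ the constant path $Z$ is Borel. Two representatives of $Z$ agree $P$-a.s., and $G_{\rm reg}$ is full, so the class is independent of the choice.

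\emph{Simple initial data.} Let $Z=\sum_{i\le k}z_i\one_{A_i}$ with $A_i\in\mathcal F_u^0$ a partition and $z_i$ deterministic. Then $\Phi_{u,\cdot}(\cdot,Z)=\sum_i\one_{A_i}\Phi_{u,\cdot}(\cdot,z_i)$ pointwise. Each $\Phi_{u,\cdot}(\cdot,z_i)$ represents under $P$ the classical strong solution from $(u,z_i)$, by \eqref{eq:Euler-raw-and-completed-field-agree} and \Cref{lem:trace-clock-Euler-sections}. Because $A_i\in\mathcal F_u^0$, the indicator commutes with stochastic integration on $[u,T]$, so the sum solves the SDE started from $Z$. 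For two simple data $Z,Z'$ on a common partition, the stability \eqref{eq:Euler-section-parameter-increment} is applied conditionally on each $A_i$. More precisely, the modelwise BDG--Gronwall estimate for the classical SDE, localized by $\one_{A_i}$, gives $E^P[\sup|\cdot|^q\one_{A_i}]\le C_q^qE^P[|z_i-z_i'|^q\one_{A_i}]$. Summing over $i$ yields \eqref{eq:Euler-random-initial-stability}, with a constant uniform in $P$ since only the trace clock and the Lipschitz/growth constants enter.

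\emph{General $Z$.} This is the main obstacle. Pick simple $\mathcal F_u^0$-measurable $Z_n\to Z$ both pointwise and in $L^q(P)$. The stability bound makes $\Phi_{u,\cdot}(\cdot,Z_n)$ Cauchy in $L^q(P;C)$, and its limit solves the SDE from $Z$ because the classical solution map is $L^q$-Lipschitz in the initial datum. It remains to show that this limit equals the pointwise composition $\Phi_{u,\cdot}(\cdot,Z)$. Pointwise convergence of $Z_n$, together with continuity of $y\mapsto\Phi_{u,\cdot}(x,y)$ in sup norm on $G_{\rm reg}$, gives $\Phi_{u,\cdot}(x,Z_n(x))\to\Phi_{u,\cdot}(x,Z(x))$ for every $x\in G_{\rm reg}$. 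Almost-sure and $L^q$ limits therefore coincide, which gives both the identification and, via Fatou, the stability estimate for general $Z,Z'$. Uniqueness of the classical solution from $Z$ then yields the asserted uniqueness of the class.
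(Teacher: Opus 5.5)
Your proposal is correct and follows essentially the paper's own argument. You approximate $Z$ by finite-valued $\mathcal F_u^0$-measurable variables, identify those compositions with classical solutions by locality of the Lebesgue and It\^o integrals, and apply the initial-state BDG--Gronwall estimate; you then pass to general $Z$ through continuity of $y\mapsto\Phi_{u,\cdot}(x,y)$ on $G_{\rm reg}$, and obtain the general stability bound by Fatou. One minor imprecision: the identity $\Phi_{u,\cdot}(\cdot,Z)=\sum_i\one_{A_i}\Phi_{u,\cdot}(\cdot,z_i)$ holds on $G_{\rm reg}$ rather than pointwise everywhere, which is harmless because $G_{\rm reg}^c$ is polar.
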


\begin{proof}
Approximate $Z$---and, for the stability estimate, $Z'$---almost surely and
in $L^q(P)$ by finite-valued $\mathcal F_u^0$-measurable variables.  For the
identification of the first composition, denote the approximants of $Z$ by
$Z_n$.  The declared pointwise
composition is Borel; changing it on the polar complement of $G_{\rm reg}$
does not alter any modelwise class.  Locality of the Lebesgue and It\^o
integrals shows that $\Phi_{u,\cdot}(\cdot,Z_n)$ is the classical solution
started from $Z_n$.  The usual initial-state BDG--Gronwall estimate gives
\eqref{eq:Euler-random-initial-stability}.  On $G_{\rm reg}$, continuity in
the initial state identifies the limit of the finite-valued compositions
with the pointwise composition by $Z$.  Hence that composition is the unique
strong solution started from $Z$.
\end{proof}

\begin{lemma}[One-set flow and comparison perfection]
\label{lem:one-set-Euler-perfection}
Let $\Phi$ be the field of
\Cref{lem:Euler-parameter-field-completion}.  There is a Borel full-capacity
set $G_{\rm fl}\subset G_{\rm reg}$, independent of all flow parameters,
such that
\begin{equation}\label{eq:perfect-Euler-flow}
 \Phi_{s,t}(x,y)
 =\Phi_{u,t}\bigl(x,\Phi_{s,u}(x,y)\bigr)
\end{equation}
for every $x\in G_{\rm fl}$,
$0\le s\le u\le t\le T$, and $y\in\mathbb R^m$.

More generally, let
$\widetilde\Phi:\Omega\times\Delta_T\times\mathbb R^m\to\mathbb R^m$
be jointly Borel.  Suppose that outside one Borel polar set both parameter
fields are locally continuous and that, for every $P\in\cP$ and every fixed
$(s,y)$, the processes
$\Phi_{s,\cdot}(\cdot,y)$ and
$\widetilde\Phi_{s,\cdot}(\cdot,y)$ are indistinguishable under $P$.
Then there is one Borel $\cP$-polar set $N$, independent of
$P,s,t,y$, such that
\begin{equation}\label{eq:field-comparison-perfection}
 \Phi_{s,t}(x,y)=\widetilde\Phi_{s,t}(x,y)
\end{equation}
for every $x\notin N$, $0\le s\le t\le T$, and
$y\in\mathbb R^m$.
\end{lemma}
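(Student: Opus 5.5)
The flow law \eqref{eq:perfect-Euler-flow} will be reduced to a countable family of quasi-sure identities and then extended to all parameters by continuity, in the spirit of \Cref{lem:dense-skeleton-one-set-perfection}. Fix rational $s\le u\le t$ in $[0,T]$ and rational $y\in\mathbb Q^m$. Under a fixed $P\in\cP$, the section $\Phi_{s,\cdot}(\cdot,y)$ is the classical strong solution from $(s,y)$ by \Cref{lem:Euler-parameter-field-completion} and \Cref{lem:trace-clock-Euler-sections}. Put $Z:=\Phi_{s,u}(\cdot,y)$; this is $\mathcal F_u^0$-measurable because $\Phi$ is raw causal, and it lies in every $L^q(P)$ by \eqref{eq:Euler-section-moment}. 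By \Cref{lem:Euler-random-initial-restart}, the pointwise composition $\Phi_{u,\cdot}(\cdot,Z)$ is the unique classical solution on $[u,T]$ started from $Z$. Pathwise uniqueness of the strong solution then shows that the restriction of $\Phi_{s,\cdot}(\cdot,y)$ to $[u,T]$ solves the same equation from the same initial value, so the two processes are $P$-indistinguishable on $[u,T]$.

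Both sides are Borel in $x$, since composition of jointly Borel maps is Borel. Hence the mismatch event is Borel and $P$-null for every $P$, and therefore polar by \Cref{prop:common-event}. Take $N_{\rm fl}$ to be the countable union of these events over rational $(s,u,y)$ (simultaneous in $t$ by path continuity), and set $G_{\rm fl}:=G_{\rm reg}\setminus N_{\rm fl}$.

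To pass from rational to arbitrary parameters on $G_{\rm fl}$, use the local joint continuity of $(s,t,y)\mapsto\Phi_{s,t}(x,y)$ given by \eqref{eq:Euler-raw-and-completed-field-agree}, including continuity across the frozen seam. Let $s_n,u_n,y_n$ be rationals converging to $(s,u,y)$ with $s_n\le u_n\le t$. Then the inner values $\Phi_{s_n,u_n}(x,y_n)$ converge and stay in a compact ball, so joint continuity of $(u,z)\mapsto\Phi_{u,t}(x,z)$ passes the outer composition to the limit. The one delicate point is the boundary ordering: when $u=t$ or $s=u$ the identity is trivial because of the diagonal normalization $\Phi_{s,s}(x,y)=y$. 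Otherwise interior rational approximants exist.

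For the comparison clause, apply \Cref{lem:dense-skeleton-one-set-perfection} with $\Theta=\Delta_T\times\mathbb R^m$, $D$ its rational points, and $G_0$ the Borel full set on which both fields are locally continuous. $P$-indistinguishability at each fixed $(s,y)$, combined with \Cref{prop:common-event}, gives quasi-sure equality on $D$.

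The main obstacle I expect is measurability together with version issues in the restart step. One must check that the pointwise composition with the Borel representative $Z=\Phi_{s,u}(\cdot,y)$ is the object \Cref{lem:Euler-random-initial-restart} identifies, uniformly over the choice of representative. This is handled by taking $Z$ itself, defined everywhere and Borel, as the chosen representative. On $G_{\rm reg}^c$ the convention in that lemma (constant path $Z$) may disagree with $\Phi$, but that set is polar and is removed.
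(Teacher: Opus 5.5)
Your proposal is correct and follows the paper's proof essentially step for step. Both arguments use a rational skeleton in $(s,u,y)$, a restart via \Cref{lem:Euler-random-initial-restart} together with pathwise uniqueness, polarity of the resulting Borel mismatch events, and extension to all parameters by local joint continuity on $G_{\rm reg}$; your comparison clause is the paper's argument via \Cref{lem:dense-skeleton-one-set-perfection}. The only difference is cosmetic: you carry out the continuity extension of the flow identity by hand, including the diagonal cases, whereas the paper applies the residual clause of \Cref{lem:dense-skeleton-one-set-perfection} on the parameter set $\{(s,u,t,y):0\le s\le u\le t\le T,\ y\in\mathbb R^m\}$.
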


\begin{proof}
We first perfect the flow law.  Fix $P\in\cP$, take rational $s\le u$ and
rational $y$, and put
$Z=\Phi_{s,u}(\cdot,y)$.  Raw causality makes $Z$
$\mathcal F_u^0$-measurable, and \eqref{eq:Euler-section-moment} gives all
finite moments.  Projecting \eqref{eq:Euler-common-solution-pair} under $P$
and splitting the resulting classical equation at $u$, then using
\Cref{lem:Euler-random-initial-restart} and pathwise uniqueness, gives
\eqref{eq:perfect-Euler-flow} $P$-almost surely for this triple and
all terminal times.  Since the preceding argument applies to arbitrary
$P\in\cP$, the Borel exceptional event is polar for each rational
$(s,u,y)$.  Apply the residual clause of
\Cref{lem:dense-skeleton-one-set-perfection} on
\[
 \Theta_{\rm fl}
 :=\{(s,u,t,y):0\le s\le u\le t\le T,\ y\in\mathbb R^m\}
\]
to the difference between the two sides of
\eqref{eq:perfect-Euler-flow}.  Local joint continuity on
$G_{\rm reg}$ gives one parameter-independent full-capacity set
$G_{\rm fl}$ on which the flow identity holds everywhere.

For comparison, fixed-parameter indistinguishability makes the mismatch event
polar on a countable dense subset of
$\Delta_T\times\mathbb R^m$.  The two-field assertion of
\Cref{lem:dense-skeleton-one-set-perfection}, applied on the common parameter
continuity set, gives one polar set independent of $(s,t,y)$ and proves
\eqref{eq:field-comparison-perfection}.
\end{proof}

\end{document}